\numberwithin{equation}{section}
\def\@seccntformat#1{%
  \expandafter\ifx\csname c@#1\endcsname\c@section\else
  \csname the#1\endcsname\quad
  \fi}
\tikzset{
  symbol/.style={
    draw=none,
    every to/.append style={
      edge node={node [sloped, allow upside down, auto=false]{$#1$}}}
  }
}
\newcommand{\bbm}{\begin{bmatrix}}
\newcommand{\ebm}{\end{bmatrix}}
\newcommand{\bv}{\begin{vmatrix}}
\newcommand{\ev}{\end{vmatrix}}
\newcommand{\NN}{\mathbb{N}}
\newcommand{\C}{\mathbb{C}}
\newcommand{\G}{\mathbb{G}}
\newcommand{\acts}{\circlearrowright}
\newcommand{\mf}{\mathfrak}
\newcommand{\mc}{\mathcal}
\newcommand{\Q}{\mathbb{Q}}
\newcommand{\Z}{\mathbb{Z}}
\newcommand{\F}{\mathbb{F}}
\newcommand{\R}{\mathbb{R}}
\newcommand{\PP}{\mathbb{P}}
\newcommand{\OO}{\mathbb{O}}
\newcommand{\A}{\mathbb{A}}
\newcommand{\bp}{\begin{pmatrix}}
\newcommand{\ep}{\end{pmatrix}}
\newcommand{\ind}{\mathbbold{1}}
\newcommand{\pt}{\mathrm{pt}}
\newcommand{\tr}{tr}
\newcommand*\leftdash{\rotatebox[origin=c]{-45}{$\dabar@\dabar@\dabar@$}}
\newcommand*\rightdash{\rotatebox[origin=c]{45}{$\dabar@\dabar@\dabar@$}}
\DeclareMathOperator{\Spec}{Spec}
\DeclareMathOperator{\Frob}{Frob_p}
\DeclareMathOperator{\lFrob}{\overline{Frob}_p}
\DeclareMathOperator{\Pic}{Pic}
\DeclareMathOperator{\Max}{Max}
\DeclareMathOperator{\Gal}{Gal}
\DeclareMathOperator{\Tr}{Tr}
\DeclareMathOperator{\Nm}{Nm}
\DeclareMathOperator{\Disc}{Disc}
\DeclareMathOperator{\Jac}{Jac}
\DeclareMathOperator{\Aut}{Aut}
\DeclareMathOperator{\End}{End}
\DeclareMathOperator{\val}{val}
\DeclareMathOperator{\Rep}{Rep}
\DeclareMathOperator{\Lie}{Lie}
\DeclareMathOperator{\Norm}{Norm}
\DeclareMathOperator{\Ind}{Ind}
\DeclareMathOperator{\stab}{stab}
\DeclareMathOperator{\Res}{Res}
\DeclareMathOperator{\Hom}{Hom}
\DeclareMathOperator{\Perv}{Perv}
\DeclareMathOperator{\ad}{ad}
\DeclareMathOperator{\codim}{codim}
\DeclareMathOperator{\Irr}{Irr}
\DeclareMathOperator{\graph}{graph}
\DeclareMathOperator{\Fun}{Fun}
\DeclareMathOperator{\Mat}{Mat}
\DeclareMathOperator{\Coh}{Coh}
\DeclareMathOperator{\supp}{supp}
\DeclareMathOperator{\Bun}{Bun}
\DeclareMathOperator{\Sh}{Sh}
\DeclareMathOperator{\Loc}{Loc}
\DeclareMathOperator{\sTr}{sTr}
\DeclareMathOperator{\GL}{GL}
\DeclareMathOperator{\SL}{SL}
\DeclareMathOperator{\PGL}{PGL}
\DeclareMathOperator{\res}{res}
\DeclareMathOperator{\Vect}{Vect}
\DeclareMathOperator{\im}{Im}
\DeclareMathOperator{\coker}{coker}
\DeclareMathOperator{\Diads}{Diads}
\DeclareMathOperator{\Ext}{Ext}
\DeclareMathOperator{\Sym}{Sym}
\DeclareMathOperator{\Mod}{Mod}
\DeclareMathOperator{\IC}{IC}
\DeclareSymbolFont{bbold}{U}{bbold}{m}{n}
\DeclareSymbolFontAlphabet{\mathbbold}{bbold}
\theoremstyle{definition}
\newtheorem{claim}{Claim}[section]
\newtheorem{notation}{Notation}[section]
\newtheorem{definition}[claim]{Definition}
\newtheorem{theorem}[claim]{Theorem}
\newtheorem{remark}[claim]{Remark}
\newtheorem{example}[claim]{Example}
\newtheorem{exercise}[claim]{Exercise}
\newtheorem{proposition}[claim]{Proposition}
\newtheorem{lemma}[claim]{Lemma}
\newtheorem{question}[claim]{Question}
\newtheorem{corollary}[claim]{Corollary}
\begin{document}

\titlecontents{section}[0em]
{\vskip 0.5ex}%
{\bf}
{\itshape}
{}%

\title{Langlands correspondence and Bezrukavnikov's equivalence}
\author{Anna Romanov and Geordie Williamson}
\date{\today}
\maketitle

This document contains notes (taken by the first author) from a course (taught by the second author) at the University of Sydney. The course took place over two extended semesters, and consisted of two hours of lectures per week between 2019-2020. The course is divided in two parts, reflecting the two terms.

The first part (concluding with Lecture 11) is an attempt to give an informal introduction to what the Langlands program is about, from an arithmetical point of view. We assume the audience (like the lecturer) is a beginner in this subject, but had a first course in complex analysis, Galois theory, topology and representation theory. At times we also assume background in algebraic geometry. Not much is proved, but we try to give enough detail to convince the reader that there is a lot of marvellous mathematics here.

The second longer part (Lectures 12 through 34) tries to give enough background in geometric representation theory to understand Bezrukavnikov’s equivalence. Taking the tamely ramified local Langlands correspondence as motivation, we pass through work of Deligne-Lusztig and Ginzburg giving a coherent construction of the affine Hecke algebra. Here we are roughly following the book of Chriss-Ginzburg, but our route is different at times. Bezrukavnikov’s equivalence is a categorification of this isomorphism, and is our aim for the rest of the notes. In order to even understand the statement of Bezrukavnikov’s equivalence, we need much of the toolbox of modern geometric representation theory: perverse sheaves; highest weight categories; Koszul duality; the geometric Satake equivalence, etc. A major role is also played by monoidal categories and their actions (“higher representation theory”). We try to spend enough time on each of these topics to make students feel somewhat comfortable with the ideas. It goes without saying that all of this took much longer to cover than anyone expected, and at the very end we arrive at a statement of Bezrukavnikov’s equivalence. Its proof will have to wait for the next lecture series!

The spirit of these notes is very informal, and this is intentional. We hope that they are nonetheless useful for a casual reader trying to orient themselves in a fascinating but potentially intimidating landscape. We assume that the reader is willing to take some things on faith, and have tried to be honest. Audience members were encouraged to do exercises throughout, and this wouldn’t be bad advice for any potential reader either.
\pagebreak
\tableofcontents
\pagebreak
\section{Lecture 1: Reciprocity Laws} 
\label{lecture 1}

If you do nothing else with this course this semester beyond attending the first lecture, you should at least try to read \cite{Lan90}.

\subsection{Reciprocity Laws}

We start at the natural starting place: an equation. Consider the equation
\[
x^2+1=0.
\]
If $p$ is a prime, one might wonder: how many solutions does this equation have, modulo $p$? Some calculations will reveal the following table. 

\begin{center}
\begin{tabular}{ | c|c|c| c| c| c| c| c| c| c |} 
\hline
$p$ & 2 & 3 & 5 & 7 & 11 & 13 & 17 & 19 & 23 \\ 
\hline
\# of sol's mod $p$ & 1 & 0 & 2 & 0 & 0 & 2 & 2 & 0 & 0 \\ 
\hline
$p$ mod 4 & 2 & 3 & 1 & 3 & 3 & 1 & 1 & 3 & 3 \\ 
\hline
\end{tabular} ...
\end{center}

\noindent
We see a pattern. The prime 2 is weird, so we ignore it. But for the rest, it seems that 
\[
\# \text{ of solutions mod } p \neq 2 =
\begin{cases}
2 & \text{ if } p = 1 \mod 4, \\
0 & \text{ if } p =3 \mod 4.
\end{cases}
\]
This pattern is surprising. It appears to be saying that there is a global rule governing the number of solutions mod $p$; that is, that the different primes somehow ``talk to one another''.

Here we can give a simple proof of why our claim above must be true. Assume $p \neq 2$. We have a short exact sequence 
\[
1 \rightarrow (\mathbb{F}_p^\times)^2 \rightarrow \mathbb{F}_p^\times \rightarrow \{ \pm 1\}\rightarrow 1, 
\]
where the third arrow is given by $x \mapsto x^{\frac{p-1}{2}}$. Therefore, 
\[
-1 \text{ is a square mod $p$} \iff (-1)^\frac{p-1}{2} =1 \iff \frac{p-1}{2} \text{ is even} \iff p=1 \mod 4. 
\]
Let's do another example. Consider the equation 
\[
x^2-3=0. 
\]
We ask the same question and compute: 

\begin{center}
\begin{tabular}{ | c|c|c| c| c| c| c| c| c| c |c|c|} 
\hline
$p$ & 2 & 3 & 5 & 7 & 11 & 13 & 17 & 19 & 23 &29 & 31\\ 
\hline
\# of sol's mod $p$ & 1 & 1 & 0 & 0 & 2 & 2 & 0 & 0 & 2 & 0 & 0\\ 
\hline
$p$ mod 12 & 2 & 3 & 5 & 7 & 11 & 1 & 5 & 7 & 11 &5 & 7\\ 
\hline
\end{tabular} ...
\end{center}
Again, we have some small weird primes (2 and 3), so we throw them out. For the rest, we make a guess:
\[
\# \text{ of solutions mod } p \neq 2,3 =
\begin{cases}
2 & \text{ if } p = 1,11 \mod 12, \\
0 & \text{ if } p =5,7 \mod 12.
\end{cases}
\]
To prove that this is indeed the case, we introduce a little more technology. Let $p\neq 2$ be a prime. Define 
\[
\epsilon(p) = \begin{cases} 0 &\text{ if } p = 1 \mod 4, \\
1 &\text{ if } p = 3 \mod 4 \end{cases}
\]
and the Legendre symbol
\[
\left( \frac{x}{p} \right) = x^\frac{p-1}{2} \mod p 
= \begin{cases} 1 &\text{ if } x \text{ is a square mod $p$}, \\
-1 &\text{ if } x \text{ is not a square mod $p$}. 
\end{cases}
\]
(For example, we saw above that $\left( \frac{-1}{p}\right)=(-1)^{\epsilon(p)}$.)

\begin{theorem} ({\bf Gauss's Law}) Let $p,q$ be distinct primes $\neq 2$. Then 
\[
\left( \frac{p}{q} \right) \left( \frac{q}{p} \right) = (-1)^{\epsilon(p)\epsilon(q)}.
\]
\end{theorem}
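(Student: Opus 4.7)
The plan is to use the Gauss sum proof, which fits the spirit of these notes: the cyclotomic field $\Q(\zeta_p)$ will be the secret bridge carrying information between $p$ and $q$, and this computation is the prototype of the reciprocity phenomena that will occupy us all year. Fix an algebraic closure $\overline{\F}_q$ and a primitive $p$-th root of unity $\zeta \in \overline{\F}_q$ (available since $p \neq q$), set $p^* := (-1)^{\epsilon(p)} p$, and define the Gauss sum
\[
g \;=\; \sum_{a \in \F_p^\times} \left(\frac{a}{p}\right) \zeta^a \;\in\; \overline{\F}_q.
\]

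The argument rests on two identities. The first, which is the heart of the matter, is $g^2 = p^*$ in $\overline{\F}_q$. The second is the easy observation
\[
g^q \;=\; \sum_a \left(\frac{a}{p}\right)^q \zeta^{qa} \;=\; \sum_a \left(\frac{a}{p}\right) \zeta^{qa} \;=\; \left(\frac{q}{p}\right) g,
\]
valid because $\overline{\F}_q$ has characteristic $q$, the Legendre symbol is $\pm 1$, and substituting $b = qa$ pulls out the factor $\left(\frac{q^{-1}}{p}\right) = \left(\frac{q}{p}\right)$. Granting both, reciprocity is immediate: Fermat's little theorem gives $g^{q-1} = (g^2)^{(q-1)/2} = (p^*)^{(q-1)/2} = \left(\frac{p^*}{q}\right)$, and comparing with $g^q = \left(\frac{q}{p}\right) g$ after cancelling the unit $g$ (a unit because $g^2 = p^* \neq 0$ in $\overline{\F}_q$) yields $\left(\frac{q}{p}\right) = \left(\frac{p^*}{q}\right)$ as $\pm 1$'s in $\Z$ (using $q > 2$). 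Expanding
\[
\left(\frac{p^*}{q}\right) = \left(\frac{-1}{q}\right)^{\epsilon(p)} \left(\frac{p}{q}\right) = (-1)^{\epsilon(p)\epsilon(q)} \left(\frac{p}{q}\right)
\]
via the $x^2+1$ computation from before and rearranging finishes the proof.

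The main obstacle is therefore the identity $g^2 = p^*$. I would prove it by expanding $g^2 = \sum_{a,b} \left(\frac{ab}{p}\right) \zeta^{a+b}$, substituting $b = ac$ so that $\left(\frac{ab}{p}\right) = \left(\frac{c}{p}\right)$, and evaluating the resulting inner sum $\sum_a \zeta^{a(1+c)}$: it equals $p-1$ for $c=-1$ and $-1$ otherwise, the latter by the standard vanishing $\sum_{x\in\F_p}\zeta^x = 0$. Combining with the character orthogonality $\sum_c \left(\frac{c}{p}\right) = 0$ collapses everything to $\left(\frac{-1}{p}\right) p = p^*$. Conceptually, the identity $g^q = \left(\frac{q}{p}\right) g$ is saying that Frobenius acts on the $\chi$-eigenline for $\chi = \left(\frac{\cdot}{p}\right)$ inside $\overline{\F}_q[\zeta]$ by the scalar $\chi(q)$ — a baby case of how Frobenius eigenvalues on cohomology will encode arithmetic throughout these notes.
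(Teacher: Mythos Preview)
Your proof is correct and complete --- this is the classical Gauss sum argument, carried out cleanly. However, there is nothing to compare it against: the paper states Gauss's Law as a black box and immediately uses it to verify the $x^2-3$ pattern, without offering any proof.

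That said, your choice of argument is well suited to the paper's themes. The identity $g^q = \left(\frac{q}{p}\right) g$ is precisely the computation of $\Frob$ acting on the Gauss sum inside $\overline{\F}_q$, and the hidden role of $\Q(\zeta_p)$ (or rather its quadratic subfield $\Q(\sqrt{p^*})$) foreshadows the class field theory and cyclotomic embeddings that appear in later lectures. One small stylistic point: when you write ``as $\pm 1$'s in $\Z$ (using $q>2$)'', you are lifting an equality in $\F_q$ back to $\Z$; this is fine, but it is worth making explicit that the two Legendre symbols land in $\{\pm 1\} \subset \F_q$ and that $1 \neq -1$ there since $q$ is odd, so the lift is unambiguous.
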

With this we can prove that our guess was correct. Assume $p \neq 2,3$. Then 
\begin{align*}
x^2-3 \text{ has 2 solutions mod $p$} &\iff \left( \frac{3}{p} \right) = 1\\
&\iff \left( \frac{p}{3} \right) (-1)^{\epsilon(p) \epsilon(3)}=1 \\
&\iff \left( \frac{p}{3} \right) (-1)^{\epsilon(p)}=1 \\
& \iff \begin{cases} p = 1 \mod 3 \text{ and } p= 1 \mod 4 \\ 
p=2 \mod 3 \text{ and } p=3 \mod 4 
\end{cases} \\
& \iff p = 1 \text{ or } -1 \mod 12. 
\end{align*}

These are examples of {\bf reciprocity laws}. All polynomials of degree 2 can be worked out analogously to the ones above using quadratic reciprocity (Gauss's law). There was much activity on this problem starting with Gauss's work, which finally led to Artin's reciprocity law. This implied all known reciprocity laws at the time, and in particular treats polynomials of degree $3$ and $4$.  However, we get stuck at 5. For example, consider 
\[
x^5 + 20x + 16=0.
\]
We can construct a table
\begin{center}
\begin{tabular}{ | c|c|c| c| c| c| c| c| c| c |c|c|c|c|c|c|c|} 
\hline
$p$ & 2 & 3 & 5 & 7 & 11 & 13 & 17 & 19 & 23 &29 & 31&37&41&43&47&53\\ 
\hline
\# of sol's mod $p$ & 1 & 0 & 1 & 2 & 2 & 0 & 2 & 0 & 1 & 2 & 0 & 1 & 2 & 2 & 0 & 0\\ 
\hline
\end{tabular} ...
\end{center}
but no obvious pattern emerges. (For a table that goes much further than this one, see the sheet on the course website.) It turns out that there is a pattern, but it is very well-hidden, and to find it, we need analysis. 

\subsection{Higher dimensional varieties}
We could ask similar questions for polynomials in two variables. Consider the equation 
\[
y^2 = x^3 + 1. 
\]
How many solutions does this equation have modulo $p$? Let's try to answer this for one specific prime. Let $p=5$, and we can compile our results in the following table:   
\begin{center}
\begin{tabular}{ | c|c|c| c| c| c|} 
\hline
$y\backslash x$ & 0 & 1 & 2 & 3 & 4\\
\hline
0 & $\times$ & $\times$ & $\times$ & $\times$ & $\checkmark$ \\
\hline 
1 & $\checkmark$ & $\times$ & $\times$ & $\times$  & $\times$ \\
\hline 
2 & $\times$ & $\times$ & $\checkmark$ & $\times$ & $\times$ \\
\hline 
3 & $\times$ & $\times$ & $\checkmark$ & $\times$ & $\times$\\
\hline 
4 & $\checkmark$ & $\times$ & $\times$ & $\times$ & $\times$ \\
\hline
\end{tabular} 
\end{center}
So here we found that there are five solutions modulo 5. In general, how many solutions do we expect? Well, the map $x \mapsto x^3+1$ in $\mathbb{F}_p$ is ``roughly random,'' about half the elements of $\mathbb{F}_p$ are squares, and for every square we get two solutions, so  we expect {\it approximately p solutions.} But how often is this actually the case? We can measure the accuracy of this estimation by studying the {\bf Sato-Tate error term}: 
\[
ST(p)=p-\#(\text{solutions modulo } p).
\]
Here is another table. 
\begin{center}
\begin{tabular}{ | c|c|c| c| c| c| c| c| c| c |c|c|c|c|c|c|c|} 
\hline
$p$ & 2 & 3 & 5 & 7 & 11 & 13 & 17 & 19 & 23 &29 & 31&37&41&43&47&53\\ 
\hline
\# $ST(p)$&  0 & 0 & 0 & -4 & 0 & 2 & 0 & 8 & 0 & 0 & -4 & -10 & 0 & 8 & 0 & 0\\
\hline
\end{tabular} ...
\end{center}

Notice how frequently the Sato-Tate error term is zero! We can now study this table and see if any patterns emerge. This is the content of the {\bf Sato-Tate conjecture}, which is basically known thanks to recent work of Harris, Taylor, Clozel, and many others. 

\subsection{What is going on here? What does this have to do with representation theory?}

Let $f(x) \in \Z[x]$ be an irreducible polynomial with integral coefficients. We can consider the {\bf splitting field} of $f$
\[
K=\Q(\alpha_1, \ldots, \alpha_n),
\]
and the associated {\bf Galois group}
\[
\Gamma = \Gal(K/\Q), 
\]
which acts on the set of roots $\{\alpha_1, \ldots \alpha_n\}$. As representation theorists, our natural instinct when we see a group action is to linearize. Doing this here results in the {\bf permutation representation}
\[
\Gamma \acts H=\bigoplus_{i=1}^n \C \alpha_i.
\]
We can also consider the reduction of $f$ modulo $p$, $\bar{f}(x) \in \mathbb{F}_p[x]$, as we did in the previous section. In general, $\bar{f}$ will be reducible. If $p \nmid \Delta(f)$ (that is, $p$ is not one of the ``weird'' primes we encountered earlier), then $\bar{f}(x)$ has $n$ roots, $\overline{\alpha}_1 , \ldots , \overline{\alpha}_n \in \mathbb{F}_{p^n}$. Recall that the Galois group $\Gal(\mathbb{F}_{p^n}/\mathbb{F}_p)=\Z/n\Z$ is generated by the Frobenius map
\[
\lFrob: x \mapsto x^p.
\]
Then $\F_p = (\F_{p^n})^{\lFrob}$, and the number of solutions of $\bar{f}$ is the number of fixed points of $\lFrob$ on $\{\overline{\alpha}_1, \ldots, \overline{\alpha}_n\}$. For such a $p$ (``unramified'') and after a choice (``prime in $\mathbb{O}$ above $p$''), we get a bijection 
\[
\{\alpha_1, \ldots, \alpha_n\} \longrightarrow \{\overline{\alpha}_1, \ldots \overline{\alpha}_n\},
\]
and an element $\Frob\in \Gamma$ such that the action of $\Frob$ on $\{\alpha_1, \ldots, \alpha_n\}$ aligns with the action of $\lFrob$ on $\{ \overline{\alpha}_1, \ldots, \overline{\alpha}_n\}$ under the bijection above. 
\begin{remark}
Different choices of ``prime in $\mathbb{O}$ over $p$'' result in conjugate $\Frob$'s. Hence, it is best to think of $\Frob$ as a conjugacy class instead of an individual element. 
\end{remark}

\noindent
The upshot of the discussion above is that
\begin{align*}
    \# \text{ solutions modulo $p$ }& = \# \text{ fixed points of $\lFrob$ on } \{ \overline{\alpha}_1, \ldots, \overline{\alpha}_n\} \\ &= \# \text{ of fixed points of $\Frob$ on } \{ \alpha_1 \ldots, \alpha_n\} \\ 
    &= \Tr(\Frob, H),
\end{align*}
where $H$ is the permutation representation introduced at the beginning of this section. The number $\Tr (\Frob, H)$ is completely canonical - it doesn't depend on any of our choices! So we've reduced our question of finding solutions of polynomials modulo $p$ to computing something that looks very much like the character of a representation. 

\vspace{5mm}
\noindent
{\bf The Punchline:} If $p \nmid \Delta(f)$,
\[
\# \text{ solutions of $f$ mod $p$ } = \Tr(\Frob, H). 
\]

\subsection{Schematic picture of the Langlands correspondence}

({\em Do not worry if this makes no sense!}) A caricature of the Langlands correspondence is captured in the diagram below. 
\[
\begin{tikzcd}
\text{ ``geometric'' reps  $V$ of }\Gal(\overline{\Q}/\Q) \arrow[r]
&  \text{``character'' }\Tr(\Frob, V)\arrow[d]
& \text{ automorphic forms}\arrow[ld]\\
& \text{$L$-functions (analytic)}
\end{tikzcd}
\]
From any ``geometric'' representation of $\Gal(\overline{\Q}/\Q)$, we can take the trace of Frobenius, as we did in the previous section for the permutation representation $H$. We should think of this procedure as taking the character of the representation. To $\Tr(\Frob, V)$, we can attach the associated ``$L$-function,'' which is an analytic object. (For example, when we start with the trivial representation of $\Gal(\overline{\Q}/\Q)$, the resulting $L$-function is the Riemann $\zeta$-function.) On the other hand, there is also a procedure for constructing $L$-functions from automorphic forms. The Langlands correspondence is an attempt to align these two sources of $L$-functions. 

{\em This is very deep.} For example, two-dimensional representations of $\Gal(\overline{\Q}/\Q)$ result in Hecke $L$-functions, and the corresponding automorphic forms are modular forms. It turns out that working out the correspondence for 2-dimensional representations is enough to prove Fermat's last theorem. 

\subsection{Chebotarev density theorem}

If we talk of $\Tr(\Frob,H)$ as a ``character,'' we would like to know at least that the set $\{\Frob\}$ for all $p$ unramified cover the set of all conjugacy classes of $\Gamma$. This is a deep theorem. 

\begin{theorem}
\label{Chebotarev density}
(Chebotarev density theorem) Fix a conjugacy class $C \subset \Gamma$. Then 
\[
\{ p \text{ unramified } | \Frob=C\}
\]
has density $|C|/|\Gamma|$.
\end{theorem}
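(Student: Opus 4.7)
The plan is to follow the classical analytic route via Artin $L$-functions, which simultaneously generalises Dirichlet's theorem on primes in arithmetic progressions. For each irreducible complex representation $\rho \colon \Gamma \to \GL(V)$ with character $\chi$, introduce the Artin $L$-function
\[
L(s, \chi) = \prod_p \det\bigl(1 - \rho(\Frob)\, p^{-s} \,\big|\, V^{I_p}\bigr)^{-1},
\]
where $I_p \subset \Gamma$ is the inertia subgroup at $p$ (trivial for unramified $p$). The product converges absolutely for $\Re(s) > 1$, and its logarithm satisfies
\[
\log L(s, \chi) = \sum_{p \text{ unramified}} \chi(\Frob)\, p^{-s} + O(1) \qquad \text{as } s \to 1^+,
\]
the error absorbing prime-power and ramified contributions.

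Next, I would use character orthogonality on $\Gamma$ to write the indicator function of $C$ as $\mathbf{1}_C(g) = \tfrac{|C|}{|\Gamma|} \sum_{\chi \in \Irr(\Gamma)} \overline{\chi(C)}\, \chi(g)$. Weighting the identity above by $\overline{\chi(C)}$ and summing over $\chi$ yields
\[
\sum_{p : \Frob = C} p^{-s} = \frac{|C|}{|\Gamma|} \sum_{\chi} \overline{\chi(C)}\, \log L(s, \chi) + O(1).
\]
The trivial character contributes $\log L(s, \mathbf{1}) \sim \log \tfrac{1}{s-1}$, since $L(s, \mathbf{1})$ agrees with $\zeta(s)$ up to finitely many Euler factors and so has a simple pole at $s=1$. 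Provided every non-trivial irreducible $\chi$ satisfies $L(1, \chi) \neq 0$ (and is holomorphic there), the remaining terms are $O(1)$, and dividing by $\sum_p p^{-s} \sim \log \tfrac{1}{s-1}$ delivers the Dirichlet density $|C|/|\Gamma|$.

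The hard part, and the conceptual heart of the proof, is precisely this holomorphy and non-vanishing of $L(s, \chi)$ at $s=1$ for non-trivial irreducible $\chi$. For one-dimensional $\chi$ it is classical: via abelian class field theory $L(s, \chi)$ is identified with a Hecke $L$-function attached to a ray class character, and non-vanishing at the edge of the critical strip goes back to Dirichlet and Hecke. To reduce the general case to the abelian one, I would invoke Brauer's induction theorem, which expresses $\chi = \sum_i n_i \, \Ind_{H_i}^{\Gamma} \psi_i$ with $n_i \in \Z$ and $\dim \psi_i = 1$; since Artin $L$-functions are multiplicative under induction, this gives
\[
L(s, \chi) = \prod_i L(s, \psi_i)^{n_i}
\]
as a product of Hecke $L$-functions of intermediate fields, with integer (possibly negative) exponents. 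This exhibits $L(s, \chi)$ as meromorphic on all of $\C$ and settles the required behaviour at $s=1$. Finally, upgrading Dirichlet density to natural density is a standard application of a Wiener-Ikehara Tauberian theorem, using the analytic properties of $L(s, \chi)$ just established.
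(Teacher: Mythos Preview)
The paper does not prove this theorem. It is stated in Lecture~1 as a ``deep theorem'' and left unproved; a later exercise (in the Sato--Tate lecture) invites the reader to deduce it from the black-box input that $\zeta_K(s)/\zeta(s)$ is holomorphic and non-vanishing at $s=1$, via the equidistribution criterion of Corollary~\ref{equidistribution}. Your outline goes further than the paper, supplying the analytic input itself through Brauer induction and Hecke's theory.

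Your argument is the standard one and is essentially correct. One step deserves to be made explicit: after writing $L(s,\chi) = \prod_i L(s,\psi_i)^{n_i}$ via Brauer, you assert this ``settles the required behaviour at $s=1$'', but the reason should be spelled out. Factors with $\psi_i$ non-trivial are holomorphic and non-zero at $s=1$ by Hecke, while each trivial $\psi_i$ contributes $\zeta_{K^{H_i}}(s)^{n_i}$, carrying a pole of order $n_i$. Frobenius reciprocity gives $\sum_{i:\,\psi_i\text{ trivial}} n_i = \langle \chi, \mathbf{1}_\Gamma \rangle = 0$ for non-trivial irreducible $\chi$, so the total order at $s=1$ is zero --- which is exactly holomorphy \emph{and} non-vanishing simultaneously. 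With that filled in, you have the Dirichlet density, and the Tauberian upgrade to natural density is standard as you say.
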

Here density refers to either the natural density or the analytic density of the set of primes.

\begin{example}
\label{example1}
Let $f(x)=x^2+1 \in \Z[x]$. The set of roots of $f(x)$ is $\{i, -i\}$. The splitting field is $K=\Q(i)$ and $\Gal(\Q(i)/\Q) = \Z / 2\Z = \{id, s\}$. In this example, all $p \neq 2$ are unramified. Then for such an unramified $p$, 
\[
\Frob: i \mapsto i^p.
\]
Hence, 
\[
\Frob=\begin{cases}
id &\text{ if } p=1 \mod 4, \\
s &\text{ if } p=3 \mod 4. 
\end{cases}
\]
\end{example}

\begin{exercise} (Mandatory)
Check that $\Frob$ is indeed given as above!
\end{exercise}

\begin{exercise}
(Harder) By considering cyclotomic extensions (i.e. $\Q(e^{2\pi i / m}))$, show that Chebotarev's density theorem implies Dirichlet's theorem on primes in arithmetic progression. 
\end{exercise}

At the beginning of today's lecture, we discussed patterns in the number of solutions of a given polynomial modulo $p$. There is a sheet on the course webpage which shows tables of these patterns for the polynomials $x^2 + 1$, $x^2-3$, $x^2+x+1$, $x^2+2x+3$, and $x^2-x-1$. A somewhat mysterious feature of these tables was the modulus appearing in the patterns. (For example, we showed that $x^2-3$ has two solutions modulo $p\neq 2,3$ if and only if $p = 1$ or $11 \mod 12$. Where did 12 come from?) We'll complete today's lecture with an example to demonstrate where this modulus comes from.
\begin{example}
Consider the polynomial $f(x)=x^2-x-1$. We can see from the patterns on the handout that $f(x)$ has 2 solutions mod $p$ if $p=1$ or $9 \mod 10$ and $f(x)$ has 0 solutions mod $p$ if $p = 3$ or $7 \mod 10$ (for $p$ unramified). In this example, the splitting field is $K=\Q(\phi)$, where $\phi = \frac{1 + \sqrt{5}}{2}=2\cos(\pi/5)$ is the golden ratio, and $\Gal(\Q(\phi)/\Q)=\Z/2\Z = \langle s \rangle$. Note that $\phi = \zeta + \overline{\zeta}$, where $\zeta = e^{\pi i /5}$ is a fifth root of unity. Hence we can embed 
\[
K \hookrightarrow \Q(e^{2 \pi i/10})=\Q(\zeta)
\]
via $\phi \mapsto \zeta + \overline{\zeta}$. As in the last example, for unramified $p$, 
\[
\Frob: \zeta \mapsto \zeta^p.
\]
Hence, 
\[
\Frob: \begin{cases}
\phi \mapsto \phi & \text{ if } p=1 \text{ or } 9 \mod 10, \\
\phi \mapsto s(\phi) & \text{ if } p = 3 \text{ or }7 \mod 10. 
\end{cases}
\]
In general, the modulus for quadratic fields are determined by embeddings of the splitting field into cyclotomic fields:
\[
K \hookrightarrow \Q (e^{2\pi i / \text{modulus}}).
\]
\end{example}
\begin{remark}
Consider the degree 5 polynomial $f(x)=x^5+20x+16$ we discussed in the first section. In all of the primes that occurred in our table, $f(x)$ had either 0 or 2 solutions. However, we would expect that for certain primes, $f(x)$ should have 5 solutions by Chebotarev's theorem. Should we be worried that we haven't seen any 5's in our chart? Geordie reassures us that we shouldn't be worried. In this example, $\Gamma = A_5$ has order 60. Then, by our discussion earlier,
\begin{align*}
\# \text{ solutions modulo }p =5 &\iff \text{all solutions in $\F_{p^m}$ are defined over $\F_p$} \\
&\iff \text{all solutions are fixed by $\lFrob$ }
\\
&\iff \Frob = id.
\end{align*}
Since $id$ is in its own conjugacy class, we expect $f(x)$ to have 5 solutions modulo p about $1/60$th of the time by Chebotarev's density theorem. We included fewer than 60 primes in our table, so we shouldn't be surprised that we haven't seen this happen yet. 
\end{remark}

It may seem like considering the number of solutions of a polynomial over a finite field is a cute, but not particularly important problem. However, it is actually of fundamental importance in number theory. A {\bf number field} is a finite extension of $\Q$. All number fields (which are Galois extensions) are splitting fields of polynomials $f(x) \in \Z[x]$. One of the the most basic open questions in number theory is the following:

\begin{question}
How many number fields are there?
\end{question}

\noindent 
We can determine the field extension $K$ corresponding to the polynomial $f(x)$ by reducing mod $p$:
\begin{theorem}
The set $\left\{ p \mid  p \text{ unramified and }  f(x) \text{ splits completely mod $p$} \right\}$ completely determines $K$. 
\end{theorem}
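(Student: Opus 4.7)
The plan is to reduce the theorem to a statement about Galois extensions using Chebotarev, then separate fields via a compositum argument.

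First I would translate ``splits completely mod $p$'' into Galois-theoretic language. For $p$ unramified and not dividing the leading coefficient, $\bar f$ has $n=\deg f$ distinct roots in $\overline{\F_p}$. Then $\bar f$ splits completely in $\F_p[x]$ iff every root is fixed by $\lFrob$, iff (under the bijection recalled in the excerpt) $\Frob_p$ acts trivially on $\{\alpha_1,\dots,\alpha_n\}$. Since $K=\Q(\alpha_1,\dots,\alpha_n)$, the action of $\Gamma=\Gal(K/\Q)$ on the roots is faithful, so this is equivalent to $\Frob_p=\mathrm{id}$ in $\Gamma$, which is the standard condition that $p$ splits completely in $K$. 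Thus the set $S(K)$ in the theorem is (up to the finite set of ramified primes) the set of primes that split completely in $K/\Q$.

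Next I would apply Chebotarev. The conjugacy class $\{\mathrm{id}\}\subset\Gamma$ has size $1$, so Theorem~\ref{Chebotarev density} gives
\[
\text{density}(S(K)) \;=\; \frac{1}{|\Gamma|} \;=\; \frac{1}{[K:\Q]}.
\]
In particular $S(K)$ recovers the degree $[K:\Q]$ from its density, which is already a nontrivial piece of information.

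The main step is to show that $S(K)$ actually determines $K$ and not merely $[K:\Q]$. Suppose $K'$ is another Galois number field with $S(K')=S(K)$ (up to a finite symmetric difference, which is irrelevant since densities ignore finite sets). Form the compositum $L=KK'$, which is again Galois over $\Q$. The key lemma I would prove is: \emph{an unramified prime $p$ splits completely in $L$ if and only if it splits completely in both $K$ and $K'$}. One direction is immediate from the tower $L/K$, $L/K'$; the converse follows because $\Frob_p$ in $\Gal(L/\Q)\hookrightarrow\Gal(K/\Q)\times\Gal(K'/\Q)$ is trivial as soon as both its components are trivial. Hence $S(L)=S(K)\cap S(K')=S(K)$ up to finitely many primes, so by Chebotarev
\[
\frac{1}{[L:\Q]}=\text{density}(S(L))=\text{density}(S(K))=\frac{1}{[K:\Q]}.
\]
Therefore $[L:\Q]=[K:\Q]$, and since $K\subseteq L$ this forces $L=K$. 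Symmetrically $L=K'$, so $K=K'$.

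The main obstacle I anticipate is the compositum lemma: one has to keep track of the fact that Frobenius is only well-defined up to conjugacy and depends on a choice of prime above $p$, and then argue that triviality in both $\Gal(K/\Q)$ and $\Gal(K'/\Q)$ is independent of these choices (which it is, because the identity is its own conjugacy class). Everything else — the Galois-theoretic reformulation of ``splits completely'' and the density computation — is either a routine exercise that the excerpt already flagged as mandatory or a direct invocation of the Chebotarev theorem stated above.
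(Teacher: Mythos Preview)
The paper does not prove this theorem; it is stated at the very end of Lecture~1 purely as motivation for why counting solutions modulo~$p$ is important, and no argument is supplied. Your proof is the standard one and is correct: the translation of ``$\bar f$ splits completely'' into ``$\Frob_p=\mathrm{id}$ in $\Gamma$'' is exactly what the surrounding discussion in the paper sets up, and the compositum-plus-Chebotarev argument is the classical way to finish. The only point worth making explicit is that you are implicitly restricting to Galois $K'$, which is fine here since the paper's $K$ is a splitting field (hence Galois) and the theorem is comparing two such; but the result is in fact false for non-Galois extensions, so it is worth flagging that the Galois hypothesis is being used when you invoke the embedding $\Gal(L/\Q)\hookrightarrow\Gal(K/\Q)\times\Gal(K'/\Q)$.
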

So our motivational problem may have been cute, but it certainly isn't unimportant. 

\pagebreak
\section{Lecture 2: Review of some algebraic number theory}
\label{lecture 2}

Last time we discussed how by Chebotarev's density theorem, the equation $f(x)=x^5+20x+16$ should have five solutions modulo p about $1/60^{th}$ of the time. Joel (+ a computer) computed that in the set of all primes below $500,000,$ there are $16,613$ where $f(x)$ has no solutions, $10,367$ where $f(x)$ has one solution, $13,885$ with two solutions, and $673$ with five solutions. In this case, we know that the Galois group is $A_5$, so it is order 60, but if we didn't know the Galois group, we could use this data to predict its order. 

\begin{exercise}
Check the consistency of the numbers above with Chebotarev's density theorem. 
\end{exercise}

The goal of today's lecture is to give the necessary background in algebraic number theory to continue. It is roughly based on a lecture by Dick Gross \cite{gross}. 

\subsection{Number fields}
A {\bf number field} is a finite extension of $\Q$. Given a number field $K/\Q$ of degree $n$ (in this lecture, our field extensions will always be degree $n$), there is an associated {\bf ring of integers} $\OO \subset K$ consisting of all elements of $K$ which satisfy a monic polynomial with coefficients in $\Z$. The ring of integers $\OO$ is a free $\Z$-module of rank $n$, as well as a Dedekind domain (i.e. Noetherian, normal, Krull dimension 1). 

\begin{exercise} Show that the following field extensions have the following rings of integers:
\begin{enumerate}
    \item $K=\Q(i)$, $\OO=\Z[i]$.
    \item $K=\Z(\sqrt{2})$, $\OO=\Z[\sqrt{2}]$.
    \item $K=\Q(\sqrt{5}),$ $\OO=\Z[\phi]$, where $\phi = \frac{1 + \sqrt{5}}{2}$.
\end{enumerate}
\end{exercise}
\noindent 
Generally, for a complicated extension, it is not easy to find $\OO$.

We can measure how complicated a number field is using something called the {\bf discriminant}. It is defined as follows. Let $K/\Q$ be a number field. Given $x \in K$, we get a $\Q$-linear map $x \cdot: K \rightarrow K$. Using this we define 
\begin{align*}
    \Tr:&K \rightarrow \Q\\
    \Nm:&K^\times \rightarrow \Q^\times 
\end{align*}
by $\Tr(x):=\Tr(x \cdot)$, $\Nm(x):= \det(x \cdot )$. This gives us a bilinear form called the {\bf trace form}: 
\begin{align*}
    K \times K &\rightarrow \Q \\
    (x,y)&:=\Tr(xy).
\end{align*}
The trace form is nondegenerate over $\Q$ because $\Tr(1)=n$, hence $\Tr(xx^{-1})=n \neq 0$. Since any element of $\OO$ satisfies a monic polynomial with integer coefficients, the trace form restricts to a map $\OO \times \OO \rightarrow \Z$. Choose a $\Z$-basis $\{
\alpha_1, \ldots, \alpha_n\}$ for $\OO$. Then the {\bf discriminant} of the field $K$ is
\[
\Disc(K):=\det((\alpha_i,\alpha_j)).
\]
This is a close relative of the discriminant of a polynomial. 
\begin{remark}
We have no idea how many number fields there are, so it is useful to have a measurement of how complicated a number field is. This is one of the reasons the discriminant is so useful. 
\end{remark}
\begin{example}
\label{Q(i)}
Let $K=\Q(i)$. Then $\OO=\Z[i]$ has basis $\{\alpha_1, \alpha_2 \} =\{1, i\}$. We can compute 
\[
((\alpha_i, \alpha_j))=\begin{pmatrix} 2 & 0 \\ 0 & -2 \end{pmatrix},
\]
so $\Disc(K)=\det((\alpha_i, \alpha_j))=-4$. 
\end{example}
\begin{exercise} \begin{enumerate}
    \item Let $\alpha \in \Z$ be square-free. Let $K=\Q(\sqrt{\alpha})$. Then 
    \[
    \OO=\begin{cases} \Z[\sqrt{\alpha}] & \text{ if } \alpha \neq 1 \mod 4, \\
    \Z\left[\frac{1+\sqrt{\alpha}}{2}\right] & \text{ if } \alpha = 1 \mod 4. 
    \end{cases} 
    \]
    Hence, calculate 
    \[
    \Disc(K)=\begin{cases} 4 \alpha & \text{ if } \alpha \neq 1 \mod 4, \\
    \alpha & \text{ if } \alpha = 1 \mod 4. 
    \end{cases} 
    \]
    \item Calculate the discriminant of $\Q(e^{2 \pi i/3})$. 
\end{enumerate}
\end{exercise}

Let $K/\Q$ be an \'{e}tale $\Q$-algebra (i.e. a finite separable extension of $\Q$). Then $K \otimes_\Q \R$ is an \'{e}tale $\R$-algebra, so $K \otimes_\Q \R \simeq \R^{r_1} \times \C^{r_2}$, where $n=r_1 + 2r_2$. The field $K$ is {\bf totally real} if $r_2=0$ (or, equivalently, if every embedding $K \hookrightarrow \C$ lands in $\R$). For example, this happens if $K$ is the splitting field of a polynomial with real roots.

\begin{exercise}
Show that the signature of the trace form on $K$ totally real is $(r_1+r_2, r_2)$. In particular, 
\[
K \text{ is totally real } \iff r_2 = 0 \iff ( \cdot, \cdot ) \text{ is positive definite}.
\]
\end{exercise}

\subsection{An analogy}
Next we will explore a useful analogy which will be a theme of this course. 
\begin{align*}
\left\{ \begin{array}{c}\text{finite extensions}
\\
\text{$K$ of $\C(x)=\text{Frac}\C[x]$} \\ 
\text{(called {\bf function fields})}
\end{array} \right\}  &\leftarrow \left\{ \begin{array}{c} \text{smooth complex} 
\\
\text{projective curves} \\
\text{$C$ over $\PP^1\C$}
\end{array}   \right\} \leftrightarrow \left\{  \begin{array}{c} \text{compact Riemann}
\\
\text{surfaces with a }\\
\text{map to $\PP^1\C$}
\end{array}   \right\}
\end{align*}
The first arrow going left is given by taking the function field of the curve. We can also go in the other direction. Given 
\[
\begin{tikzcd}
K  \arrow[d]
& \OO\arrow[d]  \arrow[l, hook] \\
\C(x)
& \C[x] \arrow[l, hook]
\end{tikzcd}
\]
we obtain a map $\Spec\OO \rightarrow \Spec\C[x]=\A^1$, so we have a unique compactification and an equivalence between the first two sets. 

Similarly, there is a bijection 
\begin{align*}
\left\{ \begin{array}{c}\text{finite extensions}
\\
\text{$K$ of $\F_p(x)$} 
\end{array} \right\}  &\leftrightarrow \left\{ \begin{array}{c} \text{smooth projective} 
\\
\text{ curves $C$ over $\PP^1_{\F_p}$}
\end{array}   \right\}.
\end{align*}
Classically, people worked on problems in number theory in the algebraic world. Artin moved to the geometric world and proved deep results there. Many people in modern number theory work on the geometric side and hope to prove something about number fields. Here is a (very rough) schematic of difficulty: 
\[
\text{function fields over }\C << \text{function fields over }\F_p << \text{ number fields}
\]

For a very inspiring reference for all of this, see Andr\'{e} Weil's letter to his sister Simone Weil on the role of analogy in mathematics \cite{Weil}. 

\begin{exercise} (Use of Google allowed.) Show that Fermat's last theorem is true in function fields; i.e. if $f,g,h \in k[x]$ are relatively prime and $f^n+g^n=h^n$, then $n=2$.  
\end{exercise}

\subsection{The fundamental exact sequence}

Let $K/\Q$ be a number field and $\OO$ the ring of integers of $K$. A {\bf fractional ideal} is a finitely generated $\OO$-submodule of $K$. Given two fractional ideals $I,J$, we can construct their product: 
\[
IJ:=\left\{\sum\alpha_i\beta_j | \alpha _i \in I, \beta_j \in J \right\}.
\]
(This is the ``union'' in the sense of algebraic geometry.) Since $\OO$ is a Dedekind domain, 
\begin{itemize}
    \item every prime ideal $\mathfrak{p} \neq 0$ is maximal, and 
    \item every fractional ideal has a unique factorization $I=\prod \mf{p}_i^{e_i}$, where $\mathfrak{p}_i$ are prime ideals. 
\end{itemize}
Denote by $\mathcal{J}=\bigoplus_{\mathfrak{p}\neq 0 \text{ prime}} \Z\mathfrak{p}$ the group of nonzero fractional ideals under this product. We have the following fundamental exact sequence:
\[
\{1\}\rightarrow \OO^\times \hookrightarrow K^\times \rightarrow \mathcal{J} \rightarrow \mathcal{C}\ell(K) \rightarrow 0.
\]
Here $\mathcal{C}\ell(K)$ is the {\bf ideal class group} of $K$, which measures the failure of $\OO$ to be a PID. The ideal class group is difficult to calculate, and we know very little about it in general. The image of the second map in this exact sequence is $\mathcal{P}$, the set of all principal ideals (that is, ideals of the form $x\OO$ for some $x \in K^\times$) of $K$.
\begin{theorem} (Fundamental finiteness theorems)
\begin{enumerate}
    \item The ideal class group $\mathcal{C}\ell(K)$ is finite. 
    \item The group $\OO^\times$ is finitely generated of rank $r_1 + r_2 - 1$. 
\end{enumerate}
\end{theorem}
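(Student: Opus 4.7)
The plan is to prove both statements using Minkowski's geometry of numbers applied to the archimedean embedding
\[
\iota : \OO \hookrightarrow K \otimes_\Q \R \simeq \R^{r_1} \times \C^{r_2} \simeq \R^n,
\]
under which $\OO$ becomes a full-rank lattice (this is immediate: $\OO$ is free of rank $n$ over $\Z$, and $\iota$ extends to a $\Q$-linear iso after tensoring with $\R$). A short computation comparing the trace form to the Euclidean form on $\R^n$ shows that the covolume of $\iota(\OO)$ is $2^{-r_2}\sqrt{|\Disc(K)|}$.

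\textbf{Finiteness of the class group.} The idea is the Minkowski bound. Given any nonzero ideal $\mathfrak{a} \subset \OO$, its image $\iota(\mathfrak{a})$ is a sublattice of covolume $N(\mathfrak{a}) \cdot \text{covol}(\iota(\OO))$, where $N(\mathfrak{a}) = [\OO:\mathfrak{a}]$. I would apply Minkowski's convex body theorem to the symmetric convex region $\{(x, z) : \sum |x_i| + 2\sum |z_j| \leq t\}$ with $t$ chosen so the volume just exceeds $2^n \cdot \text{covol}(\iota(\mathfrak{a}))$. This yields a nonzero $\alpha \in \mathfrak{a}$ with $|\Nm(\alpha)| \leq M \cdot N(\mathfrak{a})$ for an explicit Minkowski constant $M = M(K)$ depending only on $K$. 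For any ideal class, apply this to a representative of its inverse class to obtain an integral representative of norm at most $M$. Since the primes dividing such an ideal lie above rational primes $p \leq M$, and only finitely many ideals of bounded norm exist, $\mathcal{C}\ell(K)$ is finite.

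\textbf{Dirichlet's unit theorem.} Let $\sigma_1, \ldots, \sigma_{r_1}$ be the real embeddings and $\tau_1, \ldots, \tau_{r_2}$ representatives of the complex embeddings. Define the logarithmic embedding
\[
L : \OO^\times \to \R^{r_1+r_2}, \quad u \mapsto \bigl(\log|\sigma_1(u)|, \ldots, \log|\sigma_{r_1}(u)|, \, 2\log|\tau_1(u)|, \ldots, 2\log|\tau_{r_2}(u)|\bigr).
\]
I would verify in order: (a) $L$ is a homomorphism; (b) since $|\Nm(u)| = 1$ for any unit, the image lies in the hyperplane $H = \{x : \sum x_i = 0\}$, which has dimension $r_1 + r_2 - 1$; (c) the kernel of $L$ consists of elements of $\OO$ with all archimedean absolute values equal to $1$, hence lies in a bounded region of the lattice $\iota(\OO)$, so $\ker L$ is finite, hence a finite group of roots of unity; (d) $L(\OO^\times)$ is discrete in $\R^{r_1+r_2}$, by the same boundedness argument. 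So $L(\OO^\times)$ is a discrete subgroup of $H$ with finite kernel, making $\OO^\times$ finitely generated of rank at most $r_1 + r_2 - 1$.

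\textbf{The main obstacle} is proving that $L(\OO^\times)$ has full rank in $H$, i.e. spans a lattice of rank $r_1 + r_2 - 1$. The plan here is: for any linear functional $\ell$ on $\R^{r_1+r_2}$ vanishing on $\R \cdot (1, \ldots, 1)$ and not identically zero on $H$, produce a unit $u$ with $\ell(L(u)) \neq 0$. Equivalently, for any proper subspace $V \subsetneq H$, produce a unit whose log-image is not in $V$. The construction is an iterated Minkowski argument: rescale the archimedean factors by positive reals $(c_1, \ldots, c_{r_1+r_2})$ with $\prod c_i^{d_i} = 1$ (where $d_i = 1$ or $2$), which preserves covolume, and apply Minkowski to extract nonzero $\alpha \in \OO$ of bounded norm with prescribed relative sizes of $|\sigma_i(\alpha)|$. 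Varying the scaling and using the pigeonhole principle (only finitely many integral ideals have norm below the Minkowski bound), one produces $\alpha, \beta \in \OO$ generating the same ideal, whose ratio $\alpha/\beta$ is a unit with controlled logarithmic coordinates. Carefully iterating this argument yields units whose logarithmic images span $H$.
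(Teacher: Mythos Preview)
The paper does not prove this theorem at all; it is stated without proof in a review lecture on algebraic number theory, and the narrative moves on immediately to an exercise about $\OO^\times$ for real quadratic fields. Your proposal is the standard and correct proof via Minkowski's geometry of numbers, and your identification of the hard step (full rank of the log-unit lattice in the trace-zero hyperplane) and the pigeonhole/rescaling strategy for producing units is exactly the classical argument.
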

\begin{exercise} Compute $\OO^\times$ for $\Q(\sqrt{2})$ and $\Q(\sqrt{3})$. ({\em Hint}: Pell's equation)
\end{exercise}

We can study an analogue of this exact sequence for a smooth projective curve. Let $C$ be a compact Riemann surface. Then under the analogy, 
\begin{align*}
    0 \neq \mf{p} \text{ prime ideals }= \text{ maximal ideals} &\leftrightarrow \text{ points of }C,
\end{align*}
and we have the following exact sequence:
\[
\{1\} \rightarrow \C^\times \rightarrow K^\times \rightarrow \mathcal{P} \hookrightarrow \bigoplus_{x \in C} \Z x \twoheadrightarrow \Pic(C)\rightarrow 0.
\]
Here $K$ is the function field of $C$, $\mathcal{P}$ is the set of divisors of meromorphic functions (``principal divisors''), and $\Pic(C)$ is the Picard group of $C$ (isomorphism classes of line bundles on $C$). 

\begin{remark}
The group $\Pic(C) = \Jac(C) \times \Z$ is very far from finite.  Also, $\C^\times$ is not finitely generated. So in this setting, neither of the fundamental finiteness theorems hold. 
\end{remark}

\begin{exercise}
(For those who know some algebraic geometry) Show that the analogues of $\mathcal{C}\ell(K)$ and $\OO^\times$ are finite if $C$ is an affine curve defined over a finite field.
\end{exercise}

\begin{exercise}
(If you know what $K_0$ is) Show that $K_0(\OO)=\Z \oplus \mathcal{C}\ell(K)$. 
\end{exercise}

\subsection{Ramification}
\label{ramification}
 First consider a smooth projective curve $C\xrightarrow{f} \mathbb{P}^1\C$ (e.g. $y^2=f(x)$ for some polynomial $f(x)$ without repeated roots.) After deleting a finite set of points from $\mathbb{P}^1\C$ (the ``discriminant'' of $f$), then $f$ is \'{e}tale, and hence gives us a finite covering of open sets $U\subset \mathbb{P}^1\C$. So all fibres are ``the same'' away from finitely many points where $f$ is ``ramified''. A picture: 
 
 \begin{center}
 \includegraphics{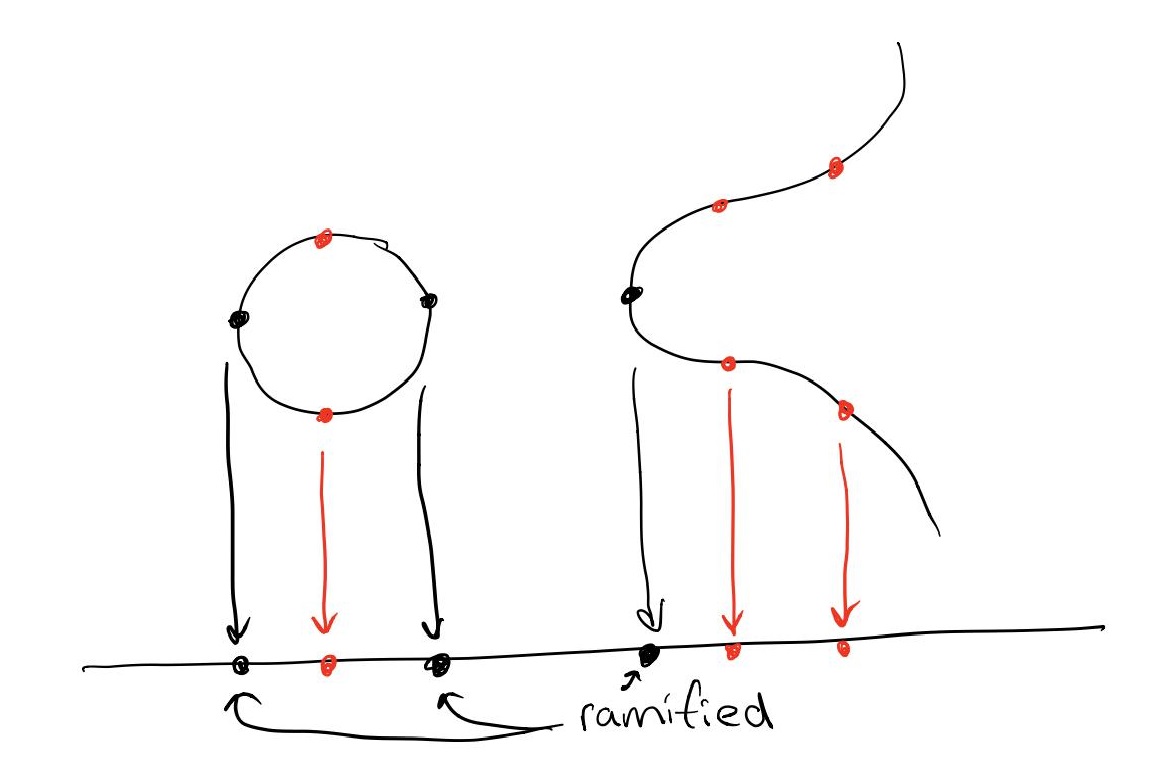}
\end{center}

 Over finite fields, almost the same thing happens. Again, if we have a smooth projective curve $C \xrightarrow{f} \mathbb{P}^1_{\F_p}$, $f$ is \'{e}tale after deleting finitely many points. For example, consider $C=\Spec\F_p[x,y]/(y^2=x^3+1)\rightarrow \Spec\F_p[x]$. Fibres of this map over a point $x=\lambda$ are singular if $\lambda$ is a third root of unity, or else  
 \[
\F_p[x,y]/(y^2=x^3+1)\otimes_{\F_p[x]}\F_p = \F_p[y]/(y^2=\lambda^3+1) = \begin{cases} 
\F_p \times \F_p  &\text{ if } \lambda^3+1 \text{ is a square} \\
\F_{p^2} &\text{ if } \lambda^3 + 1 \text{ is not a square}
\end{cases}.
 \]
 A picture: 
 
  \begin{center}
 \includegraphics{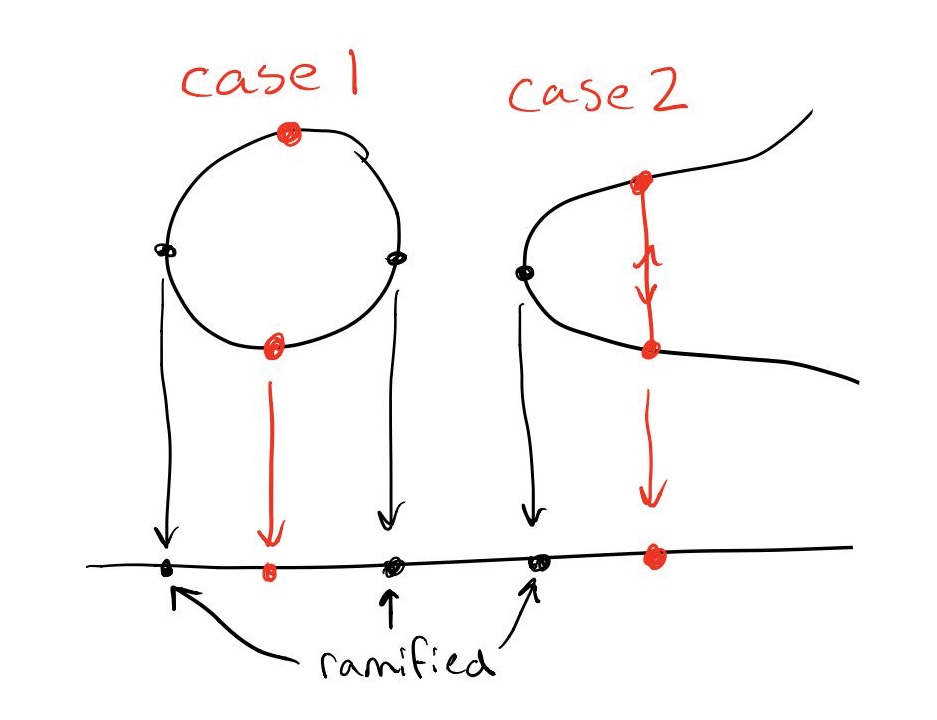}
\end{center}
 
 A similar picture holds for number fields:
 
  \begin{center}
 \includegraphics[scale=0.75]{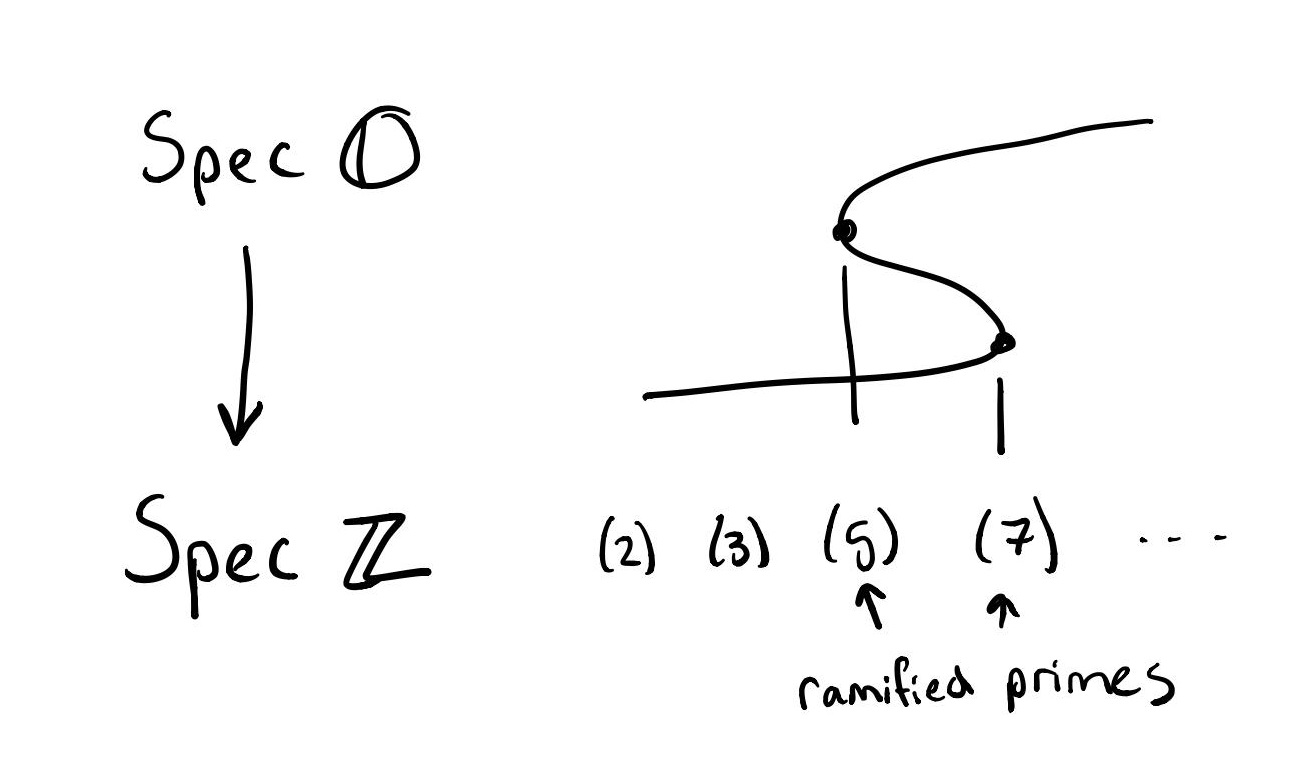}
\end{center}
 
 We can make this picture rigorous. Let $K/\Q$ be a number field and $\OO$ its ring of integers. For each prime $p \in \Z$, the corresponding ideal in $\OO$ decomposes into the product of prime ideals in $\OO$:
 \[
 (p)=\prod_{i=1}^{g_p} \mathfrak{p}_i^{e_i}. 
 \]
 We say the primes $\mathfrak{p}_i$ appearing in this decomposition are the ``primes above $(p)$''. The number $e_i$ is the {\bf ramification index} of $\mf{p}_i$. For each $\mf{p}_i$, the field $\OO/\mf{p}_i$ is a finite extension of $\F_p$ (so $\OO/\mf{p}_i=\F_{p^{f_i}}$ for some $f_i$), and the degree $f_i$ of this extension is the {\bf inertia degree}. 
 
 \begin{exercise}
 \label{important}
 (Important!) Show that $\displaystyle n=\sum_{i=1}^{g_p} e_if_i$.
 \end{exercise}
 
 \begin{definition}
 \begin{enumerate}[label=(\alph*)]
     \item The ideal $(p)$ is {\bf unramified} if all $e_i=1$. Otherwise it is {\bf ramified}.
     \item The ideal $(p)$ {\bf splits completely} if $f_i=e_i=1$ for all $i$. 
     \item The ideal $(p)$ is {\bf inert} if $g_p=1$ and $e_1=1$. 
 \end{enumerate}
\end{definition}

\begin{theorem}
\label{unramified}
The ideal $(p)$ is unramified in $\OO$ if and only if $p \nmid \Disc(K)$. 
\end{theorem}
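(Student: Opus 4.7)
The plan is to translate both conditions into statements about the finite-dimensional $\F_p$-algebra $A := \OO/p\OO = \OO \otimes_\Z \F_p$, which has dimension $n$ over $\F_p$, and then show they are equivalent via the non-degeneracy of the trace form of $A$.

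First I would show that $p\nmid\Disc(K)$ is equivalent to the trace form of $A$ (the $\F_p$-bilinear form $(\bar x,\bar y)\mapsto \Tr_{A/\F_p}(\bar x\bar y)$) being non-degenerate. Fix a $\Z$-basis $\{\alpha_1,\ldots,\alpha_n\}$ of $\OO$; its image $\{\bar\alpha_1,\ldots,\bar\alpha_n\}$ is an $\F_p$-basis of $A$. Multiplication by $\alpha_i\alpha_j$ on $\OO$ has an integer matrix in the basis $\{\alpha_k\}$, and its $\Z$-trace is an integer that reduces mod $p$ to the $\F_p$-trace of multiplication by $\bar\alpha_i\bar\alpha_j$ on $A$. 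Thus $\det((\bar\alpha_i,\bar\alpha_j)) \equiv \Disc(K) \pmod{p}$, and the equivalence follows.

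Next I would show $(p)$ is unramified iff $A$ is reduced iff the trace form of $A$ is non-degenerate. By the unique factorization $(p)=\prod\mf{p}_i^{e_i}$ and the Chinese Remainder Theorem, $A \cong \prod_i \OO/\mf{p}_i^{e_i}$; each factor $\OO/\mf{p}_i^{e_i}$ is a field exactly when $e_i=1$ and otherwise contains a nonzero nilpotent (namely the image of $\mf{p}_i^{e_i-1}$). If $A$ has a nonzero nilpotent $x$, then for every $y\in A$ the element $xy$ is nilpotent, so its trace as an $\F_p$-linear operator on $A$ vanishes; this exhibits $x$ in the radical of the trace form. Conversely, if $A$ is reduced then $A=\prod_i F_i$ with each $F_i$ a finite extension of the perfect field $\F_p$, hence separable; the trace form on $A$ is the orthogonal sum of the trace forms of the $F_i$, and separability implies each of these is non-degenerate, so the trace form on $A$ is too.

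The main obstacle is the last implication, that separability of $F_i/\F_p$ yields a non-degenerate trace form on $F_i$; this is the standard fact underlying the very definition of the discriminant, and I would prove it by embedding $F_i$ into a normal closure, writing the trace as the sum of distinct $\F_p$-embeddings into that closure, and invoking linear independence of characters to show no nonzero element can pair to zero with everything. Chaining the equivalences together yields the theorem.
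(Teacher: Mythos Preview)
Your proposal is correct and follows precisely the route the paper indicates: the paper leaves this theorem as an exercise with the hint ``Show that a finite-dimensional commutative $\F_p$-algebra is \'etale if and only if its trace form is nondegenerate,'' and your argument carries out exactly this plan by reducing both sides to the non-degeneracy of the trace form on $A=\OO/p\OO$.
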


\begin{exercise}
Prove Theorem \ref{unramified}. ({\em Hint}: Show that a finite-dimensional commutative $\F_p$-algebra is \'{e}tale if and only if its trace form is nondegenerate.) 
\end{exercise}

\begin{example}
\label{Q(i)splitting}
Let $K=\Q(i)$, so $\OO=\Z[i]$ and $\Disc(K)=-4$. (See Exercise \ref{Q(i)}.) Since $(1+i)^2=1+2i-1=2i$, we see that the ideal $(2)\subset \OO$ decomposes as $(2)=(1+i)^2$, and is therefore ramified. By Theorem \ref{unramified}, all other primes are unramified. Let $p \neq 2$ be prime. Then to determine if $(p)$ splits, we notice that 
\[
\Z[i]/(p)=\Z[x]/(x^2+1,p)=\F_p[x]/(x^2+1)=\begin{cases} \F_p \times \F_p &\text{ if } \left( \frac{-1}{p} \right) =1, \\
\F_{p^2} &\text{ if } \left( \frac{-1}{p} \right) =-1.
\end{cases}
\]
Therefore $(p)$ splits completely if and only if $p = 1 \mod 4$. (For example, $(5)=(2+i)(2-i)$), and $(p)$ is inert if and only if $p = 3 \mod 4$. 
\end{example}

We can adapt the strategy of Example \ref{Q(i)splitting} to determine splitting behavior in general. Let $K/\Q$ be a number field, and choose a primitive element (i.e. a generating element) $\theta$ of $K$. We can assume without loss of generality that $\theta \in \OO$. Then $\theta$ satisfies a monic polynomial $f(x) \in \Z[x]$ of degree $n$, so $\Z[\theta]\subset \OO$ is of finite index $m$. Then for $p$ such the $p \nmid m$ and $p \nmid \Disc(K)$, 
\[
\OO/(p)=\Z[\theta]/(p)=\Z[x]/(p,f(x))=\F_p[x]/(f(x))=\F_{p^{f_1}}\times \cdots \times \F_{p^{f_k}},
\]
where $f_i$'s are the degrees of irreducible factors of $f(x)$ modulo $p$. So in particular, 
\begin{align*}
    (p) \text{ splits completely } & \iff f \text{ is reducible modulo }p, \\
    (p) \text{ is inert } &\iff f \text{ is irreducible modulo }p. 
\end{align*}
Hence we could have (and probably should have) phrased last week's lecture in terms of splitting of primes in a ring of integers. 

\subsection{The case of Galois extensions}

Assume $K/\Q$ is a Galois extension, with Galois group $\Gal(K/\Q)=G$. Then $G$ acts on $\OO$, and Frobenius proved the following result. 
\begin{theorem}
If $(p)=\prod\mf{p}_i^{e_i}$, then $G$ acts transitively on the primes $\mf{p}_i$. 
\end{theorem}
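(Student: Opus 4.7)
The plan is to first verify that the Galois action on $\mathcal{O}$ really does induce an action on the finite set $\{\mathfrak{p}_1,\ldots,\mathfrak{p}_{g_p}\}$ of primes above $(p)$, and then prove transitivity by a contradiction argument involving the norm.

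For the first step, I would note that any $\sigma \in G$ fixes $\mathbb{Z}$, so if $\mathfrak{p}$ is a prime of $\mathcal{O}$ with $\mathfrak{p} \cap \mathbb{Z} = (p)$, then $\sigma(\mathfrak{p})$ is again a prime of $\mathcal{O}$ (Galois elements are ring automorphisms) with $\sigma(\mathfrak{p}) \cap \mathbb{Z} = (p)$. Hence $G$ permutes the primes above $(p)$.

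For transitivity, suppose for contradiction that some $\mathfrak{q}$ above $(p)$ is not in the $G$-orbit of some fixed $\mathfrak{p}$ above $(p)$. Then $\mathfrak{q}$ and the ideals $\{\sigma(\mathfrak{p}) : \sigma \in G\}$ are pairwise distinct. Since $\mathcal{O}$ is a Dedekind domain, all of these are distinct maximal ideals, so the Chinese Remainder Theorem produces an element $x \in \mathcal{O}$ with $x \in \mathfrak{q}$ but $x \notin \sigma(\mathfrak{p})$ for every $\sigma \in G$. I would then consider the norm
\[
\Nm(x) = \prod_{\tau \in G} \tau(x),
\]
which lies in $\mathbb{Z}$ because it is $G$-invariant. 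Since $x \in \mathfrak{q}$, we have $\Nm(x) \in \mathfrak{q}$, hence $\Nm(x) \in \mathfrak{q} \cap \mathbb{Z} = (p) \subseteq \mathfrak{p}$. Because $\mathfrak{p}$ is prime, some factor $\tau(x)$ lies in $\mathfrak{p}$, equivalently $x \in \tau^{-1}(\mathfrak{p})$, contradicting the choice of $x$. This forces $\mathfrak{q}$ to lie in the $G$-orbit of $\mathfrak{p}$, completing the argument.

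The main obstacle is really just setting up the CRT step cleanly: one must be sure that $\mathfrak{q}$ and all the $\sigma(\mathfrak{p})$ are genuinely distinct (this is built into the assumption for contradiction) and pairwise coprime (automatic since distinct maximal ideals in a Dedekind domain are coprime). Everything after that is a short calculation with the norm. I would not need any more advanced machinery than unique factorization of ideals and the basic properties of $\Nm$ reviewed earlier in the lecture.
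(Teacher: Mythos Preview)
Your argument is the standard and correct proof. The paper itself does not prove this theorem; it simply states the result and attributes it to Frobenius, then immediately uses it to deduce that all $e_i$ and $f_i$ coincide. So there is no proof in the paper to compare against.

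One small wording issue: when you write that $\mathfrak{q}$ and the ideals $\{\sigma(\mathfrak{p}):\sigma\in G\}$ are ``pairwise distinct,'' note that the orbit $\{\sigma(\mathfrak{p})\}$ may well have repetitions as $\sigma$ ranges over $G$ (indeed it always will unless the decomposition group is trivial). What you actually need, and what the contradiction hypothesis gives you, is only that $\mathfrak{q}$ is distinct from every $\sigma(\mathfrak{p})$. That suffices for CRT: apply it to the finite set of distinct maximal ideals $\{\mathfrak{q}\}\cup\{\sigma(\mathfrak{p}):\sigma\in G\}$ with the obvious congruence conditions. Also, strictly speaking, $G$-invariance of $\Nm(x)$ places it in $K^G=\mathbb{Q}$; the extra observation that it lies in $\mathcal{O}$ (as a product of elements of $\mathcal{O}$) then forces it into $\mathcal{O}\cap\mathbb{Q}=\mathbb{Z}$. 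Neither point affects the validity of your argument.
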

Therefore, all $e_i$ (resp. $f_i$) are equal. Denote their common value $e$ (resp. $f$). Exercise \ref{important} ($n=\sum e_i f_i$) implies that $n=efg_p$. Fix a prime $\mf{p}_i=:\mf{p}$ lying over $(p)$. Denote by $G_\mf{p}$ the stabilizer of $\mf{p}$ in $G$ (the ``decomposition group''). Then 
\[
|G/G_\mf{p}|=\# \text{ primes over p }=g_p,
\]
so $|G_\mf{p}|=ef$. Let $I_\mf{p}$ be the subgroup of $G_\mf{p}$ which acts trivially on $\OO_\mf{p}$ (the ``inertia group''). This group has order $e$. We have the following exact sequence 
\[
1 \rightarrow I_\mf{p} \rightarrow G_\mf{p} \twoheadrightarrow \Aut(\OO/\mf{p}) \simeq \Z/f\Z.
\]
Since $\OO/\mf{p} \simeq \F_{p^f}$, the group $\Aut(\OO/\mf{p})$ is generated by $\lFrob:x \mapsto x^p$. If $p$ is unramified (i.e. $p \nmid \Disc(K)$), then $e=1$, so $I_\mf{p}$ is trivial and $G_\mf{p} \simeq \Z/f\Z$. In this case, $\Frob \in G_\mf{p}$ is defined to be the element that maps to $\lFrob \in \Aut(\OO/\mf{p})$. A different choice of $\mf{p}$ lying over $p$ results in conjugate $G_\mf{p}$ and conjugate $\Frob$. This explains rigorously the $\Frob$ from the last lecture. 

\pagebreak
\section{Lecture 3: $L$-functions}
\label{lecture 3}

Last class we reviewed some algebraic number theory. This class we will review some analytic number theory. The motivation for this lecture is the following. We start with an arithmetic problem (for example, counting the number of $x \in \F_p$ such that $x^5+20x+16=0$), and assign to it an $L$-function (something like a ``character'' for the representation theorists in the audience), which can be studied analytically.  
\subsection{The Riemann $\zeta$-function}

Define 
\[
\zeta(s)=\sum_{n \geq 1} n^{-s},
\]
where $s\in \C$ is a complex variable. We can compare this sum to the integral
\[
\int_1^\infty x^{-s}dx, 
\]
which converges to $\frac{1}{s-1}$ for real $s>1$. When viewed as a holomorphic function, the integral converges absolutely for $s \in \C$ such that $\Re(s)>1$. Hence the sum $\zeta(s)$ converges for all $s \in \C$ such that $\Re(s)>1$. 

This function has a rich history. Euler computed special values (e.g. $\zeta(2)=\frac{\pi^2}{2}$), and noticed that the $\zeta$-function may also be given as the Euler product:
\[
\sum_{n \geq 1} n^{-s}=(1 + 2^{-s} + (2^2)^{-s} + \cdots)(1 + 3^{-s} + (3^2)^{-s}+ \cdots) \cdots = \prod_{p \text{ prime}} \left( \frac{1}{1-p^s} \right). 
\]
This product relates an analytic object, $\zeta(s)$, to the prime numbers. {\em This relationship lets us study properties of primes using analysis!} For example, the Euler product immediately gives us two proofs of the infinitude of primes: (1) the divergence of $\sum_{n \geq 1} \frac{1}{n}$ implies the product on the right hand side must be infinite, and (2) since $\zeta(2)=\frac{\pi^2}{2}$, the irrationality of $\pi^2$ also implies that the right hand product must be infinite.  

Riemann showed that $\zeta$ admits a meromorphic continuation to all of $\C$. This is the {\bf Riemann zeta function}. He also showed that $\zeta(s)$ has a simple pole at $s=1$ (Exercise: Show that $\zeta(s)-\frac{1}{1-s}$ converges for $\Re(s)>0$), and ``trivial zeros'' at $-2, -4, \ldots$. Furthermore, he established the {\bf functional equation}:  $\Lambda(s):=\pi^{-s/2}\Gamma(\frac{s}{2})\zeta(s)$ satisfies  
\[
\Lambda(s)=\Lambda(1-s). 
\]
Here $\Gamma(s)=\int_o^\infty x^{s-1}e^{-x}dx$ is the gamma function. And finally, he proposed the following conjecture, which eventually became a millenium question. 

\vspace{5mm}
\noindent
{\bf Riemann hypothesis:} If $z$ is a non-trivial zero of $\zeta(z)$, then $\Re(z)=\frac{1}{2}$. 

\vspace{5mm}
\noindent
The Riemann hypothesis is known to be true for the first $10^{12}$ zeros of $\zeta(s)$.

\subsection{Why do we care?}

Here's the slogan of this story: ``The zeros of the Riemann zeta function are the Fourier modes of the primes''. We will spend the rest of the lecture trying to make this precise. 

One of Riemann's motivations was the following theorem, which was a conjecture during his lifetime. 
\begin{theorem}
({\bf Prime Number Theorem}) Let $\pi(x)$ be the number of primes less than or equal to $x\in \R$. Then 
\[
\pi(x) \sim \frac{x}{\log{x}}.
\]
\end{theorem}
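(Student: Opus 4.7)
The plan is to follow the classical analytic proof of Hadamard and de la Vall\'{e}e Poussin, using the Riemann zeta function as the bridge between the counting function $\pi(x)$ and complex analysis.

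First, I would replace $\pi(x)$ by the Chebyshev function
\[
\psi(x) := \sum_{p^k \le x} \log p = \sum_{n \le x} \Lambda(n),
\]
where $\Lambda$ is the von Mangoldt function ($\Lambda(n)=\log p$ if $n=p^k$ is a prime power, and zero otherwise). A routine Abel summation, together with the elementary bound that prime powers with $k\ge 2$ contribute only $O(\sqrt{x}\log x)$ to $\psi(x)$, reduces the theorem to the asymptotic $\psi(x) \sim x$. The link to $\zeta$ is obtained by taking the logarithmic derivative of the Euler product recorded in the excerpt, which gives
\[
-\frac{\zeta'(s)}{\zeta(s)} = \sum_{n\ge 1} \Lambda(n)\, n^{-s}, \qquad \Re(s)>1.
\]

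Second, I would invoke a Tauberian theorem---the Wiener-Ikehara theorem is the cleanest choice---applied to this Dirichlet series. After subtracting off the contribution of the simple pole at $s=1$ (which is precisely what produces the main term $x$ in $\psi(x)\sim x$), the hypothesis of Wiener-Ikehara asks that the remaining function $-\zeta'(s)/\zeta(s)-1/(s-1)$ extend continuously to the closed half-plane $\Re(s)\ge 1$. Combining this with the meromorphic continuation of $\zeta$ already stated in the excerpt, the entire deduction reduces to the single crucial non-vanishing statement
\[
\zeta(1+it) \ne 0 \quad \text{for every real } t \ne 0.
\]

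The main obstacle is precisely this non-vanishing. The classical trick is to exploit the elementary trigonometric inequality
\[
3 + 4\cos\theta + \cos 2\theta = 2(1+\cos\theta)^2 \ge 0.
\]
Applied to the real part of $\log\zeta(\sigma+it)=\sum_{p,k}(kp^{k(\sigma+it)})^{-1}$ (valid for $\sigma>1$), it yields the Mertens-style estimate
\[
|\zeta(\sigma)|^3 \, |\zeta(\sigma+it)|^4 \, |\zeta(\sigma+2it)| \ge 1.
\]
A hypothetical zero of $\zeta$ at $1+it$ with $t \ne 0$ would force the middle factor to vanish to order at least four as $\sigma \downarrow 1$, while the first factor blows up only like $(\sigma-1)^{-3}$ (from the simple pole at $s=1$) and the third factor remains bounded (since $\zeta$ is regular at $1+2it$). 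The left-hand side would then tend to $0$, contradicting the inequality.

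With $\zeta(1+it)\ne 0$ in hand, Wiener-Ikehara immediately yields $\psi(x) \sim x$, and the initial reduction upgrades this to $\pi(x) \sim x/\log x$. Noteworthy alternatives would be Newman's short contour-integral proof, which avoids the heavy Tauberian machinery but still relies on the same non-vanishing input, or the Erd\H{o}s-Selberg elementary proof, which avoids complex analysis entirely at the cost of considerably more combinatorial work.
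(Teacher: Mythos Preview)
Your outline is a correct sketch of the classical Hadamard--de la Vall\'{e}e Poussin proof: reduce to $\psi(x)\sim x$, link $\psi$ to $-\zeta'/\zeta$ via the Euler product, feed this into Wiener--Ikehara, and supply the key input $\zeta(1+it)\ne 0$ via the $3+4\cos\theta+\cos 2\theta$ trick. Nothing is wrong here.

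However, the paper does not actually prove the Prime Number Theorem. It is stated as a classical result motivating Riemann's work, and the surrounding discussion records only the pieces you use as ingredients: the reduction to $\psi(n)\sim n$ appears as an exercise, and the equivalence of the Prime Number Theorem with the non-vanishing of $\zeta$ on the line $\Re(s)=1$ is also left as an exercise. No Tauberian argument, no proof of the non-vanishing, and no mention of the Mertens inequality appear in the text. So your proposal goes well beyond what the paper supplies; there is no ``paper's own proof'' to compare against, only the context into which your argument would slot.
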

\noindent
Here $\sim$ means that $\lim_{x \rightarrow \infty}\frac{\pi(x)}{x/\log{x}}=1$. Riemann was interested in two questions about the Prime Number Theorem:
\begin{itemize}
    \item Why does it hold?
    \item What is the error term?
\end{itemize}
Instead of considering $\pi(x)$ directly, we can examine the {\bf von Mangoldt function}, which ``makes noise at prime powers'': 
\[
\Lambda_{vm}(n)=\begin{cases} \log{p} &\text{ if } n=p^m, \\
0 &\text{ otherwise}. 
\end{cases}
\]
Define 
\[
\psi(n)=\sum_{m\leq n} \Lambda_{vm}(m).
\]
\begin{exercise}
Show that the Prime Number Theorem is equivalent to $\psi(n) \sim n$. 
\end{exercise}

Riemann discovered an explicit formula for $\psi(x)$ at non-integers. 

\begin{theorem}
(Riemann) For $x \in \R-\Z$, there is equality
\[
\psi(x)=x-\sum_\rho \frac{x^\rho}{\rho} - \log(2 \pi),
\]
where the sum is taken over all zeros $\rho$ of the Riemann $\zeta$-function. 
\end{theorem}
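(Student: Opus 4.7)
The plan is to use Perron's formula to write $\psi(x)$ as a contour integral involving $-\zeta'(s)/\zeta(s)$, and then shift the contour leftward, recognizing the right-hand side of the formula as the sum of residues at the singularities of the integrand. The starting point is the Dirichlet series identity
\[
-\frac{\zeta'(s)}{\zeta(s)} = \sum_{n \geq 1} \Lambda_{vm}(n)\, n^{-s}, \qquad \Re(s) > 1,
\]
obtained by taking the logarithmic derivative of the Euler product $\zeta(s) = \prod_p (1-p^{-s})^{-1}$; this is precisely why the von Mangoldt function is the arithmetic function naturally attached to $\zeta$. Perron's formula (from Mellin inversion applied to the characteristic function of $[1,x]$) then gives, for $c > 1$ and $x > 1$ not an integer,
\[
\psi(x) = \frac{1}{2\pi i} \int_{c - i\infty}^{c + i\infty} \left( -\frac{\zeta'(s)}{\zeta(s)} \right) \frac{x^s}{s}\, ds,
\]
understood as a principal value obtained by truncating at height $T$.

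Next I would shift the contour to $\Re(s) = -U$ for large $U$ and eventually let $U, T \to \infty$, collecting residues at each singularity of the integrand crossed in the process. At $s = 1$, the simple pole of $\zeta$ makes $-\zeta'/\zeta$ have a simple pole of residue $1$, and multiplication by $x^s/s$ contributes $+x$. At each zero $\rho$ of $\zeta$ (trivial or nontrivial), $-\zeta'/\zeta$ has a simple pole whose residue equals the multiplicity of the zero, yielding a contribution of $-x^\rho/\rho$ per zero counted with multiplicity. At $s = 0$ the factor $x^s/s$ contributes the pole, and using the classical values $\zeta(0) = -\tfrac{1}{2}$ and $\zeta'(0) = -\tfrac{1}{2}\log(2\pi)$ one computes $-\zeta'(0)/\zeta(0) = -\log(2\pi)$, producing the constant $-\log(2\pi)$. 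Summed together, these residues reproduce the right-hand side of the explicit formula.

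The hard part will be rigorously justifying the contour shift: one has to show that the horizontal segments joining the two vertical lines and the shifted vertical integral itself both contribute negligibly as $T, U \to \infty$. This requires quantitative bounds on $|\zeta'(s)/\zeta(s)|$ on vertical lines staying away from the zeros, which in turn rest on Hadamard's product representation for the entire function $(s-1)\zeta(s)$ and on the Riemann--von Mangoldt counting estimate $N(T) \sim \tfrac{T}{2\pi}\log(T/2\pi)$ for the number of nontrivial zeros up to height $T$. A delicate point is that $T$ must be chosen to avoid the imaginary parts of zeros. Moreover the resulting series $\sum_\rho x^\rho/\rho$ converges only conditionally, so it must be interpreted as the symmetric limit $\lim_{T \to \infty} \sum_{|\Im \rho| < T} x^\rho/\rho$. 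These analytic estimates --- the combination of Hadamard factorization with zero-density bounds and careful avoidance of zero ordinates --- form the classical technical core of the argument, and are the reason the proof only became fully rigorous with the work of Hadamard and de la Vall\'{e}e Poussin.
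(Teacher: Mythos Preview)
The paper does not prove this theorem; it is stated without proof in an introductory lecture on $L$-functions, as one of several classical results about $\zeta(s)$ quoted for motivation (the authors write in the preface that ``not much is proved'' in the first part of the course). So there is no proof in the paper to compare against.

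That said, your outline is the standard classical argument and is correct in its essentials: the logarithmic-derivative identity, Perron's formula, contour shifting, and the residue computations at $s=1$, at the zeros $\rho$, and at $s=0$ all check out. You have also correctly identified the genuine analytic obstacles --- bounding $|\zeta'/\zeta|$ away from zeros via the Hadamard product, choosing $T$ to avoid zero ordinates, and interpreting $\sum_\rho x^\rho/\rho$ as a symmetric limit --- and correctly attributed the rigorous completion to Hadamard and de la Vall\'ee Poussin rather than to Riemann himself. One small point: the trivial zeros $\rho = -2, -4, \ldots$ contribute $-\sum_{k\geq 1} x^{-2k}/(-2k) = \tfrac{1}{2}\log(1-x^{-2})$, and many treatments write this term separately rather than folding it into the sum over $\rho$; the paper's statement evidently follows the convention of summing over all zeros, trivial and nontrivial together.
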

In other words,
\[
x-\psi(x)=\log(2\pi) + \sum_\rho \frac{x^\rho}{\rho},
\]
so {\em the zeros of the Riemann $\zeta$-function measure the error term in the prime number theorem.} We can examine what effect the different types of zeros have on the right hand side of the equality above. 
\begin{itemize}
    \item {\bf Trivial zeros}: The function $\frac{x^{-2n}}{-2n}=-\frac{1}{2nx^n}$ decays very quickly, so for large $x$, trivial zeros have almost no effect on the formula. 
    \item {\bf A pair $\rho$, $\bar{\rho}$ of non-trivial conjugate zeros}: Each such pair contribues 
    \[
    \lambda \cdot x^{\Re(\rho)}\cdot \cos(\gamma + \log x),
    \]
    where $\lambda$ and $\gamma$ depend in a simple way on $\rho$ and $\bar{\rho}$. So $\Re(\rho)$ is crucial in the contribution of $\rho$ and $\bar{\rho}$ to the error term, and if the Riemann hypothesis is true, the growth of this contribution looks roughly like the product of $\cos(x)$ and $x^{1/2}$. Also, as $\Im(\rho)$ gets bigger, $\lambda$ gets smaller. Thus, if the Riemann hypothesis is true, small zeros will contribute larger variations. A counterexample to the Riemann hypothesis would cause unexpectedly large fluctuations in $x-\psi(x)$.
\end{itemize}
\begin{exercise}
 \begin{enumerate}[label=(\alph*)]
 \item Show that the Prime Number Theorem is equivalent to $\zeta(s)$ having no zeros $z$ with $\Re(z)=1$. 
 \item Show that the Riemann hypothesis is equivalent to $x-\psi(x) \in O(x^{1/2})$.
 \item Find the error term in $\pi(x)-\frac{x}{\log{x}}$ assuming the Riemann hypothesis. 
 \end{enumerate}
\end{exercise}
 
\begin{remark}
 It is unknown whether there exist non-trivial zeros $\zeta$ of $\zeta(s)$ with $\Re(z)=1-\epsilon$ for {\em any} $\epsilon>0$. 
\end{remark} 

\subsection{Dirichlet $L$-functions}
It is natural to ask questions about primes satisfying certain properties. For example, fix $m \in \Z_{\geq 0}$, and $a \in \left( \Z/m\Z \right)^\times$. Consider the set 
\[
\{p \text{ prime } | p = a \mod m\}. 
\]
Is this set infinite? Is there an analogue to the Prime Number Theorem in this setting? A naive attempt to show that this set is infinite would be to consider the product
\[
\prod_{p} \frac{1}{1-p^{-s}}
\]
 taken over all primes $p$ such that $p = a \mod m$ and recreate one of the arguments for the infinitude of primes given in the previous section. However, there is no Euler product in this setting, so this approach fails. Another approach is representation theory. 
 
 \vspace{5mm}
 \noindent
 {\bf What do we learn from representation theory?}
 
 \vspace{2mm}
 \noindent 
 Consider the set of all functions from
 \[
 (\Z/m\Z)^\times \rightarrow \C.
 \]
 This set has two natural bases:
 \begin{enumerate}
     \item Indicator functions: $x \mapsto \delta_{x,a}$ for $a \in \left( \Z/m\Z \right) ^\times$.
     \item Irreducible characters: $x \mapsto e^{2 \pi ij/\phi(m)}$ for $j=0,\ldots, \phi(m)-1$, where $\phi(m)=\left| \left( \Z/m\Z \right) \right|.$
 \end{enumerate}
 In many ways, the basis of characters is more natural. Dirichlet borrowed this idea of characters to adapt our naive attempt above into something that works. 
 
 \begin{definition}
 A {\bf Dirichlet character modulo m} is a character $\chi: \left( \Z/m\Z\right)^\times \rightarrow \C^\times$ extended by zero to all of $\Z$. In other words, it is a function $\chi: \Z \rightarrow \C^\times$ such that $\chi(n)=0$ if $(n,m)>1$, $\chi(n)$ depends only on $n \mod m$, and $\chi$ induces a character $\chi: \left( \Z/m\Z\right)^\times \rightarrow \C^\times$.
 \end{definition}
 
 Such a function $\chi$ has the property that $\chi(nn')=\chi(n)\chi(n')$. Using these characters, Dirichlet defined a {\bf Dirichlet $L$-function}:
 \[
 L(\chi, s)=\sum_{n \geq 1} \chi(n)n^{-s} = \prod_{p \text{ prime}} \frac{1}{1-\chi(p)p^{-s}}.
 \]
 With this adjustment by a character, the sum {\em does} admit an Euler product, as well as a meromorphic extension to all of $\C$, and a (rather complicated) functional equation. If $\chi$ is trivial, we recover the Riemann $\zeta$-function. If $\chi$ is not trivial, then $L(\chi, s)$ is entire. 
 
 Dirichlet's theorem (that $\{p | p = a \mod m\}$ is infinite, with distribution $\frac{1}{\phi(m)}\frac{\pi}{\log{x}}$) is an easy consequence of the fact that $L(\chi, 1) \neq 0$ if $\chi$ is not trivial. Furthermore, there is an analogue of the Riemann hypothesis in this setting, usually called the ``generalized Riemann hypothesis,'' and all non-trivial zeros of $L(\chi,s)$ lie on the critical line (i.e. satisfy $\Re(z)=\frac{1}{2}$). 
 
 \subsection{Dedekind $\zeta$-functions}
 Another natural question of this flavor is how primes behave in $\Z[i]$, or other rings of integers. To answer this question, Dedekind introduced his version of a $\zeta$-function. 
 
 Return to the setting of last week: Let $K/\Q$ be a number field with ring of integers $\OO$. For a nontrivial fractional ideal $I \subset \OO$, let $\NN I=\#\OO/I$. (For example, if $K=\Q$, $\NN (p)=p$.) The {\bf Dedekind $\zeta$-function} is \[
 \zeta_K(s) = \sum_{0 \neq I \subset \OO} (\NN I) ^{-s} = \prod_{\mf{p} \subset \OO \text{ prime}} \frac{1}{1-\NN \mf{p}^{-s}}.
 \]
 This sum admits an Euler product because of the uniqueness of factorization of ideals in $\OO$. Again, we have a meromorphic continuation and functional equation in this setting. (Historical note: The functional equation first appeared in Hecke's thesis in the 1920's, but the proof was very complicated in Hecke's work. A much simpler proof was given by Tate in his thesis in 1950.) The key example is the following. 
 
 \begin{example}
 \label{factorization}
 Let $K=\Q(i) \supset \OO=\Z[i]$. Then 
 \begin{align*}
     \zeta_K(s)&=\prod_{\mf{p}\subset\OO \text{ prime}} \frac{1}{1-\NN \mf{p}^{-s}} \\
     &=\left(\frac{1}{1-2^{-s}}\right) \prod_{\mf{p} \text{ s.t. } (p) \text{ splits}}\left( \frac{1}{1-\NN \mf{p}^{-s}}\right) \left( \frac{1}{1-\NN \mf{p}^{-s}}\right) \prod_{\mf{p} \text{ s.t. } (p) \text{ is inert}} \left( \frac{1}{1-\NN \mf{p}^{-2s}} \right)   \\
     &= \left( \frac{1}{1-2^{-s}} \right) \prod_{p = 1 \mod 4} \left( \frac{1}{1-p^{-s}}\right)^2 \prod_{p=3 \mod 4} \left( \frac{1}{1-p^{-s}}\right) \left( \frac{1}{1+p^{-s}}\right) \\
     &= \prod_p \left( \frac{1}{1-p^{-s}} \right) \prod_{p \neq 2} \left( \frac{1}{1-\chi(p)p^{-s}}\right)\\
     &=\zeta(s)L(\chi,s),
 \end{align*}
 where 
 \[
 \chi(p)=\begin{cases} 1 & \text{ if } p = 1 \mod 4, \\
 -1 & \text{ if } p = 3 \mod 4 
 \end{cases}
 \]
 is a Dirichlet character on $\Z/4\Z$. The moral of this example is that {\em we can understand $\Z(i)$ in terms of $\Z$!} We are witnessing the beginnings of {\bf class field theory}.  
 \end{example}
 
 \subsection{Walking across the bridge}
 Next we will cross our bridge of analogy and see what happens in the geometric world. Recall that the objects analogous to a ring of integers $\OO$ contained in a number field $K$ are smooth projective curves over $\F_p$ and their function fields over $\F_p$. 
 
 \begin{example}
 The simplest example of such a smooth projective curve is $\PP^1_{\F_p}$. Instead we will work with $\mathbb{A}^1\F_p$, where the analogue of a ring of integers is $\mathcal{O}(\mathbb{A}^1\F_p) = \F_p[x]$. Here, 
 \begin{align*}
     \zeta_{\mathbb{A}^1\F_p}(s)&=\sum_{o \neq I \subset \F_p[x]} (\NN I)^{-s} \\
     &= \sum_{f \text{ monic}} (\NN(f))^{-s} \\ 
     &= \sum_{d \geq 1} \left( \sum_{f \text{ monic degree }d} (p^d)^{-s} \right) \\
     &=\sum_{d \geq 0} p^d (p^d)^{-s} \\ 
     &= \sum_{d \geq 0} (p^{-s + 1})^d \\
     &= \frac{1}{1-p^{-s+1}}.
 \end{align*}
 So $\zeta_{\mathbb{A}^1_{\F_p}}(s)$ is a rational function with a unique pole at $s=1$ and {\em no zeros}. Since $\zeta_{\mathbb{A}^1_{\F_p}}(s)$ measures the error term of the prime number theorem, this means that we can count primes exactly in this setting! In fact, we have (Gauss)
 \[
 \# \text{ irred. polys of degree $d$} = \frac{1}{d} \sum_{m|d}\mu\left(\frac{d}{m}\right) p^m. 
 \]
 Here $\mu$ is the Mobius function (i.e. $\mu(n)$ is the sum of the primitive $n^{th}$ roots of unity). So in this setting, we know exactly how many primes there are in a given interval.  
 \end{example}
 
 Our example was a little too simple, so we should bump it up a notch. In Artin's thesis (1923), he instead considered $X=\F_p[x,y]/(y^2-f(x))$ for $f(x)$ square free. (Under our analogy, this is the analogue of a quadratic field $\Q(\sqrt{a})$ for $a$ square free.) Artin showed that 
 \[
 \zeta_X(s)=\frac{(1-\alpha p^s)(1-\bar{\alpha}p^s)}{1-p^{-s+1}}
 \]
 is again a rational function, with $\alpha \in \C$ of norm $p^{1/2}$. {\em Hence all zeros of $\zeta_X(s)$ have $\Re{z}=\frac{1}{2}$, and the Riemann hypothesis is true in this setting.} However, unlike the zeros of $\zeta(s)$, the zeros of $\zeta_X(s)$ are distributed evenly along the critical line. 
 
 The analogue to this statement for all curves was proven by Weil, and the case of arbitrary varieties was completed by Deligne ($\sim$1970). These accomplishments were some of the crowning glories of 20$^{th}$ century mathematics. 
 
 \subsection{Artin $L$-functions}
 Now we enter the non-abelian world. The year is 1927, about 31 years after Frobenius started developing the theory of group characters. Let $K/\Q$ be a Galois extension, with $G=\Gal(K/\Q)$, and $d_K=\Disc(K)$. Recall from previous lectures that:
 \begin{itemize}
     \item $p$ is unramified if and only if $p \nmid d_K$, for unramified $p$,
     \item a choice of prime $\mf{p}$ over $p$ results in an element $\Frob \in G$, and
     \item different choices of $\mf{p}$ lead to conjugate $\Frob$. 
 \end{itemize} 
 Fix a finite dimensional complex representation 
 \[
 \rho:G \rightarrow \GL(V)
 \]
 of the Galois group. Notice that the conjugacy class of $\rho(\Frob)$ is completely determined by its characteristic polynomial $\det(1-t\Frob|_V)$. The (first approximation) of an {\bf Artin $L$-function} is 
 \[
 L_{ur}(V,s)=\prod_{p \nmid d_K} \frac{1}{\det(1-p^{-s}\Frob|_V)}.
 \]
 \begin{example}
 \begin{enumerate}[label=(\alph*)]
     \item $V$ is the trivial representation. Then 
     \[
     L(V,s)=\prod_{p \nmid d_K} \frac{1}{1-p^{-s}} = \zeta(s) \text{ up to finitely many factors.}
     \]
     So the Artin $L$-function recovers the Riemann $\zeta$-function as a special case. 
     \item Let $K=\Z(\sqrt{\alpha})/\Q$ be a quadratic field, $G=\{\pm 1\}$, and $\rho:G \rightarrow \GL_1(C)$ be the identity map $\{\pm1\} \mapsto \{\pm1\}$. Then for $p \nmid d_K$, 
     \[
     \rho(\Frob) = \begin{cases} 1 &\text{ if } p \text{ splits } \left(\iff \left( \frac{p}{\alpha}\right)=1\right),\\
     -1 & \text{ if } p \text{ is inert } \left(\iff \left( \frac{p}{\alpha}\right)=-1\right). \end{cases}
     \]
     Therefore, by quadratic reciprocity, 
     \begin{align*}
     L_{ur}(\rho,s)&=\prod_p \left( 1-\left( \frac{p}{\alpha}\right)p^{-s}\right)^{-1} \\
     &= \prod_p (1 - \chi(p)p^{-s})^{-1} \\
     &=L(\chi,s) \text{ (up to finitely many factors)}
     \end{align*}
     for some Dirichlet character $\chi:\Z/4\alpha\Z \rightarrow \C^\times$. So Artin L-functions also recover Dirichlet $L$-functions. 
 \end{enumerate}
  \end{example}

 \begin{exercise}
 \begin{enumerate}[label=(\alph*)]
     \item Show that $L_{ur}(V_1 \oplus V_2,s) =L_{ur}(V_1, s) L_{ur}(V_2,s)$. 
     \item Show that $L(V,s)$ converges absolutely for $\Re s>1$. ({\em Hint}: Compare it to a product of $\dim V$ copies of the $\zeta$-function.)  
     \item (Beautiful! Do it!) For a Galois extension $K/\Q$, let $V_{reg}$ be the regular representation of $G$. Show that $L_{ur}(V_{reg},s)=\zeta_K(s)$ up to finitely many factors. 
 \end{enumerate} 
 \end{exercise}

\noindent 
A consequence of the exercise is the {\bf Artin decomposition}:
\[
\zeta_K(s)=\prod_{V \text{ irrep of }G} L_{ur}(V,s)^{\dim V} \text{ (up to finitely many factors).}
\]
Hence $\zeta_\Q(s)$ always divides $\zeta_K(s)$! This is a generalization of the factorization $\zeta_{\Q(i)}(s)=\zeta(s)L(\chi,s)$ that we saw in Example \ref{factorization}. 

\vspace{5mm}
\noindent
{\bf The Moral:} All of the difficulty in $K$ is contained in the difficulty of $\Q$ if we allow for ``non-abelian'' difficulty (i.e. general representations of the Galois group).

\begin{remark}
 We can get rid of the ``up to finitely many factors'' caveat in all of these statements by modifying $L_{ur}(V,s)$ with ``general local factors'' at ramified primes. See Geordie's hand-written lecture notes or Gus's lecture for a description of this procedure. 
\end{remark}

\pagebreak
\section{Lecture 4: The Sato-Tate conjecture}
\label{sato-tate}

This will be our final lecture of global motivation before we zoom in on the local story. The goal of today's lecture is to describe the {\bf Sato-Tate conjecture} and its relationship to the global Langlands picture. We will see that seemingly innocuous statements in the Langlands correspondence can have very powerful repercussions. 

\subsection{Equidistribution in representation theory}

We say that the real numbers $\alpha_1, \alpha_2, \ldots \in [0,1]$ are {\bf equidistributed} if 
\[
\lim_{n \rightarrow \infty} \frac{1}{n} \# \{ \alpha_i \mid \alpha_i \in (a,b) \text{ for } i=1, \ldots, n \} = b-a = \int_0^1 1_{(a,b)} dx
\]
for any interval $(a,b) \subset [0,1]$. Here $1_{(a,b)}$ is the indicator function on $(a,b)$. Because indicator functions are dense in complex-valued Riemann integrable functions on $[0,1]$, this condition is equivalent to saying that the discrete average of any function on this set and continuous average of the same function agree; that is,  
\[
\lim_{n \rightarrow \infty} \frac{1}{n} \sum_{i=1}^n f(\alpha_i) = \int_0^1 f(x) dx
\]
for all Riemann integrable $f:[0,1]\rightarrow \C$. Now we can approximate any Riemann integrable $f:[0,1]\rightarrow \C$ with a Fourier series, so this is in turn equivalent to 
\[
\lim_{n \rightarrow \infty} \frac{1}{n} \sum_{i=1}^n e(m\alpha_i) = \int_0^1 e(mx)dx = \begin{cases} 1 & \text{ if } m=0, \\ 0 & \text{ otherwise} 
\end{cases}
\]
for all $m \in \Z$. Here $e(x)=\exp(2 \pi i x)$. The condition above always holds for $m=0$, so to check if $\alpha_1, \alpha_2, \ldots$ are equidistributed, it suffices to check that for $m \neq 0$, 
\[
\lim_{n \rightarrow \infty} \sum_{i=1}^n e(m \alpha_i) = 0. 
\]
Here is an application of this observation. Let $\xi \in \R$ be irrational. Consider $ (\xi), (2 \xi), (3\xi), \ldots $ where $(m \xi):= m\xi \mod 1$. Are these numbers equidistributed? Weyl used the observation above to show that they are. Choose $m \neq 0$, and let $\eta = m \xi$. Then
\[
\left| \frac{1}{n} \sum_{j=1}^n e(mj\xi) \right| = \left| \frac{1}{n} (e(\eta) + e(2\eta) + \cdots e(n \eta) ) \right| = \left| \frac{1}{n} \frac{e((n+1)\eta) - e(\eta)}{e(\eta) -1} \right| \leq \frac{1}{n} \left| \frac{2}{e(\eta) - 1} \right| \rightarrow 0 
\]
as $n \rightarrow \infty$. 

\begin{remark} This leads to many questions of a similar flavor (e.g. what about $(\xi), (4\xi), (9\xi), \ldots $?) Weyl's paper \cite{Weyl} gives an affirmative answer for polynomials $f(x) \in n\R[n]$ whose coefficients are not all rational!
\end{remark}

We can reinterpret equidistribution via representation theory. The above reasoning shows (after identifying integers mod $1$ with $S^1$) that a sequence $z_1, z_2, \ldots  \in S^1\subset \C$ {\bf equidistributes} if for every nontrivial rational character $\chi$ of $S^1$ (e.g. $\chi: z \mapsto z^m$ for $m \neq 0$), 
\[
\lim_{n \rightarrow \infty} \frac{1}{n} \sum_{i=1}^n \chi(z_i) = 0. 
\]
If this condition holds, we say that the sum is ``little o of $n$,'' and write 
\[
\sum_{i=1}^n \chi(z_i) = o(n).
\]

Now suppose $G$ is a compact group with Haar measure $\mu$, and let $X$ denote the space of conjugacy classes of $G$. (Recall that the {\bf Haar measure} is the unique left (and then necessarily also right) $G$-invariant measure on $G$ with $\mu(G)=1$.) Let $C(X)$ be the Banach space of continuous complex-valued functions on $X$. Two properties of irreducible characters on compact groups are the following:
\begin{theorem} ({\bf The Peter-Weyl Theorem}) The irreducible characters span a dense subspace of $C(X)$. 
\end{theorem}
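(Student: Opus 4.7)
The plan is to prove a stronger statement about $C(G)$ first and then descend to class functions. The intermediate target is the classical Peter-Weyl decomposition: matrix coefficients of finite dimensional irreducible (unitary) representations span a uniformly dense subspace of $C(G)$.

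First I would construct finite dimensional subrepresentations of the right regular representation via convolution. For $f \in C(G)$ satisfying $f(g^{-1}) = \overline{f(g)}$, the convolution operator $T_f \colon L^2(G) \to L^2(G)$ given by $(T_f \phi)(g) = \int_G f(h)\phi(h^{-1}g)\, dh$ has continuous kernel on a compact space, hence is Hilbert-Schmidt and in particular compact; the symmetry condition makes it self-adjoint. The spectral theorem for compact self-adjoint operators decomposes $L^2(G)$ into a Hilbert sum of finite dimensional eigenspaces, and since $T_f$ commutes with right translation each nonzero eigenspace is a finite dimensional right subrepresentation. Letting $f$ range over an approximate identity at $e$ (a net of positive, conjugation-invariant bumps shrinking to $e$), one checks that $T_f\phi \to \phi$ uniformly for every $\phi \in C(G)$. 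Combined with the previous step, this shows that $\phi$ is uniformly approximated by elements lying in finite dimensional subrepresentations, i.e.\ by finite linear combinations of matrix coefficients of irreducibles.

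Second, I would push this density through the conjugation averaging map $P \colon C(G) \to C(X)$, $(P\phi)(g) = \int_G \phi(hgh^{-1})\, dh$. This $P$ is continuous with respect to the sup norm and is the identity on $C(X)$, so it suffices to see what it does to matrix coefficients. For an irreducible representation $(V,\rho)$ and $c_{v,\alpha}(g) = \alpha(\rho(g)v)$, the operator $v \mapsto \int_G \rho(h)\rho(g)\rho(h^{-1})v\, dh$ commutes with the action of $G$, so by Schur's lemma it is a scalar; taking traces identifies that scalar as $\chi_V(g)/\dim V$. Hence $P(c_{v,\alpha}) = \frac{\alpha(v)}{\dim V}\chi_V$. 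Given $\phi \in C(X)$, choose a finite linear combination of matrix coefficients approximating $\phi$ uniformly; applying $P$ gives a finite linear combination of characters that still approximates $\phi$ uniformly, which is what we want.

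The main obstacle is the functional-analytic input in the first step: the Hilbert-Schmidt estimate for $T_f$, self-adjointness, and the extraction of a finite dimensional $G$-stable decomposition of $L^2(G)$ from the spectral theorem. Everything afterwards — the approximate-identity argument producing uniform (not merely $L^2$) density, and the descent to characters via Schur's lemma — is clean once the convolution machinery is in place.
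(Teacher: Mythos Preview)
Your argument is the standard, correct proof of the Peter--Weyl theorem: density of matrix coefficients via compact self-adjoint convolution operators and an approximate identity, followed by conjugation averaging and Schur's lemma to pass to characters. There is nothing to compare against, however: the paper states this theorem without proof, citing it as background for the equidistribution discussion in Lecture~\ref{sato-tate}. So your proposal is correct, and supplies what the paper omits.
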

\begin{theorem} ({\bf Orthogonality of characters}) If $\chi, \chi'$ are irreducible characters, then 
\[
\int_G \chi(g) \overline{\chi'(g)} d\mu = \begin{cases} 1 &\text{ if } \chi = \chi', \\
0 &\text{ otherwise}.
\end{cases}
\]
\end{theorem}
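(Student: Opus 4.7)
The plan is to reduce orthogonality of characters to Schur's lemma applied to an averaged intertwining operator. The starting observation is that every irreducible character $\chi$ on a compact group comes from a finite-dimensional unitary representation $\rho \colon G \to U(V)$ (for a compact group, every finite-dimensional representation can be made unitary by averaging an arbitrary inner product against Haar measure, using compactness to ensure convergence). For such a unitary $\rho$ we have $\overline{\chi(g)} = \overline{\tr \rho(g)} = \tr \rho(g)^* = \tr \rho(g^{-1}) = \chi(g^{-1})$, which is the identity that will let us recognise the character integral as a trace of an averaged operator.

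So let $\chi = \tr \rho$ and $\chi' = \tr \rho'$ come from irreducible unitary representations $\rho \colon G \to U(V)$ and $\rho' \colon G \to U(V')$. For any linear map $T \colon V \to V'$ I would form
\[
\widetilde{T} := \int_G \rho'(g^{-1}) T \rho(g)\, d\mu(g),
\]
which is well-defined because $G$ is compact and the integrand is continuous. Left-invariance of $\mu$ (combined with a simple change of variables) shows that $\widetilde{T}$ is $G$-equivariant, i.e.\ $\widetilde{T}\rho(h) = \rho'(h)\widetilde{T}$ for all $h \in G$. Schur's lemma then gives two cases: if $\rho \not\cong \rho'$ then $\widetilde{T} = 0$ for every $T$, while if $\rho = \rho'$ then $\widetilde{T} = \lambda(T) \cdot \mathrm{id}_V$, and taking traces on both sides (using that trace is invariant under conjugation and that $\mu(G)=1$) forces $\lambda(T) = \tr(T)/\dim V$.

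Now I would unpack this into matrix coefficients. Picking orthonormal bases $\{e_i\}$ of $V$ and $\{f_j\}$ of $V'$ and taking $T$ to be the rank-one operator $e_k \mapsto f_l$, reading off the $(i,j)$-entry of the identity $\widetilde{T} = 0$ (respectively $\widetilde{T} = (\tr T / \dim V)\,\mathrm{id}$) yields the matrix-coefficient orthogonality
\[
\int_G \rho_{ki}(g)\,\overline{\rho'_{lj}(g)}\, d\mu = \begin{cases} \dfrac{1}{\dim V}\,\delta_{kl}\delta_{ij} & \text{if } \rho = \rho', \\[4pt] 0 & \text{if } \rho \not\cong \rho', \end{cases}
\]
where I used unitarity to rewrite the matrix entries of $\rho'(g^{-1})$ as conjugates of those of $\rho'(g)$. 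Finally, summing the identity $\chi(g)\overline{\chi'(g)} = \sum_{i,j}\rho_{ii}(g)\overline{\rho'_{jj}(g)}$ against $\mu$ and invoking the matrix-coefficient formula gives $0$ when $\rho \not\cong \rho'$ and $\sum_{i=1}^{\dim V} \tfrac{1}{\dim V} = 1$ when $\rho = \rho'$, which is exactly the desired conclusion.

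The main conceptual hurdle is setting up the averaged operator $\widetilde{T}$ and justifying that it is $G$-equivariant; the rest is bookkeeping. A secondary subtlety is the initial reduction to unitary representations, which uses both compactness and existence of Haar measure in an essential way. Everything afterwards is linear algebra plus Schur's lemma.
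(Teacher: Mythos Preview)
Your argument is the standard and correct proof via Schur's lemma and averaging: unitarise, form the averaged intertwiner $\widetilde{T}$, apply Schur to get either $0$ or a scalar, read off matrix-coefficient orthogonality, and sum over diagonal entries. There is nothing to fault here.

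Note, however, that the paper does not actually prove this theorem. It is stated in Lecture~4 as one of two background facts (alongside the Peter--Weyl theorem) needed to set up the equidistribution criterion for sequences of conjugacy classes in a compact group; the paper simply cites it as a classical result and moves on. So there is no ``paper's own proof'' to compare against --- your proof is the expected one, and it is fine.
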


Hence, we have the following theorem about equidistribution of sequences in $G$. 
\begin{theorem}
A sequence $\alpha_1, \alpha_2, \ldots \in X$ is equidistributed with respect to the (push forward of the) Haar measure if and only if 
\[
\sum_{i=1}^n \chi(\alpha_i) = o(n) 
\]
for all irreducible nontrivial characters $\chi$. 
\end{theorem}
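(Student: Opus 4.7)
The plan is to reduce the theorem to the two quoted results: Peter--Weyl (uniform density of irreducible characters in $C(X)$) and orthogonality. Since irreducible characters are class functions, they descend to continuous functions on $X$, so both integrals and discrete averages make sense on $X$ via the pushforward measure. Throughout I write $\int f\,d\mu$ for $\int_X f\,d(\pi_*\mu)$, where $\pi \colon G \to X$ is the projection, and note that $\int_G \chi\, d\mu = \int_X \chi\, d(\pi_*\mu)$ because $\chi$ is a class function.

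For the forward direction, assume $\alpha_1,\alpha_2,\ldots$ is equidistributed. Let $\chi$ be any nontrivial irreducible character. Then $\chi$ is a continuous function on $X$, so the defining property of equidistribution gives
\[
\lim_{n\to\infty} \frac{1}{n}\sum_{i=1}^n \chi(\alpha_i) = \int_G \chi\, d\mu.
\]
By orthogonality of characters applied to $\chi$ and the trivial character, the right-hand side vanishes, which is precisely the statement $\sum_{i=1}^n \chi(\alpha_i) = o(n)$.

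For the reverse direction, assume the $o(n)$ bound for every nontrivial irreducible character $\chi$. I first verify the same bound trivially holds for $\chi = 1$ (where $\frac{1}{n}\sum 1 = 1 = \int 1\, d\mu$), so by linearity, for every finite linear combination $g = \sum c_\chi \chi$ of irreducible characters,
\[
\lim_{n\to\infty} \frac{1}{n}\sum_{i=1}^n g(\alpha_i) = c_{\mathbf{1}} = \int_G g\, d\mu,
\]
where the last equality again uses orthogonality. Now let $f \in C(X)$ be arbitrary and fix $\varepsilon > 0$. By Peter--Weyl, choose a finite linear combination $g$ of irreducible characters with $\|f - g\|_\infty < \varepsilon$. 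Then for all $n$, $\bigl|\frac{1}{n}\sum f(\alpha_i) - \frac{1}{n}\sum g(\alpha_i)\bigr| \le \varepsilon$ and $\bigl|\int f\,d\mu - \int g\,d\mu\bigr| \le \varepsilon$, while for $n$ large $\bigl|\frac{1}{n}\sum g(\alpha_i) - \int g\,d\mu\bigr| < \varepsilon$ by the previous step. A three-epsilon argument gives $\frac{1}{n}\sum f(\alpha_i) \to \int f\,d\mu$, which is equidistribution.

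The only subtlety is ensuring that Peter--Weyl supplies \emph{uniform} (not merely $L^2$) density of characters in the space of continuous class functions; this is the content of the theorem as stated above, and once granted, the rest is the standard Weyl criterion argument transported from $S^1$ to a general compact group. The main obstacle is therefore conceptual rather than technical: recognizing that irreducible characters play exactly the role that the exponentials $e(mx)$ played in the motivating example on $S^1$.
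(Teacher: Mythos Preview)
Your proof is correct and is precisely the argument the paper has in mind: the theorem is stated there with a bare ``Hence'' after the Peter--Weyl and orthogonality theorems, leaving the reader to supply exactly the density-plus-three-epsilon argument you wrote out. There is no separate proof in the paper to compare against; you have simply made explicit what the lecture leaves implicit.
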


\begin{example}
Let $G=S_3$. The conjugacy classes are determined by cycle type: $C_1=\{id\}$, $C_2=\{(12),(23),(13)\}$, $C_3=\{(123),(132)\}$. The character table of $S_3$ is the following. 
\[
\begin{tabular}{c|c|c|c|c|c}
    \text{Haar measure} & $\#C_i$ & \text{conj. class} & \text{triv} & \text{nat} & \text{sgn}  \\
    \hline 
     $1/6$ & 1 & $C_1$ & 1 & 2 & 1\\
     $1/2$ & 3 & $C_2$ & 1 & 0 & -1 \\
     $1/3$ & 2 & $C_3$ & 1 & -1 & 1 
\end{tabular}
\]
We can see from this table that for a sequence to be equidistributed, it should spend twice the time in $C_3$ as it does in $C_1$ (as dictated by the column corresponding to the natural representation), and should spend as much time in $C_2$ as in $C_1 \cup C_3$ (as dictated by the column corresponding to the sign representation).
\end{example}

\begin{example}
Let $G=SU(2)$. Since all matrices in $SU(2)$ are diagonalizable, conjugacy classes are determined by eigenvalues, which come in conjugate pairs. So $X=\{(\gamma, \bar{\gamma})| \gamma \in S_1\}$ can be identified with $S^1_+=\{z \in S^1 | \Re(z)\geq 0 \} \simeq [0,\pi]$. By the Weyl character formula for $SU(2)$, irreducible characters are of the form
\[
\chi_m(z)=z^m+z^{m-2}+\cdots + z^{-m},
\]
where $z=e^{i\theta } \in S^1$. (Here we are using the identification $X\simeq S^1_+$ when defining these characters.) The Haar measure on the space of conjugacy classes (after identifying $S^1$ with $[0,2\pi]$ via the exponential function) is then 
\[
\frac{2}{\pi} \sin^2\theta d \theta.
\]
\begin{exercise}
Check that this is correct by showing that \[
\frac{2}{\pi}\int_0^\pi \sin^2\theta d \theta = 1
\]
and 
\[
\int_0^\pi \chi_m(e^{i \theta}) \sin^2 \theta d \theta =0
\]
for $m \neq 0$.
\end{exercise}
The figure below is a plot of $\frac{2}{\pi} \sin^2 \theta$. From this we see the distribution of eigenvalues of matrices in $SU(2)$. A first observation is that there are many matrices in $SU(2)$ with eigenvalues $\{i, -i\}$ (corresponding to $\theta = \frac{\pi}{2}$), and very few matrices with eigenvalues $\{1,1\}$ and $\{-1, -1\}$ (corresponding to $\theta = 0$ and $\theta = \pi$, respectively). In fact, there is exactly one matrix in each case: $I$ and $-I$. 

\begin{remark}
This demonstrates that it is more likely for matrices in $SU(2)$ to have eigenvalues which are ``far away'' (meaning that the angle between them in the complex plane is large), an important fact in random matrix theory.
\end{remark}

\begin{center}
 \includegraphics[scale=0.5]{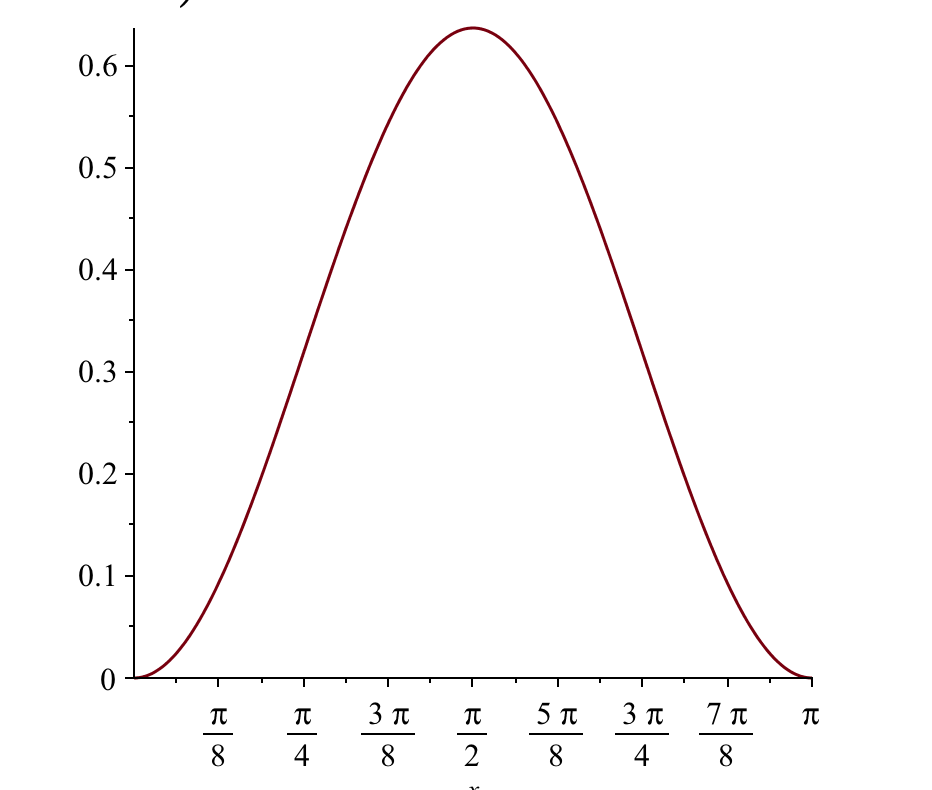}
\end{center}
Hence if you had a sequence of pairs of complex numbers which you suspect are eigenvalues of random matrices in $SU(2)$, you could tell pretty quickly whether or not it was plausible. 
\end{example}

\subsection{Elliptic curves and the Sato-Tate conjecture}

Next we discuss elliptic curves. (This seems to be completely unrelated, but we should have faith that it will come full circle.) Let $k$ be a field, and $E$ an elliptic curve. (That is, a smooth projective curve over $k$ of genus $1$ with a fixed rational point $0 \in E(k)$.) Assume the characteristic of $k$ is not $2$ or $3$. Then $E$ can be made to be of the form (the projective closure of)
\[
y^2=x^3+ax+b, \text{ with } 0=(0:0:1) \text{ its point at infinity}.
\]
Here smoothness translates into the fact that $x^3+ax+b$ has no repeated roots, i.e. that $4a^3-27b^2\neq 0$. 

First assume we are working over $\C$. Then $E$ is a compact Riemann surface of genus $g =1$, so $E=\C/\Lambda$ for some lattice $\Lambda \simeq \Z^2 \hookrightarrow \C$. Hence
\begin{align*}
\left\{ \begin{array}{c}\text{elliptic curves}
\\
\text{over $\C$} 
\end{array} \right\}_{/\text{ iso}}  &\simeq \left\{ \begin{array}{c} \text{lattices} 
\\
\text{ $\Lambda \subset \C$}
\end{array}   \right\}_{/\text{ iso}}\xrightarrow{\text{``period ratio''}} \prescript{}{\SL_2(\Z)}{\backslash \mathbb{H}},
\end{align*}
where $\mathbb{H}$ is the upper half-plane. The quotient $\prescript{}{\SL_2(\Z)}{\backslash \mathbb{H}}$ can be identified (up to some ambiguity on the boundary) with its fundamental domain
\begin{center}
 \includegraphics[scale=0.45]{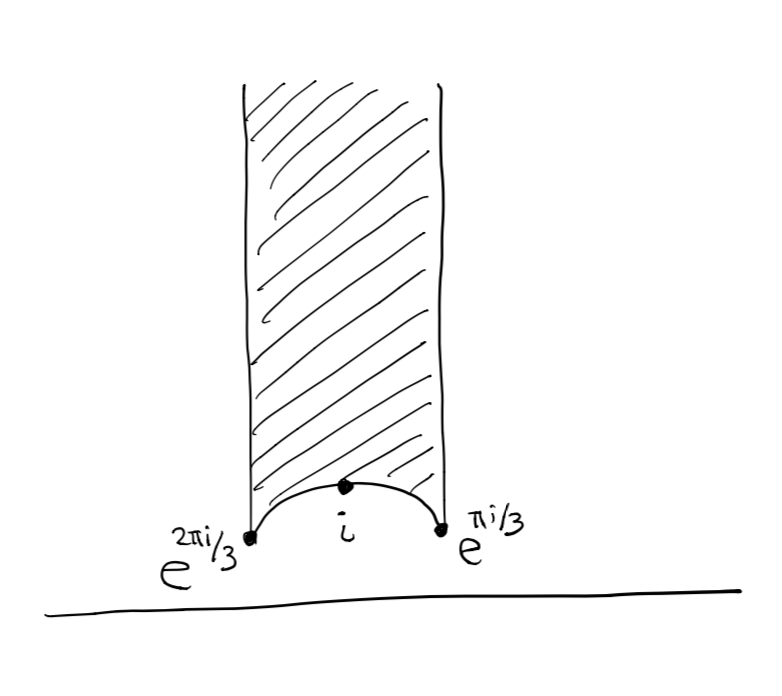}
\end{center}
so we can consider all complex elliptic curves as points in the region above. 

Certain elliptic curves have extra structure called complex multiplication. Let $E$ be a complex elliptic curve. As a real Lie group, $E$ is isomorphic to $S^1 \times S^1$, hence 
\begin{align*}
\End_{\text{Lie gp}}(E)&=\Z^2\\
(z,w)\mapsto (z^m, w^n) & \leftrightarrow (m,n)
\end{align*}
But $E$ has additional structure - it is an elliptic curve ($E \simeq \C/\Lambda$), and a complex algebraic group. By a miracle of abelian varieties, the elliptic curve endomorphisms of $E$ fixing $0$ are the same as the complex algebraic group endomorphisms of $E$. Hence,
\[
\End_{0 \mapsto 0}(E) = \End_{\text{alg gp}}(E) = \{z \mid z \Lambda \subset \Lambda \} = \begin{cases} \Z &\text{ if } \Lambda \cdot \Lambda \not\subset \Lambda, \\ 
\Lambda & \text{ if } \Lambda \cdot \Lambda \subset \Lambda. \end{cases}
\]
In the second case, we say $E$ has {\bf complex multiplication}. In the fundamental domain, the elliptic curves with complex multiplication are of the form $i \sqrt{d}$ (the ``corner point'' in the picture above):
\begin{center}
 \includegraphics[scale=0.5]{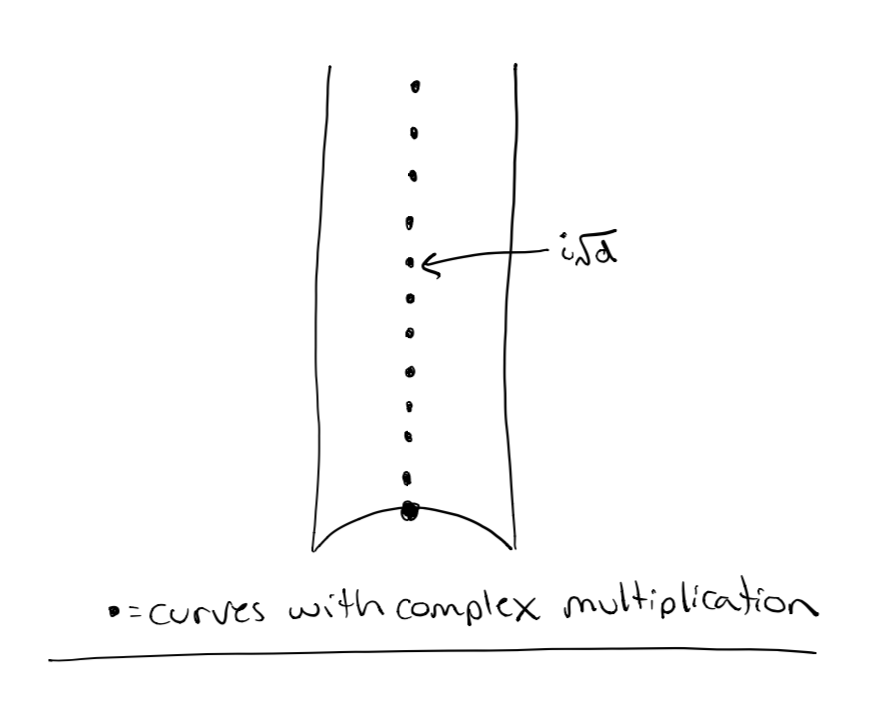}
\end{center}
Thus one can think of curves with complex multiplication as being very special. 

\begin{exercise} Show that if $\Lambda \cdot \Lambda \subset \Lambda$ (i.e. $E$ has complex multiplication), $\Lambda \otimes \Q$ is an imaginary quadratic field. In particular, if $E$ has complex multiplication then $\Lambda$ is what is called an {\em order} in an imaginary quadratic field.
\end{exercise}

Next we work over $\Q$, and consider the elliptic curve $E$ given by $y^2=x^3+ax+b$ for $a,b \in \Z$. To understand $E$, we reduce modulo $p$ and study the number of points of $E(\mathbb{F}_p)$. If $E_{\F_p}=E \times \Spec \F_p$ is nonsingular, then $p$ is a prime of {\bf good reduction}. (This is the analogue for algebraic varieties of a prime being unramified.) We can bound $\#E(\F_p)$ by the  {\bf Hasse-Weil bound}:
\begin{theorem} (Hasse-Weil) 
\[
\#E(\F_p) = 1 + p - \alpha_p,
\]
where $|\alpha_p|\leq 2 \sqrt{p}$. 
\end{theorem}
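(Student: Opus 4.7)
The plan is to exploit the Frobenius endomorphism $\phi \colon E_{\F_p} \to E_{\F_p}$, $(x, y) \mapsto (x^p, y^p)$. Its fixed points on $\overline{\F_p}$-points are exactly $E(\F_p)$, so $E(\F_p) = \ker(1 - \phi)$. The first step is to observe that $1 - \phi$ is a \emph{separable} isogeny: its differential at the identity is $d(1) - d(\phi) = \mathrm{id} - 0 = \mathrm{id}$, because Frobenius has vanishing differential in characteristic $p$. Consequently $\#E(\F_p) = \#\ker(1 - \phi) = \deg(1 - \phi)$.

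The core input is that the degree map $\deg \colon \End(E_{\F_p}) \to \Z$ is a positive-definite quadratic form. The cleanest route is via the dual isogeny: for each isogeny $f$ there is a unique $\hat f$ with $f \hat f = [\deg f]$, and the map $f \mapsto \hat f$ is $\Z$-linear. The non-trivial input here is additivity, $\widehat{f + g} = \hat f + \hat g$, typically proved via the Weil pairing on torsion or by a divisor-theoretic calculation on $E_{\F_p}$. Positivity of $\deg$ is then automatic. Let $B(f, g) := \deg(f + g) - \deg f - \deg g \in \Z$ be the associated symmetric bilinear pairing, and set $\alpha_p := B(1, \phi)$.

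Using $\deg \phi = p$ (Frobenius is finite of degree $p$), the identity $\deg(1 - \phi) = \deg(1) + \deg(\phi) - B(1, \phi) = 1 + p - \alpha_p$ gives $\#E(\F_p) = 1 + p - \alpha_p$, matching the theorem statement. For the bound, positivity of $\deg$ yields
\[
0 \leq \deg(m + n \phi) = m^2 + \alpha_p m n + p n^2 \quad \text{for all } (m, n) \in \Z^2 \setminus \{(0, 0)\}.
\]
A binary quadratic form with real coefficients taking non-negative values on $\Z^2 \setminus \{0\}$ must in fact take non-negative values on all of $\R^2$ (rescale a dense rational point in a hypothetical negative region to an integer point), so its discriminant is $\leq 0$. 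Hence $\alpha_p^2 - 4p \leq 0$, i.e., $|\alpha_p| \leq 2\sqrt{p}$.

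The main obstacle, and the only substantive input, is proving that $\deg$ is a quadratic form — concretely, the $\Z$-linearity of the dual isogeny $f \mapsto \hat f$. Once this is granted, everything else is routine: separability of $1 - \phi$ is a one-line check on differentials, $\deg \phi = p$ is immediate from the local description of Frobenius, and the discriminant argument is elementary. I would also remark that good reduction is exactly what lets us regard $E_{\F_p}$ as an elliptic curve at all; at a prime of bad reduction $E_{\F_p}$ degenerates to a nodal or cuspidal cubic whose smooth locus is $\G_m$ or $\G_a$, and a separate (easier) point count is required there.
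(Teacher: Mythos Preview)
Your argument is the standard classical proof (essentially Hasse's, as presented for instance in Silverman's \emph{The Arithmetic of Elliptic Curves}), and it is correct. The paper, however, does not prove this theorem at all: it is quoted without proof as a known background fact in an informal lecture on the Sato--Tate conjecture, serving only to motivate the subsequent discussion of Frobenius eigenvalues and the Grothendieck--Lefschetz trace formula. So there is no proof in the paper to compare against; your write-up supplies exactly the argument the paper omits.
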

The Hasse-Weil bound tells us that $1+p$ is a ``square root accurate'' approximation for $\#E(\F_p)$. By our discussions last week, hopefully you are convinced that this is an analogue of the Riemann hypothesis for elliptic curves. 

\vspace{5mm}
\noindent
{\bf Big question:} How do the $\alpha_p$'s behave? 
\vspace{5mm}

We can start to answer this question using the {\bf Grothendieck--Lefshetz trace formula.} Let $H^*(E)$ be the \'{e}tale cohomology of $E$. For all $p$ outside a finite set, we have an action of $\Frob$ on (\'{e}tale, but don't worry if you don't know what this is) cohomology: 
\[
\begin{array}{cccc}
    \dim: & 1 & 2 & 1 \\
    & H^0(E) & H^1(E) & H^2(E) \\
    &\acts & \acts & \acts \\
    &\Frob = 1 & \Frob \sim \bp \gamma_p & 0 \\ 0 & \overline{\gamma_p} \ep & \Frob=p  
\end{array}
\]
The Grothendieck-Lefshetz trace formula is 
\[
\#E(\F_p) = \sum(-1)^i \Tr(\Frob:H^i) = 1+p - (\gamma_p + \overline{\gamma_p}),
\]
where $\gamma_p, \overline{\gamma_p}$ are the eigenvalues of Frobenius on $H^1(E)$. Hence to determine the number of solutions of $E(\F_p)$, it is enough to examine $\gamma_p$, and it is true (but not so easy to see) that the Riemann hypothesis for $E$ is equivalent to $|\gamma_p|=\frac{1}{2}$. 

We can renormalize so that
\[
\theta_p := \frac{1}{\sqrt{p}}\gamma_p \in S_+^1.
\]
This leaves us with a sequence $\theta_2, \theta_3, \theta_5, \ldots$ of points on a semicircle $S^1_+$ controlling the number of points of $E$ modulo $p$. 
\begin{center}
 \includegraphics[scale=0.5]{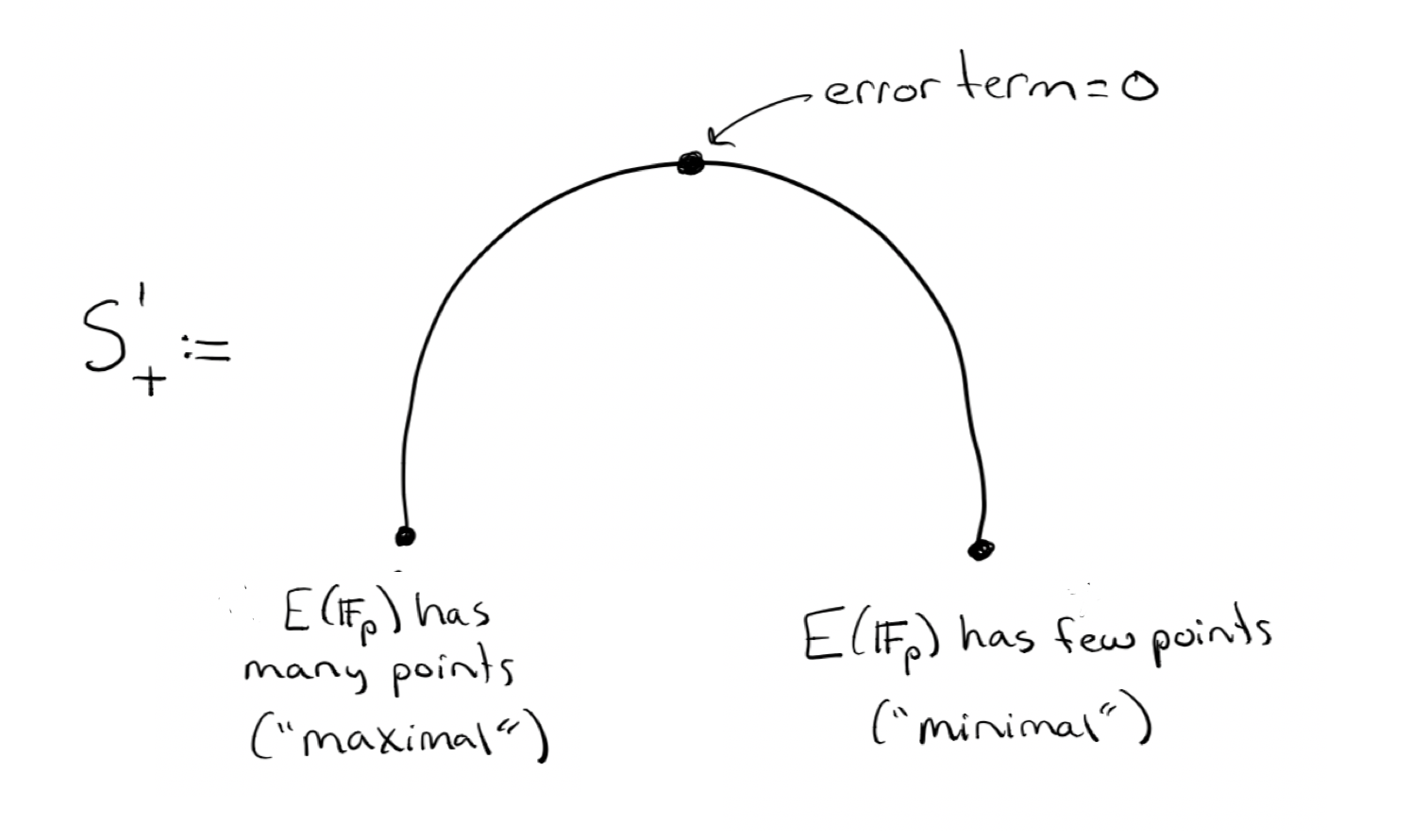}
\end{center}

\vspace{5mm}
\noindent
{\bf Sato--Tate conjecture (1960):} Suppose $E$ does not have complex multiplication. If we identify $S^1_+ \xrightarrow{\simeq}[0,\pi]$, then 
\[
\lim_{n \rightarrow \infty}\frac{1}{\pi(n)}\sum_{p \leq n} \mu_{\theta_p} = \frac{2}{\pi} \sin^2 \theta d \theta.
\]
Here $\mu_{\theta_p}$ is the Dirac distribution. 

\vspace{5mm}
{\em In other words, the Sato--Tate conjecture is that the $\theta_p$'s look like eigenvalues of random matrices in $SU(2)$!} Below is a very beautiful illustration of this phenomenon. For the elliptic curve $y^2 + y = x^3+x+3x+5$ (which has no complex multiplication), the following is the plot of $\theta_p$ for the first $5,000$ primes. (The further out the dot, the bigger the prime.) 
\begin{center}
     \includegraphics[scale=0.4]{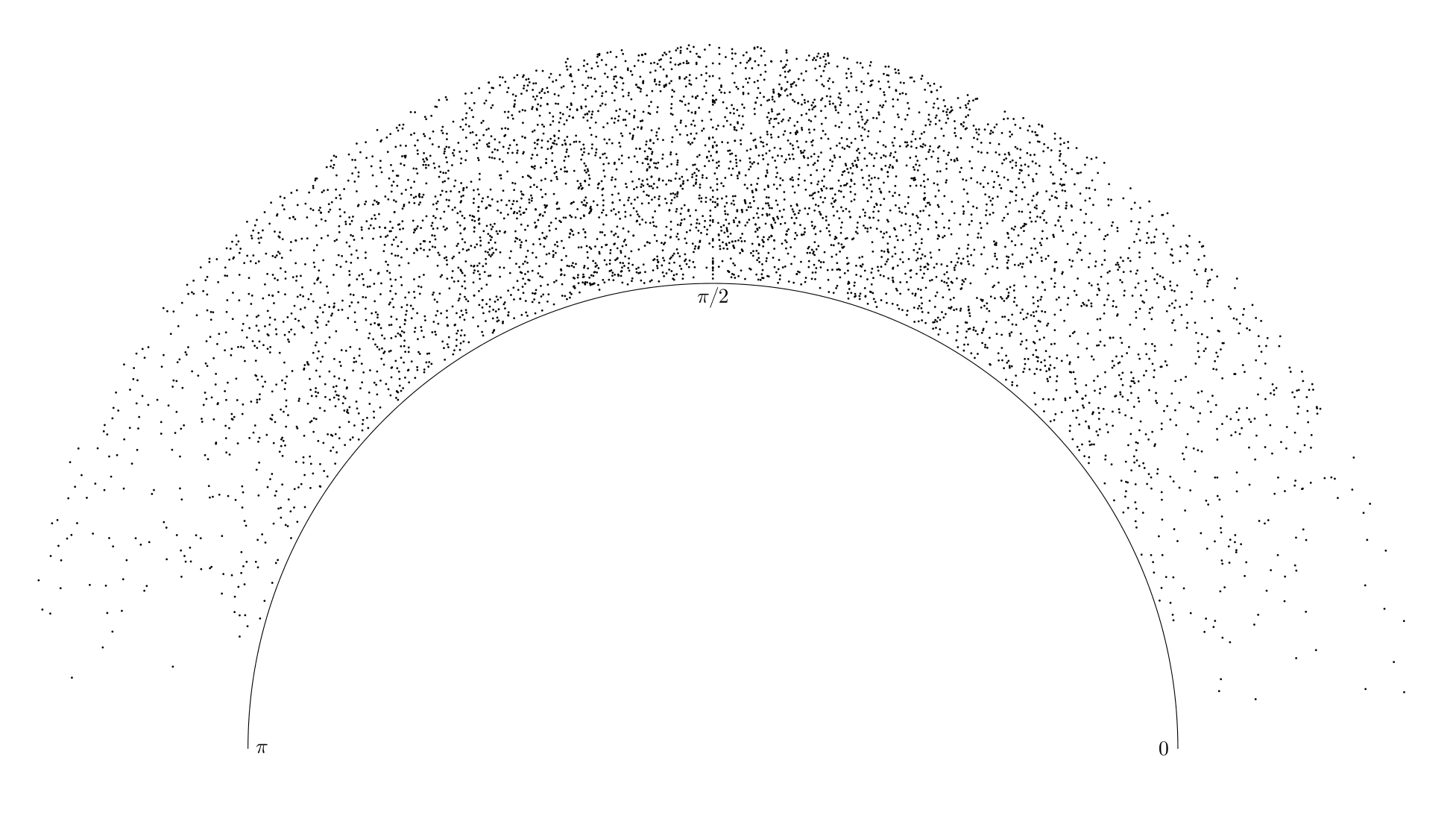}
\end{center}
In contrast to this, consider the elliptic curve $y^2=x^3+1$. Here complex multiplication is given by the Eisenstein integers. The plot below shows $\theta_p$ for the first $5,000$ primes. Norm is linear in the prime (so, again, the further the dot, the bigger the prime). 
\begin{center}
     \includegraphics[scale=0.4]{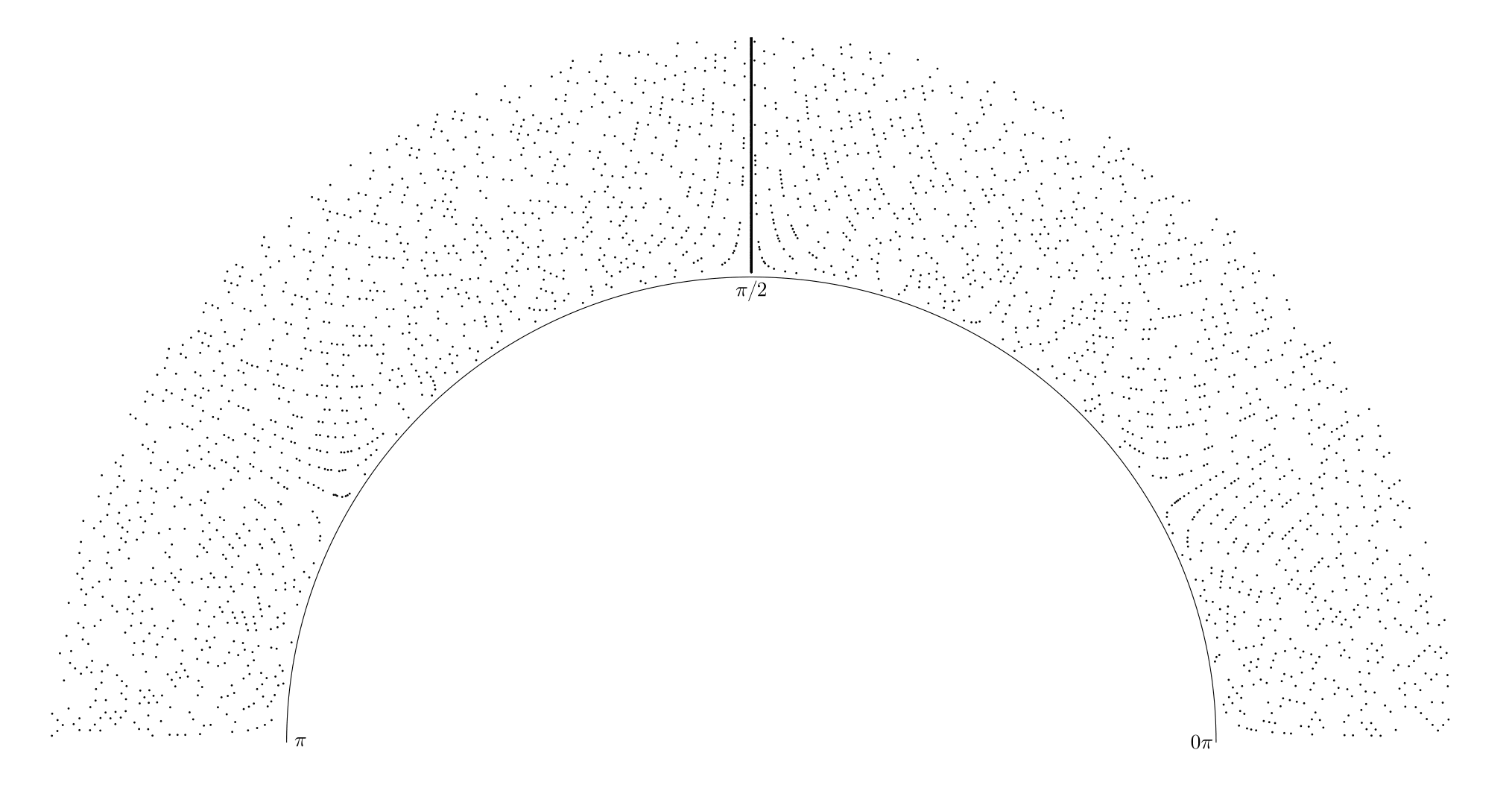}
\end{center}
The difference between these two curves is striking!

\begin{remark}
The case of complex multiplication is well understood. The idea (Geordie thinks) is that the extra endomorphisms force the $\Frob$ to lie in a subgroup of $SU(2)$. (Geordie points out that he is ``the exact opposite of an expert on this topic...'')
\end{remark}

\begin{remark}
One can think of Sato-Tate (roughly) as a higher-dimensional analogue of Chebotarev density: $\Gal(K/\Q) \leftrightarrow SU(2)$. 
\end{remark}

\subsection{Equidistribution and $L$-functions}

In the final part of this lecture, we will describe how the Sato--Tate conjecture follows from a simple part of the Langlands correspondence (which is still conjectural). The Sato--Tate conjecture has been proven using other methods, but the proof is very involved and heavily influenced by ideas from the Langlands program. This example is meant to showcase the power of the Langlands correspondence. 

Let $G$ be a compact group, $X$ its space of conjugacy classes, and $x_p \in X$ a family of elements parameterized by primes $p$ (perhaps outside some finite set of ``bad'' primes). For an irreducible character $\chi$, we can define an $L$-function analogously to how we defined Artin $L$-functions for Galois groups:
\[
L(\chi, s)=\prod_p \frac{1}{\det(1-\rho(x_p)p^{-s})},
\]
where $\rho$ is the representation afforded by $\chi$. This converges for $\Re(s)>1$. Assume additionally that $L(\chi, s)$ extends to a meromorphic function on $\Re(s) \geq 1$ having neither zeros nor poles along $\Re(s)=1$ except possibly at $s=1$. Let $-c_\chi$ be the order of $L(\chi, s)$ at $s=1$ (so $c_\chi>0$ pole, $c_\chi<0$ zero). With these assumptions, we have the following theorem. 
\begin{theorem}
\[
\sum_{p \leq n} \chi(x_p) = c_\chi \cdot \frac{n}{\log(n)} + o\left(\frac{n}{\log(n)}\right).
\]
\end{theorem}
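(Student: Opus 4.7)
The plan is to mimic the standard deduction of the Prime Number Theorem (and of Dirichlet's theorem on primes in arithmetic progressions) from analytic properties of $\zeta(s)$ and of $L(\chi,s)$: this theorem is a Tauberian statement that transfers the behavior of $L(\chi,s)$ at the line $\Re(s)=1$ into asymptotics for partial sums of the ``Dirichlet coefficients'' $\chi(x_p)$.

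First I would convert $L(\chi,s)$ into a Dirichlet series supported on primes by taking its logarithm. Using $-\log\det(1-A) = \sum_{k\geq 1} \tr(A^k)/k$ applied to $A = \rho(x_p)\,p^{-s}$ (where $\rho$ is a representation with character $\chi$), one gets
\[
\log L(\chi,s) \;=\; \sum_p \sum_{k\geq 1} \frac{\chi(x_p^k)}{k}\, p^{-ks}.
\]
Since $|\chi(x_p^k)| \leq \dim\rho$ is uniformly bounded, the $k\geq 2$ terms converge absolutely on $\Re(s) > 1/2$ and so define a function holomorphic there. Hence up to a function holomorphic near $s=1$,
\[
\log L(\chi,s) \;=\; \sum_p \chi(x_p)\, p^{-s} \;+\; (\text{holomorphic on } \Re(s) > 1/2).
\]

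Next I would extract the singularity from the analytic hypothesis. By assumption, near $s=1$ we may write $L(\chi,s) = (s-1)^{-c_\chi}\,h(s)$ with $h$ holomorphic and non-vanishing in a neighborhood of $s=1$, and the hypothesis that $L(\chi,s)$ has no zeros or poles elsewhere on $\Re(s)=1$ lets $\log L(\chi,s)$ be defined continuously on a neighborhood of $\{\Re(s)\geq 1\}\setminus\{1\}$. Combining with the previous step,
\[
f_\chi(s) \;:=\; \sum_p \chi(x_p)\, p^{-s} \;=\; c_\chi\,\log\!\left(\frac{1}{s-1}\right) \;+\; g(s),
\]
where $g$ extends continuously to $\Re(s) \geq 1$. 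This already rules out the sum on the left having a singularity worse than logarithmic at $s=1$, which morally forces the density claim.

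The final and hardest step is a Tauberian theorem turning this logarithmic singularity of $f_\chi$ into the asymptotic $\sum_{p \leq n} \chi(x_p) = c_\chi\, n/\log n + o(n/\log n)$. The obstacle is that the coefficients $\chi(x_p)$ are complex and in general not non-negative, so the classical Wiener--Ikehara theorem does not apply directly. My preferred route is to appeal to a complex Tauberian theorem adapted to bounded-coefficient Dirichlet series with logarithmic singularities (in the spirit of Delange, or a contour-integral argument \`a la Newman--Zagier applied to $f_\chi(s) - c_\chi\log(1/(s-1))$, which by the above extends continuously to $\Re(s)\geq 1$). An alternative, more hands-on route is to decompose $\chi(x_p) = \Re\chi(x_p) + i\,\Im\chi(x_p)$, write each real sequence as a difference of two non-negative sequences bounded by $\dim\rho$, and apply classical Ikehara to each resulting Dirichlet series; the uniform boundedness of $\chi$ provides the domination needed for the Tauberian hypotheses, and the logarithmic singularity produces exactly the $n/\log n$ weighting relative to the trivial bound $\sum_{p\leq n} 1$.
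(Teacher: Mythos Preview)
The paper does not actually prove this theorem; it only remarks that the proof ``involves some complex analysis and tricks with sums, but is not difficult.'' Your outline is a correct sketch of the standard argument (essentially the one in the appendix to Serre's \emph{Abelian $\ell$-adic representations and elliptic curves}), and is entirely consistent with that one-line description.

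One small comment on the Tauberian step, which you correctly flag as the delicate point: rather than working with $\log L(\chi,s)$ and its logarithmic singularity, it is often cleaner to differentiate and work with $-L'(\chi,s)/L(\chi,s)$. Under the stated hypotheses this has a \emph{simple pole} at $s=1$ with residue $c_\chi$ and extends continuously to the rest of $\Re(s)\geq 1$. A Wiener--Ikehara type theorem (adapted to bounded complex coefficients, for instance by adding a large multiple of $-\zeta'/\zeta$ to force non-negativity, exactly as in your second suggested workaround) then gives $\sum_{p\leq n}\chi(x_p)\log p = c_\chi\, n + o(n)$, and partial summation removes the $\log p$ weight to yield the claim. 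This route sidesteps the Delange-type Tauberian theorems for logarithmic singularities, which are a bit less standard.
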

The proof of this theorem involves some complex analysis and tricks with sums, but is not difficult. This theorem has an important corollary. 
\begin{corollary}
\label{equidistribution}
If for all nontrivial $\chi$, $L(\chi, s)$ is holomorphic and nonzero at $s=1$, then the $x_p$ are equidistributed in $X$. 
\end{corollary}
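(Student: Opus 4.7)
The plan is to reduce the corollary directly to the theorem preceding it, combined with the equidistribution criterion via characters established earlier in the lecture and the Prime Number Theorem.

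First I would unpack what the hypothesis gives us. By assumption, for every nontrivial irreducible character $\chi$, the $L$-function $L(\chi,s)$ is holomorphic and nonzero at $s=1$, so its order of vanishing there is $0$ and hence $c_\chi = 0$. Plugging this into the preceding theorem yields, for every nontrivial irreducible $\chi$,
\[
\sum_{p \leq n} \chi(x_p) = o\!\left(\frac{n}{\log n}\right).
\]

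Next I would reindex the sequence by primes so that the earlier equidistribution criterion applies. Let $p_1 < p_2 < \cdots$ enumerate the primes (outside the finite bad set, which contributes only a bounded term and is harmless), and set $\alpha_i := x_{p_i} \in X$. The criterion stated earlier says that $(\alpha_i)$ equidistributes in $X$ (with respect to the pushforward of Haar measure) if and only if
\[
\sum_{i=1}^{N} \chi(\alpha_i) = o(N)
\]
for every nontrivial irreducible $\chi$. By the Prime Number Theorem, $N = \pi(p_N) \sim p_N/\log p_N$, so a quantity of size $o(p_N/\log p_N)$ is the same as a quantity of size $o(N)$. Therefore the bound from the theorem translates exactly into
\[
\sum_{i=1}^{N} \chi(\alpha_i) \;=\; \sum_{p \leq p_N} \chi(x_p) \;=\; o\!\left(\frac{p_N}{\log p_N}\right) \;=\; o(N),
\]
which is the desired condition. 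Applying the equidistribution criterion finishes the proof.

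The argument is essentially formal given the inputs; there is no single hard step, only bookkeeping. The one place to be slightly careful is the passage from ``density weighted by $n/\log n$'' (which is how the theorem naturally phrases things) to ``density weighted by the counting measure on primes'' (which is what the Peter--Weyl equidistribution criterion from earlier wants). The Prime Number Theorem bridges these, and the finitely many omitted primes at which the $x_p$ may be undefined contribute an $O(1)$ error that is absorbed into the $o(N)$. That is really the only thing one needs to verify honestly.
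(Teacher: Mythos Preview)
Your proposal is correct and matches the intended argument; the paper does not spell out a proof of this corollary, but the reasoning is exactly as you describe — the hypothesis forces $c_\chi = 0$, the preceding theorem gives $\sum_{p \le n}\chi(x_p) = o(n/\log n)$, and the Prime Number Theorem converts this to the $o(N)$ bound needed for the character-theoretic equidistribution criterion from earlier in the lecture. Your care with the reindexing via $\pi(n) \sim n/\log n$ is precisely the bookkeeping the paper leaves implicit.
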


So we can use $L$-functions to test whether a sequence in a compact group is equidistributed! 

\begin{exercise}
\begin{enumerate}[label=(\alph*)]
\item Show that $L(\chi, 1)\neq 0$ implies Dirichlet's theorem. 
\item Let $K$ be a number field. It's known that $\zeta_K(s)/\zeta(s)$ is holomorphic and nonvanishing at $s=1$. Using this, deduce Chebotarev's density theorem. 
\end{enumerate}
\end{exercise}

An important consequence of Corollary \ref{equidistribution} is that it lets us reframe the Sato-Tate conjecture in terms of $L$-functions.
\begin{example} (Serre) Assume that for all $m \geq 1$, the symmetric $L$-power function 
\[
L(S_\chi^m, s) := \prod_p \frac{1}{ (1 - \theta_p^m p^{-s}) ( 1- \theta_p^{m-2}p^{-s}) \cdots (1-\theta_p^{-m}p^{-s})}
\]
satisfies the extra assumption above. (Here $S^m_\chi$ is the $m^{th}$ symmetric power of the representation with character $\chi$ sending $\Frob\mapsto \bp \theta_p & 0 \\ 0 & \theta_p^{-1} \ep$.)  Then the Sato-Tate conjecture holds!
\end{example}

Recall our cartoon of the Langlands correspondence: 
\[
\begin{tikzcd}
\left\{ \begin{array}{c} \text{``geometric'' $n$-dimensional} 
\\
\text{ rep'ns of $\Gal(K)=\Gal(\overline{K}/K)$}
\end{array}   \right\}\arrow[rd] & \xleftrightarrow{ \text{ finite-to-one }} &
\left\{ \begin{array}{c} \text{``automorphic'' rep'ns} 
\\
\text{ of $\GL_n(\mathbb{A})$}
\end{array}   \right\}\arrow[ld]\\
& \text{$L$-functions}
\end{tikzcd}
\]

An extremely important part of this picture is {\bf functoriality}. That is, the diagram should be compatible with
\begin{enumerate}
    \item composition of representations of the Galois group with algebraic representations of $\GL_n$ (so $\rho: \Gal(K) \rightarrow \GL_n \xrightarrow{\text{alg rep'n}} \GL_m$ should correspond to some operation on automorphic representations), and
    \item changing the field $K$. 
\end{enumerate}
A key piece of the Langlands correspondence is that $L$-functions coming from automorphic representations have many desirable properties, which are extremely difficult to establish for $L$-functions coming from geometric representations of $\Gal(K)$. For example, once we know that an $L$-function comes from an automorphic representation, we immediately know that it admits a meromorphic continuation and has a functional equation.   

\vspace{5mm}
\noindent
{\bf The Punchline:} {\em In the simple example of the algebraic representation $\GL_2 \rightarrow \GL_m$ via symmetric powers, the prediction of functoriality implies the Sato-Tate conjecture! }

\pagebreak
\section{Lecture 5: Infinite Galois theory and global class field theory}
\label{lecture 5}

The topic of today's lecture is {\bf class field theory}. But before diving in, we will start with a motivating question and a review of infinite Galois theory.

\vspace{5mm}
\noindent
{\bf Basic Question:} What are all finite extensions of a number field $K$ (e.g. $\Q$)? 

\vspace{5mm}
\noindent 
This is certainly a question of fundamental importance in number theory. One could also ask more specific questions, such as ``How many number fields have a given Galois group and discriminant?'' For almost every question of this sort, we have no idea what the answer is. Class field theory develops techniques for answering these questions in the abelian setting. We will say more precisely what this means later in the lecture, but for now, let's take a look at an example. 

\begin{example}
\label{profinite Z}
Let $K=\F_p$, and $\overline{K}$ its algebraic closure. For all $n\geq 1$, there is a unique subfield $K_n \subset \overline{K}$ with $p^n$ elements, and $\overline{K}=\bigcup_{n \geq 1} K_n$. We have the following picture of field extensions and corresponding Galois groups:
\begin{center}
     \includegraphics[scale=0.4]{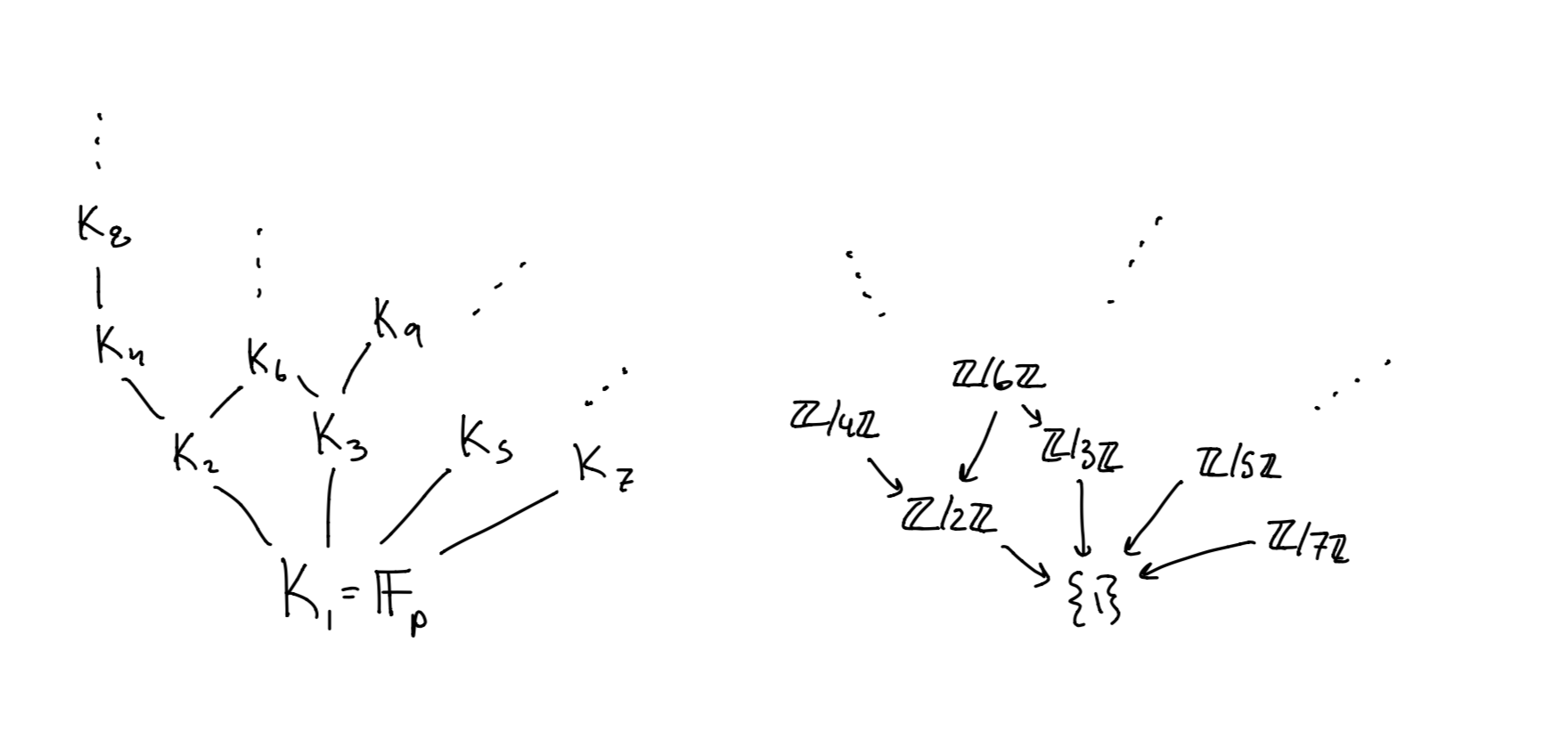}
\end{center}
Let's calculate $\Gal(\overline{K}/K)$. Because $\overline{K}=\bigcup_{n \geq 1} K_n$, we have an injection 
\[
\Gal(\overline{K}/K) \hookrightarrow \prod_{n \geq 1} \Gal(K_n/K) = \prod_{n \geq 1} \Z/n\Z.
\]
For $\varphi \in \Gal(\overline{K}/K)$, let the sequence $(\varphi_n)$ be its image in $\prod_{n \geq 1}\Gal(K_n/K)$. A sequence $(\gamma_n) \in \prod_{n \geq 1}\Gal(K_n/K)$ is in the image if and only if $\gamma_n=\gamma_m \mod m$ whenever $m | n$. Hence, 
\[
\Gal(\overline{K}/K)=\lim_{\longleftarrow} \Z/n\Z = \widehat{\Z}
\]
is the ``profinite completion of $\Z$''. 
\end{example}

\begin{exercise}
\begin{enumerate}[label=(\alph*)]
\item Show that $\widehat{Z} =\prod_{p \text{ prime}} \Z_p$. 
\item Show that $\widehat{\Z}$ has uncountably many subgroups. Hence a naive Galois correspondence cannot hold.
\end{enumerate}
\end{exercise}

\subsection{Infinite Galois theory}
Let $L/K$ be a Galois extension (algebraic, normal, separable, not necessarily finite). Then we have an injection
\[
\Gal(L/K) \hookrightarrow \prod_{K \subset L' \subset L} \Gal(L'/K),
\]
where the product is taken over all towers of field extensions $K \subset L'\subset L$, where the extension $L'/K$ is finite Galois. For all towers of extensions $K \subset L' \subset L'' \subset L$, where the extensions $L'/K$ and $L''/K$ are finite Galois, there is a corresponding map $\Gal(L''/K) \rightarrow \Gal(L'/K)$. This determines the image of this injection; that is, 
\[
\Gal(L/K)=
  \lim_{\substack{\longleftarrow \\ K \subset L' \subset L}} \Gal(L'/K).
\]
This is a topological group. Indeed, if we give $\prod_{K \subset L' \subset L} \Gal(L'/K)$ the product topology (which is compact, by Tychonov), then $\Gal(L/K)$ inherits the subspace topology. 
\begin{exercise}
\label{compact}
The group $\Gal(L/K)$ is closed (therefore compact) in $\prod_{K \subset L' \subset L} \Gal(L'/K)$.
\end{exercise}
\begin{example}
  We see from Exercise \ref{compact} and Example \ref{profinite Z} that the group $\widehat{\Z}$ is compact, which might look strange.
\end{example}

\begin{exercise}
(Important, can be used as a definition) A basis of open neighborhoods of $1 \in \Gal(L/K)$ is given by kernels of the maps 
\[
\Gal(L/K)\rightarrow \Gal(L'/L)
\]
for $L'/K$ finite Galois. 
\end{exercise}

For a general group $G$, define 
\[
\widehat{G}=\lim_{\substack{ H \subseteq G \\ \text{ normal}\\\text{finite index}}} G/H. 
\]
The group $G$ is {\bf profinite} if $G \xrightarrow{\simeq}\widehat{G}$, otherwise, say $\widehat{G}$ is the {\bf profinite completion} of $G$. {\em So Galois groups are profinite groups!} The key example to keep in mind is the following. 

\begin{example}
\label{3-adics}
Consider the group $\displaystyle{\Z_p=\lim_{\longleftarrow}\Z/p^n\Z}$. What does this group look like? Here's a picture for $p=3$:
\begin{center}
     \includegraphics[scale=0.4]{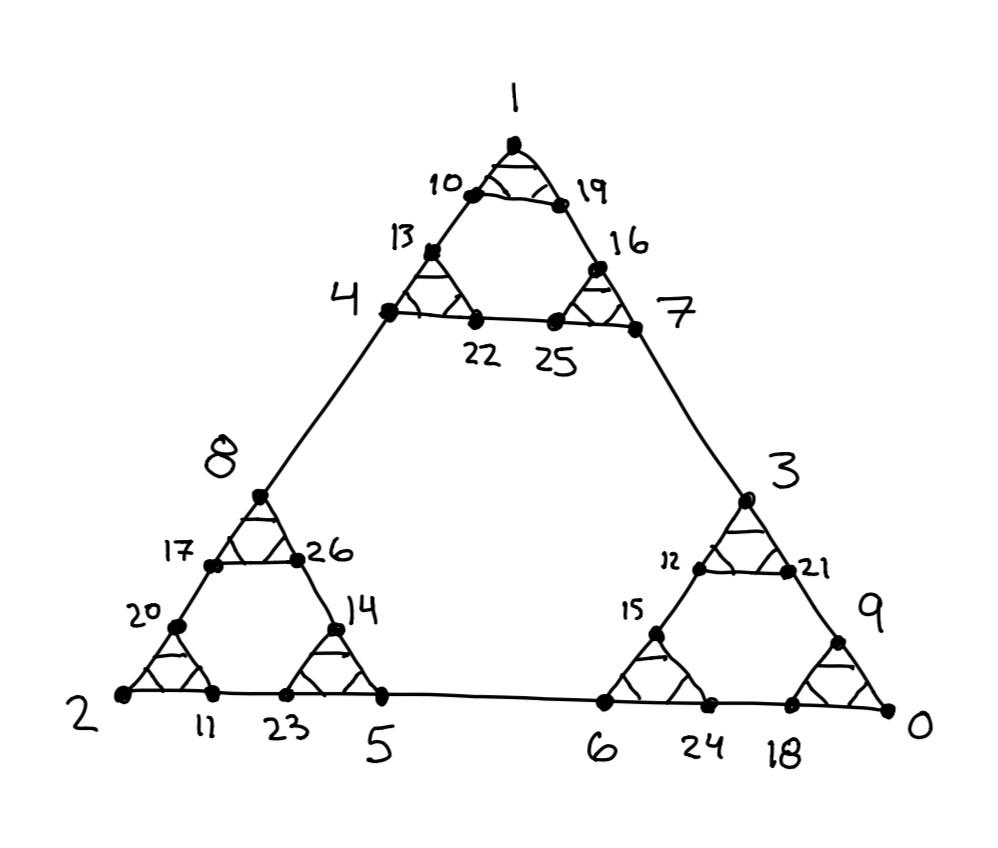}
\end{center}
\begin{exercise}
Here are some fun thought experiments. 
\begin{enumerate}[label=(\alph*)]
    \item Where is $-1$ in this picture?
    \item Think about $\Q_3$. 
\end{enumerate} 
\end{exercise}
\end{example}

\vspace{5mm}
\noindent
{\bf The motto:} Galois groups are fractal-like objects! 
\vspace{3mm}

\begin{theorem}
(Fundamental theorem of infinite Galois theory) Let $L/K$ be a Galois extension. Then there exists a canonical bijection
\begin{align*} 
\left\{ K \subset L' \subset L  \right\} &\leftrightarrow \left\{  \begin{array}{c} \text{closed subgroups}
\\
\text{of }\Gal(L/K)
\end{array}   \right\}\\
L^H& \mapsfrom H \\
L' &\mapsto \Gal(L/L')
\end{align*}
Moreover, under this bijection, 
\begin{align*}
    \text{ finite extensions } & \leftrightarrow \text{ closed and open subgroups } \\
    \text{ Galois extensions } & \leftrightarrow \text{ normal subgroups}
\end{align*}
\end{theorem}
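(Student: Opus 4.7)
The plan is to bootstrap this from the finite Galois correspondence using the inverse limit description $\Gal(L/K) = \varprojlim \Gal(L'/K)$ over finite Galois $L'/K \subset L$, together with the topology whose basic open neighborhoods of the identity are the kernels $U_M := \ker(\Gal(L/K) \to \Gal(M/K))$ for $M/K$ finite Galois inside $L$.

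First I would verify that both maps are well-defined. Given intermediate $L'$, the group $\Gal(L/L')$ is closed because it equals $\bigcap_{x \in L'} \mathrm{Stab}(x)$, and each stabilizer is a union of cosets of some $U_M$ (take $M/K$ to be any finite Galois extension inside $L$ containing $x$), hence open and closed. Given a closed subgroup $H \subseteq \Gal(L/K)$, the fixed field $L^H$ is clearly an intermediate field. The inclusions $L' \subseteq L^{\Gal(L/L')}$ and $H \subseteq \Gal(L/L^H)$ are tautological, so all the content is in showing these inclusions are equalities.

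For $L^{\Gal(L/L')} \subseteq L'$: take $x \in L$ with $x \notin L'$. Let $M/K$ be a finite Galois extension inside $L$ containing $x$; then $M \cdot L'$ is finite Galois over $L'$, and by the finite Galois correspondence there exists $\sigma \in \Gal(M \cdot L' / L')$ with $\sigma(x) \neq x$. Now $\Gal(L/L') \twoheadrightarrow \Gal(M \cdot L'/L')$ (a standard extension result, which I would prove by transfinite induction or Zorn's lemma), so $\sigma$ lifts to an element of $\Gal(L/L')$ not fixing $x$. The hard direction is the reverse: I claim that for $H$ closed, $\Gal(L/L^H) \subseteq H$. Take $\sigma \in \Gal(L/L^H)$; since $H$ is closed, it suffices to show $\sigma \in H \cdot U_M$ for every finite Galois $M/K \subset L$. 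Let $\bar H = H|_M \subseteq \Gal(M/K)$ be the image under restriction, and note $M \cap L^H \subseteq M^{\bar H}$. Conversely, any element of $M$ fixed by $\bar H$ is fixed by $H$ (the restriction map $H \to \bar H$ is surjective by definition), so $M^{\bar H} = M \cap L^H$. The finite Galois correspondence applied to $M/M^{\bar H}$ gives $\bar H = \Gal(M/M \cap L^H)$. Since $\sigma$ fixes $L^H$, $\sigma|_M \in \Gal(M/M \cap L^H) = \bar H$, so there exists $h \in H$ with $h|_M = \sigma|_M$, i.e.\ $\sigma h^{-1} \in U_M$. Hence $\sigma \in \overline{H} = H$.

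Finally, I would deduce the two refinements. An intermediate extension $L'/K$ is finite if and only if $\Gal(L/L')$ has finite index in $\Gal(L/K)$, and a closed subgroup of a profinite group has finite index if and only if it is open (because it contains some basic open $U_M$, and conversely any open subgroup is a finite union of cosets of some $U_M$ by compactness). For the Galois correspondence: $L'/K$ is Galois iff $L'$ is stable under $\Gal(L/K)$ iff $\Gal(L/L') = \mathrm{Stab}(L')$ is normal in $\Gal(L/K)$, with $\Gal(L'/K) \cong \Gal(L/K)/\Gal(L/L')$ topologically. The main obstacle is the density argument for $\Gal(L/L^H) \subseteq H$; everything else is either formal or a direct transcription of the finite-dimensional theorem through the inverse limit.
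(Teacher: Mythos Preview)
The paper does not prove this theorem; it is stated without proof in Lecture~5 as background material. Your proof is the standard one and is correct: reduce to the finite case via the profinite topology, with the crucial step being the density argument showing $\Gal(L/L^H) \subseteq \overline{H}$ by checking membership in $H\cdot U_M$ for each finite Galois $M/K$.

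One small imprecision: your parenthetical justification that a closed subgroup of finite index is open (``because it contains some basic open $U_M$'') puts the cart before the horse. The clean argument is that the complement of a closed finite-index subgroup is a finite union of closed cosets, hence closed, so the subgroup is open; \emph{then} openness gives containment of some $U_M$. Conversely, open implies finite index by compactness of the quotient. This does not affect the validity of your argument, only its exposition.
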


\begin{exercise}
Show that the only closed subgroups of $\widehat{\Z}$ are $n\widehat{\Z}$ for $n \in \Z$. If $n \geq 1$, then the subgroup $n \widehat{\Z}$ corresponds to the extension $K_n$ under the bijection above, and $n=0$ corresponds to $\overline{K}$. (So $0\widehat{\Z}$ is the only closed subgroup which isn't open.) 
\end{exercise}

Now we return to the problem posed at the start of this lecture:
\[
\text{\bf describe all number fields over $\Q$}
\]
or, equivalently, 
\[
\text{ \bf describe all closed subgroups of $\Gal(\overline{\Q}/\Q)$.}
\]
However, after some thought, one sees that this isn't really a well-defined question (from a philosophical point of view), because $\overline{\Q}$ involves a {\em choice} (or many choices), so there is no concrete canonical realisation of $\overline{\Q}$. Hence $\Gal(\overline{\Q}/\Q)$ is only really a ``group up to conjugacy,'' in the sense that any meaningful statements one can make about this group must be invariant under conjugation. (One cannot talk about individual elements.) 

\vspace{5mm}
\noindent
{\bf The Punchline:} \begin{enumerate}
    \item Isomorphism classes of representations of ``a group up to conjugacy'' are canonical, so it makes sense to talk about representations of $\Gal(\overline{\Q}/\Q)$. {\em This is one reason why representations are so important in the Langlands program!}
    \item The maximal abelian extension $\Q^{ab}$ of $\Q$ (which is the extension corresponding to $\overline{[\Gal(\overline{\Q}/\Q), \Gal(\overline{\Q}/\Q)]}$) {\em is} canonical, and we can hope to describe it by studying the maximal abelian quotient $G^{ab}:=G/\overline{[G,G]}$ of $G=\Gal(\overline{\Q}/{\Q})$. {\em This is class field theory!}
\end{enumerate}

\subsection{Global class field theory, first version}
\label{global class field theory}

The key example to keep in mind is the maximal abelian extension of $\Q$.  
\begin{example}
Let $K_m:=\Q(\zeta_m)$ for $\zeta_m=e^{2 \pi i/m}$. Define $\Q(\mu_\infty):=\bigcup_{m \geq 1} K_m$. The Galois group of $K_m/\Q$ is $\left( \Z/m\Z\right)^\times$, hence 
\[
\Gal(\Q(\mu_\infty)/\Q) = \lim_{\longleftarrow} \left(\Z/m\Z\right)^\times = \prod_p \Z_p ^ \times. 
\]
{\bf Fact:} (``Kronecker Jugendtraum'') $\Q(\mu_\infty)$ is the maximal abelian extension of $\Q$. 

\vspace{5mm}
\noindent 
This fact is not easy! (It will follow from global class field theory.) The hope of Kronecker was to predict this starting just from $\Q$, without calculating extensions.
\end{example}

\begin{exercise}
Use Kronecker Jugentraum to show that any continuous character $\chi: \Gal(\overline{\Q}/\Q) \rightarrow \C^\times$ ``is'' a Dirichlet character. (Part of the exercise is to work out what ``is'' means in this context.)
\end{exercise}

Given a number field $K$, it is useful to consider all norms at once. Let $\OO\subset K$ be the ring of integers. A {\bf place} $v$ is an equivalence class of nontrivial multiplicative norms 
\[
|\cdot|_v: K \rightarrow \R_{\geq0}
\]
on $K$.

\begin{theorem}
All places of a number field $K$ are of the following form. 
\begin{itemize}
    \item {\bf Finite places:} $|x|_v:= \left( \# \OO/ \mf{p} \right)^{-\text{val}_\mf{p}(x)}$ for $\mf{p} \subset \OO$ prime.
    \item {\bf Real places:} $|x|_v:=|i(x)|$ for some real embedding $i:K \hookrightarrow \R$, 
    \item {\bf Complex places:} $|x|_v:=|i(x)|^2$ for some pair of conjugate $i:K \hookrightarrow \C$ not landing in $\R$. 
\end{itemize}
\end{theorem}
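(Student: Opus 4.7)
The plan is to prove the classification in two stages: first establish Ostrowski's theorem classifying places of $\Q$, then bootstrap to $K$ by restricting an arbitrary place of $K$ to $\Q$ and lifting the classification back up.

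First I would prove Ostrowski for $\Q$: every nontrivial multiplicative norm on $\Q$ is equivalent either to the usual archimedean norm or to a $p$-adic norm for a unique prime $p$. The case split is whether $|n|\leq 1$ for all $n\in\Z$. In the non-archimedean case, observe that $\mf{p}=\{n\in\Z:|n|<1\}$ is closed under addition and under multiplication by $\Z$, and is prime by multiplicativity of $|\cdot|$; hence $\mf{p}=(p)$ for a rational prime $p$, and multiplicativity forces $|\cdot|=|\cdot|_p^s$ for some $s>0$. In the archimedean case, the standard Ostrowski trick of writing $m$ in base $n$ and comparing $|m|^{1/\log m}$ with $|n|^{1/\log n}$ shows these quantities coincide and exceed $1$, yielding equivalence with the usual absolute value.

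Now let $v$ be a place of $K$ and restrict $|\cdot|_v$ to $\Q$. By Ostrowski the restriction is either $p$-adic or archimedean (it cannot be trivial unless $|\cdot|_v$ itself is trivial). If $v|_\Q$ is $p$-adic, then $|\cdot|_v$ is non-archimedean on $K$, and the valuation ring $\OO_v=\{x\in K:|x|_v\leq 1\}$ contains every element integral over $\Z$, hence contains $\OO$. The ideal $\mf{p}=\OO\cap\{x\in K:|x|_v<1\}$ is a nonzero prime of $\OO$ lying above $(p)$. Using the unique factorization of ideals in the Dedekind domain $\OO$, one checks that $|\cdot|_v$ depends only on $\val_\mf{p}(x)$ and is trivial on units away from $\mf{p}$, so $|x|_v=(\#\OO/\mf{p})^{-s\cdot\val_\mf{p}(x)}$ for some $s>0$, which is equivalent to the $\mf{p}$-adic norm as stated.

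If instead $v|_\Q$ is archimedean, let $K_v$ denote the completion of $K$ with respect to $v$; it contains the completion $\R$ of $\Q$. Since $K/\Q$ is finite of degree $n$, the ring $K\otimes_\Q\R$ is an $n$-dimensional real vector space on which all Hausdorff vector-space topologies coincide, so $K_v$ is a finite extension of $\R$ of degree at most $n$. By the fundamental theorem of algebra (equivalently Gelfand--Mazur for archimedean valued fields), $K_v$ is $\R$ or $\C$, producing an embedding $i:K\hookrightarrow\R$ or $i:K\hookrightarrow\C$; pulling back the standard norm yields $|x|_v=|i(x)|^c$ for some $c>0$. Complex-conjugate embeddings give the same $v$, and the convention $c=1$ in the real case and $c=2$ in the complex case is chosen to make the product formula hold. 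The main obstacle is the archimedean step: one needs both that completion preserves the finiteness over the base (so that $K_v$ really is finite over $\R$) and the classification of finite extensions of $\R$, whereas the non-archimedean step is essentially formal once Ostrowski on $\Q$ and the Dedekind structure of $\OO$ are available.
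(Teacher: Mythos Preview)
The paper does not prove this theorem; it is stated as background in a survey lecture on algebraic number theory, with the remark ``These are all possible notions of distance on a number field'' and then moves directly to the product formula. So there is nothing to compare your argument against.

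Your outline is the standard correct proof. Two small points worth tightening. First, the assertion that the restriction of $|\cdot|_v$ to $\Q$ cannot be trivial deserves a line: if it were, then for $\alpha\in K^\times$ satisfying $a_n\alpha^n+\cdots+a_0=0$ with $a_i\in\Q$, the ultrametric inequality (which holds once $|\cdot|$ is bounded on $\Z$) bounds $|\alpha|$ above and below by $1$. Second, in the non-archimedean step the cleanest way to pass from ``$\mf{p}=\OO\cap\mf{m}_v$ is a nonzero prime'' to the exact form of $|\cdot|_v$ is to note that the valuation ring $\OO_v$ dominates the localization $\OO_\mf{p}$; since $\OO_\mf{p}$ is a DVR and DVRs are maximal among local subrings of their fraction field, $\OO_v=\OO_\mf{p}$, which immediately gives $|x|_v=c^{\val_\mf{p}(x)}$ for some $0<c<1$. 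Your phrasing ``depends only on $\val_\mf{p}(x)$ and is trivial on units away from $\mf{p}$'' is gesturing at this but would benefit from the explicit DVR maximality step.
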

These are all possible notions of distance on a number field. 

\begin{exercise}
Show that there are no nontrivial norms on a finite field. 
\end{exercise}

Note that we could have chosen any scalar $\lambda >1$ in place of $\#\OO/\mf{p}$ above. The reason for the the above normalization is the beautiful {\bf product formula}: For $x \in K^\times$, 
\[
\prod_{\text{places }v} |x|_v = 1.
\]

Note that this product makes sense because all but finitely many places are $1$. The function field case of this formula is the statement that the number of poles and number of zeros (with multiplicity) agree. 

\subsection{Global class field theory \'{a} la Artin}

Fix a finite abelian Galois extension $L/K$ with abelian Galois group $\Gal(L/K)$. Let $\OO_L\subset L$ and $\OO_K \subset K$ be the rings of integers, and let $S_f\subset \OO_K$ be the set of ramified primes. We have seen that for a prime $\mf{p} \subset \OO_K$ such that $\mf{p} \not \in S_f$, there is a corresponding conjugacy class $\Frob \in \Gal(L/K)$. In general $\Frob$ is only defined up to conjugacy, but since we are assuming that $\Gal(L/K)$ is abelian, $\Frob$ is a single element. Hence we obtain a map (the {\bf Artin map}):
\begin{align*}
\mc{J}^{S_f} = \bigoplus_{\mf{p} \not \in S_f} \Z\mf{p} &\rightarrow \Gal(L/K) \\
\sum m_\mf{p}\mf{p} &\mapsto \prod \Frob^{m_{\mf{p}}} 
\end{align*}

Here $\mc{J}^{S_f} \subset \mc{J}$ is a subgroup of the group of nonzero fractional ideals $\mc{J}=\bigoplus_{\text{primes }\mf{p}}\Z\mf{p}$ discussed in Lecture 2. By Chebotarev's density theorem, the Artin map is surjective. 

\vspace{5mm}
\noindent
{\bf Question:} What is the kernel of the Artin map?
\vspace{5mm}

The answer to this question is related to an observation we made in the very first lecture. Recall our motivating problem for the course of determining the number of solutions of a polynomial modulo $p$ for various primes $p$. For quadratic polynomials, we used quadratic reciprocity to find some modulus $m \in \Z$ such that the number of solutions of the polynomial modulo $p$ was given by the residue of $p$ modulo $m$. At first, the modulus $m$ seemed to be somewhat mysterious, but eventually we observed that it was obtained from the {\em ramified primes} (that is, the ``weird primes'' which we ignored). For example, for the polynomial $x^2+1$, which has 2 solutions modulo $p$ if $p=1 \mod 4$ and $0$ solutions if $p=3 \mod 4$, the modulus $4$ is the square of $2$, our only ramified prime. 

Returning to the setting of the Artin map, we define a {\bf modulus} $m$ supported in a set of places $S$ to be a formal $\Z$-linear combination of places $m=\sum m_i v_i$ such that $m_i \in \{0, 1\}$ for real places and $m_i=0$ for all complex places and places $v_i \not \in S$. Given a modulus $m$ supported in a set of places $S$, we can define an associated group (the Ray class group) as follows. Consider the following two subsets of $K^\times$:
\[
K^S =\{\lambda \in K^\times \mid \text{val}_{\mf{p}}(\lambda)=0 \text{ for all } \mf{p} \in S\} , \text{ and }
\]
\[
K^{m,1}=\{\lambda \in K^s \mid  \text{val}_\mf{p}(\lambda - 1) \geq m_i \text{ for finite places, and }
i(\lambda) \in \R_{>0}^\times \text{ for real places }m_i=1\}.
\]
The set $K^{m,1}$ is the set of $\lambda$ which are ``$m$ close to $1$''. We have the following maps: 
\[
\begin{tikzcd}
K^\times\arrow[rd, twoheadrightarrow] \arrow[rr, "\text{val}"] & &
\mc{J} \arrow[r, twoheadrightarrow] & \mc{C}\ell_K\\
& \{\text{principal ideals}\} \arrow[ur, hookrightarrow] \\
K^s \arrow[uu, hookrightarrow] \arrow[rr, "\text{val}"]& & \mc{J}^S \arrow[uu, hookrightarrow]\\
K^{m,1} \arrow[urr, "\text{val}"] \arrow[u, hookrightarrow, ]
\end{tikzcd}
\]
The {\bf Ray class group} associated to the modulus $m$ is the quotient 
\[
\mc{C}\ell_K^m:=\mc{J}^S/\text{val}(K^{m,1}).
\]
\begin{example}
    If $m=0$, then $\mc{C}\ell_K^0=\mc{C}\ell_K$ is the class group. 
\end{example}
\begin{example} Let $K=\Q$, and $m=n(p)$ for a prime $p \in \Z$. Then $m$ is a modulus supported on $S=\{(p)\}$. We have 
\[
K^S=\left\{ \frac{a}{b} \mid a,b \text{ are coprime to }p \right\} = \Z_{(p)}^\times, \text{ and}
\]
\[
K^{m,1} = \left\{ \frac{a}{b} \mid a,b \text{ coprime to }p \text{ and } \text{val}_p(\frac{a}{b}-1)\geq n\right\} = \left\{ \frac{a}{b} \mid \frac{a}{b} = 1 \mod p^n \right\}. 
\]
Hence 
\[
K^s/K^{m,1} = \Z^\times_{(p)} / K^{m,1} = \left( \Z/p^n\Z \right)^\times = \mc{C}\ell_\Q^m. 
\]
\end{example}
\begin{theorem}
For any modulus $m$, the Ray class group is finite and surjects onto the class group of $K$. 
\end{theorem}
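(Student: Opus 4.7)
The plan is to construct a natural surjection $\mathcal{C}\ell_K^m \twoheadrightarrow \mathcal{C}\ell_K$, then bound its kernel by a finite product of local unit groups, and finally invoke the fundamental finiteness theorem for $\mathcal{C}\ell_K$ recalled earlier in the notes.

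First I would check that the inclusion $\mathcal{J}^S \hookrightarrow \mathcal{J}$ composed with the projection to $\mathcal{C}\ell_K$ is \emph{surjective}. Given any fractional ideal $I$, I need $x \in K^\times$ with $xI \in \mathcal{J}^S$, i.e.\ $\text{val}_\mathfrak{p}(x) = -\text{val}_\mathfrak{p}(I)$ for each $\mathfrak{p} \in S$; this is a standard weak approximation / CRT argument in the Dedekind domain $\mathcal{O}_K$. Since $\text{val}(K^{m,1}) \subseteq \text{val}(K^S) \subseteq \mathcal{J}^S$ and the image in $\mathcal{J}$ is principal, $\text{val}(K^{m,1})$ lies in the kernel of $\mathcal{J}^S \twoheadrightarrow \mathcal{C}\ell_K$. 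This gives the desired surjection $\mathcal{C}\ell_K^m \twoheadrightarrow \mathcal{C}\ell_K$, proving the second assertion.

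Next I would identify the kernel of this surjection. Since $\text{val}\colon K^S \to \mathcal{J}^S$ has kernel $\mathcal{O}_K^\times$ and image equal to the principal ideals in $\mathcal{J}^S$, a short diagram chase gives a short exact sequence
\[
1 \longrightarrow K^S/(K^{m,1}\cdot \mathcal{O}_K^\times) \longrightarrow \mathcal{C}\ell_K^m \longrightarrow \mathcal{C}\ell_K \longrightarrow 1.
\]
The rightmost group is finite by the first fundamental finiteness theorem, so it suffices to show the leftmost group is finite; for this it is enough to show that $K^S/K^{m,1}$ is finite.

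The key step is to write down a natural injection
\[
K^S/K^{m,1} \hookrightarrow \prod_{\substack{v_i \in S \\ \text{finite}}} (\mathcal{O}_K/\mathfrak{p}_i^{m_i})^\times \;\times \prod_{\substack{v_i \in S,\ \text{real} \\ m_i = 1}} \{\pm 1\},
\]
sending $x$ to its reductions mod $\mathfrak{p}_i^{m_i}$ (well-defined because $\text{val}_{\mathfrak{p}_i}(x) = 0$ puts $x$ in the local units) together with its signs at the designated real places. By construction the kernel of this map is exactly $K^{m,1}$, so the target being a finite group forces $K^S/K^{m,1}$ to be finite, and hence $\mathcal{C}\ell_K^m$ is finite.

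The main technical point I expect to work at is the surjectivity of $\mathcal{J}^S \twoheadrightarrow \mathcal{C}\ell_K$: one has to actually produce the approximating element $x$, which in the archimedean-supported part of $m$ also requires being a little careful. Once that is in hand, the rest is formal diagram chasing plus the observation that the local quotients $(\mathcal{O}_K/\mathfrak{p}^{m_i})^\times$ are finite because $\mathcal{O}_K/\mathfrak{p}^{m_i}$ is.
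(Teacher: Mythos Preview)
Your argument is correct and is the standard one: construct the surjection $\mathcal{C}\ell_K^m \twoheadrightarrow \mathcal{C}\ell_K$ via weak approximation, identify the kernel as $K^S/(K^{m,1}\cdot \mathcal{O}_K^\times)$, and bound this by the finite product of local unit residues and signs. However, the paper does not actually prove this theorem---it is stated without proof in Lecture~5 as a background fact preceding Artin's reciprocity theorem, so there is no paper proof to compare against.
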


\begin{theorem}
\label{Artin kernel}
  (Artin) For any $L/K$ as above, there exists a modulus $m$ with $\text{supp}(m) \cap \{\text{finite places}\} = S_f$ such that $\text{val}(K^{m,1})$ is contained in the kernel of the Artin map. Moreover, for any modulus $m$ and any quotient $q:\mc{C}\ell_K^m \twoheadrightarrow Q$, there exists an abelian extension $L/K$ ramified only at primes in $\text{supp}(m)$ such that \[
  \begin{tikzcd}
  \mc{C}\ell^m_K \arrow[r, "\text{Artin}"] \arrow[rd, twoheadrightarrow]& \Gal(L/K) \\
  & Q\arrow[u, equal]
  \end{tikzcd}
  \]
  commutes. 
\end{theorem}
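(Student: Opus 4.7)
The plan is to treat the two halves of the statement separately, because they are genuinely different in spirit: the first (existence of an admissible modulus $m$ for a given $L/K$) is Artin reciprocity, while the second (constructing $L/K$ realizing a given quotient of $\mc{C}\ell_K^m$) is the existence theorem. I expect the existence theorem to be the main obstacle, as it ultimately requires both an analytic ``first inequality'' and an algebraic ``second inequality,'' while the reciprocity statement is amenable to reduction to an explicit computation.

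For the first half I would proceed by reduction to the cyclotomic case. The model case is $K=\Q$ and $L=\Q(\zeta_n)$, where the modulus $m = n\cdot\infty + \sum_{p\mid n}(p)$ works: the Artin map sends a prime $q\nmid n$ to the residue class of $q$ in $(\Z/n\Z)^\times = \Gal(L/\Q)$, and any $\lambda\in\Q^{m,1}$ is a positive rational $\equiv 1\pmod{n}$, so its prime factorization pairs up primes whose residues multiply to $1$ in $(\Z/n\Z)^\times$, placing $\lambda$ in the kernel. To bootstrap from this to a general finite abelian $L/K$, I would use two ingredients: (a) functoriality of the Artin map under composita and subextensions (reducing to the case $\Gal(L/K)$ cyclic), and (b) the classical Artin lemma that every cyclic extension of $K$ can be captured, after adjoining enough roots of unity and enlarging the base, in a diagram where one side is cyclotomic, so that reciprocity on three sides of the diagram forces it on the fourth. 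Carefully tracking conductors through this base change is the delicate technical point.

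For the second half, I would take a quotient $q:\mc{C}\ell_K^m \twoheadrightarrow Q$ and try to build $L$ in stages. After reducing to $Q$ cyclic of prime power order $\ell^r$ (which is legitimate since every finite abelian group is a product of such), I would adjoin an $\ell^r$-th root of unity to form $K'=K(\zeta_{\ell^r})$, use Kummer theory to construct an abelian extension $L'/K'$ with the prescribed Galois group, and then descend to $L/K$ via Galois invariants, checking that the ramification stays controlled. The crucial step is verifying that the resulting $L$ actually realizes the quotient $Q$ of $\mc{C}\ell_K^m$ and not some proper further quotient; this is exactly the content of the two inequalities of class field theory. The analytic ``first inequality'' $[\mc{C}\ell_K^m : \ker(\text{Artin})] \geq [L:K]$ follows from non-vanishing of $L(\chi,s)$ at $s=1$ for nontrivial $\chi$, in the spirit of Lecture~3; the algebraic ``second inequality'' in the reverse direction is proved via Herbrand quotients and a cohomological computation of norm indices.

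The main obstacle is precisely this second inequality together with the compatibility between global reciprocity and the local reciprocity laws at each place. This is genuinely the heart of class field theory and cannot be condensed into a self-contained sketch: in a complete treatment one either absorbs the entire cohomological machinery (group cohomology of idele class groups, Tate duality, the invariant map), or else follows the original analytic-algebraic route through L-series and ambiguous class number formulas. Either way, it is fair to say that once the two inequalities and the cyclotomic base case are in hand, the assembly of the theorem is essentially a matter of bookkeeping; the theorem is the two inequalities.
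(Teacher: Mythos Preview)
The paper does not prove this theorem. It is stated in Lecture~5 as a foundational black-box result of global class field theory, followed immediately by a weaker reformulation and then examples (the Hilbert class field, the case of $\Q(i)/\Q$). This is entirely appropriate for the genre: the lecture notes are an informal survey leading toward Bezrukavnikov's equivalence, and class field theory is invoked as input, not developed from scratch. Indeed, the ``potted history'' at the start of Lecture~6 makes clear that the authors regard the proof of this theorem as a decades-long collective achievement (Takagi, Artin, Chevalley, \dots) rather than something to be sketched in passing.

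Your outline is a faithful summary of the classical route to the theorem: reduction to the cyclic case, Artin's lemma embedding cyclic extensions into cyclotomic towers, Kummer descent for the existence half, and the two inequalities as the analytic and cohomological core. You correctly identify that the second inequality and the local-global compatibility are the genuine content and cannot be compressed. There is nothing to compare your approach against here, but as a standalone sketch of how one \emph{would} prove Artin reciprocity and the existence theorem, it is accurate and well-organized.
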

A weaker form of this theorem provides a more direct answer to our question from the beginning of this section.  
\begin{theorem}
  For any abelian Galois extension $L/K$, there exists an $\epsilon >0$ such that if $\lambda \in K^{S_f}$ is $\epsilon$ close to $1$ for all places $v \in S$, then $\text{val}(\lambda)$ is in the kernel of the Artin map.
\end{theorem}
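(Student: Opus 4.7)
The plan is to deduce this weaker statement as a direct consequence of the stronger Theorem~\ref{Artin kernel}. That theorem supplies a modulus $m$, with finite support equal to $S_f$ and possibly some archimedean support, such that $\text{val}(K^{m,1})$ already lies in the kernel of the Artin map. So the entire task reduces to showing that once $\epsilon$ is chosen small enough, any $\lambda \in K^{S_f}$ which is $\epsilon$-close to $1$ at every place of $S := \text{supp}(m)$ lands automatically in $K^{m,1}$.

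First I would unpack the definition of $K^{m,1}$. It consists of elements $\lambda \in K^{S_f}$ satisfying two types of conditions: at each finite place $\mf{p} \in S_f$ one requires $\text{val}_\mf{p}(\lambda - 1) \geq m_\mf{p}$; and at each real archimedean place $v$ with $m_v = 1$ one requires $i(\lambda) \in \R_{>0}$, where $i$ is the corresponding real embedding. The first condition is literally a $\mf{p}$-adic closeness condition: with the normalization $|x|_\mf{p} = (\#\OO/\mf{p})^{-\text{val}_\mf{p}(x)}$ it is equivalent to $|\lambda - 1|_\mf{p} \leq (\#\OO/\mf{p})^{-m_\mf{p}}$. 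The second condition is implied by $|i(\lambda) - 1| < 1$, since that constraint places $i(\lambda)$ in the interval $(0,2) \subset \R_{>0}$.

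With those translations in hand, I would set
\[
\epsilon := \min\Bigl(1,\; \min_{\mf{p} \in S_f} (\#\OO/\mf{p})^{-m_\mf{p}}\Bigr),
\]
which is strictly positive because $S_f$ is finite. Any $\lambda \in K^{S_f}$ that is $\epsilon$-close to $1$ at every $v \in S$ then meets both the finite and archimedean conditions defining $K^{m,1}$, so $\lambda \in K^{m,1}$, and Theorem~\ref{Artin kernel} immediately places $\text{val}(\lambda)$ in the kernel of the Artin map.

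There is no substantive obstacle: the content is entirely in Theorem~\ref{Artin kernel}, and the remaining work is a translation between the valuation-theoretic definition of $K^{m,1}$ and the metric notion of being close to $1$. The only bookkeeping subtleties are making sure to absorb the (finite) archimedean contribution of $m$ into the set $S$ that controls the closeness hypothesis, and remembering that $K^{S_f}$ already forces $\text{val}_\mf{p}(\lambda) = 0$ at the ramified primes, so the $\mf{p}$-adic closeness to $1$ is genuinely constraining $\lambda$ within the unit group of the residue disc around $1$.
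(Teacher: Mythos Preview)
Your proposal is correct and matches the paper's intent: the paper states this theorem explicitly as ``a weaker form'' of Theorem~\ref{Artin kernel} and gives no separate proof, so deducing it by translating the modulus conditions defining $K^{m,1}$ into metric $\epsilon$-closeness at the places in $S = \text{supp}(m)$ is exactly the expected argument.
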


\begin{example}
  Consider the extension $\Q(i)/Q$, which is the splitting field of the polynomial $x^2+1$. The ramified primes are $2$ and $\infty$ (but we haven't discussed what it means for $\infty$ to be ramified, so we are sweeping this under the rug), so $S^f=\{(2)\}$. For any $p \neq 2$, $\Frob(i)=i^p$, hence in the Galois group, $\Frob\leftrightarrow p \mod 4 \in \left(\Z / 4\Z\right)^\times = \Gal (\Q(i)/\Q)$. We have 
  \[
  \mc{J}^{S_f}=\left\{\frac{a}{b} \mid a,b>0 \text{ coprime to }2 \right\}=\Z_{(2),>0}^\times. 
\]
Hence the Artin map is the natural map 
\[
\Z_{(2),>0}^\times \rightarrow \left(\Z/4\Z\right)^\times. 
\]
Its kernel is the set of all elements satisfying a ``congruence condition at 2'':
\[
\left\{ \lambda \in \Z_{(2),>0}^\times \mid \text{val}_2(\lambda-1) \geq 2\right\}. 
\]
We see from this example that the finitely many ``weird'' (ramified) primes from the first lecture are the primes determining our congruences.  
\end{example}

\begin{example}
  If $m=0$, then $\mc{C}\ell_K^0=\mc{C}\ell_K$. Hence the existence statement in Theorem \ref{Artin kernel} implies that there exists an unramified everywhere extension $L/K$ with $\Gal(L/K)=\mc{C}\ell_K$. This extension is the {\bf Hilbert class field}. 
\end{example}

\pagebreak
\section{Lecture 6: Local Galois groups and local class field theory}
\label{lecture 6}

\subsection{A potted history of class field theory}
Sometimes if we find something difficult, it can be comforting to know that other people also found it difficult. Accordingly, we'll start today's lecture with a potted history of class field theory, following \cite{conrad, miyake}.  

\begin{itemize}
    \item {\bf Kronecker, Weber (1850-1880)}: Kronecker's Jugentraum (that $\Q(\mu_\infty)$ is the maximal abelian extension of $\Q$), explicit class field theory  (describing $K^{ab}$ explicitly, not just its Galois group) for $\Q$ and $\Q(i)$, relevance of complex multiplication.   
    \item {\bf Dedekind, Frobenius (1880)}: defined $\Frob$ (then everyone promptly forgot for $40$ years).
    \item {\bf Hilbert (189-1900)}: first correct proof of Jugentraum for $\Q$, emphasis on ``places at $\infty$,'' introduction of Hilbert class field, $12^{th}$ problem on Hilbert's list was explicit class field theory for any $K$. (Still open! Even for $\Q(\sqrt{d}), d \geq 0$!)
    \item {\bf Hensel (1897)}: introduction of $p$-adic numbers, took a while to catch on.
    \item {\bf Takagi (1900)}: PhD student of Hilbert in G\"{o}ttingen, thesis on $\Q(i)^{ab}$, proof of existence theorem during WWI (when there was no contact between Germany and Japan), result announced at the ICM in 1920.
    \item {\bf Hasse (1922)}: local global principle, first time $p$-adics were taken seriously by the broader mathematics community. 
    \item {\bf Chebotarev (1927)}: density theorem. 
    \item {\bf E. Artin (1927)}: introduction of Artin map (the return of $\Frob$!), reciprocity theorem. 
    \item {\bf Schmidt (1930)}: deduced local class field theory from global class field theory (proofs still analytic). 
    \item {\bf Noether (1930s)}: local theory should be simpler and come first.
    \item {\bf Chevalley (1940)}: algebraic proof of local class field theory.
\end{itemize}

\subsection{A trip across the bridge}

Recall that the ``bridge'' in this course is the motivating analogy between number fields and function fields.

\begin{remark}
This bridge was what motivated Hensel's advocation for the introduction of the $p$-adic numbers.  
\end{remark}

Let's look at local class field theory through this analogy. Let $L/K$ be a finite Galois extension. Across the bridge, this should correspond to a surjective map (i.e. ramified cover) $f:X\rightarrow Y$ of algebraic curves/Riemann surfaces over $\C$. Recall that our $\R$-picture and $\C$-picture of such a map are the following:
\begin{center}
     \includegraphics[scale=0.4]{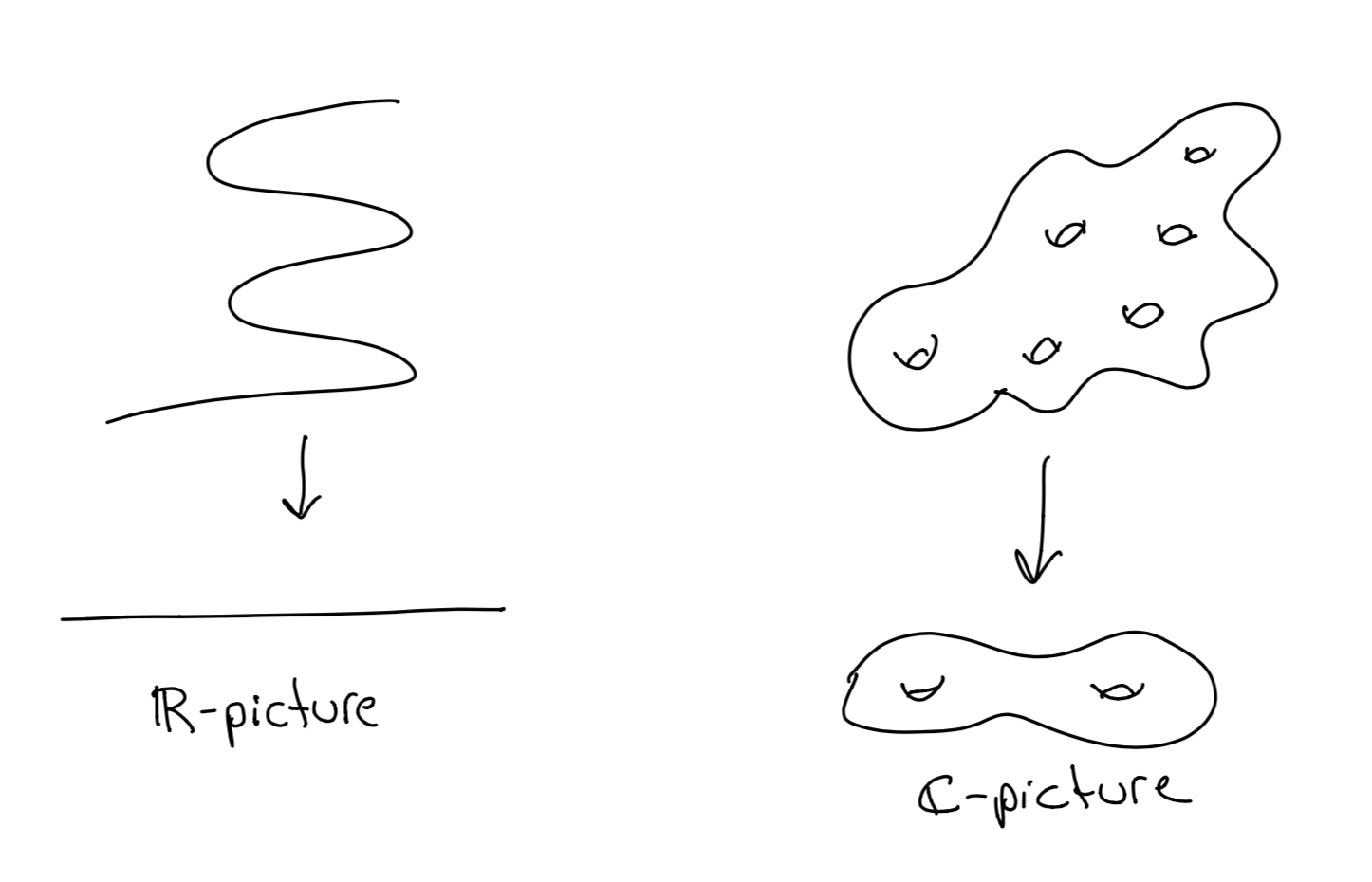}
\end{center}
For all $y \in Y$ outside of a finite set, $f$ is \'{e}tale; that is, a smooth $n:1$ cover in a neighborhood of $y$:
\begin{center}
     \includegraphics[scale=0.4]{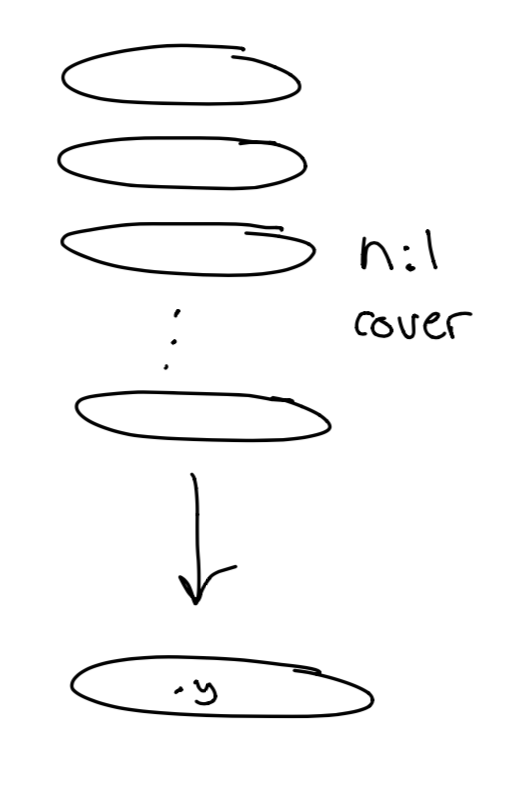}
\end{center}
For a finite set of points $y \in Y$, $f$ is not smooth, and locally sends $z\mapsto z^{n_i}$, such that $\sum n_i=n$: 
\begin{center}
     \includegraphics[scale=0.4]{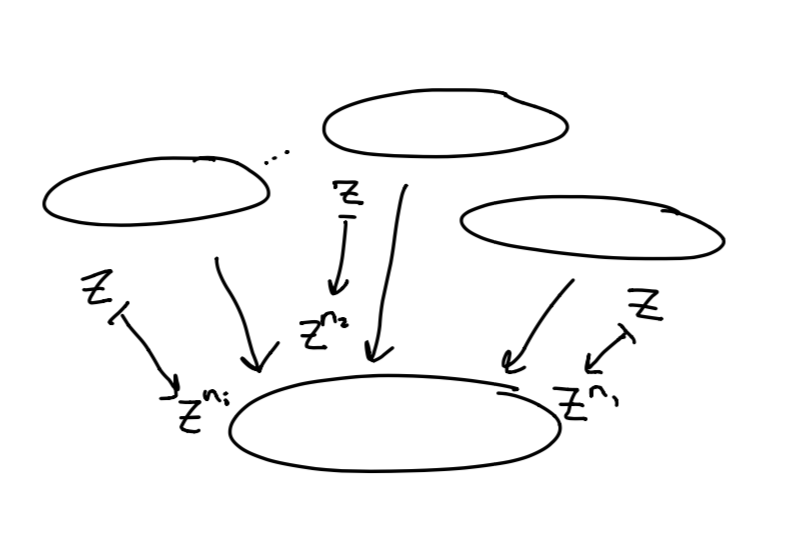}
\end{center}
If $f$ is Galois, then all $n_i$ are equal. 

\vspace{5mm}
\noindent
{\bf The Moral:} The ramified cover $f$ is determined by simple data (``local monodromies'') at finitely many points (``ramified primes of $Y$'').
\vspace{5mm}

\begin{remark}
We have
\begin{center}
     \includegraphics[scale=0.4]{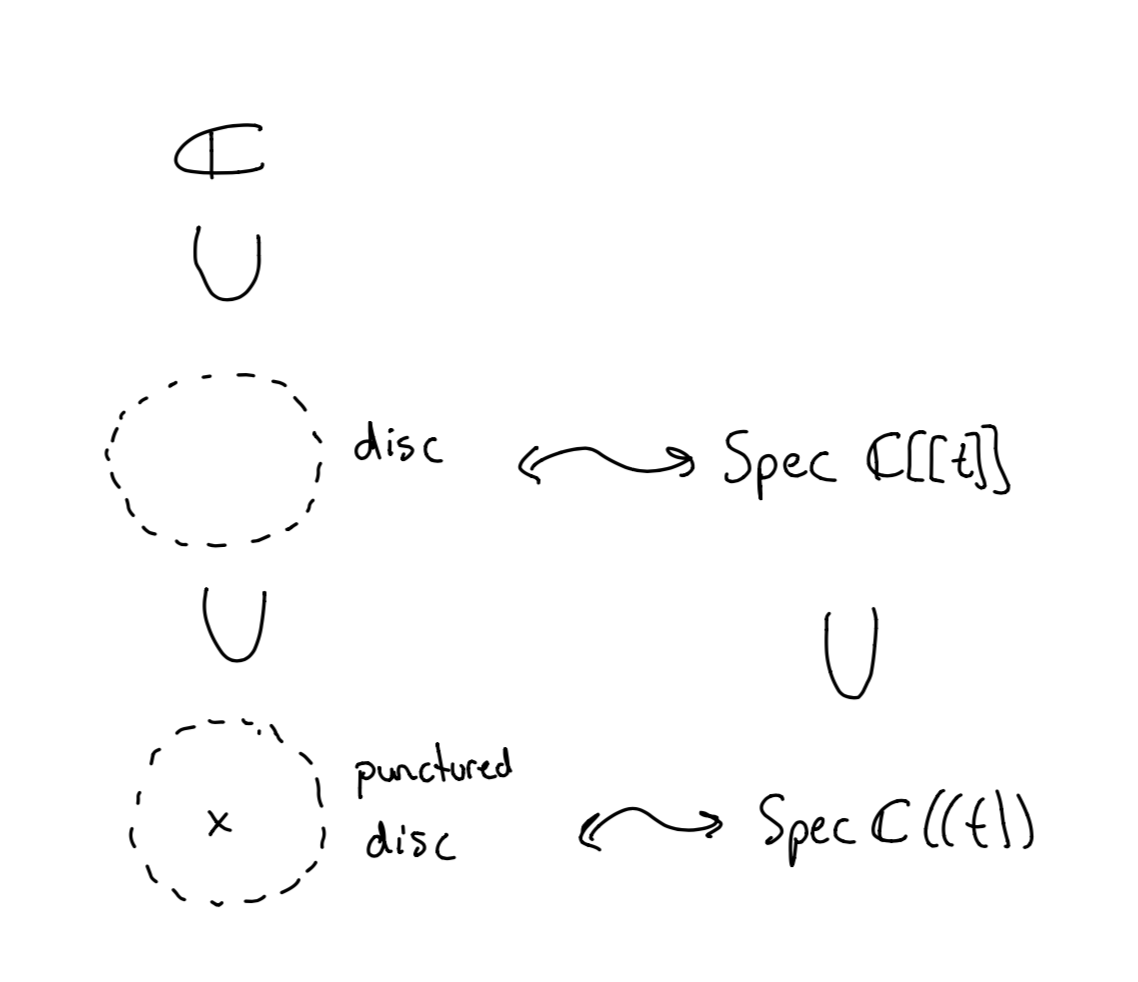}
\end{center}
The algebraic closure of $\C((t))$ is $\displaystyle{\C((t^\Q)):=\bigcup_{n \geq 1} \C((t^{1/n}))}$. Hence, \[
\Gal\left(\overline{\C((t))}/\C((t))\right) = \lim_{\leftarrow} \Z/n\Z = \widehat{\Z}.
\]
This is ``why'' the local field information is so simple in function field/$\C$ case\footnote{Recall that we saw last lecture that $\Gal(\overline{\F_p}/\F_p)=\widehat{\Z}$ as well. This turns out to be a useful coincidence, but we won't comment on it further here.}. In language to come, every extension of $\C((t))$ is ``tamely ramified''.

The upshot is that on the function field side of the bridge, local information is easy, and can be patched together to form the global picture. On the number field side of the bridge, local information is much harder, but the philosophy we learn from our analogy is that it should still be easier than global information, so we should focus on it first. Because of this, essentially the rest of this course will be local.  
\end{remark}

Now we return to number fields. Let $L/K$ be a finite Galois extension. Fix a place $v$ of $K$. If $v$ is finite (corresponding to some $\mf{p} \subset \OO_k$), then we know what it means for a place $v'$ (corresponding to $\mf{q} \subset \OO_L$) of $L$ to ``lie over $v$'': $v'$ lies over $v$ precisely when $\mf{q}$ is a prime above $\mf{p}$ in the sense of lecture $2$ (i.e. $\mf{p}\OO_L \subset \mf{q}$). 

If $v$ is a real or complex place corresponding to $i:K \hookrightarrow \mathbb{K}\in \{\R, \C\}$, then a place $v'$ of $L$ {\bf lies over }$v$ if the corresponding injection $i'$ fits into a commutative diagram. 
\[
\begin{tikzcd}
L \arrow[r, hookrightarrow, "i'"] \arrow[d, dash]& \mathbb{K}' \arrow[d, dash]\\
K \arrow[r, hookrightarrow, "i"] & \mathbb{K}
\end{tikzcd}
\]
Hence if $v$ is real, then $v'$ is either real or complex. A real place $v$ is {\bf ramified} if there exists a $v'$ lying above $v$ that is complex in $L$, and {\bf unramified} if all $v'$ above $v$ are real. If $v$ is complex, then all $v'$ above $v$ are complex, and we say the place $v$ is {\bf unramified}. 

Fix a place $v$ of $K$ and a place $v'$ of $L$ over $v$. Let $L_{v'}$ (resp. $K_v$) be the completion of $L$ (resp. $K$) with respect to the place $v'$ (resp. $v$). Then we have the diagram
\[
\begin{tikzcd}
L \arrow[r, hookrightarrow] \arrow[d, dash]& L_{v'} \arrow[d, dash]\\
K \arrow[r, hookrightarrow] & K_v.
\end{tikzcd}
\]
Set 
\begin{align*}
G_v&=\{ \sigma \in \Gal(L/K) \mid \sigma \text{ acts continuously on }L_v\} \\
&=\{ \sigma \mid \sigma \text{ preserves }v'\} = \begin{cases} G_\mf{q}  &\text{ if }v\text{ is finite}, \\
\{1\} & \text{ if $v$ is unramified infinite},\\
\Z/2\Z & \text{ if $v$ is ramified infinite}. \end{cases}
\end{align*}
Here $G_\mf{q}\subset \Gal(L/K)$ is the decomposition group corresponding to the prime $\mf{q} \subset \OO_L$ determining $v'$. 
\begin{remark}
In the last case (when $v$ is ramified infinite), we get a canonical element $c \in \Gal(L/K)$ corresponding to complex conjugation.
\end{remark}
\vspace{3mm}
\noindent
{\bf The point:} $L_{v'}/K_v$ is a finite Galois extension of local fields with Galois group $G_v$. We will first try to understand such extensions for all places $v$, then piece together this information to understand $L/K$. 
\vspace{3mm}

\subsection{Local class field theory}
Between the ``easy'' world of finite fields and the complicated world of global fields lies the world of local fields. Let $K$ be a field equipped with discrete valuation $\val:K \rightarrow \Z\cup \{\infty\}$. In $K$ lies its ring of integers $\OO_K$, with maximal idea $\mf{m}$ generated by a ``uniformizer'' $\pi \in \mf{m}$:
\[
K \supset \OO_K=\val^{-1}(\Z_{\geq 0} \cup \{ \infty\}) \supset \mf{m} =\val^{-1}(\Z_{>0} \cup \{\infty\})=(\pi). 
\]
The field $k_K = \OO_K/\mf{m}$ is the residue field. Note that in this setup, $K$, $\OO_K$ and $\mf{m}$ are all canonical, but the uniformizer $\pi \in \mf{m}$ is not. For us, a {\bf local field} will be a field $K$ equipped with a discrete valuation as above such that  
\begin{enumerate}
    \item $K$ is complete with respect to $\val$ (i.e. $K$ has the topology coming from $\displaystyle{\OO_K=\lim_{\leftarrow}\OO_K/\mf{m}_K^n}$), and 
    \item $k_K$ is finite. 
\end{enumerate}

\begin{exercise}
Show that $1.$ and $2.$ are equivalent to $K$ being locally compact. 
\end{exercise}

\begin{example}
The field $\Q_p$ is locally compact because it is covered by dilates of $\Z_p$:  
\[
\Q_p=\bigcup_{n \geq 1} p^{-n}\Z_p. 
\]
Recall that $\Z_p$ are compact open sets in $\Q_p$.
\end{example}

\begin{remark}
In some terminology, $\R$ and $\C$ are also referred to as local fields. 
\end{remark}

Let $L/K$ be a finite Galois extension of local fields. Then any element $\sigma \in \Gal(L/K)$ preserves $\OO_L/\OO_K$ and $\mf{m}_L/\mf{m}_K$, and thus acts on $k_L/k_K$. Hence we get maps
\[
1 \rightarrow I_{L/k} \hookrightarrow \Gal(L/K) \twoheadrightarrow \Gal(k_L/k_K)\rightarrow 1,
\]
where $I_{L/K}$ is the inertia subgroup of lecture 2. The Galois group $\Gal(k_L/k_K) \simeq \Z/d\Z$ is generated by $\text{Frob}_q$. For $L=\overline{K}$, this short exact sequence becomes 
\[
1 \rightarrow I_{\overline{K}/K} \hookrightarrow \Gal(\overline{K}/K) \twoheadrightarrow \Gal(\overline{\F_q}/\F_q)\simeq \widehat{\Z}\rightarrow 1.
\]

\noindent
{\bf Local class field theory:} There exists a canonical map\footnote{The canonical map $r_K$ is sometimes called the ``reciprocity map''.} $r_K:K^\times \rightarrow \Gal(\overline{K}/K)^{ab}$ with dense image such that $r_K$ induces an isomorphism $\widehat{K^\times} \xrightarrow{\sim}\Gal(\overline{K}/K)^{ab}$. Here $\widehat{K^\times}$ is the profinite completion of $K^\times$ (that is, the completion with respect to subgroups of finite index). Moreover, 
\begin{enumerate}
    \item the diagram 
    \[
    \begin{tikzcd}
    1 \arrow[r] & I_{\overline{K}/K}^{ab} \arrow[r, hookrightarrow] \arrow[d, equal] & \Gal(\overline{K}/K)^{ab} \arrow[r, twoheadrightarrow] & \Gal(\overline{k_K}/k_K)\simeq\widehat{\Z} \\
    1 \arrow[r] & \OO_K^\times \arrow[r, hookrightarrow] & K^\times \arrow[r, "\text{val}"] \arrow[u, "r_K"]  & \Z \arrow[u]
    \end{tikzcd}
    \]
    commutes, and 
    \item if $L/K$ is finite Galois, then 
    \[
    \begin{tikzcd}
    L^\times \arrow[r, "r_L"] \arrow[d, "\text{Norm}_{L/K}"] &\Gal(\overline{L}/L)^{ab} \arrow[d, "\text{res}"] \\
    K^\times \arrow[r, "r_K"] & \Gal(\overline{L}/K)^{ab} 
    \end{tikzcd}
    \]
    commutes. 
\end{enumerate}
\vspace{5mm}

\begin{remark} The analogous statements for the fields $\R$ and $\C((t))$ are the following:
\begin{enumerate}
    \item {\bf $K=\R$:} In the diagram
    \[
    \begin{tikzcd}
    \C^\times \arrow[r, "r_\C"] \arrow[d, "\text{Norm}_{\C/\R}"]& \Gal(\C/\C)=\{1\} \arrow[d] \\
    \R^\times \arrow[r, "r_\R"] & \Gal(\C/\R) 
    \end{tikzcd}
    \]
    the map $r_\R: -1 \mapsto $ complex conjugation is continuous and surjective. The kernel of $r_\R$ is $\R_{>0}^\times$, the set of norms coming from $\C^\times$ (i.e. the image of $\text{Norm}_{\C/\R}$).
    \item {\bf $K=\C((t))$:}\footnote{Note that $K$ does not quite fit our assumptions so LCFT does not apply, but morally it fits into the same picture.}  The map 
    \[
    r_{\C((t))}:\C((t))^\times \rightarrow \Gal \left(\overline{\C((t))}/\C((t)) \right) = \widehat{\Z}
    \]
    has dense image, so a reasonable choice is valuation $\text{val}:\C((t))^\times \rightarrow \Z$. 
\end{enumerate}
\end{remark}

It is useful to modify the Galois group $\Gal(\overline{K}/K)$ slightly. Define the {\bf Weil group} of $K$ to be the subgroup $W_K\subset \Gal(\overline{K}/K)$ of elements whose projection onto $\widehat{\Z}$ is an integral power of $\text{Frob}_q$; that is, $W_K$ fits into the short exact sequence 
\[
\begin{tikzcd}
I_{\overline{K}/K} \arrow[r, hookrightarrow] \arrow[d, equal] & W_K \arrow[r, twoheadrightarrow] \arrow[d, hookrightarrow] & \Z \arrow[d, hookrightarrow]\\
I_{\overline{K}/K} \arrow[r, hookrightarrow]  & \Gal(\overline{K}/K) \arrow[r, twoheadrightarrow] & \widehat{\Z}. 
\end{tikzcd}
\]
The purpose for this modification of the following fact: the reciprocity map $r_K$ provides an isomorphism between $K^\times$ and the abelianization of the Weil group:
\[
r_K:K^\times \xrightarrow{\simeq} W_K^{ab}. 
\]
With this, we can state the local Langlands correspondence for $\GL_n(K)$. 
\begin{theorem} ({\bf Local Langlands correspondence for $\GL_n(K)$}) (Harris-Taylor) There is a bijection 
\[
\text{Hom}_{cts}(W_K, \GL_n(\C))/_{\text{conj}} \xleftrightarrow{1:1} \left\{  \begin{array}{c} \text{irreps of $\GL_n(K)$}
\\
\text{in $\C$-vector spaces}
\end{array}   \right\}.
\]
\end{theorem}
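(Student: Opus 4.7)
The plan is to reduce the bijection to the case of supercuspidal representations of $\GL_n(K)$ matched with irreducible continuous $n$-dimensional Weil-group representations, and then to construct the supercuspidal-to-parameter map by global geometric means. On the automorphic side, the Bernstein-Zelevinsky classification writes every irreducible admissible representation of $\GL_n(K)$ as a Langlands quotient of a parabolic induction built from supercuspidals of smaller $\GL_m(K)$'s; on the Galois side, every continuous semisimple $n$-dimensional $W_K$-representation is a direct sum of irreducibles, and to accommodate non-tempered automorphic representations one enlarges to Weil-Deligne parameters by recording a nilpotent monodromy operator. A matching at the supercuspidal/irreducible level that is compatible with parabolic induction on one side, direct sum on the other, and with character twists (pinned down by the $n=1$ case via the reciprocity map $r_K$ of local class field theory) then yields the full bijection formally.

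To pin down the matching rather than merely assert existence, I would impose that it preserve central characters and match local $L$-, $\epsilon$-, and $\gamma$-factors of Rankin-Selberg pairs $L(s,\pi_1\times\pi_2)$ with their Artin-side analogues $L(s,\sigma_1\otimes\sigma_2)$ as $\pi_i$ range over supercuspidals of various $\GL_{m_i}(K)$. Henniart's local converse theorem then guarantees at most one such bijection, so the task becomes existence. For existence I would follow Harris-Taylor: choose a CM global field $F$ whose completion at a prescribed place recovers $K$, and a unitary similitude group $G/F$ whose local factor at that place is isomorphic to $\GL_n(K)$. The associated simple Shimura variety carries commuting actions of $G(\A_f)$ and $\Gal(\overline{\Q}/F)$ on its middle-degree $\ell$-adic cohomology; isotypic decomposition against a global automorphic representation $\Pi$ yields an $n$-dimensional Galois representation whose restriction to $W_K$, viewed as a Weil-Deligne representation via nearby cycles at the place above the residue characteristic of $K$, is declared to be the parameter attached to the local component $\Pi_v\cong\pi$.

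The main obstacle is controlling the mixed-characteristic geometry of the bad reduction of the Shimura variety precisely enough that the Weil-Deligne representation extracted above actually has the predicted local $L$- and $\epsilon$-factors. This is the technical heart of Harris-Taylor, where the Lubin-Tate tower and Rapoport-Zink spaces for one-dimensional formal $\OO_K$-modules enter decisively; the remaining ingredients (cyclic base change together with global Langlands for cuspidal automorphic representations of $\GL_n$ over CM fields, purity and vanishing for Shimura-variety cohomology outside the middle degree, and Henniart's numerical matching of supercuspidals on the two sides) are substantial but, by comparison, conceptually routine. An alternative argument by Scholze characterizes the correspondence by a trace identity on the Lubin-Tate tower, which is more local in flavor but still rests on non-trivial global input. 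In the informal spirit of these notes we will not attempt the proof at all; we take the theorem only as motivation and pursue instead its tamely ramified shadow, which is the natural home of the affine Hecke algebra and of Bezrukavnikov's equivalence.
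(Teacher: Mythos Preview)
The paper does not prove this theorem at all: it is stated as a result of Harris--Taylor, immediately flagged as imprecise (the remark following it says one should use Weil--Deligne representations on the left and smooth admissible representations on the right), and then used purely as motivation for the rest of the course. Your final sentence acknowledges exactly this, so your proposal is consistent with the paper's treatment.

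What you have added beyond the paper is a competent high-level outline of the actual Harris--Taylor strategy: reduction to supercuspidals via Bernstein--Zelevinsky, uniqueness via Henniart's converse theorem, and existence via the $\ell$-adic cohomology of certain unitary Shimura varieties with bad reduction controlled through Lubin--Tate/Rapoport--Zink spaces. This is accurate as a roadmap, though of course each step you label ``conceptually routine'' is itself a substantial paper. The paper itself never attempts even this level of sketch; it simply records the statement, refines it over several lectures into the correct Weil--Deligne/smooth-admissible form, and then specializes to the unramified and tamely ramified cases where the Satake and Kazhdan--Lusztig isomorphisms live.
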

The continuous group homomorphisms on the left hand side of this bijection are referred to as the {\bf Langlands parameters} of the corresponding $\GL_n(K)$-representations on the right. 

\begin{remark}
Actually, this is not quite correct. Instead we should consider Weil-Deligne reps on the left, and smooth admissible reps on the right. These issues will be addressed in coming lectures. 
\end{remark}

\begin{example} The $n=1$ case of this theorem is true by local class field theory:
\begin{align*}
    \text{Hom}_{cts}(W_K,\GL_1(\C))/_{\text{conj}} &= \text{Hom}_{cts}(W_K, \C^\times) \\
    &= \text{Hom}_{cts}(W_K^{ab}, \C^\times) \\
    &= \text{Hom}_{cts}(K^\times, \C^\times) \\
    &= \{ \text{irreps of $\GL_1(K)$} \}.
\end{align*}
\end{example}
 
\begin{example}
We can see explicitly that local class field theory is true for $\Q_p$. By a local version of the Jugentraum, 
\[
\Q_p^{ab} = \Q_p(\mu_\infty) = \bigcup\Q_p(\zeta_n),
\]
where $\zeta_n$ is an $n^{th}$ root of unity. Hence 
\[
\Q_p^{ab}=\Q_p(\mu_{p'}) \cdot \Q_p(\mu_{p^\infty}),
\]
where $\displaystyle{\Q_p(\mu_{p'}):=\bigcup_{p \nmid n} \Q_p(\zeta_n)}$ and $\displaystyle{\Q_p(\mu_{p^\infty}):= \bigcup_{n \geq 1} \Q_p(\zeta_{p^n})}$. As before, $\Gal(\Q_p(\zeta_{p^n})/\Q_p) = \left( \Z/p^n\Z \right) ^\times,$ so 
\[
\Gal(\Q_p(\mu_{p^\infty})/\Q_p) = \Z_p^\times.
\]
If $p \nmid n$, note that $\Q_p(\zeta_{n})/\Q_p$ is unramified, so $\Q(\mu_{p'})=:\Q_p^{ur}$ is the maximal unramified extension of $\Q_p$. Because $\displaystyle{\overline{\F_p}=\bigcup_{p \nmid n} \F_p(\zeta_n)}$, we have that 
\[
\Gal(\Q_p(\mu_{p'})/\Q_p) = \Gal(\overline{\F_p}/\F_p) = \widehat{\Z}.
\]
We conclude that 
\[
\Gal(\Q_p^{ab}/\Q_p) = \widehat{\Z} \times \Z_p^\times \simeq \widehat{\Q}_p^\times,
\]
which is exactly what is predicted by local class field theory. 
\end{example}

\subsection{Structure of Galois groups of local fields}
\label{ramification filtration}
We will finish this lecture with some remarks on the structure of local Galois groups. This section is hands-on and explicit. Morally, it should come before local class field theory. 

Let $L/K$ be a finite Galois extension of local fields, and $L \supset \OO_L \supset \mf{m}_L =(\pi_L)$, $K \supset \OO_K \supset \mf{m}_K =(\pi_K)$ the respective rings of integers, maximal ideals, and uniformizers. As before, there is a corresponding extension of residue fields $k_L/k_K$. We have a short exact sequence
\[
1 \rightarrow I_{L/K} \rightarrow \Gal(L/K) \twoheadrightarrow \Gal(k_L/k_K) \simeq \Z/n\Z \rightarrow 1,
\]
where $I_{L/K}=\{\sigma \mid \sigma \text{ acts trivially on }k_L\}$, and $\Gal(k_L/k_K)$ is generated by the canonical generator Frob. Note that $\Gal(L/K)$ preserves $\OO_K$, hence $\mf{m}_K$, hence the valuation $v_K$, hence acts on $\OO_K/\mf{m}_K^j$, hence acts continuously on $L$. 

\begin{lemma}
\label{key lemma} (Key lemma) Any $\sigma \in I_{K/L}$ is determined by its action on $\pi_L$. 
\end{lemma}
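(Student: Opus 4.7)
The plan is to describe every element of $\OO_L$ as a convergent $\pi_L$-adic power series whose coefficients come from a set that $\sigma \in I_{L/K}$ is forced to fix. Once this is set up, applying $\sigma$ to $\sum s_i \pi_L^i$ yields $\sum s_i \sigma(\pi_L)^i$, so $\sigma$ is determined on $\OO_L$ by $\sigma(\pi_L)$, and hence on $L = \mathrm{Frac}(\OO_L)$. Equivalently, it suffices to show that $\rho \in I_{L/K}$ with $\rho(\pi_L) = \pi_L$ must equal the identity.

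First I would produce such a system of representatives using Teichm\"uller lifts. Since $k_L$ is a finite field of some order $q = p^f$ and $p \nmid q-1$, the polynomial $x^{q-1} - 1$ has simple roots modulo $\mf{m}_L$, so Hensel's lemma lifts every nonzero residue uniquely to a $(q-1)$-th root of unity in $\OO_L$. This produces a subgroup $\mu_{q-1}(L) \subset \OO_L^\times$ mapping bijectively to $k_L^\times$, and $S := \{0\} \cup \mu_{q-1}(L)$ is a system of representatives for $k_L$ inside $\OO_L$. Completeness of $\OO_L$ then gives, for each $x \in \OO_L$, a unique expansion $x = \sum_{i \geq 0} s_i \pi_L^i$ with $s_i \in S$: pick $s_0 \in S$ lifting $\bar{x}$, observe $x - s_0 \in \pi_L \OO_L$, write $x - s_0 = \pi_L x_1$, and iterate.

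Next, for any $s \in \mu_{q-1}(L)$ and $\sigma \in I_{L/K}$, the element $\sigma(s)$ is again a $(q-1)$-th root of unity (as $\sigma$ is a ring automorphism) with the same residue as $s$ (because $\sigma$ acts trivially on $k_L$), so the uniqueness of the Hensel lift forces $\sigma(s) = s$. Thus $\sigma$ fixes $S$ pointwise, and continuity of $\sigma$ with respect to the $\pi_L$-adic topology lets it commute with the infinite sum, yielding $\sigma(x) = \sum_{i \geq 0} s_i \sigma(\pi_L)^i$, which manifestly depends only on $\sigma(\pi_L)$.

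The main obstacle is really just setting up the Teichm\"uller representatives and verifying that $\sigma$ must preserve them; the rest is formal manipulation of $\pi_L$-adic expansions, relying only on completeness. If one preferred to avoid Hensel's lemma directly, the same conclusion follows from the structure theorem for local extensions: if $L^{\mathrm{ur}} \subset L$ is the maximal unramified subextension (so $\Gal(L/L^{\mathrm{ur}}) = I_{L/K}$) and $L/L^{\mathrm{ur}}$ is totally ramified, then $L = L^{\mathrm{ur}}(\pi_L)$ and $\sigma \in I_{L/K}$ fixes $L^{\mathrm{ur}}$ pointwise, so it is determined by its value on the single generator $\pi_L$.
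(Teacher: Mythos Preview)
Your proof is correct. The paper does not actually prove this lemma; it is left as Exercise~6.12 with the hint ``use the fact that any $\sigma \in \Gal(L/K)$ is automatically continuous.'' Your main argument via Teichm\"uller representatives and $\pi_L$-adic expansions is precisely the natural way to execute that hint: continuity is what lets you pass $\sigma$ through the infinite sum $\sum s_i \pi_L^i$, and the Teichm\"uller lifts are exactly the coefficients that $\sigma$ is forced to fix by virtue of lying in inertia. Your alternative route through the maximal unramified subextension $L^{\mathrm{ur}}$ is also correct and arguably cleaner, since it reduces the statement to the single-generator fact $L = L^{\mathrm{ur}}(\pi_L)$ without any appeal to series expansions or continuity at all.
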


\begin{exercise}
Prove Lemma \ref{key lemma}. ({\em Hint}: use the fact that any $\sigma \in \Gal(L/K)$ is automatically continuous.) 
\end{exercise}

Set $I:=I_{L/K}$, $I_0:=I$, and $I_j:=\{ \sigma \in I \mid \sigma(\pi) \pi^{-1} \in 1 + \mf{m}_L^j \}$ for $j \geq 1$. 
\begin{proposition}
This defines a filtration
\[
I=I_0 \supset I_1 \supset I_2 \supset \cdots 
\]
of $I$ by normal subgroups. Moreover, 
\begin{enumerate}
    \item This is a finite filtration; i.e. $I_m=\{1\}$ for large enough $m$. 
    \item We have natural injections 
    \begin{align*}
        I_0/I_1 &\xhookrightarrow{\sigma(\pi)\pi^{-1}} k_L^\times, \\
        I_j/I_{j+1} &\xhookrightarrow{\hspace{11mm}} (1+\mf{m}_L^j)/(1+\mf{m}_L^{j+1}) \simeq k_L 
    \end{align*}
    for $j\geq 1$. In particular, $I$ is solvable, $I_0/I_1$ is of order prime to $p$, and $I_1$ is the Sylow $p$ subgroup of $I$. 
\end{enumerate}
\end{proposition}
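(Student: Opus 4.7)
The plan is to verify each assertion using the key lemma together with direct computations on the action on a uniformizer $\pi$ of $L$. For $\sigma \in I$, write $u_\sigma := \sigma(\pi)/\pi \in 1 + \mf{m}_L$, so that $\sigma \in I_j$ iff $u_\sigma \in 1 + \mf{m}_L^j$ and, by the key lemma, $\sigma$ is determined by $u_\sigma$. Everything reduces to tracking how $u_\sigma$ behaves under composition.

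First I would check that each $I_j$ is a subgroup. For $\sigma,\tau \in I_j$ the identity $u_{\sigma\tau} = \sigma(\tau(\pi))/\pi = u_\sigma \cdot \sigma(u_\tau)$ is immediate. Writing $u_\tau = 1 + a$ with $a = \pi^j b \in \mf{m}_L^j$, I would use that $\sigma \in I$ acts trivially on $k_L$ (hence $\sigma(b) \equiv b \pmod{\mf{m}_L}$) together with $\sigma(\pi)^j \equiv \pi^j \pmod{\mf{m}_L^{j+1}}$ to deduce $\sigma(a) \equiv a \pmod{\mf{m}_L^{j+1}}$. This yields both closure, $u_{\sigma\tau} \in 1+\mf{m}_L^j$, and the refinement
\[
u_{\sigma\tau} \equiv u_\sigma u_\tau \pmod{\mf{m}_L^{j+1}},
\]
which will power the homomorphism property in the next step. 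Inverses are handled identically. For normality in $\Gal(L/K)$, I would use the equivalent characterization $I_j = \{\sigma \in I : \sigma \text{ acts trivially on } \OO_L/\mf{m}_L^{j+1}\}$ (equivalent to the given definition after reducing to the totally ramified extension $L/L^I$, where $\OO_L = \OO_{L^I}[\pi]$ so Taylor-expanding a polynomial in $\pi$ transfers the estimate on $\sigma(\pi) - \pi$ to all of $\OO_L$); this description is patently conjugation-invariant.

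Next I would package the above computation as group homomorphisms $\varphi_j : I_j \to (1+\mf{m}_L^j)/(1+\mf{m}_L^{j+1})$ sending $\sigma \mapsto u_\sigma$. The congruence $u_{\sigma\tau} \equiv u_\sigma u_\tau \pmod{\mf{m}_L^{j+1}}$ says $\varphi_j$ is a homomorphism, and its kernel is literally $I_{j+1}$. For $j = 0$ the target is $\OO_L^\times/(1+\mf{m}_L) \cong k_L^\times$; for $j \geq 1$, the map $1 + \pi^j x \mapsto \bar x$ is a well-defined isomorphism $(1+\mf{m}_L^j)/(1+\mf{m}_L^{j+1}) \xrightarrow{\sim} (k_L,+)$, because $2j \geq j+1$ gives $(1+\pi^j x)(1+\pi^j y) \equiv 1 + \pi^j(x+y) \pmod{\mf{m}_L^{j+1}}$.

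Finally I would conclude finiteness and structure. Since $L/K$ is finite, $I$ is finite, so the chain $I_0 \supseteq I_1 \supseteq \cdots$ stabilizes; any $\sigma$ in the stable term lies in every $I_j$, so $u_\sigma \in \bigcap_j (1+\mf{m}_L^j) = \{1\}$, forcing $\sigma(\pi)=\pi$ and hence $\sigma = 1$ by the key lemma. Thus $I_m = \{1\}$ for $m \gg 0$. From the injections, $I_0/I_1$ is a finite subgroup of $k_L^\times$, whose order divides $|k_L|-1$ and is therefore prime to $p = \mathrm{char}(k_L)$; each $I_j/I_{j+1}$ ($j \geq 1$) is an elementary abelian $p$-group. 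Hence $I_1$ is a $p$-group of prime-to-$p$ index in $I$, i.e.\ the Sylow $p$-subgroup, and $I$ is solvable with abelian quotients. The main obstacle I expect is the sharp congruence in Step 1: one must see that $u_{\sigma\tau} \equiv u_\sigma u_\tau$ modulo $\mf{m}_L^{j+1}$ and not merely $\mf{m}_L^j$, since it is precisely this extra order of vanishing that makes the quotients abelian and drives everything else in the proposition.
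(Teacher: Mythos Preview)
Your argument is correct and is the standard textbook proof (cf.\ Serre's \emph{Local Fields}, Chapter IV). The paper itself does not give a proof: it simply states that ``the proof is an easy exercise,'' so there is nothing to compare against beyond noting that your approach is exactly what the authors had in mind.

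One small slip: in your first sentence you write $u_\sigma := \sigma(\pi)/\pi \in 1 + \mf{m}_L$ for arbitrary $\sigma \in I$, but this is only true for $\sigma \in I_1$; for $\sigma \in I_0 = I$ one only has $u_\sigma \in \OO_L^\times$. You clearly know this, since you handle $j=0$ separately later with target $\OO_L^\times/(1+\mf{m}_L) \cong k_L^\times$, so this is just a typo in the setup rather than a gap in the argument.
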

This filtration is called the {\bf ramification filtration of $I$}. The proof is an easy exercise. 
\begin{definition}
We say that $L/K$ is {\bf tamely ramified} if $I_1=\{1\}$, and $L/K$ is {\bf unramified}\footnote{Note that a quirk of this terminology is that unramified is tamely ramified. It is strange, but we will just have to get used to it.} if $I_0=\{1\}$.
\end{definition}

\begin{remark}
This agrees with our earlier notion of unramified from Lecture 2.
\end{remark}

\pagebreak
\section{Lecture 7: Representation theory of $p$-adic groups}
\label{lecture 7}

\subsection{Tying up some loose ends}

We start today's lecture by tying up some loose ends from previous weeks. Recall that two lectures ago, we discussed how looking for a description of $\Gal(\overline{\Q}/\Q)$ is misguided because $\Gal(\overline{\Q}/\Q)$ is only a ``group up to conjugacy,'' since its definition requires a choice of $\overline{\Q}$. There is an analogy for this idea that Geordie learned from Kevin Buzzard which might be more familiar to us. Let $X$ be a path-connected space. A choice of base point $x \in X$ yields the fundamental group $\pi_1(X,x)$. Another choice of base point $y \in X$ yields the isomorphic group $\pi_1(X,y)$. 
\begin{center}
     \includegraphics[scale=0.4]{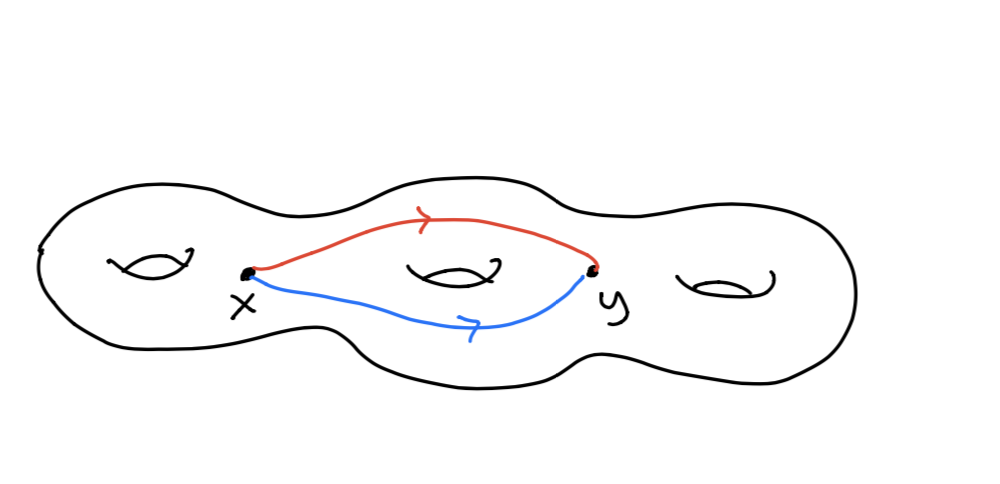}
\end{center}
An isomorphism $\pi_1(X,x) \simeq \pi(X,y)$ requires a choice of path from $x$ to $y$ in $X$. Such a choice of path is {\em not canonical.} Grothendieck taught us an analogue of this for extensions of fields. 
\begin{align*}
    \Q &\leftrightarrow \text{ ``\'{e}tale site''} \Spec{\Q} \text{ (something like a space)} \\
    \text{choice of }\overline{\Q} &\leftrightarrow \text{choice of ``base point'' of }\Spec{\Q} 
\end{align*}
Then the \'{e}tale fundamental group $\pi_1^{\text{\'{e}t}}(\Spec{\Q},\Q)=\Gal(\overline{\Q}/\Q)$. 

\vspace{5mm}
\noindent
{\bf The punchline:} The fundamental group $\pi_1(X,x)$ depends on base point $x$, but $\Rep{\pi_1(X,x)}\simeq\{\text{local systems on $X$}\}$ is canonical! Transporting this statement via Grothendieck's analogy we see that, although the absolute Galois group is not defined canonically, its category of (continuous) representations is. It is this category that the Langlands correspondence tries to understand.
\vspace{5mm}

Last lecture we stated the {\bf local Langlands correspondence for $\GL_n$}: Fix a local field $K$ (i.e. $K$ is a finite extension of $\Q_p$ or $K \simeq \F_q((t))$). There is a canonical bijection 
\begin{align*}
    \left\{ \begin{array}{c} \text{ cts reps of $W_K$} \\ \text{ in }\GL_n(\C) \end{array} \right\} _{/\text{iso}} \xleftrightarrow{1:1} \left\{ \begin{array}{c} \text{ irred blah }\\ \text{ reps of }\GL_n(K) \end{array} \right\} _{/\text{iso}}.
\end{align*}
Here $W_K$ is the {\bf Weil group} of the field $K$. Last week we showed why this follows from local class field theory for $n=1$. However, this statement is not quite precise. On the left hand side we need to consider {\bf Weil-Deligne representations} of $W_K$, and on the right hand side we need to establish exactly what conditions are captured by ``blah''. We'll keep stating versions of this theorem every lecture until we converge on something correct.

\subsection{The no small subgroups argument}

Our final piece of housekeeping is the {\bf no small subgroups argument}. This is a very useful fact that has not fit in naturally to our story so far, so we will slot it in here. 

\begin{definition}
A topological group $G$ has {\bf no small subgroups} if there exists a neighborhood $U$ of the identity in $G$ such that any subgroup contained in $U$ is trivial.
\end{definition}

\begin{example} Here are some examples of groups with no small subgroups. 
\begin{enumerate}
    \item The circle group $S^1$. 
    \item Discrete groups (e.g. finite groups). We can take $U=\{id\}$. 
    \item The real numbers $\R$ and the complex numbers $\C$. (Powers of any non-identity element move far away from the identity.) 
    \item Any Lie group $G$. (Use that $\exp: \Lie{G} \rightarrow G$ is a local diffeomorphism and $\exp(mg)=\exp(g)^m.$)
    \item Any topological subgroup of a group with no small subgroups has no small subgroups (e.g. $\Q/\Z \hookrightarrow S^1$ has no small subgroups). 
\end{enumerate}
\end{example}
 
\begin{remark} In contrast, profinite groups have ``many small subgroups,'' because a basis of neighbourhoods of the identity consists of subgroups of finite index.
\end{remark}

\begin{lemma}
\label{no small subgroups}
Let $\Gamma$ be a profinite group and $G$ a topological group with no small subgroups. Then any continuous group homomorphism $\varphi:\Gamma \rightarrow G$ has finite image. 
\end{lemma}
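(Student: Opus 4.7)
The plan is to exploit the mismatch between the two notions of ``small neighborhood of the identity'' in $\Gamma$ and $G$: in $\Gamma$ such neighborhoods contain full open subgroups of finite index, while in $G$ they contain no nontrivial subgroup at all. Matching these up via continuity should force the image of a whole finite-index subgroup to collapse to the identity.

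In more detail, I would start by choosing an open neighborhood $U$ of the identity in $G$ witnessing the ``no small subgroups'' property, so that the only subgroup of $G$ contained in $U$ is the trivial one. By continuity of $\varphi$, the preimage $\varphi^{-1}(U)$ is an open neighborhood of the identity in $\Gamma$. Now I would invoke the structure of profinite groups: $\Gamma$ has a basis of neighborhoods of the identity consisting of open normal subgroups of finite index (this is precisely the content of the characterization of the topology on a profinite group used earlier in the notes in the case of $\mathrm{Gal}(L/K)$). Hence there exists an open normal subgroup $N \trianglelefteq \Gamma$ of finite index such that $N \subseteq \varphi^{-1}(U)$.

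Then $\varphi(N)$ is a subgroup of $G$ contained in $U$, so by the defining property of $U$ we must have $\varphi(N) = \{e\}$. Consequently $\varphi$ factors through the finite quotient $\Gamma/N$, and so $\varphi(\Gamma) \cong \Gamma/\ker\varphi$ is a quotient of a finite group, hence finite.

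The only real content is the observation in the second paragraph — that in a profinite group every neighborhood of the identity swallows an open finite-index subgroup; everything else is essentially formal. So the ``main obstacle,'' such as it is, is simply making sure one is allowed to quote this characterization of the topology on $\Gamma$, which is available from the preceding material in the notes. No calculation is needed.
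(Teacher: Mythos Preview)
Your proof is correct and matches the paper's argument essentially line for line: choose $U$ witnessing the no-small-subgroups property, pull it back, find an open normal finite-index $N$ inside the preimage, observe $\varphi(N)\subset U$ forces $\varphi(N)=\{1\}$, and conclude that $\varphi$ factors through the finite group $\Gamma/N$.
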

\begin{proof}
Let $U \subset G$ be an open neighborhood of the identity containing no nontrivial subgroups, and let $\varphi:\Gamma \rightarrow G$ be a continuous group homomorphism. Then $\varphi^{-1}(U) \subset \Gamma$ is open. Since $\Gamma$ is profinite, there exists a normal subgroup $N \subset \varphi^{-1}(U)$ such that $G/N$ is finite. The image $\varphi(N) \subset U$ is a subgroup, so $\varphi(N)=\{1\}$ since $G$ has no small subgroups. Hence $\Gamma$ factors through a finite group, $\varphi:\Gamma \rightarrow \Gamma/N \rightarrow G$.  
\end{proof}

\noindent
{\bf The Moral:} 
\begin{align*}
    \left\{ \begin{array}{c} \text{Fractal-like objects} \\ \text{($p$-adic groups, Galois groups)} \end{array} \right\} \cap \left\{ \begin{array}{c} \text{Euclidean-type objects} \\ \text{(Lie groups)} \end{array} \right\} = \{ \text{finite groups} \} 
\end{align*}

\vspace{5mm}
\noindent 
A consequence of this is that we can not draw any good pictures of $\Z_p$ in $\C$ (or for that matter in any Lie group) which respect the addition or multiplication structure. 

This moral gives us a new perspective of the local Langlands correspondence. The ``no small subgroups'' lemma implies that the left hand side of the LLC consists (roughly) of a collection of finite subgroups of $\GL_n(\C)$, along with surjections from a Galois-group-type object to the subgroups. So very roughly, the LLC provides a classification of irreducible admissible representations of a Lie group over a local field by certain finite subgroups of $\GL_n(\C)$.

\subsection{You could have guessed the LLC for $\GL_2$!}

The goal is this section is to give a heuristic explanation for the LLC. {\em Warning}: this is not precise! Everything we say here will have to be tweaked later. Geordie learned this perspecive from a series of lectures by Dipendra Prasad in Russia. 

\vspace{5mm}
\noindent
{\bf Starting place:} Say we wanted to guess the representation theory of $\GL_n(\Q_p)$. What would we do? 

\vspace{5mm}
\noindent
{\bf Step 1}: We might start by figuring out the representation theory of finite reductive groups. For example, let $G=\SL_2(\F_q)$. There are two maximal tori in $G$, up to conjugacy: 
\[
T_s=\left\{ \bp a & 0 \\ 0 & a^{-1} \ep \mid a \in \F_q^\times \right\}\simeq \Z/(q-1)\Z, \text{ the ``split torus,'' and}
\]
\[
T_a = \{\lambda \in \F_{q^2}^\times \subset \GL_2(\F_q) \mid \Norm(\lambda)=1 \}\simeq \Z/(q+1)\Z, \text{ the ``anisotropic torus''.} 
\]
In the definition of $T_a$ above, we are using the fact that $\GL_2(\F_q)$ is the group of invertible linear transformations of the $\F_q$-vector space $\F_q^2$, so 
\[\F_{q^2}^\times \subset \GL_{\F_q}(\F_q^2)=\GL_2(\F_q).
\]
Roughly, 
\begin{align*}
    \left\{ \begin{array}{c} \text{irred reps of }\\ \text{$\SL_2(\F_q)$ over $\C$} \end{array} \right\} \xleftrightarrow{1:1} \begin{array}{c} \text{ } \\ \left\{ \chi: T_s \rightarrow \C^\times \right\}_{/ \chi \sim \chi^{-1}} \\  \text{ ``principal series''} \end{array} \bigsqcup \begin{array}{c} \text{ } \\ \left\{\theta:T_a \rightarrow \C^\times \right\}_{/\theta \sim \theta^{-1}} \\ \text{ ``discrete series''} \end{array}
\end{align*}

Let us check that we are not too far off by doing a count. Irreducible representations of a finite group are in bijection with conjugacy classes, and conjucagy classes are roughly in bijection with characteristic polynomials, so the sizes of the sets above are roughly 
\begin{align*}
\begin{array}{c} \# \text{ characteristic polynomials }\\
\text{of elements in }\SL_2(\F_q)\end{array}
= |\{ x^2 + ax + 1 \; | \; a \in \F_q \}|=
q= \frac{q-1}{2} + \frac{q+1}{2}.
\end{align*}
For more details and a careful construction of the irreducible representation of $\SL_2(\F_q)$, see the notes from Joe Baine's talks on the Informal Friday Seminar webpage. The upshot is that we obtain almost all irreducible representations of $\SL_2(\F_q)$ through some ``induction'' from characters of the two conjugacy classes of tori.
(Note that the details are much more complicated as there is no actual induction functor. In the setting of finite reductive groups we use Deligne-Lusztig induction.)  

\vspace{5mm}
\noindent
{\bf Step 2}: Once we have a good idea of the representation theory of finite reductive groups, a next natural step might be to understand the representation theory of real reductive groups. For example, let $G=\SL_2(\R)$. Again, there are two conjugacy classes of maximal tori: the ``split torus''
\[
T_s =\left\{ \bp a & 0 \\ 0 & a^{-1} \ep \mid a \in \R^\times \right\} \simeq \R^\times,
\]
and the ``anisotropic torus'' 
\[
T_a = \left\{ \bp \cos \theta & \sin \theta \\ - \sin \theta & \cos \theta \ep \right\} \simeq SO_2. 
\]
Something similar happens in this setting to what we saw with the finite reductive groups. Roughly, 
\begin{align*}
    \left\{ \begin{array}{c} \text{irred admissible } \\ \text{reps of }\SL_2(\R) \end{array} \right\} \xleftrightarrow{1:1} \begin{array}{c} \text{ } \\ \left\{ \begin{array}{c} \text{cts characters }\\ \text{ of $T_s \simeq \R \times \Z/2\Z$} \end{array} \right\} \\ \text{``principal series''} \end{array} \bigsqcup
    \begin{array}{c} \text{ } \\ \left\{ \begin{array}{c} \text{cts characters }\\ \text{ of $T_a \simeq SO_2 \simeq S^1$} \end{array} \right\} \\ \text{``discrete series''} \end{array} 
\end{align*}
So again we see that, roughly, irreducible representations are all obtained by ``inducing'' characters of conjugacy classes of tori.

\vspace{5mm}
\noindent
{\bf Step 3:} We dream that something similar might be true for representations of $p$-adic groups. Let $G=\GL_2(K)$ for a local field $K$. By descent, we have the following relationship:
\begin{align*}
    \left\{ \begin{array}{c} \text{conjugacy classes of }\\ \text{max'l tori in $\GL_2(K)$} \end{array} \right\} &\leftrightarrow \left\{ \begin{array}{c} \text{semisimple $K$-algebras} \\ \text{$L$ s.t. $\dim_KL=2$} \end{array} \right\} \\
    L^\times &\leftrightarrow L
\end{align*}
There are two cases:
\begin{enumerate}
    \item Split torus: $L \simeq K \times K$, so maximal torus is of the form $L^\times \simeq K^\times \times K^ \times$.
    \item Anisotropic torus: $L/K$ degree $2$ extension, so maximal torus is of the form $L^\times$. 
\end{enumerate}
Applying our analogy from earlier, we might expect 
\begin{align*}
    \left\{ \begin{array}{c} \text{irred blah }\\ \text{reps of $\GL_2(K)$} \end{array} \right\} &\xleftrightarrow{\text{roughly }1:1} \left\{ \begin{array}{c} \text{pairs of characters} \\ \chi_1, \chi_2: K^\times \rightarrow \C^\times \end{array} \right\} \bigsqcup \left\{ \begin{array}{c} \text{characters } \theta:L^\times \rightarrow \C^\times \\ \text{where $L/K$ is degree $2$} \end{array} \right\}.  
\end{align*}
Now, local class field theory tells us that $W_K^{ab} \simeq K^\times$ and $W_L^{ab} \simeq L^\times$. Moreover, $W_L \subset W_K$ is an index $2$ subgroup, so
\begin{align*}
    \left\{ \begin{array}{c} \text{irred blah }\\ \text{reps of $\GL_2(K)$} \end{array} \right\}\xleftrightarrow{\text{roughly }1:1}\left\{ \chi_1 \otimes \chi_2:W_K \rightarrow \GL_2(\C) \right\} \sqcup \left\{ \Ind_{W_L}^{W_K}(\theta): W_L \rightarrow \GL_2(\C) \right\}. 
    \end{align*}
It turns out that our dream is a reality:

\vspace{5mm}
\noindent 
{\bf Fact:} If $p \neq 2$, all continuous representations of $W_K$ are either of the form $\chi_1 \otimes \chi_2$ or $\Ind_{W_L}^{W_K}(\theta)$ as above. 
\vspace{5mm}

So we guessed LLC for $\GL_2(K)$! Though again, let us emphasize that this is not actually the correct version of the correspondence (it is for example not compatible with taking duals). However it will not take too much effort to make this into a correct statement next lecture.

\begin{remark}
For $p=2$, the matching still works, but there are more objects on both sides. 
\end{remark}

\subsection{Basic representation theory of $p$-adic groups}

Let $K$ be a local field. Then $\GL_n(K)$ is a topological group, with a basis of open neighborhoods of $id$ given by 
\[
K_j = \left\{ g \in \GL_n(\OO_K) \mid g = id \mod \mf{m}_K^j \right\}.
\]
Note that $\GL_n(\OO_K)/K_j \simeq \GL_n(\OO_K/\mf{m}_K^j)$ is a finite group. 

For example, if $K=\Q_p$, we have a natural surjective map 
\[
\GL_n(\Q_p) \supset \GL_n(\Z_p) \xrightarrow{\varphi_j} \GL_n(\Z/p^j\Z) 
\]
for all $j\in \Z_:\geq 0$, and $K_j=\varphi_j^{-1}(id)$. 

\begin{remark}
\begin{enumerate}
    \item $K_j \subset K_0$ is normal. 
    \item $K_0$ is a maximal compact subgroup. 
\end{enumerate}
\end{remark}

\begin{exercise}
 Let $\pi \in \OO_K \subset K$ be a uniformizer. 
\begin{enumerate}
    \item Establish the Bruhat decomposition:
    \begin{equation}
    \GL_n(K)=\bigsqcup_{\begin{array}{c} \lambda_1 \geq \lambda_2 \geq \ldots \geq \lambda_n \\ \lambda_i \in \Z \end{array}} \GL_n(\OO_K)\bp \pi^{\lambda_1} & & & &  \\ & \pi^{\lambda_2} & & &  \\ & & \pi^{\lambda_3} & & & \\ & & & \cdots & & \\ & & & & &  \pi^{\lambda_n} \ep \GL_n(\OO_K)   \tag{$\ast$}
    \end{equation}
    ({\em Hint}: Gaussian elimination.)
    \item Use ($\ast$) to classify the subgroups $\GL_n(\OO_K)\subset H \subset \GL_n(K)$. 
    \item Hence or otherwise, show that $\GL_n(\OO_K)$ is a maximal compact subgroup of $\GL_n(K)$. 
\end{enumerate}
\end{exercise}

\begin{example}
Consider $\GL_1(\Q_3) = \Q_3^\times = \Z \times \Z_3^\times.$  Recall our picture of the $3$-adics from Example \ref{3-adics}. A picture of the maximal compact subgroup $K_0$ of this group is: 
\begin{center}
     \includegraphics[scale=0.5]{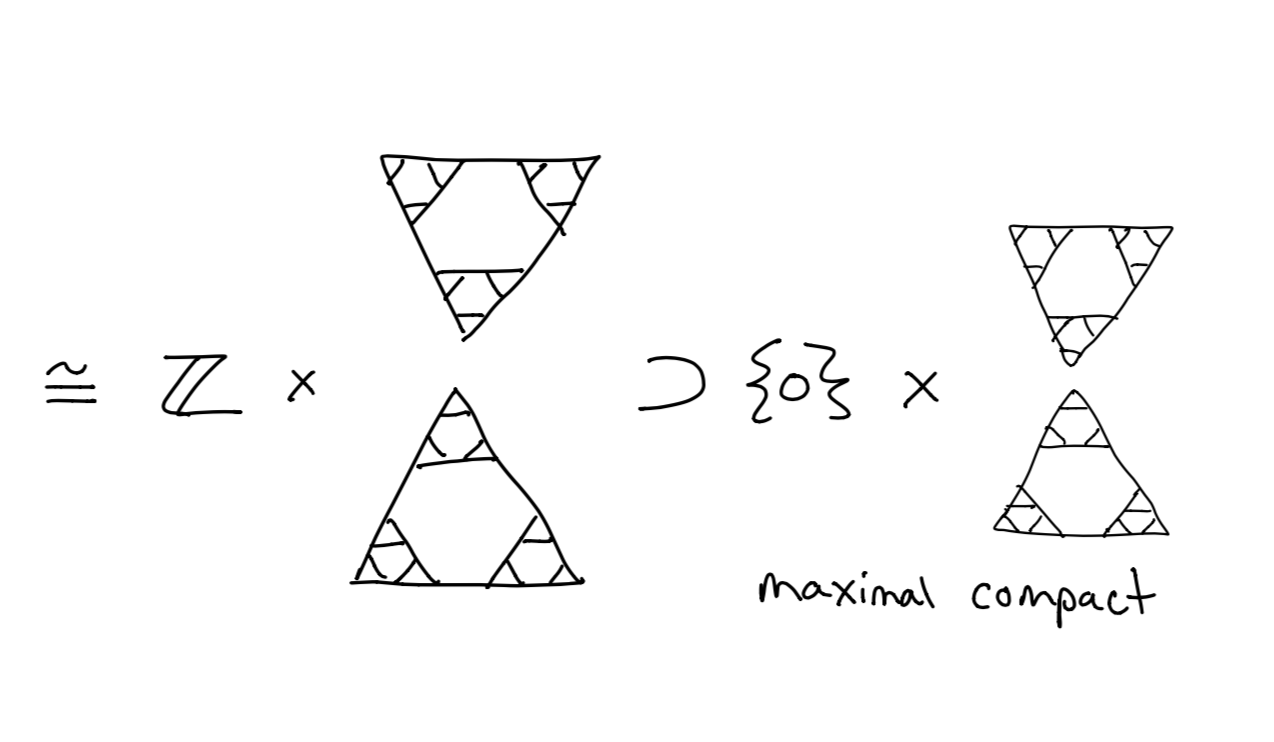}
\end{center}
\end{example}
A space is {\bf totally disconnected} if every point admits a family of compact open neighborhoods; e.g. $\GL_n(K)$ is totally disconnected because each $K_j$ is compact open. Let $G$ be a totally disconnected topological group and $V$ a vector space over a field $\mathbb{K}$ of characteristic $0$. We give $V$ the discrete topology. 
\begin{definition}
A representation $\rho:G \rightarrow \GL(V)$ is 
\begin{enumerate}
    \item {\bf smooth} if for all $v \in V$, $\stab_Gv$ is open, and 
    \item {\bf admissible} if for all open $K \subset G$, $V^K$ is finite-dimensional.  
\end{enumerate}
\end{definition}

\begin{example}
\label{smooth admissible}
\begin{enumerate}
    \item The trivial representation $\mathbb{K}$ is smooth and admissible, $\mathbb{K}^\infty$ is smooth but not admissible. 
    \item The standard representation of $\GL_n(K)$ on $K^n$ is not smooth, as $\stab_{\GL_n(K)}{v}$ is not open for $v \neq 0$.
    \item The group $G=(\Z_p, +)$ acts on the vector space $\mc{F}=\{ \varphi:\Z_p \rightarrow \C \mid \varphi \text{ is locally constant} \}$ in the natural way, forming the ``smooth regular representation''. (This is the $p$-adic analogue of $L^2(G)$.) We claim that this representation is smooth and admissible.
    \begin{itemize}
        \item {\bf Smooth:} Let $\varphi \in \mc{F}$. Then for all $x \in \Z_p$, there is a neighborhood $U_x$ such that $\varphi|_{U_x}$ is constant. This forms a covering of $\Z_p$ by open neighborhoods of the form $U_x=x+p^{n_x}\Z_p$. Since $\Z_p$ is compact, there exists a finite subcovering $U_{x_1}, \ldots , U_{x_m}$. Then $\varphi$ is fixed by $p^n\Z_p$, where $n=\Max\{n_i\}$, so the the stablizer of $\varphi$ is open, hence the representation is smooth. \item {\bf Admissible:} A basis of open neighborhoods of $0$ is given by $p^m\Z_p$, $m \geq 0$. Then 
        \begin{align*}
            \mc{F}^{p^m\Z_p} &= \{\varphi \mid \varphi \text{ is constant on $p^m\Z_p$-orbits} \} \\
            &= \{ \varphi: \Z/p^m\Z \rightarrow \C \}
        \end{align*}
        is finite dimensional, so the representation is admissible. 
    \end{itemize}
    \item (The most important example!) Recall that $\mathbb{P}^1\C$ is covered by the compact sets $D_{\leq 1} = \{ z \mid |z|\leq 1 \}$ and $D_{\leq 1}^{-1}$, so $\mathbb{P}^1\C$ is compact:
    \begin{center}
     \includegraphics[scale=0.5]{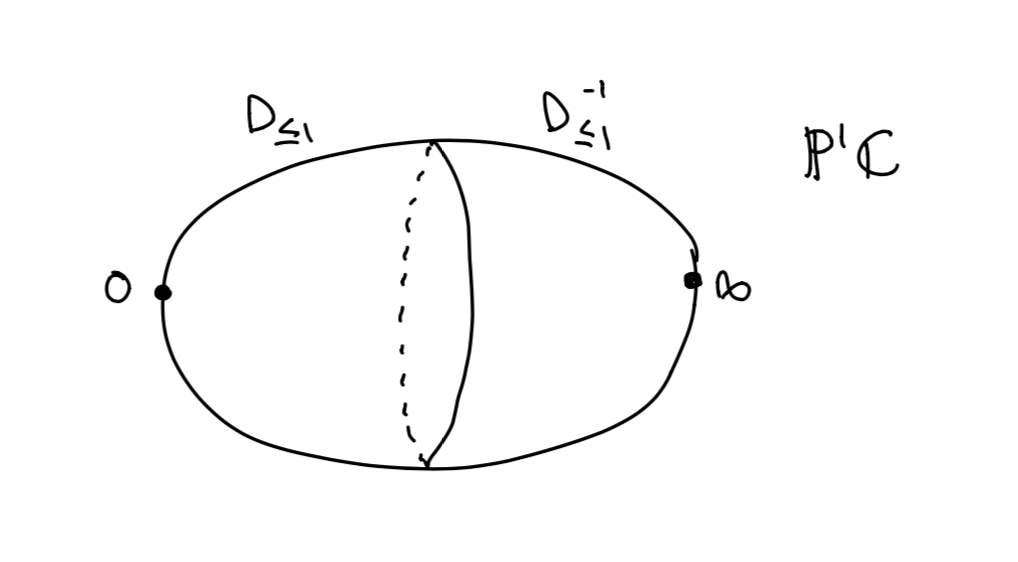}
\end{center}
    Similarly, $\mathbb{P}^1K=K\cup \{\infty\}$ is covered by the compact sets $D_{\leq 1} = \{ z \in K \mid |z|_p \leq 1 \} = \OO_K$ and $D_{\leq 1}^{-1}$, so $\mathbb{P}^1K$ is compact:
    \begin{center}
     \includegraphics[scale=0.5]{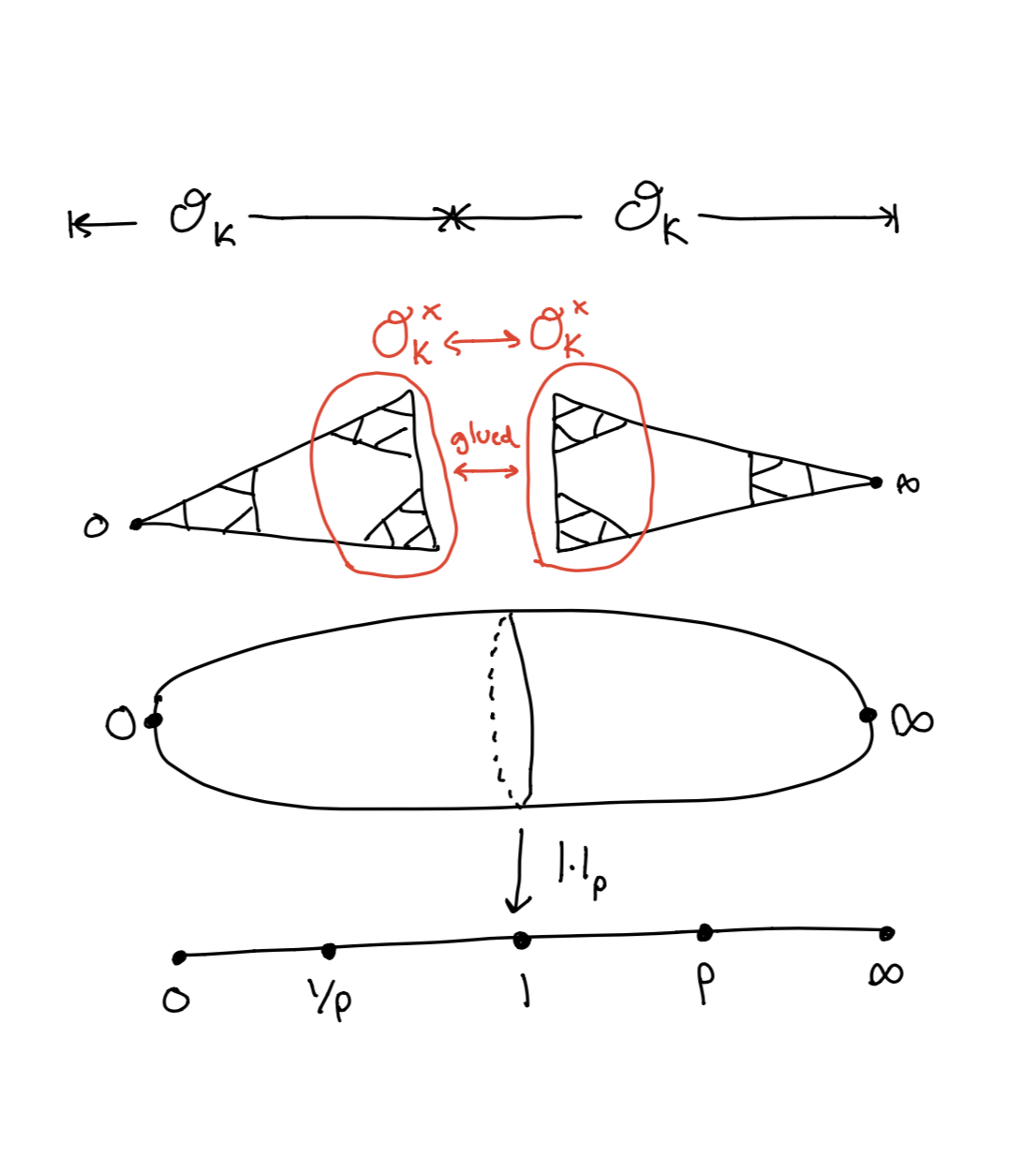}
\end{center}
The vector space 
\[
I=\{f: \mathbb{P}^1K \rightarrow \C \mid f \text{ is locally constant}\} 
\]
admits a natural $\GL_2(K)$-action, and the same argument as in the previous example (using compactness) shows that $I$ is a smooth, admissible representation of $\GL_2(K)$. In fact, 
\[
\text{constant functions} \hookrightarrow I \twoheadrightarrow St
\]
Where $St:=I/\{\text{constant functions}\}$ is the {\bf Steinberg module}. The module $St$ is irreducible (exercise, might be hard with current technology!). 
\end{enumerate}
\end{example}

\begin{exercise}
Show that any smooth finite dimensional representation of $\GL_n(K)$ factors over $\det:\GL_n(K)\rightarrow K^\times$. ({\em Hint}: the kernel of a smooth finite dimensional representation is a finite intersection of stabilizers of a basis, so it must be open and normal, hence contain $\SL_n(K)$).  
\end{exercise}
\begin{exercise}
The representation $\mc{F}'=\{\varphi:\Q_p \rightarrow \C \mid \varphi \text{ is locally constant} \}$ of $\Z_p$ is {\em not} admissible or smooth. 
\end{exercise}

\pagebreak
\section{Lecture 8: Precise statement of local Langlands for $\GL_2$, $p \neq 2$ }
\label{lecture 8}

\subsection{Basic representation theory of $p$-adic groups, continued}
We pick up where we left off in the previous lecture. Let
\[
\mf{m}_K \subset \OO_K \subset K 
\]
be the maximal ideal in the ring of integers of a local field. We are interested in the representation theory of the group $\GL_n(K)$. (Or, more generally, the representation theory of any totally disconnected group $G$, but for concreteness we will work with $\GL_n$.) 

Recall that the sets 
\[
K_j=\{ g \in \GL_n(\OO_K) \mid g = id \mod \mf{m}_K^j \}
\]
form a basis of open neighborhoods of $id \in \GL_n(K)$. In addition to being open neighborhoods of the identity, the $K_i$ are {\em subgroups} of $\GL_n(K)$. (Note the existence of such subgroups which form a basis for open neighborhoods of the identity is only possible because $\GL_n(K)$ is a totally disconnected group; a Lie group could not have such a family of subgroups because Lie groups have no small subgroups.)  

Last week we saw that $K_0$ is a maximal compact subgroup of $\GL_n(K)$. Let $V$ be a representation of $\GL_n(K)$. Because $K_0 \supset K_1 \supset K_2 \supset \cdots$,  we have a chain 
\[
V^{K_0} \subset V^{K_1} \subset V^{K_2}\cdots 
\]
If $V$ is smooth, each vector lies in $V^U$ for some open $U \subset G$, hence lies in some $V^{K_i}$. So the filtration is exhaustive. If $V$ is admissible, each $V^{K_i}$ is finite dimensional. 

Because $K_i \subset K_0$ is normal, the subspace $V^{K_i}$ is stable under action by $K_0$. The subgroup $K_i \subset K_0$ acts trivially on $V^{K_i}$, so the $K_0$-action factors through the finite group $K_0/K_i \simeq \GL_n(\OO_K/\mf{m}_K^i)$; e.g. for $K=\Q_p$, the $K_0$ action on $V^{K_i}$ factors through $\GL_n(\Z/p^i\Z)$. (The key point here is that $K_i \subset K_0$ is {\em normal}, so the quotient $K_0/K_i$ is a {\em group}.) Since representations of finite groups are completely reducible, we have a decomposition
\[
V^{K_i}=\bigoplus_{\rho \in \widehat{K_0/K_i}} V^{K_i}(\rho),
\]
where $V^{K_i}(\rho)$ is the $\rho$-isotypic component of $V^{K_i}$; that is, $V^{K_i}(\rho)$ is the direct sum of all irreducible subrepresentations of $V^{K_i}$ which are isomorphic to $\rho$. Passing to the limit, we obtain a decomposition 
\[
V = \bigoplus_{\rho \in \widehat{K_0}} V(\rho).
\]
Here $\widehat{K_0}$ denotes all representations of $K_0$ which factor over some quotient $K_0/K_i$.
\begin{lemma}
The representation $V$ is admissible if and only if each isotypic component $V(\rho)$ in the decomposition above is finite-dimensional.  
\end{lemma}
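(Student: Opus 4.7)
The plan is to reduce admissibility to the statement that $V^{K_i}$ is finite-dimensional for every $i \geq 0$, and then to play this off against the isotypic decomposition $V = \bigoplus_{\rho \in \widehat{K_0}} V(\rho)$ just established. The preliminary reduction is immediate: any open subgroup $K \subset \GL_n(K)$ contains the identity, hence a basic neighborhood $K_i$; so $V^K \subset V^{K_i}$, and admissibility is equivalent to finite-dimensionality of each $V^{K_i}$.

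For the forward implication, I would argue as follows. Fix an irreducible $\rho \in \widehat{K_0}$. By the very description of $\widehat{K_0}$ as those representations of $K_0$ factoring through some $K_0/K_i$, there exists an $i$ such that $K_i$ acts trivially on $\rho$. Consequently every subrepresentation of $V$ isomorphic to $\rho$ lies in $V^{K_i}$, so $V(\rho) \subset V^{K_i}$. If $V$ is admissible then $V^{K_i}$ is finite-dimensional, and hence so is $V(\rho)$.

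For the reverse implication, fix $i$ and look at the decomposition
\[
V^{K_i} \;=\; \bigoplus_{\rho \in \widehat{K_0/K_i}} V^{K_i}(\rho).
\]
Here $\widehat{K_0/K_i}$ is a \emph{finite} set, since $K_0/K_i \simeq \GL_n(\OO_K/\mf{m}_K^i)$ is a finite group and a finite group has only finitely many irreducible representations. Each summand $V^{K_i}(\rho)$ is a subspace of $V(\rho)$ and so, by hypothesis, finite-dimensional. A finite direct sum of finite-dimensional spaces is finite-dimensional, and this gives admissibility.

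I don't anticipate any serious obstacle — the content of the lemma is essentially the observation that the decomposition $V = \bigoplus V(\rho)$ converts an infinite condition (finiteness of $V^K$ for every open $K$) into a pointwise condition on irreducibles, with finiteness of $\widehat{K_0/K_i}$ supplying the only nontrivial input. The only subtlety worth flagging explicitly is that the containment $V(\rho) \subset V^{K_i}$ uses that $\rho$ is trivial on some $K_i$, which is built into the definition of $\widehat{K_0}$ used above; if one instead took $\widehat{K_0}$ to mean arbitrary continuous irreducibles, one would want to invoke the no-small-subgroups-type argument (Lemma \ref{no small subgroups}) together with smoothness of $V$ to extract such an $i$.
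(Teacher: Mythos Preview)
Your proof is correct and follows essentially the same approach as the paper's: both directions hinge on the containment $V(\rho) \subset V^{K_i}$ whenever $\rho$ factors through $K_0/K_i$, together with the finiteness of $\widehat{K_0/K_i}$. Your version is slightly more explicit about the preliminary reduction from arbitrary open $K$ to the basic neighborhoods $K_i$, which the paper leaves implicit.
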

\begin{proof}
Assume that $V(\rho)$ is infinite dimensional for some $\rho \in \widehat{K_0}$. By definition, $\rho$ factors through $K_0/K_i$ for some $i$. Hence, $V(\rho) \subset V^{K_i}$ is an infinite dimensional subspace and $V$ is not admissible.

To prove the opposite implication, assume that each $V(\rho)$ is finite-dimensional. For each $i$, we have a decomposition 
\[
V^{K_i} = \bigoplus_{\rho \in \widehat{K_0}} V(\rho)^{K_i}.
\]
But since $V(\rho)$ is the direct sum of irreducible representations which are isomorphic to $\rho$, we have
\[
V(\rho)^{K_i} = \begin{cases} 0 &\text{ if } \rho|_{K_i} \neq \text{triv}, \\ V(\rho) &\text{ otherwise}.
\end{cases}
\]
Hence 
\[
V^{K_i} = \bigoplus_{\begin{array}{c} \rho \in \widehat{K_0} \\ \rho|_{K_i} = \text{triv} \end{array}} V(\rho).
\]
Since $K_0/K_i$ is a finite group, there are only finitely many representations $\rho \in \widehat{K_0}$ which factor through $K_0/K_i$ for any fixed $i$, so decomposition above is a finite direct sum of finite-dimensional representations, hence $V$ is admissible. 
\end{proof}

\begin{remark}
This is like the theory of $K$-finite vectors in representation theory of real Lie groups. A big difference is that the representation theory of, for example $\GL_m(\Z/p^n\Z)$ for large $m$ and $n$ is extremely complicated, whereas we know the representation theory of compact Lie groups rather well (highest weights, etc.). 
\end{remark}

\begin{example}
\begin{enumerate}
    \item Consider the $\Z_p$-representation $\mc{F}=\{\varphi: \Z_p \rightarrow \C \mid \varphi \text{ is locally constant}\}$ from Example \ref{smooth admissible}.3. For an open neighborhood $p^m\Z_p$ of the identity, the invariants are 
    \begin{align*}
    \mc{F}^{p^m\Z_p} &=\{ \varphi \mid \varphi \text{ constant on }p^m\Z_p \text{ orbits}\} \\
    &= \text{ regular representation of } \Z_p/p^m\Z_p. 
    \end{align*}
    Hence, 
    \[
    \mc{F} = \bigoplus_{\text{continuous} \atop \chi:\Z_p \rightarrow \C^\times} \C_\chi .
    \]
    \item Consider the $\GL_2(K)$-representation
    \[
    I=\{f: \PP^1K \rightarrow \C \mid f \text{ is locally constant}\} 
    \]
    from example \ref{smooth admissible}.4. Here 
    \[
    I^{K_n} = \{ \varphi: \PP^1 (\mc{O}_K/\mf{m}_K^n)\rightarrow \C \},
    \]
    so 
    \[
    I = \lim_{\rightarrow}\C[\PP^1(\mc{O}_K/\mf{m}_K^n)].
    \]
\end{enumerate}
\end{example}

Let $V$ be a representation of $\GL_n(K)$. A map $\xi:V\rightarrow \C$ is {\bf smooth} if $\stab_{\GL_n(K)}\xi$ is open. Define the {\bf smooth dual}
\[
\widehat{V}=\{ \text{smooth vectors }\xi:V \rightarrow \C \}.
\]
\begin{lemma}
Assume $V$ is a smooth representation of $\GL_n(K)$. If $V=\bigoplus_{\rho \in \widehat{K_0}} V(\rho)$, then $\widehat{V}=\bigoplus_{\rho \in \widehat{K_0}} V(\rho)^\ast$.
\end{lemma}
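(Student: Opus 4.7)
The plan is to reduce the statement to a statement about $K_0$-representations, where everything is controlled by the finite-group-like structure of $K_0$ acting via its finite quotients $K_0/K_i$. First I would observe that a linear functional $\xi : V \to \mathbb{C}$ is smooth precisely when its stabilizer in $\GL_n(K)$ is open, which (since the $K_i$ form a neighborhood basis of the identity) is equivalent to $\xi$ being $K_i$-invariant for some $i \geq 0$. So the task becomes identifying $(V^{*})^{K_i}$ inside the algebraic dual $V^{*} = \prod_\rho V(\rho)^{*}$, and then taking the union over $i$.

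The key input is the averaging idempotent $e_{K_i} : V \to V^{K_i}$ given by integration against the normalized Haar measure on the compact group $K_i$; smoothness of $V$ guarantees this is well-defined on every vector. A functional $\xi$ is $K_i$-invariant iff $\xi = \xi \circ e_{K_i}$, so that $(V^{*})^{K_i}$ is canonically identified with $(V^{K_i})^{*}$ via the splitting $V = V^{K_i} \oplus \ker(e_{K_i})$. Next, since each $K_i$ is \emph{normal} in $K_0$, for every irreducible $K_0$-representation $\rho$ the subspace $V(\rho)^{K_i}$ is $K_0$-stable; by irreducibility of $\rho$ it is either $0$ or all of $V(\rho)$, and hence equals $V(\rho)$ exactly when $\rho$ factors through the finite quotient $K_0/K_i$. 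Combining these facts yields the clean identity
\[
V^{K_i} \;=\; \bigoplus_{\rho \text{ factors through } K_0/K_i} V(\rho),
\]
and correspondingly $(V^{*})^{K_i} = \prod_{\rho \text{ factors through } K_0/K_i} V(\rho)^{*}$, where the index set is now \emph{finite} because $K_0/K_i$ is a finite group.

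Finally, I would pass to the colimit. Since $\widehat{V} = \bigcup_i (V^{*})^{K_i}$, a functional lies in $\widehat{V}$ iff its components $\xi_\rho \in V(\rho)^{*}$ vanish outside the finite set of $\rho$ factoring through some fixed $K_0/K_i$; equivalently, only finitely many $\xi_\rho$ are nonzero. This is exactly the condition for $(\xi_\rho)_\rho$ to lie in the direct sum $\bigoplus_{\rho} V(\rho)^{*}$ rather than the full product, giving the claimed equality. The only step requiring any thought is the $K_0$-stability/irreducibility argument that forces $V(\rho)^{K_i}$ to be all-or-nothing, since without the normality of $K_i$ in $K_0$ one could not conclude that a $K_i$-invariant vector in $V(\rho)$ generates a trivial $K_i$-subrepresentation on all of $V(\rho)$; fortunately, normality was built into our definition of the $K_i$, so this is automatic rather than an obstacle.
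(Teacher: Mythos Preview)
Your proof is correct and takes the same approach as the paper: a functional is smooth iff it vanishes on all but finitely many $V(\rho)$. The paper states this as a one-line claim and leaves the verification to the reader, whereas you have unpacked precisely the argument behind it (averaging idempotents, normality of $K_i$ in $K_0$, and the all-or-nothing behavior of $V(\rho)^{K_i}$).
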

In particular, if $V$ is smooth and admissible, then so is $\widehat{V}$, and $V \xrightarrow{\sim} \widehat{\widehat{V}}.$
\begin{proof}
The map $\xi:V\rightarrow \C$ is smooth if and only if $\xi$ vanishes on all but finitely many $V(\rho)$. The lemma follows.  
\end{proof}

The goal for the remainder of this lecture will be to give a bird's eye view on the smooth admissible representations of $\GL_1(K)$ and $\GL_2(K)$. But first, we need a digression on norms. 

\subsection{Canonical norms}
\label{canonical norms}

Recall that to make the product formula of Section \ref{global class field theory} hold, we define three types of equivalence classes of multiplicative norms (``places,'' denoted by $v$) on a local field $K$: 
\begin{itemize}
    \item {\bf finite places:} $|x|_v:=\left( \# \OO_K/\mf{p}\right)^{-\val_p(x)}$ for some prime $\mf{p} \subset \OO_K$, 
    \item {\bf real places:} $|x|_v:=|i(x)|$ for some real embedding $i:K \hookrightarrow \R$, and
    \item {\bf complex places:} $|x|_v:=|i(x)|^2$ for some pair of conjugate embeddings $i:K \hookrightarrow \C$ not landing in $\R$.
\end{itemize}
Different normalizations would also yield multiplicative norms, but we chose the ones above to make the product formula 
\[
\prod_{\text{places }v} |x|_v=1
\]
for $x \in K^\times$ holds. For example, if $K=\Q_p$, $|p|=\epsilon$ gives a norm for any $0 < \epsilon < 1$, so why do we choose $|p|=1/p$? In some sense, this choice is justified by the product formula, but it is still a little mysterious.

Tate made the following observation which further justifies this choice. For a place $v$, the completion $K_v$ is is locally compact. Let $\mu$ be the additive Haar measure on $K_v$. The measure $\mu$ is unique up to a scalar. Define 
\[
|x|_v=\text{ factor by which $x \cdot$ scales the Haar measure};
\]
i.e., $|x|_v=\frac{\mu(x \cdot A)}{\mu(A)}$ for $A \subset K_v$ measurable and $0 < \mu(A) < \infty$. 
\begin{example}
\begin{enumerate}
    \item $K_v=\R$: For $x \in \R$, $|x|_v=\frac{\mu(x[0,1])}{\mu([0,1])} = \mu([0,x])=|x|$. 
    \item $K_v=\C$: For $z \in \C$, and 
    \begin{center}
     \includegraphics[scale=0.5]{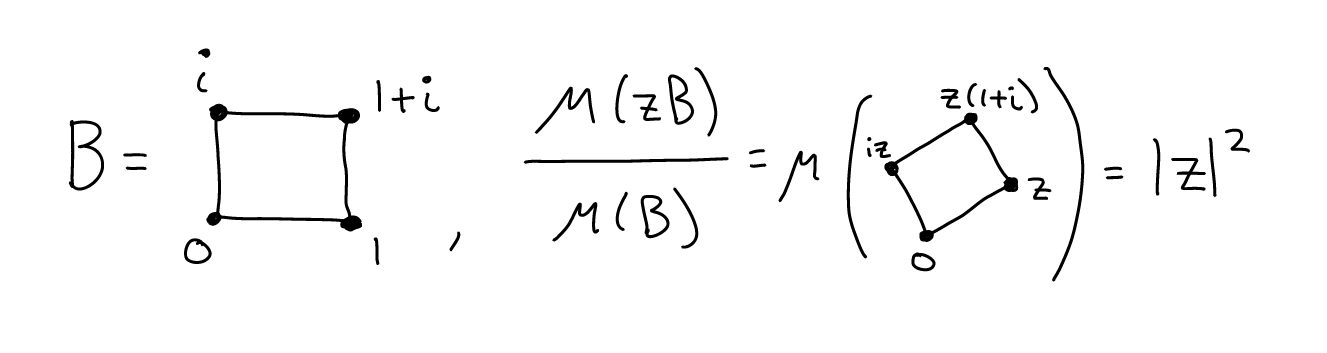}
    \end{center}
    \item $K=\Q_p$: Recall that $\displaystyle{\Z_p=\bigsqcup_{0 \leq m < p} m + p \Z_p}$, so $p\mu(p\Z_p)=\mu(\Z_p)$. Hence,
    \[
    |x|_v=\frac{\mu(p \Z_p)}{\mu(\Z_p)} = \frac{1}{p}. 
    \]
\end{enumerate}
\end{example}
From now on, whenever we consider a norm on a locally compact field, we will always consider this canonical norm, denoted $| \cdot |$. 

\subsection{Smooth admissible representations of $\GL_1(K)$}

Let $V$ be a smooth admissible representation of $\GL_1(K)=K^\times$. Since $V$ is smooth admissible, 
\[
V=\bigcup V^{K_i}
\]
and each $V^{K_i}$ is finite-dimensional. Furthermore, since $K^\times$ is abelian, each of the subgroups $K_j:=1+\mf{m}_K^j \subset \OO_K^\times$ is normal in $K^\times$, and the group 
\[
K^\times/K^j \simeq \Z \times (\OO_K/\mf{m}_K^j)^\times 
\]
acts on $V^{K_j}$. Hence if $V$ is irreducible, $V$ is one-dimensional and determined by a character of the form $|\cdot|^c \chi:K^\times \rightarrow \C$, where $c \in \C$ and $\chi:\OO_K^\times \rightarrow \C$ is a continuous character. 

\begin{remark} The category of smooth admissible representations of $\GL_1(K)$ is not semisimple. For example, the representation 
\[
x \mapsto \bp 1 & \log|x| \\ 0 & 1 \ep 
\]
is a smooth, two-dimensional admissible representation which is not semisimple. 
\end{remark}

\subsection{Smooth admissible representations of $\GL_2(K)$}

Recall from our heuristic description of last lecture that we expect roughly two types of representations of $\GL_2(K)$: ``principal series'' representations coming from a split torus, and ``cuspidal'' representations coming from an anisotropic torus. 

 Let $B \subset \GL_2(K)$ be the subgroup of upper triangular matrices. Given continuous characters $\chi_1, \chi_2: K^\times \rightarrow \C$, define 
\begin{align*}
I(\chi_1, \chi_2):=\{ \varphi:\GL_2(K) \rightarrow \C \mid &\varphi \text{ loc. const., and } \varphi \left( \bp a & b \\ 0 & d \ep \cdot g \right) = \chi_1(a) \chi_2(d) |\frac{a}{d}|^{1/2}\varphi(g)\\
&\text{ for all } \bp a & b \\ 0 & d \ep \in B \}. 
\end{align*}

\begin{example}
\begin{align*}
I(|\cdot|^{-1/2}, |\cdot|^{1/2} )&= \{\varphi: \GL_2(K) \rightarrow \C \mid \varphi \text{ loc. const. and } \varphi\left( \bp a & b \\ 0 & d \ep \cdot g\right) = \varphi(g) \} \\
&= \{ \varphi: \PP^1K\simeq G/B \rightarrow \C \mid \varphi \text{ locally constant} \} 
\end{align*}
We saw last time that $I(|\cdot|^{-1/2}, | \cdot | ^{1/2})$ is smooth and admissible.
\end{example}
The representations $I(\chi_1, \chi_2)$ formed in this way are called {\bf principal series representations}.

\begin{theorem}
\begin{enumerate}
    \item For all $\chi_1, \chi_2$, $I(\chi_1, \chi_2)$ is smooth and admissible. 
    \item $\widehat{I(\chi_1, \chi_2)} \simeq I(\chi_1^{-1}, \chi_2^{-1})$. 
    \item If $\chi_1/\chi_2 = | \cdot |^{-1}$, then we have an exact sequence of representations
    \[
    0 \rightarrow C(\chi_1, \chi_2) \rightarrow I(\chi_1, \chi_2) \rightarrow S(\chi_1, \chi_2) \rightarrow 0
    \]
    with $\dim{C(\chi_1, \chi_2)}=1$ and $S(\chi_1, \chi_2)$ irreducible. 
    \item If $\chi_1/\chi_2 = | \cdot |$, then we have an exact sequence of representations 
    \[
    0 \rightarrow S(\chi_1, \chi_2) \rightarrow I(\chi_2, \chi_2) \rightarrow C(\chi_1, \chi_2) \rightarrow 0 
    \]
    with $\dim{C(\chi_1, \chi_2)}=1$ and $S(\chi_1, \chi_2)$ irreducible. 
    \item Otherwise, $I(\chi_1, \chi_2)$ is irreducible. 
    \item If $\chi_1/\chi_2 \simeq | \cdot |^{-1}$, then $S(\chi_1, \chi_2) \simeq S(\chi_2, \chi_1)$ and $C(\chi_1, \chi_2) \simeq C(\chi_2, \chi_1)$, and if $\chi_1/\chi_2 \not \simeq |\cdot|^{\pm 1}$, then $I(\chi_1, \chi_2) \simeq I(\chi_2, \chi_1).$
\end{enumerate}
\end{theorem}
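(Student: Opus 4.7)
The plan is to prove the six statements essentially in order, leveraging the Iwasawa decomposition $\GL_2(K) = B \cdot K_0$ (where $K_0 = \GL_2(\OO_K)$) and the Bruhat decomposition $\GL_2(K) = B \sqcup B w N$, with the identification $B\backslash \GL_2(K) \cong \PP^1 K$. For statement (1), I would first observe that via Iwasawa, any $\varphi \in I(\chi_1,\chi_2)$ is determined by its restriction to $K_0$, and in fact by a function on $(B \cap K_0)\backslash K_0$ with prescribed left transformation behavior. Since $\chi_1, \chi_2$ become trivial on some $1 + \mf{m}_K^j$, the stabilizer of $\varphi$ under right translation contains some $K_n$, giving smoothness. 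For admissibility, the $K_n$-fixed vectors are parametrized by the finite double coset space $(B \cap K_0) \backslash K_0 / K_n$, which is a finite set (a consequence of the Bruhat decomposition for $\GL_2(\OO_K/\mf{m}_K^n)$).

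For statement (2), I would define a pairing
\[
\langle \varphi, \psi \rangle := \int_{K_0} \varphi(k) \psi(k)\, dk
\]
for $\varphi \in I(\chi_1, \chi_2)$ and $\psi \in I(\chi_1^{-1}, \chi_2^{-1})$. The normalization factor $|a/d|^{1/2}$ in the definition of the induced representation is precisely the square root of the modular character $\delta_B$; the point is that this shift makes $\varphi\psi$ transform under $B$ by $\delta_B$, so integration descends to a $\GL_2(K)$-invariant pairing on $\PP^1 K$ (using that the canonical measure of Section \ref{canonical norms} transforms by $\delta_B$ under $B$). A standard computation then shows the map $I(\chi_1^{-1},\chi_2^{-1}) \to \widehat{I(\chi_1,\chi_2)}$ is injective, and surjectivity onto the smooth dual follows from admissibility together with the isotypic-component description from the previous lemma.

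The heart of the argument is (3)--(6), for which I would use the Jacquet module $V_N := V/\langle nv - v : n \in N, v \in V\rangle$, where $N$ is the upper unipotent. Filtering $I(\chi_1,\chi_2)$ by support on the two Bruhat cells gives a short exact sequence of $B$-modules whose Jacquet module is a two-step filtration on the torus with characters $(\chi_1,\chi_2)$ and $(\chi_2,\chi_1)$ (each twisted by $\delta_B^{1/2}$). By Frobenius reciprocity, the space of $\GL_2(K)$-intertwiners $I(\chi_1,\chi_2) \to I(\chi_2,\chi_1)$ is at most one-dimensional in the generic case, and is spanned by the standard intertwiner
\[
(M\varphi)(g) = \int_N \varphi(w n g)\, dn,
\]
defined first for $\chi_1/\chi_2$ in a region of absolute convergence and then meromorphically continued in the character. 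One then computes explicitly (using the action on spherical vectors) that $M$ is an isomorphism exactly when $\chi_1/\chi_2 \neq |\cdot|^{\pm 1}$, giving (5) and (6). In the degenerate cases, $M$ has a one-dimensional kernel or cokernel (a character of $\GL_2(K)$ pulled back via $\det$), which after identification gives the subrepresentation $C(\chi_1,\chi_2)$ and quotient $S(\chi_1,\chi_2)$ (or vice versa), with the irreducibility of $S(\chi_1,\chi_2)$ following from the fact that its Jacquet module is one-dimensional.

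The main obstacle will be the irreducibility claim in (5) and the careful analysis of $M$ in the reducible cases. Establishing the convergence, meromorphic continuation, and functional equation of $M$, and then pinning down exactly which characters appear in the kernel and cokernel, requires an explicit computation with spherical functions (or equivalently, with the $K_0$-fixed vector) and a delicate bookkeeping of the modular character $\delta_B$. Once this is done, statement (6) follows because $M$ and its "other direction" $M': I(\chi_2,\chi_1) \to I(\chi_1,\chi_2)$ compose to a scalar that vanishes precisely on the unique proper sub/quotient in the degenerate cases, forcing the displayed isomorphisms.
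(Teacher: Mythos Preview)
Your approach is sound and in fact more detailed than what the paper provides. The paper does not give a proof of this theorem; it only offers a remark sketching the ideas behind parts (2) and (6). For part (2), the paper explains the same pairing you describe: one observes that $\varphi\varphi' \in I(|\cdot|^{1/2}, |\cdot|^{-1/2})$ for $\varphi \in I(\chi_1,\chi_2)$ and $\varphi' \in I(\chi_1^{-1},\chi_2^{-1})$, and then composes with the $\GL_2(K)$-invariant functional $\Phi(\psi) = \int_{K_0} \psi \, d\mu$. For part (6), the paper simply notes that ``it uses an intertwiner $I(\chi_1,\chi_2) \to I(\chi_2,\chi_1)$ via analytic continuation,'' which is exactly your standard intertwining operator $M$. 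Your use of the Jacquet module to control the composition series and to bound $\dim \Hom$ is the standard route and is not discussed in the paper at all, but it is correct and is indeed how one would fill in parts (3)--(5).
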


\begin{remark}
Some remarks on the theorem:
\begin{enumerate}[label=(\alph*)]
    \item The representation $I(\chi_1, \chi_2)$ is an example of an induced representation for a totally disconnected group. 
    \item Why the strange $|\frac{a}{b}|^{1/2}$ factor? It is necessary to make 2. hold! So why does 2. hold? Consider 
    \[
    I(|\cdot|^{1/2}, |\cdot|^{-1/2}) = \{ \varphi \mid \varphi\left( \bp a & b \\ 0 & d \ep \cdot g \right) = | \frac{a}{d} | \varphi(g) \}. 
    \]
    We can define a function 
    \begin{align*}
        \Phi: I(|\cdot|^{1/2}, | \cdot | ^{-1/2})&\rightarrow \C \\
        \varphi &\mapsto \int_{K_0}\varphi d \mu . 
    \end{align*}
    (One can think of $I(|\cdot|^{1/2}, | \cdot | ^{-1/2})$ as being some ``functions'' on $\PP^1K$ and we are integrating over $\PP^1K$ to get a number. More precisely, these are ``densities", but to explain why would take us too far afield.) Here is a minor miracle: the function $\Phi$ is $\GL_2(K)$-invariant, hence
    \[
    C(|\cdot |^{1/2}, | \cdot |^{-1/2}) =\C
    \]
    is the trivial representation. Now, given $\varphi \in I(\chi_1, \chi_2)$ and $\varphi' \in I(\chi_1^{-1}, \chi_2^{-1})$, $\varphi \varphi' \in I(|\cdot |^{1/2}, | \cdot |^{-1/2}).$ By composing with $\Phi$, we get a $\GL_2(K)$-invariant pairing:
    \[
    I(\chi_1,\chi_2) \times I(\chi_1^{-1}, \chi_2^{-1}) \rightarrow \C,
    \]
    which turns out to be non-degenerate, establishing 2. 
    \item Part 6. is the most complicated to prove. It uses an intertwiner $I(\chi_1, \chi_2) \rightarrow I(\chi_2, \chi_1)$ via analytic continuation (there are connections to the Jantzen filtration).
    \item Finally, note that $\Rep{\GL_2(K)}$ is not semisimple, so we cannot just compute homs as in the finite group case. 
\end{enumerate}
\end{remark}

The other class of representations of $\GL_2(K)$ are cuspidal representations. 
\begin{theorem}
For every degree $2$ extension $L/K$ and continuous character $\theta: L^\times \rightarrow \C$ which does not factor through the norm, there exists an irreducible representation $BC_{L/K}(\theta)$. We have that $BC_{L/K}(\theta) \simeq BC_{L/K}(\theta')$ if and only if $\theta^\sigma \simeq \theta'$ for $\sigma \in \Gal(L/K)$.
\end{theorem}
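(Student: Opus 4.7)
The plan is to construct $BC_{L/K}(\theta)$ as a compactly induced representation from an open, compact-mod-center subgroup of $\GL_2(K)$ containing the image of $L^\times$, and then check irreducibility and the isomorphism classification via Mackey theory. This is the standard route for $p \neq 2$, where the residual structure of $L/K$ is tame and life stays relatively clean.

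First I would embed $L^\times \hookrightarrow \GL_2(K)$ by choosing a $K$-basis of $L$ and letting $L$ act on itself by multiplication, so $L$ becomes a non-split maximal torus of $\GL_2(K)$. Let $J \subset \GL_2(K)$ be the normalizer of this torus; concretely $J$ is generated by the image of $L^\times$ and an element realizing the nontrivial $\sigma \in \Gal(L/K)$ (for instance, conjugation becomes an element of $\GL_2(K)$ under our embedding). Then $J/L^\times \simeq \Z/2\Z$ and $J$ is open and compact modulo center in $\GL_2(K)$. Next I would extend $\theta$ from $L^\times$ to $J$: because $\theta \neq \theta^\sigma$ (the ``does not factor through the norm'' hypothesis), the two extensions of $\theta|_{L^\times}$ to $J$ differ, but induction up to $\GL_2(K)$ will annihilate this choice, so I would fix either extension $\tilde\theta$ and set
\[
BC_{L/K}(\theta) := \text{c-Ind}_J^{\GL_2(K)}\tilde\theta,
\]
where c-Ind denotes compact induction (i.e. functions with support compact modulo $J$).

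The key step, and the one I expect to be the main obstacle, is irreducibility. By the standard Mackey irreducibility criterion for compact-mod-center induction (valid in this totally disconnected setting since $J$ is its own normalizer in $\GL_2(K)$ modulo center, up to the element realizing $\sigma$), $\text{c-Ind}_J^{\GL_2(K)}\tilde\theta$ is irreducible precisely when for every $g \in \GL_2(K) \setminus J$ one has
\[
\Hom_{J \cap gJg^{-1}}(\tilde\theta,\, \tilde\theta^{\,g}) = 0.
\]
Using the Cartan-type double coset decomposition of $\GL_2(K)$ with respect to $J$ (a finite union of double cosets indexed by the diagonal torus of $\GL_2(K)/L^\times$, analogous to the Bruhat decomposition from Exercise 7.7), the nontrivial double coset representatives move $L^\times$ inside itself by $\sigma$, so on the intersection $J \cap gJg^{-1}$ the twisted character $\tilde\theta^g$ restricts to $\theta^\sigma$ on $L^\times$. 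The hypothesis $\theta^\sigma \neq \theta$ therefore forces the Hom to vanish, giving irreducibility. This is where the ``does not factor through the norm'' condition is essential, since $\theta = \theta^\sigma$ is equivalent to $\theta$ factoring through $\Nm_{L/K}$.

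Finally, for the classification statement, one direction is immediate from the construction: replacing $\theta$ by $\theta^\sigma$ amounts to precomposing with an inner automorphism of $J$ (conjugation by the element realizing $\sigma$), so $\text{c-Ind}_J^{\GL_2(K)}\tilde\theta \simeq \text{c-Ind}_J^{\GL_2(K)}\widetilde{\theta^\sigma}$. For the converse, if $BC_{L/K}(\theta) \simeq BC_{L/K}(\theta')$ then by Frobenius reciprocity for compact induction,
\[
\Hom_{\GL_2(K)}(\text{c-Ind}\,\tilde\theta,\,\text{c-Ind}\,\tilde\theta') \simeq \bigoplus_{J \backslash \GL_2(K) / J} \Hom_{J \cap gJg^{-1}}(\tilde\theta,\,(\tilde\theta')^g),
\]
and the same double-coset analysis as above shows the only nonzero contribution comes from $g \in J$ (giving $\theta = \theta'$) or from $g$ representing $\sigma$ (giving $\theta = (\theta')^\sigma$), which is exactly the claimed equivalence. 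One should also check independence of the choice of extension $\tilde\theta \mapsto \tilde\theta \otimes \mathrm{sgn}$, which follows because the two extensions differ by pulling back the sign character of $J/L^\times$ and this is absorbed on the other side by the same Galois twist.
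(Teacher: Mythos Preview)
The paper does not prove this theorem; it merely states it and remarks that ``the construction of $BC_{L/K}(\theta)$ is complicated, via the Weil representation,'' followed by an informal analogy with Deligne--Lusztig theory and Drinfeld's upper half-plane. So there is no proof in the paper to compare against directly.

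Your proposal has a genuine gap at the outset: the normalizer $J$ of the non-split torus $L^\times$ in $\GL_2(K)$ is \emph{not} open. The torus $L^\times$ is a $2$-dimensional $p$-adic Lie group inside the $4$-dimensional $\GL_2(K)$, and its normalizer (containing $L^\times$ with index $2$) has the same dimension. Since $J$ is not open, compact induction $\text{c-Ind}_J^{\GL_2(K)}\tilde\theta$ does not land in smooth representations, the double coset space $J \backslash \GL_2(K) / J$ is not discrete, and the Mackey criterion you invoke does not apply. The actual construction does use compact induction, but from a genuinely open compact-mod-center subgroup $J'$ containing $L^\times$ (for instance $K^\times \cdot \GL_2(\OO_K)$ in the unramified depth-zero case). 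The hard step --- and the reason the paper defers to the Weil representation --- is building from $\theta$ a higher-dimensional representation of $J'$, not merely extending a character; once that inducing datum is in hand, your Mackey argument for irreducibility and the Galois-orbit classification of isomorphism classes becomes correct in spirit.
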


The construction of $BC_{L/K}(\theta)$ is complicated, via the Weil representation. What is going on metaphorically? 
\begin{itemize}
    \item Consider $\SL_2(\F_q)$. We have seen in Joe's Informal Friday Seminar talks that a  character $\theta: T_a \rightarrow \C^\times$ gives rise to a local system $L_\theta$ on $\PP^1_{\F_q}\setminus \PP^1\F_q$. Taking the first cohomology yields a cuspidal representation $R_{T_a}^G(\theta)$.
    \item Consider $\SL_2(\R)$. A character $\theta: SO(2) \rightarrow \C^\times$ (such characters are classified by $\Z$) gives rise to a local system $\mc{O}(n)$ on the upper half plane, taking global sections yields a discrete series representation $\Gamma(\mathbb{H}, \mc{O}(n))$. 
    \item Now take $\GL_2(K)$. A character $\theta: L^\times \rightarrow \C$ gives rise to a local system $\mc{L}_\theta$ on ``Drinfeld space'' $\PP^1\overline{K}/\PP^1K$. Taking first cohomology yields the representation $BC_{L/K}(\theta)$. Note that this is very technical, and is an active area of research. 
\end{itemize}

This can also be viewed through the lens of Langlands functoriality. Let $L/K$ be a degree $n$ extension, so $W_L \subset W_K$ is an index $n$ subgroup. We have the following diagram: 
\[
\begin{tikzcd}
\left\{ \begin{array}{c} \text{1-dim'l reps }\\ \text{ of }L^\times \end{array} \right\} \arrow[r, leftrightarrow] \arrow[d, "\Ind_{W_L}^{W_K}"]&  \left\{ \begin{array}{c} \text{irred. smooth ad.}\\ \text{reps of $\GL_1(L)$} \end{array} \right\} \arrow[d, "BC=\text{``base change''}"]\\
\left\{ \begin{array}{c} \text{$n$-dim'l reps}\\ \text{of $K^\times$} \end{array} \right\} \arrow[r, leftrightarrow] & \left\{ \begin{array}{c} \text{irred smooth}\\\text{reps of $\GL_n(K)$} \end{array} \right\} 
\end{tikzcd}
\]

Thus an innocuous induction functor on the left hand side predicts a highly non-trivial correspondence between irreducible representation on the right hand side!

\subsection{Weil-Deligne representations}
\label{Weil-Deligne}
We are almost ready to make a precise statement of the local Langlands correspondence for $\GL_2(K)$! Recall local class field theory: there is a map 
\[
r_K: W_K \rightarrow K^\times.
\]
Composing with $| \cdot |$ gives us the {\bf norm character}
\[
| \cdot | : W_K \rightarrow \Q^\times.
\]
An $n$-dimensional {\bf Weil-Deligne representation} is a triple $(\rho, V, N)$, where 
\begin{itemize}
    \item $V$ is an $n$-dimensional complex vector space, 
    \item $\rho: W_K \rightarrow \GL(V)$ is a continuous representation, and 
    \item $N \in \End(V)$ is nilpotent such that 
    \begin{equation*}
    \rho(x)N\rho(x)^{-1} = |x|N
        \tag{$\ast$}
    \end{equation*}
    for all $x \in W_K$. (In fact, $(\ast)$ forces $N$ to be nilpotent.) 
\end{itemize}

\begin{example}
\begin{enumerate}
    \item Any $n$-dimensional continuous representation of $W_K$ with $N=0$ is a Weil-Deligne representation. 
    \item The representation $\rho = \bp |\cdot| \chi & 0 \\ 0 & \chi \ep$ for any character $\chi:W_K \rightarrow \C^\times$ and $N=\bp 0 & 1 \\ 0 & 0 \ep$ form a Weil-Deligne representation. Indeed, for $x \in W_K$,
    \begin{align*}
        \rho(x)N\rho(x)^{-1}&= \bp |x| \chi(x) & 0 \\ 0 & \chi(x) \ep \bp 0 & 1 \\ 0 & 0 \ep \bp |x|^{-1} \chi(x)^{-1} & 0 \\ 0 & \chi(x)^{-1} \ep \\
        &= \bp 0 & |x| \\ 0 & 0 \ep \\ 
        &= |x| \bp 0 & 1 \\ 0 & 0 \ep. 
    \end{align*}
    We denote this Weil-Deligne representation $St(\chi, |\cdot| \chi)$. 
\end{enumerate}
\end{example}
A Weil-Deligne representation is {\bf $F$-semisimple} if $V$ is semisimple as a representation of $W_K$. 

\begin{exercise}
Let $\widetilde{\text{Frob}} \in W_K$ be any lift of Frobenius. Show that a Weil-Deligne representation is $F$-semisimple if and only if $\widetilde{\text{Frob}}$ is semisimple. 
\end{exercise}

\subsection{The local Langlands correspondence for $\GL_2$}

\begin{theorem}
({\em local Langlands correspondence for $\GL_2$, $p \neq 2$}) Fix a local field $K$ of residue characteristic $p \neq 2$. There is a canonical bijection. 
\[
\left\{ \begin{array}{c} \text{ $F$-semisimple}\\ \text{$2$-dimensional} \\ 
\text{Weil-Deligne reps} \end{array} \right\}_ {/\simeq} \leftrightarrow \left\{ \begin{array}{c} \text{irred. smooth admiss.} \\ \text{ reps of $\GL_2(K)$} \end{array} \right\}_{/\simeq}
\]
Moreover, this bijection is given as follows. 
\begin{align*}
    \chi_1/\chi_2 = |\cdot|^{\pm1}: \left( \bp \chi_1 & 0 \\ 0 & \chi_2 \ep, N=0 \right) &\leftrightarrow C(\chi_1, \chi_2) \\
    \chi_1/\chi_2 = | \cdot |^{\pm1}: \left( \bp \chi |\cdot| & 0 \\ 0 & \chi \ep, N=\bp 0 & 1 \\ 0 & 0 \ep \right) &\leftrightarrow S(\chi, |\cdot| \chi) \\
    \chi_1/\chi_2 \neq | \cdot |^{\pm 1}: \left( \bp \chi_1 & 0 \\ 0 & \chi_2 \ep , N=0 \right) &\leftrightarrow I(\chi_1, \chi_2) \\ 
    L/K, \theta:L^\times \rightarrow \C^\times: \left( \Ind_{W_K}^{W_L}(\theta), N=0 \right) &\leftrightarrow BC_{L/K}(\theta)  
\end{align*}
Where the character $\theta: L^\times \rightarrow \C^\times$ does not factor through the norm. 
\end{theorem}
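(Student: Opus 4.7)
The plan is to classify the irreducible objects on each side of the correspondence separately, match them by the explicit recipe given in the theorem, and pin down canonicity via local class field theory together with compatibility of $L$- and $\varepsilon$-factors. On the Galois side, I would first classify the $F$-semisimple 2-dimensional Weil--Deligne representations $(\rho, V, N)$. If $N \neq 0$, the intertwining relation $\rho(x) N \rho(x)^{-1} = |x| N$ forces $N$ to be rank one, and in a basis with $N = \bp 0 & 1 \\ 0 & 0 \ep$ forces $\rho$ to be upper triangular with diagonal characters $\chi|\cdot|$ and $\chi$ for some $\chi : W_K \to \C^\times$; $F$-semisimplicity then forces $\rho$ to be the diagonal sum $\chi|\cdot| \oplus \chi$. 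If $N = 0$, then $\rho$ is a continuous representation $W_K \to \GL_2(\C)$, and applying Lemma \ref{no small subgroups} to the inertia subgroup $I_{\overline{K}/K}$ shows that $\rho(I_{\overline{K}/K})$ is finite. Hence $\rho$ is either a sum of two quasi-characters or irreducible; in the irreducible case, a finite-group argument (using $p \neq 2$ crucially, which rules out exceptional subgroups of $\PGL_2(\C)$ as images of $W_K$ and leaves only the dihedral case) gives $\rho \simeq \Ind_{W_L}^{W_K} \theta$ for a unique pair $(L/K, \theta)$ up to Galois conjugation, where $L/K$ is quadratic and $\theta$ does not factor through $\Nm_{L/K}$.

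On the $\GL_2(K)$ side, I would partition the irreducible smooth admissible representations $V$ by the Jacquet module $V_U$, where $U$ is the unipotent radical of $B$. When $V_U \neq 0$, Frobenius reciprocity embeds $V$ into some principal series $I(\chi_1, \chi_2)$, and the previous theorem on principal series enumerates the irreducibles obtained this way: the $I(\chi_1, \chi_2)$ with $\chi_1/\chi_2 \neq |\cdot|^{\pm 1}$, the one-dimensional $C(\chi_1, \chi_2)$, and the Steinberg-type $S(\chi_1, \chi_2)$. When $V_U = 0$ the representation $V$ is supercuspidal, and here I would both construct the supercuspidals $BC_{L/K}(\theta)$ via the Weil representation of $\SL_2(K)$ extended to $\GL_2(K)$ using $\theta$, and show they exhaust all supercuspidals by analysing an irreducible supercuspidal $V$ through its restriction to the maximal compact $K_0 = \GL_2(\OO_K)$ and its congruence filtration, extracting a compact ``type'' from which the pair $(L/K, \theta)$ can be reconstructed.

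Finally I would verify the matching is a bijection class by class: local class field theory $W_K^{\mathrm{ab}} \simeq K^\times$ identifies characters on the two sides and handles the principal series, character, and Steinberg cases, while the bijection on supercuspidals reduces to the observation that both parametrizations are taken modulo the same $\Gal(L/K)$-action on $\theta$. Canonicity --- that this is \emph{the} Langlands bijection, not merely some bijection --- would then be pinned down by insisting on compatibility with local class field theory in the one-dimensional case (determinant versus central character), with character twists, and with $L$- and $\varepsilon$-factors. The main obstacle will be the supercuspidal case in both directions: the Weil-representation construction of $BC_{L/K}(\theta)$ crucially requires inverting $2$ in a Heisenberg cocycle (this is where the hypothesis $p \neq 2$ is indispensable), and conversely the exhaustion of supercuspidals by the $BC_{L/K}(\theta)$ needs a delicate type-theoretic analysis of the $K_0$-action, which is the technical heart of the proof.
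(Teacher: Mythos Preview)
The paper does not prove this theorem; it states it as the culmination of Lecture~8 after assembling the ingredients (the principal series classification, the existence of $BC_{L/K}(\theta)$, and the definition of Weil--Deligne representations), and then in Lecture~9 explains \emph{a posteriori} why $p\neq 2$ makes the classification on the Galois side clean. So your outline is not so much to be compared against the paper's proof as to be compared against the paper's exposition of why the statement is plausible. On that score your outline is more complete than what the paper does: you invoke the Jacquet module to separate supercuspidals from principal series constituents, and you sketch the exhaustion argument for supercuspidals via types, neither of which the paper attempts.

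One point of disagreement with the paper's account is where the hypothesis $p\neq 2$ enters. You locate it in the Weil-representation construction of $BC_{L/K}(\theta)$, via a factor of $2$ in a Heisenberg cocycle. The paper's explanation (Section~9.3 and Proposition~\ref{galois reps}) is different and lives entirely on the Galois side: an irreducible $2$-dimensional representation of $W_K$ that is not induced must remain irreducible on wild inertia, forcing its dimension to be a power of $p$; since $2$ is not a power of $p$ for $p\neq 2$, every irreducible $2$-dimensional $\rho$ is dihedral. For $p=2$ the Weil representation still exists and still produces supercuspidals; what fails is exhaustion, on both sides simultaneously --- there are ``primitive'' Galois representations (tetrahedral or octahedral image in $\PGL_2$) and matching ``extraordinary'' supercuspidals not of the form $BC_{L/K}(\theta)$. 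So your proof strategy is sound, but the role you assign to $p\neq 2$ should be relocated from the construction step to the exhaustion step.
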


\pagebreak
\section{Lecture 9: The case of $p=2$ and the Satake isomorphism}
\label{lecture 9}

Today's lecture has two objectives: to explore what the LLC looks like when $p=2$, and to examine how a simple special case of the LLC for $\GL_n$ leads to the Satake isomorphism. 

\subsection{Ramification filtration revisited} 
To begin, we revisit the ramification filtration of Section \ref{ramification filtration}. Let $L/K$ be a finite Galois extension where $K$ and $L$ are both local fields. We have the following inclusions:
\[
\begin{tikzcd}
L \arrow[r, hookleftarrow] \arrow[d, dash] & \OO_L \arrow[r, hookleftarrow] \arrow[d, dash] &\mf{m}_L \arrow[r, equal] \arrow[d, dash] &(\pi_L) \arrow[d, dash] \\
K \arrow[r, hookleftarrow] &\OO_K \arrow[r, hookleftarrow] & \mf{m}_K \arrow[r, equal] & (\pi_L)  
\end{tikzcd}
\]
Here the uniformizers $\pi_K, \pi_L$ are the only non-canonical objects in the diagram above. We denote by $k_K=\OO_L/\mf{m}_L$ (resp. $k_L=\OO_L/\mf{m}_L$) the residue fields. There is a short exact sequence 
\[
I_{L/K} \hookrightarrow \Gal(L/K) \twoheadrightarrow \Gal(k_L/k_K)=\langle\text{Frob}\rangle\simeq \Z/f\Z. 
\]
\begin{lemma}
An element $\sigma \in I_{L/K}$ in the inertia subgroup is determined by $\sigma(\pi_L)$. 
\end{lemma}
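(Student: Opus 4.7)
The plan is to reduce the assertion to the case of a totally ramified extension, where the uniformizer generates the extension outright. Let $L_0 \subset L$ be the subfield fixed by $I := I_{L/K}$. By infinite Galois theory and the very definition of the inertia subgroup as the kernel of $\Gal(L/K) \twoheadrightarrow \Gal(k_L/k_K)$, the field $L_0$ is the maximal unramified subextension of $L/K$, and $I = \Gal(L/L_0)$. Since any $\sigma \in I$ fixes $L_0$ pointwise, it suffices to show that an element of $\Gal(L/L_0)$ is determined by its value on $\pi_L$. (The hint's remark that $\sigma$ is automatically continuous is what guarantees that $\sigma$ preserves valuations and descends to $k_L$, so that this identification of $L_0$ with the maximal unramified subfield is legitimate.)

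Next I would invoke the classical structure theorem for totally ramified extensions of local fields applied to $L/L_0$. Writing $e = [L:L_0] = |I|$, the residue-field extension $k_L/k_{L_0}$ is trivial, and the uniformizer $\pi_L$ satisfies an Eisenstein polynomial
\[
\pi_L^e + a_{e-1}\pi_L^{e-1} + \cdots + a_1 \pi_L + a_0 = 0
\]
over $\OO_{L_0}$, with $a_i \in \mf{m}_{L_0}$ and $a_0 \notin \mf{m}_{L_0}^2$. Since Eisenstein polynomials are irreducible, $[L_0(\pi_L):L_0] = e = [L:L_0]$, and therefore $L = L_0(\pi_L)$.

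With this in hand the conclusion is immediate: an element $\sigma \in \Gal(L/L_0)$ fixes $L_0$ and acts on the single generator $\pi_L$, so it is determined by $\sigma(\pi_L)$. The only real content is the Eisenstein description of $\pi_L$ over the maximal unramified subextension (equivalently, the claim that $L = L_0(\pi_L)$ in a totally ramified extension); this is the main step, and it is where completeness and the discreteness of the valuation genuinely enter. Everything else is formal Galois theory plus the identification $L^I = L_0$.
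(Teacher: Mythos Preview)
Your proof is correct. The paper does not actually give a proof; it leaves the lemma as an exercise with the hint ``use the fact that any $\sigma \in \Gal(L/K)$ is automatically continuous.'' Your argument is the clean structural one: pass to the fixed field $L_0 = L^{I}$, identify it as the maximal unramified subextension, and then invoke the Eisenstein description of totally ramified extensions to get $L = L_0(\pi_L)$, so that $\sigma$ is determined by where it sends the single generator.

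The route the hint seems to be pointing toward is slightly different and more ``bare-hands'': one shows that $\sigma$ preserves $\OO_L$ and $\mf{m}_L$ (hence is continuous for the $\mf{m}_L$-adic topology), and then uses that every element of $\OO_L$ has a convergent expansion $\sum_{i \ge 0} r_i \pi_L^i$ with $r_i$ ranging over a fixed set of representatives of $k_L$ (e.g.\ Teichm\"uller lifts). Since $\sigma \in I$ fixes these representatives and $\sigma$ is continuous, knowing $\sigma(\pi_L)$ determines $\sigma$ on all of $\OO_L$. This avoids citing the Eisenstein theorem but requires setting up the expansion carefully. Your approach trades that for a single structural citation and is arguably cleaner; either is perfectly acceptable.

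One small comment: your parenthetical attributes the identification of $L_0$ with the maximal unramified subextension to continuity of $\sigma$. That identification is really straight Galois theory plus the definition of $I$; continuity is not doing work there. Where completeness genuinely enters in your argument is in the Eisenstein step, as you correctly note at the end.
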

This leads to the {\bf ramification filtration} of the Galois group. Define
\[
I_0 := I_{L/K}, \hspace{5mm}  I_j:=\left\{\sigma \in I_{L/K} \mid \frac{\sigma(\pi_L)}{\pi_L}=1 \mod \mf{m}_L^j \right\}.
\]
Then 
\[
\Gal(L/K)\supset I_0 \supset I_1 \supset I_2 \supset \cdots \supset \{1\}
\]
is a filtration of $\Gal(L/K)$. 

\vspace{5mm}
\noindent
{\bf Key Facts:}
\begin{enumerate}
    \item The ramification filtration is a finite exhaustive filtration. 
    \item There is an injection $I_0/I_1 \hookrightarrow \OO^\times_L/(1+\OO_L^\times)\simeq k_L^\times: \sigma \mapsto \frac{\sigma(\pi_L)}{\pi_L}$. Hence $I_0/I_1$ is cyclic of order prime to $p$.  
    \item For $j \geq 1$, there is an injection $I_j/I_{j+1} \hookrightarrow (1+\mf{m}_L^j)/(1+\mf{m}_L^{j+1}) \simeq (k_L, +): \sigma \mapsto \frac{\sigma(\pi_L)}{\pi_L}$. Hence $I_j/I_{j+1}$ is an abelian $p$-group, and $I_1$ is a Sylow $p$-subgroup of $I_{L/K}$.  
\end{enumerate}
This filtration leads to some nomenclature: The first subquotient $\Gal(L/K)/I_{L/K}$ is canonically isomorphic to $\Z/f\Z$, and is referred to as the {\bf unramified} part of the Galois group. The second subquotient $I_{L/K}/I_1$ is cyclic of order prime to $p$, and is referred to as the {\bf tamely ramified} part of the Galois group. The remaining subquotients of the ramification filtration are abelian $p$-groups (and hence $I_1$ is a solvable $p$-group), and are referred to as the {\bf wildly ramified} part of the Galois group. 

The upshot is that there are significant constraints on which groups can appear as Galois groups of extensions of local fields. (For example, they must be solvable.) This is in sharp contrast to the number field setting, where many types of groups can appear as Galois groups of extensions of $\Q$. (Though exactly which groups appear as Galois groups of number fields is still very much an open problem, the {\em inverse Galois problem.})   

\begin{example}
Consider the extension  $L=\Q_p(\sqrt[p-1]{p})$ of $K=\Q_p$. Here $\pi_L=\sqrt[p-1]{p}$ and $\pi_K=p$ are uniformizers.  (Exercise: Show that the set $\mu_{p-1}$ of all  $(p-1)^{st}$ roots of unity is contained in $\Q_p$.) 
    The Galois group of this extension is 
    \[
    \Gal(L/K) = \{ \sigma_\zeta: \sqrt[p-1]{p} \mapsto \zeta \sqrt[p-1]{p} \text{ for } \zeta \in \mu_{p-1} \} \simeq  k_K^\times \hookrightarrow k_L^\times.
    \]
    One can check that $\Gal(L/K)$ is totally ramified; that is, $I_{L/K}=\Gal(L/K)$. (This follows from the observation that we are adjoining roots of $p$, whose image is zero in the residue field.) Furthermore, if $\sigma_\zeta \in I_{L/K}=\Gal(L/K)$, then $\frac{\sigma_\zeta(\pi_L)}{\pi_L} = \zeta \in k_L^\times$, hence $I_1 = \{1\}$ and $\Gal(L/K)$ is tamely ramified. 
\end{example}
 
 \begin{remark} Examples of wild ramification are almost always hard! We really should spend a lecture on such examples, but we are quickly running out of time, so sadly we will not. 
 \end{remark}

Next we will examine the structure of the absolute Galois group of a local field. For a local field $K$, we have an exact sequence 
\[
I_{\overline{K}/K} \hookrightarrow \Gal(\overline{K}/K) \twoheadrightarrow \widehat{\Z}. 
\]
We would like to pass to the limit to obtain a ramification filtration of the inertia subgroup $I_{\overline{K}/K}$ from the ramification filtrations of the inertia subgroups of finite extensions. However, there is a problem: if $L'/L/K$ is a tower of finite extensions, then the ramification filtration of $I_{L'/K}$ is related to multiples of the ramification filtration of $I_{L/K}$. 

This can be fixed through an ``upper numbering'' procedure which replaces $I_j$ with $I_{L/K}^\lambda$ for $\lambda \in \Q_{\geq 0}$ in a way that is compatible with extensions. (Exactly how one does this appears pretty crazy at first sight. It is explained in Serre's Local Fields \cite{Serre}.) This leads to a ramification filtration $I_{\overline{K}/K}^\lambda$ of the inertia subgroup of the absolute Galois group indexed by rational numbers:

\[
\Gal(\overline{K}/K)\supset I_{\overline{K}/K} \supset I^{>0}_{\overline{K}/K} \supset \cdots \supset I_{\overline{K}/K}^{463/5} \supset \cdots. 
\]
This filtration has the property that $\Gal(\overline{K}/K)/I_{\overline{K}/K} = \widehat{\Z} $  canonically, $I_{\overline{K}/K}/I^{>0}_{\overline{K}/K} \simeq \displaystyle{\prod_{l \neq p \atop \text{prime}} \Z_l}$  non-canonically, and other subquotients are pro-$p$ groups. 

\vspace{5mm}
\noindent
{\bf Important points:}
\begin{enumerate}
    \item The first two steps depend only on the residue characteristic of the field.
    \item Via class field theory, the image of this filtration in $W^{ab}_K\simeq K^\times$ corresponds to the filtration by $1+\mf{m}_K^j \subset \OO_K$. The fact that the only jumps in this filtration are at integers (as opposed to other elements of $\Q$) is the {\bf Hasse--Arf Theorem}. 
\end{enumerate}

\subsection{More details on Weil--Deligne representations}

Next we would like to show two things: (1) why any Weil--Deligne representation is ``close'' to a continuous representation of $\Gal({\overline{K}}/K)$ and (2) why $p=2$ is special in the local Langlands correspondence. 

\begin{proposition}
\label{close to Galois}
Any indecomposible $F$-semisimple Weil--Deligne representation is isomorphic to $St_n\otimes \rho$, where $\rho$ is an irreducible representation of $W_K$. 
\end{proposition}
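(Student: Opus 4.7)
The plan is to exploit $F$-semisimplicity to decompose $V$ into $W_K$-isotypic components, understand how the twist relation $\rho(x)N\rho(x)^{-1}=|x|N$ constrains $N$ on these components, and then recognise the resulting data as a representation of a type $A$ quiver.

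First, using $F$-semisimplicity, I would write $V=\bigoplus_{[\tau]} V[\tau]$, summed over isomorphism classes $[\tau]$ of irreducible continuous representations $\tau$ of $W_K$. The commutation relation can be rephrased as saying that $N$ is $W_K$-equivariant as a map $V\to V\otimes|\cdot|^{-1}$, so Schur's lemma forces $N(V[\tau])\subseteq V[\tau\otimes|\cdot|]$. Consequently, partitioning the irreducibles into orbits under $\tau\mapsto\tau\otimes|\cdot|$, each orbit $O$ contributes a Weil-Deligne subrepresentation $V_O=\bigoplus_{\tau\in O} V[\tau]$, and $V=\bigoplus_O V_O$. Since $V$ is indecomposable, exactly one $V_O$ is nonzero, and by choosing $\rho\in O$ appropriately one may arrange $V=\bigoplus_{i=0}^{n-1} V[\rho\otimes|\cdot|^i]$ for some $n\geq 1$.

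Next, fix a model $U$ for $\rho$ and write $V[\rho\otimes|\cdot|^i]\cong W_i\otimes U$, where $W_i$ is a multiplicity space and $W_K$ acts as $|\cdot|^i\otimes\rho$. Schur's lemma again implies that the restriction of $N$ to $V[\rho\otimes|\cdot|^i]\to V[\rho\otimes|\cdot|^{i+1}]$ takes the form $N_i\otimes\mathrm{id}_U$ for a unique linear map $N_i\colon W_i\to W_{i+1}$. Thus $V$ as a Weil-Deligne representation is completely encoded by the representation
\[
W_0\xrightarrow{N_0}W_1\xrightarrow{N_1}\cdots\xrightarrow{N_{n-2}}W_{n-1}
\]
of the type $A_n$ quiver with all arrows pointing in one direction, and direct sum decompositions of $V$ in the Weil-Deligne category correspond precisely to direct sum decompositions of this quiver representation.

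Finally, by Gabriel's theorem (or a direct inductive argument), the indecomposable representations of a one-way type $A$ quiver are the interval representations, in which each $W_i=\mathbb{C}$ and each $N_i$ is the identity. Substituting this back into the description of $V$ yields $V\cong St_n\otimes\rho$ after possibly absorbing a twist into $\rho$ to match conventions. The main delicate step is tracking the twist direction carefully in the Schur argument, so that $N$ goes $V[\tau]\to V[\tau\otimes|\cdot|]$ in the direction matching the normalisation of $St_n$; once that bookkeeping is in place, the rest follows from the standard quiver classification.
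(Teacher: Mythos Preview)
Your argument is correct. The paper does not actually prove this proposition; it leaves it as an exercise with the hint that ``it becomes easier if you know what the weight filtration associated to a nilpotent operator is.'' Your route --- isotypic decomposition under $W_K$, followed by recognising the data as a representation of a linearly oriented type $A$ quiver and invoking Gabriel's theorem --- is a genuinely different and more elementary approach than the hinted one via the monodromy weight filtration. The weight-filtration approach would build a canonical filtration from $N$ alone and then use $F$-semisimplicity to split it; your approach instead uses $F$-semisimplicity first to reduce everything to linear algebra on multiplicity spaces. Both work, but yours avoids the (mild) machinery of the Jacobson--Morozov/weight filtration.

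One small point you pass over: when you arrange $V=\bigoplus_{i=0}^{n-1} V[\rho\otimes|\cdot|^i]$ and treat this as a linear $A_n$ quiver, you are implicitly using that the orbit of $\rho$ under $\tau\mapsto\tau\otimes|\cdot|$ is free, i.e.\ $\rho\not\cong\rho\otimes|\cdot|^m$ for $m\neq 0$. This holds because any lift of Frobenius acts on $\rho$ with a finite set of eigenvalues, and multiplication by $q^m$ (with $q>1$ real) has no finite orbits on $\C^\times$. It is worth saying this explicitly, since otherwise one would a priori land on a cyclic quiver rather than a linear one.
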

Here $St_n$ is the Steinberg representation from the previous lecture; e.g.
\[
St_4=\left( \bp |\cdot|^3 & 0 & 0 & 0 \\ 0 & |\cdot|^2 & 0 & 0 \\ 0 & 0 & |\cdot| & 0 \\ 0 & 0 & 0 & 1 \ep , N=\bp 0 & 1 & 0 & 0 \\ 0 & 0 & 1 & 0 \\ 0 & 0 & 0 & 1 \\ 0 & 0 & 0 & 0 \ep \right) 
\]

The proof of this proposition is left as a somewhat tricky exercise. It becomes easier if you know what the weight filtration associated to a nilpotent operator is. 

\begin{proposition}
\label{galois reps}
\begin{enumerate}
    \item Let $\rho:W_K \rightarrow \GL(v)$ be an irreducible representation. Then there exists a continuous character $\chi:W_K \rightarrow \C^\times$ such that $\rho \otimes \chi$ has finite image and hence defines a representation $\rho \otimes \chi:\Gal(\overline{K}/K) \rightarrow \GL(V)$.
    \item Suppose that $\rho:W_K \rightarrow \GL(V)$ is irreducible and not induced from any proper subgroup of $W_K$. Then the restriction to wild intertia is irreducible. In particular, $\dim{V}$ is a power of $p$ (since any irreducible module over a $p$-group has dimension divisible by $p$). 
\end{enumerate}
\end{proposition}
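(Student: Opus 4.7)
The plan is to first pin down the image of inertia via the no small subgroups argument, then twist by a character to kill the residual action of Frobenius. Since $I := I_{\overline{K}/K}$ is closed in the profinite group $\Gal(\overline{K}/K)$, it is itself profinite, while $\GL(V) \cong \GL_n(\C)$ is a Lie group and hence has no small subgroups; Lemma \ref{no small subgroups} therefore forces $\rho(I)$ to be finite. Fix any lift $\widetilde{\text{Frob}} \in W_K$ of Frobenius. Since $\Aut(\rho(I))$ is finite, some power $\widetilde{\text{Frob}}^n$ conjugates $\rho(I)$ trivially, i.e. $\rho(\widetilde{\text{Frob}}^n)$ centralises $\rho(I)$; as it also commutes with $\rho(\widetilde{\text{Frob}})$, and every element of $W_K$ factors as a product of an element of $I$ with a power of $\widetilde{\text{Frob}}$, it commutes with all of $\rho(W_K)$. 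Schur's lemma together with the irreducibility of $V$ then gives $\rho(\widetilde{\text{Frob}}^n) = \lambda \cdot \mathrm{id}_V$ for some $\lambda \in \C^\times$. Now define a continuous character $\chi : W_K \to \C^\times$ by $\chi|_I = 1$ and $\chi(\widetilde{\text{Frob}}) = \mu$ for any chosen $n$th root $\mu$ of $\lambda^{-1}$ (continuity is automatic because $I$ is open in $W_K$). Then $(\rho \otimes \chi)(\widetilde{\text{Frob}}^n) = \mathrm{id}_V$, so $(\rho \otimes \chi)(W_K)$ is generated by the finite group $\rho(I)$ together with an element of finite order that normalises it, and is therefore finite. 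Any continuous homomorphism from $W_K$ to a finite group factors through a finite quotient $(I/I') \rtimes \Z/m\Z$, and this is visibly also a quotient of $\Gal(\overline{K}/K)$, which gives the desired extension.

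\textbf{Part (2), reduction via Clifford theory.} Twisting by a character preserves both irreducibility and the non-induced hypothesis, so by part (1) I may assume $\rho$ has finite image; set $G := \rho(W_K)$ and let $P := \rho(I^{>0})$ be the image of wild inertia. By the key facts about the ramification filtration recalled in Section \ref{ramification filtration}, $P$ is a normal $p$-subgroup of $G$ and the quotient $G/P$ has order coprime to $p$, so $P$ is in fact the unique normal Sylow $p$-subgroup. Applying Clifford's theorem to $P \lhd G$, the restriction $V|_P$ splits into $P$-isotypic components transitively permuted by $G$, and $V$ is induced from the $G$-stabiliser of any one component. Since $V$ is not induced from any proper subgroup of $W_K$ (equivalently, of $G$), the decomposition has only one term: $V|_P \cong W^{\oplus m}$ for some irreducible $P$-representation $W$ and multiplicity $m \geq 1$.

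\textbf{Part (2), the main obstacle: showing $m=1$.} The most delicate step—and the one I expect to be the main obstacle—is proving $m = 1$; granted this, the conclusion that $\dim V = \dim W$ is a power of $p$ follows immediately from the standard fact that the irreducible complex representations of a finite $p$-group have dimension a power of $p$. The strategy is structural: by Schur--Zassenhaus, $G \cong P \rtimes Q$ with $Q \cong G/P$ of order coprime to $p$, and the ramification filtration shows that $Q$ is an extension of the cyclic unramified quotient by the cyclic tame quotient, hence metacyclic. Metacyclic groups are supersolvable, and supersolvable groups are M-groups, meaning every irreducible complex representation is induced from a one-dimensional character of a subgroup. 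In the benign case where $W$ extends to a representation $\widetilde W$ of $G$, one obtains $V \cong \widetilde W \otimes U$ with $U$ an irreducible $Q$-representation of dimension $m$; writing $U \cong \Ind_H^Q \chi$ by monomiality and applying the projection formula gives $V \cong \Ind_{P \rtimes H}^G(\widetilde W|_{P \rtimes H} \otimes \chi)$, so the non-induction hypothesis forces $H = Q$ and hence $m = 1$. The genuine technical content of the proof—which I would isolate in a lemma—is the case where $W$ does \emph{not} extend to $G$: one lifts the picture to a central extension $\widetilde{G}$ of $G$ by $\C^\times$ along which $W$ does extend, and runs the same monomiality argument there, using that the relevant central extension of the supersolvable $Q$ remains supersolvable.
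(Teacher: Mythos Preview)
Your proposal is correct in both parts. Part~(1) is the standard argument and matches what the paper has in mind (the paper simply cites Tate and its own Lemma~\ref{finite subgroup}).

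For Part~(2), your route is correct but heavier than the one the paper is pointing to. The paper's Lemma~\ref{finite subgroup} is designed to be used \emph{iteratively}, exploiting that both relevant quotients are cyclic. First apply it with $\Gamma = \rho(I)$ and $G/\Gamma$ the cyclic image of $\Z$: since $V$ is not induced from any $W_L$ with $L/K$ unramified, the lemma gives $V|_I$ irreducible. Now run Clifford for $I^{>0} \lhd W_K$: if the isotypic components are permuted nontrivially the stabiliser is proper in $W_K$ (its intersection with $I$ is already proper in $I$), contradicting the hypothesis; so $V|_{I^{>0}} \cong W^{\oplus m}$ is homogeneous and $\C^m \cong \Hom_{I^{>0}}(W,V)$ carries an irreducible projective representation of $W_K/I^{>0}$. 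But because $V|_I$ is irreducible, this projective representation is already irreducible when restricted to the \emph{cyclic} subgroup $I/I^{>0}$, and cyclic groups have trivial Schur multiplier, so $m=1$ immediately. In short: two applications of ``Clifford with cyclic quotient'' make the multiplicity step trivial.

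By contrast, you apply Clifford once to $I^{>0}\lhd W_K$ and face the full metacyclic quotient $Q$, which forces you into the ``does $W$ extend?'' dichotomy, M-group theory for supersolvable groups, and a central-extension argument for the non-extending case. All of that is correct (a central extension of a supersolvable group by a finite cyclic group is again supersolvable, and the induced representation descends as you indicate), but it is exactly the machinery the paper's two-step cyclic reduction avoids. This is why the paper can call the proposition a ``reasonably easy consequence'' of a lemma whose only content is Clifford theory over a $\Z$-quotient.
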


\begin{remark}
Proofs of these two statements can be found in \cite{Tate}. 
\end{remark}

Propositions \ref{close to Galois} and \ref{galois reps}.1 show that every Weil--Deligne representation is ``close'' to a representation of the absolute Galois group, in the sense that every Weil-Deligne representation can be obtained from the Steinberg representation and an irreducible representation of $W_K$, and every irreducible representation of $W_K$ can be upgraded to a representation of $\Gal(\overline{K}/K)$ by tensoring with a character.

The two statements of Proposition \ref{galois reps} are reasonably easy consequences of the following lemma. 

\begin{lemma}
\label{finite subgroup}
Suppose a group $G$ has the form 
\[
\Gamma \hookrightarrow G \twoheadrightarrow \Z
\]
for some finite group $\Gamma$. Then any irreducible $G$-module is either irreducible over $\Gamma$ or induced from a subgroup of the form $\Gamma \rtimes m\Z$. 
\end{lemma}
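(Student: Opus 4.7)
The plan is to apply Clifford theory to the restriction $V|_\Gamma$. Since $\Gamma$ is finite and we work with complex representations, Maschke's theorem gives an isotypic decomposition $V|_\Gamma = \bigoplus_{[W']} V_{[W']}$ indexed by isomorphism classes of irreducible $\Gamma$-representations. Fixing a lift $\phi \in G$ of a generator of $G/\Gamma \cong \Z$, conjugation by $\phi$ permutes the classes $[W']$, so $\phi$ permutes the isotypic components. I would then fix an irreducible $\Gamma$-submodule $W \subset V|_\Gamma$ with isotypic component $V_W$, and let $m \in \Z_{\geq 0}$ be the minimal positive integer with $\phi^m \cdot [W] = [W]$ (or $m := 0$ if none exists), so that $m$ is the size of the $\phi$-orbit of $[W]$ (read as $\infty$ when $m = 0$). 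The subgroup $H := \langle \Gamma, \phi^m \rangle = \Gamma \rtimes m\Z$ is then the stabilizer of $V_W$ in $G$.

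Next I would argue $V \cong \Ind_H^G V_W$. The $G$-module $V$ is generated by $V_W$ since it is irreducible, and the distinct translates $\phi^k V_W$ (for $k$ running over coset representatives of $m\Z$ in $\Z$) land in pairwise disjoint isotypic components of $V|_\Gamma$, which gives the identification with the induced module. If $m \geq 2$ then $H$ is proper and $V$ is induced from $\Gamma \rtimes m\Z$ as required. If $m = 0$ then $H = \Gamma$ and Frobenius reciprocity yields $\End_G(\Ind_\Gamma^G V_W) \cong \End_\Gamma(V_W)$ (only the $k=0$ coset contributes, since the $\phi^k W$ lie in distinct iso classes), so irreducibility of $V$ forces $V_W = W$, exhibiting $V \cong \Ind_\Gamma^G W$ as induced from $\Gamma = \Gamma \rtimes 0\Z$.

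The hard part will be the residual case $m = 1$, where $H = G$ and the induction is trivial, so I must show $V$ itself is $\Gamma$-irreducible. Here $V|_\Gamma$ is $W$-isotypic, so I would write $V \cong W \otimes M$ with $M := \Hom_\Gamma(W, V)$ and $\Gamma$ acting only on the $W$-factor. The condition $m = 1$ means $W^\phi \cong W$ as $\Gamma$-modules, so Schur's lemma supplies a $\Gamma$-equivariant isomorphism $c : W^\phi \xrightarrow{\sim} W$, unique up to scalar; chasing the intertwining relation $\phi(\gamma v) = (\phi\gamma\phi^{-1})\phi(v)$ then forces $\phi$ to act on $V$ as $c \otimes T$ for some $T \in \GL(M)$. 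The $G$-submodules of $V$ therefore correspond bijectively to the $\C[T, T^{-1}]$-submodules of $M$, and irreducibility of $V$ forces $M$ to be a simple module over the Laurent polynomial ring. Since $\C$ is algebraically closed, every maximal ideal of $\C[T, T^{-1}]$ has the form $(T - \lambda)$ with residue field $\C$, so $M$ is one-dimensional and $V \cong W$ is irreducible over $\Gamma$, completing the proof.
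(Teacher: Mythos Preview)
The paper does not actually prove this lemma; it simply says ``The proof of this lemma is a worthwhile exercise!'' and moves on. Your Clifford-theoretic argument is the natural approach and is correct.

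One small simplification: since $\Gamma$ is finite, it has only finitely many isomorphism classes of irreducible representations, so the $\phi$-orbit of $[W]$ is automatically finite and your case $m = 0$ never arises. You can therefore drop that paragraph entirely. The remaining cases $m \geq 2$ (genuine induction from the proper subgroup $\Gamma \rtimes m\Z$) and $m = 1$ (where your tensor decomposition $V \cong W \otimes M$ reduces the question to simple modules over the Laurent polynomial ring $\C[T,T^{-1}]$, forcing $\dim M = 1$) are handled cleanly and correctly.
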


The proof of this lemma is a worthwhile exercise! 

\subsection{Why is LLC for $p=2$ special?}

Proposition \ref{galois reps} shows that for $p \neq 2$, all irreducible $2$-dimensional representations of $W_K$ are induced from a finite index subgroup. However, for $p=2$, it's possible that there are irreducible representations of $W_K$ which are not induced. So do such representations exist? Yes! 

Consider a continuous two-dimensional representation $\rho:\Gal(\overline{K}/K)\rightarrow \GL_2(\C)$. By the no small subgroups lemma, the image of $\rho$ must lie in a finite subgroup of $\GL_2(\C)$, so in the composition of $\rho$ with the projection 
\[
\GL_2(\C)\rightarrow \PGL_2(\C),
\]
the image must be conjugate to a subgroup of the maximal compact subgroup $SO_3\subset \PGL_2(\C)$. The finite subgroups of $SO_3$ were classified\footnote{by Klein in 1884, \cite{Klein}}! They are of the following types: 
\begin{itemize}
    \item cyclic (orientation preserving symmetries of the product of an $m$-gon and an interval, fixing one end)
    \item dihedral (orientation preserving symmetries of the product of an $m$-gon and an interval)
    \item $A_4$ (orientation preserving symmetries of the tetrahedron)
    \item symmetries of the cube
    \item $A_5$ (orientation preserving symmetries of the icosahedron)
\end{itemize}
Reducible representations have images in cyclic subgroups of $SO_3$, and induced representations have images which are dihedral groups. What about the other three? Are there any representations of $\Gal(\overline{K}/K)$ whose image lies in any of the final three finite subgroups? Since $\Gal(\overline{K}/K)$ is solvable, we can eliminate the non-solvable group $A_5$ from our list. Let's consider the composition series of $A_4$:
\[
K_4=\Z/2\Z \times \Z/2\Z \hookrightarrow A_4 \twoheadrightarrow \Z/3\Z 
\]
By the structure of the ramification filtration, this subgroup structure is only possible for a local Galois group if $p=2$. It turns out that it does indeed occur for some local fields! 

The upshot is that for $p=2$, there are more representations on each side of the LLC, and the extra representations on the Weil group side are this special class of irreducible non-induced representations whose image lies in $A_4$. (Geordie isn't sure if representations corresponding to the symmetries of the cube exist. He is told that they do...) 

\vspace{5mm}
\noindent
{\bf A mystery to ponder:} Let $G$ be a compact Lie group (e.g. a finite group), and let $R(G)_\C$ be its representation ring. What is a character 
\[
\theta: R(G)_\C \rightarrow \C ?
\]

\subsection{Unramified representations} 

One way to convince yourself that the LLC is amazing is to see that simple special cases already have deep consequences. The first example of this that we have seen is local class field theory. The second example we will see now!

The local Langlands correspondence for $\GL_n(K)$ says that there is a canonical bijection:
\[
\begin{tikzcd}
\left\{ \begin{array}{c} \text{$F$-semisimple}\\ 
\text{$n$-dimensional}\\
\text{Weil-Deligne reps} 
\end{array} \right\}_{/\simeq}
 \arrow[r, leftrightarrow, "1:1"] & \left\{ \begin{array}{c}
 \text{irred smooth}\\
 \text{admissible}\\
 \text{reps of $\GL_n(K)$} 
 \end{array} \right\}_{/\simeq}
 \end{tikzcd}
\]
On the left hand side of this bijection, we can consider a special class of {\bf unramified Weil-Deligne representations} consisting of those representations of $W_K$ which are trivial on the inertia subgroup. The corresponding representations on the right hand side are the {\bf spherical representations} of $\GL_n(K)$:
\[
\begin{tikzcd}
\left\{ \begin{array}{c} \text{Weil-Deligne reps}\\ 
\text{s.t. $N=0$ and $\rho$}\\
\text{factors through $\Z$: }\\
W_k \twoheadrightarrow \Z \hookrightarrow \GL_n(\C)
\end{array} \right\}_{/\simeq}
 \arrow[r, leftrightarrow, "1:1"] & \left\{ \begin{array}{c}
 \text{reps of $\GL_n(K)$}\\
 \text{admitting a}\\
 \text{$\GL_n(\OO_K)$-fixed vector} 
 \end{array} \right\}_{/\simeq}
 \end{tikzcd}
\]

Semisimple representations of $W_K$ which factor through $\Z$ are in bijection with semisimple elements of $\GL_n(\C)$, and irreducible representations of $\GL_n(K)$ admitting a $\GL_n(\OO_K)$-fixed vector are in bijection with irreducible representations of the ``spherical Hecke algebra'' (which you are not expected to be familiar with and we will soon define). Thus, the restriction of the local Langlands correspondence to this special case results in a bijection 
\[
\left\{ \begin{array}{c} 
\text{semisimple elements}\\
\text{in $\GL_n(\C)$} 
\end{array} \right\}_{/\text{conj}} \xleftrightarrow{1:1} \left\{ \begin{array}{c} 
\text{irreducible reps of }\\
\mc{H}_{sph}:=\mc{H}(\GL_n(\OO_K),\GL_n(K))
\end{array} \right\}_{/\simeq}
\]
This is the {\bf Satake isomorphism}! We will spend the rest of the lecture explaining this bijection (particularly the right hand side) in more detail. 

\begin{remark}
The left-hand-side of the bijection above is independent of $K$, and even of the residue characteristic $p$! 
\end{remark}

\subsection{Hecke algebras}

Suppose $G$ is a finite group. 

\vspace{5mm}
\noindent
{\bf Case 1:} Consider $N\subset G$ a normal subgroup. If $V$ is a $G$-representation, then $G$ acts on $V^N$ (because for $n\in N, g \in G$, and $v \in V^N$, $n \cdot gv=g \cdot g^{-1}ng \cdot v = gv$), and the action factors over $G/N$. Moreover, one can check that $\End(\Ind_N^G\C)\simeq \C[G/N]$. Hence we have a bijection 
\[
\left\{ \begin{array}{c} 
\text{irred $G$-modules}\\
\text{with an $N$-fixed vector} \end{array} \right\}_{/\simeq} \leftrightarrow \{\text{ irred $G/N$-modules} \}_{/\simeq} .
\]

\noindent
{\bf Case 2:} Consider $H \subset G$ not necessarily normal. Given a $G$-representation $V$, what acts on $V^H$? The Hecke algebra! The operator 
\begin{align*}
    \pi_H:V&\rightarrow V^H \\
    v &\mapsto \frac{1}{|H|} \sum_{h \in H} h \cdot v
\end{align*}
projects onto $H$-invariants. This can be used to define a ``Hecke operator'' $[HgH]$ for every $g \in G$ which makes the following diagram commute:
\[
\begin{tikzcd}
V^H  \arrow[r, "\text{[}HgH\text{]}"] \arrow[d, hookrightarrow] &V^H\arrow[d, leftarrow, "\pi_H"]\\
V \arrow[r, "\cdot g"] &V
\end{tikzcd}
\]
Note that all $g$ in the same double coset yield the same Hecke operator. Alternatively, this operator is the sum  
\[
[HgH]=\frac{1}{|H|} \sum_{g' \in HgH} g'.
\]
The {\bf Hecke algebra} $\mc{H}(H,G)$ of the pair $(H,G)$ is the vector space $^H\C[G]^H$ with multiplication 
\[
(f \ast f')(g):=\frac{1}{|H|}\sum_{g=hh'}f(h)f'(h).
\]
This is an associative unital algebra with unit
\[
1_H=\frac{1}{|H|} \sum_{h \in H} h.
\]

\begin{example}
\begin{enumerate}
    \item If $N$ is normal, $\mc{H}(N,G)=\C[G/N]$. 
    \item If $G=\GL_n(\F_q)$ and $B=\left\{ \bp * & \cdots & * \\ 0 & \ddots & \vdots \\ 0 & 0 & * \ep \right\}$, then $\mc{H}(B,G)$ is the ``Hecke algebra of $S_n$ at $q=|\F_q|$''. This algebra is almost independent of $q$.
\end{enumerate}
\end{example}

\begin{exercise}
(Do it!) Show that 
\[
\End(\Ind_H^G \C) \simeq \mc{H}(H,G).
\]
Hence 
\[
\langle \Ind_H^G \C \rangle \xrightarrow{\sim} \mc{H}(H,G)\text{-mod}.
\]
(Here the angle brackets mean the smallest abelian category generated by kernels, cokernels, extensions, and direct sums.) 
Deduce that 
\[
\left\{ {\text{irred. $G$-modules} \atop \text{with $H$-fixed vector}} \right\} \xleftrightarrow{1:1} \left\{ \text{irred $\mc{H}(H,G)$-modules} \right\}. 
\]
\end{exercise}

\begin{remark}
There is a tendency in the literature to consider one subgroup $H$ at a time, but one can also consider all subgroups (or a particularly nice family of subgroups) at the same time, resulting in a ``Hecke algebroid''. 
\end{remark}

We can also define Hecke algebras of $p$-adic groups. Let $G=\GL_n(K)$ for a local field $K$, and $K_0 = \GL_n(\OO_K)$ the maximal compact subgroup. Then the ``big'' Hecke algebra of $G$ is 
\[
\mc{H}^{big}=\left\{\varphi:G \rightarrow \C \middle | {\varphi \text{ locally constant} \atop \text{ compact support} }\right\}. 
\]
An alternate description is
\[
\mc{H}^{big} = \bigcup_i \left\{\varphi:G \rightarrow \C \middle |  { \varphi \text{ locally constant on $K_i$-double} \atop \text{cosets, non-zero on finitely many}} \right\}.
\]
\begin{exercise}
Prove that the two formulations of $\mc{H}^{big}$ are equivalent.
\end{exercise}
The algebra structure on $\mc{H}^{big}$ is given by 
\[
(f \ast f')(g) = \int_{h \in G} f(h) f'(h^{-1}g) d \mu,
\]
where $\mu$ is the Haar measure. 

\begin{example}
Let $1_{K_i}$ be the indicator function on $K_i$. Then 
\[
1_{K_i} \ast 1_{K_i} (g) = \int_{h \in G} 1_{K_i}(h) 1_{K_i}(h^{-1}g) d \mu = \begin{cases} 0 &\text{ if } g \not \in K_i, \\ \int_{K_i}1 d\mu &\text{ if }g \in K_i.
\end{cases}
\]
In other words, 
\[
1_{K_i} \ast 1_{K_i} = \mu(K_i)1_{K_i},
\]
so $1_{K_i}$ is a quasi-idempotent. 
\end{example}

\begin{remark}
Because any irreducible $G$-module has $V^{K_i} \neq 0$ for some $i$, $\mc{H}^{big}$ can be used to understand all smooth admissible representations of $G$. However, it is very complicated. 
\end{remark}

Assume $\mu(K_0)=1$ so $1_{K_0}$ is idempotent. The {\bf spherical Hecke algebra} is
\[
\mc{H}^{sph}=\mc{H}(K_0,G):=1_{K_0}\mc{H}^{big}1_{K_0}.
\]
\begin{exercise}
(Do it!) Prove the Cartan decomposition of $G$:
\[
    G=\bigsqcup_{\begin{array}{c} \underline{\lambda} \\ \lambda_1 \geq \lambda_2 \geq \ldots \geq \lambda_n \\ \lambda_i \in \Z \end{array}} K_0 \bp \pi^{\lambda_1} & & & &  \\ & \pi^{\lambda_2} & & &  \\ & & \pi^{\lambda_3} & & & \\ & & & \ddots & & \\ & & & & &  \pi^{\lambda_n} \ep K_0 
\]
Hence 
\[
\mc{H}^{sph} = \bigoplus_{\underline{\lambda}} \C 1_{\underline{\lambda}}.
\]
\end{exercise}
There are two miracles. 
\begin{theorem}
\label{satake}
\begin{enumerate}
    \item The spherical Hecke algebra $\mc{H}^{sph}$ is commutative. 
    \item ({\bf The Satake isomorphism}) There exists a canonical bijection 
    \[
    \mc{H}^{sph} \xleftrightarrow{\sim} R(^L\GL_n(\C)).
    \]
\end{enumerate}
\end{theorem}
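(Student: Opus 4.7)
The plan is to prove the two parts separately, with commutativity as a warm-up and the Satake isomorphism as the main event.

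For part (1), I would use \emph{Gelfand's trick}. Consider the anti-involution $\tau : G \to G$, $g \mapsto g^T$ (matrix transpose). It preserves $K_0 = \GL_n(\OO_K)$, and because the Cartan decomposition from the exercise above uses diagonal (hence symmetric) representatives, $\tau$ fixes every double coset $K_0 \, \mathrm{diag}(\pi^{\lambda_1}, \ldots, \pi^{\lambda_n}) \, K_0$ setwise. Hence the induced map $\tau_*$ on $\mc{H}^{sph}$ fixes every basis element $1_{\underline{\lambda}}$, so $\tau_*$ is the identity. On the other hand, from the definition of convolution one checks $\tau_*(f \ast f') = \tau_*(f') \ast \tau_*(f)$, whence $f \ast f' = f' \ast f$ for all $f, f' \in \mc{H}^{sph}$.

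For part (2), I would construct the Satake isomorphism explicitly. Let $T \subset B \subset G$ be the diagonal torus inside the Borel of upper-triangular matrices, and let $U$ be the unipotent radical of $B$. Write $W = S_n$ for the Weyl group, $\delta : B \to \Q^\times$ for the modular character (the determinant of the adjoint action on $\Lie U$, computable as $\delta(\mathrm{diag}(a_1, \ldots, a_n)) = \prod_{i<j} |a_i/a_j|$), and define the \emph{Satake transform}
\[
\mc{S} : \mc{H}^{sph} \longrightarrow \mc{H}(T(\OO_K), T(K)), \qquad \mc{S}(f)(t) = \delta(t)^{1/2} \int_U f(tu)\, du.
\]
The first key step is to show that $\mc{S}(f)$ is well-defined (the integral converges because $f$ has compact support) and takes values in $\mc{H}(T(\OO_K), T(K))^W$. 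Next, identify $\mc{H}(T(\OO_K), T(K)) \cong \C[X_*(T)] \cong \C[x_1^{\pm 1}, \ldots, x_n^{\pm 1}]$ via $t \mapsto (\val \pi(t_1), \ldots, \val(t_n))$, so that the Weyl-invariant part becomes $\C[x_1^{\pm1}, \ldots, x_n^{\pm1}]^{S_n}$. Finally, recognize the latter as the representation ring $R(\GL_n(\C))$ via the character map (every polynomial character of the diagonal torus is the character of a unique virtual representation, determined by its values on diagonal matrices).

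The main work, and the step I expect to be the principal obstacle, is proving that $\mc{S}$ is an algebra isomorphism onto $\C[X_*(T)]^W$. Multiplicativity follows, after using the Iwasawa decomposition $G = U \cdot T \cdot K_0$, from a change-of-variables argument; the factor $\delta^{1/2}$ is inserted precisely to make the convolution product come out symmetric (this is the same normalization trick that made principal series self-dual in Lecture 8). Injectivity and the image computation rest on a \emph{triangularity} (or ``leading term'') argument: order dominant coweights $\underline{\lambda}$ by the usual partial order, pick $1_{\underline{\lambda}}$, and show that $\mc{S}(1_{\underline{\lambda}}) = x^{\underline{\lambda}} + (\text{lower terms})$ after symmetrization, where the lower terms are indexed by coweights strictly smaller in the dominance order. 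Since the Schur basis of symmetric Laurent polynomials is obtained from monomial symmetric functions by exactly such a unitriangular change of basis, $\mc{S}$ sends the Cartan basis of $\mc{H}^{sph}$ to a basis of $\C[X_*(T)]^W$, hence is a bijection. The combinatorial bookkeeping in this triangularity step is where the real content lies, as one must compute volumes of certain $U$-orbits intersected with double cosets, and the ``$W$-invariance'' of the output is itself nontrivial, since it is not apparent from the definition but emerges from a comparison of the Satake transform with respect to opposite Borels.
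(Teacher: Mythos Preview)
Your proposal is correct and follows the standard route: Gelfand's trick via transpose for commutativity, and the Satake transform $\mc{S}(f)(t) = \delta(t)^{1/2}\int_U f(tu)\,du$ together with a triangularity argument against the dominance order for the isomorphism. Both parts are sound as outlined; the one place I would tighten is the claim that $W$-invariance ``emerges from a comparison with respect to opposite Borels'' --- the more direct route is to check it for a single simple reflection, where it reduces to a rank-one $\SL_2$ calculation (or to invoke Macdonald's formula), and this is usually cleaner than the opposite-Borel comparison.

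However, you should be aware that the paper does \emph{not} prove this theorem. It is stated without proof, introduced by the phrase ``There are two miracles'', and then used as a black box to deduce the unramified local Langlands correspondence. This is consistent with the informal lecture-notes style of the paper, which explicitly says ``Not much is proved, but we try to give enough detail to convince the reader that there is a lot of marvellous mathematics here''. So there is nothing to compare against: your outline is the standard proof, and the paper simply takes the result on faith.
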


\begin{remark}
The Langlands dual group $^L\GL_n(\C) \simeq \GL_n(\C)$ so we could have replaced the right hand side of the Satake isomorphism with the representation ring of $\GL_n(\C)$; however, the theorem also holds for general reductive groups and there the dual group is important. 
\end{remark}

Recall our mystery from earlier in the lecture: For a compact Lie group $G$, what is a character $\theta:R(G)_\C\rightarrow \C$ of its representation ring? By the Chevalley restriction theorem, $R(G)_\C\simeq R(T)_\C^W$, where $T\subset G$ is a maximal torus and $W$ is the Weyl group of $G$. So a  character of $R(G)_\C$ is just a choice of a semisimple conjugacy class in $G$! 

Theorem \ref{satake} can be used to establish unramified LLC:
\begin{align*}
    \left\{ \begin{array}{c}\text{``spherical representations'';}\\ \text{i.e. smooth admissible} \\ \text{irred reps of $G$ with} \\ \text{a $K_0$-fixed vector} \end{array} \right\}_{/\simeq}  & \xleftrightarrow[\text{Hecke algebra magic}]{1:1} \left\{ \begin{array}{c} \text{ irreducible} \\ \mc{H}(K_0,G)=\mc{H}^{sph}- \\ \text{modules} \end{array} \right\}_{/\simeq} \\
    &\xleftrightarrow[\text{commutativity of $\mc{H}^{sph}$}]{1:1} \left\{ \begin{array}{c} \text{characters}\\
    \chi:\mc{H}^{sph} \rightarrow \C \end{array} \right\} \\
    & \xleftrightarrow[\text{Satake isomorphism}]{1:1} \left\{ \begin{array}{c} \text{characters} \\ \theta: R(^L\GL_n(\C)) \rightarrow \C \end{array} \right\}\\
    &\xleftrightarrow[\text{the mystery from earlier}]{1:1} \left\{ \begin{array}{c} \text{conjugacy classes} \\
    \text{of semisimple}\\
    \text{ elts in $^L\GL_n(\C)$} \end{array} \right\} \\
    &\xleftrightarrow[\text{ definition}]{1:1} \left\{ \begin{array}{c} \text{unramified}\\
    \text{$n$-dimensional}\\
    \text{Weil--Deligne} \\
    \text{representations} \end{array} \right\}_{/\simeq}
\end{align*}

\pagebreak
\section{Lecture 10: The big picture}
\label{lecture 10}

Today is about the big picture. We start with the very big picture, and finish with the moderately big picture. This is also the final lecture of the first term of this course!

\subsection{The very big picture}

\subsubsection{Dimension 0}

Let us go back to the beginning. Let $f(x)\in \Z[x]$ be a polynomial; e.g. $f(x)=x^2+1$. Back in March, we wondered: How many solutions does $f(x)$ have modulo a prime $p$? We constructed tables:
\begin{center}
\begin{tabular}{ | c|c|c| c| c| c| c| c| c| c |} 
\hline
$p$ & 2 & 3 & 5 & 7 & 11 & 13 & 17 & 19 & 23 \\ 
\hline
\# of sol's mod $p$ & 1 & 0 & 2 & 0 & 0 & 2 & 2 & 0 & 0 \\ 
\hline
$p$ mod 4 & 2 & 3 & 1 & 3 & 3 & 1 & 1 & 3 & 3 \\ 
\hline
\end{tabular} ...
\end{center}
Then we studied this via representation theory. The Galois group $\Gal(\overline{\Q}/\Q)$ acts on the roots $\{\sigma_1, \ldots, \sigma_n\} \subset \overline{\Q}$ of $f(x)$, so we have a permutation representation 
\[
\Gal(\overline{\Q}/\Q) \rightarrow \GL(H),
\]
where $H:=\bigoplus_{i=1}^n = \C \sigma_i$. Then for unramified primes,
\[
\# \text{ solutions of $f(x)$ mod $p$ }=\Tr(\Frob, H).
\]

Even in this innocent (``dimension 0'') case, $H$ is enormously complicated. To simplify things, we instead considered a collection of {\em local} representations $H_{\Q_p}$, defined as follows. For each $p$, consider roots $\sigma_1', \ldots, \sigma_n'$ of $f(x)$ in $\overline{\Q}_p$. Then for each $p$ we have ``local Galois representations''
\[
\Gal(\overline{\Q}_p/\Q_p)\rightarrow \GL(H_{\Q_p}),
\]
where $H_{\Q_p}=\bigoplus \C \sigma_i'$. This gives us a ``categorification'' of the table above:
\begin{center}
\begin{tabular}{ | c|c|c| c| c| c| c| c| c| c |} 
\hline
$p$ & $2$ & $3$ & $5$ & $7$ & $11$ & $13$ & $17$ & $19$ & $23$ \\ 
\hline
& $H_{\Q_2}$ & $H_{\Q_3}$ & $H_{\Q_5}$ & $H_{\Q_7}$ & $H_{\Q_{11}}$ & $H_{\Q_{13}}$ & $H_{\Q_{17}}$ & $H_{\Q_{19}}$ & $H_{\Q_{23}}$ \\ 
\hline
\end{tabular} ...
\end{center}
If $p$ is unramified, then the inertia subgroup $I\subset\Gal(\overline{\Q_p}/\Q_p)$ acts trivially on $H_{\Q_p}$, so the local Galois representation is unramified. By the Satake isomorphism (Theorem \ref{satake}), this implies that $H_{\Q_p}$ is determined by a semisimple conjugacy class $[x] \in \GL_n(\C)$, and 
\[
\# \text{ solutions of $f(x)$ mod $p$ }=\Tr([x]).
\]
For unramified primes, the representation $H_{\Q_p}$ is rather simple. However, for ramified primes, the representation $H_{\Q_p}$ can be quite complicated:
\begin{enumerate}
    \item The study of $H_{\Q_p}$ lets us define local factors in the Artin $L$-function. 
    \item We can hope to understand $H_{\Q_p}$ through the local Langlands correspondence.
\end{enumerate}

\noindent
{\bf Remember our slogan:} There is a lot of substance at ramified primes/points! 

\subsubsection{Dimension $\geq 1$}

The classic example is that of an elliptic curve $E$; e.g. the projective completion of the curve $y^2+y=x^3+x^2+3x+5$ that we studied in the lecture on the Sato-Tate conjecture, Lecture \ref{sato-tate}. What is the analogue of the Galois representation $H$ in this setting?

Recall that $E$ is a group, and the complex points of $E$ are 
\[
E(\C)=\text{ solutions over $\C$ } = \C/\Lambda,
\]
where $\Lambda \subset \C$ is a lattice That is, we obtain $E(\C)$ by identifying opposite edges in this picture, where the dots represent elements of $\Lambda$:
\begin{center}
         \includegraphics[scale=0.5]{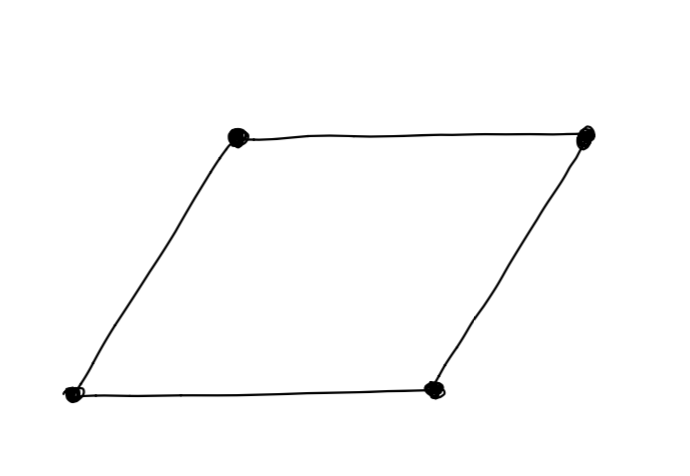}
\end{center}
The Galois group $\Gal(\overline{\Q}/\Q)$ does not act in any meaningful way on $E(\C)$. It does act on $E(\overline{\Q})$, but this is an enormously complicated set, a little too complicated for us! However, for any prime $\ell$, we can consider the ``$\ell^m$-torsion points'': \[
E[\ell^m]:=\{x \in E(\overline{\Q}) \mid \ell^m \cdot x=0 \} \simeq \left( \Z/\ell^m \Z \right) ^2; 
\]
e.g., for $\ell=3, m=1$:
\begin{center}
         \includegraphics[scale=0.5]{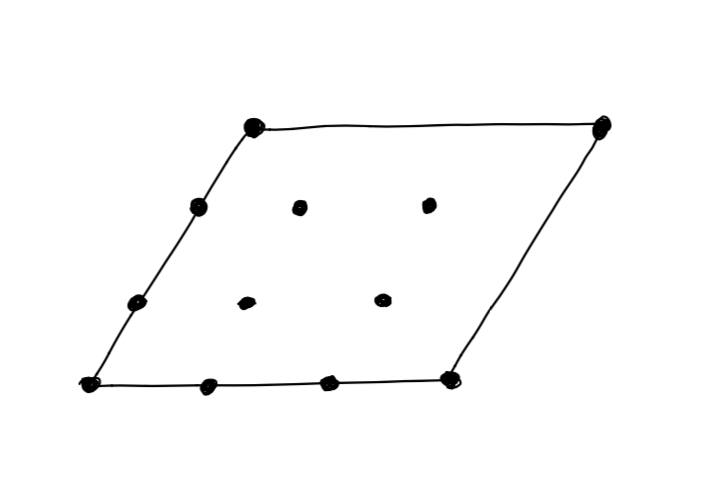}
\end{center}
There is a natural action of $\Gal(\overline{\Q}/\Q)$ on $E[\ell^m]$. The {\bf Tate module} is formed by taking the direct limit of the $E[\ell^m]$:
\[
T_\ell(E) := \lim_{\leftarrow} E[\ell^m] \simeq \Z_\ell^2. 
\]
The Tate module has a continuous action of $\Gal(\overline{\Q}/\Q)$. Moreover, if $E_{\F_p}$ is smooth, then 
\[
\# E(\F_p) = 1+p- \Tr(\Frob, T_\ell(E)). 
\]
So in this classic example of an elliptic curve, the Tate modules play the role of the representation $H$ which appeared in the dimension 0 setting. Notice that in the previous section we constructed a single representation $H$, but there is one Tate module for each prime $\ell$. This is an embarrassment of riches! 

The Tate module $T_\ell(E)$ is an example of ``$\ell$-adic cohomology'':
\[
T_\ell(E)=H_{\acute{e}t}^1(E, \Z_\ell)^\ast.
\]
In general, given a variety $X$ over a field $k$ and a prime $\ell$ such that multiplication by $\ell$ is non-zero in $k$, there is a continuous action of $\Gal(\overline{k}/k)$ on the $\ell$-adic cohomology groups $H_{\acute{e}t}^\ast(X_{\overline{k}}, \Q_\ell).$ Again, it is useful to study these representations via their restriction to local Galois groups $\Gal(\overline{\Q}_p/\Q_p)$. We can always calculate these after base change to $\Spec{\Q_p}$ and at primes of good reduction after base change to $\Spec{\F_p}$. ({\bf Exercise:} Think about what this statement means for $f(x) \in \Z[x]$.) 

In addition to \'{e}tale cohomology, one has several other methods for associating cohomology groups to the variety $X$:
\begin{enumerate}
    \item {\bf singular cohomology:} $H^i(X(\C), \Z)$, $H^i(X(\C),\F_p)$ (related via universal coefficient theorem)
    \item {\bf deRham cohomology:} $H^i_{dR}(X)$, $H^i_{dR}(X_{\F_p})$ (cohomology of differential forms)
    \item {\bf crystalline cohomology:} $H^i_{crys}(X_{\F_p}/\Z_p)$ (a fancy theory that produces $\Z_p$-vector spaces for $\F_p$-schemes)
\end{enumerate}

\noindent
{\bf Grothendieck's philosophy:} All of these cohomology groups should be shadows of a unique object, the ``motive'' of $X$. 

\vspace{3mm}
\noindent
{\bf Scholze:} Perhaps the ``motive'' is more like a sheaf/local system on $\Spec{\Z} \times \Spec{\Z}$. 

\vspace{3mm}
\noindent
{\bf Scholze's ICM picture:}
\begin{center}
         \includegraphics[scale=0.4]{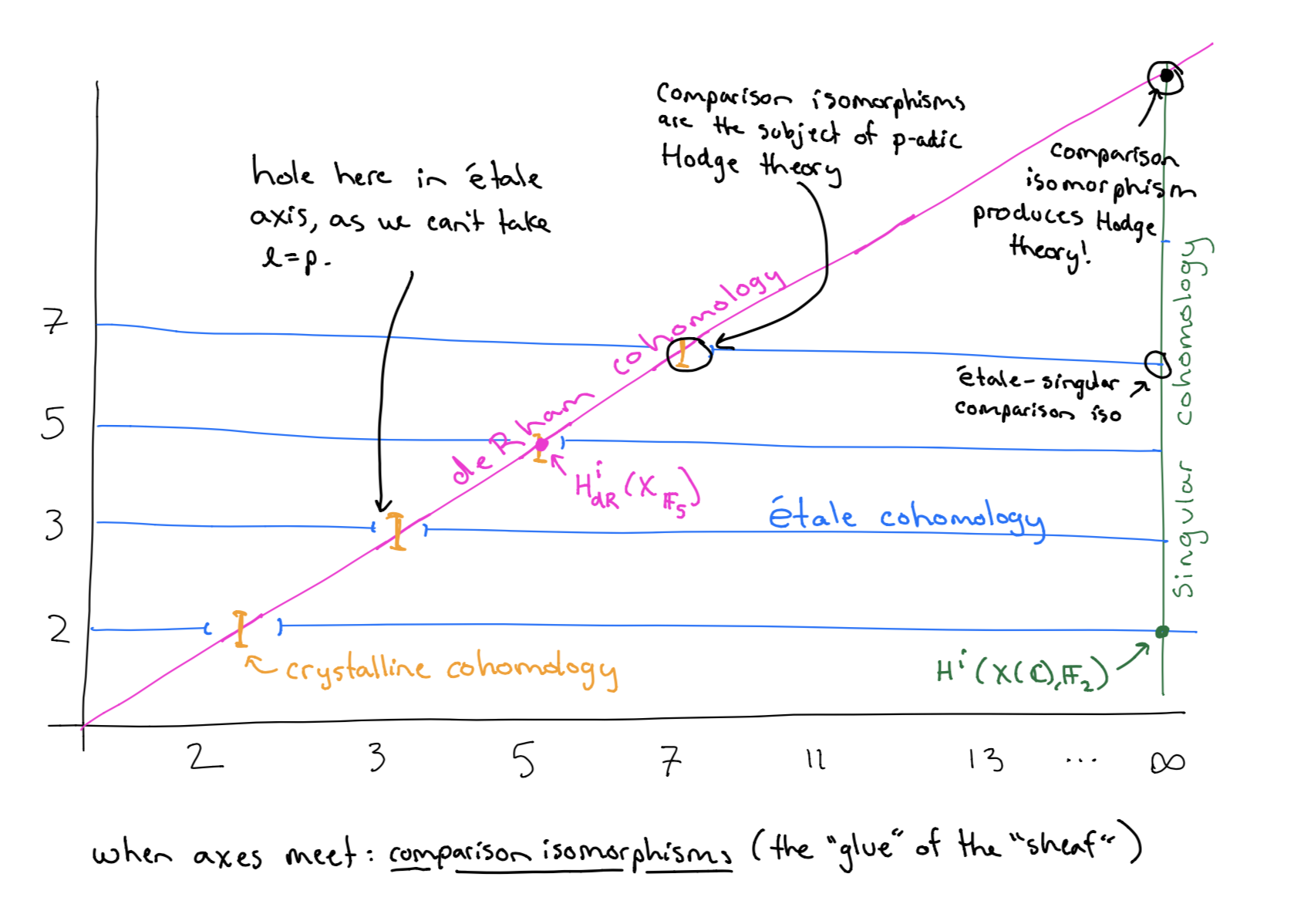}
\end{center}
Scholze also predicts an archimidean theory for varieties in characteristic $p$ which has been missing since the beginning of this subject! 

\vspace{5mm}
\noindent
{\bf Recommendation/Exercise:} Read section $10$ of Scholze's ICM paper.

\vspace{5mm}
How does one compare columns in Scholze's picture? In other words, if  $G_{\Q_p}=\Gal(\Q_p/\Q_p)$, how can we compare 
\[
\rho:G_{\Q_p} \rightarrow \GL_n(\Q_\ell) \text{ and } \rho':G_{\Q_p} \rightarrow \GL_n(\Q_{\ell'})? 
\]
The problem is the topology. The solution is given by Weil--Deligne representations.

A topological group $\Gamma$ is {\bf pro-$p$} if it is profinite and for all open normal subgroups $N \subset \Gamma$, $\Gamma/N$ is a $p$-group. 
\begin{example} Two examples of pro-$p$-groups are:
\begin{enumerate}
    \item wild inertia $\subset G_{\Q_p}$, and 
    \item $1+p\text{Mat}_n\Z_p=K_1 \subset \GL_n(\Q_p)$.
\end{enumerate}
\end{example}
\begin{lemma}
Any continuous group homomorphism 
\[
\rho:\Gamma \rightarrow G
\]
from a pro-$p$ group $\Gamma$ to a pro-$\ell$ group $G$ is trivial. 
\end{lemma}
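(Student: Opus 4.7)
The plan is to reduce the statement to a finite-group fact by projecting both sides to finite quotients, and then to exploit the disjointness of $p$-power and $\ell$-power orders. Implicit in the statement should be that $p\neq \ell$; otherwise the identity map on $\Z_p$ is a counterexample, so I will assume this throughout.

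First I would fix an arbitrary open normal subgroup $N \trianglelefteq G$, and observe that by the pro-$\ell$ hypothesis on $G$, the finite quotient $G/N$ is an $\ell$-group. Since $\rho$ is continuous, $H := \rho^{-1}(N) \trianglelefteq \Gamma$ is an open normal subgroup of $\Gamma$, and by the pro-$p$ hypothesis $\Gamma/H$ is a finite $p$-group. The induced homomorphism $\bar\rho\colon \Gamma/H \hookrightarrow G/N$ realizes $\Gamma/H$ as a subgroup of a finite $\ell$-group, so its order is simultaneously a power of $p$ and a power of $\ell$; since $p\neq \ell$ this forces $\Gamma/H$ to be trivial, i.e.\ $\rho(\Gamma) \subseteq N$.

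Next I would pass to the limit over all open normal subgroups of $G$. Since $G$ is profinite, it is in particular Hausdorff, and the intersection $\bigcap_{N \trianglelefteq G\text{ open}} N = \{1\}$ is a standard fact about profinite groups (a basis of open neighbourhoods of $1$ is given by such $N$). Combining this with the previous paragraph, $\rho(\Gamma) \subseteq \bigcap_N N = \{1\}$, so $\rho$ is trivial.

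The only mildly delicate step is the observation that every open normal subgroup of a pro-$p$ group has $p$-power index — i.e.\ that the projection $\Gamma \twoheadrightarrow \Gamma/H$ lands in a finite $p$-group. This is really just the definition here, but if one instead defined pro-$p$ as an inverse limit of finite $p$-groups, one would need to check it by writing $\Gamma = \varprojlim \Gamma_i$ with $\Gamma_i$ a finite $p$-group and noting that any open normal $H$ contains $\ker(\Gamma \to \Gamma_i)$ for some $i$, so $\Gamma/H$ is a quotient of the $p$-group $\Gamma_i$. Nothing else in the argument is substantive: it is really just the coprimality of $p$ and $\ell$ translated through the profinite formalism.
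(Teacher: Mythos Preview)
Your proof is correct. The paper states this lemma without proof (it appears as a quick technical tool en route to Grothendieck's theorem on Weil--Deligne representations), so there is nothing to compare against; your argument --- reducing to finite quotients via an open normal $N\trianglelefteq G$ and using that a finite group which is simultaneously a $p$-group and an $\ell$-group must be trivial --- is the standard one, and you rightly flag the implicit hypothesis $p\neq\ell$.
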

\begin{corollary}
For a pro-$p$ group $\Gamma$, any continuous group homomorphism 
\[
\rho: \Gamma \rightarrow \GL_n(\Q_p)
\]
has finite image. 
\end{corollary}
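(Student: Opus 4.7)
The plan is to combine the preceding lemma with two standard facts about $p$-adic groups: every compact subgroup of $\GL_n$ over a local field is conjugate into the integer points, and the first congruence subgroup of $\GL_n(\Z_p)$ is open of finite index (the very example stated just before the corollary).

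The argument will proceed in three steps. First, since $\Gamma$ is profinite it is compact, so $\rho(\Gamma)$ is a compact subgroup of $\GL_n(\Q_p)$; by the lattice-stabilizer argument (the $\Z_p$-span of the $\rho(\Gamma)$-orbit of the standard lattice is a $\rho(\Gamma)$-stable $\Z_p$-lattice), we may conjugate so that $\rho(\Gamma)\subseteq \GL_n(\Z_p)$. Second, reduction mod $p$ gives
\[
1 \to K_1 \to \GL_n(\Z_p) \to \GL_n(\F_p) \to 1,
\]
where $K_1 := 1 + p\Mat_n(\Z_p)$ is open, normal, and of finite index. Continuity then makes $\Gamma_0 := \rho^{-1}(K_1)$ open and of finite index in $\Gamma$, so it suffices to prove $\rho(\Gamma_0)$ is trivial. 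Third, $\Gamma_0$ is pro-$p$ (being a closed subgroup of a pro-$p$ group), and applying the preceding lemma to $\rho|_{\Gamma_0}\colon \Gamma_0 \to K_1$ forces this restriction to be trivial, whence $\rho$ factors through the finite quotient $\Gamma/\Gamma_0$.

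The hard part is this third step, which is in fact the crux of the whole statement. For the preceding lemma to be applicable, the target $K_1$ must be pro-$\ell$ for some prime $\ell$ coprime to $p$. Taken literally, $K_1 = 1 + p\Mat_n(\Z_p)$ is pro-$p$, the lemma does not apply, and indeed the identity embedding $K_1 \hookrightarrow \GL_n(\Q_p)$ is a continuous homomorphism from a pro-$p$ group into $\GL_n(\Q_p)$ with infinite image. The corollary is therefore meant — consistently with its use in the paper's immediately following discussion of $\ell$-adic Galois representations of $G_{\Q_p}$ — with the coefficient prime of the target distinct from $p$: reading $\GL_n(\Q_p)$ as $\GL_n(\Q_\ell)$ with $\ell \ne p$ makes $K_1 = 1 + \ell\Mat_n(\Z_\ell)$ pro-$\ell$, and the preceding lemma closes the argument exactly as above. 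Without this coprimality, no proof relying on the stated lemma can go through.
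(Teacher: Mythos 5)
Your diagnosis is exactly right, and the counterexample you give is the cleanest possible: $K_1 = 1 + p\Mat_n(\Z_p)$ is a pro-$p$ group, its inclusion into $\GL_n(\Q_p)$ is continuous, and its image is infinite. So the corollary as printed is false, and no proof can exist. The paper supplies no proof for this corollary, so there is nothing to compare against; but the surrounding context makes the intended statement unambiguous. The preceding lemma is stated for homomorphisms from a pro-$p$ group to a pro-$\ell$ group, and the sentence immediately following the corollary reads ``Grothendieck showed us that the pro-$\ell'$ group $\prod_{\ell' \neq p,\ell} \Z_{\ell'}\subset I$ must have finite image,'' which is precisely an instance of the correct corollary with two distinct residue characteristics. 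The coefficient prime must differ from $p$, and your correction (replace $\Q_p$ by $\Q_\ell$ with $\ell\neq p$) is the reading the authors intended.

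Your proof of the corrected statement is complete and is the standard one: (1) conjugate the compact image into $\GL_n(\Z_\ell)$ via a stable lattice; (2) observe $K_1 = 1 + \ell\Mat_n(\Z_\ell)$ is open normal of finite index with quotient $\GL_n(\F_\ell)$; (3) $\Gamma_0 = \rho^{-1}(K_1)$ is an open (hence closed, hence pro-$p$ and finite-index) subgroup of $\Gamma$ mapping into the pro-$\ell$ group $K_1$, so the lemma forces $\rho(\Gamma_0)=1$, and $\rho$ factors through the finite group $\Gamma/\Gamma_0$. All three steps are correct, and you correctly identify step (3) as the one that genuinely uses the hypothesis $\ell\neq p$, which is where the printed version breaks.
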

We have seen (via the ramification filtration) the $G_{\Q_p}$ has the following structure: 
\begin{center}
     \includegraphics[scale=0.5]{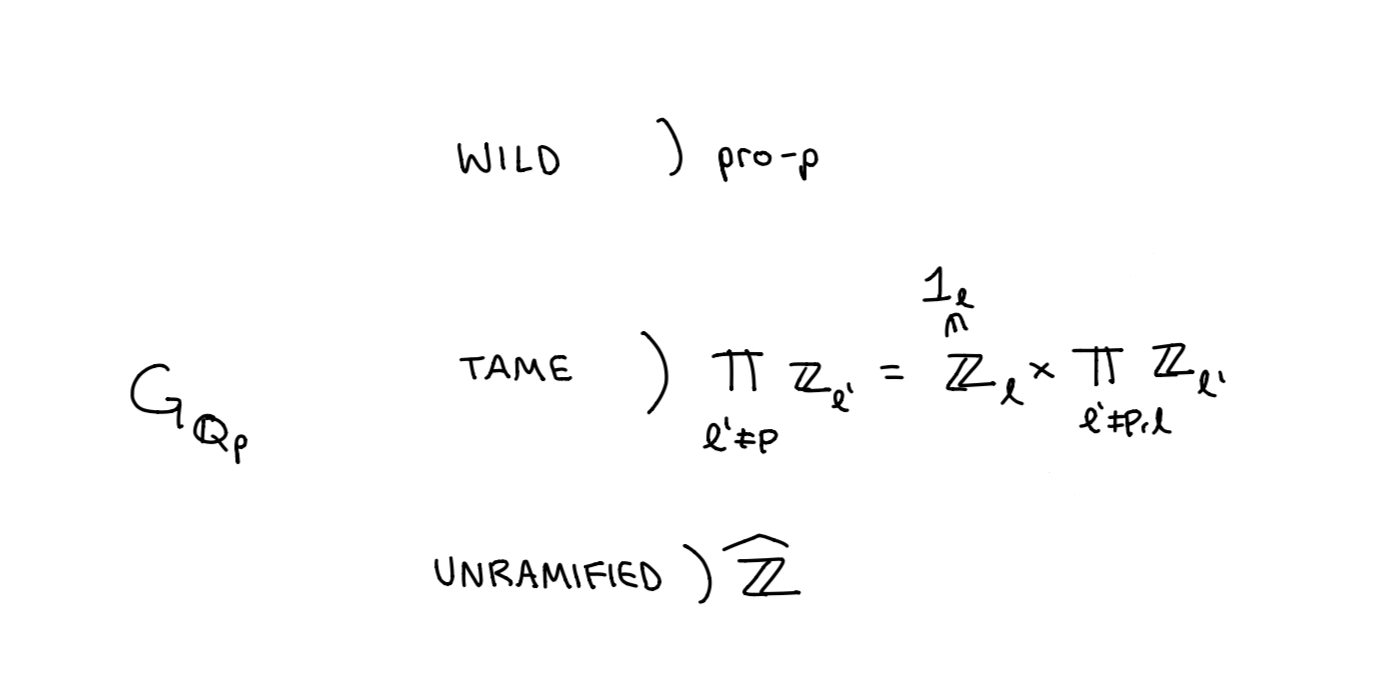}
\end{center}
Grothendieck showed us that the pro-$\ell'$ group $\prod_{\ell' \neq p,\ell} \Z_{\ell'}\subset I$ must have finite image. Moreover, $\rho(1_\ell)\in \GL_n(\Q_\ell)$ is almost unipotent. So what Grothendieck has shown us is that we  can ``take logs to get Weil--Deligne representations''.

\begin{theorem} (Grothendieck)
After identifying $\overline{\Q}_\ell$ with $\C$, one has a canonical injection 
\[
\left\{ \begin{array}{c} \text{cts. reps.} \\ \rho:G_{\Q_p} \rightarrow \GL_n(\Q_\ell) \end{array} \right\} \hookrightarrow \left\{ \begin{array}{c} \text{ Weil--Deligne reps} \\ \text{of $G_{\Q_p}$ over $\C$} \end{array} \right\}. 
\]
\end{theorem}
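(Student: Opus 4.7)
The plan is to invoke Grothendieck's $\ell$-adic monodromy theorem, which constructs the nilpotent operator $N$ of the would-be Weil--Deligne representation as a logarithm of the action of an open subgroup of the inertia group $I \subset G_{\Q_p}$. Concretely, for a continuous $\rho \colon G_{\Q_p} \to \GL(V)$ on a finite-dimensional $\overline{\Q}_\ell$-vector space $V$ (with $\ell \neq p$), I will produce an open subgroup $I' \subset I$ and a unique nilpotent $N \in \End(V)$ with $\rho(\sigma) = \exp(t_\ell(\sigma) N)$ for all $\sigma \in I'$, where $t_\ell \colon I \twoheadrightarrow \Z_\ell(1)$ is the tame $\ell$-character. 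Granting this, I will associate to $\rho$ the Weil--Deligne representation $(\rho', V, N)$ by setting $\rho'(w) := \rho(w)\exp(-t_\ell(w) N)$ on $w \in W_{\Q_p}$, with $t_\ell$ extended to the Weil group by declaring it zero on a chosen lift of Frobenius.

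First I would reduce to an integral setting. Since $G_{\Q_p}$ is compact and $\rho$ is continuous, the image lies in $\GL_n(E)$ for some finite extension $E/\Q_\ell$, and after conjugation in $\GL_n(\mathcal{O}_E)$. The principal congruence subgroup $U_1 := 1 + \pi_E\,\Mat_n(\mathcal{O}_E)$ is pro-$\ell$, with finite quotient $\GL_n(\mathcal{O}_E)/U_1 = \GL_n(k_E)$. Wild inertia $P \subset I$ is pro-$p$; the general principle that continuous maps from pro-$p$ groups to pro-$\ell$ groups are trivial (in the spirit of Lemma~\ref{no small subgroups}) forces $\rho(P)$ to map into a $p$-subgroup of $\GL_n(k_E)$, hence $\rho(P)$ is finite. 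Replacing $I$ by the open normal subgroup $I' := \rho^{-1}(U_1) \cap I$, we get $\rho(I') \subset U_1$, so $\log \circ \rho$ is defined and continuous on $I'$.

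Next I would use the structure of tame inertia. The quotient $I'/(I'\cap P)$ embeds into $I/P \simeq \prod_{\ell'\neq p}\Z_{\ell'}(1)$; since $U_1$ is pro-$\ell$, the map $\log \rho \colon I' \to \Mat_n(\mathcal{O}_E)$ factors through the $\ell$-component, i.e.\ through $t_\ell$. A continuous $\Z_\ell$-module homomorphism $\Z_\ell \to \Mat_n(E)$ is determined by its value at $1$, producing an operator $N \in \End(V)$ with $\log\rho(\sigma) = t_\ell(\sigma) N$ for $\sigma \in I'$. For nilpotency of $N$ I would exploit the Frobenius conjugation: a Frobenius lift $\phi$ acts on tame inertia by multiplication by $q = \#k_{\Q_p}$, and $\rho(\phi)\rho(\sigma)\rho(\phi)^{-1} = \rho(\phi\sigma\phi^{-1})$ translates into $\rho(\phi) N \rho(\phi)^{-1} = q N$. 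Iterating, $\mathrm{ad}(\rho(\phi))^k N = q^k N$, so the eigenvalues of $N$ form a set stable under multiplication by $q$; since $\mathrm{ad}(\rho(\phi))$ acts with eigenvalues of bounded $\ell$-adic absolute value on the finite-dimensional space $\End(V)$, this forces the eigenvalues of $N$ to all be zero, i.e.\ $N$ is nilpotent.

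The same Frobenius computation yields $\rho'(x) N \rho'(x)^{-1} = |x| N$ (up to the fixed normalization), verifying the Weil--Deligne identity; local constancy of $\rho'$ on $I'$, where the exponential factor cancels against $\rho$, gives continuity with respect to the discrete topology on $\GL(V)$, so $\rho'$ genuinely is a smooth Weil-group representation. Injectivity is then automatic: from $(\rho', V, N)$ we recover $\rho(w) = \rho'(w)\exp(t_\ell(w) N)$ on the dense subgroup $W_{\Q_p} \subset G_{\Q_p}$, and then on all of $G_{\Q_p}$ by continuity. The main obstacle is the $\Z_\ell$-linear factorization of $\log \rho$ combined with the nilpotency argument; this is where the mismatch between the $\ell$-adic topology on the target and the profinite topology on the source really bites, and once this monodromy step is established the remaining verifications are formal bookkeeping.
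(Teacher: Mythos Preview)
The paper does not actually prove this theorem: it is stated in Lecture~10 after a two-sentence sketch (``the pro-$\ell'$ part of inertia has finite image, $\rho(1_\ell)$ is almost unipotent, so take logs to get Weil--Deligne representations''), with no further argument. Your proposal supplies exactly the standard Grothendieck monodromy argument that this sketch gestures at, and it is essentially correct: reduce to an $\mathcal{O}_E$-lattice, use the pro-$p$/pro-$\ell$ dichotomy to kill wild inertia and the prime-to-$\ell$ tame part on an open subgroup, factor $\log\rho$ through $t_\ell$ to obtain $N$, and untwist to define $\rho'$.

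One small wobble: your nilpotency paragraph conflates two things. The relation $\rho(\phi)N\rho(\phi)^{-1}=qN$ says $N$ and $qN$ are conjugate, hence have the same eigenvalue multiset; since the eigenvalues of $qN$ are $q$ times those of $N$, this multiset is stable under multiplication by $q$, and as $q\in\Z_{>1}$ is not a root of unity in $\overline{\Q}_\ell$, every eigenvalue must be $0$. The sentence about ``$\mathrm{ad}(\rho(\phi))$ acting with eigenvalues of bounded $\ell$-adic absolute value'' is not the mechanism here and should be dropped; the argument is purely about the spectrum of $N$, not of $\mathrm{ad}(\rho(\phi))$. With that cleaned up, your write-up is a faithful expansion of what the paper only hints at.
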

Notice that the set on the right is independent of $\ell$! 

\begin{remark}
We are being a bit lazy, but one can identify the image of this injection.
\end{remark}

\subsection{Local Langlands correspondence for split groups}

Let $G$ be a split reductive algebraic group over $\Z$ determined by the root datum $(X^\ast \supset R, X_\ast \supset R^\vee)$, and $\widehat{G}$ its dual group, determined by the opposite root datum $(X_\ast \supset R^\vee, X^\ast \supset R)$. Fix a local field $K$ and set $q=|\OO_K/\mf{m}_K|$. 

A {\bf Weil--Deligne representation in $\widehat{G}$} is a pair $(\rho, e)$, where 
\begin{itemize}
    \item $\rho:W_K \rightarrow \widehat{G}(\C)$ is a continuous group homomorphism, and 
    \item $e \in \Lie \widehat{G}(\C)$ is a nilpotent element
\end{itemize}
such that $\rho(g)e \rho(g)^{-1} = |g|e$ for all $g \in W_K$. A Weil--Deligne representation in $\widehat{G}$ is {\bf $F$-semisimple} if $\rho$ is semisimple. 
\begin{example}
A Weil--Deligne representation in $\GL_n$ is just an $n$-dimensional Weil--Deligne representation, as in Section \ref{Weil-Deligne}.
\end{example}
Given a Weil--Deligne representation $(\rho, e)$, consider
\[
Z_{\widehat{G}}(\rho, e) = \{ g \in \widehat{G} \mid g \cdot (\rho, e) = (\rho, e) \}. 
\]

\begin{theorem}
\label{llc}
({\bf Local Langlands correspondence}) There is a canonical correspondence 
\[
\left\{ \begin{array}{c} \text{$F$-semisimple} \\ \text{WD reps in $\widehat{G}$} \end{array} \right\}_{/\text{$\widehat{G}$-conj}} \xleftrightarrow{1:\text{finite}} \left\{ \begin{array}{c} \text{irred smooth} \\ \text{admissible reps} \\ \text{of $G(K)$} \end{array} \right\}_{/\simeq}
\]
Fibres of this map should be indexed by irreducible representations of $Z_{\widehat{G}}(\rho, e)/Z_{\widehat{G}}(\rho, e)^\circ$ and are called ``L-packets''. 
\end{theorem}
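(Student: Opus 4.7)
The plan is to stratify both sides of the correspondence by depth of ramification and match the strata using successively more sophisticated tools, recycling as much of the earlier lectures as possible. The three strata are: (i) unramified parameters, handled by the Satake isomorphism; (ii) tamely ramified parameters, handled by the Deligne–Langlands–Kazhdan–Lusztig classification via the affine/Iwahori–Hecke algebra; and (iii) parameters of positive depth, which require global methods for $\GL_n$ and the Fargues–Scholze construction plus endoscopic refinements in general.

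First I would dispose of the unramified case. A Weil–Deligne parameter $(\rho,e)$ with $\rho$ trivial on $I_K$ and $e=0$ is, by the no-small-subgroups lemma and the fact that $W_K/I_K\cong\Z$, the same datum as a semisimple conjugacy class $s=\rho(\mathrm{Frob})$ in $\widehat G(\C)$. Via Chevalley restriction, these are characters of $R(\widehat G)_\C\cong R(T_{\widehat G})^{W}_\C$, and the Satake isomorphism of Theorem~\ref{satake} identifies this with the set of characters of $\mathcal{H}^{sph}=\mathcal{H}(G(\OO_K),G(K))$, which by the Hecke-algebra formalism of Lecture~\ref{lecture 9} indexes exactly the irreducible smooth admissible $G(K)$-representations admitting a nonzero $G(\OO_K)$-fixed vector. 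Here $Z_{\widehat G}(\rho,e)$ is a Levi and $\pi_0$ is trivial for generic $s$, so the $L$-packets are singletons and Theorem~\ref{llc} holds.

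Next I would treat the tamely ramified case, where $\rho$ is trivial on the wild inertia $P_K$ and $e$ may be nonzero. The key geometric input is the Borel–Bernstein–Casselman theorem: an irreducible smooth $G(K)$-representation has a nonzero $I$-fixed vector (where $I\subset G(\OO_K)$ is an Iwahori) if and only if its parameter is tame. Such representations correspond to simple modules over the Iwahori–Hecke algebra $\mathcal{H}^I$, and the Kazhdan–Lusztig / Ginzburg classification (to be proved via the coherent realisation of $\mathcal{H}^I\simeq K^{\widehat G\times\C^\times}(Z)$ with $Z$ the Steinberg variety, as later lectures will develop) parametrises these by $\widehat G$-conjugacy classes of triples $(s,e,\rho)$ with $\mathrm{Ad}(s)e=qe$ and $\rho\in\mathrm{Irr}\,\pi_0(Z_{\widehat G}(s,e))$. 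Since tame WD parameters give precisely such pairs $(s,e)$ after evaluation at a Frobenius lift, this matches Theorem~\ref{llc} and produces the predicted $L$-packets. (Bezrukavnikov's equivalence, the aim of the notes, is the categorification of this step.)

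Finally, the general case of positive depth. For $G=\GL_n$, I would invoke the Harris–Taylor realisation of the correspondence in the $\ell$-adic cohomology of the Lubin–Tate tower / certain PEL Shimura varieties, combined with Henniart's numerical local Langlands to pin down the bijection uniquely; compatibility with $L$- and $\varepsilon$-factors and with the previous two strata is then checked component-by-component along the Bernstein decomposition using Bushnell–Kutzko types. For general split $G$, I would appeal to the Fargues–Scholze construction on $\mathrm{Bun}_G$ over the Fargues–Fontaine curve to produce a semisimplified parametrisation, and refine it to $L$-packets using endoscopic character identities, stable transfer, and Ngô's Fundamental Lemma (with normalisations à la Kaletha). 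The main obstacle, and the reason this theorem is a cumulative achievement of decades, lies exactly here: the positive-depth case needs global input, the packet structure is canonical only after a choice of Whittaker datum for non-$\GL_n$ groups, and establishing the matching of $\varepsilon$-factors and functoriality with the endoscopic picture requires the full Langlands–Kottwitz machinery rather than anything purely local and elementary.
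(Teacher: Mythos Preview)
The paper does not prove this theorem at all. It is stated in Lecture~10 as a known deep result (for $\GL_n$) and a guiding conjecture (for general split $G$; note the hedging ``should be'' in the statement of the $L$-packet structure). The surrounding text immediately moves on to isolate the tamely-ramified-with-unipotent-monodromy slice as the Deligne--Langlands conjecture, which is then the subject of the second half of the course. These are informal lecture notes whose stated aim is to orient the reader, not to prove the LLC.

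Your proposal is therefore not comparable to a proof in the paper, because there is none. What you have written is a correct high-level survey of the architecture of the known results, and your stratification by depth matches how the paper itself uses the LLC pedagogically (Satake for the unramified layer in Lecture~9, Kazhdan--Lusztig for the Iwahori layer in Lectures~12--15). But you should not present this as a proof: in the positive-depth paragraph you are invoking Harris--Taylor, Fargues--Scholze, the Fundamental Lemma, and Kaletha's packet refinements as black boxes, each of which is hundreds of pages, and for general split $G$ the full statement (with the asserted $L$-packet parametrisation) is not a theorem in the literature. A more honest framing would be to say that the theorem is stated without proof in the paper, that the unramified and tame-with-unipotent-monodromy cases are established by the Satake and Kazhdan--Lusztig isomorphisms respectively (and these are the cases the course actually treats), and that the general case for $\GL_n$ is Harris--Taylor/Henniart while for other groups it remains partly conjectural.
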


\subsection{The Deligne--Langlands conjecture}
Last week we examined (for $G=\GL_n$) a simple special case of the local Langlands correspondence, the case of unramified WD representations, and found that it followed from the Satake isomorphism. Another slightly less simple special case of the LLC is given by {\bf tamely ramified WD representations with unipotent monodromy} (TRUM). By restricting the correspondence in Theorem \ref{llc} to TRUM, we hope to obtain a correspondence:
\[
\left\{ \begin{array}{c} \text{TRUM; i.e. $(\rho, e)$} \\ \text{s.t. $\rho$ factors } \\ W_K \twoheadrightarrow \Z \text{, $e$ arbitrary} \end{array} \right\} \xleftrightarrow{1:\text{finite}} \left\{ \begin{array}{c} \text{reps with an} \\ \text{ Iwahori fixed} \\ \text{vector} \end{array} \right\}   
\]
By analogous arguments to the ones we made last week for $\GL_n$, the set of $\rho$ which factor through $W_K \twoheadrightarrow \Z$ is in bijection with the set of conjugacy classes of semisimple elements in $\widehat{G}$. Hence the left hand side of the correspondence above is in bijection with the set
\[
\{ (s, e) \mid s \in \widehat{G} \text{ semisimple }, e\in \Lie{\widehat{G}} \text{ nilpotent s.t. }ses^{-1}=qe \}_{/\text{$\widehat{G}$-conj}}. 
\]
The right hand side of the corrspondence above is in bijection with the set
\[
\{\text{irred reps of the ``Iwahori-Hecke algebra'' $\mc{H}_{\text{aff}}:=\mc{H}(I,G(K))$}\}.
\]
This motivates the following conjecture of Deligne--Langlands. 
\vspace{5mm}

\noindent
{\bf The Deligne--Langlands conjecture:} As in the set-up above, let $q=|\OO_K/\mf{m}_K|$ be the residue characteristic of the local field $K$. There is a bijection: 
\[
\left\{ (s, e, \chi)\hspace{2mm} \middle|\ \begin{array}{c} s \in \widehat{G}(\C) \text{ semisimple}, \\
e \in \Lie{\widehat{G}} \text{ nilpotent, and } \\
\chi \text{ irred rep of }\pi_0(Z_{\widehat{G}}(\rho, e)) \\ \text{ such that }ses^{-1}=qe \end{array} \right\}_{/ \widehat{G} \text{-conj}} \xleftrightarrow{\hspace{2mm}\simeq\hspace{2mm}} \{\text{irred }\mc{H}_{\text{aff}}\text{-modules}\}_{/\simeq}
\]
\begin{remark}
The affine Hecke algebra $\mc{H}(I,G(K))$ has a presentation in which $q$ becomes a variable. The above conjecture can either be understood with fixed $q=\#|\OO_K/\mf{m}_K|$ or with $q$ as a variable, in which case $q$ is also a variable on the left hand side.
\end{remark}
Recall that the unramified LLC followed from the Satake isomorphism:
\[
{\mc{H}_{\text{sph}}=\mc{H}(G(\OO_K), G(K)) \atop \text{ ``constructible'' }} {\xleftrightarrow{\hspace{2mm}\simeq\hspace{2mm}} R(\widehat{G})=\mc{O}\left({\text{semisimple conj.} \atop \text{classes in $\widehat{G}$}}  \right)  \xleftrightarrow{\hspace{2mm}\simeq\hspace{2mm}} \atop \hspace{3mm} } {K^0(\text{pt}/\widehat{G})=K^{\widehat{G}}(\text{pt}) \atop \text{ ``coherent'' }}
\]
Similarly, the TRUM case of the LLC (which reduces to the Deligne--Langlands conjecture) follows from the Kazhdan--Lusztig isomorphism:
\[
{\mc{H}_{\text{aff}} \atop \text{ ``constructible'' }} { \xleftrightarrow{\hspace{2mm}\simeq \hspace{2mm}} \atop } { K^{\widehat{G} \times \C^\times}(St) \atop \text{ ``coherent'' }}
\]
Indeed, if $\pi:\widetilde{\mc{N}}=T^\ast\mc{B} \rightarrow \mc{N}$ is the Springer resolution, $\mc{B}_e=\pi^{-1}(e)$ is the Springer fibre of a nilpotent element $e \in \mc{N}$, and $St$ is the Steinberg variety, the Kazhdan--Lusztig isomorphism (which is not easy to establish!) implies that there is an action of the affine Hecke algebra $\mc{H}_{\text{aff}}$ on $K^{Z_{\widehat{G}\times \C^\times}(e)}(\mc{B}_e).$
Here we can see that 
\[
Z_{\widehat{G}\times \C^\times}(e) = \{(g, c) \mid c \cdot geg^{-1}=e\} =\{ (g, c) \mid geg^{-1}=c^{-1}\cdot e\} 
\]
looks very close to the parameters in the Deligne--Langlands conjecture. This action shows us that the $K$-theory of Springer fibres provides all simple $\mc{H}_{\text{aff}}$-modules, thus proving the Deligne--Langlands conjecture. 

\subsection{Geometric Satake equivalence}

There is a geometric upgrade of the Satake isomorphism which has proven to be a major tool in geometric representation theory. Set $K=k((t)),$ so $\OO_K=k[[t]]$, where $k=\C$ or $\F_q$. Then 
\[
\mc{H}(G(\OO_K),G(K)) = {G(\OO_K)\text{-invariant functions on the} \atop \text{``affine Grassmanian'' $\mc{G}r_G:=G(K)/G(\OO_K)$} }.
\]
The {\bf geometric Satake equivalence} is the equivalence of categories:
\[
{ (\text{Perv}_{G(\OO_K)}(\mc{G}r, \C), \ast) \atop \text{ ``constructible'' } } { \xrightarrow{\simeq} \atop } { (\Rep{\widehat{G}_\C}, \otimes) \atop \text{ ``coherent'' } }  
\]
This equivalence was key in recent work by V. Laffourg\'{e}s giving an ``automorphic to Galois'' correspondence for global function fields. 

\subsection{Bezrukavnikov's equivalence} 

There is also a geometric upgrade of the Kazhdan--Lusztig isomorphism. With $K=k((t))$ as above, the affine Hecke algebra is
\[
\mc{H}_{\text{aff}}=\text{ Iwahori-invariant functions on $G(K)/I$}.
\]
Here $G(K)/I$ is the set of $k$-points of the ``affine flag variety'' $\mc{F}l_G$. Roughly, Bezrukavnikov's equivalence is an equivalence of categories
\[
{(D^b_{I \times I}(\mc{F}l_G), \ast) \atop \text{ ``constructible'' } } { \xrightarrow{\simeq} \atop }{ (D^b\text{Coh}^{G \times \C^\times}(St), \ast) \atop \text{ ``coherent''} }. 
\]
\begin{remark}
This is a bit of a lie! It would take several more lecture to precisely describe the categories on each side of this equivalence. 
\end{remark}
This equivalence has many applications in geometric representation theory. For example, a mod $p$ version of this equivalence would imply everything that we know about modular representations of algebraic groups!  

\pagebreak
\section{Lecture 11: Review of the first semester}
\label{lecture 11}

\subsection{The local Langlands correspondence}

Recall our setup from last semester. Let $K$ be a local field (i.e. $K$ is a finite extension of $\Q_p$ or $K=\F_q((t))$) with ring of integers $\OO$ and residue field $k$:
\[
K \supset \OO \twoheadrightarrow k
\]
Nothing is lost by just thinking in terms of the example 
\[
\Q_p \supset \Z_p \twoheadrightarrow \F_p. 
\]
Last semester, we worked up to stating the local Langlands correspondence. 

\vspace{5mm}

\noindent
{\bf Local Langlands correspondence for $\GL_n$}: There exists a canonical bijection 
\[
 \left\{ \begin{array}{c} \text{irred. smooth admiss.} \\ \text{ reps of $\GL_n(K)$ on}\\ \C \text{-vector spaces} \end{array} \right\}_{/\simeq} \xleftrightarrow{1:1} \left\{ \begin{array}{c} \text{ $F$-semisimple}\\ \text{$n$-dimensional} \\
\text{Weil-Deligne reps} \end{array} \right\}_ {/\simeq}
\]
It has been a while sine the last lecture, so let us remind ourselves what all of these words mean. The representations on the left hand side of this correspondence are {\em irreducible} representations which are usually infinite-dimensional. The adjective {\bf smooth} means that every vector has an open stabilizer, and the adjective {\bf admissible} means that for any open subgroup $U \subset \GL_n(K)$, $V^U$ is finite-dimensional. We consider the set of such representations up to equivalence. 

The objects on the right hand side are $n$-dimensional representations (where $n$ is the same $n$ appearing in $\GL_n$ on the left) of the Weil group attached to the field $K$, along with some extra data. Recall that the {\bf Weil group} is a subgroup of the absolute Galois group $\Gal(\overline{K}/K)$ defined by the fact that it fits into the following diagram. 
\[
\begin{tikzcd}
I_{\overline{K}/K} \arrow[r, hookrightarrow] \arrow[d, equal] & W_K:=\varphi^{-1}(\Z) \arrow[r, twoheadrightarrow] \arrow[d, hookrightarrow] & \Z=\langle \text{Frob} \rangle  \arrow[d, hookrightarrow]\\
I_{\overline{K}/K} \arrow[r, hookrightarrow]  & \Gal(\overline{K}/K) \arrow[r, twoheadrightarrow, "\varphi"] & \Gal(\overline{k}/k)=\widehat{\Z} 
\end{tikzcd}
\]
Here $I_{\overline{K}/K}$ is the {\bf inertia subgroup}. A {\bf Weil-Deligne representation} is a triple $(V, \rho, N)$, where 
\begin{enumerate}
    \item $V$ is an $n$-dimensional $\C$-vector space, 
    \item $\rho:W_K \rightarrow \GL(V)$ is a continuous representation, and 
    \item $N$ is a nilpotent endomorphism of $V$ such that 
    \[
    \rho(x)N\rho(x)^{-1} = |x|N
    \]
    for all $x \in W_K$.
\end{enumerate}
Here $|\cdot|$ is the canonical norm on $W_k$, which is uniquely determined by the property that $|\text{Frob}|=|k|$. (See Section \ref{canonical norms} for a refresher on why this exists.) A Weil-Deligne representation is {\bf $F$-semisimple} if any lift of Frob acts semisimply. 

\begin{remark}
For the experts: Weil-Deligne representations are distilled out of continuous representations $W_K \rightarrow \GL_n(\overline{\Q}_\ell)$ via ``log of monodromy''. Thus one can think of the right hand side as secretly being genuine representations of a group. It is phrased in this way to remove the auxilary choice of a prime number $\ell$.
\end{remark}

In general, if we replace $\GL_n(K)$ with any split reductive group $G(K)$ (e.g. $\SL_n$, $Sp_{2n}$, etc.), then the LLC changes as follows. 

\vspace{5mm}
\noindent
{\bf Local Langlands correspondence for split reductive groups}: There exists a canonical finite-to-one map 
\[
 \left\{ \begin{array}{c} \text{irred. smooth admiss.} \\ \text{ reps of $G(K)$ on}\\ \C \text{-vector spaces} \end{array} \right\}_{/\simeq} \xrightarrow{\text{finite}:1} \left\{ \begin{array}{c} \text{ $F$-semisimple}\\ 
\text{Weil-Deligne reps}\\ \text{of }W_K \text{ in }G^\vee(\C) \end{array} \right\}_ {/\simeq}
\]
In this setting, a {\bf Weil-Deligne representation of $W_K$ in $G^\vee(\C)$} is a pair $(\rho, N)$, where
\begin{enumerate}
    \item $\rho:W_K \rightarrow G^\vee(\C)$ is a continuous group homomorphism of the Weil group into the complex Langlands dual group of $G$, and 
    \item $N\in \Lie G^\vee(\C)$  such that 
    \[
    \rho(x)N\rho(x)^{-1} = |x|N
    \]
    for all $x \in W_K$.
\end{enumerate}
Note that condition $2$ forces $N \in G^\vee(\C)$ to be a nilpotent element. 

\subsection{The unramified story}
Last semester we ended the course by unpacking the simplest piece of the LLC, the case of {\em unramified representations.} Let us remind ourselves how this story went. By restricting each side of the correspondence above, we obtain a bijection 
\[
\begin{tikzcd}
 \left\{ \begin{array}{c}
 \text{irred. smooth admissible}\\
 \text{unramified reps of $G(K)$}\\
 \text{(i.e. reps of $G(K)$ that admit}\\ \text{a non-zero $G(\OO)$-fixed vector)}
 \end{array} \right\}_{/\simeq}
  \arrow[r, leftrightarrow, "1:1"]
 & \left\{ \begin{array}{c} \text{unramified}\\ 
\text{Weil-Deligne reps;}\\
\text{(i.e. reps which factor }\\ W_K \twoheadrightarrow \Z \rightarrow G^\vee(\C) \\
\text{ with $N=0$)}
\end{array} \right\}_{/\simeq}
 \end{tikzcd}
\]
By Hecke algebra theory, representations of $G(K)$ with a $G(\OO)$-fixed vector are in bijection with irreducible modules for the {\em spherical Hecke algebra}, 
\[
{\mc{H}_{\text{sph}}=\mc{H}(G(K), G(\OO)):=(\text{Fun}_{G(\OO)\times G(\OO)}(G(K), \C), \ast).}
\]
The functions in this definition are continuous with compact support. On the other hand, unramified Weil-Deligne representations are in bijection with the set of conjugacy classes of semisimple elements in $G^\vee(\C)$. Recall the Satake isomorphism. 

\begin{theorem}
\label{satake iso}
(Satake isomorphism) There exists a canonical isomorphism 
\[
\mc{H}_{\text{sph}} \xrightarrow{\sim} [\Rep{G}^\vee]\otimes_{\Z} \C. 
\]
\end{theorem}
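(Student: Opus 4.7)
The plan is to construct an explicit map—the \emph{Satake transform}—and then identify its target with the representation ring of the dual group. First I would introduce two natural bases. The Cartan decomposition
$$G(K) = \bigsqcup_{\lambda \in X_*(T)^+} G(\OO)\,\pi^\lambda\, G(\OO),$$
(where $T \subset G$ is a maximal split torus, $X_*(T)^+$ is the set of dominant coweights, and $\pi \in \OO$ is a uniformizer) exhibits $\mc{H}_{\text{sph}}$ as a $\C$-vector space with basis $\{c_\lambda := \mathbf{1}_{G(\OO)\pi^\lambda G(\OO)}\}_{\lambda \in X_*(T)^+}$. Dually, the dominant coweights of $G$ are by definition the dominant weights of $G^\vee$, so $[\Rep G^\vee]$ has $\Z$-basis $\{[V(\lambda)]\}_{\lambda \in X_*(T)^+}$ of classes of irreducible representations. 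Both sides are thus free of the same rank over $\C$; the content of the theorem is that the algebra structures match.

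Next, fix a Borel $B = TN \subset G$. I would define the Satake transform
$$S \colon \mc{H}_{\text{sph}}(G) \longrightarrow \mc{H}(T(K), T(\OO)) = \C[X_*(T)], \qquad S(f)(t) = \delta_B(t)^{1/2}\int_{N(K)} f(tn)\, dn,$$
where $\delta_B$ is the modular character of $B$. A routine manipulation of convolution integrals using the Iwasawa decomposition $G(K) = N(K)T(K)G(\OO)$ shows that $S$ is an algebra homomorphism. The factor $\delta_B^{1/2}$ is not cosmetic: it is precisely what forces the image into the Weyl invariants $\C[X_*(T)]^W$ and makes $S$ independent of the choice of $B$.

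The main work, and the main obstacle, is twofold. Proving that $S$ lands in $\C[X_*(T)]^W$ amounts to computing the orbital integrals $S(c_\lambda)(\pi^\mu)$, which requires parameterizing the intersections $N(K)\pi^\mu \cap G(\OO)\pi^\lambda G(\OO)$ via the Iwahori factorization of $G(\OO)$; the $W$-symmetry then comes from replacing $B$ by $wBw^{-1}$ and using $W$-invariance of the relevant Haar measures. To conclude that $S$ is a bijection onto $\C[X_*(T)]^W$, I would prove a triangularity statement: writing $S(c_\lambda) = \sum_\mu a_{\lambda,\mu}\, e^\mu$, one checks that $a_{\lambda,\mu} = 0$ unless $\mu \leq \lambda$ in the dominance order and $a_{\lambda,\lambda} = 1$. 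The $W$-orbit sums $m_\lambda := \sum_{\nu \in W\lambda} e^\nu$ form a $\C$-basis of $\C[X_*(T)]^W$ and are related to the Weyl characters $\ch V(\lambda)$ by the same triangular pattern, so the composition
$$\mc{H}_{\text{sph}} \xrightarrow{S} \C[X_*(T)]^W \xleftarrow{\ch} [\Rep G^\vee]\otimes_{\Z}\C$$
is a product of triangular change-of-basis maps with unit diagonal, hence invertible. The right-hand arrow being an isomorphism is the Chevalley restriction theorem applied to the complex dual group $G^\vee$, using $X_*(T) = X^*(T^\vee)$ and $W(G,T) = W(G^\vee, T^\vee)$.
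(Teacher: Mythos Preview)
The paper does not actually prove this theorem: both in Lecture~9 (Theorem~\ref{satake}) and again here in Lecture~11 it is \emph{stated} as a fundamental input and immediately used to deduce the unramified local Langlands correspondence. The only piece the paper supplies is the identification $[\Rep G^\vee]\otimes_{\Z}\C \simeq \C[X^*(T^\vee)]^W$ via highest weight theory and Chevalley restriction (the paragraph following the statement), which is precisely the right-hand arrow in your composition at the end. So there is no ``paper's own proof'' to compare against.

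Your proposal is the standard classical argument, essentially Satake's original proof: define the Satake transform as a normalised constant-term map, use Iwasawa to see it is an algebra map, verify Weyl-invariance of the image, and finish by a triangularity argument against the orbit-sum basis. This is correct in outline. One small clarification: it is not quite that $\delta_B^{1/2}$ is what \emph{forces} the image into the Weyl invariants --- the un-normalised transform also has a $W$-invariant image after an appropriate twist --- but rather that the normalisation is exactly what makes the image equal to $\C[X_*(T)]^W$ on the nose and renders the map independent of the choice of $B$. The triangularity step (that $a_{\lambda,\mu}=0$ unless $\mu\leq\lambda$, with $a_{\lambda,\lambda}=1$) is where the genuine computation lives, and you have correctly identified it as the main obstacle; it comes down to controlling which $T(\OO)$-cosets in $T(K)$ meet $N(K)\cap G(\OO)\pi^\lambda G(\OO)$, which one does via the valuation of matrix entries (or, more conceptually, via the geometry of the affine Grassmannian, though that is anachronistic for this argument).
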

This theorem will be a big feature of the course this semester. Let $T^\vee \subset G^\vee$ be a maximal torus, and $W$ the Weyl group of $G^\vee$. By highest weight theory we have 

\[
\begin{tikzcd}
\left[ \Rep{G}^\vee \right] \arrow[r, hookrightarrow] \arrow[rd, "\sim"] & \left[ \Rep{T}^\vee \right] = \Z \left[ X^*(T^\vee)\right] \\
& \Z\left[ X^*(T^\vee)\right]^W \arrow[u, hookrightarrow]
\end{tikzcd}
\]
Hence 
\[
[\Rep{G}^\vee]\otimes_\Z \C = \C[X^*(T^\vee)]^W = \mc{O}(T^\vee / W) = \mc{O}(G^\vee_{ss} / \text{conj}). 
\]
The spherical Hecke algebra is commutative. Therefore, 
\[
\{ \text{irred. } \mc{H}_{\text{sph}}\text{-modules}\} \leftrightarrow \{\chi:[\Rep{G}^\vee]\otimes_\Z \C \rightarrow \C \} = \{\text{semisimple conjugacy classes in }G^\vee(\C)\}
\]
The first bijection follows from the Satake isomorphism and the commutativity of $\mc{H}_{\text{sph}}$, and the second equality is the nullstellensatz. So the Satake isomorphism implies unramified LLC!

\begin{remark}
The goal of the next two Informal Friday Seminars (September 6 and 13) will be to explain the {\bf geometric Satake equivalence}, which is a categorification of Theorem \ref{satake iso}:
\[
(\Perv_{G(\OO)\times G(\OO)}(G(K), \C), \ast) \xleftrightarrow{\sim} (\Rep{G}^\vee_\C, \otimes)
\]
\end{remark}

\subsection{The tamely ramified with unipotent monodromy (TRUM) story}

This semester we will focus on the next simplest piece of the LLC, the case of {\em tamely ramified representations with unipotent monodromy (TRUM)}. In this setting, the LLC gives us a finite-to-one map:
\[
 \left\{ \begin{array}{c} \text{TRUM reps of $G(K)$}\\ \text{(i.e. those which admit} \\ \text{a non-zero $Iw$-fixed vector)} \end{array} \right\}_{/\simeq} \xrightarrow{\text{finite}:1} \left\{ \begin{array}{c} \text{ TRUM Weil-Deligne reps}\\ \text{(i.e. reps which factor }\\ W_K \twoheadrightarrow \Z \rightarrow G^\vee(\C) \\
\text{ with $N$ arbitrary)} \end{array} \right\}_ {/\simeq}
\]
Here $Iw\subset G(K)$ is the {\em Iwahori subgroup}, which sits in the group $G(K)$ in the following way:
\[
\begin{tikzcd}
G(\OO) \arrow[d] \arrow[r, hookleftarrow] & Iw  \arrow[d]\\
G(k) \arrow[r, hookleftarrow] & B \text{ (Borel)}
\end{tikzcd}
\]
By Hecke algebra theory, TRUM representations of $G(K)$ are in bijection with irreducible representations of the {\em Iwahori-Matsumoto Hecke algebra} $\mc{H}_{\text{aff}}=\mc{H}(G(K), I_w)$. On the other hand, TRUM Weil-Deligne representations are parameterized by the set 
\[
\{(s,N) \in G^\vee(\C) \times \mc{N} \mid sNs^{-1}=qN \}/_{\text{conj}}.
\]
Here $s \in G^\vee(\C)$ is the semisimple image of Frobenius, $\mc{N}\subset \Lie{G}^\vee$ is the nilpotent cone, and $q=|k|$. So the LLC predicts a parameterisation of irreducible $\mc{H}_{\text{aff}}$-modules. This prediction is the {\em Deligne--Langland conjecture}, and it served as an important early test case of the Langland's philosophy. 

\vspace{5mm}
\noindent
{\bf Goals for the next few weeks:}
\begin{enumerate}
    \item Discuss the Iwahori-Matsumoto Hecke algebra is some detail. 
    \item Discuss Kazhdan--Lusztig's realisation of the affine Hecke algebra $H$ via equivariant $K$-theory:
    \[
    H \xrightarrow{\sim} K^{G^\vee \times \C^\times}(\mathrm{Steinberg})
    \]
    \item Deduce the Deligne--Langlands conjecture\footnote{Actually, we will not end up deducing the Deligne-Langlands conjecture. We will simply promise the reader that the Deligne-Langlands conjecture is not a long walk from where we get to.}. 
\end{enumerate}
Then we will pass to categorifications! 

\subsection{Affine Weyl groups and affine Hecke algebras}
Let $(X\supset R, X^\vee \supset R^\vee)$ be a root datum. 
\begin{example}
\begin{enumerate}
    \item $\SL_2$: $X=\Z \supset R = \{\pm 2\}$, $X^\vee=\Z \supset R^\vee = \{\pm 1\}$
    \item $\PGL_2$: $X=\Z \supset R=\{\pm 1\}$, $X^\vee =\Z \supset R^\vee = \{\pm 2\}$
\end{enumerate}
We see from this example that $\SL_2$ and $\PGL_2$ are interchanged by swapping roots and coroots, so they are Langlands dual groups. 
\end{example}
Let $W_f$ be the finite Weyl group associated to this root datum. Then $W_f$ acts on both $X$ and $X^\vee$. Assume that our root datum $(X\supset R, X^\vee \supset R^\vee)$ is {\em adjoint}; i.e., $X=\Z R$. (This is the ``most complicated case''.) 
\begin{definition}
\label{extended affine Hecke algebra}
The {\bf extended affine Weyl group}\footnote{This definition is the definition according to Iwahori-Matsumoto and Bourbaki, but we warn the reader that it is {\em not} consistent across all sources! For example, this is not the convention employed in Chriss-Ginzburg.} is
\[
W_{\text{ext}}=\Z X^\vee \rtimes W_f.
\]
\end{definition}
The group $W_{\text{ext}}$ acts on $X_\R^\vee:= X^\vee \otimes_\Z \R$ by ``affine transformations;'' that is, $w \in W_f$ acts as usual, $w (\lambda) = w(\lambda)$, and for $\gamma \in X^\vee,$ $t_\gamma \in \Z X^\vee$ acts by 
\[
t_\gamma(\lambda)= \lambda + \gamma.
\]
To understand $W_{\text{ext}}$, we first consider the {\em affine Weyl group} $W=\Z R^\vee \rtimes W_f \subset W_{\text{ext}}$. For $\alpha \in R$, $m \in \Z$, $\lambda \in X_\R^\vee$, define  
\[
s_{\alpha, m}(\lambda):=\lambda - \langle \lambda, \alpha \rangle \alpha^\vee + m \alpha^\vee. 
\]
Clearly, 
\[
s_{\alpha, m} = t_{m\alpha^\vee} \circ s_\alpha,
\]
so $s_{\alpha, m} \in W$, and $t_{m \alpha^\vee} \in \langle s_{\alpha, m} \mid \alpha \in R, m \in \Z \rangle$. We conclude that 
\[
W=\langle s_{\alpha, m} \mid \alpha \in R, m \in \Z \rangle
\]
is an affine reflection group generated by reflections $s_{\alpha, m}$ through the hyperplanes 
\[
H_{\alpha, m}=\{\lambda \in X_\R^\vee \mid \langle \lambda, \alpha \rangle = m \}.
\]
We call the set $\{ H_{\alpha,m} \}$ the set of \emph{reflecting hyperplanes}. Denote by $\mc{A}$ the corresponding set of {\em alcoves}; that is, the closures of connected components of 
\[
X^\vee_\R \backslash \bigcup_{\alpha \in R \atop m \in \Z} H_{\alpha, m}.
\]
Fix a set of positive roots $R_+ \subset R$, and let 
\[
A_0=\{ \lambda \in X_\R^\vee \mid 0 \leq \langle \lambda, \alpha \rangle \leq 1 \text{ for all }\alpha \in R_+ \}\subset \mc{A}
\]
be the {\em fundamental alcove}. The general (very beautiful) theory of reflection groups gives:
\begin{enumerate}
    \item $W$ is a Coxeter group with Coxeter generators $S:=\{\text{reflections in the walls of $A_0$}\}$.
    \item The length function may be described by 
    \[
    \ell(w)=\# \{ \text{reflecting hyperplanes separating $A_0^{int}$ and $wA_0^{int}$}\}.  
    \]
    \item $A_0$ is a fundamental domain for the $W$-action on $X_\R^\vee$.
\end{enumerate}
So we have an identification 
\begin{align*}
    W &\rightarrow \mc{A} \\
    w &\mapsto wA_0
\end{align*}

\begin{example}
$C_2=B_2$ 
\begin{center}
     \includegraphics[scale=0.6]{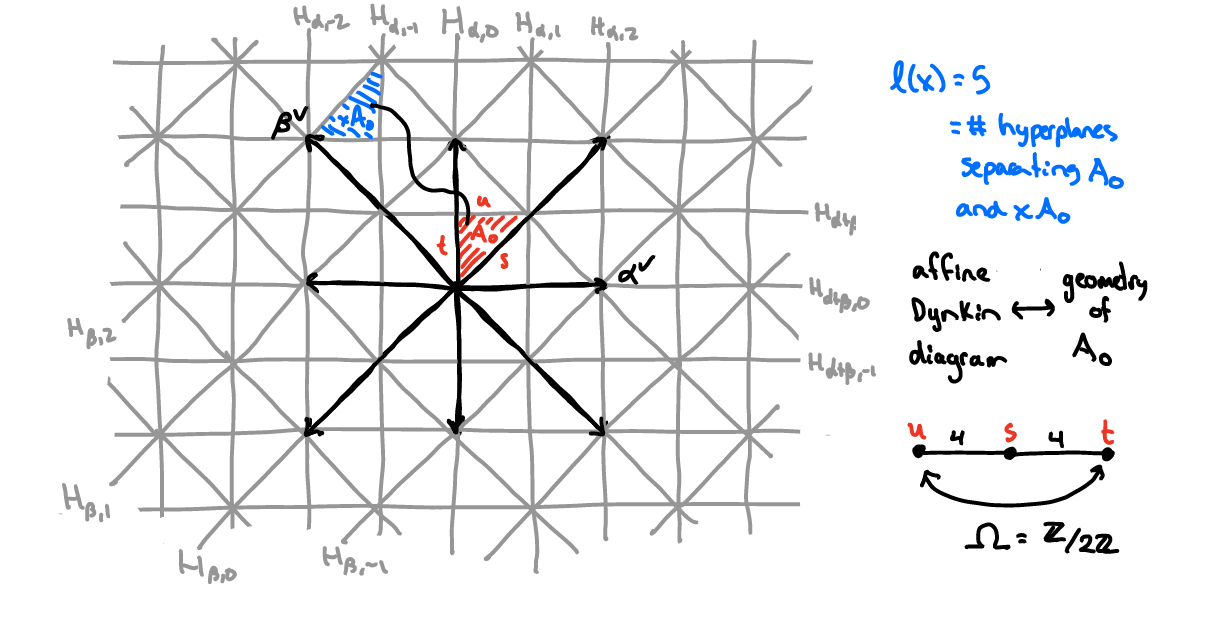}
\end{center}
\end{example}
Now we move on to $W_{\text{ext}}$. There is an action of $W_{\text{ext}}$ on $\mc{A}$ because 
\[
t_\gamma \cdot H_{\alpha, m} = H_{\alpha, m + \langle \gamma, \alpha \rangle}.
\]
Define 
\begin{align*}
    \ell: W_{\text{ext}} &\rightarrow \Z_{\geq 0} \\
    w &\mapsto \ell(w):= \# \{\text{hyperplanes separating $A_0^{int}$ and $wA_0^{int}$}\}
\end{align*}
Define the {\em length zero} elements of $W_{\text{ext}}$ to be 
\[
\Omega:=\ell^{-1}(0).
\]
\begin{lemma}
$W_{\text{ext}}=\Omega \ltimes W$.
\end{lemma}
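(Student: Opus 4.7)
The plan is to use the action of $W_{\text{ext}}$ on the set of alcoves $\mc{A}$ to identify $\Omega$ with the stabilizer of $A_0$, and then to leverage the simple transitivity of $W$ on $\mc{A}$ recorded just above (in the identification $W \xrightarrow{\sim} \mc{A}$, $w \mapsto w A_0$).

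First, I would check that $W_{\text{ext}}$ preserves the arrangement of reflecting hyperplanes, and hence permutes $\mc{A}$. The formula $t_\gamma \cdot H_{\alpha,m} = H_{\alpha,\, m + \langle \gamma, \alpha\rangle}$ noted above handles the translations, and $w \cdot H_{\alpha,m} = H_{w\alpha,m}$ is immediate for $w \in W_f$. Next I would verify $W \triangleleft W_{\text{ext}}$. The key point is that $W_f$ acts trivially on the finite abelian quotient $X^\vee / \Z R^\vee$, as one checks on generators via $(1 - s_\alpha)\gamma = \langle \gamma,\alpha\rangle\alpha^\vee \in \Z R^\vee$ for all $\gamma \in X^\vee$. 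Consequently, a direct computation shows that conjugating any $t_\mu v \in W$ by any $t_\gamma w_0 \in W_{\text{ext}}$ yields $t_{(1-u)\gamma + w_0\mu}\, u$ with $u = w_0 v w_0^{-1} \in W_f$, and both $w_0\mu$ and $(1-u)\gamma$ lie in $\Z R^\vee$, giving normality.

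I would then identify $\Omega$ with $\stab_{W_{\text{ext}}}(A_0)$: by definition $\omega \in \Omega$ iff no reflecting hyperplane separates $A_0^{int}$ from $\omega A_0^{int}$, and since $A_0^{int}$ is a connected component of the complement of the arrangement, this is equivalent to $\omega A_0 = A_0$. In particular $\Omega$ is automatically a subgroup of $W_{\text{ext}}$, since the stabilizer of a set under a group action is always a subgroup.

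Finally I would assemble the semidirect-product decomposition. Simple transitivity of $W$ on $\mc{A}$ immediately gives $\Omega \cap W = \stab_W(A_0) = \{1\}$. For any $x \in W_{\text{ext}}$, the alcove $xA_0$ equals $wA_0$ for a unique $w \in W$, so $\omega := w^{-1}x \in \Omega$ and $x = w\omega \in W \cdot \Omega$; by normality of $W$ one rewrites this as $W_{\text{ext}} = \Omega \cdot W$, and combined with $\Omega \cap W = \{1\}$ this yields $W_{\text{ext}} = \Omega \ltimes W$. The nontrivial input here is the simple transitivity of $W$ on $\mc{A}$, which is the deep Coxeter-theoretic fact about affine reflection groups; since it was stated explicitly just above as part of the structure theorem for $W$, I would invoke it directly rather than reprove it.
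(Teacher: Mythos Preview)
Your proof is correct and follows essentially the same three-step structure as the paper: normality of $W$ in $W_{\text{ext}}$, the decomposition $W_{\text{ext}} = W \cdot \Omega$ via the action on alcoves and simple transitivity, and $W \cap \Omega = \{1\}$. The only minor differences are cosmetic: the paper proves normality by directly conjugating the reflections $s_{\alpha,m}$ by translations $t_\gamma$ (obtaining $t_\gamma s_{\alpha,m} t_\gamma^{-1} = s_{\alpha,\langle\gamma,\alpha\rangle+m}$), whereas you argue via the quotient $X^\vee/\Z R^\vee$; and the paper deduces $W \cap \Omega = \{1\}$ from the Coxeter length function (length zero in a Coxeter group forces the identity) rather than from simple transitivity directly, though these are equivalent.
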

\begin{proof} 
{\bf Step 1:} $W \subset W_{\text{ext}}$ is normal. \\

Let $\gamma \in X^\vee$, $\lambda \in X_\R^\vee$. Then
\begin{align*}
t_\gamma s_{\alpha, m} t_\gamma^{-1}(\lambda) &=t_\gamma(s_{\alpha, m}(\lambda - \gamma) \\
&= \lambda - \gamma - \langle \lambda - \gamma, \alpha \rangle \alpha^\vee + m \alpha^\vee + \gamma \\
&= \lambda - \langle \lambda, \alpha \rangle \alpha^\vee + (\langle \gamma, \alpha \rangle + m ) \alpha^\vee \\
&=s_{\alpha, \langle \gamma, \alpha \rangle + m}(\lambda). 
\end{align*}

\noindent
{\bf Step 2:} $W_{\text{ext}}=W \cdot \Omega$.\\

Let $w \in W_{\text{ext}}$. Then $wA_0 \in \mc{A}$, so by 3. above, there exists $y \in W$ such that $ywA_0=A_0$. Hence $yw=\omega$ for $\omega \in \Omega$, and $w=y^{-1}\omega$. \\

\noindent
{\bf Step 3:} $W \cap \Omega = \{id\}$.\\

Any $w\in W \cap \Omega$ is length zero, so $w=id$ by 2. above and the fact that W is a Coxeter group.
\end{proof}

\begin{lemma}
\label{IM}
(Iwahori-Matsumoto) $w \in W_f$, $\lambda \in X^\vee$, 
\[
\ell(t_\lambda w) = \sum_{\alpha \in R^+ \atop w^{-1}(\alpha)>0} |\langle \lambda, \alpha \rangle | + \sum_{\alpha \in R^+ \atop w^{-1}(\alpha) < 0} | \langle \lambda, \alpha \rangle -1|.
\]
\end{lemma}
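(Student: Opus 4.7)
The plan is to use the geometric characterization of length stated earlier: $\ell(v) = \#\{\text{reflecting hyperplanes separating $A_0^{\mathrm{int}}$ from $vA_0^{\mathrm{int}}$}\}$ (which extends from $W$ to $W_{\mathrm{ext}}$ by the same counting recipe). Since $H_{\alpha,m}=H_{-\alpha,-m}$, the set of reflecting hyperplanes is indexed by pairs $(\alpha,m)\in R^+\times\Z$, and I will compute the count one positive root at a time. Concretely, for each $\alpha\in R^+$ I will identify the open interval $J_\alpha\subset\R$ consisting of the values $\langle \mu',\alpha\rangle$ as $\mu'$ ranges over $t_\lambda w A_0^{\mathrm{int}}$, and then count those integers $m$ that lie between $(0,1)$ (the range of $\langle\cdot,\alpha\rangle$ on $A_0^{\mathrm{int}}$) and $J_\alpha$.

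First I would compute $J_\alpha$. Writing $\mu'=\lambda+w\mu$ with $\mu\in A_0^{\mathrm{int}}$, one has $\langle\mu',\alpha\rangle=\langle\lambda,\alpha\rangle+\langle\mu,w^{-1}\alpha\rangle$. If $w^{-1}\alpha>0$, then $\langle\mu,w^{-1}\alpha\rangle$ ranges over $(0,1)$, so $J_\alpha=(\langle\lambda,\alpha\rangle,\,\langle\lambda,\alpha\rangle+1)$. If $w^{-1}\alpha<0$, write $w^{-1}\alpha=-\beta$ with $\beta\in R^+$; then $\langle\mu,w^{-1}\alpha\rangle\in(-1,0)$ and $J_\alpha=(\langle\lambda,\alpha\rangle-1,\,\langle\lambda,\alpha\rangle)$.

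Next I would count separating integers. A hyperplane $H_{\alpha,m}$ separates $A_0^{\mathrm{int}}$ from $t_\lambda w A_0^{\mathrm{int}}$ precisely when $m$ lies weakly between the unit intervals $(0,1)$ and $J_\alpha$; since these two intervals have length one, this amounts to the number of integers strictly between them, which by an elementary case check equals the distance between their left endpoints. In the case $w^{-1}\alpha>0$ the two left endpoints are $0$ and $\langle\lambda,\alpha\rangle$, contributing $|\langle\lambda,\alpha\rangle|$; in the case $w^{-1}\alpha<0$ they are $0$ and $\langle\lambda,\alpha\rangle-1$, contributing $|\langle\lambda,\alpha\rangle-1|$. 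Summing over $\alpha\in R^+$ gives the claimed formula.

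The only step that requires a bit of care is the elementary count: for two open unit intervals $(0,1)$ and $(a,a+1)$, one must verify in each sign regime for $a$ that the number of integers $m$ strictly separating the intervals is exactly $|a|$ (checking boundary cases $a\in\Z$ where one integer sits on an endpoint of an alcove, but such an $m$ is not counted because alcoves are open and the alcoves then touch at a face rather than being separated). This is the main, but essentially combinatorial, obstacle; everything else reduces to transporting $A_0^{\mathrm{int}}$ by $t_\lambda w$ and reading off coroot coordinates.
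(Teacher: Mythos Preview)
Your argument is correct and follows essentially the same route as the paper: use the geometric length formula, break the hyperplane count over $\alpha\in R^+$, and split into cases according to the sign of $w^{-1}\alpha$. The paper streamlines the bookkeeping by picking a single point $p\in A_0^{\mathrm{int}}$ close to $0$ and counting integers between $\langle p,\alpha\rangle$ and $\langle t_\lambda w\,p,\alpha\rangle$, which avoids your interval-versus-interval count (and incidentally sidesteps the small imprecision that $\langle\mu,w^{-1}\alpha\rangle$ need only lie in a subinterval of $(0,1)$, not all of it---though this does not affect your conclusion, since no integers lie in either interval).
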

\begin{proof}
Let $x=t_\lambda w$. Then 
\begin{align*}
    \ell(x)&=\# \{ \text{hyperplanes separating $A_0^{int}$ and $xA_0^{int}$}\} \\
    &=\sum_{\alpha \in R_+}\# \{m \mid H_{\alpha, m} \text{ separates $A_0^{int}$ and $xA_0^{int}$}\}.
\end{align*}
Now chose a point $p \in A_0$, very close to zero. Then $\langle p, \alpha \rangle$ is small and positive, and
\begin{align*}
    \ell(x) &= \sum_{\alpha \in R_+} \#\{ \text{integers between $\langle p, \alpha \rangle$ and $\langle xp=\lambda + wp, \alpha \rangle$}\} \\
    &= \sum_{\alpha \in R_+} \begin{cases} |\langle \lambda, \alpha \rangle | &\text{ if $\langle wp, \alpha \rangle >0$} \\ 
    |\langle \lambda, \alpha \rangle -1| &\text{ if $\langle wp, \alpha \rangle <0$} \end{cases}\\ 
    &= \sum_{\alpha \in R_+ \atop w^{-1} \alpha >0} |\langle \lambda, \alpha \rangle| + \sum_{\alpha \in R_+ \atop w^{-1} \alpha < 0 } |\langle \lambda, \alpha \rangle - 1|.
\end{align*}
\end{proof}

\begin{example}
For $\PGL_2$, $X^\vee = \Z \varpi_1$. Then 
\[
\ell(m \varpi_1) = |m|, \text{ and }\ell(m \varpi_1 s)=|m-1|.
\]
Hence $\varpi_1s$ is length zero, and $\Omega = \{ id, t_{\varpi_1}s\}$. 
\end{example}

\pagebreak
\section{Lecture 12: The Deligne-Langlands conjecture}
\label{lecture 12}

\subsection{The Iwahori--Matsumoto Hecke algebra} 

Recall our setup from last week. From an adjoint root datum $(X\supset R, X^\vee \supset R^\vee)$, (i.e.; meaning $X=\Z R$) we construct
\begin{itemize}
    \item the finite Weyl group $W_f$, 
    \item the affine Weyl group $W=\Z R^\vee \rtimes W_f$, and
    \item the extended affine Weyl group $W_{\text{ext}}=\Z X^\vee \rtimes W_f$.
\end{itemize}
We fix a set of positive roots $R_+^\vee \subset R^\vee$, then obtain the fundamental alcove $A_0$, and the corresponding set $S\subset W$ of Coxeter generators. We define a length function $\ell$ by
\begin{align*}
    \ell:W_{\text{ext}}&\rightarrow \Z \\
    x &\mapsto \# 
    \left\{ \begin{array}{c} \text{reflecting hyperplanes }\\ \text{between $A_0^{int}$ and $xA_0^{int}$} \end{array} \right\}.
\end{align*}
We denoted the length zero elements by $\Omega = \ell^{-1}(0)$, and showed that $W_{\text{ext}}$ is a ``quasi Coxeter group,'' meaning that 
\[
W_{\text{ext}}=\Omega \ltimes W,
\]
$W$ is a Coxeter group, and $\Omega$ acts on $W$ via automorphisms of the Coxeter system. \begin{example}
For $\PGL_2$, 
\begin{center}
     \includegraphics[scale=0.5]{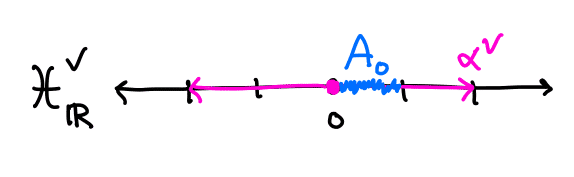}
\end{center}
The affine Weyl group and extended affine Weyl group are 
\[
W=\langle s, t \mid s^2=t^2=id \rangle \subset W_\text{ext}=\langle s, t, \tau=\varpi s \mid s^2=t^2=\tau^2=id, \tau s=t\tau \rangle. 
\]
\end{example}

We can use the extended affine Weyl group to define a Hecke algebra. 
\begin{definition}
The {\em Iwahori--Matsumoto Hecke algebra} $\mc{H}_{\text{ext}}$ is the $\Z[v^{\pm 1}]$-algebra with basis $\{H_x \mid x \in W_\text{ext}\}$ and multiplication 
\begin{align*}
    H_x H_y=H_{xy} \text{ if } \ell(xy)=\ell(x)+\ell(y) \\
    H_s^2 = H_{id} + (v^{-1}-v)H_s \text{ if }s \in S. 
\end{align*}
Define $\mc{H}=\langle H_x \mid x \in W \rangle \subset \mc{H}_\text{ext}$. 
\end{definition}
\begin{remark}
\begin{enumerate}
    \item $\mc{H} \subset \mc{H}_{\text{ext}}$ is a subalgebra. 
    \item For all $x \in W_\text{ext}$, $H_x$ is invertible. 
    \item If $\tau \in \Omega$, then $\ell(x \tau)=\ell(x) = \ell(\tau x)$. Hence 
    \[
    \mc{H}_\text{ext}=\Omega \ltimes \mc{H}.
    \]
\end{enumerate}
\end{remark}

\noindent
{\bf Question:} Where did the loop presentation $W_{\text{ext}}=\Z X^\vee \rtimes W_f$ go?

\vspace{5mm}

For $\lambda \in X^\vee$, write $\lambda = \gamma - \gamma'$ with $\gamma, \gamma' \in X_+^\vee$. Define 
\[
H_\lambda := H_{t_\gamma}H_{t_{\gamma'}}^{-1}.
\]
Note that $H_{t_\lambda} \neq H_\lambda$ in general! For example, if $\lambda \in X^\vee_+$, then $H_\lambda = H_{t_\lambda}$, but if $\lambda \in -X^\vee_+$, then $H_\lambda = H_{t_{-\lambda}}^{-1}$. 

\vspace{5mm}
\noindent
{\bf Why is this well-defined?} Assume that $\lambda = \gamma - \gamma' = \mu - \mu'$ for $\gamma, \gamma', \mu, \mu' \in X_+^\vee$. To show that $H_\lambda$ is well-defined, we need to show that 
\[
H_{t_\gamma}H^{-1}_{t_{\gamma'}} = H_{t_\mu}H^{-1}_{t_{\mu'}}. 
\]
This is equivalent to showing that for $\zeta \in X^\vee$, 
\[
H_{t_\gamma}H^{-1}_{t_{\gamma'}} H_{t_\zeta}= H_{t_\mu}H^{-1}_{t_{\mu'}}H_{t_\zeta}. 
\]
If we choose $\zeta \in X^\vee_+$ very dominant, then $\zeta - \mu'$ is dominant and
\[
H_{t_{\zeta - \mu'}}H_{t_{\mu'}} = H_{t_{\zeta - \mu'}t_{\mu'}}=H_{t_{\zeta}} =H_{t_{\mu'}t_{\zeta - \mu'}}= H_{t_{\mu'}}H_{t_{\zeta - \mu'}}. 
\]
The first and fourth equalities follow from the fact that the lengths of $t_{\zeta - \mu'}$ and $t_{\mu'}$ add by the Iwahori--Matsumoto lemma (Lemma \ref{IM}). Hence, 
\[
H_{t_\gamma}H^{-1}_{t_{\gamma'}} H_{t_\zeta}=H_{t_\gamma}H_{t_{\zeta - \gamma'}}=H_{t_{\gamma+\zeta-\gamma'}}=H_{t_{\mu + \zeta - \mu'}} = H_{t_\mu}H_{t_{\zeta - \mu'}}=H_{t_\mu}H^{-1}_{t_{\mu'}}H_{t_\zeta},
\]
so $H_\lambda$ is well-defined.

\vspace{5mm}
\noindent
{\bf The upshot:} When studying representations of an algebra, it is useful to have a large commutative subalgebra. This is what we have just accomplished for the Iwahori--Matsumoto Hecke algebra: the map $\lambda \mapsto H_\lambda$ determines an embedding 
\[
\Z[v^{\pm 1}][X^\vee] \hookrightarrow \mc{H}_{\text{ext}}.
\]

We can use this commutative subalgebra to describe the center of $\mc{H}_\text{ext}$. 
\begin{theorem}
(Bernstein) For any $\lambda \in X^\vee_+$, define 
\[
z_\lambda:= \sum_{\mu \in W_f \lambda}H_\mu.
\]
Then the center $Z(\mc{H}_\text{ext})$ of $\mc{H}_\text{ext}$ is a free $\Z[v^{\pm 1}]$-module with basis $\{z_\lambda \mid \lambda \in X_+^\vee\}$, and 
\[
Z(\mc{H}_\text{ext})=\left( \Z[v^{\pm 1}][X^\vee] \right) ^{W_f}.
\]
\end{theorem}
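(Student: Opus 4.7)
The plan is to first establish Bernstein's PBW-style basis of $\mc{H}_\text{ext}$, then derive Bernstein's cross-relation between the commutative subalgebra $\Z[v^{\pm 1}][X^\vee]$ and the finite reflections, and finally extract the center by a Bruhat-order leading-term argument. The preliminary step is to show that $\mc{H}_\text{ext}$ is free as a right $\Z[v^{\pm 1}][X^\vee]$-module (via $\lambda \mapsto H_\lambda$) with basis $\{H_w \mid w \in W_f\}$; the inputs are the Iwahori--Matsumoto length formula of Lemma~\ref{IM}, which controls when lengths add, the well-definedness of $H_\lambda$ established in the excerpt, and the decomposition $W_\text{ext} = \Z X^\vee \rtimes W_f$.

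The next step will be to derive the Bernstein cross-relation: for each simple finite reflection $s = s_\alpha$ and each $\lambda \in X^\vee$,
\begin{equation*}
H_\lambda H_s - H_s H_{s(\lambda)} = (v^{-1}-v)\,\frac{H_\lambda - H_{s(\lambda)}}{1 - H_{-\alpha^\vee}},
\end{equation*}
where the fraction lies genuinely in $\Z[v^{\pm 1}][X^\vee]$ because $1 - H_{-\alpha^\vee}$ divides $1 - H_{-n\alpha^\vee}$ for every $n \in \Z$ in this integral domain. By multiplicativity of $\lambda \mapsto H_\lambda$ and the quadratic relation $H_s^2 = H_{id} + (v^{-1}-v)H_s$, this identity reduces to the rank-one case, which is essentially a direct computation with $H_{\pm\alpha^\vee}$.

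Given these two ingredients, the inclusion $(\Z[v^{\pm 1}][X^\vee])^{W_f} \subseteq Z(\mc{H}_\text{ext})$ is quick. An element $z = \sum_\lambda c_\lambda H_\lambda$ with $W_f$-invariant coefficients automatically commutes with the commutative subalgebra, and the Bernstein relation gives $H_s z - z H_s = (v^{-1}-v) \sum_\lambda c_\lambda (H_{s(\lambda)} - H_\lambda)/(1 - H_{-\alpha^\vee})$; pairing $\lambda$ with $s(\lambda)$ and using $c_\lambda = c_{s(\lambda)}$ forces cancellation. For $\tau \in \Omega$, conjugation by $H_\tau$ preserves $\Z[v^{\pm 1}][X^\vee]$ and acts on $H_\lambda$ via an element of $W_f$ (the finite component of the length-zero lift of $\tau$), so $W_f$-invariance again suffices.

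The main obstacle is the reverse inclusion. I would write a central $z = \sum_{w \in W_f} a_w H_w$ in the Bernstein basis and iterate the Bernstein relation to obtain an expansion $H_w H_\mu = H_{w(\mu)} H_w + (\text{terms of the form } b H_{w'},\ w' < w \text{ in Bruhat order})$. Choosing $\mu \in X^\vee_+$ strictly dominant and regular, $H_\mu - H_{w(\mu)}$ is a nonzero element of the integral domain $\Z[v^{\pm 1}][X^\vee]$ for every $w \neq e$; imposing $[H_\mu, z] = 0$ and reading off the Bruhat-maximal term then forces $a_w = 0$ for $w \neq e$ by a downward induction on $\ell(w)$. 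Thus $z = a_e \in \Z[v^{\pm 1}][X^\vee]$, and commutation with each simple $H_s$ forces $W_f$-invariance by running the paragraph-three cancellation in reverse. The basis claim is then immediate: $X^\vee_+$ parametrises $W_f$-orbits on $X^\vee$, so the orbit sums $z_\lambda$ form a $\Z[v^{\pm 1}]$-basis of $\Z[v^{\pm 1}][X^\vee]^{W_f}$. The genuinely delicate step is the Bruhat-order leading-term analysis of $[H_\mu, z]$, which requires clean bookkeeping of the iterated Bernstein commutation.
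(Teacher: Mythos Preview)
The paper does not prove this theorem; it is stated as Bernstein's result, followed immediately by the statement of the Bernstein presentation (Theorem~\ref{Bernstein presentation}), with one relation verified for $\PGL_2$. So there is no paper proof to compare against. Your proposal is essentially the standard argument (as in Lusztig or Chriss--Ginzburg), and the architecture---establish the PBW basis, derive the cross-relation, then run a Bruhat leading-term argument with a regular dominant $\mu$ to force $a_w = 0$ for $w \neq e$---is correct.

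One step is wrong, though it is dispensable. Your claim that conjugation by $H_\tau$ for $\tau \in \Omega$ preserves the Bernstein lattice $\Z[v^{\pm 1}][X^\vee]$ and acts there via the finite part $w_\tau \in W_f$ is false. Take $\PGL_2$ with $\tau = t_\varpi s$ the nontrivial length-zero element. One computes $H_\tau H_\varpi H_\tau^{-1} = H_{t_{-\varpi}} = H_s H_\varpi H_s^{-1}$, and expanding via the cross-relation gives
\[
H_{t_{-\varpi}} \;=\; \bigl(H_{-\varpi} - (v^{-1}-v)^2 H_\varpi\bigr)\cdot H_e \;+\; (v^{-1}-v)\,H_\varpi \cdot H_s,
\]
which is neither $H_{-\varpi}$ nor even in the image of $\lambda \mapsto H_\lambda$. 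Fortunately you never need to treat $\Omega$ separately: once you have the PBW decomposition $\mc{H}_\text{ext} \simeq \Z[v^{\pm 1}][X^\vee] \otimes \mc{H}_f$, the whole algebra is already generated by the $H_\lambda$ together with $\{H_s : s \in S_f\}$, and commutation of $z$ with those generators forces centrality (commutation with $H_\tau$ then comes for free). Delete that paragraph and your proof goes through.
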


Now we can state the Bernstein presentation of $\mc{H}_\text{ext}$. 
\begin{theorem}
\label{Bernstein presentation}
(Bernstein presentation) The Iwahori--Matsumoto Hecke algebra admits the following presentation. 
\begin{enumerate}
    \item $\langle H_s \mid s \in S_f \rangle$ generate a finite Hecke algebra (``finite part''). 
    \item $H_\lambda H_\gamma = H_{\lambda + \gamma}$ for all $\lambda, \gamma \in X^\vee$ (``lattice part'').  
    \item For $\lambda \in X^\vee, s_\alpha \in S_f$, 
    \begin{align*}
        H_{s_\alpha}H_{s_\alpha(\lambda)} - H_\lambda H_{s_\alpha}&=(v-v^{-1}) \left( \frac{H_\lambda - H_{s_\alpha(\lambda)}}{1-H_{-\alpha}} \right)\\
        &=(v-v^{-1})(H_\lambda + H_{\lambda - \alpha} + \cdots + H_{s_\alpha(\lambda)+\alpha}).
    \end{align*}
\end{enumerate}
In other words, we have
\[
\mc{H}_\text{ext}\simeq \Z[v^{\pm 1}][X^\vee]\otimes \mc{H}_f.  
\]
\end{theorem}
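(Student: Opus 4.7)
The plan is to verify that the three families of relations hold in $\mc{H}_{\text{ext}}$ and then to promote this to a presentation via a basis argument. Relation (1) is essentially automatic: since $W_f = \langle S_f \rangle$ sits inside $W$ as the stabilizer of the origin, its length function agrees with the restriction of $\ell$ to $W_f$, so the braid and quadratic relations defining $\mc{H}_{\text{ext}}$ restrict on the subalgebra $\langle H_s : s \in S_f \rangle$ to give exactly the presentation of the finite Hecke algebra $\mc{H}_f$. For relation (2), I would first treat the case of dominant coweights: by the Iwahori--Matsumoto length formula (Lemma \ref{IM}), $\ell(t_{\lambda+\mu}) = \ell(t_\lambda) + \ell(t_\mu)$ for $\lambda,\mu \in X^\vee_+$, so $H_{t_\lambda}H_{t_\mu} = H_{t_{\lambda+\mu}}$ and in particular the dominant translations commute in $\mc{H}_{\text{ext}}$. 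The same then holds for their inverses, and the verification of well-definedness of $H_\lambda$ already given in the text extends to prove $H_\lambda H_\mu = H_{\lambda + \mu}$ for arbitrary $\lambda, \mu \in X^\vee$ once each is written as a difference of dominants.

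The hard part will be relation (3), the cross-relation between $H_{s_\alpha}$ and $H_\lambda$. My approach would be a rank-one reduction. For $\gamma \in X^\vee_+$ dominant, I would apply Lemma \ref{IM} to compare $\ell(s_\alpha t_\gamma)$ with $\ell(t_\gamma) \pm 1$; the answer depends only on $\langle \gamma, \alpha\rangle$. Combining the identity $s_\alpha t_\gamma = t_{s_\alpha(\gamma)} s_\alpha$ in $W_{\text{ext}}$ with the quadratic relation rewritten as $H_{s_\alpha}^{-1} = H_{s_\alpha} + (v - v^{-1})$, a direct case analysis on the sign of $\langle \gamma, \alpha\rangle$ would produce $H_{s_\alpha} H_{t_\gamma} - H_{t_{s_\alpha(\gamma)}} H_{s_\alpha}$ as an explicit telescoping sum $(v - v^{-1}) \sum_k H_{t_{\mu_k}}$, where the $\mu_k$ interpolate between $\gamma$ and $s_\alpha(\gamma)$ in unit steps along the $\alpha^\vee$-direction. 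Writing a general $\lambda \in X^\vee$ as $\gamma - \gamma'$ and combining the two resulting identities assembles the correction into the formal expression on the right-hand side of (3), interpreted as a Laurent polynomial in the commutative lattice subalgebra. Tracking signs across the three cases (positive, zero, and negative pairings) and across the subtraction of dominants is the delicate combinatorial point.

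Once (1)--(3) are established inside $\mc{H}_{\text{ext}}$, the final step promotes them to a presentation. Let $A$ be the abstract $\Z[v^{\pm 1}]$-algebra defined by (1), (2), (3), and consider the evident surjection $\varphi : A \twoheadrightarrow \mc{H}_{\text{ext}}$. Relation (3) provides a straightening rule that rewrites any monomial in $A$ as a $\Z[v^{\pm 1}]$-linear combination of products $H_\lambda H_w$ with $\lambda \in X^\vee$ and $w \in W_f$, so these products span $A$. On the $\mc{H}_{\text{ext}}$ side, a length-filtration argument shows that $\{H_\lambda H_w\}$ is related to the standard basis $\{H_{t_\lambda w}\}$ by a unitriangular change-of-basis matrix, and hence is itself a $\Z[v^{\pm 1}]$-basis. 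The map $\varphi$ therefore sends a spanning set to a basis and so is an isomorphism, yielding $\mc{H}_{\text{ext}} \simeq \Z[v^{\pm 1}][X^\vee] \otimes \mc{H}_f$ as asserted.
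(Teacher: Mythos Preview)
Your outline is sound and would yield a complete proof, which is more than the paper attempts: these are lecture notes, and the text treats the theorem as a known result of Bernstein, verifying only relation (3) and only for $\PGL_2$, after one key reduction. That reduction is the main idea you are missing, and it would streamline your argument for (3) considerably. Rather than computing directly for dominant $\gamma$ and then combining via differences, the paper first shows abstractly that if (3) holds for $\lambda$ and for $\mu$ then it holds for $\lambda + \mu$, by a three-line formal manipulation using only (2) and the form of the right-hand side. It follows immediately that (3) holds for $-\lambda$ once it holds for $\lambda$, so one need only check (3) on any generating set for $X^\vee$. For $\PGL_2$ the paper then checks the single case $\lambda = \varpi$ by writing $t_\varpi = \tau s_\alpha$ with $\tau \in \Omega$ of length zero, so that $H_\varpi = H_\tau H_{s_\alpha}$ and the verification becomes a two-line consequence of the quadratic relation and $\tau s_\alpha = s_0 \tau$.

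Your direct approach via dominant coweights and a telescoping sum is the standard route and would work, but be careful with notation: you write $H_{t_{s_\alpha(\gamma)}}$, which is the Iwahori--Matsumoto basis element, whereas relation (3) involves the Bernstein element $H_{s_\alpha(\gamma)}$, and these differ when $s_\alpha(\gamma)$ is not dominant. Converting between them is exactly where the telescoping sum of length $\langle \gamma, \alpha \rangle$ appears, and this step deserves to be made explicit. The paper's additivity trick sidesteps this entirely by never leaving the Bernstein elements. The paper does not address your spanning/basis argument at the end; that part of your outline is correct and is the standard way to finish.
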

We will check the relations for $\PGL_2$. First note that if 3. holds for $\lambda$ and $\gamma$, then it holds for $\lambda+\gamma$:
\begin{align*}
    H_{s_\alpha}H_{s_\alpha(\lambda)+s_\alpha(\gamma)} &=(v-v^{-1})\left( \frac{H_\lambda - H_{s_\alpha(\lambda)}}{1-H_{-\alpha}} \right) H_{s_\alpha(\gamma)}+H_\lambda H_{s_\alpha}H_{s_\alpha(\gamma)} \\
    &=(v-v^{-1})\left( \frac{H_{\lambda + s_\alpha(\gamma)} - H_{s_\alpha(\lambda) + s_\alpha(\gamma)}+H_{\lambda+\gamma}-H_{\lambda - s_\alpha(\gamma)}}{1-H_{-\alpha}}\right) + H_{\lambda + \gamma} H_{s_\alpha}\\
    &= (v-v^{-1})\left( \frac{H_{\lambda + \gamma}-H_{s_\alpha(\lambda + \gamma)}}{1-H_{-\alpha}} \right) +H_{\lambda + \gamma}H_{s_\alpha}.
\end{align*}
In particular, if 3. is true for $\lambda$, then it is true for $-\lambda$. 

\vspace{5mm}
\noindent
{\bf For $\PGL_2$:} We will check 3. for $\lambda=\varpi$. We have that $\langle \varpi, \alpha \rangle=1$, $\tau=t_\varpi s_\alpha$, so $t_\varpi=\tau s_\alpha$, and $H_\varpi=H_\tau H_{s_\alpha}$. We compute
\begin{align*}
    H_{s_\alpha}H_{-\varpi}-H_\varpi H_{s_\alpha}^{-1} &=
    H_{s_\alpha}H_{s_\alpha}^{-1}H_\tau-H_\tau H_{s_\alpha}H_{s_\alpha} \\
    &=H_\tau - H_\tau(1+(v^{-1}-v)H_{s_\alpha}) \\
    &=(v-v^{-1})H_\tau H_{s_\alpha} \\
    &=(v-v^{-1})H_\varpi.
\end{align*}

\subsection{The Deligne--Langlands conjecture}
\label{Deligne-Langlands conjecture}

Now we return to the LLC. Let $K$ be a local field with ring of integers $\OO$ and residue field $k$. Define $q:=|k|$. Let $G/K$ be a split reductive group, and $(X \supset R, X^\vee \supset R^\vee)$ the corresponding root datum. 

Recall that the Deligne--Langlands conjecture tells us that we should expect the following relationships:
\[
\begin{tikzcd}
 \left\{ \begin{array}{c} \text{TRUM reps }\\ \text{of $G(K)$} \end{array} \right\}_{/\simeq} \arrow[r, "finite:1"] \arrow[d, leftrightarrow, "\sim"] & 
 \left\{ \begin{array}{c} \text{ TRUM Weil-} \\ \text{Deligne reps}  \end{array} \right\}_ {/\simeq} \arrow[d, leftrightarrow, "="]\\
 \left\{ \begin{array}{c} \text{irred $\mc{H}(G(K), Iw)$-} \\ \text{modules} \end{array} \right\}_{/\simeq} & 
\left\{ \begin{array}{c} (s,x) \in G_\C^\vee \times \mf{g}_\C^\vee \\ \text{s.t. $s$ is ss, $x$ nilp} \\ \text{and $sxs^{-1}=qx$} \end{array} \right\}_{/conj} 
\end{tikzcd}
\]
The following theorem relates the extended affine Hecke algebra of Iwahori--Matsumoto to this story.
\begin{theorem}
\label{IM theorem}
(Iwahori--Matasumoto) \begin{enumerate}
    \item $G(K)=\bigsqcup_{w \in W_\text{ext}}Iw \cdot w \cdot Iw$ (``Bruhat decomposition')
    \item There is an isomorphism of algebras
    \begin{align*} 
    \mc{H}_\text{ext}\otimes_{\Z[v^{\pm 1}]} \C \xrightarrow{\sim} \mc{H}(G(K), Iw) 
    \end{align*}
    where $\C$ is a $\Z[v^{\pm 1}]$-algebra via $v \mapsto (\sqrt{q})^{-1} \in \R_+ \subset \C$.
\end{enumerate}
Moreover, under 2., $H_x$ is mapped to the indicator function on $Iw \cdot x \cdot Iw$, up to a scalar. 
\end{theorem}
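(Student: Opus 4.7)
The plan is to prove (1) first and then leverage it to construct the algebra isomorphism in (2). For (1), the most transparent route is via the Bruhat--Tits building: $Iw$ is the pointwise stabilizer of the fundamental alcove $A_0$, and from Lecture 11 we know $W_\text{ext}$ acts simply transitively on the set $\mc{A}$ of alcoves. Since $G(K)$ acts transitively on $\mc{A}$, the $Iw$-double cosets in $G(K)$ biject with $W_\text{ext}$-orbits on $\mc{A}\times\mc{A}$, which is in turn $\mc{A}$ itself, hence $W_\text{ext}$. Alternatively one can argue more pedestrianly by combining the Cartan decomposition $G(K)=\bigsqcup_{\lambda\in X_+^\vee}G(\OO)t^\lambda G(\OO)$ with the finite Bruhat decomposition $G(k)=\bigsqcup_{w\in W_f}B(k)wB(k)$, using that $Iw$ is the preimage of $B(k)$ under the reduction $G(\OO)\twoheadrightarrow G(k)$.

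Given (1), fix the Haar measure on $G(K)$ so that $\mu(Iw)=1$, and for each $x\in W_\text{ext}$ let $T_x$ denote the characteristic function of $Iw\cdot x\cdot Iw$. By (1), $\{T_x\}_{x\in W_\text{ext}}$ is a $\C$-basis of $\mc{H}(G(K),Iw)$, so I would define
\[
\Phi:\mc{H}_\text{ext}\otimes_{\Z[v^{\pm 1}]}\C\longrightarrow\mc{H}(G(K),Iw),\qquad H_x\longmapsto q^{-\ell(x)/2}T_x.
\]
This is manifestly a bijection of $\C$-vector spaces, so the content is to check that it respects multiplication. Invoking the quasi-Coxeter presentation $W_\text{ext}=\Omega\ltimes W$ from Lecture 11, the defining relations of $\mc{H}_\text{ext}$ reduce to two types: the length-additive relation $H_x H_y=H_{xy}$ when $\ell(xy)=\ell(x)+\ell(y)$, and the quadratic relation $H_s^2=H_{id}+(v^{-1}-v)H_s$ for $s\in S$.

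The length-additive relation translates to $T_x*T_y=q^{(\ell(x)+\ell(y))/2}T_{xy}$ whenever lengths add, which follows from the combinatorial identity $Iw\cdot x\cdot Iw\cdot y\cdot Iw=Iw\cdot xy\cdot Iw$ in this case together with the volume count $\mu(Iw\cdot x\cdot Iw)=q^{\ell(x)}$ (proven by induction on $\ell(x)$, the case $\ell(x)=1$ being the base). The main obstacle will be the quadratic relation. Here the essential geometric input is that for each $s\in S$ the set $P_s:=Iw\sqcup Iw\cdot s\cdot Iw$ is a subgroup of $G(K)$ --- the rank-one parahoric attached to $s$ --- containing $Iw$ with $P_s/Iw\cong\PP^1(k)$, so $|Iw\cdot s\cdot Iw/Iw|=q$. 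Unwinding the convolution $T_s*T_s$ against this, one finds the product is supported on $P_s$, and a direct count on each of the two $Iw$-double cosets yields $T_s*T_s=q\,T_{id}+(q-1)\,T_s$. Applying $\Phi$ with $v=q^{-1/2}$ and comparing coefficients confirms this matches the quadratic relation exactly, completing the verification. The final assertion that $H_x\mapsto T_x$ up to scalar is then evident from the construction.
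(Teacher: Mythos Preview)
The paper does not actually prove this theorem: it is stated as a result of Iwahori--Matsumoto and then immediately used, with no argument given. So there is no paper proof to compare against.

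Your sketch is the standard route and is essentially sound. One small slip: in the length-additive case you claim $T_x*T_y=q^{(\ell(x)+\ell(y))/2}T_{xy}$, but with the normalisation $\mu(Iw)=1$ the correct identity is simply $T_x*T_y=T_{xy}$ (the set-theoretic product $Iw\,x\,Iw\cdot Iw\,y\,Iw$ is exactly $Iw\,xy\,Iw$, and the convolution integral evaluates to $1$ on this coset). This is harmless for your argument, since under $\Phi(H_x)=q^{-\ell(x)/2}T_x$ both sides of $\Phi(H_x)\Phi(H_y)=\Phi(H_{xy})$ still match once $\ell(xy)=\ell(x)+\ell(y)$. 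The quadratic-relation check is correct as written. For part (1), the building argument is the cleanest; the alternative via Cartan plus finite Bruhat requires a bit more care to show the pieces fit together without overlap, but also works.
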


\begin{example}
Let $G=\GL_n$, and fix a uniformizer $\pi \in \OO$. Then 
\[
{W_\text{ext} = \atop \text{ } } {\left\langle \begin{array}{c} \text{permutation}\\ 
\text{matrices} \end{array} \right\rangle \atop \text{``finite part''}}{ \ltimes \atop \text{ } } {\left \langle \bp \pi^{\lambda_1} & \cdots & 0 \\ 0 & \ddots & 0 \\ 0 & \cdots & \pi^{\lambda_n} \ep \right\rangle. \atop \text{ ``lattice part''}}
\]
\end{example}

By Theorem \ref{IM theorem}, we can understand TRUM representations of $G(K)$ by studying irreducible $\mc{H}_\mathrm{ext}$-modules. Denote by $Z:=Z(\mc{H}_\mathrm{ext})=(\Z[v^{\pm 1}][X^\vee])^{W_f}$. By Quillen's Lemma (which is an infinite-dimensional version of Schur's Lemma), $Z$ acts by scalars on any irreducible $\mc{H}_\mathrm{ext}$-module. The Bernstein presentation tells us that $\mc{H}_\mathrm{ext}$ is finite over $R:=\Z[v^{\pm 1}][X^\vee]$. Since $R$ is also finite over $R^{W_f}$, we conclude that any irreducible $\mc{H}_\text{ext}$-module is finite-dimensional, and, in fact, is of dimension $\leq |W_f|^2$.  

Hence, the Deligne--Langlands conjecture predicts the following relationships. 
\[
\begin{tikzcd}
 \left\{ \begin{array}{c} \text{irreps/$\C$ }\\ \text{of $\mc{H}_\text{ext}$} \end{array} \right\} \arrow[r, "finite:1"] \arrow[d, "\text{central character}"] & 
 \left\{ \begin{array}{c} (s,x) \in G_\C^\vee \times \mf{g}_\C^\vee \\ \text{s.t. $s$ is ss, $x$ nilp} \\ \text{and $sxs^{-1}=qx$} \end{array} \right\}_ {/\simeq} \arrow[dd]\\
 \left\{ \begin{array}{c} \text{irreps of $Z$} \end{array} \right\} \arrow[d, leftrightarrow, "="] & \text{ } \\
\left\{\text{pairs } (s', v) \in G_\C^\vee \times \C^\times \right\}_{/G_\C^\vee \text{ conj}} \arrow[r, dashrightarrow] & \{(s, x) \in G_\C^\vee \times \C^\times \}_{/G^\vee_\C \text{ conj}} 
\end{tikzcd}
\]
The dashed arrow should match $v^{-1} \leftrightarrow \sqrt{q}$. 
\begin{remark}
\begin{enumerate}
    \item We are no longer forced to take $q=|k|$. 
    \item In the diagram above, we are repeatedly using the fact from last semester that 
\[
\{ \text{characters } \chi:\Z[X^\vee]^{W_f} \rightarrow \C\} \leftrightarrow \{\text{semisimple elts of $G$ up to conjugacy}\}.
\]
\end{enumerate}
\end{remark}

\noindent
{\bf An easy and interesting case:} Consider the case when $s=id$ and $q=1$. In this case, the right side of the dashed arrow is 
\[
\{x \in \mf{g}^\vee_\C \text{ nilpotent}\}_{/G^\vee_\C \text{ conjugacy}} = \text{ ``nilpotent orbits''.}
\]
This provides a hint that we should not expect to have a good algebraic grip on the problem of understanding the irreducible representations of $\mc{H}_\text{ext}$, because nilpotent orbits are complicated and not combinatorial in general. For the next lecture and a half, we will dive into this geometry. 

\subsection{Geometric setting}
For notational convience, we will temporarily swap $G \leftrightarrow G^\vee$ in this section. Let $G/\C$ be a complex reductive group, and $\mc{N}\subset \Lie{G}=:\mf{g}$ the nilpotent cone. 
\begin{remark}
\label{definition of nilcone}
For $\GL_n$, it is tempting to define $\mc{N}$ as the variety 
\[
\{ x \in \mf{gl}_n(\C) \mid x^n=0\}. 
\]
However, this results in a non-reduced scheme, because the ideal corresponding to the equation $x^n=0$ is not radical. A better definition is to consider 
\[
\{x \in \mf{gl}_n(\C) \mid \text{ coeffients of the characteristic polynomial vanish}\}. 
\]
This still captures what we know as a nilpotent matrix, but results in a better geometric object. (In particular, it is a reduced scheme.)  
\end{remark}
In general, we define the nilpotent cone as follows. Consider the adjoint quotient map
\[
\mf{g} \xrightarrow{q} \mf{g}/G=\mf{h}/W. 
\]
The equality $\mf{g}/G=\mf{h}/W$ follows from Chevalley's theorem. For $\mf{gl}_n$, the map $q$ is ``take coefficients of the characteristic polynomial,'' so this captures what we wanted in Remark \ref{definition of nilcone}.  We define 
\[
\mc{N}=q^{-1}(0). 
\]
\noindent
{\bf Fundamental facts about the nilpotent cone:}
\begin{enumerate}
    \item $\mc{N}$ is irreducible, reduced, and normal. 
    \item $G$ has finitely many orbits on $\mc{N}$, and all are even-dimensional (over $\C$). 
\end{enumerate}
\begin{example}
If $G=\GL_n$, then 
\[
\left\{ \begin{array}{c} \text{nilpotent} \\ \text{matrices} \end{array} \right\}_{/conj} 
\leftrightarrow 
\left\{ \begin{array}{c}  \text{Jordan} \\ \text{normal} \\ \text{form} \end{array} \right\}  \leftrightarrow \left\{ \begin{array}{c} \text{partitions} \\ \lambda \vdash n \end{array}  \right\}
\]
Moreover, if we denote by $\mc{O}_\lambda$ the orbit corresponding to the partition $\lambda$, then
\[
\mc{O}_\mu \subset \overline{\mc{O}_\lambda} \iff \mu \leq \lambda \text{ in dominance order}. 
\]
We can examine these orbit relations explicitly for small $n$. 

\vspace{5mm}
\noindent
{\bf $n=2$:} 
\[
\mc{N} = \left\{ x=\bp a & b \\ c & d \ep \mid \Tr{x}=\det{x}=0 \right\} = \left\{ x=\bp a & b \\ c & -a \ep \mid \det{x}=-a^2-bc=0 \right\} \subset \C^2. 
\]
A $\R$-picture of this is: 
\begin{center}
     \includegraphics[scale=0.5]{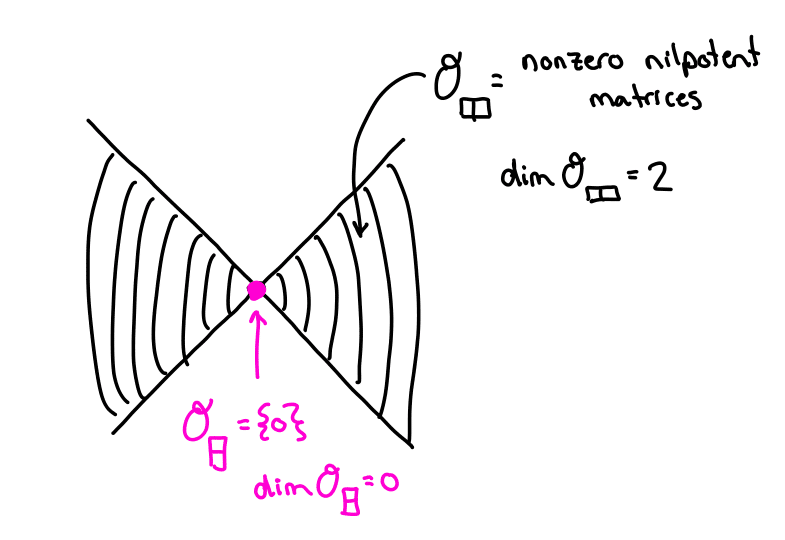}
\end{center}
\noindent
{\bf $n=3$:} For $n=3$, the picture is more of a caricature. 
\begin{center}
     \includegraphics[scale=0.5]{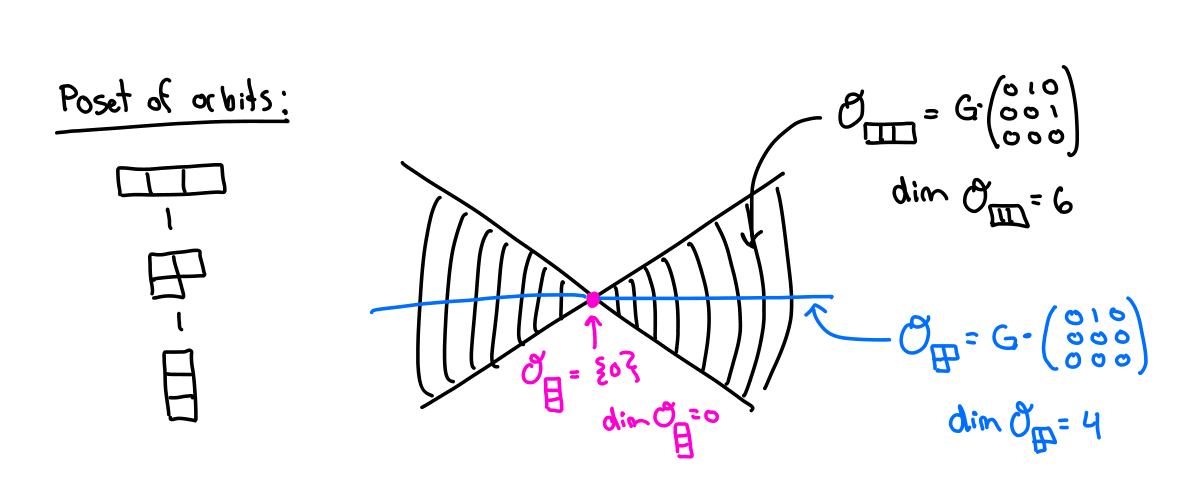}
\end{center}
\begin{exercise}
Try to do this for large $n$ (perhaps 7 or 8) and see that this poset is rather ugly; in particular, it is not graded. 
\end{exercise}
\end{example}

\subsection{The Springer resolution}
We can study these nilpotent orbits by using a resolution of singularities of the nilpotent cone. Let $\mc{B}$ be the variety of Borel subalgebras in $\mf{g}$. For $\mf{gl}_n$, this is the variety of complete flags in $\C^n$. 
\begin{definition}
The {\em Springer resolution} is the map 
\[
\begin{tikzcd}
\widetilde{\mc{N}} \arrow[d]= \{(x, \mf{b}) \in \mc{N} \times \mc{B} \mid x \in \mf{b} \}\simeq T^*\mc{B} \\
\mc{N} 
\end{tikzcd}
\]
which sends $(x, \mf{b}) \mapsto x$. 
\end{definition}
Next week we will study the Springer resolution more carefully, and show that it is proper, smooth, and a resolution of singularities. For $\GL_n$, 
\[
\widetilde{\mc{N}}=\{(x, \{0\}=V_0 \subset V_1 \subset \cdots V_n=\C^n) \mid x \text{ preserves flag } \iff xV^i \subset V^{i-1}\}. 
\]
\begin{example}
We examine $\GL_n$ for small $n$ again. 

\vspace{5mm}
\noindent
{\bf $n=2$:} The variety of Borel subalgebras is $\mc{B} = \PP^1\C = \{\text{lines in }\C^2\}$, and the Springer resolution looks like:
\begin{center}
     \includegraphics[scale=0.5]{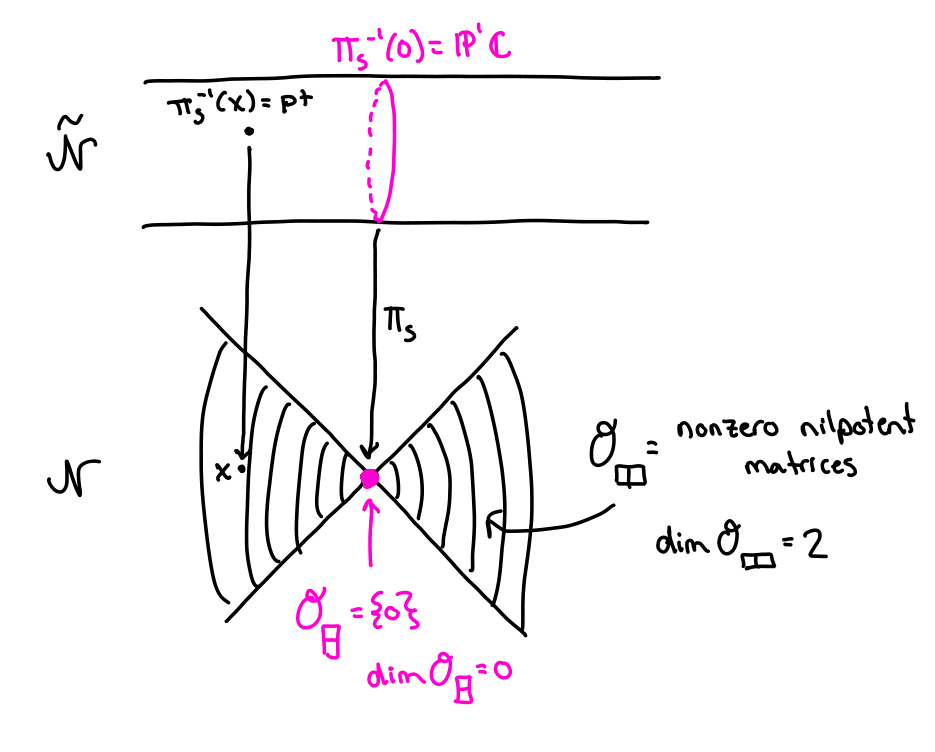}
\end{center}

\noindent
{\bf $n=3$:} The variety of Borel subalgebras is $\mc{B}=\{(\ell, P) \mid \ell \subset P \subset \C^2\}$ and a caricature of the Springer resolution looks like:
\begin{center}
     \includegraphics[scale=0.5]{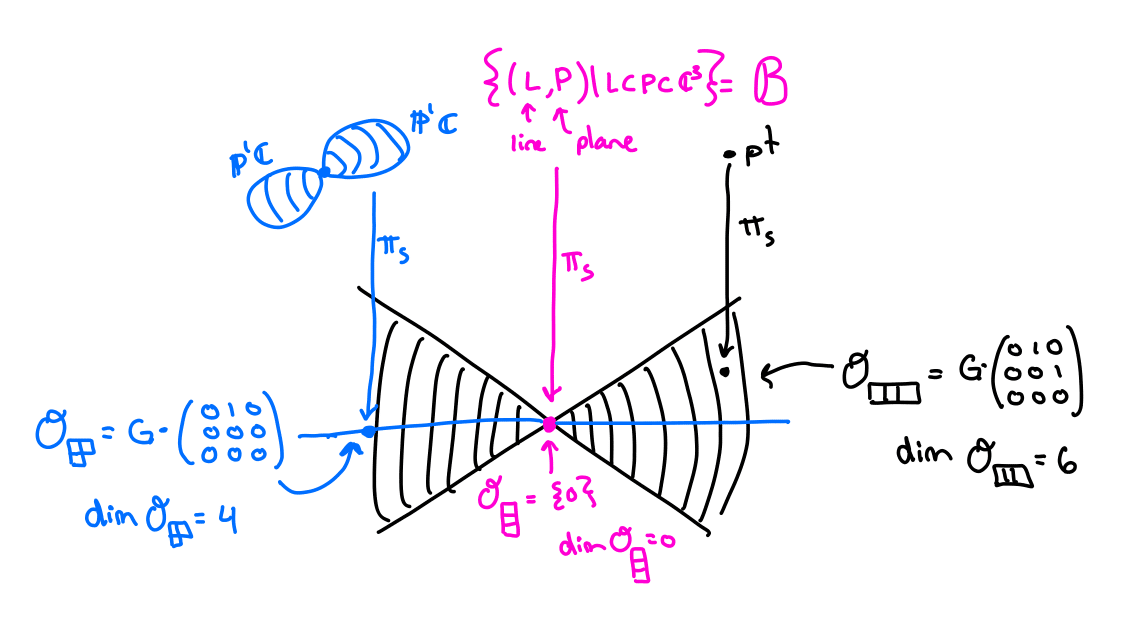}
\end{center}
We will explain in more detail how we arrived at this picture next week.
\end{example}

\pagebreak
\section{Lecture 13: Springer fibres and the Steinberg variety}
\label{lecture 13}

\subsection{The Springer resolution, continued}

Today we are going to pick up where we left off last week and continue to discuss the geometry of nilpotent orbits. To begin, we will discuss in more detail our claim from last week that $\widetilde{N} \simeq T^*\mc{B}$. 

Let $G$ be an algebraic group and $\mf{g}$ its Lie algebra. Canonically, we can express the tangent bundle of $G$ as
\[
TG=G \times \mf{g},  
\]
since the Lie algebra can be identified with $T_e(G)$. Hence if $X$ is a homogeneous space for $G$ (i.e. $G$ acts on $X$ transitively), then we have a surjection 
\[
TG=G \times \mf{g} \rightarrow TX,
\]
and for $x \in X$, $T_xX=\mf{g}/\Lie{(\stab_G{x})}$. Similarly, there is a canonical identification of the cotangent bundle of $G$
\[
T^*G=G\times \mf{g}^*
\]
and for a homogeneous space $X$, 
\[
T^*X \hookrightarrow T^*G.
\]
Moreover, for $x \in X$, $T_xX=(\Lie{(\stab_G{x})})^\perp \subset \mf{g}^*$.

Now assume that $G$ is semisimple and let $\mc{B}$ be the variety of Borel subalgeras of $\mf{g}$. Once we choose a  Borel subalgebra $B \subset G$, we can identify $\mc{B} \simeq G/B$ since $\stab_G\mf{b}=B$. Last lecture we introduced the following space 
\[
\widetilde{\mc{N}} = \{(\mf{b}, x) \in \mc{B} \times \mc{N} \mid x \in \mf{b} \}. 
\]
\begin{claim}
There is a canonical isomorphism $\widetilde{\mc{N}} \simeq T^*\mc{B}$. 
\end{claim}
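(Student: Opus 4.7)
The plan is to identify both sides by exhibiting $\widetilde{\mc{N}}$ as the conormal variety of $\mc{B}$ inside $\mf{g}$ (or rather $\mf{g}^*$), using the Killing form to identify $\mf{g}$ and $\mf{g}^*$. Throughout I write $\mf{n}_{\mf{b}} = [\mf{b},\mf{b}]$ for the nilradical of a Borel $\mf{b}$.

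First I would set up the general fibre computation. Choosing a basepoint $\mf{b}_0 \in \mc{B}$ identifies $\mc{B} = G/B$, and the discussion just before the claim shows that the cotangent fibre is $T_{\mf{b}}^*\mc{B} = (\Lie\stab_G(\mf{b}))^\perp = \mf{b}^\perp \subset \mf{g}^*$. Since $G$ is semisimple, the Killing form $\kappa$ is non-degenerate and gives a $G$-equivariant isomorphism $\mf{g} \xrightarrow{\sim} \mf{g}^*$. Under $\kappa$ the annihilator of $\mf{b}$ is exactly $\mf{n}_{\mf{b}}$: indeed $\kappa(\mf{b},\mf{n}_{\mf{b}}) = 0$ because $\mf{n}_{\mf{b}}$ acts nilpotently on $\mf{b}$, and a dimension count ($\dim\mf{b} + \dim\mf{n}_{\mf{b}} = \dim\mf{g}$) shows equality. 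Hence the $\kappa$-transported description of the cotangent bundle is
\[
T^*\mc{B} \;\simeq\; \{(\mf{b},x) \in \mc{B} \times \mf{g} \mid x \in \mf{n}_{\mf{b}}\}.
\]

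The main step is then to identify the right-hand side with $\widetilde{\mc{N}}$. The inclusion $\{(\mf{b},x) : x \in \mf{n}_{\mf{b}}\} \subseteq \widetilde{\mc{N}}$ is immediate, since elements of $\mf{n}_{\mf{b}}$ are nilpotent and lie in $\mf{b}$. For the reverse inclusion, suppose $x \in \mf{b}$ is nilpotent. Fixing a Levi decomposition $\mf{b} = \mf{h} \oplus \mf{n}_{\mf{b}}$, write $x = h + n$. I would invoke the Jordan decomposition $x = x_s + x_n$ in $\mf{g}$: up to $B$-conjugation (which preserves $\mf{b}$) one may arrange $x_s \in \mf{h}$, so that the image of $x$ in $\mf{b}/\mf{n}_{\mf{b}} \simeq \mf{h}$ is $x_s$. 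The nilpotence of $x$ forces $x_s = 0$, hence $x \in \mf{n}_{\mf{b}}$. This identifies $\widetilde{\mc{N}}$ with $T^*\mc{B}$ setwise; because both are closed subschemes of $\mc{B} \times \mf{g}$ cut out by the same linear conditions on fibres and the identification is $G$-equivariant and algebraic, it is an isomorphism of algebraic varieties.

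I expect the main subtlety to be the final step of the second paragraph, namely justifying that a nilpotent $x$ in $\mf{b}$ lies in the nilradical. The cleanest way is to apply Jordan decomposition inside $\mf{g}$ and use that any semisimple element of $\mf{b}$ lies (up to $B$-conjugacy) in the fixed Cartan $\mf{h}$; alternatively one can argue representation-theoretically by noting that $\mf{h}$ acts semisimply on any faithful representation while a nilpotent element acts nilpotently, forcing its $\mf{h}$-component to vanish. Everything else, including the identification $T^*G = G \times \mf{g}^*$ and the passage to the homogeneous space $\mc{B} = G/B$, is formal.
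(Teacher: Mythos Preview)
Your approach is essentially the same as the paper's: identify $T^*_{\mf{b}}\mc{B}$ with $\mf{b}^\perp \subset \mf{g}^*$, use the Killing form to transport this to $\mf{n}_{\mf{b}} \subset \mf{g}$, and then match with the definition of $\widetilde{\mc{N}}$. The paper simply writes down the final identification without justifying why $\{x \in \mf{b} : x \text{ nilpotent}\} = \mf{n}_{\mf{b}}$, so you are actually being more careful on this point.

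That said, your primary Jordan decomposition argument for this step is circular as written. You say: conjugate so that $x_s \in \mf{h}$, and then ``the image of $x$ in $\mf{b}/\mf{n}_{\mf{b}} \simeq \mf{h}$ is $x_s$''. But this last claim amounts to $x_n \in \mf{n}_{\mf{b}}$, which is exactly the statement (nilpotent elements of $\mf{b}$ lie in $\mf{n}_{\mf{b}}$) you are trying to prove. When $x$ is already nilpotent, $x_s = 0 \in \mf{h}$ trivially and the conjugation is vacuous. The clean version of this argument is to observe that the quotient $B \to B/N \simeq T$ is a morphism of algebraic groups, so the induced map $\mf{b} \to \mf{b}/\mf{n}_{\mf{b}}$ preserves Jordan decomposition; since the target is toral, nilpotents map to zero. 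Your alternative representation-theoretic argument (weight-space decomposition of a faithful module) is correct and self-contained, so the overall proof is fine once you rely on that instead. One small imprecision: the reason $\kappa(\mf{b},\mf{n}_{\mf{b}})=0$ is that $\ad x \ad y$ is nilpotent on $\mf{g}$ (it strictly raises height), not merely that $\mf{n}_{\mf{b}}$ acts nilpotently on $\mf{b}$.
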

\begin{proof}
For a point $\mf{b} \in \mc{B}$, the tangent space to $\mc{B}$ at $\mf{b}$ is 
\[
T_{\mf{b}}\mc{B} = \mf{g}/(\Lie{(\stab_G{\mf{b}})}=\mf{b}).
\]
Hence, 
\[
T^*\mc{B}=\{(\mf{b}, v) \in \mc{B} \times \mf{g}^* \mid v \in \mf{b}^\perp\}. 
\]
Recall the Killing form $\kappa:\mf{g} \times \mf{g} \rightarrow \C$, $\kappa(x, y)=\tr{(\ad{x}\ad{y})}$. This is a symmetric, nondegenerate bilinear form. Fix a Cartan subalgebra $\mf{h} \subset \mf{b}$, and we obtain a triangular decomposition $\mf{g}=\mf{n}_- \oplus \mf{h} \oplus \mf{n}_+$, with $\mf{b}=\mf{h} \oplus \mf{n}_+$. The restriction $\kappa|_{\mf{h}\times \mf{h}}$ is nondegenerate, and $\kappa$ gives a nondegenerate pairing 
\[
\kappa: \mf{n}_- \times \mf{n}_+ \rightarrow \C. 
\]
Hence under the identification $\mf{g} \simeq \mf{g}^*$ via the Killing form, $\mf{b}^\perp \subset \mf{g}^*$ corresponds to $\mf{n}_+ \subset \mf{g}$. We conclude that 
\begin{align*}
T^*\mc{B} &= \{ (\mf{b}, x) \in \mc{B} \times \mf{g} \mid x \in \mf{b} \text{ is nilpotent} \}\\
&= \{(\mf{b}, x) \in \mc{B} \times \mc{N} \mid x \in \mf{b}\} \\ 
&=\widetilde{\mc{N}}. 
\end{align*}
\end{proof}

\subsection{Many examples of Springer fibres}

\vspace{5mm}
\begin{center}
``The richness of Springer fibres cannot be underestimated.'' 
\end{center}

\vspace{5mm}

Recall the Springer resolution
\[
\begin{tikzcd}
\widetilde{\mc{N}} \arrow[d, "\pi_s"]= \{(x, \mf{b}) \in \mc{N} \times \mc{B} \mid x \in \mf{b} \}\simeq T^*\mc{B} \\
\mc{N} 
\end{tikzcd}
\]
\begin{definition}
Let $x \in \mc{O}_\lambda \subset \mc{N}$ be a point in a $G$-orbit. The associated {\em Springer fibre} $F_\lambda$ is 
\[
F_\lambda:=\pi_s^{-1}(x).
\]
By equivariance, this is independent of the choice of $x \in \mc{O}_\lambda$, up to isomorphism. 
\end{definition}
Let $G=\SL_n$. This section is devoted to studying Springer fibres for $n=2,3,4$. Recall that for $G=\SL_n$, 
\[
\widetilde{\mc{N}}=\{(F^\cdot, x) \in {\text{complete} \atop \text{flags in } \C^n} \times \mc{N} \mid x \text{ preserves }F^\cdot \} = \{(F^\cdot, x)  \mid xF^i \subset F^{i-1} \}.
\]

 \noindent
 {\bf Example $\mathbf{n=2}$:} The nilpotent cone is the quadric cone
    \[
    \mc{N} = \left\{ \bp a & b \\ c & -a \ep \in \mf{sl}_2 \mid -a^2 -bc = 0 \right\}\subset \C^3. 
    \]
    A $\R$-picture:
    \begin{center}
     \includegraphics[scale=0.5]{images/springer2.png}
\end{center}
A table of orbits:
\[
\begin{tabular}{|c||c|c|}
\hline
    $\lambda$ & \includegraphics[scale=0.15]{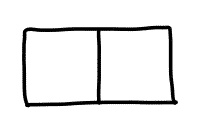} & \includegraphics[scale=0.15]{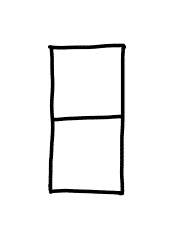}\\
     \hline \hline
    $\dim{\mc{O}_\lambda}$& 2 & 0 \\
     \hline
    $\codim{\mc{O}_\lambda}$ &0 & 2\\
     \hline
     Springer fibre & $pt$ & $\PP^1\C$ \\
     \hline 
     $\dim{\text{fibre}}$ & 0 & 1 \\
     \hline
\end{tabular}
\]

\vspace{5mm}
\noindent
{\bf Example $\mathbf{n=3}$:} 
A caricature:
\begin{center}
\includegraphics[scale=0.5]{images/springer3.png}
\end{center}
We'd like to construct a table as we did in the previous example, but to do so, we need to determine the dimension\footnote{If you'd like to understand some variety, it is a good idea to know its dimension!} of the nilpotent orbits. This is easy for the regular nilpotent orbit and the zero orbit. But what about the other orbit $\mc{O}_\lambda$? Well we know that the matrix 
\[
e=\bp 0 & 1 & 0 \\ 0 & 0 & 0 \\ 0 & 0 & 0 \ep 
\]
is in the orbit, and we observe that
\[
\Lie{(\stab_{\SL_3}{e})}=\stab_{\Lie{\SL_3}}{e}=\ker \ad{e}. 
\]
A computation shows that 
\[
\ker \ad{e} = \left\{ \bp a & b & c \\
0 & a & 0 \\ 
0 & d & -2a \ep \mid a,b,c,d \in \C \right\}
\]
is $4$-dimensional, hence the orbit $\mc{O}_\lambda$ is $(8-4=4)$-dimensional. We also need to determine the Springer fibre corresponding to this orbit. We have that 
\[
F_\lambda = \{(L \subset P \subset \C^3) \mid e\C^3 \subset P, eP \subset L, \text{ and }eL =0\},
\]
but what does this look like? Well we know that
\[
0 \subset \text{im}e \subset \ker{e} \subset \C^3,
\]
and the condition that a flag in $F_\lambda$ must satisfy $eL=0$ implies that $L \subset \ker{e}$. We see that there are two possibilities for flags in $F_\lambda$:
\begin{itemize}
    \item $\{L=\text{im}e$, free choice of $P$, as long as $L \subset P \}\simeq \PP^1\C$
    \item $\{P=\ker{e}$, free choice of $L$, as long as $L \subset P \} \simeq \PP^1\C$ 
\end{itemize}
These two cases intersect when $L=\text{im}e$ and $P=\ker{e}$, which is a single point, so our Springer fibre looks like two $\PP^1\C$'s joined at a point. With this, we can complete our table of orbits: 
\[
\begin{tabular}{|c||c|c|c|}
\hline
    $\lambda$ & \includegraphics[scale=0.15]{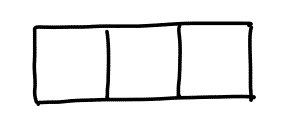} & \includegraphics[scale=0.15]{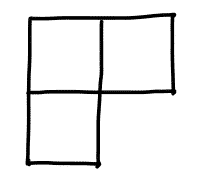} & \includegraphics[scale=0.15]{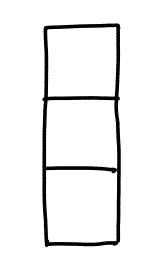}\\
     \hline \hline
    $\dim{\mc{O}_\lambda}$& 6 & 4 & 0\\
     \hline
    $\codim{\mc{O}_\lambda}$ &0 & 2 & 6\\
     \hline
     Springer fibre & $pt$ & \includegraphics[scale=0.19]{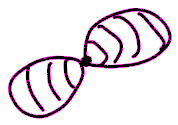}& $\mc{B}$ \\
     \hline 
     $\dim{\text{fibre}}$ & 0 & 1 & 3 \\
     \hline
\end{tabular}
\]

\vspace{5mm}
\noindent
{\bf Example $\mathbf{n=4}$:} Drawing pictures is no longer so reasonable, but we can still count dimensions and make our table. The dimension of $\mc{B}$ is the number of positive roots (which for $\SL_4$ is $6$), so we know that $\dim{T^*{\mc{B}}}=12$. 

To find the dimensions of the Springer fibres, we can play a similar game to what we did in the previous example. Consider the orbit $\mc{O}_\lambda$ corresponding to 
\[
\lambda = \begin{array}{c}\includegraphics[scale=0.2]{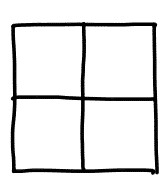}
\end{array}
\]
The matrix
\[
e=\bp 0 & 1 & 0 & 0 \\ 
0 & 0 & 0 & 0 \\
0 & 0 & 0 & 1 \\
0 & 0 & 0 & 0 \ep
\]
is in $\mc{O}_\lambda$, and we have that 
\[
0 \subset \text{im}e =\ker{e} \subset \C^4.
\]
The Springer fibre is
\[
F_\lambda = \{(L \subset P \subset H \subset \C^4) \mid e\C^4 \subset H, eH \subset P, eP \subset L, eL = 0\}. 
\]
Again, we have two possibilities:
\begin{itemize}
    \item (``easy component'') $\{P=\text{im}e=\ker{e}$, free choice of $L, H$ as long as $L\subset P \subset H\} \simeq \PP^1\C \times \PP^1 \C =: \Sigma_1$
    \item (``hard component'') $\{ H=e^{-1}L\}=: \Sigma_2$. There is a natural map $\Sigma_2 \rightarrow \PP^1\C$ sending $(L \subset P \subset H) \mapsto L$, and the fibre over $L'$ is $\{(L' \subset P \subset e^{-1}L'=H)\}\simeq \PP^1\C$, so $\Sigma_2$ is a $\PP^1\C$-bundle over $\PP^1\C$. 
\end{itemize}
The diagonal $\Delta \subset \Sigma_1$ embeds into $\Sigma_2$ as the zero section, so these two components are glued together along a $\PP^1\C$.

We've established that the second component $\Sigma_2$ is a $\PP^1\C$-bundle over $\PP^1\C$, but which one?
\begin{claim}
$\Sigma_2 = \PP(\mc{O}(1) \oplus \mc{O}(-1))$ is the ``second Hirzebruch surface''. 
\end{claim}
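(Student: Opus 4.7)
The plan is to identify $\Sigma_2$ explicitly as the projectivization of a rank two bundle on $\mathbb{P}^1$ and then compute that bundle. Set $K := \ker e \subset \mathbb{C}^4$. For the nilpotent $e$ in the orbit $\mathcal{O}_\lambda$ with $\lambda = (2,2)$, one checks $\dim K = 2$ and $\mathrm{im}\, e = K$. The map
\[
\Sigma_2 \longrightarrow \mathbb{P}(K)\simeq \mathbb{P}^1, \qquad (L\subset P\subset H)\longmapsto L,
\]
is well-defined since $L\subset \ker e = K$, and its fibre over $L$ consists of the lines $P/L$ in the two-dimensional space $e^{-1}(L)/L$ (recall $H = e^{-1}L$ is forced, and $L\subset P\subset H$). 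So $\Sigma_2 = \mathbb{P}(\mathcal{E})$, where $\mathcal{E}$ is the rank two bundle on $\mathbb{P}^1 = \mathbb{P}(K)$ whose fibre at $L$ is $e^{-1}(L)/L$.

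Next I would compute $\mathcal{E}$. The map $e: e^{-1}(L)\to L$ is surjective with kernel $K$, and quotienting by the subbundle $L\subset K \subset e^{-1}(L)$ gives a short exact sequence of vector bundles
\[
0 \longrightarrow K/L \longrightarrow \mathcal{E} \longrightarrow L \longrightarrow 0
\]
on $\mathbb{P}^1 = \mathbb{P}(K)$. The subbundle $L$ is, by definition, the tautological line bundle $\mathcal{O}_{\mathbb{P}^1}(-1)$. The Euler sequence
\[
0 \longrightarrow \mathcal{O}(-1) \longrightarrow K\otimes \mathcal{O} \longrightarrow \mathcal{O}(1) \longrightarrow 0
\]
on $\mathbb{P}(K)$ then identifies $K/L$ with $\mathcal{O}(1)$. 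So we have
\[
0 \longrightarrow \mathcal{O}(1) \longrightarrow \mathcal{E} \longrightarrow \mathcal{O}(-1) \longrightarrow 0.
\]

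To finish, note that extensions of $\mathcal{O}(-1)$ by $\mathcal{O}(1)$ on $\mathbb{P}^1$ are classified by
\[
\mathrm{Ext}^1_{\mathbb{P}^1}(\mathcal{O}(-1),\mathcal{O}(1)) = H^1(\mathbb{P}^1,\mathcal{O}(2)) = 0,
\]
so the sequence splits and $\mathcal{E} \simeq \mathcal{O}(1)\oplus \mathcal{O}(-1)$. Thus $\Sigma_2 = \mathbb{P}(\mathcal{O}(1)\oplus \mathcal{O}(-1))$; twisting by $\mathcal{O}(1)$ (which does not change the projectivization) gives $\mathbb{P}(\mathcal{O}(2)\oplus \mathcal{O})$, the second Hirzebruch surface $\mathbb{F}_2$, as claimed.

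The main obstacle is really just unpacking the definition of $\Sigma_2$ cleanly enough that the bundle $\mathcal{E}$ and its filtration by $K/L$ are visible; once that is done, everything else is the Euler sequence plus the standard vanishing $H^1(\mathbb{P}^1,\mathcal{O}(n))=0$ for $n\ge -1$. One subtle point worth checking is that the filtration $L\subset K \subset e^{-1}(L)$ genuinely varies as a filtration of subbundles (not just fibrewise), which it does because $L$ is the tautological family and $K$ is constant, with $e$ fixed; from there the extension in the displayed short exact sequence is a genuine extension of bundles, so the splitting above is a splitting of bundles and not just of fibres.
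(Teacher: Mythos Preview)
Your proof is correct. The identification of the fibre of $\Sigma_2 \to \mathbb{P}(K)$ as $\mathbb{P}(e^{-1}(L)/L)$ is right (all the flag conditions $e\mathbb{C}^4 \subset H$, $eH\subset P$, $eP\subset L$ are automatic once $L\subset K$ and $H=e^{-1}L$), the short exact sequence $0\to K/L\to \mathcal{E}\to L\to 0$ is obtained exactly as you say, and the Ext computation forcing the splitting is standard.

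The paper does not actually give a proof of this claim; it simply refers the reader to Geordie's hand-written notes. So there is nothing to compare against in the text itself. Your argument via the Euler sequence and $H^1(\mathbb{P}^1,\mathcal{O}(2))=0$ is the natural one and would be a welcome addition. One small stylistic point: you might state up front that $e^{-1}(L)$ is a genuine rank $3$ subbundle of $\mathbb{C}^4\otimes\mathcal{O}$, as the kernel of the surjection $\mathbb{C}^4\otimes\mathcal{O}\xrightarrow{e} K\otimes\mathcal{O}\to K/L\simeq\mathcal{O}(1)$; this makes the ``varies as a filtration of subbundles'' remark at the end unnecessary.
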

For justification of this claim, see Geordie's hand-written notes. 
We finish this example by constructing our table.
\[
\begin{tabular}{|c||c|c|c|c|c|}
\hline
    $\lambda$ & \includegraphics[scale=0.15]{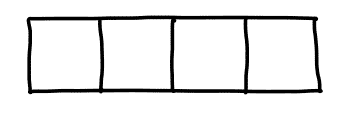} & \includegraphics[scale=0.15]{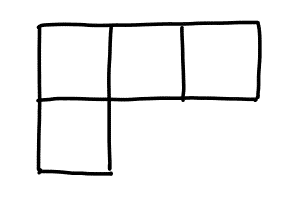} & \includegraphics[scale=0.15]{images/22.png} &
    \includegraphics[scale=0.15]{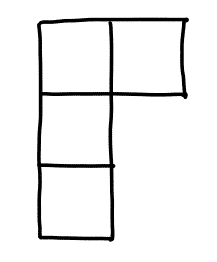} & \includegraphics[scale=0.15]{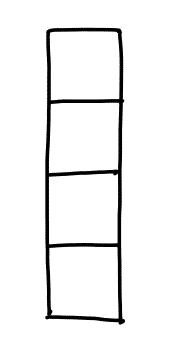}\\
     \hline \hline
    $\dim{\mc{O}_\lambda}$& 12 & 10 & 8 & 6 & 0 \\
     \hline
    $\codim{\mc{O}_\lambda}$ &0 & 2 & 4 & 6 & 12\\
     \hline
     Springer fibre & pt & \includegraphics[scale=0.15]{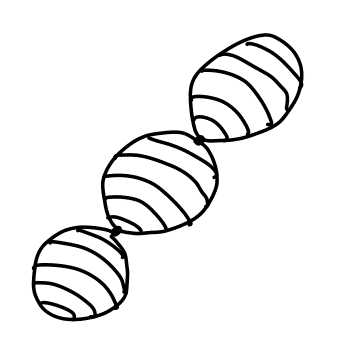}& $\Sigma_1 \cup_{\PP^1\C}\Sigma_2$ & $\mc{B}_{\SL_3} \cup \text{2 other comp.}$ & $\mc{B}_{\SL_4}$ \\
     \hline 
     $\dim{\text{fibre}}$ & 0 & 1 & 2 & 3 & 6 \\
     \hline
\end{tabular}
\]

Now that we've constructed four tables, we can make some observations about patterns we see. 

\vspace{5mm}
\noindent
{\bf Observations:}
\begin{itemize}
    \item $2 \dim{F_\lambda}=\codim{\mc{O}_\lambda}$
    \item For any $\lambda$, one (``easy'') component of $F_\lambda$ is isomorphic to a flag variety of a smaller group. 
    \item Fibres are equidimensional\footnote{This is really remarkable!} and components appear to be smooth\footnote{Sadly this fails in bigger examples.}.
\end{itemize}
\begin{remark}
    An audience member ``Mr. Wiggins'' also observed that in our examples, $\dim{T^*\mc{B}}$ is divisible by all of the $\codim{\mc{O}_\lambda}$, a property we coined ``Wiggins divisibility''. (Though we are not sure if this is a general phenomenon...)
\end{remark}
Many of our observations hold in general. Here are some fundamental properties of Springer fibers. 
\begin{theorem}Let $G$ be an a semisimple algebraic group. 
\begin{enumerate} 
    \item The Springer resolution $\pi_s:\widetilde{\mc{N}}\rightarrow \mc{N}$ is a $G$-equivariant, projective resolution of singularities, and is an isomorphism over $\mc{N}_\text{reg}=\{x \in \mc{N} \mid \dim{Z_GX}=\text{rank}\mf{g}\}$. 
    \item For a nilpotent orbit $\mc{O}_\lambda \subset \mc{N}$, the corresponding Springer fibre $F_\lambda:=\pi_s^{-1}(x)$, $x \in \mc{O}_\lambda$ is equidimensional, and $\dim{F_\lambda}=\frac{1}{2}\codim(\mc{O}_\lambda \subset \mc{N})$. 
    \item $H_\text{odd}(F_\lambda, \Z)=0$.
\end{enumerate}
\end{theorem}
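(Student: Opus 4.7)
The $G$-equivariance of $\pi_s$ is immediate from the description $\widetilde{\mc{N}} = \{(x,\mf{b}) : x \in \mf{b}\} \subset \mc{N} \times \mc{B}$, and projectivity follows because this is a closed subvariety of a product with $\mc{B}$ projective. Smoothness of $\widetilde{\mc{N}}$ is already delivered by the isomorphism $\widetilde{\mc{N}} \simeq T^*\mc{B}$ established earlier, and surjectivity holds since every nilpotent element lies in some Borel subalgebra. The remaining content is birationality, for which I would invoke the classical theorem of Kostant that a regular nilpotent element $e$ lies in a unique Borel subalgebra; combined with $G$-equivariance this shows $\pi_s$ restricts to an isomorphism $\pi_s^{-1}(\mc{N}_{\text{reg}}) \xrightarrow{\sim} \mc{N}_{\text{reg}}$.

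\textbf{Plan for (2).} Set $d = 2\dim \mc{B} = \dim G - \text{rank}\,\mf{g}$, which equals $\dim \mc{N}$; so $\pi_s$ is a proper surjection between varieties of the same dimension. By $G$-equivariance all fibres over a fixed orbit $\mc{O}_\lambda$ have the same dimension, giving $\dim \pi_s^{-1}(\mc{O}_\lambda) = \dim \mc{O}_\lambda + \dim F_\lambda$. The substantive step is the sharp bound $2\dim F_\lambda = \codim \mc{O}_\lambda$, which I would deduce from the geometry of the \emph{Steinberg variety} $Z := \widetilde{\mc{N}} \times_{\mc{N}} \widetilde{\mc{N}}$ (to be developed shortly in the course). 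Using the Bruhat decomposition $\mc{B} \times \mc{B} = \bigsqcup_{w \in W_f} Y_w$, one identifies $Z$ with the union of conormal bundles $T^*_{Y_w}(\mc{B} \times \mc{B})$; each such conormal bundle is smooth of dimension $\dim(\mc{B} \times \mc{B}) = d$, so every irreducible component of $Z$ has dimension exactly $d$. Computing $\dim Z$ instead along the $G$-stratification of $\mc{N}$ and working fibrewise yields, for every $\mc{O}_\lambda$, the inequality $\dim \mc{O}_\lambda + 2\dim F_\lambda \leq d = \dim \mc{N}$, and the component count from the Steinberg decomposition forces equality, producing the dimension formula and equidimensionality simultaneously.

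\textbf{Plan for (3).} The standard route to $H_{\text{odd}}(F_\lambda,\Z)=0$ is to construct an algebraic affine paving of $F_\lambda$ and invoke the fact that such a paving forces integral homology to be concentrated in even degrees and torsion-free. I would build the paving via Bia\l ynicki-Birula. Let $A$ be a maximal torus of the reductive quotient of the centraliser $Z_G(x)$; since $A$ commutes with $x$, it acts on $\mc{B}$ preserving the closed condition $x \in \mf{b}$ and hence acts on $F_\lambda$. Choose a cocharacter $\nu : \mathbb{G}_m \to A$ that is regular with respect to the roots of $A$ on $\mf{g}$, so that the $A$-fixed points on $\mc{B}$ are isolated; intersecting with $F_\lambda$ keeps finitely many points. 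Applying Bia\l ynicki-Birula to the smooth ambient $\mc{B}$ produces a paving by affine attracting cells, and the key step (essentially De Concini--Lusztig--Procesi) is to verify that each such cell meets $F_\lambda$ in an affine subspace, so that $F_\lambda$ inherits an affine paving. The main obstacle I expect is precisely this last verification: the cells of a smooth ambient variety need not carve out affine subvarieties in a singular subvariety, and one must exploit the explicit weight-flag description of the attractors coming from a Jacobson--Morozov $\mf{sl}_2$-triple through $x$ to control the intersection; once this is granted, the vanishing of odd homology is automatic.
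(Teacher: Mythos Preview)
The paper states this theorem without proof, as a summary of fundamental facts about Springer fibres; the Remark immediately following it explicitly flags which parts are accessible and which are not. So there is no proof in the paper to compare against, but your plan has genuine gaps that the paper's own commentary already anticipates.

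For (2), your Steinberg-variety argument cleanly gives the \emph{upper} bound $\dim C \leq \tfrac{1}{2}\codim(\mc{O}_\lambda \subset \mc{N})$ for each irreducible component $C$ of $F_\lambda$: the conormal description shows $Z$ is equidimensional of dimension $d$, and the locally closed piece $G\cdot(C\times C')$ sitting over $\mc{O}_\lambda$ has dimension $\dim\mc{O}_\lambda + \dim C + \dim C' \leq d$. But ``the component count forces equality'' is not a proof. Equidimensionality of $Z$ constrains only the irreducible \emph{components of $Z$}, not the components of each stratum; a hypothetically small component $C$ of $F_\lambda$ would simply mean that $G\cdot(C\times C)$ lies in the closure of a piece over a larger orbit, and nothing you have written rules this out. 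Getting the equality (equivalently, showing every orbit is relevant for the semismall map $\pi_s$) requires a separate argument, classically due to Spaltenstein and Steinberg via parabolic induction.

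For (3), the paper's Remark says explicitly: ``In type $A$, $F_\lambda$ have cell decompositions\ldots\ With much more work cell decompositions have been constructed in other classical types, but existence of cell decompositions is unknown in exceptional types.'' So your affine-paving strategy is not known to succeed in general, and this is precisely the obstacle you flag but do not overcome: the claim that Bia\l ynicki--Birula cells on $\mc{B}$ meet $F_\lambda$ in affine spaces is open for exceptional $G$. The general proof of $H_{\text{odd}}=0$ by De Concini--Lusztig--Procesi does not go through a paving; it uses a torus action whose fixed loci are smaller Springer fibres rather than isolated points, works over $\overline{\mathbb{F}}_q$, and deduces vanishing from purity of Frobenius eigenvalues by induction on the semisimple rank.
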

\begin{remark}
\begin{enumerate}
    \item $H_\text{even}(F_\lambda, \Z)$ is well-studied (for example, its Betti numbers are known). 
    \item Part 2. of the theorem implies that $\pi_s$ is semismall and all strata are relevent. 
    \item In type $A$, $F_\lambda$ have cell decompositions, $\C^0 \sqcup (\C^1)^? \sqcup \cdots$. This cell decomposition implies Part $3.$ of the theorem for type $A$. With much more work cell decompositions have been constructed in other classical types, but existence of cell decompositions is unknown in exceptional types. 
    \item In type $A$, 
    \[
    \# \left\{\text{components of $F_\lambda$}\right\} = \dim(\text{irrep of $S_n$ indexed by $\lambda$}).
    \]
    In fact, there exists a canonical bijection 
    \[
    \left\{ \begin{array}{c} \text{standard Young} \\ \text{tableaux of shape $\lambda$} \end{array} \right\} \leftrightarrow \left\{ \begin{array}{c} \text{components} \\ \text{ of $F_\lambda$} \end{array} \right\}. 
    \]
\end{enumerate}
\end{remark}

\subsection{The conormal space}
Let 
\[
X=\bigsqcup_{\lambda \in \Lambda} X_\lambda
\]
be a stratefied variety. For example, $\C=\C^\times \sqcup \{0\}$. Consider the space
\[
\bigcup_{\lambda \in \Lambda} TX_\lambda \subset TX.
\]
This is a horrible space! In our example, $T\C=\C \times \C$, and 
\[
\bigcup TX_\lambda = \{(x, y) \in \C^2 \mid x \neq 0 \text{ or } x=0, y=0\}. 
\]
Here's a picture:
\begin{center}
\includegraphics[scale=0.4]{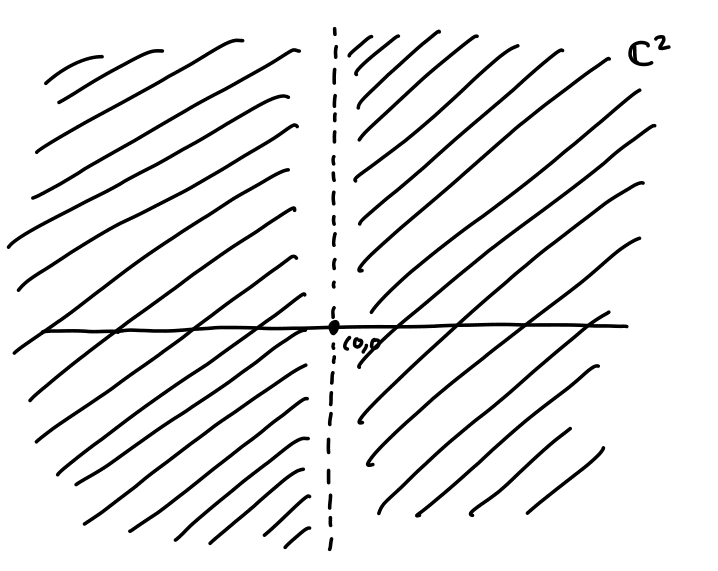}
\end{center}
In contrast to this, the {\em conormal space}
\[
T_\Lambda^*X:= \bigcup_{\lambda \in \Lambda} T_\lambda^*X \subset T^*X, 
\]
where $T_\lambda^*X=\{ \xi \in T_x^*X \text{ for } x \in X_\lambda \mid \xi \text{ vanishes on }TX_\lambda\}$ is very nice. In our example of $\C=\C^\times \sqcup \{0\}$, $
T_\Lambda^*\C = \{(x, y) \in \C^2 \mid xy=0\}$:
\begin{center}
\includegraphics[scale=0.4]{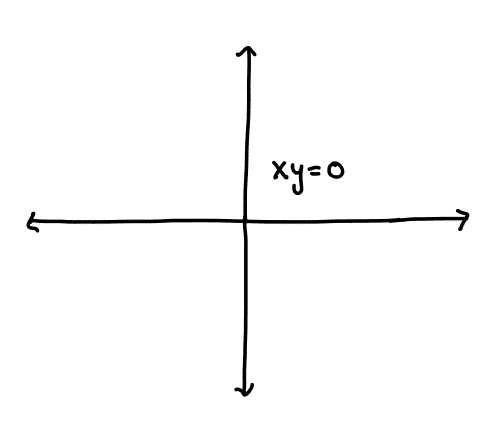}
\end{center}

\vspace{5mm}
\noindent
{\bf Properties of $T_\Lambda^*X$:}
\begin{enumerate}
    \item $T_\Lambda^*X$ is a closed subvariety of $T^*X$.
    \item $\dim{T_{\lambda}^*X}=\dim{X_\lambda} + \codim(X_\lambda \subset X ) = \dim{X}$ is independent of $\lambda$! (So we have a ``democracy of strata''.)
    \item The components of the conormal space are in bijection with the strata.
\end{enumerate}
The conormal space is a fundamental object in microlocal geometry. 

\vspace{5mm} 
\noindent
{\bf Warning:} the intersection pattern of $\overline{T_\lambda^*X}$ may be very complicated. 

\vspace{5mm}

An important object in our story arises as a conormal variety, the Steinberg variety.

\subsection{The Steinberg variety}
Let $H$ be a group that acts on a variety $X$ on the right, and a variety $Y$ on the left. We can form the {\em balanced product}
\[
X\times_HY:= X \times Y / (xh,y) \sim (x,hy). 
\]
This space may not exist as a variety in general, but in all examples we will encounter, it does. 

Choose a Borel subgroup $B \subset G$, so $\mc{B} \simeq G/B$. Then consider the variety
\begin{align*}
G\times_B G/B &\xrightarrow{\sim} G/B \times G/B \\
(g, g'B) &\mapsto (gg'B, g'B)
\end{align*}
The set of $G$-orbits on $G \times_B G/B$ is equal to the set of $B$-orbits on $G/B$, which is parameterized by $W$ by the Bruhat decomposition. So we have a stratification 
\[
\mc{B} \times \mc{B} = \bigsqcup_{x \in W} \mc{O}_x.
\]
In type $A$, this stratification is given by ``pairs of flags in relative position $x$''. 

\begin{example}
There is a stratification of $\PP^1\C \times \PP^1\C$ given by
\[
\PP^1\C \times \PP^1 \C = \Delta \sqcup (\PP^1\C \times \PP^1\C) \backslash \Delta.
\]
Pairs of flags in $\Delta$ are in relative position $id$ (i.e. they are equal), and flags in $(\PP^1\C \times \PP^1\C) \backslash \Delta$ are in relative position $s$.
\end{example}
\begin{remark}
The variety of Borel subalgebras $\mc{B}$ does not depend on any choices, so the product $\mc{B} \times \mc{B}$ does not depend on any choices. Since we can define the Weyl group as the set of $G$-orbits on $\mc{B} \times \mc{B}$, this gives us a canonical definition of the Weyl group that does not depend on a choice!
\end{remark}

\begin{definition}
The {\em Steinberg variety} 
\[
St=\{(\mf{b}, \mf{b}', x) \in \mc{B} \times \mc{B} \times \mc{N} \mid x \in \mf{b}, x \in \mf{b}'\}
\]
is given by the fibre product
\[
\begin{tikzcd}
& St\arrow[dl] \arrow[dr] & \\
\widetilde{\mc{N}} \arrow[dr] & & \widetilde{\mc{N}} \arrow[dl]\\
& \mc{N} & 
\end{tikzcd}
\]
where the maps are
\[
\begin{tikzcd}
& (\mf{b}, \mf{b}', x)\arrow[dl, mapsto] \arrow[dr, mapsto] & \\
(\mf{b},x) \arrow[dr, mapsto] & & (\mf{b}',x) \arrow[dl, mapsto]\\
& x & 
\end{tikzcd}
\]
\end{definition}

\begin{exercise}
(Solution can be found in Geordie's hand-written notes.) Show that the Steinberg variety $St$ is the conormal variety of 
\[
\mc{B} \times \mc{B} = \bigsqcup_{x \in W} \mc{O}_x.
\]
\end{exercise}
\begin{corollary}
There is a canonical bijection 
\[
\left\{ \begin{array}{c} \text{irred. comp.} \\ \text{of }St \end{array} \right\} \leftrightarrow W. 
\]
\end{corollary}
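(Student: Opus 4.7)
The plan is to derive the corollary as an immediate consequence of the exercise that $St = T^*_\Lambda(\mc{B} \times \mc{B})$ for the Bruhat stratification $\Lambda = \{\mc{O}_x\}_{x \in W}$, together with property (3) of conormal spaces recalled just above: \emph{the irreducible components of the conormal space $T^*_\Lambda X$ are in canonical bijection with the strata of $X$}. Once those two inputs are granted, the canonical bijection of the corollary is forced to be $x \longmapsto \overline{T^*_{\mc{O}_x}(\mc{B} \times \mc{B})}$, and canonicality is automatic because $\mc{B} \times \mc{B}$ and its $G$-orbit stratification involve no choices.

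First I would spell out why $St$ is the conormal variety. The identification $\widetilde{\mc{N}} \simeq T^*\mc{B}$ from the start of this lecture gives $T^*(\mc{B} \times \mc{B}) = T^*\mc{B} \times T^*\mc{B} \simeq \widetilde{\mc{N}} \times \widetilde{\mc{N}}$, under which a covector at $(\mf{b}, \mf{b}')$ is recorded as a pair $(x, x') \in \mf{b} \times \mf{b}'$ of nilpotents (via the Killing form identification $\mf{g} \simeq \mf{g}^*$ and the calculation $\mf{b}^\perp \leftrightarrow \mf{n}_+$). A covector at a point of the diagonal $G$-orbit $\mc{O}_x$ lies in $T^*_{\mc{O}_x}(\mc{B} \times \mc{B})$ iff it vanishes on all tangent directions to $\mc{O}_x$; a short calculation using the fact that the tangent space to $\mc{O}_x$ at $(\mf{b}, \mf{b}')$ is the image of the diagonal $\mf{g} \to \mf{g}/\mf{b} \oplus \mf{g}/\mf{b}'$ shows that the conormal condition reduces to $x + x' = 0$. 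Absorbing the sign, one recovers exactly the incidence $\{(\mf{b}, \mf{b}', x) \mid x \in \mf{b} \cap \mf{b}'\} = St$.

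Second, to invoke property (3) I need each stratum to actually contribute a distinct irreducible component. This is where property (2) does the work: for every $x \in W$ the conormal bundle $T^*_{\mc{O}_x}(\mc{B} \times \mc{B})$ is a vector bundle over the smooth connected stratum $\mc{O}_x$ of rank $\codim(\mc{O}_x \subset \mc{B} \times \mc{B})$, so it is irreducible of dimension $\dim(\mc{B} \times \mc{B})$ independent of $x$. Its closure in $St$ is therefore irreducible of this common dimension, and distinct strata yield distinct closures because they have distinct images in $\mc{B} \times \mc{B}$. Since $St$ is the (finite) union of these closures, these are exactly its irreducible components.

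The main obstacle, and the only step that is not essentially formal, is the conormal identification in the second paragraph: one must be careful with the sign in the Killing form pairing and with the description of the tangent space to a $G$-orbit on $\mc{B} \times \mc{B}$. Everything else is a direct application of the properties of conormal varieties listed in the text.
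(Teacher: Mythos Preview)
Your proposal is correct and follows exactly the route the paper intends: the corollary is an immediate consequence of the preceding exercise (that $St$ is the conormal variety to the Bruhat stratification of $\mc{B}\times\mc{B}$) together with property (3) of conormal spaces listed just above. The paper offers no further proof beyond this, while you have helpfully spelled out both the solution to the exercise and the justification of property (3) via the equidimensionality in property (2).
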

To finish this lecture, let us recall a remarkable geometric construction of the Robinson-Schensted correspondence, due to Steinberg. Geordie is given to understand that this is the origin of the name ``Steinberg variety''. Fix $\lambda$, and two components $C, C'$ of $F_\lambda$. We have the following diagram 
\[
\begin{tikzcd}
\pi_s^{-1}(\mc{O}_\lambda) \arrow[r, hookleftarrow] \arrow[d] & \mc{O}_\lambda \times C \arrow[d] \\
\mc{O}_\lambda \arrow[r, equals] & \mc{O}_\lambda
\end{tikzcd}
\]
The set $\{(x, c, c') \mid x \in \mc{O}_\lambda, c \in C, c' \in C'\}$ is a subvariety in $St$ of dimension 
\[
\dim{\mc{O}_\lambda} + \dim{C} + \dim{C'} = \dim{\mc{O}_\lambda} + \frac{1}{2} \codim(\mc{O}_\lambda \subset \mc{N}) + \frac{1}{2} \codim(\mc{O}_\lambda \subset \mc{N}) = \dim{\mc{N}} = \dim{\mc{B} \times \mc{B}}.
\]
The components of $St$ all have dimension $\dim \mc{B} \times \mc{B}$, so we have a bijection 
\[
\{\text{components of }St \} \leftrightarrow \{(\mc{O}_\lambda, C, C') \text{ as above}\}.
\]
In type $A$, this is a bijection between 
\[
W \leftrightarrow \{(\lambda, T, T') \mid T,T' \text{ are standard Young tableaux of shape }\lambda \}.
\]
This is the Robinson-Schensted correspondence! 
\pagebreak
\section{Lecture 14: Springer correspondence and Borel-Moore homology}
\label{lecture 14}

\subsection{The Springer correspondence}
We pick up in the setting of the last lecture, the {\bf Springer resolution}:  
\[
\begin{tikzcd}
\widetilde{\mc{N}} \arrow[d, "\pi_s"] \simeq T^*\mc{B} & F_x := f^{-1}(x) \arrow[d, mapsto] \\
\mc{N} \subset \mf{g} & x 
\end{tikzcd}
\]
Roughly, the {\bf Springer correspondence} states that $W$ acts on $H^*(F_x; \Q)$, and one obtains all irreducible representations of $W$ in top cohomology. Note that this does {\em not} come from an action of $W$ on $F_x$! This is in contrast to Deligne-Lusztig theory and other settings where we obtain representations of a group in the cohomology of varieties on which the group acts. In the first part of today's lecture, we'll work towards a precise statement of the Springer correspondence.  

\begin{remark}
In type A, the Springer correspondence explains why irreducible representations of the symmetric group and nilpotent orbits are both classified by the same combinatorial data (Young diagrams). 
\end{remark}

\vspace{3mm}
\noindent
{\bf Grothendieck-Springer alteration:}
Let 
\begin{align*}
    \widetilde{\mf{g}} = \{ (x, \mf{b}) \in \mf{g} \times \mc{B} \mid x \in \mf{b} \}.
\end{align*}
In type A, this is equal to 
\[
\{ (x, F) \in \mf{g} \times \mc{F}\ell ags \mid xF^i \subset F^i \}.
\]
\begin{remark}
Note that $\widetilde{\mf{g}} \subset \mc{B} \times \mf{g}$, and its dual is $\widetilde{\mc{N}} = T^*\mc{B}$. (This is because the dual of $x \in \mf{b}$ is $x \in \mf{b}^\perp = \mf{n}$.)
\end{remark}
The key diagram is the following:
\[
\begin{tikzcd}
\widetilde{\mc{N}} \arrow[d, "\pi_s"] \arrow[r, symbol=\subset] & \widetilde{\mf{g}} \arrow[d, "\pi_G"] \arrow[r, symbol= \supset] & \widetilde{\mf{g}}_{r.s.} \arrow[d] \\
\mc{N} \arrow[r, symbol=\subset] & \mf{g} \arrow[r, symbol=\supset] & \mf{g}_{r.s.}
\end{tikzcd}
\]
where $\mf{g}_{r.s.}=\{x \in \mf{g} \mid x \text{ regular semisimple}\}$. Note that $\mf{g} \neq \mc{N} \cup \mf{g}_{r.s.}$. 
\begin{theorem}
\begin{enumerate}
    \item The map $\pi_s$ is semismall. 
    \item The map $\pi_G$ is small (i.e., $\pi_{G*} k_{\widetilde{\mf{g}}}[\dim \mf{g}] = IC(\mf{g}_{r.s.}, \mc{L})$).
    \item Over $\mf{g}_{r.s.}$, $\pi_G$ is a $W$-torsor. (Here $W$ is the Weyl group of $G$.) 
\end{enumerate}
\end{theorem}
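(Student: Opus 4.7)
My plan is to prove the three statements in the order (3), (1), (2), since the geometry of the regular semisimple locus in (3) controls the generic behavior of $\pi_G$ and hence enters the dimension count for (2), while (1) follows very quickly from what we already established about Springer fibres last lecture.

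First I would handle (3). The key observation is that a regular semisimple $x \in \mf{g}$ is contained in a \emph{unique} Cartan subalgebra $\mf{h}_x = \ker(\ad x)$, and any Borel subalgebra $\mf{b}$ containing $x$ must contain $\mf{h}_x$ (since a semisimple element is contained in every Cartan of any solvable subalgebra that contains it). Thus the fibre $\pi_G^{-1}(x)$ is the set of Borels of $\mf{g}$ containing $\mf{h}_x$, which is a free transitive $W$-set. To globalise this to a $W$-torsor structure on $\widetilde{\mf{g}}_{r.s.} \to \mf{g}_{r.s.}$, I fix a Cartan $\mf{h} \subset \mf{b}_0$ and show that the map
\[
G \times^T \mf{h}_{r.s.} \longrightarrow \widetilde{\mf{g}}_{r.s.}, \qquad [g,h] \longmapsto (\Ad(g)h,\ \Ad(g)\mf{b}_0)
\]
is a well-defined $G$-equivariant isomorphism (since $T$ acts trivially on $\mf{h}$ and normalises $\mf{b}_0$), and that the composite to $\mf{g}_{r.s.}$ factors as $G \times^T \mf{h}_{r.s.} \to G \times^{N_G(T)} \mf{h}_{r.s.} \xrightarrow{\sim} \mf{g}_{r.s.}$, where the second map is an isomorphism (every regular semisimple element is conjugate to a unique $W$-orbit in $\mf{h}_{r.s.}$) and the first is manifestly a $W$-torsor.

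Next I would prove (1). Since $\widetilde{\mc{N}} = T^*\mc{B}$ has dimension $2\dim\mc{B} = \dim\mc{N}$, semismallness of $\pi_s$ asks that $\codim(\mc{O}_\lambda \subset \mc{N}) \geq 2 \dim F_\lambda$ for every nilpotent orbit $\mc{O}_\lambda$. But this is precisely part (2) of the theorem on Springer fibres from Lecture \ref{lecture 13}, which gives equality $\dim F_\lambda = \tfrac{1}{2}\codim(\mc{O}_\lambda \subset \mc{N})$. So (1) is immediate, and in fact $\pi_s$ is semismall with \emph{every} stratum relevant.

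Finally I would prove (2), which I expect to be the main point requiring thought. Note $\widetilde{\mf{g}} = G \times^B \mf{b}$ has dimension $\dim G = \dim \mf{g}$, so generic finiteness (guaranteed by (3)) shows the dimension count is correct. For smallness I need, for every $i > 0$, that the locus $\{x : \dim \pi_G^{-1}(x) \geq i\}$ has codimension $> 2i$ in $\mf{g}$. The crucial input is a Levi reduction of fibres: for $x \in \mf{g}$ with Jordan decomposition $x = x_s + x_n$, setting $L := Z_G(x_s)$ (a Levi subgroup of the same rank $r$ as $G$), the map $\mf{b} \mapsto \mf{b} \cap \mf{l}$ defines a bijection between Borels of $\mf{g}$ containing $x_s$ and Borels of $\mf{l}$, which restricts to an isomorphism
\[
\pi_G^{-1}(x) \ \cong\ \mc{B}_L^{x_n}
\]
with the Springer fibre of $x_n$ in $L$. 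Establishing this identification cleanly is the technical heart of the argument. Granting it, the dimension formula for Springer fibres (applied inside $L$) gives
\[
\dim \pi_G^{-1}(x) = \dim \mc{B}_L^{x_n} = \tfrac{1}{2}\bigl(\dim Z_L(x_n) - r\bigr),
\]
while $\codim(G \cdot x \subset \mf{g}) = \dim Z_G(x) = \dim Z_L(x_n)$. Combining,
\[
\codim(G \cdot x \subset \mf{g}) = r + 2 \dim \pi_G^{-1}(x),
\]
so $\{x : \dim \pi_G^{-1}(x) \geq i\}$ is a union of orbit closures each of codimension $\geq r + 2i > 2i$ (using $r > 0$). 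This gives smallness of $\pi_G$, equivalently $\pi_{G*}\underline{k}_{\widetilde{\mf{g}}}[\dim\mf{g}] = \IC(\mf{g}_{r.s.},\mc{L})$ for the local system $\mc{L}$ coming from the $W$-torsor of (3).
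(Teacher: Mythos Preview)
The paper does not actually prove this theorem: it is stated as background, and the only justification given is the type~A illustration of (3) (a flag preserved by a regular semisimple $x$ amounts to an ordering of its eigenvalues, visibly an $S_n$-torsor). Your write-up therefore goes well beyond the paper, and your overall strategy --- (3) via the unique Cartan through a regular semisimple element and the model $G\times^T\mf{h}_{r.s.}$, (1) as an immediate consequence of the Springer fibre dimension formula from Lecture~\ref{lecture 13}, and (2) via Levi reduction $\pi_G^{-1}(x)\cong\mc{B}_L^{x_n}$ for $L=Z_G(x_s)$ --- is exactly the standard one (cf.\ Borho--MacPherson, Chriss--Ginzburg) and is correct in outline.

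There is one genuine gap in your argument for (2). From the orbitwise identity $\codim_{\mf{g}}(G\cdot x)=r+2\dim\pi_G^{-1}(x)$ you conclude that $S_i:=\{x:\dim\pi_G^{-1}(x)\ge i\}$ ``is a union of orbit closures each of codimension $\ge r+2i$'' and hence has codimension $>2i$. But $S_i$ is a union of \emph{infinitely} many orbits, and an infinite union of closed sets of codimension $\ge c$ need not have codimension $\ge c$. To close the gap you must pass to a \emph{finite} stratification: stratify $\mf{g}$ by pairs (conjugacy class of the Levi $L=Z_G(x_s)$, nilpotent orbit $\mc{O}$ of $x_n$ in $L$), of which there are finitely many. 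The stratum $\mf{g}_{L,\mc{O}}$ is isomorphic to $G\times^{N_G(L)}(\mf{z}(\mf{l})^{\mathrm{gen}}\times\mc{O})$, and a short count gives
\[
\dim\mf{g}_{L,\mc{O}}+2\dim\pi_G^{-1}(x)=\dim\mf{g}-\bigl(r-\dim\mf{z}(\mf{l})\bigr).
\]
If the fibre dimension $i$ is positive then $L\neq T$, so $r-\dim\mf{z}(\mf{l})\ge 1$, whence $\codim\mf{g}_{L,\mc{O}}\ge 1+2i>2i$. Since $S_i$ is a finite union of such strata, smallness follows. With this correction, your proof is complete.
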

For example, in type A, 
\[
\mf{g}_{r.s.} = \{ x \in \mf{g} \mid x \text{ is semisimple with distinct eigenvalues}\}.
\]
To give a flag $F$ preserved by $x$ is equivalent to an ordering of the eigenvalues of $x$. This is a $S_n$-torsor. 

Now we are ready to state the Springer correspondence precisely. Given $x \in \mc{N}$, let $A_G(x)$ be the component group of the centralizer:  $A_G(x):=C_G(x)/C_G(x)^\circ$.
\begin{theorem}(Springer correspondence) 
\begin{enumerate}
    \item   $\displaystyle \pi_{s*} \Q_{\widetilde{\mc{N}}}[\dim \widetilde{\mc{N}}] = \bigoplus_{x \in \mc{N}/G, \atop \rho \in \Irr A_G(x)}H_{top}(F_x)_\rho \otimes IC(\overline{G \cdot x}, \mc{L}_\rho)$.
    \item (**Most important**) $\End(\pi_{s*}\Q_{\widetilde{\mc{N}}}[\dim \widetilde{\mc{N}}]) = \End( \pi_{G*} \Q_{\widetilde{\mf{g}}}[\dim \widetilde{\mf{g}}])=\Q W$.
    \item $\{H_{top}(F_x)_\rho\}$ are all irreducible representations of $W$.
\end{enumerate}
\end{theorem}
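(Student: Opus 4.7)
The plan is to derive all three parts from the decomposition theorem applied to the two maps in the key diagram, exploiting that $\pi_s$ is semismall while $\pi_G$ is small. For (1), since $\pi_s$ is proper and semismall, the shifted pushforward $\pi_{s*}\Q_{\widetilde{\mc{N}}}[\dim\widetilde{\mc{N}}]$ is already a semisimple perverse sheaf on $\mc{N}$; the decomposition theorem then writes it as a direct sum $\bigoplus_{(x,\rho)} V_{x,\rho}\otimes IC(\overline{G\cdot x},\mc{L}_\rho)$, indexed by $G$-orbits and irreducible $G$-equivariant local systems on them (equivalently, pairs $(x,\rho)$ with $\rho\in\Irr A_G(x)$). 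The multiplicity $V_{x,\rho}$ is identified with the $\rho$-isotypic part of $H_{top}(F_x)$ by restricting to a point $x$ in the orbit: the stalk of the pushforward computes $H^\ast(F_x)$, and semismallness combined with equidimensionality of the fibres (from the previous lecture) pins down the contribution to the perverse summand supported on $\overline{G\cdot x}$ as precisely the top cohomology, with its natural $A_G(x)$-action decomposed into isotypic parts.

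For (2) I would show each endomorphism algebra equals $\Q W$ and that they coincide. Since $\pi_G$ is small and generically a $W$-torsor, smallness gives $\pi_{G*}\Q_{\widetilde{\mf{g}}}[\dim\widetilde{\mf{g}}] = IC(\mf{g},\mc{L})$, where $\mc{L}$ is the local system on $\mf{g}_{r.s.}$ arising from the regular representation of $W$. Hence $\End(\pi_{G*}\Q[\dim]) = \End(\mc{L}|_{\mf{g}_{r.s.}}) = \End_W(\Q W) \cong \Q W$. To identify $\End(\pi_{s*})$ with $\End(\pi_{G*})$, the cleanest route is the convolution algebra on Borel--Moore homology of the Steinberg variety: both endomorphism algebras are naturally isomorphic to $H^{BM}_{top}$ of the relevant fibre product with its convolution product, and restriction from $\mf{g}$ to $\mc{N}$ induces an algebra map $\End(\pi_{G*})\to\End(\pi_{s*})$ which is an isomorphism by the dimension count from the previous lecture (the components of $St$ are indexed by $W$, and their fundamental classes form a basis of the top Borel--Moore homology). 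The hardest step will be constructing the $W$-action concretely -- either via Springer's original specialization/nearby-cycles argument from the generic fibre of $\pi_G$ to the special fibre $\pi_s$, or directly as a convolution action -- and checking that the two constructions agree; verifying the algebra structure on either description is the technical heart of the proof.

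Combining (1) and (2) gives (3) immediately. Since the simple perverse sheaves $IC(\overline{G\cdot x},\mc{L}_\rho)$ appearing in (1) are pairwise non-isomorphic, one has
\[
\Q W \;\cong\; \End\bigl(\pi_{s*}\Q_{\widetilde{\mc{N}}}[\dim\widetilde{\mc{N}}]\bigr) \;\cong\; \prod_{(x,\rho)} \End_{\Q}\bigl(H_{top}(F_x)_\rho\bigr),
\]
where the product is over pairs with $H_{top}(F_x)_\rho \neq 0$. Because $\Q W$ is a semisimple $\Q$-algebra, Wedderburn's theorem forces each nonzero multiplicity space $H_{top}(F_x)_\rho$ to be an irreducible $\Q W$-module, different pairs to yield non-isomorphic irreducibles, and the whole collection to exhaust $\Irr W$. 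This produces the canonical bijection between the pairs $(G\cdot x,\rho)$ supporting a nonzero summand and $\Irr W$, which is the Springer correspondence.
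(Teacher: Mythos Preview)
Your outline is correct and matches the paper's own treatment closely. The paper does not give a full proof either; it remarks that (1) and (3) follow from (2) and then sketches three approaches to (2): Borho--MacPherson (restriction $\End(\pi_{G*})\to\End(\pi_{s*})$), Fourier transform, and convolution algebras on $H^{BM}(St)$. Your argument is essentially a blend of the first and third, which is exactly right.

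One small gap worth flagging: in your argument for (2), knowing that both $\End(\pi_{G*})$ and $\End(\pi_{s*})$ have dimension $|W|$ (via the $W$-indexed components of the Steinberg variety) and that restriction gives an algebra map between them is not by itself enough to conclude the map is an isomorphism. You still need either injectivity or surjectivity. The paper's Borho--MacPherson sketch supplies this: the restriction map is injective because the $W$-action on the stalk at $0\in\mc{N}$, namely on $H^*(\mc{B})$, is already faithful (this is the classical Weyl group action on the cohomology of the flag variety). Once you have injectivity, your dimension count finishes the job. Everything else in your proposal---the use of semismallness for (1), smallness for $\End(\pi_{G*})\cong\Q W$, and the Wedderburn argument for (3)---is correct and is how the paper (and the literature it points to) proceeds.
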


\begin{remark}
It is not difficult to see that in fact 1. and 3. are consequences of 2. 
\end{remark}

\begin{remark}
Part 2. of the theorem is true over $\Z$, and indeed over any ring. But part 1. fails over arbitrary rings because the decomposition theorem fails. 
\end{remark}

How might we approach part 2.? Three possible approaches:
\begin{enumerate}[label=(\alph*)]
    \item {\bf Borho - MacPherson}: 
    \begin{itemize}
        \item First note that because $\widetilde{\mf{g}} \rightarrow \mf{g}$ is small, the fact that $\End(\pi_{G*} \Q_{\widetilde{\mf{g}}}[\dim \mf{g}])=\Q W$ is obvious. Indeed, 
        \[
        \End(\pi_{G*} \Q_{\widetilde{\mf{g}}})= \End(\pi_{G*} \Q_{\widetilde{\mf{g}}} |_{reg. ss.}) = \End_{\Q W} (\Q W) = \Q W, 
        \]
        with the first equality following from smallness. 
        \item Then BM point out that we have a homomorphism 
        \[
        \End(\pi_{G*} \Q_{\widetilde{\mf{g}}}) \xrightarrow{r} \End(i^* \pi_{G*} \Q_{\widetilde{\mf{g}}}) = \End(\pi_{s*} \Q_{\widetilde{\mc{N}}})
        \]
        where $i: \mc{N} \hookrightarrow \mf{g}$; and, miraculously, $r$ is an isomorphism. (Proof sketch: The map $r$ is injective because the action of $W$ on $H^*(\mc{B})$ is faithful, then compare dimensions.) 
    \end{itemize}
    \item {\bf Fourier transform}: (Springer's original approach) Because $\widetilde{\mc{N}}$ and $\widetilde{\mf{g}}$ are dual in $\mc{B} \times \mf{g}$, $k_{\widetilde{\mc{N}}}$ and $k_{\widetilde{\mf{g}}}$ are Fourier transforms of one another. This implies that the endomorphism rings are equal:
    \[
    \End(\pi_{s*}k_{\widetilde{\mc{N}}}) = \End(\pi_{G*} k_{\widetilde{\mf{g}}})=kW. 
    \]
    \item {\bf Convolution algebras}: (in Chris-Ginzburg) 
    
    The next part of the lecture will explain this approach. For impatient readers, we'll give away the ending:
    
    \vspace{2mm}
    \noindent
    {\bf The punchline}: There is a notion of homology (Borel-Moore homology, $H_{BM}$) such that $\End(\pi_{s*} k_{\widetilde{\mc{N}}})$ is canonically equal to $H_{BM}(\text{Steinberg variety})$. This gives a concrete realisation of this endomorphism algebra. 
\end{enumerate}

\subsection{Borel-Moore homology} 
Let $X$ be an algebraic variety, $k$ a field, $p:X \rightarrow pt$, and $k_X$ the constant sheaf on $X$. Here are four notions of (co)homology:
\begin{itemize}
    \item $H^*(X; k)=H^*(p_* k_X)$ - cohomology (cochains)
    \item $H_c^*(X; k)=H^{*}(p_! k_X)$ - cohomology with compact support
    \item $H_*(X; k)=H^{-*}(p_!\omega_X)$ - homology (chains)
    \item $H_*^{BM}(X;k)=H^{-*}(p_*\omega_X)$ - Borel-Moore homology (locally finite chains)
\end{itemize}
Here $\omega_X$ is the dualising sheaf, $\mathbb{D} k_X$. The canonical example is the following. 
\begin{example}
Let $X=\{z \in \C \mid 0 < |z| < 1 \}$. Then 
\[
H_0(X; \Z) = \Z \vcenter{\hbox{\includegraphics[scale=0.3]{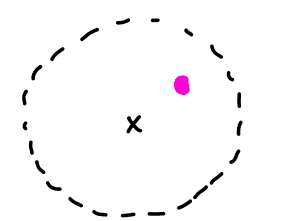}}}, \hspace{2mm} H_1(X; \Z) = \Z \vcenter{\hbox{\includegraphics[scale=0.3]{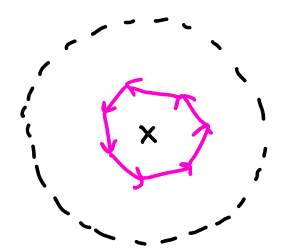}}}, \hspace{2mm} H_2(X, \Z)=0. 
\]
If we compute Borel-Moore homology, we get
\[
H^{BM}_0(X; \Z) = 0, \hspace{2mm}  H^{BM}_1(X, \Z) = \Z \vcenter{\hbox{\includegraphics[scale=0.3]{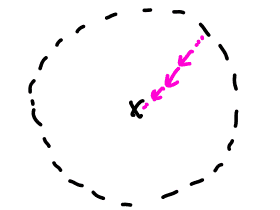}}}, \hspace{2mm} H^{BM}_2(X; \Z) = \Z \text{ ``fundamental class''}.
\]
In the first computation ($H_0^{BM}$), the formal generator of $H_0$ is now a boundary, so we get $0$. In the second computation ($H_1^{BM}$), we can now have cycles from the edge to the center, and these are not boundaries. In the third computation ($H_2^{BM}$), the ``fundamental class'' is a triangulation of $X$. 
\end{example}

\begin{center}
\boxed{\textbf{ From now on in this course, $H_* = H^{BM}_*$.}}
\end{center}

\noindent
{\bf Key properties of Borel-Moore homology}:
\begin{enumerate}
    \item If $X \xhookrightarrow{i} M$ is a closed embedding of $X$ into a smooth, $\C$-dimension $d$ variety $M$, then we have local Poincar\'{e} duality: $\omega_M \simeq k_M[2d]$. Hence
    \begin{align*}
        H_*(X; k) &= H^{-*}(X, \omega_X) \\
        &= H^{-*}(X, i^! \omega_M) \\
        &= H^{-*}(M, i_!i^!k_M[2d])\\
        &=H^{2d-*}(M, i_!i^!k_M) \\
        &=H^{2d-*}(M, M\backslash X; k).
    \end{align*}
    \item $H_*(-)$ is {\em not} functorial for arbitrary maps, but for proper maps $p$, we have $p_*$.
    \item For an open inclusion $U \hookrightarrow X$, we have restriction
    \[
    H_*(X) \rightarrow H_*(U). 
    \]
    ({\bf Thought exercise}: Why? In terms of chains?)
    \item If $X$ is equidimensional of dimension $d$ with components $X_1, X_2, \ldots, X_m$, then 
    \[
    H_{2d}(X) = \bigoplus \Z[X_i],
    \]
    where $[X_i]$ are the fundamental classes of the components.  
\end{enumerate}

\subsection{Geometric convolution algebras}
Let $X_1, X_2, X_3$ be smooth varieties of dimensions $d_1, d_2, d_3$, respectively, and 
\[
Z_{12} \subset X_1 \times X_2, \hspace{3mm} Z_{23} \subset X_2 \times X_3
\]
closed subvarieties (``correspondences''). (For example, we could take $Z_{12}=\graph(f)$ for some $f:X_1 \rightarrow X_2$.) Define 
\[
Z_{12} \circ Z_{23} := \{(x_1, x_3) \in X_1 \times X_3 \mid \text{ there exists }x_2 \in X_2 \text{ s.t. } (x_1, x_2) \in Z_{12}, (x_2, x_3) \in Z_{23} \}.
\]
(So in our example, $\graph(f) \circ \graph(g) = \graph(g \circ f)$.) We have projections 
\[
\begin{tikzcd}
 & X_1 \times X_2 \times X_3 \arrow[ld, "p_{12}"'] \arrow[d, "p_{13}"] \arrow[rd, "p_{23}"] & \\
X_1 \times X_2 & X_1 \times X_3 & X_2 \times X_3 
\end{tikzcd}
\]
We make the following {\bf properness assumption}: From now on, assume that the map
\[
p_{13}: p_{12}^{-1}(Z_{12}) \cap p_{23}^{-1}(Z_{23}) \rightarrow X_1 \times X_3
\]
is proper. 

\begin{definition}
We define a {\bf convolution product} on homology:
\begin{align*}
    H_i(Z_{12}) \times H_j(Z_{23}) &\rightarrow H_{i+j-2d_2}(Z_{12} \circ Z_{23}) \\
    (c_{12}, c_{23}) &\mapsto p_{13*}(p_{12}^*c_{12} \cap p_{23}^*c_{23}) 
\end{align*}
Here $\cap$ is the intersection product in Borel-Moore homology. 
\end{definition}

\noindent
{\bf Most important cases:} Let $\widetilde{X} \xrightarrow{f} X$ be a proper map of a smooth variety $\widetilde{X}$ to $X$, and let $X_i=\widetilde{X}$ for $i=1, 2, 3$.  Here are two cases of the construction above: 
\begin{enumerate}
    \item Let $Z_{12}=Z_{23}=Z_{12} \circ Z_{23}=\widetilde{X} \times_X \widetilde{X} \subset \widetilde{X} \times \widetilde{X}$. Then the convolution product gives an associative algebra structure on $H_*(\widetilde{X} \times_X \widetilde{X})$ (with messy gradings). 
    \item Let $Z_{12}=\widetilde{X} \times_X \widetilde{X}$, $Z_{23}=f^{-1}(X)$, $Z_{12} \circ Z_{23} = Z_{23}$. Then the convolution product gives us a map 
    \[
    H_*(\widetilde{X} \times_X \widetilde{X}) \times H_*(f^{-1}(X)) \rightarrow H_*(f^{-1}(X)).
    \]
    This gives $H_*(f^{-1}(X))$ the structure of a $H_*(\widetilde{X} \times \widetilde{X})$-module. 
\end{enumerate}
So whenever we have a smooth proper map into our variety, we obtain from this formalism an associative algebra and a collection of modules (one for each fibre) over that algebra. Seems promising! 

\vspace{5mm}
\noindent
{\bf Conceptual explanation of convolution algebras:}
Let $f$ be as above, with the fibre product diagram 
\[
\begin{tikzcd}
\widetilde{X} \times_X \widetilde{X} \arrow[d, "g"] \arrow[r, "g"] & \widetilde{X} \arrow[d, "f"]\\ 
\widetilde{X} \arrow[r, "f"] & X
\end{tikzcd}
\]
\noindent
{\bf Claim:} $\End^\cdot(f_*k_{\widetilde{X}}) = H_*(\widetilde{X} \times_X \widetilde{X})$
\begin{proof}
We will only check the statement on the level of vector spaces. Let $d_X:= \dim_\C{X}$. We have 
\begin{align*}
    \Hom^\cdot (f_*k_{\widetilde{X}}, f_*k_{\widetilde{X}}) &= \Hom^\cdot (f^*f_*k_{\widetilde{X}}, k_{\widetilde{X}}) \\
    &=\Hom^\cdot (f^*f_! k_{\widetilde{X}}, k_{\widetilde{X}}) \\
    &= \Hom^\cdot (g_! g^* k_{\widetilde{X}}, k_{\widetilde{X}}) \\
    &= \Hom^\cdot (k_{\widetilde{X} \times_X \widetilde{X}}, g^!k_{\widetilde{X}}) \\
    &= \Hom^\cdot(k_{\widetilde{X} \times_X \widetilde{X}}, g^! \omega_{\widetilde{X}} [-2d_X]) \\
    &=H^{*-2d_X}(\widetilde{X}\times_X \widetilde{X}, \omega_{\widetilde{X} \times \widetilde{X}}) \\ 
    &= H_{2d_X - *}(\widetilde{X} \times_X \widetilde{X}). 
\end{align*}
Here we are using properness of $f$ (second equality), adjunctions (first equality, fourth equality), proper base change (third equality), and local Poincar\'{e} duality (fifth equality). 
\end{proof}

\vspace{5mm}
\noindent
{\bf The Upshot:} Up to gradings, 
\[
H_*(\widetilde{X} \times_X \widetilde{X}) = \End^\cdot (f_* k_{\widetilde{X}} ), 
\]
and with some work we can show that multiplication matches on both sides. Similarly, 
\[
(f_*k_{\widetilde{X}})_x = H_*(f^{-1}(X)),
\]
and the module structure comes from the action $\End^\cdot(f_*k_{\widetilde{X}}) \circlearrowright (f_*k_{\widetilde{X}})_x$. 

\vspace{5mm}
\noindent
{\bf Connection to the Spring correspondence:}

\vspace{2mm}
\noindent
Let $\widetilde{\mc{N}} \xrightarrow{\pi_s} \mc{N}$ be the Springer resolution, and 
\[
St:=\widetilde{\mc{N}} \times_{\mc{N}} \widetilde{\mc{N}} = \begin{array}{c} \text{conormal space to} \\ \text{the $G$-space $\mc{B} \times \mc{B}$} \end{array} = \bigcup_{x \in W} T_x^*,
\]
the Steinberg variety. (See lecture 13.) Here $T_x^*$ is the conormal bundle to the $G$-orbit $\mc{O}_x \subset \mc{B} \times \mc{B}$. Recall that $St$ is equidimensional and all components have dimension equal to $N:=\dim{\mc{B} \times \mc{B}}=\dim T^*\mc{B}$. 

The convolution product in Borel-Moore homology gives us a map 
\[
H_{2N}(St) \times H_{2N}(St) \xrightarrow{*} H_{4N-2\dim{T^*\mc{B}}}(St) = H_{2N}(St).
\]
This gives $H_{2N}(St)$ the structure of an algebra! 
\begin{theorem}
As algebras, 
\[
H_{2N}(St) = kW.
\]
\end{theorem}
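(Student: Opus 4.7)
The plan is to combine the endomorphism-algebra description of convolution established just above with part (2) of the Springer correspondence stated earlier in the lecture.

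On the level of $k$-vector spaces the result is immediate from the geometry of Lecture 13: the Steinberg variety decomposes as $St = \bigcup_{w \in W} C_w$, where $C_w := \overline{T^*_{\mathcal{O}_w}(\mathcal{B}\times\mathcal{B})}$ is the closure of the conormal bundle to the $G$-orbit indexed by $w$, and each such $C_w$ is an irreducible component of complex dimension $N$. Property (4) of Borel--Moore homology therefore yields
$$H_{2N}(St; k) = \bigoplus_{w \in W} k\cdot [C_w],$$
giving a canonical $W$-indexed basis of fundamental classes that already matches $kW$ as a $k$-module.

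To promote this to an isomorphism of algebras, apply the conceptual identification $\End^{\bullet}(f_* k_{\widetilde{X}}) \cong H_{2d_X-*}(\widetilde{X}\times_X\widetilde{X})$ of the previous subsection (with convolution corresponding to composition) to $f = \pi_s$, $\widetilde{X} = \widetilde{\mathcal{N}}$, noting $d_X = \dim_{\mathbb{C}}\widetilde{\mathcal{N}} = N$. The degree-$0$ endomorphisms of the shifted Springer sheaf then correspond to top Borel--Moore homology:
$$\End\bigl(\pi_{s*} k_{\widetilde{\mathcal{N}}}[\dim\widetilde{\mathcal{N}}]\bigr) \cong H_{2N}(St)$$
as $k$-algebras. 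Combining this with part (2) of the Springer correspondence, $\End(\pi_{s*} k_{\widetilde{\mathcal{N}}}[\dim\widetilde{\mathcal{N}}]) = kW$ (valid, as noted in the Remark following its statement, over any coefficient ring), delivers $H_{2N}(St) \cong kW$.

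The main technical obstacle is the algebra-level compatibility invoked in the second step. The chain of adjunctions and proper base change outlined in the previous subsection identifies $\End^{\bullet}$ with Borel--Moore homology as graded vector spaces, but one must verify that composition of morphisms translates to the geometric convolution product defined via pullback, intersection and proper pushforward; this requires a careful diagram chase combining proper base change with the Poincar\'e duality $\omega_M \cong k_M[2d_M]$ on an ambient smooth manifold. A secondary subtlety is that the bijection $H_{2N}(St) \xrightarrow{\sim} kW$ produced by the chain above could a priori differ from the naive matching $[C_w] \leftrightarrow w$ by length-dependent signs; this normalisation would be pinned down by computing the self-convolution $[C_s] \ast [C_s]$ on a simple reflection class as a Bott--Samelson-type intersection and verifying it reproduces $[C_e]$, confirming the expected relations in $kW$.
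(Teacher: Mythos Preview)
Your proposal is correct and follows exactly the route the lecture sets up: the paper does not give an independent proof of this theorem, but rather states it after having assembled the two ingredients you invoke---the identification $H_{2N}(St)\cong \End(\pi_{s*}k_{\widetilde{\mathcal N}}[N])$ from the ``conceptual explanation of convolution algebras'' claim, and part (2) of the Springer correspondence. You have also correctly flagged the one point the paper explicitly leaves as a black box (``with some work we can show that multiplication matches on both sides''), namely that composition in the $\End$-algebra corresponds to geometric convolution.

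One small caution on your final paragraph: the abstract isomorphism $H_{2N}(St)\cong kW$ produced by the chain above does \emph{not} in general send the fundamental class $[C_w]$ to the group element $w$. The two bases are related by a unitriangular change (with respect to Bruhat order), not merely by signs, and the direct computation of $[C_s]\ast[C_s]$ is more delicate than it first appears because the intersection is not transverse. The standard way to pin this down (as in Chriss--Ginzburg, which the lecture is following) is a specialization argument over $\mathfrak h/W$, rather than a direct convolution calculation; but this is orthogonal to the algebra isomorphism itself, which your argument establishes.
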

By the theory earlier, the action of $kW$ on $H_i(F_x)$ yields the Springer action. In this way we end up with $W$-modules everywhere! 
\pagebreak
\section{Lecture 15: The Kazhdan--Lusztig isomorphism} 
\label{lecture 15}

\subsection{More convolution algebras}
Last week we ended with a discussion on convolution algebras. We'll start today by continuing this discussion. Consider the following two settings:
\begin{enumerate}
    \item Let $G$ be a group and $H \subset G$ a subgroup. With the operation $*$ of convolution of functions, the vector space $\Fun(G, \C)$ of complex-valued functions on $G$ has the structure of an algebra. This is just the group algebra, $\Fun(G, \C) \simeq \C[G]$. The subspace $\Fun_{H \times H}(G, \C)$ of $H$-biinvariant functions forms a subalgebra. This is a {\em Hecke algebra}. As many of us are aware, its representation theory is very complicated in general! 
    \item Now let $X$ be a finite set. The vector space $\Fun(X \times X, \C)$ of complex-valued functions on $X \times X$ can also be given the structure of an algebra. In this setting, the convolution product is given as follows. Let 
    \[
\begin{tikzcd}
 & X \times X \times X \arrow[ld, "p_{12}"'] \arrow[d, "p_{13}"] \arrow[rd, "p_{23}"] & \\
X \times X & X \times X & X \times X 
\end{tikzcd}
\]
be the natural projections. Then for $f, g \in \Fun(X \times X, \C)$, define $f * g \in \Fun(X \times X, \C)$ by
\[
(f * g)(x, z) := \sum_{y \in X} f(x, y) g(y, z)
\]
for $(x, z) \in X \times X$. In other words, 
\[
f*g = p_{13!}(p_{12}^*f \boxtimes p_{23}^*g). 
\]
With $*$, $(\Fun(X \times X, \C), *)$ is an algebra. In fact, it's a familiar\footnote{This is why we don't meet this algebra as often as we meet the Hecke algebra - it's ``too easy,'' in the sense that it is just a matrix ring. However, when we categorify, it becomes more interesting, so we are more likely to encounter it (or regognize it!) in categorified settings.} algebra. Let $e_{x, y}$ be the indicator function on $(x, y) \in X \times X$. Then $e_{x, y} * e_{y', z} = \delta_{y, y'} e_{x, z}$. Hence,
\[
(\Fun(X \times X, \C), *) \simeq \Mat_{X \times X}(\C). 
\]
{\bf Some Variants:} 
\begin{enumerate}
    \item Let $X\rightarrow Y$ be a map of sets, then we can construct the convolution algebra
    \[
    \Fun(X \times_Y X, \C) \simeq \prod_{y \in Y} \Mat_{f^{-1}(y) \times f^{-1}(y)} (\C). 
    \]
    We can formulate Mashke's theorem in this language. For a finite group $G$, let $Y=\{\text{irreps of $G$}\}$, $X = \bigsqcup_{\rho \in Y} \{\text{basis of }\rho\}$, and $X \rightarrow Y$ the map which assigns to a basis element the corresponding representation. Then Mashke's Theorem that the group algebra is semisimple is the following statement:  \begin{theorem}
    (Mashke's Theorem)
    \[
    (\Fun(G, \C), * ) \simeq (\Fun(X \times_Y X, \C), *). 
    \] 
    \end{theorem}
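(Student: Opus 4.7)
The plan is to write down an explicit algebra homomorphism $\Phi:(\Fun(G,\C),*) \to (\Fun(X\times_Y X,\C),*)$ and show it is an isomorphism. First I would identify the right-hand side concretely: since the fibres of $f:X\to Y$ over $\rho\in Y$ are precisely the chosen basis of $\rho$, the variant from earlier in the section gives a canonical isomorphism of convolution algebras $\Fun(X\times_Y X,\C)\simeq \prod_{\rho\in Y}\Mat_{\dim\rho}(\C)$, where the basis function at $(e_i,e_j)\in \rho\times\rho$ maps to the matrix unit $E_{ij}^\rho$. Under this identification, $\Phi$ is defined by sending $f\in\Fun(G,\C)$ to the tuple $(\rho(f))_\rho$ where $\rho(f):=\sum_{g\in G} f(g)\rho(g)\in\Mat_{\dim\rho}(\C)$.

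Next I would check that $\Phi$ is a ring homomorphism. Using $(f*g)(h)=\sum_{k\in G}f(k)g(k^{-1}h)$ and $\rho(h)=\rho(k)\rho(k^{-1}h)$, one computes
\[
\rho(f*g)=\sum_h (f*g)(h)\rho(h)=\sum_{k,h} f(k)g(k^{-1}h)\rho(k)\rho(k^{-1}h)=\rho(f)\rho(g),
\]
so $\Phi$ respects convolution coordinate by coordinate. It is clearly linear and unital (sending $\delta_e$ to the tuple of identity matrices).

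Then I would show $\Phi$ is an isomorphism. For injectivity: if $\rho(f)=0$ for every irreducible $\rho$, then $f$ acts as zero on every irreducible $G$-module, hence on any direct sum of irreducibles. Since $\C[G]$ is itself such a sum (this is the one nontrivial input — essentially Maschke's theorem that $\C[G]$ is semisimple), $f$ acts as zero on $\C[G]$, and in particular $\sum_g f(g)\,g = f\cdot 1 = 0$, forcing $f=0$. For surjectivity, it suffices to compare dimensions: $\dim\Fun(G,\C)=|G|$ and $\dim\Fun(X\times_Y X,\C)=\sum_\rho(\dim\rho)^2$, and the equality $|G|=\sum_\rho(\dim\rho)^2$ follows from the decomposition of the regular representation $\C[G]\cong\bigoplus_\rho\rho^{\oplus\dim\rho}$.

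The main obstacle is thus the semisimplicity input: one needs that every $G$-module over $\C$ decomposes as a sum of irreducibles, which is Maschke's theorem proper (proved by averaging an inner product, or by constructing a splitting idempotent via $\frac{1}{|G|}\sum_g g$). Everything else is bookkeeping: once semisimplicity is granted, the injectivity argument and the dimension count both follow from decomposing $\C[G]$ into isotypic components, and the reformulation as $\Fun(X\times_Y X,\C)$ is just the statement that an endomorphism of $\rho^{\oplus\dim\rho}$ living inside $\End_G(\C[G])$ is the same data as a $\dim\rho\times\dim\rho$ matrix of scalars together with a choice of basis of $\rho$ — which is exactly what the fibre product $X\times_Y X$ parametrizes.
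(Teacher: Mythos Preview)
Your argument is correct and is the standard proof of the Artin--Wedderburn decomposition of $\C[G]$, repackaged in the convolution-algebra language of the paper. Note, however, that the paper does not actually supply a proof of this statement: it is presented as a reformulation of the classical Maschke theorem (``Mashke's Theorem that the group algebra is semisimple is the following statement''), with the semisimplicity of $\C[G]$ taken as known. So there is nothing to compare against --- your write-up is exactly the kind of proof one would give if asked to fill in the details, and it correctly isolates the one nontrivial input (semisimplicity, i.e.\ Maschke proper) from the bookkeeping.
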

    This realizes a complicated algebra (the group algebra) in terms of much simpler pieces (matrix rings). 
    \item If our sets $X, Y$ come with the additional structure of an action by a group $\Gamma$, and $X \rightarrow Y$ is a $\Gamma$-equivariant map of sets, then we can construct the convolution algebra 
    \[
    \Fun^\Gamma(X \times_Y X, \C).
    \]
\end{enumerate}

\noindent
{\bf The Upshot:} There are two types of convolution algebras, one is hard (Hecke algebras), and the other is easy ($\Fun(X \times X, \C))$. The hard one is of great significance in representation theory. A strategy that we use in representation theory is to try to realise the hard type as the easy type. 
\end{enumerate}

\subsection{Equivariant $K$-theory} 

Let $G$ be an algebraic group acting on a variety $X$. Then we can formulate the notion of an equivariant coherent sheaf, 
\[
\mc{F} \in \Coh_G(X), 
\]
as a coherent sheaf $\mc{F}$ on $X$, coupled with some extra data\footnote{For an excellent description of this construction, see notes from Emily's Sept 20, 2019 talk in the Informal Friday Seminar. Notes from Emily's (and all other) IFS talks can be found at \url{https://sites.google.com/view/ifssydney/home}.} Roughly speaking, one can think of an equivariant coherent sheaf on $X$ as being an equivariant sheaf together with an algebraic action on its sections.

\begin{remark}
If $\mc{F}$ is locally free, then $\mc{F}$ corresponds to a vector bundle $V \rightarrow X$ on $X$. In this setting,  $G$-equivariance of $\mc{F}$ corresponds to an algebraic $G$-action on $V$ which is compatible with projection. In particular,
\[
\Coh_G(pt) \simeq \Rep{G}, 
\]
where $\Rep{G}$ is the category of algebraic representations of $G$.
\end{remark}
Define two types of equivariant $K$-groups:
\begin{align*}
    K^G(X) &:= \text{ Grothendieck group of $G$-equivariant coherent sheaves on $X$}; \\
    K_G(X) &:= \text{ Grothendieck group of $G$-equivariant vector bundles on $X$}.
\end{align*}
\begin{remark}
\begin{enumerate}[label=(\alph*)]
\label{careful about functors}
\item Very loosely, 
\begin{align*}
    K^G(X) & \leftrightarrow \text{ ``Borel--Moore homology,''} \\
    K_G(X) & \leftrightarrow \text{ ``cohomology''.}
\end{align*}
\item There are higher $K$-groups, $K_i^G(X), K^i_G(X)$, which we will ignore here. (Already $K_0^G(X)=K^G(X)$ and $K^0_G(X)=K_G(X)$ are rich enough.) \item There is a natural map
\[
K_G(X) \rightarrow K^G(X).
\]
If $X$ is smooth, then we can use resolutions by coherent sheaves to show that this map is an isomorphism.
\item We have to be careful with functors between $K$-groups. For example, if $X \rightarrow pt$ is projection and $X$ is affine, then $f_*\mc{O}_X = \Gamma(X, \mc{O}_X)$ is usually infinite dimensional. So in general, we need to work to justify that functors we wish to use preserve Coh, or at least $D^b(\text{Coh})$. 
\end{enumerate}
\end{remark}

As we did for $H_*$, we have a convolution product in $K$-theory. We imitate the set-up of the previous lecture: Let $X_1, X_2, X_3$ be smooth varieties of dimensions $d_1, d_2, d_3$, respectively, and 
\[
Z_{12} \subset X_1 \times X_2, \hspace{3mm} Z_{23} \subset X_2 \times X_3
\]
closed subvarieties, with projections $p_{ij}$, $i,j = 1, 2, 3$ as before. Given $\mc{F} \in \Coh(Z_{12}), \mc{G} \in \Coh(Z_{23})$, define 
\[
\mc{F} * \mc{G} := p_{13*}(p_{12}^*\mc{F} \overset{L}{\otimes} p_{23}^* \mc{G}) \in D^b(\Coh(Z_{12} \circ Z_{23})).
\]
As we emphasized in Remark \ref{careful about functors}, we need to justify that this product is well-defined. But sure enough, the push-forward $p_{13*}$ preserves coherence because $p_{13}$ is proper, and the pull-backs $p_{12}^*, p_{23}^*$ are okay because $p_{12}$ and $p_{23}$ are flat. Hence $*$ induces a product 
\[
K_0(Z_{12}) \times K_0(Z_{23}) \rightarrow K_0(Z_{12} \circ Z_{23})
\]
which descends to a product 
\[
K^G(Z_{12}) \times K^G(Z_{23}) \rightarrow K^G(Z_{12} \circ Z_{23}). 
\]
\begin{example}
Let $X$ be a finite set, and $X\rightarrow pt$ projection to a point. In this case, elements of $\Coh(X \times X)$ are ``matrices of vector spaces,'' and 
\[
*: \Coh(X \times X) \times \Coh(X \times X) \rightarrow \Coh(X \times X)
\]
is ``multiplication of matrices''. That is, for vector spaces $V_{ij}$, 
\[
\bp V_{11} & V_{12} \\ V_{21} & V_{22} \ep * \bp V_{11}' & V_{12}' \\ V_{21}' & V_{22}' \ep = \bp V_{11}\otimes V_{11}' \oplus V_{12} \otimes V_{21}' & V_{11} \otimes V_{12}' \oplus V_{12} \otimes V_{22}' \\ 
V_{21} \otimes V_{11}' \oplus V_{22} \otimes V_{21}' & V_{21}\otimes V_{12}' \oplus V_{22} \otimes V_{22}' \ep. 
\]
The $K$-group is $K_0(X \times X) = \Fun(X \times X, \Z)$. 
\end{example}
\begin{remark}
Lusztig noticed that certain categories of the form $\Coh^\Gamma(X \times X)$, where $\Gamma$ is a group acting on a finite set $X$, are central to the classification of unipotent character sheaves. 
\end{remark}

\subsection{The Kazhdan--Lusztig isomorphism}
With this we have enough machinery to state the Kazhdan--Lusztig isomorphism. Let $(X \supset R, X^\vee \supset R^\vee)$ be a root datum, $W_{ext} = W_f \ltimes \Z X^\vee$ the extended affine Weyl group, and $\mc{H}_{ext}$ the affine Hecke algebra. (See lecture \ref{lecture 11} for a refresher on these objects.) Let $G^\vee$ be the corresponding algebraic group, $\mc{N}^\vee \subset \mf{g}^\vee$ the nilpotent cone, and $St^\vee:=T^*\mc{B}^\vee \times_{\mc{N}^\vee} T^* \mc{B}^\vee$ the Steinberg variety. (See Lecture \ref{lecture 13}.)  
\begin{theorem}
\label{KL iso}
(Kazhdan--Lusztig) We have canonical isomorphisms.
\[
\begin{tikzcd}
\mc{H}_{ext} \arrow[r, "\sim"] \arrow[d, "v=1"']& K^{G^\vee \times \C^\times}(St^\vee) \arrow[d, "\text{forget} \atop \C^\times\text{-action}"]\\
\Z W_{ext} \arrow[r, "\sim"] & K^{G^\vee}(St^\vee) 
\end{tikzcd}
\]
Here $\C^\times$ acts on $T^*\mc{B}^\vee$ by dilation along the fibres.
\end{theorem}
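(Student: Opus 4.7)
\subsection*{Proof plan for Theorem \ref{KL iso}}

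The plan is to construct an explicit algebra map $\Phi: \mathcal{H}_{\text{ext}} \to K^{G^\vee \times \C^\times}(St^\vee)$ using the Bernstein presentation (Theorem \ref{Bernstein presentation}), check relations, and then prove bijectivity by a cellular filtration / localization argument. Throughout, the $\C^\times$-action on $T^*\mathcal{B}^\vee$ is the fibrewise dilation, so that the tautological character $q\in K^{G^\vee\times\C^\times}(\mathrm{pt})$ matches the Hecke parameter via $q = v^{-2}$. The convolution product on $K^{G^\vee\times\C^\times}(St^\vee)$ is as developed in the previous lecture: this is the essential input making the right-hand side an associative $\Z[v^{\pm 1}]$-algebra (with $v^{\pm 1}$ the generator of $R(\C^\times)$).

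First I would produce the candidate classes. For the lattice part $\Z[v^{\pm 1}][X^\vee]\hookrightarrow \mathcal{H}_{\text{ext}}$, each $\lambda\in X^\vee$ gives a $G^\vee$-equivariant line bundle $\mathcal{L}(\lambda)$ on $\mathcal{B}^\vee$ (well-defined since we are using the dual root datum); I would send $H_\lambda$ to the class of its pullback to the diagonal copy of $T^*\mathcal{B}^\vee$ sitting inside $St^\vee$, namely $[\mathcal{O}_{\Delta\widetilde{\mathcal{N}}^\vee}\otimes \mathcal{L}(\lambda)\boxtimes \mathcal{O}]$, suitably $\C^\times$-twisted. For the finite Hecke part, each simple reflection $s\in S_f$ determines a codimension-one Schubert divisor $\overline{\mathcal{O}_s}\subset \mathcal{B}^\vee\times\mathcal{B}^\vee$; I would send $H_s$ to a combination like $[\mathcal{O}_{\overline{T^*_{\mathcal{O}_s}}}] + c\cdot[\mathcal{O}_{\Delta}]$ with the scalar $c\in\Z[v^{\pm 1}]$ fixed so that the quadratic relation $H_s^2 = 1+(v^{-1}-v)H_s$ holds. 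The correct normalization (due to Ginzburg) is essentially the structure sheaf of the conormal variety to $\overline{\mathcal{O}_s}$, shifted by a character of $\C^\times$ reflecting the codimension.

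Next I would verify the Bernstein relations. The lattice relation $H_\lambda H_\mu = H_{\lambda+\mu}$ is immediate from $\mathcal{L}(\lambda)\otimes\mathcal{L}(\mu)=\mathcal{L}(\lambda+\mu)$ and the fact that convolution on the diagonal is just tensor product. The finite Hecke quadratic relation reduces, by equivariant excision, to a computation on the $\PP^1$-bundle $\mathcal{B}^\vee\to\mathcal{B}^\vee/P_s^\vee$; this is a standard $\SL_2$-calculation where the appearance of $v-v^{-1}$ comes from the weight of $\C^\times$ on the tangent space of $\PP^1$ at its two fixed points. The subtle relation is the mixed Bernstein relation
\[
H_{s_\alpha}H_{s_\alpha(\lambda)} - H_\lambda H_{s_\alpha} = (v-v^{-1})\frac{H_\lambda - H_{s_\alpha(\lambda)}}{1-H_{-\alpha}};
\]
I would check this by reducing to the minimal parabolic $P_s^\vee$, where $St^\vee$ restricts to $T^*\PP^1\times_{\mathcal{N}^\vee_{\mathfrak{sl}_2}}T^*\PP^1$, and then doing a direct $K$-theoretic intersection calculation (this is essentially the $\widehat{\SL}_2$ computation we already did by hand for $\PGL_2$ in Lecture \ref{lecture 12}). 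This verification step is where all the work is, but it is purely local on the base.

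Finally, for bijectivity I would use the cellular fibration lemma of Chriss--Ginzburg. The Bruhat stratification $\mathcal{B}^\vee\times\mathcal{B}^\vee = \bigsqcup_{w\in W_f}\mathcal{O}_w$ and its refinement to $W_{\text{ext}}$-many strata in $\widetilde{\mathfrak{g}}^\vee\times\widetilde{\mathfrak{g}}^\vee$ pulls back to a filtration of $St^\vee$ by conormal bundles $T^*_w$, each of which is a vector bundle over an affine-space-bundle over $\mathcal{B}^\vee$. The cellular fibration / Thom isomorphism in equivariant $K$-theory then gives that $K^{G^\vee\times\C^\times}(St^\vee)$ is a free $R(T^\vee)[v^{\pm 1}]$-module of rank $|W_f|$, with one generator $[\mathcal{O}_{\overline{T^*_w}}]$ per $w\in W_{\text{ext}}$; comparing with the Bernstein basis $\{H_\lambda H_w\}$ of $\mathcal{H}_{\text{ext}}$ gives an isomorphism of free modules, and then the algebra structure matches by the relations checked above. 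The specialization at $v=1$ (equivalently, forgetting the $\C^\times$-action) is automatic from the construction. The main obstacle, in my view, is not the cellular fibration bookkeeping but rather pinning down the exact normalizing twists by characters of $\C^\times$ so that $H_s\mapsto[\text{something}]$ satisfies the quadratic relation on the nose; historically this is where sign and shift conventions vary between sources, and it is the one place where a routine-looking calculation must be done very carefully.
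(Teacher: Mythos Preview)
Your approach is essentially the classical Chriss--Ginzburg route and is correct in principle, but the paper deliberately takes a different path. In fact, at the end of Lecture~\ref{lecture 20} the paper outlines your strategy (define the map on Bernstein generators and check relations) and then explicitly remarks that ``it is very challenging to verify the relations directly!'' --- this is precisely the mixed Bernstein relation you flag as subtle. Your reference to ``the $\widehat{\SL}_2$ computation we already did by hand for $\PGL_2$ in Lecture~\ref{lecture 12}'' is a misreading: that lecture verified the Bernstein relation inside the Hecke algebra, not in $K$-theory of the Steinberg variety, which is a rather different calculation.

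What the paper does instead (Lectures~\ref{lecture 21}--\ref{lecture 22}) is a \emph{faithful module} argument. Both $\mc{H}_{\mathrm{ext}}$ and $K^{G^\vee\times\C^\times}(St^\vee)$ act on the same underlying $\Z[v^{\pm 1}]$-module, identified as the anti-spherical module $N = H\otimes_{H_f}\mathrm{sgn} \simeq \Z[v^{\pm1}][X^\vee] \simeq K^{G^\vee\times\C^\times}(\widetilde{\mc{N}}^\vee)$. The paper computes the action of $b_s$ on $N$ via the Demazure--Lusztig formula, then computes the convolution action of a specific class $[Q_s]$ on $K^{G^\vee\times\C^\times}(\widetilde{\mc{N}}^\vee)$ using the Weyl character formula for the $\PP^1$-fibration $\mc{B}^\vee\to\mc{B}_s^\vee$ together with a Koszul resolution of the zero section; the two formulas match (up to a $\rho$-shift). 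Since both modules are faithful, the algebras are isomorphic. Your cellular filtration argument for the underlying module structure \emph{is} used in the paper (end of Lecture~\ref{lecture 21}), but only to establish that $K^{G^\vee\times\C^\times}(St^\vee)$ has the right size, not for the algebra structure.

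The trade-off: your approach is more intrinsic (the relations live in the algebra itself) but requires a delicate intersection-theoretic computation on $St^\vee$; the paper's approach pushes everything down to a single faithful module where the computation becomes a tractable push-pull along a $\PP^1$-bundle plus a rank-one Koszul resolution.
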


Here are two examples of how Theorem \ref{KL iso} gives us insight into the representation theory of affine Hecke algebras. 
\begin{enumerate}
    \item (Bernstein) As we have discussed, there is an inclusion of algebras: 
    \begin{align*}
    \Z[v^{\pm 1}][X^\vee] &\subset \mc{H}_{ext}\\
    \lambda &\mapsto H_\lambda
    \end{align*}
    Bernstein noticed that the center of $\mc{H}_{ext}$ can be realized as $W_f$-invariants of this subalgebra: 
    \[
    \Z[v^{\pm 1}][X^\vee]^{W_f} = Z(\mc{H}_{ext}).
    \]

    \hspace{5mm} We can see this in terms of the Kazhdan--Lusztig isomorphism. Recall that for a map $X \rightarrow Y$ between finite sets, we have 
    \[
    \begin{tikzcd}
    X \arrow[r, hookrightarrow, "\text{diagonal}"]  & X \times X \arrow[r] \arrow[d] & X \arrow[d] \\
     & X \arrow[r] & Y 
    \end{tikzcd}
    \]
    and $\Fun(\text{diag}, \C) \subset \Fun(X \times_Y X, \C)$ are ``diagonal matrices''. Applying this to the diagonal $T^*\mc{B}^\vee \subset St^\vee$, we can think of 
    \[
    K^{G \times \C^\times}(T^*\mc{B}^\vee) \subset K^{G \times \C^\times}(St^\vee)
    \]
    as ``diagonal matrices''. Now by homotopy,
    \[
    K^{G^\vee \times \C^\times}(T^*\mc{B}^\vee) = K^{G^\vee \times \C^\times}(\mc{B}^\vee), 
    \]
    and since $K^{B^\vee}(pt)=[\Rep{B^\vee}]=\Z[X^\vee]$ and $K^{\C^\times}(pt)=[\Rep{\C^\times}]=\Z[v^{\pm 1}]$, 
    \[
    K^{G^\vee \times \C^\times}(\mc{B}^\vee) = K^{G^\vee \times \C^\times}(G^\vee/B^\vee) = K^{B^\vee \times \C^\times}(pt) = \Z[v^{\pm 1}][X^\vee]. 
    \]
    So the subalgebra $\Z[v^{\pm 1}][X^\vee]$ sits inside $\mc{H}_{ext}$ as ``diagonal matrices'' in the $G^\vee \times \C^\times$-equivariant $K$-group of $St^\vee$. Moreover, $K^{G^\vee \times \C^\times}(pt)$ is clearly central in $K^{G^\vee \times \C^\times}(St^\vee)$, and this gives the Bernstein center:
    \[
    K^{G^\vee \times \C^\times}(pt) = (K^{B^\vee \times \C^\times}(pt))^{W_f} = (\Z[v^{\pm 1}][X^\vee])^{W_f}. 
    \]
    \item The Kazhdan--Lusztig basis of $\mc{H}_{ext}$ leads to the notion of (left, right, two-sided) cells\footnote{Pictures of 2-sided cells in rank $2$ can be found on Lusztig's webpage: \url{http://www-math.mit.edu/~gyuri/picture.html}.}. Lusztig noticed that the sets 
    \[
    \left\{ \begin{array}{c} \text{poset of }\\ \text{2-sided} \\ \text{cells} \end{array} \right\} \leftrightarrow \left\{ \begin{array}{c} \text{nilpotent} \\ \text{orbits} \\ \text{in $\mc{N}^\vee$} \end{array} \right\} 
    \]
    appear to match. 
    \[
    \includegraphics[scale=0.4]{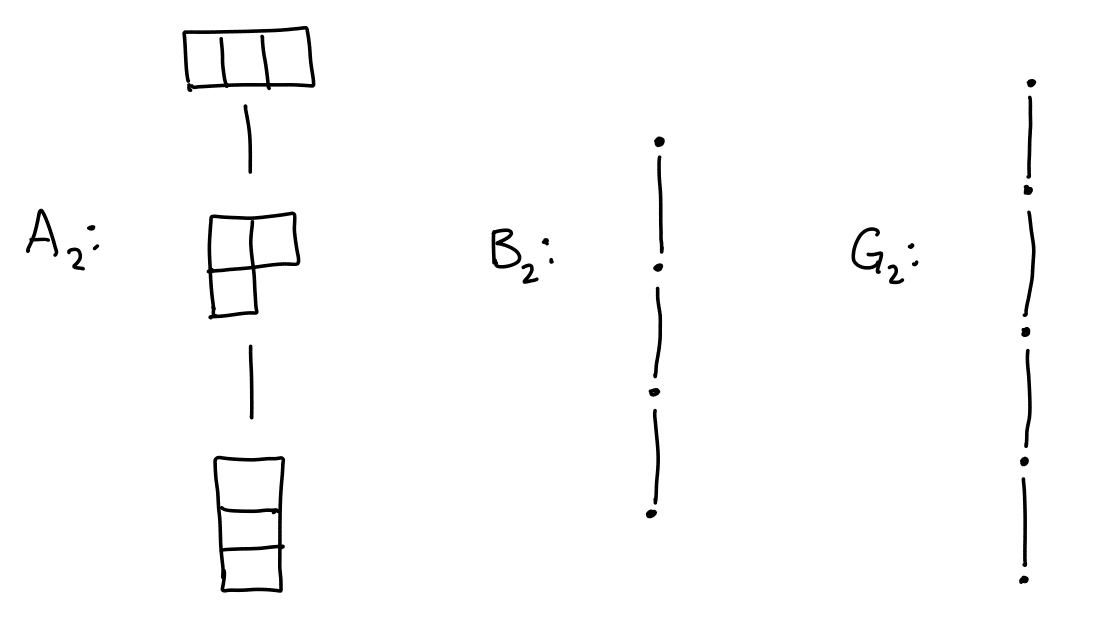}
    \]
    He also noticed other remarkable parallels, such as 
    \[
    \begin{array}{c} \text{2-sided cell} \\ \text{is finite} \end{array} \iff \begin{array}{c} \text{reductive part of} \\
    \text{the centralizer} \\ \text{is finite} \end{array}.
    \]
    These parallels convince one rather quickly that this correspondence is deep. 
    
    \hspace{5mm} We can use Theorem \ref{KL iso} to understand this observation of Lustig. Consider the projections 
    \[
    \begin{tikzcd}
    & St^\vee \arrow[dr] \arrow[dd,"p"] \arrow[dl] & \\
    T^*\mc{B}^\vee \arrow[dr] & & T^*\mc{B}^\vee \arrow[dl]\\
    & \mc{N}^\vee & 
    \end{tikzcd}
    \]
    Convolution makes it clear that any closed $G^\vee$-invariant subvariety $Z \subset \mc{N}^\vee$ gives rise to a two-sided ideal in $K^{G^\vee \times \C^\times}(St^\vee)$:
    \[
    \{[M] \mid \supp{M} \subset p^{-1}(Z)\}. 
    \]
    Hence we get a filtration of $\mc{H}_{ext}$ by nilpotent orbits in $\mc{N}^\vee$! Lusztig and Bezrukavnikov showed that the filtration of $\mc{H}_{ext}$ coming from $2$-sided cells agrees with this filtration coming from geometry, explaining Lusztig's observation that the $2$-sided cell order matches the order on nilpotent orbits. 
    \item We won't explain this in detail due to time constraints, but one can show relatively easily that Theorem \ref{KL iso} implies the Deligne--Langlands conjecture (Section \ref{Deligne-Langlands conjecture}) on representations of $\mc{H}_{ext}$, as long as $q$ is not a root of unity. 
\end{enumerate}

\subsection{Bezrukavnikov's equivalence: rough outline}

Now we are finally in the position to approach the second half of the title of this course. Here is a rough outline of the flow of ideas that lead to Bezrukavnikov's equivalence: 
\begin{itemize}
    \item {\bf Langlands correspondence}: A correspondence between sets satisfying a whole host of properties, and with many extraordinary consequences in number theory and beyond. 
    \item {\bf Weil}: In the function field case, an automorphic form\footnote{We haven't gone into what an automorphic form is yet in this course. For a global field $K$ it is a function of a special form on a certain quotient of the adelic points of a reductive group. In the function field case, Weil realised that this quotient parametrises $G$-bundles on the corresponding curve. In other words, in the function field case an automorphic form can be regarded as a function of the form given above.} can be regarded as a (very special) function 
    \[
    f: \Bun_G(\mathbb{F}_q):=\left\{ \begin{array}{c} \text{iso classes of} \\ \text{$G$-bundles on} \\ \text{smooth curves}/\mathbb{F}_q \end{array} \right\} \rightarrow \C.
    \]
    \item {\bf Grothendieck}: Functions on a variety should be understood as shadows of sheaves (function-sheaf correspondence). Here are some examples. 
    \begin{itemize}
        \item {\bf E.g. 1}: Characters of $\GL_n(\mathbb{F}_q)$ are shadows (trace of Frob) of certain $\ell$-adic sheaves on $G$, ``character sheaves''.
        \item {\bf E.g. 2}: Interesting analytic functions should satisfy many differential equations (i.e. should be solutions of a holonomic $\mc{D}$-module).
    \end{itemize}
    \item {\bf Drinfeld}: The Langlands correspondence should be approached using Grothendieck's dictionary. At its most basic level, this philosophy asserts that automorphic forms in the function field case should arise as traces of Frobenius of certain sheaves on the moduli space of $G$-bundles. At a more sophisticated level, we should expect an equivalence of categories 
    \[
    D^b(\Sh(\Bun_G)) \xleftrightarrow{???} D^b(\Coh(\Loc_G)),
    \]
    i.e. a ``geometric Langlands correspondence''. This allows us to work over $\C$, and provides fertile connections to physics, higher category theory, etc. 
    \begin{remark}
    Geometric Langlands is for {\em function fields}. The classical Langlands correspondence isn't. So they seem to be living in different worlds, but the hope is that they are actually related. For many years this appeared to many as a pretty wild idea. However, recently the geometric Langlands program has been shown to have consequences that number theorists really care about. For example Fargues showed \cite{Fargues} that geometric Langlands for the Fargues-Fontaine curve implies (part of) the LLC for {\em any} $p$-adic group!
    \end{remark}
  \item {\bf Ginzburg}: A small piece of geometric Langlands should be controlled by a categorification of the Kazhdan--Lusztig isomorphism
  \[
  \mc{H}_{ext} \simeq K^{G^\vee \times \C^\times}(St^\vee). 
  \]
  What is sought is a fundamental monoidal category that arises in two Langlands dual ways. Recall that the geometric Satake equivalence categorifies $\mc{H}^{sph} = K^G(pt)$ (Theorem \ref{satake iso}). Thus this equivalence can be seen as one layer of difficulty beyond the geometric Satake equivalence.
  \item {\bf Bezrukavnikov}: realization of (several) such equivalences:
  \begin{align*}
      K^{G^\vee \times \C^\times}(St^\vee) &\rightsquigarrow \Coh^{G^\vee \times \C^\times}(St^\vee) \text{ ``coherent side''} \\
      \mc{H}_{ext} &\rightsquigarrow D^b_{Iw}(G((t))/Iw) \text{ ``constructible side''}
  \end{align*}
 \begin{theorem}
 (Bezrukavnikov's equivalence, most basic version) Let $Iw \subset G((t))$ be an Iwahori subgroup, and $Iw_0 \subset Iw$ the pro-unipotent radical. There is an equivalence of monoidal categories 
 \[
 \left( \stackon[-8pt]{$D^b_{Iw_0}(G((t))/Iw)$}{\vstretch{1.5}{\hstretch{3.4}{\widehat{\phantom{\;\;\;\;\;\;\;\;}}}}}, * \right) \simeq \left( D^b\Coh^{G^\vee \times \C^\times}(\widetilde{St}^\vee), * \right).
 \]
 where $\widetilde{St}^\vee = \widetilde{\mf{g}}^\vee \times_{\mf{g}^\vee} \widetilde{\mf{g}}^\vee$ and the hat indicates the pro-unipotent completion.
 
 \begin{remark}
 There are several aspects of the above that need explanation, hopefully this will occur over the coming weeks and months!
 \end{remark}
 \end{theorem}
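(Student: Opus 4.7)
The plan is to mimic the decategorification: since both sides categorify $\mathcal{H}_{ext}$ via the Kazhdan--Lusztig isomorphism (Theorem \ref{KL iso}), one expects generators on each side labelled by $W_{ext}$ whose convolution-theoretic structure constants match. On the coherent side, natural candidates are the structure sheaves $\mathcal{O}_{\overline{\widetilde{St}^\vee_w}}$ of the closed subvarieties indexed by $w \in W_{ext}$ (the graphs of the $G^\vee$-orbits on $\widetilde{\mathfrak{g}}^\vee \times_{\mathfrak{g}^\vee} \widetilde{\mathfrak{g}}^\vee$); on the constructible side, the standard/costandard sheaves $\Delta_w, \nabla_w$ supported on Iwahori orbits in $G((t))/Iw$. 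The first step is to verify by direct computation that, at the level of $K$-theory, these classes realise the Kazhdan--Lusztig basis and its dual on each side, which provides the numerical consistency check and, more importantly, pins down on which objects the candidate functor should be defined.

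The crucial structural input is the geometric Satake equivalence $(\Perv_{G(\mathcal{O})}(\mathcal{G}r_G), *) \simeq (\Rep G^\vee, \otimes)$, which I would use to equip both sides with compatible central actions. On the coherent side, $\Rep G^\vee = \Coh^{G^\vee}(pt)$ acts by pullback along $\widetilde{St}^\vee \to pt$. On the constructible side, Gaitsgory's central sheaves functor $Z: \Perv_{G(\mathcal{O})}(\mathcal{G}r_G) \to \widehat{D^b_{Iw_0}(G((t))/Iw)}$ produces, via nearby cycles for a degeneration of the affine flag variety to the affine Grassmannian, objects that are central with respect to the monoidal structure $*$. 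Matching these two central actions is the natural conduit through which the Bernstein centre of $\mathcal{H}_{ext}$ --- realised geometrically by Example (1) above as the $W_f$-invariants in $\mathbb{Z}[v^{\pm 1}][X^\vee]$ --- gets categorified on both sides simultaneously.

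To construct the functor, I would first establish a ``coherent realisation of standard sheaves'': send the Wakimoto sheaf $J_\lambda$ (the convolution-theoretic extension of $\nabla_{t_\lambda}$ to arbitrary $\lambda \in X^\vee$, categorifying Bernstein's $H_\lambda$) to the line bundle $\mathcal{O}_{\widetilde{\mathcal{N}}^\vee}(\lambda)$ viewed on the diagonal, and similarly identify the generators $\Delta_s$ for simple reflections $s \in S$ with the structure sheaves of the length-one Schubert correspondences on $\widetilde{St}^\vee$. Extending monoidally using the Bernstein presentation (Theorem \ref{Bernstein presentation}) and checking the braid relations gives a candidate functor on a full subcategory; the action of central sheaves then lets one extend to all of $\widehat{D^b_{Iw_0}(G((t))/Iw)}$ by a Barr--Beck-style argument, realising the right-hand side as modules for a monad built from the central structure.

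The main obstacle, and the technical heart of Bezrukavnikov's work, will be proving that this functor is an equivalence. Direct Hom-computations on $\widetilde{St}^\vee$ are intractable because of the singular fibre product. The strategy is to pass to an anti-spherical (or Iwahori--Whittaker) quotient, where both sides simplify dramatically: on the constructible side one quotients by the subcategory killed by averaging against a generic character of $Iw_0$, while on the coherent side one restricts along the open embedding $\widetilde{\mathcal{N}}^\vee \hookrightarrow \widetilde{St}^\vee$ picking out the ``regular'' Springer fibre diagonal. One first proves an Arkhipov--Bezrukavnikov-type equivalence $D^b_{Iw_0-\mathrm{Whit}}(G((t))/Iw) \simeq D^b\Coh^{G^\vee \times \mathbb{C}^\times}(\widetilde{\mathcal{N}}^\vee)$ by exhibiting a tilting generator on each side and matching endomorphism algebras; here the exotic perverse coherent $t$-structure on the coherent side (a nontrivial construction that has to be developed ab initio) is essential to control the combinatorics. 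Having established the anti-spherical equivalence, one then bootstraps back to the full statement by showing that the central sheaves functor and its coherent counterpart (restriction from $\widetilde{St}^\vee$ to $\widetilde{\mathcal{N}}^\vee$) generate the respective ambient categories over their anti-spherical quotients in a compatible way --- this ``monodromic''/deformation argument, which controls the fattening from $\mathcal{N}^\vee$ to $\mathfrak{g}^\vee$ via the $\mathbb{C}^\times$-action, is where essentially all the work lies.
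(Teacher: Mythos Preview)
Your outline matches the paper's strategy: Gaitsgory's central sheaves and Wakimoto sheaves $J_\lambda \leftrightarrow \mathcal{O}(\lambda)$ build the functor, one first proves the Arkhipov--Bezrukavnikov equivalence $D^b\Coh^{G^\vee}(\widetilde{\mathcal{N}}^\vee)\simeq D_{IW}$ on the anti-spherical/Iwahori--Whittaker quotient, then bootstraps to the full statement. Two differences in emphasis are worth flagging: the paper constructs the functor not by matching generators and checking Bernstein-presentation relations, but via Tannakian formalism --- the central functor $Z$, its $\otimes$-derivation $N$ coming from monodromy of nearby cycles, and highest-weight Pl\"ucker arrows $b_\lambda:J_\lambda\to Z(V_\lambda)$ assemble into a functor $\Coh_{\mathrm{free}}^{G^\vee}(\widetilde{\mathcal{N}}^\vee)\to P_I$ --- and the pro-unipotent completion (the hat) and the passage from $\widetilde{\mathcal{N}}^\vee$ to $\widetilde{\mathfrak{g}}^\vee$ are handled via the Bezrukavnikov--Yun machinery of free-monodromic tilting sheaves, not a Barr--Beck argument. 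Also, $\widetilde{\mathcal{N}}^\vee$ does not sit in $\widetilde{St}^\vee$ as an open subvariety but rather via the diagonal, so the coherent counterpart of the anti-spherical quotient is a projection, not a restriction to an open.
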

\end{itemize}
\pagebreak
\section{Lecture 16: The constructible side of Bezrukavnikov's equivalence}
\label{lecture 16}

Last lecture we ended by stating (modulo several undefined pieces) {\bf Bezrukavnikov's equivalence}:
 \begin{align*}
  \text{``constructible side'' } \hspace{2mm} &\hspace{12mm} \text{``coherent side''} \\
 \left( \stackon[-8pt]{$D^b_{Iw_0}(G((t))/Iw)$}{\vstretch{1.5}{\hstretch{3.4}{\widehat{\phantom{\;\;\;\;\;\;\;\;}}}}}, * \right) &\simeq \left( D^b\Coh^{G^\vee \times \C^\times}(\widetilde{St}^\vee), * \right). 
 \end{align*}

Our goal for today is to motivate the constructible side of this equivalence. To do so, we will explain Grothendieck's function-sheaf correspondence in slightly more detail than is usually done. The starting place is the {\bf Weil conjectures}.

\subsection{Weil conjectures}
The Weil conjectures were a collection of statements (made by Weil while in jail during the second world war) about counting the number of points on an algebraic variety over a finite field. Let $X$ be a smooth projective algebraic variety over $\mathbb{F}_q$. Then we can ask about the number of points $\#X(\mathbb{F}_{q^n})$ for all $n$. Weil conjectured that a generating series built out of $\# X(\mathbb{F}_{q^n})$ has remarkable properties. He also pointed out that his conjecture would follow from an interpretation of $\#X(\mathbb{F}_{q^n})$ as the trace of an operator on a vector space, by taking traces of its powers. The Grothendieck--Lefschetz trace formula, which we briefly visited in the context of elliptic curves in Lecture \ref{sato-tate}, provides such an interpretation.

\vspace{5mm}
\noindent
{\bf Grothendieck--Lefschetz trace formula}: Let $\ell$ be a prime not dividing $q$, and $X$ as above (but not necessarily projective). Then 
\begin{equation}
    \label{Grothendieck-Lefschetz}
\#X(\mathbb{F}_{q^n}) =\sTr(\text{Frob}_{q}^n \circlearrowright H_c^i(X_{\overline{\mathbb{F}}_q}, \overline{\mathbb{Q}}_\ell) ):= \sum_i (-1)^i \Tr\left(\text{Frob}_q^n \circlearrowright H_c^i(X_{\overline{\mathbb{F}}_q}, \overline{\mathbb{Q}}_\ell)\right). 
\end{equation}
Here $H^i_c(X_{\overline{\mathbb{F}}_q}, \overline{\mathbb{Q}}_\ell)$ denotes the compactly supported \'etale cohomology of $X_{\overline{\mathbb{F}}_q}$, the base change of $X$ to $\Spec \overline{\mathbb{F}}_q$. If $X/k$, then $H_{c}^*(X_{\overline{k}}, \overline{\mathbb{Q}}_\ell)$ has a continuous action of $\Gal(\overline{k}/k)$. The symbol $\sTr$ in (\ref{Grothendieck-Lefschetz}) stands for ``supertrace''. Let's see what this formula means in examples. 

\begin{example}
Let $X=\mathbb{P}_{\mathbb{F}_q}^1$, then the cohomology and action of Frobenius are given by the following table:
\begin{center}
 \begin{tabular}{|c|c|c|} 
 \hline
 $i$ & $H^i_c(X_{\overline{\mathbb{F}}_q},\overline{\mathbb{Q}}_\ell)$ & {Frob action}\\ [0.5ex]
 \hline\hline
 2 & $\overline{\mathbb{Q}}_\ell$ & $\circlearrowleft q$\\ 
 \hline
 1 & 0 & \\
 \hline
 0 & $\overline{\mathbb{Q}}_\ell$ & $\circlearrowleft 1$\\
 \hline
\end{tabular}
\end{center}
Hence, 
\[
\sTr\left(\text{Frob}_q^n\circlearrowright H^* \right) = 1+q^n = \#X(\mathbb{F}_{q^n}).
\]
\end{example}
\begin{example}
Let $X=\mathbb{G}_m$. Then the action of Frobenius on cohomology is:
\begin{center}
 \begin{tabular}{|c|c|c|} 
 \hline
 $i$ & $H^i_c(X_{\overline{\mathbb{F}}_q},\overline{\mathbb{Q}}_\ell)$ & {Frob action}\\ [0.5ex]
 \hline\hline
 2 & $\overline{\mathbb{Q}}_\ell$ & $\circlearrowleft q$\\ 
 \hline
  1 & $\overline{\mathbb{Q}}_\ell$ & $\circlearrowleft 1$\\
 \hline
 0 & 0 & \\
 \hline
\end{tabular}
\end{center}
Hence,
\[
\sTr\left(\mathrm{Frob}_q^n\circlearrowright H^* \right) = q^n-1 = \#\mathbb{G}_m(\mathbb{F}_{q^n})=\#\mathbb{F}_{q^n}^\times.
\]
\end{example}
\begin{example}
Let $X=\Spec{\mathbb{F}_{q^2}}/\Spec{\mathbb{F}_q}$. Then 
\begin{align*}
X(\mathbb{F}_{q^n}) &= \Hom(\Spec{\mathbb{F}_{q^n}}, \Spec{\mathbb{F}_{q^2}}) \\
&=\Hom_{\mathbb{F}_q\text{-alg}}(\mathbb{F}_{q^2}, \mathbb{F}_{q^n}) \\
&=\begin{cases}
0 & \text{$n$ odd}, \\ 
2 & \text{$n$ even}.
\end{cases}
\end{align*}
On the geometric side, 
\begin{align*}
    X_{\overline{\mathbb{F}}_q} &= \Spec{\overline{\mathbb{F}}_q} \times_{\Spec{\mathbb{F}_q}} \Spec{\mathbb{F}_{q^2}} \\
    &=\Spec(\overline{\mathbb{F}}_q \otimes_{\mathbb{F}_q} \mathbb{F}_{q^2}) \\
    &= \Spec{\overline{\mathbb{F}}_q}\sqcup \Spec{\overline{\mathbb{F}}_q} \\
    &\simeq \overline{\mathbb{F}}_q \times \overline{\mathbb{F}}_q.
\end{align*}
\begin{remark}
For analogy, an easier version of the computation above is the following. We can realize 
\[
\C \simeq \R[x] / (x^2 +1).
\]
Then 
\begin{align*}
    \C \otimes_{\R}\C &= \C[x]/(x^2+1) \\
    &= \C[x]/(x-i)(x+i) \\
    &\simeq \C \times \C. 
\end{align*}
\end{remark}

\noindent
{\bf The Upshot}: The variety $X_{\overline{\mathbb{F}}_q}$ consists of $2$ points which are interchanged by $\Gal(\overline{\mathbb{F}}_q / \mathbb{F}_q)$. So our table of cohomology is 
\begin{center}
 \begin{tabular}{|c|c|c|} 
 \hline
 $i$ & $H^i_c(X_{\overline{\mathbb{F}}_q},\overline{\mathbb{Q}}_\ell)$ & {Frob action}\\ [0.5ex]
 \hline\hline
 $>0$ & 0 & \\ 
 \hline
  0 & $\overline{\mathbb{Q}}_\ell\oplus \overline{\mathbb{Q}}_\ell$ & $\circlearrowleft \bp 0 & 1 \\ 1 & 0 \ep$\\
 \hline
\end{tabular}
\end{center}
and 
\[
\sTr\left(\text{Frob}_q^n\circlearrowright H^* \right) = \begin{cases}
0 & \text{$n$ is odd}, \\
2 & \text{$n$ is even}. 
\end{cases}
\]
\end{example}
The Grothendieck--Lefschetz formula and the examples above can be realized as shadows of something happening on the level of sheaves. Let $k$ be a field. Then, roughly, 
\[
\Spec{k} = \pt/\Gal(\overline{k}/k). 
\]
In other words, one can think of $\Spec k$ as something like the classifying space of its absolute Galois group. There is a bijection 
\begin{align*}
    \begin{array}{c}
    \text{\'etale cohomology} \\
    \text{sheaves on $\Spec{k}$} \\
    \text{with $\Z/\ell^m \Z$-coefficients} \end{array} 
    &\xleftrightarrow{\sim} \begin{array}{c} \text{finitely generated} \\
    \text{$\Z/\ell^m \Z$-modules with} \\
    \text{an action of $\Gal(\overline{k}/k)$} \end{array}\\
    \text{$\overline{\mathbb{Q}}_\ell$-coefficients } &\hookrightarrow \begin{array}{c} \text{finite-dimensional continuous}\\
    \text{representations of $\Gal(\overline{k}/k)$} \\
    \text{on $\overline{\mathbb{Q}}_\ell$-vector spaces} \end{array}
\end{align*}
Let $\mc{F} \in D_c^b(X, \overline{\mathbb{Q}}_\ell)$ be a object in the bounded derived category of constructible $\overline{\mathbb{Q}}_\ell$-sheaves on $X$. Then for any $x \in X(\mathbb{F}_{q^n})$, we get an inclusion 
\[
\Spec{\mathbb{F}_{q^n}} \xhookrightarrow{i} X, 
\]
and hence an object $i^*\mc{F} \in D_c^b(\Spec{\mathbb{F}_q}, \overline{\mathbb{Q}}_\ell)$. Applying the supertrace of Frobenius to this object results in an element of $\overline{\mathbb{Q}}_\ell$. In this way, we get a map 
\[
D_c^b(X, \overline{\mathbb{Q}}_\ell) \xrightarrow{f} \prod_{n \geq 1} \Fun(X(\mathbb{F}_{q^n}) \rightarrow \overline{\mathbb{Q}}_\ell). 
\]
 If $\mc{F}\rightarrow \mc{G} \rightarrow \mc{F}' \xrightarrow{+1}$ is a distinguished triangle in $D^b_c(X, \overline{\mathbb{Q}}_\ell)$, then $f(\mc{F}) + f(\mc{F}') = f(\mc{G})$. In other words, $f$ factors through the Grothendieck group:
\[
\left[ D_c^b(X, \overline{\mathbb{Q}}_\ell) \right] \xrightarrow{f} \prod_{n \geq 0} \Fun(X(\mathbb{F}_{q^n}), \overline{\mathbb{Q}}_\ell). 
\]
A consequence of the Chebotarev density theorem (Theorem \ref{Chebotarev density}) is that this map is {\em injective}; i.e. for a given $q^n$, the collection of all of the functions $f(\mc{F})$ completely determine the class of $\mc{F}$ in the Grothendieck group. Now, Grothendieck tells us how to view this relationship. 

\vspace{5mm}
\noindent
{\bf Grothendieck's philosophy}: Interesting functions $X(\mathbb{F}_{q^n})\rightarrow \C, \overline{\mathbb{Q}}_\ell$, etc. should be shadows of interesting sheaves. 

\vspace{5mm}
Lusztig provided us with an extraordinary example of a case where Grothendieck's philosophy is exactly true. 

\begin{example}
(Lusztig's theory of character sheaves) There exists a set 
\[
\left\{\mc{F}_\chi \in D_c^b(\GL_n, \overline{\mathbb{Q}}_\ell) \right\} 
\]
such that $\mc{F}_\chi$ yield all irreducible characters $\chi:\GL_n(\mathbb{F}_{q^n}) \rightarrow \C$ of all $\GL_n(\mathbb{F}_{q^n})$ ``at once''. (More precisely, for a given $\mathbb{F}_{q^n}$, one should only consider those character sheaves which are ``defined over $\mathbb{F}_q$'', however we will not go into the details.)
\end{example}

\subsection{The Hecke algebra, revisited}
In the second half of this lecture, we will describe another example of Grothendieck's philosophy: the Hecke category. To start, we will recall the origin of the Hecke algebra. 

Let $G$ be a split reductive group over a finite field $\mathbb{F}_q$ and $B \subset G$ a Borel subgroup. We can define a convolution algebra 
\[
H_{\mathbb{F}_q}=\left(\Fun_{B(\mathbb{F}_q)\times B(\mathbb{F}_q)}(G(\mathbb{F}_q), \C), *\right)
\]
of complex-valued $B(\mathbb{F}_q)$-biinvariant functions on $G(\mathbb{F}_q)$. A priori, the structure of this algebra seems to depend on $q$, but Iwahori found a presentation of $H_{\mathbb{F}_q}$ which depends almost only on the Weyl group. Let $(W, S)$ be the Coxeter system associated to $B \subset G$. Iwahori's presentation has generators $\{T_s \mid s \in S\}$, and relations 
\begin{align*}
    T_s^2 =(q-1)T_s + q \\
    T_sT_t \cdots = T_s T_t \cdots,
\end{align*}
where the products on the second line are of $m_{st}=\text{order}(st)$ generators. In Iwahori's presentation, the element $T_s$ corresponds to the indicator function $\ind_{BsB} \in \Fun_{B(\mathbb{F}_q) \times B(\mathbb{F}_q)}(G(\mathbb{F}_q), \C)$. Iwahori's presentation demonstrated that the Hecke algebra $H_{\mathbb{F}_q}$ is ``defined over $\Z[q]$''. 

The quadratic relation may look a little mysterious at first, but it has a natural geometric origin.

\vspace{5mm}
\noindent
{\bf Origin of the quadratic relation}: Take $G=\SL_2(\mathbb{F}_q)$ and $B = \left\{ \bp * & * \\ 0 & * \ep \right\}$. Then on one hand,
\[
\ind_G * \ind_G = \begin{array}{c} \text{pushforward of the constant function on $G \times_B G$} \\ \text{under the multiplication map $G \times_B G\xrightarrow{m} G$} \end{array}
\]
But on the other hand, there is a natural isomorphism 
\begin{align*}
    G \times_B G &\xrightarrow{\sim} G/B \times G \\
    (g,h) &\mapsto (gB, gh) 
\end{align*}
and the multiplication map factors through this isomorphism:
\[
\begin{tikzcd}
G \times_B G \arrow[dr, "m"] \arrow[r, "\sim"] & G/B \times G \arrow[d, "\text{proj to }G"]\\
& G 
\end{tikzcd}
\]
Hence, 
\begin{align*}
    \ind_G * \ind_G &= \begin{array}{c} \text{pushforward of constant function on} \\ \text{$G/B \times G$ under the projection to $G$} \end{array} \\
    &=|G/B| \cdot \ind_G \\
    &=(1+q) \ind_G.
\end{align*}
From this, we deduce the quadratic relation: Since $\ind_G = \ind_{BsB} + \ind_B=T_s+1$, we have 
\begin{align*}
    T_s^2 + 2T_s + 1 &= (T_s + 1)^2 \\
    &= (q+1)(T_s+1)\\
    &=(q+1)T_s + q+1 \\
    &=(q-1)T_s + q + 2T_s + 1.
\end{align*}
Hence $T_s^2 = (q-1)T_s + q$. 

The geometric origin of the Hecke algebra suggests a categorification via Grothendieck's philosophy. Let $G$ be a split reductive group over $\mathbb{F}_q$, and $B \subset G$ a Borel subgroup. Define the (first incarnation of) the {\bf Hecke category} to be 
\[
\mc{H}:= D_{B \times B}^b(G, \overline{\mathbb{Q}}_\ell), 
\]
the $B \times B$-equivariant derived category of \'{e}tale $\overline{\mathbb{Q}}_\ell$-sheaves, in the sense of Bernstein-Lunts. We won't describe the precise construction of this category, but in its first approximation\footnote{Be careful! This approximation is just an approximation and can get you in trouble if you take it too literally (see Lecture \ref{lecture 24}).}, we can consider objects in $D_{B \times B}^b(G, \overline{\mathbb{Q}}_\ell)$ to be \'{e}tale sheaves which are constructible for $B\times B$-orbits. 

The convolution of functions in $H_{\mathbb{F}_q}$ can be upgraded to a {\bf convolution product} on sheaves. Let $\mc{F}, \mc{G} \in D_{B \times B}^b(G)$. We have maps
\[
\begin{tikzcd}
& G \times G \arrow[dl, "p_1"'] \arrow[dr, "p_2"] \arrow[rr] & & G \times_B G \arrow[r, "\mathrm{mult}"] & G \\
G & & G & & 
\end{tikzcd}
\]
Then if $\mc{F}, \mc{G} \in D_{B \times B}^b(G, \overline{\mathbb{Q}}_\ell)$, we define 
\[
\mc{F} * \mc{G} := \mathrm{mult}_*(\widetilde{\mc{F}\mc{G}}) \in D_{B \times B}^b(G, \overline{\mathbb{Q}}_\ell),
\]
where $\widetilde{\mc{F}\mc{G}}\in D^b_{B \times B}(G \times_B G, \overline{\mathbb{Q}}_\ell)$ corresponds to $\res_{B \times B \times B \times B}^{B \times B \times B}(p_1^*\mc{F} \otimes p_{2}^*\mc{G})$ under the equivalence
\[
D_{B \times B \times B\times B}^b(G \times G, \overline{\mathbb{Q}}_\ell) \xrightarrow{\sim} D^b_{B \times B \times B}(G \times_B G, \overline{\mathbb{Q}}_\ell).
\]
\begin{example}
Let $k=\overline{\mathbb{Q}}_\ell$, and $G=\SL_2$. Then 
\[
\underline{k}_{\SL_2} * \underline{k}_{\SL_2} = \mathrm{mult}_*(\underline{k}_{\SL_2 \times_B  \SL_2}) = H^*(\mathbb{P}^1) \otimes \underline{k}_{\SL_2}. 
\]
This is the categorified version of the Hecke algebra equality $\ind_G * \ind_G = (1+q)\ind_G$ that we discussed earlier! 
\end{example}

\subsection{Perverse sheaves on $\mathbb{P}^1\C$} 
The time has come for an interlude. For the rest of this lecture we will shift gears and consider the variety $\mathbb{P}^1\C$ with the stratification $\Lambda$ given by 
\[
\mathbb{P}^1\C = \{0\} \sqcup \C_\infty.  
\]
Denote by $\{0\} \xhookrightarrow{i} \mathbb{P}^1\C \xhookleftarrow{j} \C_\infty$ the natural inclusions. It is very important in what is coming to understand perverse sheaves on $\mathbb{P}^1\C$ with the stratification $\Lambda$, so we will discuss this (and generalizations) in the upcoming lectures. So what are the perverse sheaves on $\mathbb{P}^1\C$? 

Here's an algebraic answer (c.f. Emily's talk on nearby cycles in the Informal Friday Seminar\footnote{All IFS talk notes can be found on the IFS website \url{https://sites.google.com/view/ifssydney/home}}): 
\[
\Perv_\Lambda(\mathbb{P}^1\C, k) \simeq \left\{ \begin{tikzcd} V_1 \arrow[r, "c", shift left] & V_0 \arrow[l, "v", shift left] \end{tikzcd} \mid v \circ c = 0 \right\}, 
\]
where $V_i$ are finite-dimensional $k$ vector spaces representing the nearby cycles at $0$ (the vector space $V_1$) and the vanishing cycles at $0$ (the vector space $V_0$). From this perspective, we can see that there are five indecomposible objects:
\begin{enumerate}
    \item $\begin{tikzcd} 0 \arrow[r, shift left] & k \arrow[l, shift left] \end{tikzcd} \rightsquigarrow \hspace{2mm} i_*\underline{k}_0$, \text{ skyscraper at $0$} $\vcenter{\hbox{\includegraphics[scale=0.5]{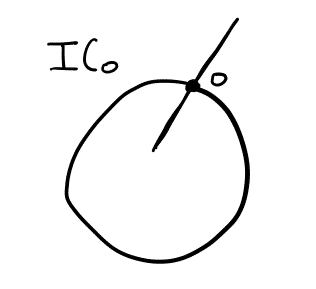}}}$ 
    \item $\begin{tikzcd} k \arrow[r, shift left] & 0 \arrow[l, shift left] \end{tikzcd} \rightsquigarrow \hspace{2mm} \text{constant sheaf}$ $\vcenter{\hbox{\includegraphics[scale=0.4]{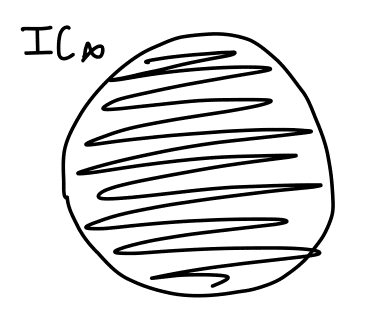}}}$
    \item $\begin{tikzcd} k \arrow[r, "\sim", shift left] & k \arrow[l, "0", shift left] \end{tikzcd} \rightsquigarrow \hspace{2mm} \vcenter{\hbox{\includegraphics[scale=0.5]{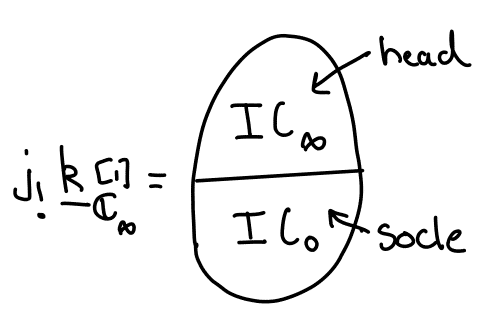}}}$
    \item $\begin{tikzcd} k \arrow[r, "0", shift left] & k \arrow[l, "\sim", shift left] \end{tikzcd} \rightsquigarrow \hspace{2mm} \vcenter{\hbox{\includegraphics[scale=0.5]{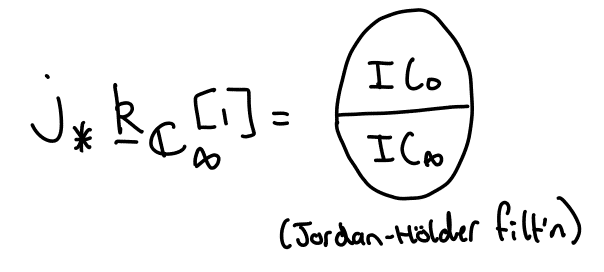}}}$
    \item $\begin{tikzcd} k \arrow[r, shift left] & k\oplus k \arrow[l,  shift left] \end{tikzcd} \rightsquigarrow \hspace{2mm} \text{``big projective/tilting sheaf''}$ $\vcenter{\hbox{\includegraphics[scale=0.5]{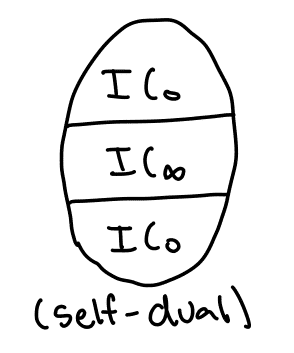}}}$
\end{enumerate}

In the next lecture we will go into more detail and describe perverse sheaves on general curves. 

\pagebreak
\section{Lecture 17: Constructible and perverse sheaves on curves}
\label{lecture 17}

Today we continue our interlude of the previous lecture and discuss perverse sheaves on curves. We are working toward understanding Beilinson glueing on curves, which will be the main topic of next week's lecture. 

\subsection{Constructible sheaves}

Let $k$ be a field, fixed throughout, and let $X/\C$ be a variety, viewed with the classical topology. Let $V$ be a $k$-vector space. We have the notion of a {\bf constant sheaf} $V_X$ with values in $V$:
\[
V_X(U):= \{ f:U \rightarrow V \text{ continuous}\},
\]
where $V$ is viewed with the discrete topology. This leads to the notion of a {\bf local system}, which is a locally constant sheaf with finite-dimensional stalks.
\begin{theorem}
If $X$ is connected, there is a bijection 
\[
\left\{\begin{array}{c} \text{local systems} \\
\text{on $X$} \end{array} \right\} \xleftrightarrow{\sim} \Rep(\pi_1(X,x)). 
\]
\end{theorem}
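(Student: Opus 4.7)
The plan is to construct maps in both directions and check they are mutually inverse. The underlying geometric intuition is that local systems correspond to vector bundles with flat connection, and the fundamental group acts on a fibre by parallel transport along loops.

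First I would construct the monodromy functor $\mu: \{\text{local systems on }X\} \to \Rep(\pi_1(X,x))$. Given a local system $\mc{L}$, set $V := \mc{L}_x$. For a loop $\gamma:[0,1] \to X$ with $\gamma(0)=\gamma(1)=x$, consider the pullback $\gamma^{-1}\mc{L}$ on $[0,1]$. Since $[0,1]$ is simply connected (and locally path connected), any locally constant sheaf on it is globally constant; concretely, one covers $[0,1]$ by finitely many intervals on which $\gamma^{-1}\mc{L}$ trivialises, and patches trivialisations together using the Lebesgue number lemma. This yields canonical isomorphisms $V = \mc{L}_{\gamma(0)} \xrightarrow{\sim} \Gamma([0,1], \gamma^{-1}\mc{L}) \xrightarrow{\sim} \mc{L}_{\gamma(1)} = V$, which I call $\rho_\mc{L}(\gamma)$. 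The key step is checking $\rho_\mc{L}(\gamma)$ depends only on the homotopy class of $\gamma$: a homotopy $H:[0,1]^2 \to X$ gives a locally constant sheaf $H^{-1}\mc{L}$ on the simply connected square, and the two boundary paths induce the same isomorphism on global sections. Composition of loops corresponds to composition of isomorphisms, so $\rho_\mc{L}$ is a group homomorphism $\pi_1(X,x) \to \GL(V)$.

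Next I would construct the inverse functor $\nu: \Rep(\pi_1(X,x)) \to \{\text{local systems on }X\}$. For this step I need to assume $X$ admits a universal cover $p: \tilde{X} \to X$; this is the one mild point-set assumption hiding behind the theorem, and is fine for $X$ a complex variety. Given $\rho:\pi_1(X,x) \to \GL(V)$, form the quotient $E_\rho := (\tilde{X} \times V)/\pi_1(X,x)$, where $\pi_1(X,x)$ acts diagonally (by deck transformations on $\tilde{X}$ and via $\rho$ on $V$). Define $\nu(\rho)$ to be the sheaf of locally constant sections of $E_\rho \to X$. Since $p$ is a covering map, this sheaf trivialises on any evenly covered open set, so $\nu(\rho)$ is indeed a local system with stalk $V$.

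Finally I would verify that $\mu$ and $\nu$ are mutually inverse. The composition $\mu \circ \nu$ sends $\rho$ to the monodromy of $\nu(\rho)$; unwinding definitions, parallel transport along a loop $\gamma$ corresponds to lifting $\gamma$ to $\tilde{X}$ and identifying its endpoint with its starting point via the deck transformation it represents, which is precisely the action of $[\gamma]$ on $V$ through $\rho$. For $\nu \circ \mu$, one exhibits an isomorphism $\mc{L} \cong \nu(\mu(\mc{L}))$ by picking, for each point of $\tilde{X}$, the image of a chosen trivialisation of $\mc{L}_x$ along any representing path; the independence of path follows from the monodromy definition. The main obstacle is purely bookkeeping — the need to fix basepoints in $\tilde{X}$ and track compatibilities — rather than any deep fact. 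If one prefers a basepoint-free formulation, both sides upgrade to equivalences of categories with the fundamental groupoid, which also clarifies what happens for disconnected $X$.
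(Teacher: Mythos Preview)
Your proposal is correct and gives the standard proof of this classical result. The paper does not actually prove this theorem: it is stated as background in the lecture on constructible sheaves on curves and immediately followed by a remark, with no proof given. So there is nothing to compare your argument to; you have supplied the standard monodromy/associated-bundle argument that the paper takes for granted.
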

\begin{remark}
The difference between local systems and vector bundles is captured with the following picture. 
\[
\includegraphics[scale=0.6]{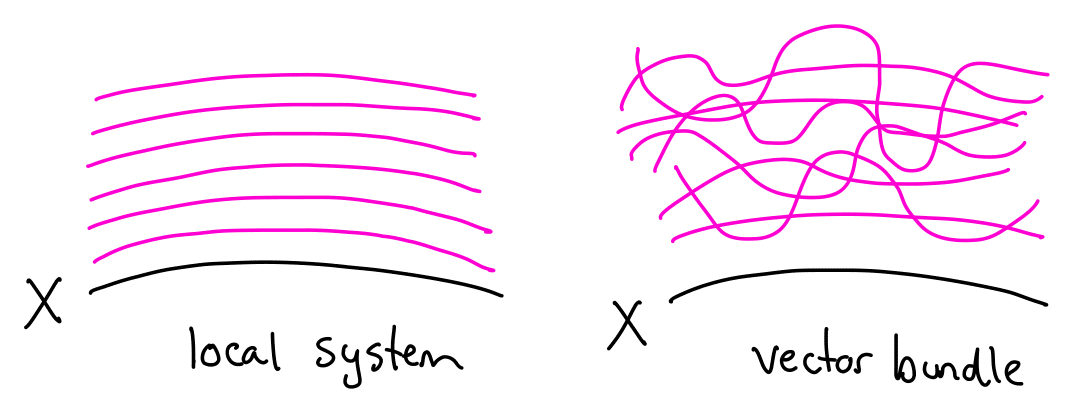}
\]
Here the pink lines are intended to denote the sections of our local systems/vector bundles. Vector bundles with flat connections are equivalent to local systems. However, local systems form an abelian category, whereas vector bundles do not. 
\end{remark}
The notion of a local system leads us to the notion to a {\bf constructible sheaf}: A sheaf $\mc{F}$ on $X$ is constructible if there exists a stratification $X = \bigsqcup_{\lambda \in \Lambda} X_\lambda$ of $X$ by a finite number of subvarieties $X_\lambda$ such that $\mc{F}|_{X_\lambda}$ is a local system for all $\lambda \in \Lambda$. Finally, the notion of a constructible sheaf leads to the notion of the {\bf bounded derived category of constructible sheaves}:
\[
D^b_c(X, k):= \left\{ \mc{F}  \left| \begin{array}{c} \mc{H}^i(\mc{F}) \text{ is constructible for all $i$, and } \\
\mc{H}^i(\mc{F})=0 \text{ for $|i|>>0$} \end{array} \right. \right\} \subset D^b \left( \begin{array}{c} \text{sheaves of $k$-vector} \\ \text{spaces on $X$} \\
\end{array} \right) 
\]

From now on, assume that $X$ is a connected, smooth curve. 
\[
\includegraphics[scale=0.8]{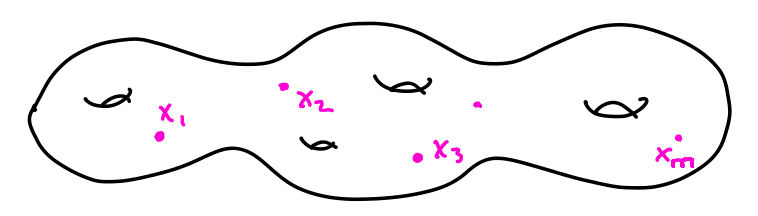}
\]
Choose a set of points $\{x_1, \ldots, x_m\}$, and let $U$ denote their complement in $X$. Fix the stratification $\Lambda$:
\[
X=U \sqcup \{x_1\} \sqcup \cdots \sqcup \{x_m\}.
\]
\begin{theorem}
There is an equivalence (a fun exercise if the reader is tempted!)
\[
\left\{ \begin{array}{c} \text{$\Lambda$-constructible} \\ 
\text{sheaves on $X$} \end{array} \right\} 
\xrightarrow{\sim} \left\{ \begin{array}{c} \text{$\mc{L}$ local system on $U$,}\\
\text{$V_1, \ldots , V_m$ finite-dimensional vector spaces} \\ \text{maps } \phi_i:V_i \rightarrow \left( \mc{L}_{n_i} \right)^{\mu_i} \end{array} \right\}. 
\]
Here $n_i \in X$ is a ``nearby point'' to $x_i$, and $\mc{L}_{n_i}$ denotes the stalk, which carries a monodromy operator $\mu_i$ given by a small loop around $x_i$ (see the diagram below). Note that the stalk depends on the choice of nearby point $n_i$, but the invariants in the stalk do not! 
\[
\includegraphics[scale=0.7]{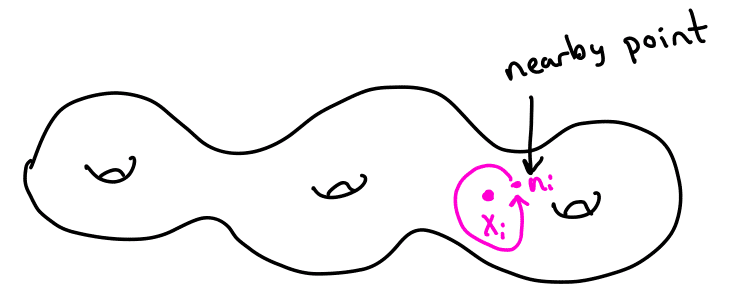}
\]
\end{theorem}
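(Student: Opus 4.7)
The plan is to exhibit this equivalence by first building the forward functor using only the standard restriction and stalk operations, then reducing the construction of an inverse to a purely local problem on a small disk around each $x_i$, and finally gluing via the sheaf axiom on the open cover $\{U, D_1, \ldots, D_m\}$, where $D_i$ is a small disk around $x_i$.

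First I would define the forward functor $F$ as follows. Given $\mathcal{F}$ a $\Lambda$-constructible sheaf, set $\mathcal{L} := \mathcal{F}|_U$ (a local system by $\Lambda$-constructibility), $V_i := \mathcal{F}_{x_i}$ (finite-dimensional since $\mathcal{F}|_{\{x_i\}}$ is a local system), and take $\phi_i$ to be the composition
\[
\mathcal{F}_{x_i} \;\xrightarrow{\sim}\; \mathcal{F}(D_i) \;\xrightarrow{\text{res}}\; \mathcal{F}(D_i^\ast) \;=\; \mathcal{L}(D_i^\ast) \;=\; \mathcal{L}_{n_i}^{\mu_i},
\]
where $D_i^\ast := D_i \setminus \{x_i\}$, the first isomorphism is the stabilization of the colimit defining the stalk for $D_i$ sufficiently small, and the final identification expresses sections of a local system over a punctured disk as monodromy invariants of a nearby stalk. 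Functoriality in $\mathcal{F}$ is automatic.

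Next I would reduce the construction of a quasi-inverse to a local statement on a disk. By the sheaf axiom on the cover $\{U, D_1, \ldots, D_m\}$, the datum of a sheaf on $X$ is equivalent to a sheaf on each $D_i$ and one on $U$, together with isomorphisms on $D_i^\ast = D_i \cap U$. Since $\mathcal{L}$ determines the restriction to $U$, the whole problem reduces to the following local claim: on a disk $D$ stratified as $D = D^\ast \sqcup \{0\}$, the functor
\[
\mathcal{F} \;\longmapsto\; \bigl(\mathcal{L} := \mathcal{F}|_{D^\ast},\; V := \mathcal{F}_0,\; \phi : V \to \mathcal{L}_n^{\mu}\bigr)
\]
is an equivalence.

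For the local statement I would construct an explicit quasi-inverse. Given $(\mathcal{L}, V, \phi)$, define a presheaf on $D$ by
\[
\mathcal{F}(W) := \begin{cases} \mathcal{L}(W) & \text{if } 0 \notin W, \\[2pt] V \times_{\mathcal{L}_n^{\mu}} \mathcal{L}(W \setminus \{0\}) & \text{if } 0 \in W, \end{cases}
\]
where the map $\mathcal{L}(W \setminus \{0\}) \to \mathcal{L}_n^{\mu}$ is restriction to a punctured neighborhood of $0$. The sheaf axiom follows from the fact that fiber products commute with limits; $\Lambda$-constructibility is immediate because $\mathcal{F}|_{D^\ast} = \mathcal{L}$ and, for $W$ a small disk around $0$, $\mathcal{L}(W \setminus \{0\}) = \mathcal{L}_n^{\mu}$, so the pullback collapses to $V$ and hence $\mathcal{F}_0 = V$. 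One checks $F \circ G \cong \mathrm{id}$ by unwinding the definitions ($\mathcal{L}$, $V$, and $\phi$ are all recovered tautologically), and $G \circ F \cong \mathrm{id}$ by applying the sheaf axiom to the cover $\{W_0, W \setminus \{0\}\}$ for $W_0$ a small disk around $0$ inside $W$: the resulting comparison is precisely the fiber product formula above.

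The main obstacle will be the local verification that $\mathcal{F}(W_0) = V$ for a small disk $W_0$ around $0$ — equivalently, that the specialization map really captures everything beyond the local system on $U$. This uses $\Lambda$-constructibility in an essential way (so that the sheaf is determined on $W_0^\ast$ by its monodromy representation alone) together with the fact that $D^\ast$ is homotopy equivalent to a circle, so that $\mathcal{L}(D^\ast) = \mathcal{L}_n^{\mu}$. Once this point is settled, functoriality in morphisms and naturality of both adjunctions follow by diagram chases, and the global equivalence drops out by the gluing argument above.
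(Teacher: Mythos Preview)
The paper does not prove this statement --- it is explicitly left as ``a fun exercise if the reader is tempted!'' --- so there is nothing to compare against. Your argument is correct and is the natural one: identify the forward functor via the specialization map $\mathcal{F}_{x_i} \cong \mathcal{F}(D_i) \to \mathcal{F}(D_i^\ast) = \mathcal{L}_{n_i}^{\mu_i}$, reduce to a single disk by gluing, and build the inverse via a fibre product. One small suggestion: your inverse functor is literally the fibre product of sheaves
\[
\mathcal{F} \;=\; i_* V \;\times_{\,i_*(\mathcal{L}_n^{\mu})}\; j_*\mathcal{L},
\]
where the two structure maps are $i_*\phi$ and the unit $j_*\mathcal{L} \to i_* i^* j_*\mathcal{L} = i_*(\mathcal{L}_n^{\mu})$. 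Phrasing it this way makes the sheaf axiom automatic (limits of sheaves are sheaves) and avoids the case analysis you flag as the ``main obstacle''.
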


\subsection{Perverse sheaves}
One way to visualize an object $\mc{F} \in D_c^b(X)$ is via its table of stalks:
\begin{center}
 \begin{tabular}{c|c|c|c} 
 & $i-1$ & $i$ & $i+1$ \\ 
 \hline
 $U$ & $\cdots$ & $\mc{H}^i(\mc{F}|_{U})$ & $\cdots$  \\ 
 \hline
 $x_1$ & $\cdots$ & $\mc{H}^i(\mc{F}_{x_i})$ & $\cdots$ \\
 \hline
 $x_2$ & $\cdots$ & $\mc{H}^i(\mc{F}_{x_2})$ & $\cdots$ \\
 \hline 
 $\vdots$ & & $\vdots$ & 
\end{tabular}
\end{center}
We will use these tables frequently in the remainder of this lecture. 

The category of {\bf perverse sheaves}
\[
\Perv_\Lambda(X) \subset D^b_c(X)
\]
is defined as the heart of the $t$-structure
\[
D^{\leq 0}_\Lambda :=\left\{ \mc{F} \left| \begin{array}{c} \mc{H}^i(\mc{F}|_U)=0 \text{ for }i\geq 0 \\ \mc{H}^i(\mc{F}|_{x_j})=0 \text{ for }i>0 \end{array} \right. \right\}, \hspace{3mm} D_{\Lambda}^{\geq 0} = \mathbb{D}(D_\Lambda^{\leq 0}).
\]
Here $\mathbb{D}$ denotes {\bf Verdier duality}. We have two possibilities for what the table of stalks can look like for a perverse sheaf in $\Perv_\Lambda(X)$: 
\begin{center}
$\begin{array}{c} \text{full} \\ \text{support} \end{array}$
\begin{tabular}{c|c|c|c|c} 
 & $-2$ & $-1$ & $0$ & $1$  \\ 
 \hline
 $U$ & $0$ & $\mc{L}$ & $0$ & $0$  \\ 
 \hline
 $x_i$ & $0$ & $V$ & $W$ & $0$ \\
\end{tabular}
\hspace{7mm}
skyscraper
\begin{tabular}{c|c|c|c|c} 
& $-2$ & $-1$ & $0$ & $1$ \\ 
 \hline
 $U$ & $0$ & $0$ & $0$ & $0$  \\ 
 \hline
 $x_i$ & $0$ & $0$ & $W'$ & $0$ \\
\end{tabular}
\end{center}

A general strategy when trying to understand an abelian category is to try to produce and study exact functors from that category to a well-understood category (like $\Vect$ or representations of a group). For perverse sheaves, this strategy proves to be somewhat complicated, and brings nearby and vanishing cycles into our world. 

\begin{theorem}
(to be explained) 
\[
\Perv_{\Lambda}(X) \xleftrightarrow{\sim} \left\{ \begin{array}{c} \mc{L} \text{ local system on $U$,} \\
V_1, \ldots, V_m \text{ finite-dimensional vector spaces (``vanishing cycles''),} \\
\text{maps } \begin{tikzcd} V_i \arrow[r, "v", shift left] & \mc{L}_{n_i} \arrow[l, "u", shift left] \end{tikzcd} \text{ s.t. } v \circ u = id-\mu_i \end{array} \right\} 
\]
\end{theorem}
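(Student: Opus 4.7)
The plan is to reduce the global problem to a local analysis at each puncture $x_i$ and then assemble the pieces using the fact that $\Perv_\Lambda(X)$ forms a stack. Concretely, on the open part $U$ a perverse sheaf is (up to a shift) just a local system, so the only real work lies in a small disk $D_i$ around each $x_i$: I want to show that $\Perv_\Lambda(D_i)$ is equivalent to the category of tuples $(\mc{L}, V, u, v)$ where $\mc{L}$ is a local system on $D_i^\ast$, $V$ a finite-dimensional vector space, and $u\colon \mc{L}_{n_i} \to V$, $v\colon V \to \mc{L}_{n_i}$ satisfy $v\circ u = \mathrm{id} - \mu_i$. Once this local equivalence is in place, the compatibility on the common open stratum $U$ forces the local data to glue uniquely, producing the global equivalence.

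To define the functor from perverse sheaves to quiver data, on the disk $D = D_i$ with open inclusion $j\colon D^\ast \hookrightarrow D$ and closed inclusion $i\colon \{x_i\} \hookrightarrow D$, I send $\mc{F}$ to the quadruple $(j^\ast\mc{F}[-1],\,\phi_{x_i}(\mc{F}),\,\mathrm{can},\,\mathrm{var})$. The shift ensures $j^\ast\mc{F}[-1]$ is a genuine local system, with stalk at the chosen nearby point $n_i$ equal to the nearby cycles space $\psi_{x_i}(\mc{F}) \simeq \mc{L}_{n_i}$ carrying the usual monodromy $\mu_i$. The vanishing cycles $\phi_{x_i}(\mc{F})$ supply $V_i$; the canonical map $\mathrm{can}\colon \psi_{x_i}\to \phi_{x_i}$ plays the role of $u$, and the variation map $\mathrm{var}\colon \phi_{x_i}\to \psi_{x_i}$ plays the role of $v$. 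The identity $\mathrm{var}\circ\mathrm{can} = \mathrm{id}-\mu$ is the standard relation between these three operators, built into the very definition of variation. A key input is that on a smooth curve both $\psi$ and $\phi$ (suitably normalised) are $t$-exact for the perverse $t$-structure and land in vector spaces concentrated in a single degree.

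For the inverse, given $(\mc{L}, V, u, v)$ with $v\circ u = \mathrm{id}-\mu$, I would first construct Beilinson's \emph{maximal extension} $\Xi(\mc{L})$: a perverse sheaf on $D$ with $j^\ast\Xi(\mc{L})[-1] = \mc{L}$, vanishing cycles canonically $\mc{L}_{n_i}$, canonical map equal to $\mathrm{id}-\mu$, and variation equal to $\mathrm{id}$. This extension is universal in the sense that it simultaneously surjects onto $j_\ast\mc{L}[1]$ and receives $j_!\mc{L}[1]$, and it is characterised by these properties. The desired perverse sheaf with quiver data $(V, u, v)$ is then produced as the middle perverse cohomology of the two-term complex $\Xi(\mc{L}) \otimes \mc{L}_{n_i} \to \Xi(\mc{L}) \otimes V$ (or more invariantly, as a specific subquotient of a direct sum of copies of $\Xi(\mc{L})$ determined by the maps $u$ and $v$). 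Applying $\phi_{x_i}$ and $\psi_{x_i}$ to this construction recovers precisely the original data, giving a quasi-inverse.

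The main obstacle is the construction and characterisation of $\Xi(\mc{L})$ together with the verification that nearby/vanishing cycles behave as claimed on it. Once $\Xi$ is in place, faithfulness of the functor follows from the recollement triangle $j_!j^\ast\mc{F} \to \mc{F} \to i_\ast i^\ast\mc{F}\xrightarrow{+1}$ together with the observation that a morphism of perverse sheaves which vanishes on $U$ and on both $\psi_{x_i}$ and $\phi_{x_i}$ must itself vanish; fullness and essential surjectivity follow from the explicit construction. The truly computational step — fixing the sign so that $v\circ u$ equals $\mathrm{id}-\mu$ rather than $\mu-\mathrm{id}$ — is handled once and for all by the normalisation of the variation map, and the sanity check against the five indecomposables on $\PP^1\C$ from the previous lecture confirms the bookkeeping is correct.
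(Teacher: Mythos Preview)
Your overall strategy is Beilinson's, and it is exactly what the paper does in Lecture~18: construct unipotent nearby cycles $\psi_f$, the maximal extension $\Xi_f$, and vanishing cycles $\phi_f$; identify $(u,v)$ with $(\mathrm{can},\mathrm{var})$ satisfying $\mathrm{var}\circ\mathrm{can}=\mathrm{id}-\mu$; and reconstruct $\mc{F}$ from the gluing data using $\Xi_f$. The reduction to disks via the stack property is fine and is implicit in the paper's treatment.

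However, two of your concrete claims are wrong and would derail the argument as written. First, the vanishing cycles of $\Xi(\mc{L})$ are \emph{not} canonically $\mc{L}_{n_i}$: for $\mc{L}$ of rank $r$ one has $\phi_f(\Xi_f(\mc{L}[1]))$ of rank $2r$, carrying a nontrivial unipotent monodromy (the paper assigns this as an explicit exercise for $\mc{L}=k$). The correct characterisation of $\Xi_f$ is via the pair of short exact sequences $j_!\hookrightarrow\Xi_f\twoheadrightarrow\psi_f$ and $\psi_f\hookrightarrow\Xi_f\twoheadrightarrow j_*$, together with the fact that the composite $\psi_f\to\Xi_f\to\psi_f$ equals $\mu-1$.

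Second, your inverse construction does not work. The perverse sheaf $\mc{F}$ is not a subquotient of copies of $\Xi$ alone, nor is it the cohomology of a two-term complex ``$\Xi(\mc{L})\otimes\mc{L}_{n_i}\to\Xi(\mc{L})\otimes V$'' (which does not really parse: $\mc{L}_{n_i}$ and $V$ are vector spaces, so tensoring just produces direct sums of copies of $\Xi$, and there is no way to insert the skyscraper $i_*V$). The paper's construction is: given $(\mc{M}_U,V,u,v)$, form the three-term complex of perverse sheaves on $D$
\[
i_*\psi_f(\mc{M}_U)\ \longrightarrow\ \Xi_f(\mc{M}_U)\oplus i_*V\ \longrightarrow\ i_*\psi_f(\mc{M}_U),
\]
where the maps into and out of the $\Xi_f$ summand come from the two short exact sequences above, and the maps into and out of $i_*V$ are $u$ and $-v$; then take $H^0$. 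That this is inverse to $\mc{F}\mapsto(\mc{F}|_U,\phi_f\mc{F},\mathrm{can},\mathrm{var})$ is packaged via an elegant ``reflection'' involution $r$ on a category of diads, satisfying $r^2=\mathrm{id}$, which swaps the diad $(j_!\to\Xi_f\oplus\mc{F}\to j_*)$ with the diad $(\psi_f\to\Xi_f\oplus\phi_f\to\psi_f)$.
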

\begin{remark}
The category $\Perv_\Lambda(X)$ is Verdier self-dual. The category of $\Lambda$-constructible sheaves is not: 
\[
\mathbb{D}(\Lambda\text{-constructible sheaves}) \simeq \left\{ \phi_i: \mc{L}_{n_i}^{\mu_i} \rightarrow V_i \right\}  
\]
\end{remark}

\begin{example}
\label{examples of perverse sheaves}
Here are some examples of perverse sheaves. 
\begin{itemize}
    \item The constant sheaf $k_{x_j}$ on $\{x_j\}$ is perverse. 
    \item For a local system $\mc{L}$ on $X$, $\mc{L}[1]$ is perverse. (This follows because $\mathbb{D} \mc{L}[1] \cong \mc{L}^\vee[1]$, where $\mc{L}^\vee$ denotes the dual local system.)
    \item Let $j:U\hookrightarrow X$ and $\mc{L}$ a local system on $U$. Then we claim that $j_!\mc{L}[1]$ and $j_*\mc{L}[1]$ are both perverse. Because $j_!$ is extension by zero, computing the stalks of $j_!\mc{L}[1]$ is easy:
    \begin{center}
    \begin{tabular}{c|c|c|c|c} 
    & $-2$ & $-1$ & $0$ & $1$ \\ 
    \hline
    $U$ & $0$ & $\mc{L}$ & $0$ & $0$  \\ 
    \hline
    $x_i$ & $0$ & $0$ & $0$ & $0$ \\
   \end{tabular} $\in D_\Lambda^{\leq 0}$
   \end{center}
   Computing the stalks of the direct image $j_*:=Rj_*$ is trickier. Le $x \in X$ be a point. Then 
   \begin{align*}
   H^i((j_*\mc{L})_x) &= \lim_{\longleftarrow \atop B(x, \epsilon)} H^i(B(x, \epsilon) \cap U, \mc{L}) \\
   &= \begin{cases} \mc{L}_x &\text{ if }x \in U, \\ H^i(B(x, \epsilon) \backslash \{x\}, \mc{L}) &\text{ if }x \not \in U. \end{cases}
   \end{align*}
   How can we compute the cohomology $H^i(B(x, \epsilon) \backslash \{x\}, \mc{L})$? The space $B(x, \epsilon) \backslash \{x\}$ is homotopic to $S^1$. We can compute the cohomology of $S^1$ with coefficients in $\mc{L}$ using the two-term complex
   \[
   V \xrightarrow{id - \mu} V. 
   \]
   Hence $H^0(S^1, \mc{L}) = V^\mu$, the invariants of $\mu \circlearrowright V$, and $H^1(S^1, \mc{L}) = V_\mu$, the coinvariants. (Alternatively, one can see that $H^0(S^1, \mc{L})=V^\mu$ directly, and conclude that $H^1(S^1, \mc{L})=V_\mu$ by Poincar\'{e} duality.) Hence, the table of stalks of $j_*\mc{L}[1]$ is
    \begin{center}
    \begin{tabular}{c|c|c|c|c} 
    & $-2$ & $-1$ & $0$ & $1$ \\ 
    \hline
    $U$ & $0$ & $\mc{L}$ & $0$ & $0$  \\ 
    \hline
    $x_i$ & $0$ & $V^\mu$ & $V_\mu$ & $0$ \\
   \end{tabular} $\in D_\Lambda^{\leq 0}$.
   \end{center}
   Because $\mathbb{D}(j_* \mc{L}) = j_!\mathbb{D}(\mc{L})$, we can conclude from these computations that $j_*\mc{L}[1]$ and $j_!\mc{L}[1]$ are both perverse. In other words, $j_*$ and $j_!$ are exact for the perverse $t$-structure. 
\end{itemize}
\end{example}

Next we turn our attention to classifying the simple objects in $\Perv_\Lambda(X)$. Recall that there is a natural map
\[
j_! \rightarrow j_*. 
\]
Given $\mc{L}$ a local system on $\mc{U}$, define
\[
IC(X, \mc{L}):= j_{!*}(\mc{L}):= \im(j_!\mc{L}[1] \rightarrow j_* \mc{L}[1]). 
\]
This is not obvious, but by a construction of Deligne, we have 
\[
IC(X, \mc{L})=\tau_{\leq -1}(j_*\mc{L}[1]),
\]
where $\tau_{\leq -1}$ is the truncation functor in the standard (not perverse) $t$-structure. Hence the table of stalks for $IC(X, \mc{L})$ is 
\begin{center}
    \begin{tabular}{c|c|c|c|c} 
    & $-2$ & $-1$ & $0$ & $1$ \\ 
    \hline
    $U$ & $0$ & $\mc{L}$ & $0$ & $0$  \\ 
    \hline
    $x_i$ & $0$ & $V^\mu$ & $0$ & $0$ \\
   \end{tabular}.
\end{center}
\begin{remark}
We see from the remarks above that when $X$ is a curve, all IC sheaves are shifts of actual constructible sheaves. This is not the case in general! 
\end{remark}

\begin{example}
\label{no roots of unity}
Consider a local system $\mc{L}$ on $U$ such that the monodromy $\mu$ does not have $1$ as an eigenvalue. Then $id - \mu$ is invertible. Hence $V_\mu = V^\mu = 0$. Recall that the general form of the tables of stalks of $j_!\mc{L}[1]$ and $j_*\mc{L}[1]$ are
\begin{center}
    $j_!\mc{L}[1]$: \begin{tabular}{c|c|c|c|c} 
    & $-2$ & $-1$ & $0$ & $1$ \\ 
    \hline
    $U$ & $0$ & $\mc{L}$ & $0$ & $0$  \\ 
    \hline
    $x_i$ & $0$ & $0$ & $0$ & $0$ \\
   \end{tabular} \hspace{5mm} $j_*\mc{L}[1]:$ \begin{tabular}{c|c|c|c|c} 
    & $-2$ & $-1$ & $0$ & $1$ \\ 
    \hline
    $U$ & $0$ & $\mc{L}$ & $0$ & $0$  \\ 
    \hline
    $x_i$ & $0$ & $V^\mu$ & $V_\mu$ & $0$ \\
   \end{tabular}.
   \end{center}
   So the fact that $V^\mu = V_\mu = 0$ implies that the natural map $j_!\mc{L}[1] \rightarrow j_*\mc{L}[1]$ is an isomorphism. We conclude that in this case
   \[
   IC(X, \mc{L}) = j_!\mc{L}[1]=j_*\mc{L}[1]. 
   \]
\end{example}

\begin{theorem}
The category $\Perv_\Lambda(X)$ is a finite-length abelian category with simple objects 
\[
\left\{ IC(X, \mc{L}) \mid \mc{L}|_U \text{ is irreducible} \right\} \cup \left\{ i_*k \mid i:\{x_i\} \hookrightarrow X \right\}.
\]
\end{theorem}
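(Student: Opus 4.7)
The abelian structure of $\Perv_\Lambda(X)$ is free of charge, being the heart of the perverse $t$-structure on $D_c^b(X,k)$. What genuinely needs work is (i) the classification of simples and (ii) the finite-length statement, and the strategy for both is to exploit the quiver-type equivalence
\[
\Perv_\Lambda(X) \;\simeq\; \bigl\{ (\mc{L}, V_1, \dots, V_m, u_i, v_i) \;:\; v_i \circ u_i = \mathrm{id} - \mu_i \bigr\}
\]
recorded earlier in the section, together with Deligne's description $IC(X, \mc{L}) = \tau_{\leq -1}(j_*\mc{L}[1])$ and the stalk computations of Example \ref{examples of perverse sheaves}.

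First I would verify that the listed objects are simple. The skyscraper $i_{x_j*}k$ corresponds to quiver data $\mc{L}=0$, $V_j = k$, and all other $V_i = 0$; the maps $u_i, v_i$ are forced to vanish, and any subobject has $V_j$ equal to $0$ or $k$, so simplicity is immediate. For $IC(X,\mc{L})$ with $\mc{L}$ irreducible, the stalk table shows that the map $j_!\mc{L}[1] \twoheadrightarrow IC(X,\mc{L}) \hookrightarrow j_*\mc{L}[1]$ has no nonzero sub or quotient supported on $\{x_1, \dots, x_m\}$ (the relevant cokernel and kernel are perverse sheaves supported at the $x_i$). Combined with the fact that any nonzero perverse subobject, restricted to $U$, yields a nonzero sublocal system of $\mc{L}$ — hence all of $\mc{L}$ by irreducibility — this forces any nonzero sub of $IC(X,\mc{L})$ to agree with it generically, and then the no-boundary-subobject property pins it down exactly.

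The main obstacle, and the heart of the argument, is the exhaustion step: any simple $\mc{F} \in \Perv_\Lambda(X)$ appears in the list. Restriction to $U$ is exact for the perverse $t$-structure (up to the shift by one), so $\mc{F}|_U[-1]$ is a local system. If it is zero, then $\mc{F}$ is supported on $\{x_i\}$ and the quiver data collapses to a direct sum of vector spaces at the points, whence $\mc{F} \cong i_{x_j*}k$ for some $j$. Otherwise $\mc{F}|_U[-1]$ contains some irreducible sublocal system $\mc{M}$; adjunction $(j_!, j^*)$ and the perverse exactness of $j_!$ produce a nonzero morphism $j_!\mc{M}[1] \to \mc{F}$, and since the image has no sub supported at the $x_i$'s, it factors as $j_!\mc{M}[1] \twoheadrightarrow IC(X,\mc{M}) \hookrightarrow \mc{F}$. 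Simplicity of $\mc{F}$ then upgrades the second arrow to an isomorphism.

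Finite length is the easiest part once the simples are known. The pair of functors ``restrict to $U$ and shift'' and ``take the vector spaces $V_i$ of vanishing cycles'' assembles, via the quiver description, into a faithful exact functor to $\Rep_k^{fd}(\pi_1(U)) \times \prod_{i=1}^m \mathrm{Vect}_k^{fd}$. The target is a finite-length abelian category (finite-dimensional representations and finite-dimensional vector spaces), so any $\mc{F} \in \Perv_\Lambda(X)$ admits a composition series whose factors must be among the simples classified above. The one point that requires care is that this functor is not an equivalence — the constraint $v_i \circ u_i = \mathrm{id} - \mu_i$ and the data of the maps $u_i, v_i$ are lost — but this only affects extensions, not composition factors, so finite length follows.
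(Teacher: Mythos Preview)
The paper states this theorem without proof --- it is recorded as a structural fact about $\Perv_\Lambda(X)$ for curves and then immediately used to discuss composition series of $j_*\mc{L}[1]$. So there is nothing to compare against; your task is really to give a self-contained argument, and what you have written is essentially correct and follows the standard line (use the quiver/gluing description stated just before the theorem, plus the characterization of $IC$ as the image of $j_! \to j_*$).

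One phrasing point in your exhaustion step deserves tightening. You write that the nonzero map $j_!\mc{M}[1] \to \mc{F}$ ``factors as $j_!\mc{M}[1] \twoheadrightarrow IC(X,\mc{M}) \hookrightarrow \mc{F}$'' because the image has no sub supported at the $x_i$. The cleaner way to run this: since $\mc{F}$ is simple the map is surjective; restricting to $U$ shows $\mc{M}[1] \to j^*\mc{F}$ is both the original inclusion and a surjection, so $j^*\mc{F} = \mc{M}[1]$; then adjunction also gives an injection $\mc{F} \hookrightarrow j_*\mc{M}[1]$ (kernel supported on $Z$, hence zero by simplicity), and the composite $j_!\mc{M}[1] \twoheadrightarrow \mc{F} \hookrightarrow j_*\mc{M}[1]$ is the canonical map, so $\mc{F}$ is its image, namely $IC(X,\mc{M})$. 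Your version reaches the same endpoint but the direction of the factorization and the role of ``no sub supported at $x_i$'' are slightly garbled as written.

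The finite-length argument via the faithful exact forgetful functor to $\Rep_k^{fd}(\pi_1(U)) \times \prod_i \mathrm{Vect}_k^{fd}$ is clean and correct.
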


For any irreducible local system $\mc{L}$ on $U$, we have the distinguished triangle
\[
\tau_{\leq -1} (j_*\mc{L}[1]) \rightarrow j_*\mc{L}[1] \rightarrow \tau_{\geq 0}( j_*\mc{L}[1]) \xrightarrow{+1}  
\]
\[
= 
\]
\[
 IC(X, \mc{L}) \rightarrow j_*\mc{L}[1] \rightarrow (V_\mu)_0 \xrightarrow{+1}
\]
in $D_c^b(X, k)$. There is something distinctive about this triangle: all objects are perverse sheaves! Hence, this is an exact sequence and we've found our first composition series. We can draw this composition series with an ``egg diagram:''
\begin{equation}
\label{first egg}
j_*\mc{L}[1]=\vcenter{\hbox{\includegraphics[scale=0.5]{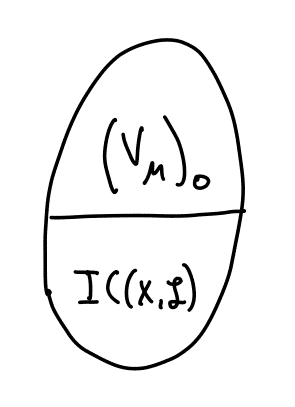}}}
\end{equation}
\begin{example}
Here is a special case of the construction above. Let $D$ be a disc and
\[
D^\times \xhookrightarrow{j} D \xhookleftarrow{i} \{0\}
\]
the natural inclusions. Then the composition series of $j_*k_{D^\times}[1]$ is 
\[
j_*k_{D^\times}[1]=\vcenter{\hbox{\includegraphics[scale=0.5]{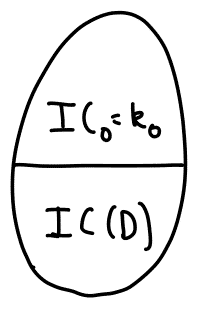}}}.
\]
\end{example}
\begin{example}
Let 
\[
U \xhookrightarrow{j} X \xhookleftarrow{i} Z:=X \backslash U
\]
be the natural inclusions. Then in $D_c^b(X,k)$ we have the distinguished triangle
\begin{equation}
    \label{pink star}
j_!j^!k_X \rightarrow k_X \rightarrow i_*i^*k_X \xrightarrow{+1}.
\end{equation}
By turning triangles, we obtain
\[
i_*k_Z \rightarrow j_!k_U[1] \rightarrow k_X[1]=IC(X) \xrightarrow{+1}. 
\]
All objects in this triangle are perverse sheaves, so this is a composition series! Hence we have the following egg:
\[
j_*k_{D^\times}[1]=\vcenter{\hbox{\includegraphics[scale=0.5]{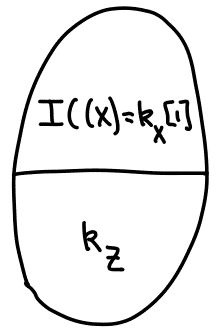}}}.
\]
We could also have obtained this by dualizing (\ref{first egg}).
\end{example}
\begin{remark}
Constructible sheaves on $X$ are not necessarily finite length (see (\ref{pink star})). However, perverse sheaves are. This is one of the reasons why we are so fond of them. 
\end{remark}

Before moving on to glueing, we will do one more fun calculation. We return to the local case:
\[
D^\times \xhookrightarrow{j} D \xhookleftarrow{i} \{0\}
\]
Let $\mc{L}_n$ be a local system on $D^\times$ with monodromy given by a single Jordan block of size $n$:
\[
J_n=\bp 1 & 1 &  &  &  \\  & 1 & 1 & &  \\ & & 1 & \ddots & \\ & & & \ddots & 1\\
& & & & 1 \ep
\]
Our goal is to describe $j_{!*}\mc{L}_n[1]=\im(j_!\mc{L}_n[1] \rightarrow j_*\mc{L}_n[1])$ in this setting. Recall that the functors $j_!$ and $j_*$ are exact because $j$ is affine, and we have the building blocks
\[
j_*k_{D^\times}[1]=\vcenter{\hbox{\includegraphics[scale=0.5]{images/j-star-D.png}}} \hspace{10mm} j_!k_{D^\times}[1]=\vcenter{\hbox{\includegraphics[scale=0.5]{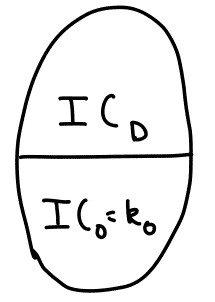}}}.
\]
In local systems on $D^\times$, we have
\[
\includegraphics[scale=0.7]{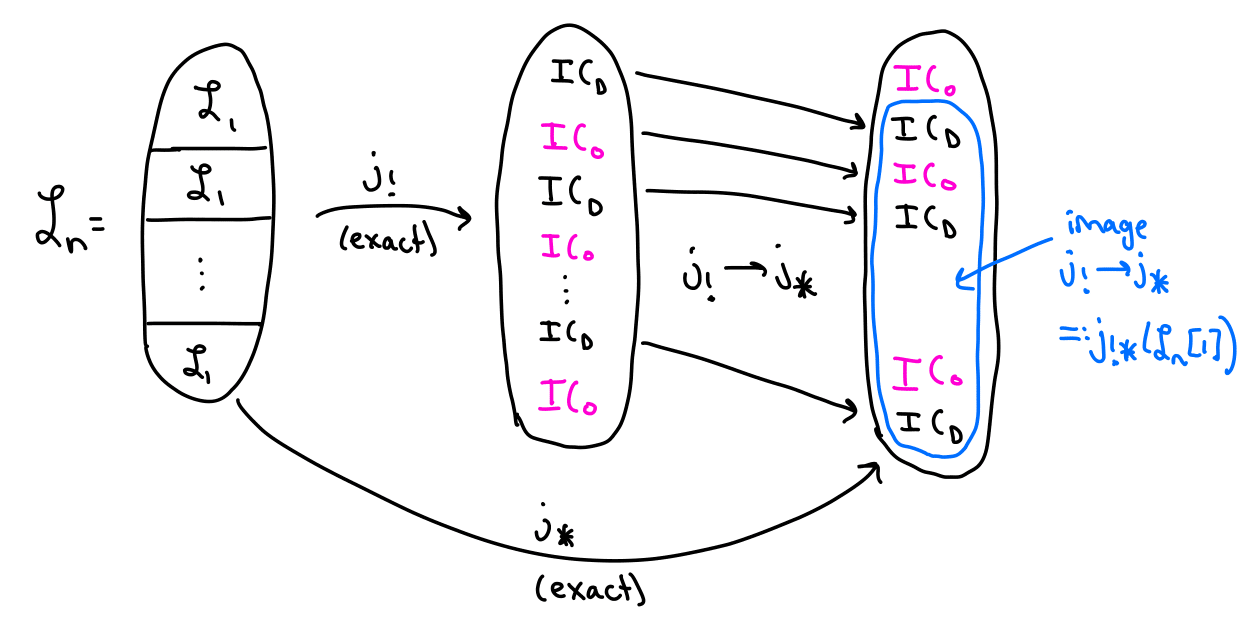}
\]
\begin{remark}
In this example, we see that the functor $j_{!*}$ preserves injections and surjections, but is NOT exact.
\end{remark}

\subsection{Nearby and vanishing cycles}
Let $D$ be the disc, $D^\times$ the punctured disc, and $\widetilde{D^\times}$ its universal cover. Let 
\[
\{0\} \xhookrightarrow{i} D \xhookleftarrow{j} D^\times \xleftarrow{p} \widetilde{D^\times} 
\]
be the natural maps. 

\vspace{3mm}
\noindent
{\bf Motivation}: We want to define the ``stalk'' of a perverse sheaf at singular points. 

\vspace{3mm}
Recall the {\bf nearby cycles} functor $\psi_f$ (c.f. Emily's Oct 18, 2019 IFS talk or Laurentiu's Feb 21/28, 2020 IFS series): For a local system $\mc{F}$ on $D^\times$, 
\begin{align*}
    \psi_f(\mc{F}) &= i^*j_*p_*p^*\mc{F} \\
    &=i^*j_*p_*(p^*\mc{F} \otimes k_{\widetilde{D^\times}}) \\
    &= i^*j_*(\mc{F} \otimes p_*k_{\widetilde{D^\times}}). 
\end{align*}
Here the third equality follows from the projection formula. The sheaf $p_*k_{\widetilde{D^\times}}$ is the ``universal local system on $D^\times$''. It is an infinite-dimensional local system with stalk $k[x^{\pm 1}]$ and monodromy $x$. 

Recall that the monodromy $\mu \circlearrowright \psi_f(\mc{F})$, so we get a decomposition into generalized eigenspaces:
\[
\psi_f(\mc{F}) = \bigoplus \psi_f^\lambda(\mc{F}).
\]
By far the most important of these eigenspaces corresponds to $\lambda = 1$, the ``unipotent nearby cycles''. Indeed, from this, we can recover all others:
\[
\psi_f^\lambda(\mc{F}) = \psi_f^1(\mc{F} \otimes k_{\lambda^{-1}}).
\]
From now on, set 
\[
\psi(\mc{F}):=\psi_f^1(\mc{F}).
\]
Let $\mc{L}_{unip}$ be the {\bf universal unipotent local system}. This is the local system with stalk $k[[u]]$ at $1$ and monodromy given by multiplication by $1+u$. In other words, 
\[
\mc{L}_{unip} = \lim_{\longleftarrow} \mc{L}_n,
\]
where $\mc{L}_n$ is our local system from earlier with stalk $k[u]/(u^n)$ and monodromy multiplication by $1+u$. (Think: ``completion of the augmentation ideal'' should give the same answer.) 

\vspace{3mm}
\noindent
{\bf Idea}: 
\begin{enumerate}
    \item We should think that 
    \[
    \psi(\mc{F}) \text{``=''} \lim_{\longleftarrow}(i^*(j_*(\mc{F} \otimes \mc{L}_n))). 
    \]
    \item $\vcenter{\hbox{\includegraphics[scale=0.7]{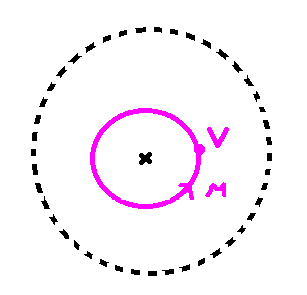}}}$ unipotent $\rightsquigarrow \begin{array}{c} H^0(D^\times, \mc{F}) = V^\mu \\ H^1(D^\times, \mc{F})=V_\mu \end{array}$ \hspace{3mm} How do we recover $V$?
\end{enumerate}

We will see in the next lecture that after tensoring with the local system given by a big Jordan block, we can recover $V$ and its monodromy via taking global sections. This appears to be a basic idea behind Beilinson's construction.

Hence it makes sense to define 
\[
\psi(\mc{F}) = H^1(D_\epsilon^\times, \mc{F} \otimes \mc{L}_n)
\]
for $n$ large. 
\pagebreak
\section{Lecture 18: Beilinson gluing on curves}
\label{lecture 18}

Throughout this lecture, we will work in the following setting. Let $D$ be the disc, $D^\times$ the punctured disc, and 
\[
D^\times \xhookrightarrow{j} D \xhookleftarrow{i} \{0\} 
\]
the natural inclusions. 

Our goal for this lecture is to explain the proof of Beilinson gluing\footnote{In addition to Beilinson's original paper \cite{Beilinson}, the notes \cite{Morel} of Sophie Morel are an excellent reference for this material.}. We'll pick up where we left off last week.

\subsection{A fact about unipotent monodromy}

Let 
\[
e=\bp 0 & 1 \\ 0 & 0 \ep, \hspace{2mm} h = \bp 1 & 0 \\ 0 & {-1} \ep, \hspace{2mm} f = \bp 0 & 0 \\ 1 & 0 \ep 
\]
be the standard basis for $\mf{sl}(2,\C) = \C f \oplus \C h \oplus \C e$. Clebsch--Gordan tells us about how the tensor product of two irreducible finite-dimensional $\mf{sl}(2,
\C)$-modules decomposes:

\begin{theorem}
(Clebsch--Gordan) Let $L_n, L_m$ be irreducible finite-dimensional $\mf{sl}(2,\C)$-modules of highest weight $n,m$, respectively. Then 
\[
L_n \otimes L_m = L_{|n-m|} \oplus \cdots \oplus L_{n+m-2} \oplus L_{n+m}.
\]
\end{theorem}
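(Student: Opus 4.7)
The plan is to prove this via weight space multiplicities, using the fact (from the classification of finite-dimensional $\mf{sl}(2,\C)$-modules) that every finite-dimensional representation $V$ is completely reducible as a sum of the $L_k$, and that the multiplicity of $L_k$ in $V$ equals $\dim V_k - \dim V_{k+2}$, where $V_\ell$ denotes the $\ell$-eigenspace of $h$. This reduces Clebsch--Gordan to a purely combinatorial count of weights of $L_n \otimes L_m$. Without loss of generality I will assume $n \geq m$, so $|n-m|=n-m$.

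First I would recall that $L_n$ has weights $n, n-2, \ldots, -n$, each with multiplicity one. Then for the tensor product, the weight $k$ space of $L_n \otimes L_m$ has basis given by pairs $(v_a \otimes w_b)$ with $a+b=k$, $a \in \{n, n-2,\dots,-n\}$, $b \in \{m, m-2,\dots,-m\}$. A direct count shows that $\dim(L_n\otimes L_m)_k$ depends only on $|k|$ (by the $h \mapsto -h$ symmetry), and for $k \geq 0$ equals
\[
\dim(L_n \otimes L_m)_k = \begin{cases} m+1 & \text{if } 0 \leq k \leq n-m \text{ and } k \equiv n+m \pmod 2,\\ \tfrac{n+m-k}{2}+1 & \text{if } n-m \leq k \leq n+m \text{ and } k \equiv n+m \pmod 2,\\ 0 & \text{otherwise}.\end{cases}
\]

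Next I would apply the multiplicity formula: the number of copies of $L_k$ appearing in $L_n \otimes L_m$ equals $\dim(L_n \otimes L_m)_k - \dim(L_n \otimes L_m)_{k+2}$ for $k \geq 0$. Plugging in the case analysis above, this difference is $0$ for $0 \leq k < n-m$, and is exactly $1$ for $k = n-m, n-m+2, \ldots, n+m$, and is $0$ for $k > n+m$. Hence each of $L_{n-m}, L_{n-m+2}, \ldots, L_{n+m}$ appears with multiplicity one in the decomposition, and no other irreducibles appear.

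I do not anticipate any serious obstacle here — the proof is essentially a careful weight-counting exercise once complete reducibility is in hand. The only mild subtlety is verifying that the piecewise formula for $\dim(L_n \otimes L_m)_k$ is correct (one needs to check that for $|k| \leq n-m$ the constraint $|b| \leq m$ is the binding one, so all $m+1$ choices of $b$ give a valid pair, while for $|k| > n-m$ the constraint $|a| \leq n$ becomes binding and cuts the count down). This can be packaged cleanly by writing $\chi_n(q) := q^n + q^{n-2} + \cdots + q^{-n}$ and verifying the identity $\chi_n(q)\chi_m(q) = \chi_{n-m}(q) + \chi_{n-m+2}(q) + \cdots + \chi_{n+m}(q)$ by multiplying both sides by $q - q^{-1}$ and telescoping, which is the character-theoretic incarnation of the same calculation and avoids the case analysis entirely.
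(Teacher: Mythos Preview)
Your proof is correct and is the standard weight-counting argument; the character identity you sketch at the end is the cleanest way to package it. However, the paper does not actually prove this theorem: it is stated as a classical result and then used (via its corollary about $(L_n \otimes L_m)^e$ for $n$ large) as input to the discussion of unipotent nearby cycles. So there is nothing to compare against --- you have supplied a proof where the paper simply cites one.
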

A consequence of this theorem is that for $n$ large, there are $m+1$ summands, so 
\[
(L_n \otimes L_m)^e = \begin{array}{c} \text{highest weight vectors}\\
\text{in above sum} \end{array} = \C^{m+1} \simeq L_m.
\]
(Here the superscript denotes Lie algebra invariants; i.e. the kernel of the action 
\[
e \cdot v \otimes w = ev \otimes w + v \otimes ew
\]
of $e$ on $L_n \otimes L_m$.) Moreover, this is more than just an isomorphism of vector spaces: the action of $-e \otimes 1$ on $(L_n \otimes L_m)^{e}$ aligns with the action of $e$ on $L_m$. Hence (be exponentiating) we have the following corollary to Clebsch--Gordan's theorem. 
\begin{corollary}
\label{CG corollary}
Let $V$ be a finite-dimensional vector space and $\phi$ a unipotent endomorphism of $V$. Let $J_n$ be a $(n+1)$-dimensional vector space and 
\[
\phi'= \bp 1 & 1 & & \\ & 1 & \ddots & \\ & & \ddots & 1 \\
& &  & 1 \ep 
\]
a Jordan block of size $n+1$. For $n$ large, 
\[
(V \otimes J_n)^{\phi \otimes \phi'} \simeq V,
\]
and this isomorphism is equivariant with respect to the action of $1 \otimes (\phi')^{-1}$ on the LHS and $\phi$ on the RHS.
\end{corollary}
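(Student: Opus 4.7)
The plan is to reduce the statement to a linear algebra fact about nilpotent logarithms, exhibit an explicit section $V \to (V \otimes J_n)^{\phi \otimes \phi'}$, and use Clebsch--Gordan only at the end to match dimensions.

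First I would pass to Lie algebra data. Since $\phi, \phi'$ are unipotent, set $N := \log \phi$ and $N' := \log \phi'$, both nilpotent. As $N \otimes 1$ and $1 \otimes N'$ commute, $\phi \otimes \phi' = \exp(N \otimes 1 + 1 \otimes N')$, and for any nilpotent $X$ one has $\ker(\exp X - 1) = \ker X$ (because $\exp X - 1 = X(1 + X/2 + \cdots)$ with invertible second factor). Hence
\[ (V \otimes J_n)^{\phi \otimes \phi'} \;=\; \ker\bigl(N \otimes 1 + 1 \otimes N' \;\text{on}\; V \otimes J_n\bigr). \]
On this kernel $N \otimes 1 = -(1 \otimes N')$, so the two endomorphisms $\phi \otimes 1$ and $1 \otimes (\phi')^{-1}$ restrict to the same operator, reducing the desired equivariance to matching $\phi$ on $V$ with $\phi \otimes 1$ on the kernel.

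Next I would build an explicit map. Choose a Jordan basis $w_0, \dots, w_n$ of $J_n$ with $N' w_k = w_{k-1}$ (and $N' w_0 = 0$), and define
\[ s : V \longrightarrow V \otimes J_n, \qquad v \longmapsto \sum_{k=0}^{n} (-N)^k v \otimes w_k. \]
Assume $n$ is large enough that $N^{n+1} = 0$ on $V$. A one-line computation shows $(N \otimes 1 + 1 \otimes N') \circ s = 0$: the two resulting sums telescope, with the $(-N)^{n+1} v \otimes w_n$ boundary term vanishing by nilpotency. Injectivity is immediate from the $w_0$-component. For equivariance, expand $(\phi')^{-1} w_k = \sum_{j \geq 0}(-1)^j w_{k-j}/j!$; reindexing by $\ell = k - j$ gives
\[ (1 \otimes (\phi')^{-1}) \, s(v) \;=\; \sum_\ell \Bigl(\sum_{j \geq 0} \tfrac{(-1)^j}{j!}(-N)^{\ell+j} v\Bigr) \otimes w_\ell \;=\; \sum_\ell (-N)^\ell \exp(N) v \otimes w_\ell \;=\; s(\phi v). \]

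It remains to see that $s$ is surjective onto the kernel, and this is where Clebsch--Gordan enters. Apply Jacobson--Morozov to the nilpotent $N$ to extend it to an $\mathfrak{sl}_2$-triple acting on $V$, decompose $V = \bigoplus_i L_{m_i}$ as an $\mathfrak{sl}_2$-module, and note that $J_n$ with its cyclic nilpotent $N'$ is exactly $L_n$. Then, for $n \geq \max_i m_i$ (which is already implied by $N^{n+1} = 0$), Clebsch--Gordan yields
\[ V \otimes J_n \;\cong\; \bigoplus_i \bigoplus_{k=0}^{m_i} L_{n+m_i - 2k}, \]
and each summand contributes a one-dimensional highest-weight space, so $\dim \ker(N\otimes 1 + 1 \otimes N') = \sum_i(m_i + 1) = \dim V$. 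Combined with the injectivity of $s$, this forces $s$ to be the required equivariant isomorphism.

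\textbf{What I expect to be the hard part.} Essentially nothing here is deep --- the telescoping identity, the equivariance, and the dimension count are each short. The only point requiring care is quantifying "$n$ large": one needs $n + 1$ to exceed the nilpotency index of $N$ on $V$ (which simultaneously ensures the boundary term vanishes and that Clebsch--Gordan returns the full $m_i + 1$ summands). Writing this constant explicitly and keeping the sign conventions on $N$ versus $-N$ consistent is the main bookkeeping hazard.
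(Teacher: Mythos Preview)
Your proof is correct and uses the same essential ingredients as the paper: pass from unipotent to nilpotent via logarithm, and invoke Clebsch--Gordan for the dimension count. The paper's argument is terser---it observes directly from Clebsch--Gordan that $(L_n \otimes L_m)^e$ has dimension $m+1$, asserts without computation that $-e \otimes 1$ on this space matches $e$ on $L_m$, and then says ``exponentiate''---whereas you reverse the order (logarithm first, Clebsch--Gordan last) and supply the explicit section $s(v) = \sum_k (-N)^k v \otimes w_k$ with a direct verification of equivariance. Your version has the advantage of producing a concrete formula for the isomorphism and making the equivariance transparent; the paper's version is shorter but leaves the matching of $-e \otimes 1$ with $e$ as something the reader must unpack. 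One terminological quibble: invoking Jacobson--Morozov for a nilpotent endomorphism of a vector space is overkill---Jordan normal form already gives the $\mathfrak{sl}_2$-action---but this does not affect the argument.
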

Hence by tensoring with a Jordan block and taking invariants, we can recover the whole vector space and its monodromy action. 

\begin{remark} (Aside for the Lie theorists) Let $\mf{g}/\C$ be a semisimple Lie algebra, and $V$ a finite-dimensional $\mf{g}$-module. For $\lambda$ sufficiently dominant, 
\[
\Delta_\lambda \otimes V \simeq \bigoplus_{\gamma_i \in \Gamma} \Delta_{\lambda + \gamma_i},
\]
where $\Delta_\mu$ is the Verma module of highest weight $\mu$ and $\Gamma$ is the multiset of weights of $V$. Moreover, there is an isomorphism of $\mf{n}_+$-modules:
\[
(\Delta_\lambda \otimes V)^\mf{b} \simeq V.
\]
\end{remark}

\subsection{The unipotent vanishing cycles functor}

Now we return to the world of perverse sheaves. A perverse sheaf $\mc{M} \in \Perv(D^\times)$ is the same thing as a vector space $V$ and monodromy endomorphism $\mu \circlearrowright V$. Let $\mc{L}_n$ be the local system with stalk $k[x]/(x^n)$ and monodromy $\phi=1+x$, as in Lecture \ref{lecture 17}. In light of Corollary \ref{CG corollary} and the third bullet point in Example \ref{examples of perverse sheaves}, for $n$ large enough, we could define a functor $\psi$ by
\[
\psi(\mc{M}) = H^{0}(j_*(\mc{M} \otimes \mc{L}_n)) = (V \otimes J_{n-1})^{\mu \otimes \phi} = V.
\]
This will end up being our definition which allows us to see certain features in the arguments below. The key lemma of today's lecture tells us that this definition stabilizes for large $n$. 
\begin{lemma}
Let $\mc{M} \in \Perv(D^\times)$. For a fixed $m$, the kernel and cokernel of 
\[
j_!(\mc{M} \otimes \mc{L}_n) \xrightarrow{x^m} j_*(\mc{M} \otimes \mc{L}_n)
\]
stabilize for large $n$, and are canonically isomorphic. 
\end{lemma}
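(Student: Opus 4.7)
The strategy is to factor $f_n^m$ through simpler maps, use the snake lemma to isolate the $n$-dependence in a single adjunction, and then invoke Corollary~\ref{CG corollary} to prove stabilization.

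First I would decompose $x^m : \mc{L}_n \to \mc{L}_n$ on $D^\times$ as the composition of the surjection $p_n : \mc{L}_n \twoheadrightarrow \mc{L}_{n-m}$ (quotient by the socle $(x^{n-m})/(x^n) \cong \mc{L}_m$) followed by the inclusion $i_n : \mc{L}_{n-m} \hookrightarrow \mc{L}_n$ identifying $\mc{L}_{n-m}$ with the image $(x^m)/(x^n)$. Tensoring with $\mc{M}$ is exact on $D^\times$, and both $j_!$ and $j_*$ are exact on $\Perv$ because $j$ is affine (Example~\ref{examples of perverse sheaves}). Using naturality of the adjunction $\alpha : j_! \Rightarrow j_*$, this factors $f_n^m$ as
\[
j_!(\mc{M} \otimes \mc{L}_n) \twoheadrightarrow j_!(\mc{M} \otimes \mc{L}_{n-m}) \xrightarrow{\alpha_{n-m}} j_*(\mc{M} \otimes \mc{L}_{n-m}) \hookrightarrow j_*(\mc{M} \otimes \mc{L}_n),
\]
where the surjection has kernel $j_!(\mc{M} \otimes \mc{L}_m)$ and the injection has cokernel $j_*(\mc{M} \otimes \mc{L}_m)$. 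Applying the snake lemma produces short exact sequences
\[
0 \to j_!(\mc{M} \otimes \mc{L}_m) \to \ker(f_n^m) \to \ker(\alpha_{n-m}) \to 0,
\]
\[
0 \to \coker(\alpha_{n-m}) \to \coker(f_n^m) \to j_*(\mc{M} \otimes \mc{L}_m) \to 0,
\]
so the outer terms are independent of $n$ and the entire $n$-dependence is concentrated in $\ker(\alpha_{n-m})$ and $\coker(\alpha_{n-m})$.

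Next I would show these two perverse sheaves stabilize for large $n$. Both are supported at $\{0\}$, and by the description of $j_!$ versus $j_*$ on a local system in Example~\ref{examples of perverse sheaves} they correspond respectively to the invariants $(V \otimes J_{n-m})^{\mu \otimes (1+x)}$ and coinvariants $(V \otimes J_{n-m})_{\mu \otimes (1+x)}$ of the monodromy on the nearby fibre of $\mc{M} \otimes \mc{L}_{n-m}$, where $(V, \mu)$ is the data of $\mc{M}$. Corollary~\ref{CG corollary} identifies both of these canonically with $V$ (more precisely, with the generalized $1$-eigenspace of the semisimple part of $\mu$; the other generalized eigenspaces contribute nothing, as in Example~\ref{no roots of unity}) once $n - m$ exceeds the largest Jordan block size of the unipotent part of $\mu$. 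In particular $\ker(\alpha_{n-m})$ and $\coker(\alpha_{n-m})$ stabilize and are canonically isomorphic to a common skyscraper at $\{0\}$, which is, by construction, the unipotent nearby cycles $\psi(\mc{M})$ of $\mc{M}$.

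The remaining and most delicate step is to promote this to a canonical isomorphism $\ker(f_n^m) \simeq \coker(f_n^m)$. The two sides restrict to the same perverse sheaf $\mc{M} \otimes \mc{L}_m$ on $D^\times$, and by the previous paragraph they carry matching vanishing-cycle data at $\{0\}$, so by the gluing description of perverse sheaves on a curve from Lecture~\ref{lecture 17} they are abstractly isomorphic. The canonical isomorphism is obtained by interposing Beilinson's maximal extension $\Xi_m(\mc{M})$: one constructs a single ambient perverse sheaf equipped with a natural inclusion from $\ker(f_n^m)$ and a natural surjection onto $\coker(f_n^m)$, both becoming isomorphisms once $n$ is large. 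The main obstacle will be carrying out this construction explicitly and verifying, via Corollary~\ref{CG corollary}, that the resulting identification is independent of $n$; this is precisely the ingredient that lets the common stabilized object serve, in the next lecture, as a definition of the unipotent nearby cycles functor $\psi$.
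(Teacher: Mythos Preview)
Your factorisation followed by the snake lemma is a genuinely different and more systematic route than the paper's. The paper only treats rank-one $\mc{M}$, dispatches monodromy $\lambda\neq 1$ by observing $j_! = j_*$, and for $\lambda = 1$ merely draws egg diagrams for one small example. Your reduction to the adjunction $\alpha_{n-m}$, with the $n$-independent terms $j_!(\mc{M}\otimes\mc{L}_m)$ and $j_*(\mc{M}\otimes\mc{L}_m)$ pushed to the outside, is correct and establishes stabilisation for all $\mc{M}$ at once via Corollary~\ref{CG corollary}.

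There is, however, a real gap in the final step. Invoking $\Xi_m(\mc{M})$ or the gluing description is circular: in this development $\Xi_f$, $\phi_f$ and Beilinson gluing are all \emph{defined or proved using the present lemma}, so you cannot appeal to them here. And even setting circularity aside, your gluing sketch does not go through as written: what you identified is $\ker(\alpha_{n-m})\cong\coker(\alpha_{n-m})$, but these are not the vanishing-cycle data of $\ker(f_n^m)$ and $\coker(f_n^m)$ --- they appear only as a quotient of the former and a subobject of the latter in your snake-lemma sequences, and knowing the pieces does not pin down the extension class (for instance $\Ext^1(i_*W,\, j_!(\mc{M}\otimes\mc{L}_m))$ is typically nonzero). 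The honest argument is Beilinson's ``middles agree'' statement about the systems $\{j_!(\mc{M}\otimes\mc{L}_n)\}$ and $\{j_*(\mc{M}\otimes\mc{L}_n)\}$, which the paper itself does not prove either but only advertises in the remark following its proof. So you have not fallen short of the paper, but your final paragraph does not close the gap.
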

\begin{proof}
We will prove the lemma when $\mc{M}$ has rank $1$ (i.e. $\mc{M}$ corresponds to the vector space $k$ with monodromy given by multiplication by $\lambda \in k^\times$.) 

\vspace{2mm}
\noindent
{\bf Case 1}: monodromy $\lambda \neq 1$. Then 
\[
j_!(\mc{M} \otimes \mc{L}_n) = j_* (\mc{M} \otimes \mc{L}_n).
\]
(See Example \ref{no roots of unity}.) The theorem follows in this case because the kernel and cokernel of $\mc{L}_n \xrightarrow{x^m} \mc{L}_n$ stabilize for $m$ fixed, $n$ large and $j_!, j_*$ are exact. 

\vspace{2mm}
\noindent
{\bf Case 2}: monodromy $\lambda =1$; i.e. $\mc{M}=k_{D^\times}[1]$ is the trivial local system. We will illustrate what is happening in this case through an example. Let $m=2$ and $n=3$. We can see what the maps $\mc{L}_3 \xrightarrow{x^2} \mc{L}_3$, $j_!\mc{L}_3 \xrightarrow{x^2} j_! \mc{L}_3$ and $j_!\mc{L}_3 \xrightarrow{x^2} j_* \mc{L}_3$ are doing on egg diagrams: 
\[
 \includegraphics[scale=0.5]{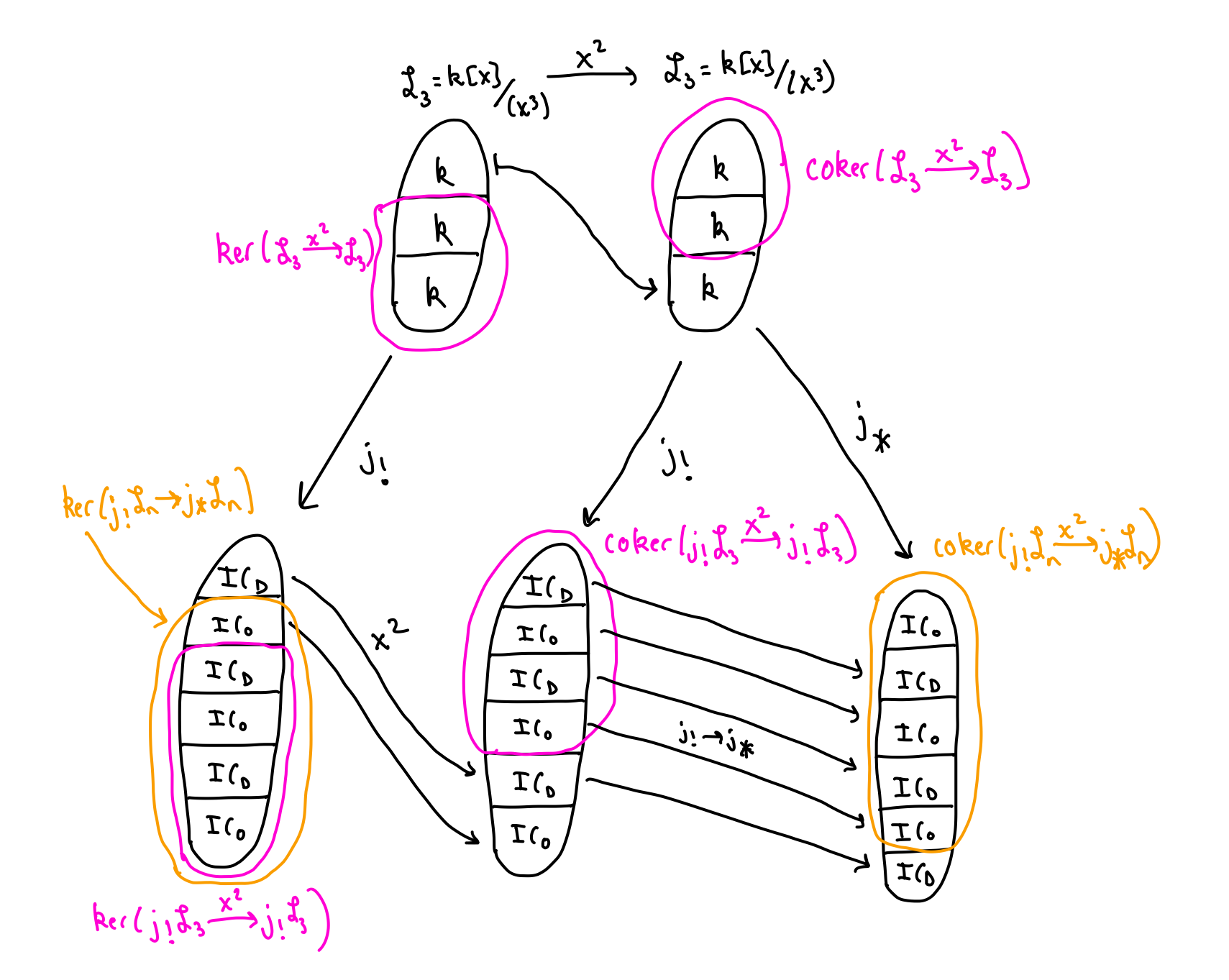}
\]
As $n$ gets larger, the eggs in the illustration above get longer and the image of $j_{!*}$ gets bigger, but the kernel and cokernel stay the same size. 
\end{proof}

\begin{remark}
What Beilinson actually proves is that 
\[
\lim_{\leftrightarrow} j_!(\mc{M} \otimes \mc{L}_n) \simeq \lim_{\leftrightarrow} j_*(\mc{M} \otimes \mc{L}_n),
\]
where `$\lim_{\leftrightarrow}$' is appropriately defined. We'll sum this up with the slogan ``middles agree''. We already saw this happening above, and will see it happening again in our next example.
\end{remark}

\begin{example}
\label{middles agree}
Here is a $2$-dimensional example. Let 
\[
\C^2 \xrightarrow[xy]{f} \C,
\]
and consider $\Perv_\Lambda(\C^2)$, where $\Lambda$ is the stratification via coordinate hyperplanes and their complement $U$.
\[
 \includegraphics[scale=0.5]{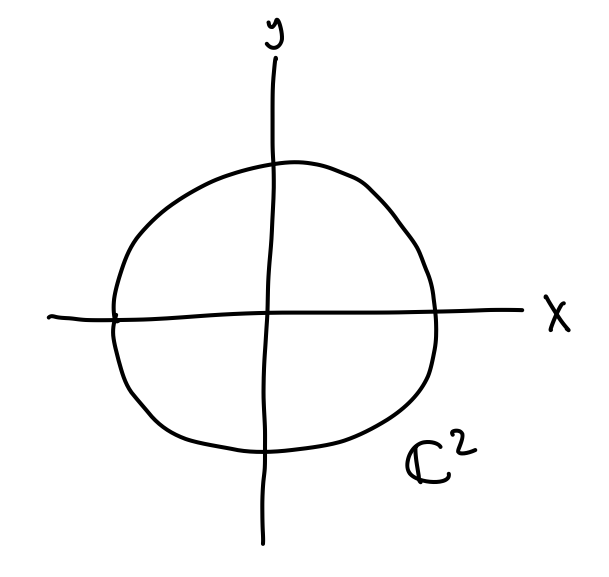}
\]
Let $\mc{M} = k_{U} [2]$, and denote by $IC_1 = IC_{\{y=0\}}$, $IC_2=IC_{\{x=0\}}$, and $IC_0=IC_{\{x=y=0\}}$. The composition series of $j_!k_U[2]$ is given by the following egg: 
\[
j_!k_U[2] = \vcenter{\hbox{
 \includegraphics[scale=0.6]{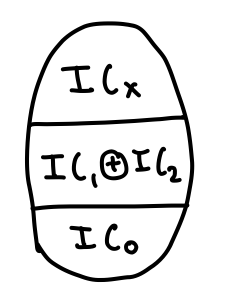}}}
\]
Then the key Lemma can be seen on egg diagrams:
\[
 \includegraphics[scale=0.45]{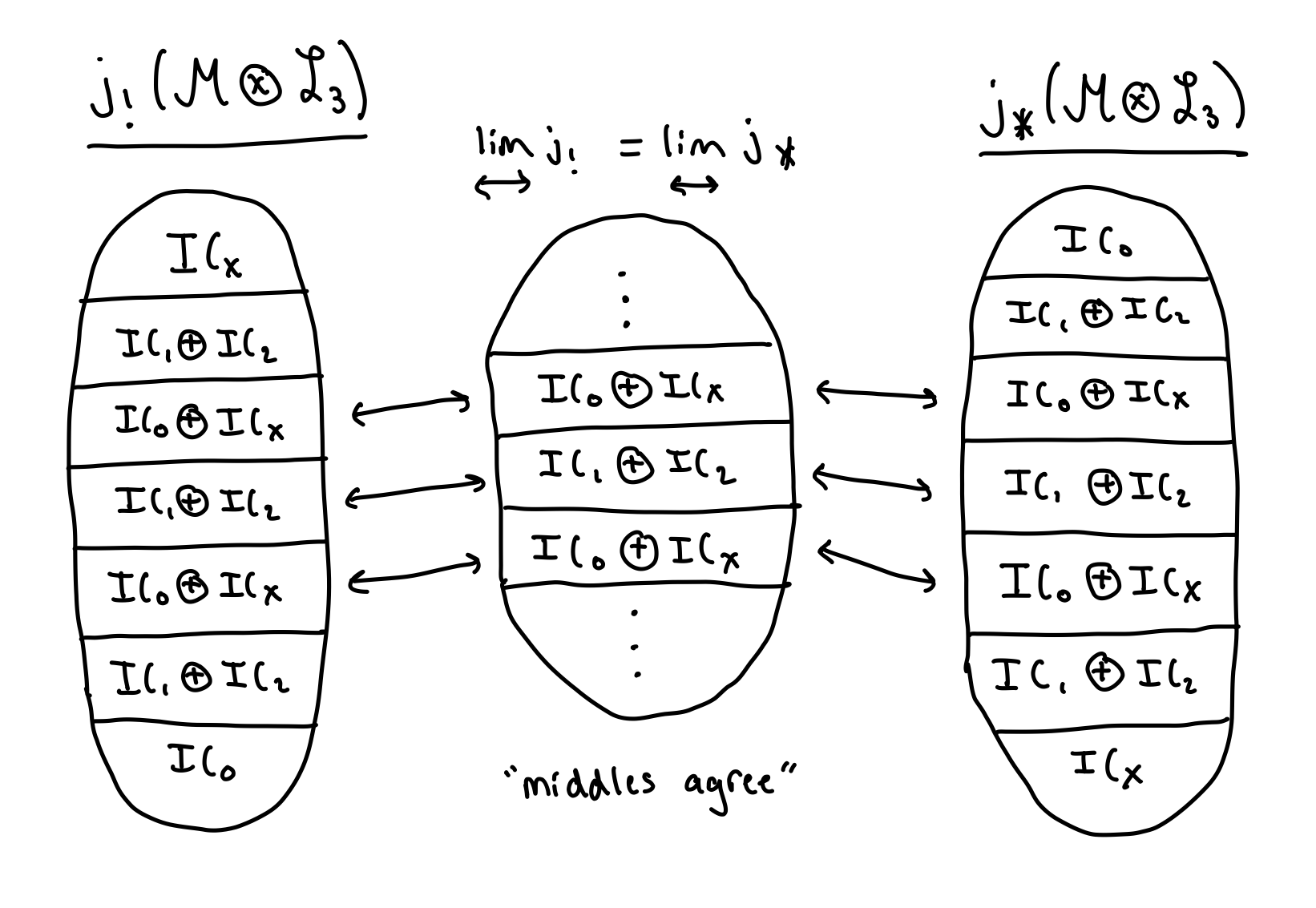}
\]
\end{example}

\begin{definition} (Beilinson) Let $X \xrightarrow{f} D$, $U = f^{-1}(D^\times)$, and $Z=f^{-1}(0)$. For a perverse sheaf $\mc{M} \in \Perv(U)$ and $n$ large, define functors $\psi_f, \Xi_f:\Perv(U) \rightarrow \Perv(X)$ by 
\begin{itemize}
    \item $\psi_f(\mc{M}) = \ker(x^0) \simeq \coker(x^0)$ \hspace{5mm} ``unipotent nearby cycles''
    \item $\Xi_f(\mc{M}) = \ker(x^1) \simeq \coker(x^1)$ \hspace{5mm}``maximal extension''
\end{itemize}
The perverse sheaf $\psi_f(\mc{M})$ is supported on $Z$ and $\Xi_f(\mc{M})$ is supported everywhere. 
\end{definition}
\begin{example} Let $\mc{M}=k_U[1]$, then we can see the images of these functors in eggs:
\[
 \includegraphics[scale=0.5]{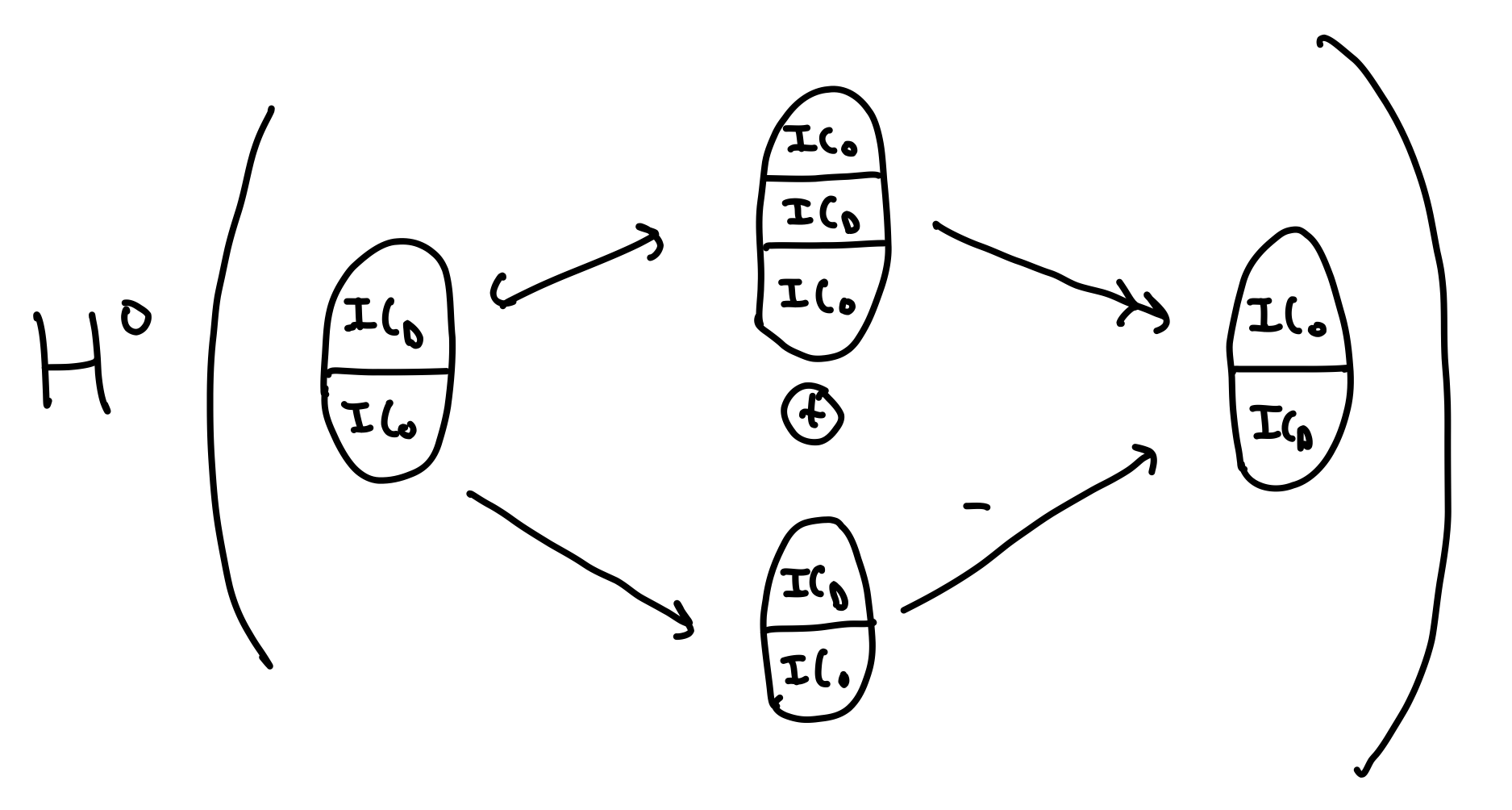}
\]
\end{example}
\begin{example} In our $2$-dimensional example, Example \ref{middles agree} we have:
\[
 \includegraphics[scale=0.6]{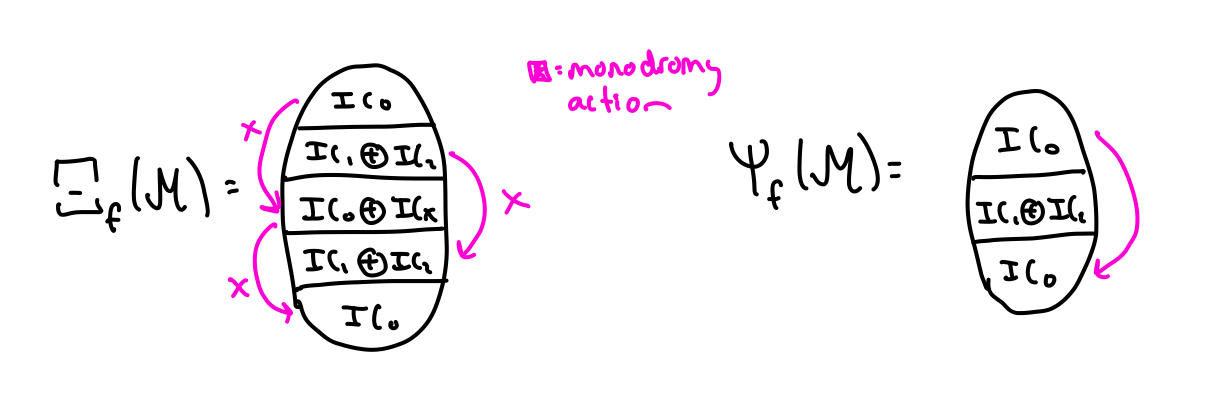}
\]
\end{example}
\begin{lemma}
The functors $\psi_f$ and $\Xi_f$ are exact, and we have functorial short exact sequences
\begin{enumerate}
    \item $j_! \hookrightarrow \Xi_f \twoheadrightarrow \psi_f$ 
    \item $\psi_f \hookrightarrow \Xi_f \twoheadrightarrow j_*$ 
\end{enumerate}
Moreover, the canonical map 
\[
\psi_f \rightarrow \Xi_f \rightarrow \psi_f
\]
agrees with monodromy $-1$.
\end{lemma}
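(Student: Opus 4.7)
The plan is to package the four objects $j_!\mc{M}$, $j_*\mc{M}$, $\psi_f(\mc{M})$ and $\Xi_f(\mc{M})$ as subquotients of the same ambient perverse sheaves $j_!(\mc{M} \otimes \mc{L}_n)$ and $j_*(\mc{M} \otimes \mc{L}_n)$ for $n$ large, and to read everything off the resulting diagram. The key algebraic input is the factorisation $x^1 = x \circ x^0 = x^0 \circ x$, where $x^0$ denotes the adjunction $j_!(\mc{M} \otimes \mc{L}_n) \to j_*(\mc{M} \otimes \mc{L}_n)$ and $x$ is the endomorphism induced by multiplication by the nilpotent $x \in k[x]/x^n$ on $\mc{L}_n$, together with the 4-term exact sequence of local systems on $U$
\[
0 \to \mc{M} \to \mc{M} \otimes \mc{L}_n \xrightarrow{x} \mc{M} \otimes \mc{L}_n \to \mc{M} \to 0
\]
whose outer terms are the kernel $(x^{n-1}) \cong k$ and the cokernel $k[x]/(x) \cong k$ of multiplication by $x$ on $k[x]/(x^n)$. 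Because $j_!$ and $j_*$ are exact on $\Perv$ (by affineness of $j$), applying them identifies $j_!\mc{M}$ with both $\ker(x)$ and $\coker(x)$ on $j_!(\mc{M} \otimes \mc{L}_n)$, and similarly $j_*\mc{M}$ for $j_*(\mc{M} \otimes \mc{L}_n)$.

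For exactness of $\psi_f$ and $\Xi_f$, I would take a short exact sequence $0 \to \mc{M}' \to \mc{M} \to \mc{M}'' \to 0$ in $\Perv(U)$ and apply the exact functors $j_!((-) \otimes \mc{L}_n)$ and $j_*((-) \otimes \mc{L}_n)$ to get a commutative diagram with exact rows and vertical arrows $x^m$. The snake lemma then produces a six-term sequence of kernels and cokernels, which collapses via the canonical isomorphisms $\ker(x^m) \simeq \coker(x^m)$ of the key lemma to the desired short exact sequence.

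To obtain the first short exact sequence, view $\Xi_f = \ker(x^0 \circ x)$ and $\psi_f = \ker(x^0)$ inside $j_!(\mc{M} \otimes \mc{L}_n)$: multiplication by $x$ then restricts to a map $\Xi_f \to \psi_f$ by definition, its kernel is $\ker(x) \cap \Xi_f = j_!\mc{M}$, and surjectivity is exactly the stabilisation statement in the key lemma (for $n$ large enough any element of $\psi_f$ lifts along $x$). The second short exact sequence follows from the dual picture using the cokernel descriptions of $\Xi_f = \coker(x \circ x^0)$ and $\psi_f = \coker(x^0)$ inside $j_*(\mc{M} \otimes \mc{L}_n)$; alternatively, one may invoke Verdier duality, which exchanges $j_!$ and $j_*$ and is compatible with $\psi_f$ and $\Xi_f$, so that it turns the first short exact sequence into the second.

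For the monodromy relation, the composite $\psi_f \hookrightarrow \Xi_f \twoheadrightarrow \psi_f$ traces, via the identifications above, to the endomorphism of $\psi_f$ induced by the action of $x$ on $\mc{L}_n$. Since the monodromy of $\mc{L}_n$ equals $1 + x$ on $k[x]/(x^n)$, the induced action of the monodromy $\mu$ on $\psi_f$ equals $1 + x$, whence $x = \mu - 1$ as claimed. The main obstacle will be careful bookkeeping: one has to match the kernel and cokernel descriptions of $\psi_f$ and $\Xi_f$ using the canonical isomorphisms from the key lemma and verify that the various ways of extracting each short exact sequence really do compose to the action of $x$ on $\psi_f$.
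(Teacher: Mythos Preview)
Your proposal is correct and is essentially the standard Beilinson argument; since the paper defers the proof entirely to external handwritten notes, there is nothing substantively different to compare. One small clarification worth making explicit: for the exactness claim, rather than trying to show the snake-lemma connecting map vanishes, it is cleaner to argue that $\psi_f$ (and $\Xi_f$) is left exact as a kernel of a natural transformation between exact functors and right exact as a cokernel, then use the naturality of the key-lemma isomorphism $\ker(x^m)\simeq\coker(x^m)$ to conclude both sequences coincide; this is what your ``collapses'' is really doing. Your surjectivity arguments via stabilisation are right---the inclusion $\mc{L}_n \hookrightarrow \mc{L}_{n+1}$ by multiplication by $x$ identifies $\ker(x^0)_{n+1}$ with the image of $\ker(x^0)_n$, which lands in $xA_{n+1}$, and dually for the cokernel side---and the monodromy computation is exactly as you say once one remembers that the monodromy on $\psi_f$ is by definition $1+x$ via the monodromy of $\mc{L}_n$.
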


\begin{proof}
See Geordie's handwritten notes on the course website.
\end{proof} 

\begin{definition}
Let $\mc{M} \in \Perv(D)$. The {\bf unipotent vanishing cycles functor} is defined by 
\[
\phi_f(\mc{M}) = H^0 \left( \begin{tikzcd} & \Xi_f(\mc{M}_U) \arrow[dr, twoheadrightarrow] & \\
j_!(\mc{M}_U) \arrow[ur, hookrightarrow] \arrow[dr] & \oplus & j_*(\mc{M}_U) \\
& \mc{M} \arrow[ur, "-1"] & \end{tikzcd} \right) 
\]
All maps in the diagram above are the canonical ones. 
\end{definition}
\begin{remark}
\begin{enumerate}
    \item The map $j_! \rightarrow \Xi_f$ is injective, and the map $\Xi_f \rightarrow j_*$ is surjective, so the cocomplex above only has cohomology in degree $0$, and hence $\phi_f$ is an exact functor on $\Perv(D)$. 
    \item $x$ induces a monodromy endomorphism of $\Xi_f, \psi_f, \phi_f$. 
\end{enumerate}
\end{remark}
\begin{example}
Let $X=D$ and $\mc{M}=j_!k_{D^\times}[1]$. To compute $\phi_f(\mc{M})$, we need to compute the zeroeth cohomology
\[
 \includegraphics[scale=0.3]{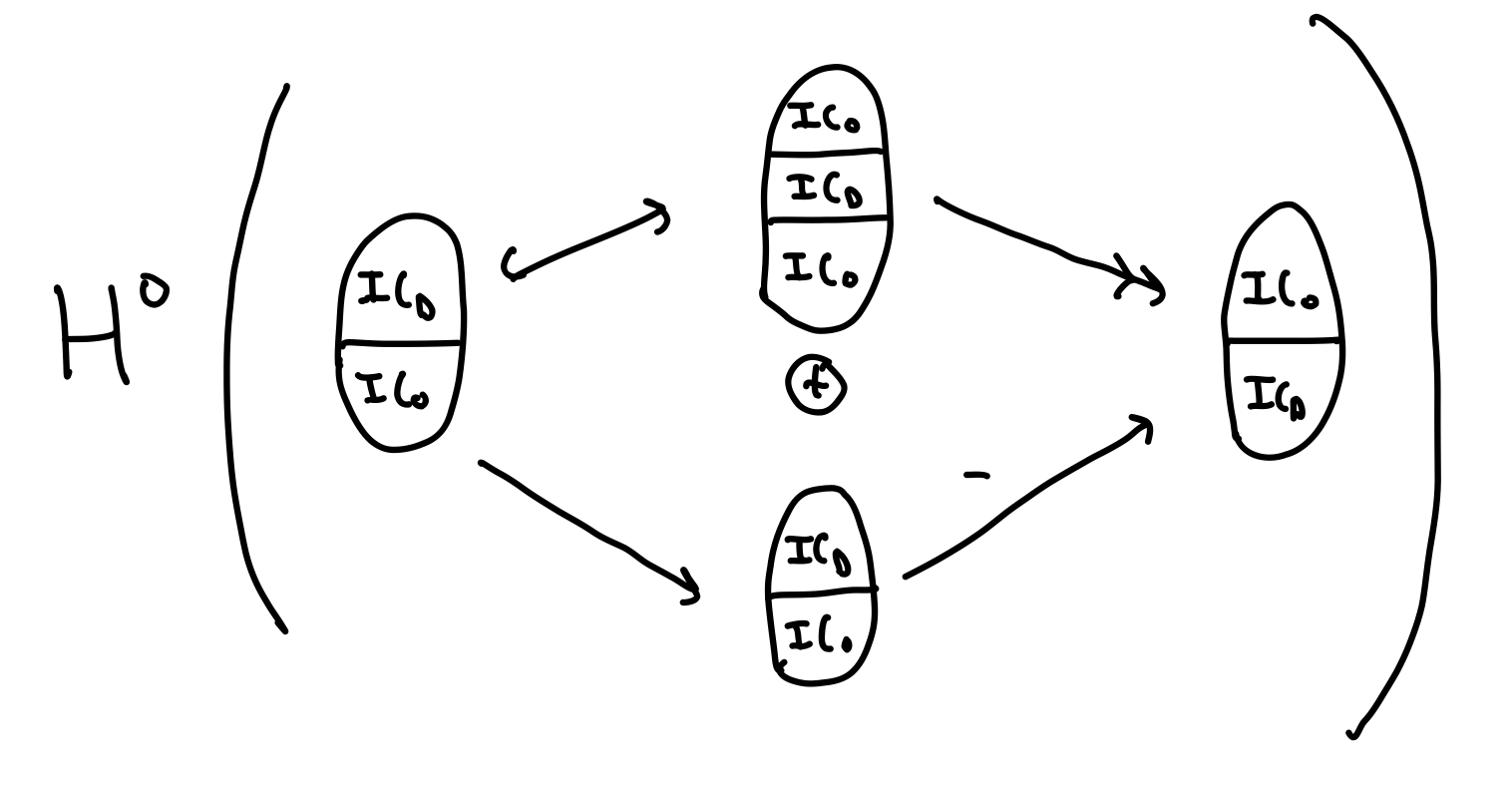}.
\]
The image of the first map is
\[
\includegraphics[scale=0.3]{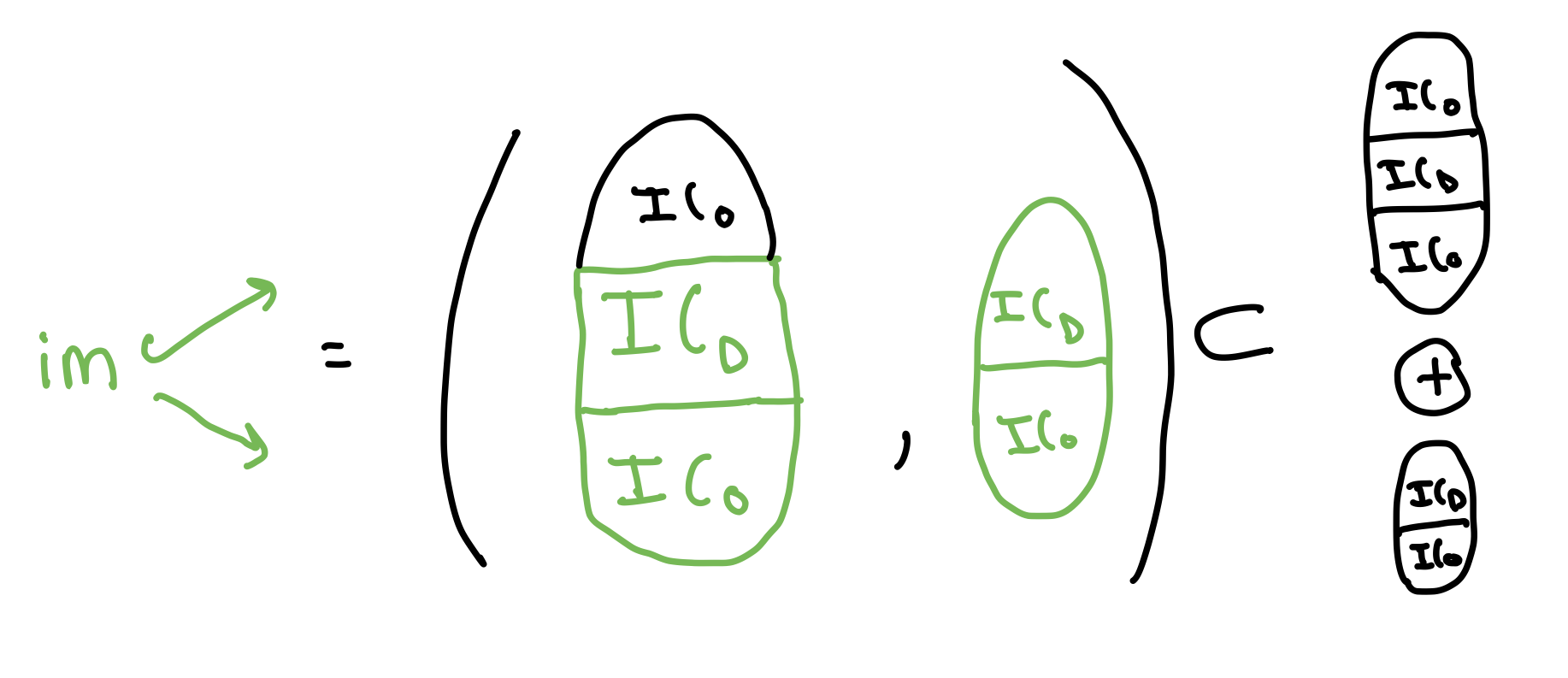}
\]
The kernel of the second map is
\[
\includegraphics[scale=0.4]{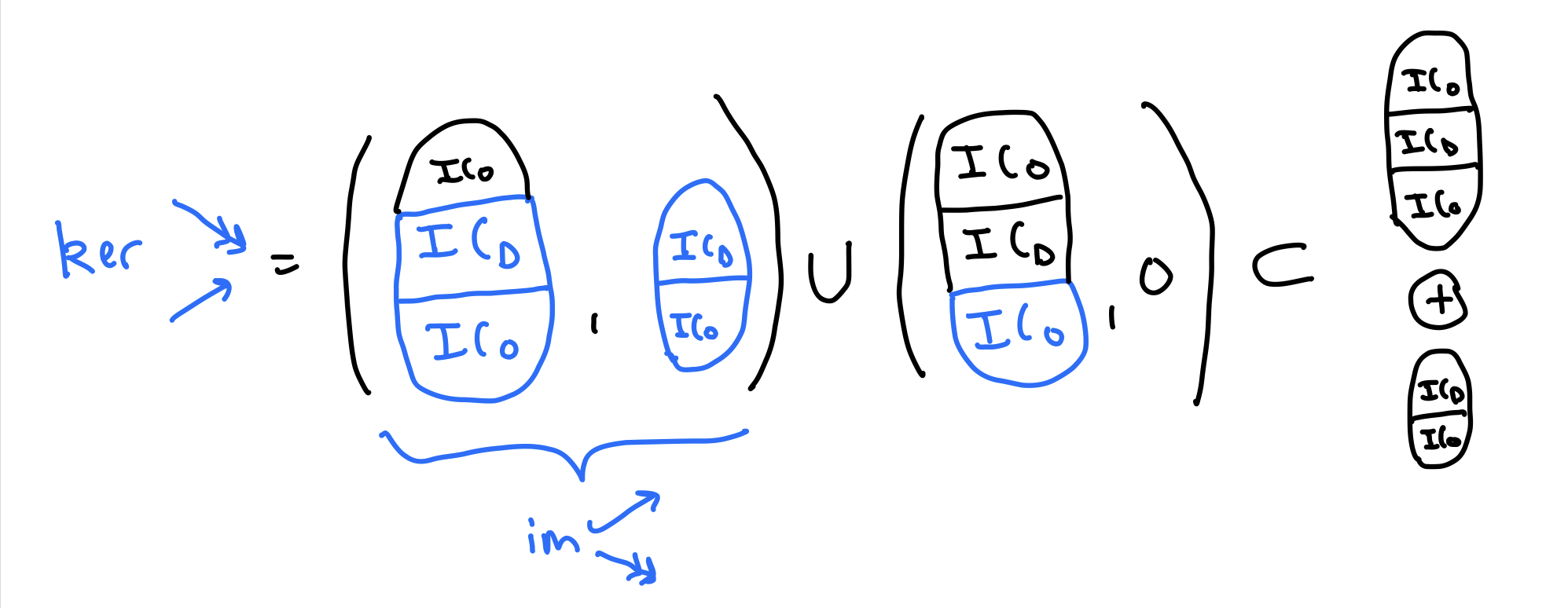}
\]
Notice that because of the sign in the second map, the image will always be contained in the kernel. Hence $\phi_f(\mc{M})$ is equal to 
\[
\includegraphics[scale=0.5]{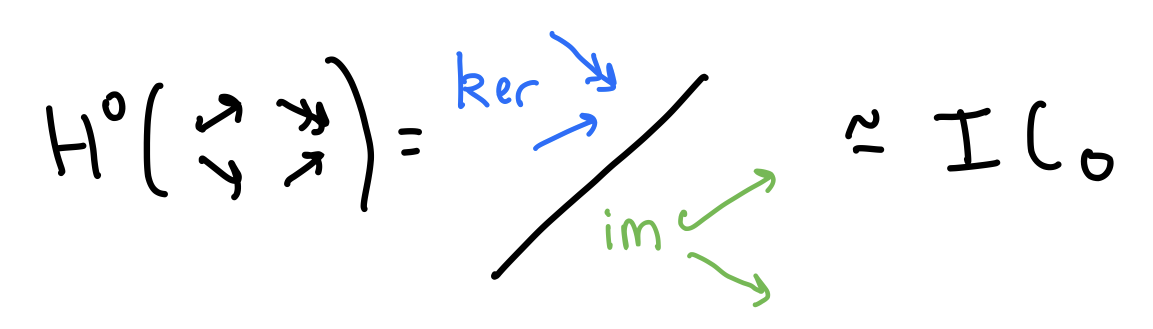}.
\]
\end{example}
\begin{example}
If $\mc{M}=k_D[1]$, then using the same method as above, we compute
\[
\phi_f(\mc{M}) = \vcenter{\hbox{\includegraphics[scale=0.4]{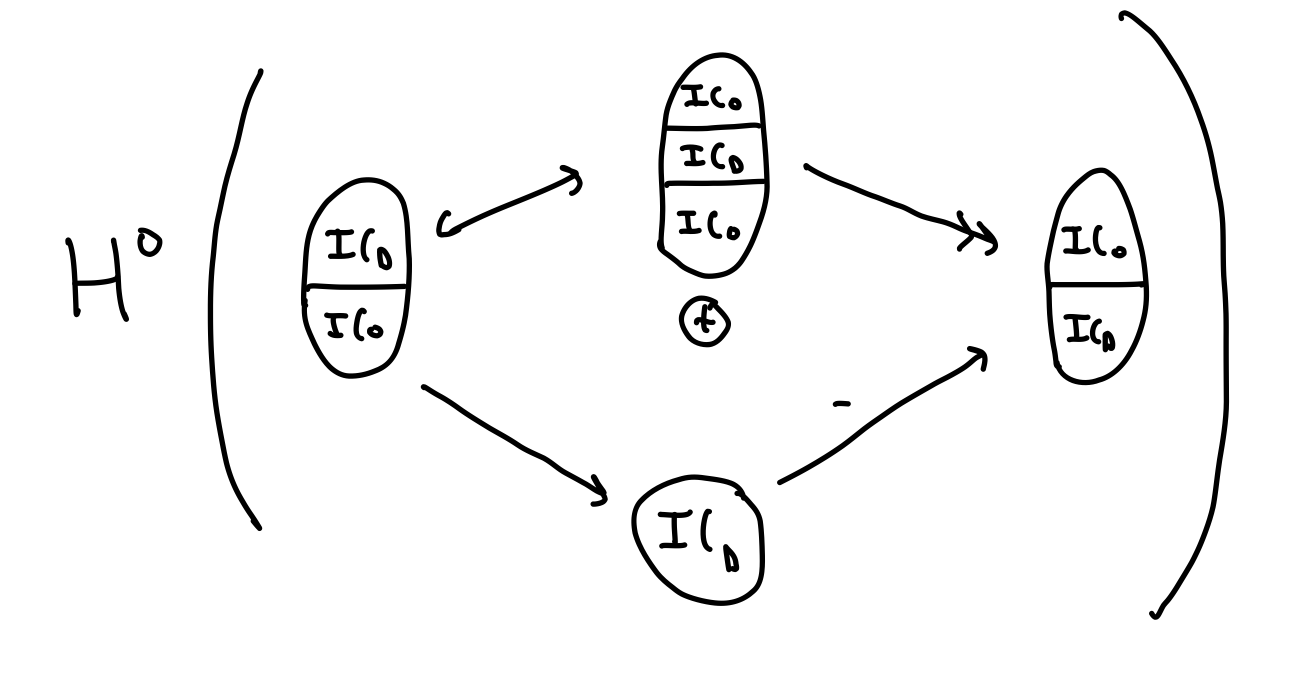}}}.
\]
Here, the kernel and image are equal:
\[
\includegraphics[scale=0.4]{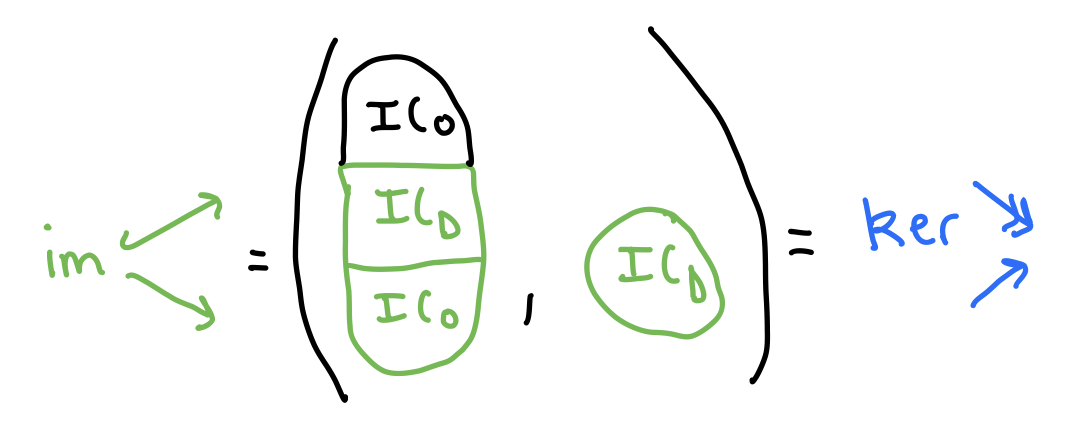}
\]
Notice that the extra piece of the kernel in the previous example is now a subsheaf of the image:
\[
\includegraphics[scale=0.4]{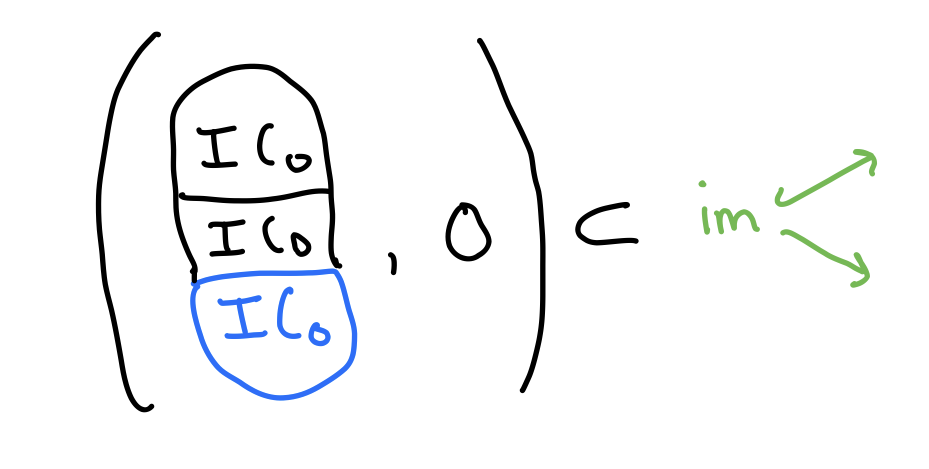}
\]
Hence $\phi_f(\mc{M})=0$.
\end{example}
\begin{exercise}
Show that 
\[
\phi_f(\Xi_f(k_{D^\times}[1]))=IC_0^{\oplus 2},
\]
and show that the monodromy endomorphism is not trivial.
\end{exercise}
\begin{example}
Let $\mc{M}=IC_X$, for $X=\C^2$, and $\C^2 \xrightarrow[xy]{f} \C
$ as earlier. Then 
\[
\phi_f(\mc{M}) = \vcenter{\hbox{\includegraphics[scale=0.4]{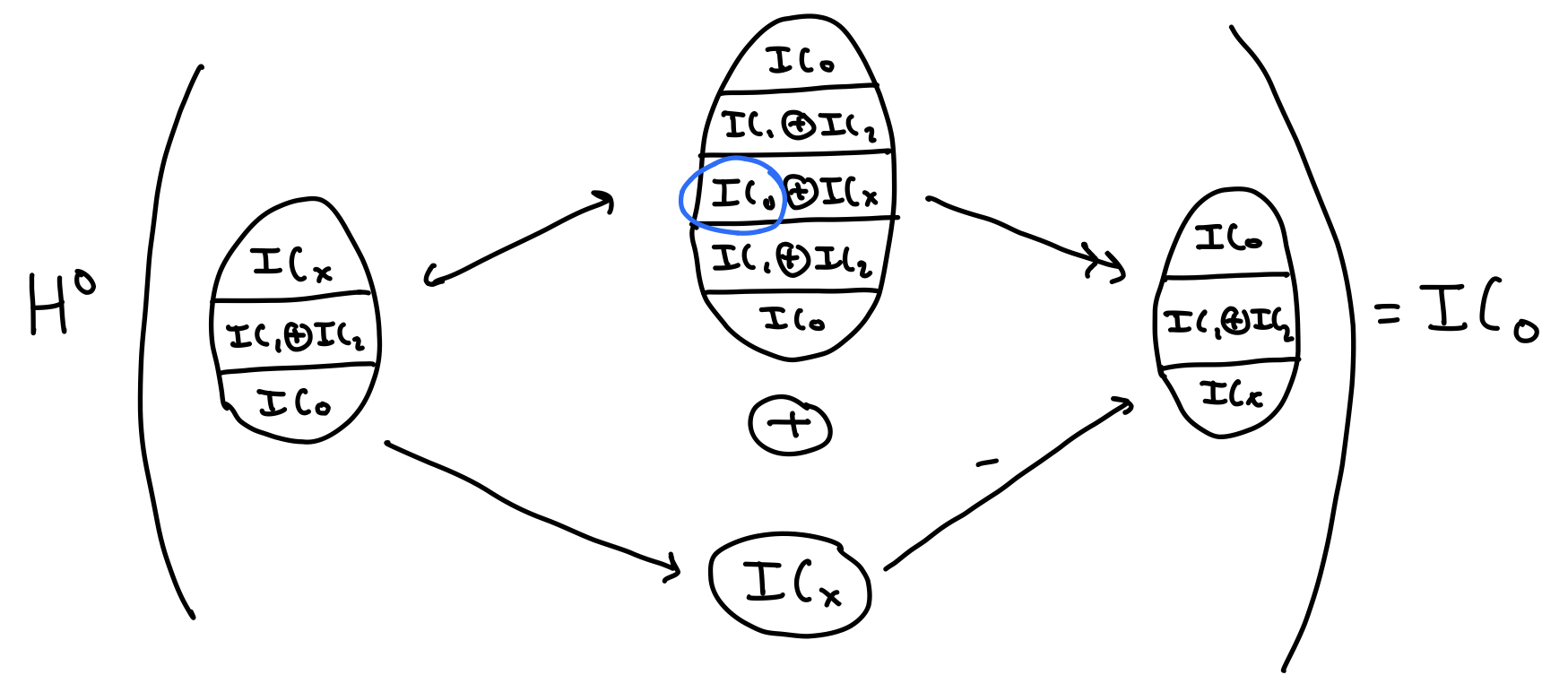}}}
\]
\end{example}
\subsection{Gluing}
With this machinery, Beilinson shows us how we can glue perverse sheaves. To state the theorem we return to a slightly more general setting. 
\[
\begin{tikzcd}
Z \arrow[r, "i", hookrightarrow] \arrow[d] & X  \arrow[d, "f"] & U \arrow[l, "j", hookrightarrow]\arrow[d] \\
\{0\} \arrow[r, hookrightarrow] & D &  D^\times \arrow[l, hookrightarrow]
\end{tikzcd}
\]
\begin{theorem}
(Beilinson) 
\[
\Perv(X) \simeq \left\{ \begin{array}{c} \mc{M}_{U} \in \Perv(U), \\ \mc{M}_{Z} \in \Perv(Z) \end{array} + \begin{tikzcd} \psi_f(\mc{M}_{U}) \arrow[rr, "\text{monodromy -1}"] \arrow[dr] & &\psi_f(\mc{M}_{U}) \\
& \mc{M}_{Z} \arrow[ur] & \end{tikzcd} \right\} 
\]
\[
\hspace{10mm} \text{``pieces''} \hspace{35mm} \text{``glue''}
\]
\end{theorem}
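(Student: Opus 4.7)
The plan is to construct functors in both directions between $\Perv(X)$ and the gluing category, and then verify they are mutually quasi-inverse. In one direction, given $\mc{M} \in \Perv(X)$, take $\mc{M}_U := j^* \mc{M}$ (which is perverse since $j$ is an open immersion) and $\mc{M}_Z := \phi_f(\mc{M})$. The maps $\alpha: \psi_f(\mc{M}_U) \to \mc{M}_Z$ and $\beta: \mc{M}_Z \to \psi_f(\mc{M}_U)$ are read off from the defining diagram of $\phi_f(\mc{M})$: one is the canonical arrow from $\psi_f(\mc{M}_U)$ (viewed inside $\Xi_f(\mc{M}_U)$ via $\psi_f \hookrightarrow \Xi_f$) into the cohomology; the other is the arrow from the cohomology to $\psi_f(\mc{M}_U)$ (via $\Xi_f \twoheadrightarrow \psi_f$). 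The identity $\beta \circ \alpha = N - 1$ is forced because the composite $\psi_f \hookrightarrow \Xi_f \twoheadrightarrow \psi_f$ coincides with monodromy minus the identity, as stated in the lemma preceding the theorem.

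For the inverse functor, given gluing data $(\mc{M}_U,\mc{M}_Z,\alpha,\beta)$ satisfying $\beta\alpha = N - 1$, define the glued perverse sheaf as the middle cohomology of the three-term complex of perverse sheaves on $X$
$$\psi_f(\mc{M}_U) \xrightarrow{(\iota,\, \alpha)} \Xi_f(\mc{M}_U) \oplus \mc{M}_Z \xrightarrow{(\pi,\, -\beta)} \psi_f(\mc{M}_U),$$
where $\iota$ and $\pi$ are the canonical maps from the two short exact sequences $j_! \hookrightarrow \Xi_f \twoheadrightarrow \psi_f$ and $\psi_f \hookrightarrow \Xi_f \twoheadrightarrow j_*$. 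The composite $(\pi,-\beta)\circ(\iota,\alpha) = \pi\iota - \beta\alpha = (N-1) - (N-1) = 0$, so this is indeed a complex. One checks that the first map is injective and the second is surjective (both reduce to injectivity/surjectivity of $\iota$ and $\pi$ on the $\Xi_f$-summand), so the only cohomology lives in the middle and is a genuine perverse sheaf on $X$. Exactness of $\psi_f$, $\Xi_f$ makes the whole construction exact and functorial.

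The main verifications are that these functors are mutually inverse. In one direction, applying the forward functor to a sheaf glued from $(\mc{M}_U,\mc{M}_Z,\alpha,\beta)$ recovers $\mc{M}_U$ on the open stratum tautologically, and recovers $\mc{M}_Z$ after chasing $\phi_f$ through the three-term complex — the key point is that both the canonical $j_! \to j_*$ and the data $\mc{M}_Z$ are subsumed into the same middle cohomology, and $\phi_f$ is designed to extract precisely the $\mc{M}_Z$-summand. In the other direction, for $\mc{M} \in \Perv(X)$ one uses the two canonical short exact sequences $j_! j^* \mc{M} \hookrightarrow \Xi_f j^*\mc{M} \oplus \mc{M}$ and its dual to produce a natural three-term resolution whose middle cohomology is $\mc{M}$ itself; this is essentially a repackaging of the diagram defining $\phi_f(\mc{M})$, read in the opposite direction.

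The hard part will be the last verification — that the forward functor applied to a glued object recovers the original gluing data on the $Z$-side, i.e. that $\phi_f$ applied to the middle cohomology of the three-term complex returns $\mc{M}_Z$ together with the correct $\alpha$ and $\beta$. This requires carefully identifying $\phi_f$ with a concrete double complex built from the stabilizing kernels/cokernels of $j_!(- \otimes \mc{L}_n) \to j_*(- \otimes \mc{L}_n)$, and then tracking the canonical maps through the diagram. The computations illustrated on egg diagrams in the examples (e.g.\ $\phi_f(\Xi_f(k_{D^\times}[1])) = IC_0^{\oplus 2}$) are special cases of this identification, and the general case reduces to them by exactness of $\psi_f$, $\Xi_f$ and induction on the length of $\mc{M}_U$ and $\mc{M}_Z$.
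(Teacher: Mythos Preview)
Your approach is correct and matches the paper's in substance: the forward functor is exactly $(j^*\mc{M}, \phi_f\mc{M}, \alpha, \beta)$, and the inverse is the middle cohomology of $\psi_f(\mc{M}_U) \to \Xi_f(\mc{M}_U) \oplus \mc{M}_Z \to \psi_f(\mc{M}_U)$. Where you diverge is in the verification that these are mutually inverse. The paper packages both checks into a single formal lemma about abelian categories: to any \emph{diad}
\[
Q:\quad C_- \hookrightarrow A \twoheadrightarrow C_+,\qquad C_- \to B \to C_+
\]
one associates a ``reflected'' diad $r(Q)$ by replacing $B$ with $H^0(C_- \to A\oplus B \to C_+)$ and $C_\pm$ with $\ker(\alpha_+)$, $\coker(\alpha_-)$; the lemma is that $r^2 = \mathrm{id}$. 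Your two functors are precisely $r$ applied to the diad $(j_!\mc{M}_U \to \Xi_f\mc{M}_U \oplus \mc{M} \to j_*\mc{M}_U)$ and its reflection, so the ``hard part'' you flag is exactly this involutivity. This is a clean general-nonsense statement requiring no induction on length and no unwinding of the stabilization defining $\psi_f$, $\Xi_f$; your proposed induction would work but is unnecessary labour.
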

\begin{remark}
Beilinson's theorem glues perverse sheaves on an open set to perverse sheaves on a closed set. Gluing perverse sheaves on two open sets is much easier because perverse sheaves form a stack. 
\end{remark}
\begin{example} Returning to our setting of $X=D=D^\times \sqcup \{0\}$, a perverse sheaf on $D^\times$ (resp. $\{0\}$) is the same thing as a vector space with automorphism $\mu \circlearrowright V$ (resp. a vector space $W$). Hence
\[
\Perv_{\begin{array}{c}\text{constructible} \\ \text{w.r.t. }D^\times, \{0\} \end{array}}(D) = \left\{ \begin{array}{c} V \circlearrowleft \mu \text{ invertible,} \\ W \text{ vector space } \end{array} + \begin{tikzcd} V \arrow[dr, "f"] \arrow[rr, "\mu - id"] & & V \\ & W \arrow[ur, "g"] & \end{tikzcd} \right\}. 
\]
\end{example}

Let us explain why Beilinson gluing holds. We start with a general definition. Let $\mc{A}$ be an abelian category. A {\bf diad} in $\mc{A}$ is 
\[
Q:= \begin{tikzcd}
& A \arrow[rd, "\alpha_+", twoheadrightarrow] & \\
C_- \arrow[dr, "\beta_-"']\arrow[ur, "\alpha_-", hookrightarrow] & & C_+ \\
& B \arrow[ur, "\beta_+"'] & 
\end{tikzcd}
\]
These form a category $\Diads(\mc{A})$ in an obvious way. 

Given a diad, we can associate a complex
\[
Q^\cdot = C_- \xrightarrow{(\alpha_-, \beta_-)} A \oplus B \xrightarrow{(\alpha_+, -\beta_+)} C_+,
\]
and another diad 
\[
r(Q):= \begin{tikzcd}
& A \arrow[rd, twoheadrightarrow] & \\
\ker{\alpha_+} \arrow[dr]\arrow[ur,  hookrightarrow] & & \coker{\alpha_-}\\
& H^0(Q^\cdot) \arrow[ur] & 
\end{tikzcd}
\]
\begin{lemma} $r^2=id$
\end{lemma}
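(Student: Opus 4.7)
The plan is to check that $r^2(Q) \cong Q$ by direct, term-by-term comparison, after first observing the crucial compatibility: for $Q^\cdot$ to genuinely be a complex we need $\alpha_+\alpha_- = \beta_+\beta_-$, i.e.\ the diamond in $Q$ is commutative. This identity is the one non-formal ingredient and will be used at the end to verify that the reconstructed structure map $\beta_-$ comes out correctly.

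First I would write out $r(Q)$ explicitly: its four objects are $C'_- = \ker\alpha_+$, $A' = A$, $B' = H^0(Q^\cdot)$, $C'_+ = \coker\alpha_-$, with $\alpha'_-$ the inclusion, $\alpha'_+$ the quotient, $\beta'_-(a) = [(a,0)]$ (well-defined since $\alpha_+(a)=0$), and $\beta'_+[(c,b)] = [c]$. One then applies $r$ again. The two outer objects are immediate: $\ker\alpha'_+ = \ker(A \twoheadrightarrow \coker\alpha_-) = \mathrm{im}\,\alpha_- \cong C_-$ (since $\alpha_-$ is mono), and $\coker\alpha'_- = A/\ker\alpha_+ \cong \mathrm{im}\,\alpha_+ = C_+$ (since $\alpha_+$ is epi). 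Under these identifications $\alpha''_- = \alpha_-$ and $\alpha''_+ = \alpha_+$ on the nose.

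The heart of the argument is identifying the new middle object $H^0((r(Q))^\cdot)$, where $(r(Q))^\cdot$ is the complex
\[
\ker\alpha_+ \xrightarrow{(\iota,\,\beta'_-)} A \oplus H^0(Q^\cdot) \xrightarrow{(\pi,\,-\beta'_+)} \coker\alpha_-.
\]
I would define $\phi\colon B \to H^0((r(Q))^\cdot)$ by $b \mapsto \bigl[(a_b,[(a_b,b)])\bigr]$ for any lift $a_b\in A$ with $\alpha_+(a_b)=\beta_+(b)$ (which exists since $\alpha_+$ is epi). Two choices of $a_b$ differ by an element of $\ker\alpha_+$, and this difference is exactly the relation $(a',[(a',0)])$ coming from $\beta'_-$, so $\phi$ is well-defined. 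A short chase shows $\phi$ is injective (if $\phi(b)=0$ one extracts first $\beta_+(b)=0$ and then $b\in\ker\alpha_-$-image, forcing $b=0$ by injectivity of $\alpha_-$). For surjectivity I would take an arbitrary representative $(c+\alpha_-(d),[(c,b)])$, rewrite $[(c,b)] = [(c+\alpha_-(d),\,b+\beta_-(d))]$ using that $(\alpha_-(d),\beta_-(d))$ is a boundary in $Q^\cdot$, and observe that $\alpha_+(c+\alpha_-(d)) = \beta_+(b) + \beta_+\beta_-(d) = \beta_+(b+\beta_-(d))$ by the commutativity $\alpha_+\alpha_-=\beta_+\beta_-$; this exhibits the element as $\phi(b+\beta_-(d))$.

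Finally I would check that $\phi$ intertwines the remaining two structure maps with the originals. For $\beta''_-$: starting from $d\in C_-$, the element $\alpha_-(d)\in\ker\alpha'_+$ is sent to $[(\alpha_-(d),0)]$, and tracing through $\phi^{-1}$ (with $c=0$, $d'=d$) yields $\beta_-(d)$. For $\beta''_+$: $\phi(b) = (a_b,[(a_b,b)])$ maps to $[a_b]\in\coker\alpha'_-$, which under $\coker\alpha'_-\cong C_+$ goes to $\alpha_+(a_b)=\beta_+(b)$. Both maps therefore recover $\beta_-$ and $\beta_+$, completing $r^2(Q)\cong Q$ with a natural isomorphism. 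The main obstacle is the surjectivity of $\phi$, which is exactly where the diamond-commutativity of $Q$ is needed; everything else is bookkeeping.
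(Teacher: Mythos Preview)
Your proof is correct. The paper, however, does not prove this lemma at all: its entire proof reads ``We will take this lemma as a black box.'' So there is nothing in the paper to compare your argument against.

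A few small remarks. First, your argument is written element-wise; in a general abelian category this is justified either by the Freyd--Mitchell embedding theorem or by working with generalized elements, and you should say so in one line. Second, your phrasing in the injectivity step (``$b\in\ker\alpha_-$-image'') is a bit garbled, though the intended argument is clear: from $[(0,b)]=0$ in $H^0(Q^\cdot)$ you get $(0,b)=(\alpha_-(d),\beta_-(d))$, whence $d=0$ by injectivity of $\alpha_-$, so $b=0$. Third, you assert naturality at the end without checking it; since $r^2=\mathrm{id}$ is a statement about functors on $\mathrm{Diads}(\mathcal{A})$, you should at least remark that your explicit isomorphism is visibly natural in morphisms of diads (which it is, since every step is built from the structure maps). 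None of these are gaps, just polish.
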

\begin{proof}
We will take this lemma as a black box.
\end{proof}
Given this, 
\begin{align*}
    \Perv(X) &\simeq \text{diads of the form }  \begin{tikzcd}[ampersand replacement=\&]
\& \Xi_f(\mc{M}) \arrow[rd] \& \\
j_!\mc{M} \arrow[dr] \arrow[ur] \& \oplus \& j_*\mc{M}\\
\& \mc{M} \arrow[ur] \& 
\end{tikzcd}\\
&\simeq \text{ diads of the form } \begin{tikzcd}[ampersand replacement=\&]
\& \Xi_f(\mc{M}) \arrow[rd] \& \\
\psi_f(\mc{M}) \arrow[dr] \arrow[ur] \& \oplus \& \psi_f(\mc{M})\\
\& \phi_f(\mc{M}) \arrow[ur] \& 
\end{tikzcd} 
\\
&\simeq \text{ pairs}  \left( \mc{M}_U, \psi_f(\mc{M}) \rightarrow \phi_f(\mc{M}) \rightarrow \psi_f(\mc{M}) \mid \psi_f(\mc{M}) \xrightarrow{\text{monodromy -1}} \psi_f(\mc{M}) \right) \qed
\end{align*}
The second equality is obtained via applying our equivalence $r$ on diads.

\pagebreak
\section{Lecture 19: The derived category of perverse sheaves}
\label{lecture 19}

\subsection{Overview of Beilinson gluing}
Recall the setting of the last lecture: Let $X/\C$ be a variety with the metric topology and $P_X:=\Perv(X,k)$. Beilinson described for us how to glue perverse sheaves on $X$. More specifically, we have the following set-up:
\[
\begin{tikzcd}
Z \arrow[r, "i", hookrightarrow] \arrow[d] & X \arrow[d, "f"] & U \arrow[d] \arrow[l, "j"', hookrightarrow] \\
\{0\} \arrow[r, hookrightarrow] & \mathbb{A}^1 & \mathbb{A}^1 \backslash \{0\} \arrow[l, hookrightarrow]
\end{tikzcd}
\]
We can move between the bounded derived categories of constructible sheaves on $X$, $Z$, and $U$ by pushing and pulling:
\[
\begin{tikzcd}[row sep=large, column sep = large]
D_c^b(Z) \arrow[r, "i_*=i_!" description]  & D_c^b(X) \arrow[l, "i^*"', shift right=2] \arrow[l, "i^!", shift left=2] \arrow[r, "j^*=j^!" description] & D_c^b(U) \arrow[l, "j_*"', shift right=2] \arrow[l, "j_!", shift left=2] \\
P_Z \arrow[r, "i_*=i_!" description] \arrow[u, hookrightarrow] & P_X \arrow[u, hookrightarrow] \arrow[r, "j^*=j^!" description] & P_U \arrow[u, hookrightarrow] \arrow[l, "j_* \text{ ($j$ affine)}"', shift right=2], \arrow[l, "j_! \text{ ($j$ affine)}", shift left=2] 
\end{tikzcd}
\]
The functors $i_*=i_!, j^* =j^!$ preserve perverse sheaves, and if $j$ is affine, then $j_*, j_!$ are exact and also preserve perverse sheaves. However, $i^*, i^!$ do not! 

Last lecture, we constructed two other exact functors
\begin{align*}
    x \circlearrowright \psi_f:P_U &\rightarrow P_Z \text{ ``unipotent nearby cycles''} \\
    x \circlearrowright\Xi_f: P_U &\rightarrow P_X \text{ ``maximal extension''} 
\end{align*}
which each have a ``monodromy'' $x$. These functors fit into short exact sequences 
\begin{align*}
    0 &\rightarrow j_!\mc{M} \rightarrow \Xi_f \mc{M} \rightarrow \psi_f \mc{M} \rightarrow 0 \\ 
    0 &\rightarrow \psi_f \mc{M} \rightarrow \Xi_f\mc{M} \rightarrow j_*\mc{M} \rightarrow 0
\end{align*}
for any $\mc{M} \in P_U$. We then used $\psi_f$ and $\Xi_f$ to construct another exact functor
\[
x \circlearrowright \phi_f:P_X \rightarrow P_Z \text{ ``unipotent vanishing cycles''} 
\]
which we should think about as the ``stalk of a perverse sheaf along $Z$''. It was defined as 
\[
\phi_f\mc{M} = H^0 \left( \begin{tikzcd} & \Xi_f\mc{M}_U \arrow[rd] & \\
j_!\mc{M}_U \arrow[ur] \arrow[dr, "adj"'] & \oplus & j_*\mc{M}_U \\
& \mc{M} \arrow[ur, "-adj"'] & \end{tikzcd} \right)
\]
for $M \in P_X$. 

\vspace{3mm}
\noindent
{\bf Easy (and useful) fact}: For $\mc{M} \in P_Z$, \[
\phi_f(i_*\mc{M})\simeq \mc{M}.
\]
The main result of last lecture was {\bf Beilinson gluing}, which gave an equivalence of categories: 
\begin{align*}
    P_X &\xrightarrow{\sim} \left\{ \begin{array}{c} \mc{M} \in P_U \\ \mc{N} \in P_Z \end{array} \left| \begin{tikzcd}[ampersand replacement=\&] \psi_f \mc{M} \arrow[rr, "1-\mu"] \arrow[rd] \& \& \psi_f \mc{M} \\
    \& \mc{N} \arrow[ru]\& \end{tikzcd} \right. \right\}\\
    \mc{F} &\mapsto \left\{ \begin{array}{c} \mc{F}_U \in P_U \\ \phi_f \mc{F} \in P_Z \end{array} \left| \begin{tikzcd}[ampersand replacement=\&] \psi_f \mc{F}_U \arrow[rr] \arrow[rd] \& \& \psi_f \mc{F}_U \\
    \& \phi_f\mc{F} \arrow[ru]\& \end{tikzcd} \right. \right\} 
\end{align*}

\subsection{A theorem of Beilinson}
The goal of today's lecture is to use the tools above to prove the following theorem. 
\begin{theorem}
$D^b(P_X) \simeq D_c^b(X)$.
\end{theorem}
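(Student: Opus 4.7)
\medskip

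\noindent\textbf{Proof proposal.} The plan is to build the realization functor $\mathrm{real}\colon D^b(P_X)\to D^b_c(X)$ and show it is an equivalence. The functor exists by the universal property of the bounded derived category: $P_X$ is the heart of the perverse $t$-structure on $D^b_c(X)$, so the exact inclusion $P_X\hookrightarrow D^b_c(X)$ extends uniquely to a triangulated functor from $D^b(P_X)$. Essential surjectivity is the easy half: any $\mc{F}\in D^b_c(X)$ has only finitely many nonzero perverse cohomology sheaves ${}^p\mc{H}^i(\mc{F})\in P_X$, and $\mc{F}$ is reconstructed from them by iterated cones (the truncation triangles for the perverse $t$-structure), so $\mc{F}$ lies in the essential image.

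The content of the theorem is fully faithfulness, which reduces in the standard way to showing that for all $\mc{M},\mc{N}\in P_X$ and all $i\geq 0$ the canonical map
\[
\mathrm{Ext}^i_{P_X}(\mc{M},\mc{N})\longrightarrow \mathrm{Hom}_{D^b_c(X)}(\mc{M},\mc{N}[i])
\]
is an isomorphism. The cases $i=0$ and $i=1$ are tautological (one is the definition, the other is the Yoneda interpretation of $\mathrm{Ext}^1$ together with the fact that $P_X$ is the heart of a $t$-structure). For $i\geq 2$ we induct on (some notion of complexity of) a stratification $X=\bigsqcup_\lambda X_\lambda$ adapted to $\mc{M}$ and $\mc{N}$, with the base case being a smooth single stratum $X$: there perverse sheaves are shifted local systems, and the statement reduces to the classical identification of sheaf Ext with group cohomology of $\pi_1(X)$, which is already known.

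For the inductive step, pick an open stratum $U\xhookrightarrow{j}X\xhookleftarrow{i}Z$ with $Z=f^{-1}(0)$ for some $f\colon X\to \mathbb{A}^1$ (after locally embedding, so Beilinson's machinery applies), and assume the theorem on $U$ and on $Z$. The Beilinson gluing theorem of Lecture~\ref{lecture 18} presents $P_X$ as the category of triples $(\mc{M}_U,\mc{N}_Z,\gamma)$ where $\gamma$ is a factorization of $1-\mu$ on $\psi_f\mc{M}_U$ through $\mc{N}_Z$, and the same recollement pattern describes $D^b_c(X)$ in terms of $D^b_c(U)$ and $D^b_c(Z)$ via the six functors $(j_!,j^*,j_*,i^*,i_*,i^!)$. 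The strategy is to promote Beilinson's exact functors $\psi_f,\Xi_f,\phi_f$ (and the exact $j_!,j_*,j^*,i_*,i^*,i^!$) to functors between the $D^b(P_?)$, verify they satisfy the axioms of a recollement, and then check that $\mathrm{real}$ intertwines them with the corresponding functors on $D^b_c$. Once this compatibility is in place, the induction hypothesis and a standard five-lemma argument applied to the long exact sequences coming from $j_!\mc{M}_U\to\mc{M}\to i_*\phi_f\mc{M}\to$ (and its dual) propagate the Ext isomorphism from $U$ and $Z$ to $X$.

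The main obstacle is the compatibility check: one must show that the derived-level realizations of $j_*$ and $j_!$ on $P_X$ (which are exact on hearts when $j$ is affine, and hence extend tautologically to $D^b(P_?)$) coincide with $Rj_*$ and $Lj_!$ on $D^b_c$ after applying $\mathrm{real}$, and likewise for $i^*, i^!$; here $i^*, i^!$ are \emph{not} exact for perverse $t$-structures, so they must be defined as the derived functors $Li^*, Ri^!$ on $D^b(P_X)$ via the gluing description, and the identification with the constructible $i^*, i^!$ is precisely the place where the nearby/vanishing cycle computations of Lecture~\ref{lecture 18} do the work (for example, $\phi_f\simeq i^*\Xi_f$ on the derived level controls the discrepancy between $i^*j_!$ and $i^*j_*$ via the monodromy $1-\mu$). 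Once this compatibility is checked on the generators and the recollement axioms verified, the equivalence is forced.
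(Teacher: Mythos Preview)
Your proof has a genuine gap in the base case. You claim that when $X$ is a smooth single stratum, ``perverse sheaves are shifted local systems, and the statement reduces to the classical identification of sheaf Ext with group cohomology of $\pi_1(X)$.'' But this identification only holds when $X$ is a $K(\pi,1)$ space. For a general smooth variety it fails: take $X=\mathbb{P}^1$ with the trivial stratification. Here $\Hom^2_{D^b_c(X)}(k_X[1],k_X[1])=H^2(\mathbb{P}^1)=k$, while group cohomology of $\pi_1(\mathbb{P}^1)=1$ vanishes in all positive degrees. (This is precisely Example~\ref{not K pi one}.) Your base case therefore does not go through, and without it the induction never starts.

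The missing ingredient is Theorem~\ref{open k pi one set}: every smooth variety contains a Zariski-open $K(\pi,1)$ subset. The paper's proof uses this crucially: rather than inducting on a \emph{fixed} stratification adapted to $\mc{M}$ and $\mc{N}$, one exploits the fact that $P_X$ is the limit over \emph{all} stratifications and refines the open stratum until it is $K(\pi,1)$, at which point the identification of sheaf Ext with $\pi_1$-cohomology becomes valid. Your inductive scheme, as written, never allows this refinement --- it treats the open stratum as an irreducible base case and then appeals to a fact that simply isn't available there.

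There is a second, subtler issue. Your recollement argument wants to compare $\Ext^i_{P_X}(\mc{M},\mc{N})$ with $\Ext$ groups on $U$ and $Z$ via long exact sequences. But the intermediate $\Ext$ groups you need are in the full categories $P_U$ and $P_Z$, not in $\Perv_\Lambda$ for the induced stratifications. So your ``induction hypothesis on $U$'' is the theorem for $P_U$, with $U$ of the same dimension as $X$ --- there is no obvious well-founded induction here unless you also induct on dimension, and then the open-stratum step is exactly where you need the $K(\pi,1)$ trick again. The paper handles this (in the curve case, \S\ref{lecture 19}) by a direct case analysis on pairs of simple objects, using adjunction to push the computation to a $K(\pi,1)$ open set; the $\phi_f$ machinery you invoke is used there not to set up a recollement but to show that a Yoneda extension of skyscrapers can be replaced by one supported on the point.
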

This result is a beautiful example of a bigger philosophy. Often we have a triangulated category we wish to understand, and within it a collection of well-behaved objects which we understand better than the rest. Then we might hope to reconstruct the entire triangulated category from our special collection of objects. In many situations we cannot, but this is an example where we can. In the poetry of Beilinson, ``the niche $D$ where $P_X$ dwells may be recovered from $P_X$''. 

\vspace{3mm}
\noindent
{\bf Recall}: Let $\Lambda$ be a stratification of $X$. Associated to this stratification is the category $\Perv_\Lambda(X)$ of $\Lambda$-constructible perverse sheaves on $X$. Our category $P_X$ is the limit of such categories:
\[
P_X = \Perv(X) = \lim_{\rightarrow \atop \Lambda} \Perv_\Lambda(X).
\]
\begin{remark}
If $\Lambda$ is a fixed stratification, usually
\[
D^b(\Perv_\Lambda(X)) \not\simeq D_\Lambda^b(X). 
\]
\end{remark}
\begin{example}
\label{not K pi one}
Let $X= \PP^1 \C$ with the trivial stratification $\Lambda$. Then 
\[
\Perv_\Lambda(X) = \text{ local systems on $X$ } \simeq \Vect_k^{f.d.}.
\]
However, 
\[
D^b(\Vect_k^{f.d.}) \not \simeq D^b_\Lambda(X)
\]
because $\Ext^i(k_X, k_X) = H^i(X) \neq 0 $ for $i=2$, but the category $\Vect_k^{f.d.}$ is semisimple, so there are no higher exts.  
\end{example}

\subsection{$K(\pi, 1)$ spaces}
We see in this example that differences in Ext groups prevented us from obtaining our desired derived equivalence. This illustrates a more general phenomenon which we will examine now. 

\begin{definition} 
A nice, path connected space $X$ with base point $x \in X$ is $\bm{K(\pi, 1)}$ if 
\begin{enumerate}
    \item $\pi_i(X)=1 $ for $i>1$, and  
    \item $\pi_1(X) = \pi$.
\end{enumerate}
Equivalently, $X$ is $K(\pi, 1)$ if its universal cover is contractible. 
\end{definition}
\begin{exercise}
Show (easier) that 
\begin{align*}
\Rep^{f.d.}k \pi &\xrightarrow{\sim} \text{ $k$-local systems on $X$} \\
V &\mapsto \mc{L}_V
\end{align*}
and (harder) that
\[
\Ext^i(V, V') \simeq \Ext^i(\mc{L}_V, \mc{L}_{V'}). 
\]
\end{exercise}

\begin{lemma}
\label{exts agree}
Let $F:\mc{D}_1 \rightarrow \mc{D}_2$ be a triangulated functor of triangulated categories. Assume that $\mc{D}_1, \mc{D}_2$ have $t$-structures with hearts $\mc{C}_1$, $\mc{C}_2$, respectively, such that $F:\mc{C}_1 \xrightarrow{\sim} \mc{C}_2$ is an equivalence, and $\mc{D}_i = \langle \mc{C}_i \rangle_\Delta$ for $i=1,2$ (i.e. the $t$-structures giving $\mc{C}_i$ are nondegenerate). Then the following are equivalent: 
\begin{enumerate}[label=(\alph*)]
    \item $F$ is an equivalence. 
    \item For any objects $M,N \in \mc{C}_1$, $F: \Hom_{\mc{D}_1}^i(M,N) \xrightarrow{\sim} \Hom_{\mc{D}_2}^i(F(M), F(N))$. In other words, the ``Exts agree''.
    \item (Won't be used) Assume that $\mc{D}_1 = D^b(\mc{C}_1)$. For any $x \in \Hom^i_{\mc{D}_2}(F(M), F(N))$, there exists an injection $N \hookrightarrow N'$ such that $x$ is zero in $\Hom_{\mc{D}_2}^i(F(M), F(N'))$; i.e. under the natural map 
    \begin{align*}
        \Hom_{\mc{D}_2}^i(F(M), F(N)) &\rightarrow \Hom_{\mc{D}_2}^i(F(M), F(N')) \\
        x &\mapsto 0.
    \end{align*} This condition is called ``effaceability''.
\end{enumerate}
\end{lemma}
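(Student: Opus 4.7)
The plan is to establish $(a) \Leftrightarrow (b)$ via the chain $(a) \Rightarrow (b) \Rightarrow (a)$, and separately $(b) \Leftrightarrow (c)$. The implication $(a) \Rightarrow (b)$ is tautological, since an equivalence of triangulated categories induces isomorphisms on all graded morphism spaces. The substantive content is $(b) \Rightarrow (a)$, which I would break into full faithfulness and essential surjectivity.

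For full faithfulness, the hypothesis $\mc{D}_1 = \langle \mc{C}_1 \rangle_\Delta$ says every object of $\mc{D}_1$ has bounded cohomological amplitude with respect to the $t$-structure, so I would induct on the total amplitude of a pair $(X, Y) \in \mc{D}_1 \times \mc{D}_1$ to show that $F: \Hom_{\mc{D}_1}^i(X, Y) \to \Hom_{\mc{D}_2}^i(FX, FY)$ is an isomorphism for all $i \in \mathbb{Z}$. The base case, where $X$ and $Y$ are each shifts of heart objects, reduces directly to hypothesis (b). For the inductive step, pick $k$ strictly inside the amplitude of $X$, form the truncation triangle $\tau_{\leq k} X \to X \to \tau_{>k} X \to$, apply $\Hom(-, Y)$ to obtain long exact sequences in both $\mc{D}_1$ and $\mc{D}_2$, and conclude via the five lemma; then repeat the argument on $Y$. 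I expect this to be the main technical obstacle, because the five lemma requires the inductive isomorphisms in adjacent degrees simultaneously, so the induction must be set up carefully on the pair of amplitudes.

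For essential surjectivity, given $Z \in \mc{D}_2$ I would induct on its amplitude. The base case, $Z \in \mc{C}_2$ up to shift, follows from the assumption that $F$ restricts to an equivalence $\mc{C}_1 \xrightarrow{\sim} \mc{C}_2$. For the inductive step, write the truncation triangle $\tau_{\leq k} Z \to Z \to \tau_{>k} Z \to (\tau_{\leq k} Z)[1]$ with both outer terms of smaller amplitude. By induction they lift to objects $A, B \in \mc{D}_1$ with $FA \cong \tau_{\leq k} Z$ and $FB \cong \tau_{>k} Z$, and the connecting morphism in $\Hom_{\mc{D}_2}(FB, FA[1])$ lifts uniquely to a map $B \to A[1]$ in $\mc{D}_1$ by the full faithfulness just established. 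Completing to a distinguished triangle in $\mc{D}_1$ and applying $F$ yields an object mapping isomorphically to $Z$ by TR3 and the five lemma.

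For the equivalence $(b) \Leftrightarrow (c)$: the direction $(b) \Rightarrow (c)$ is immediate, since in $\mc{D}_1 = D^b(\mc{C}_1)$ any class in $\Hom_{\mc{D}_1}^i(M, N)$ with $i > 0$ is effaceable by a monomorphism $N \hookrightarrow N'$ in $\mc{C}_1$ (this is the standard universal effaceability of higher Ext in a derived category, obtained for instance from a one-step injective-style resolution), and transport under $F$ using (b) gives the required effacement downstairs. Conversely, for $(c) \Rightarrow (b)$ I would induct on $i$: the case $i \leq 0$ is covered by the equivalence on hearts (with vanishing for $i < 0$ on both sides), and for $i > 0$, given $x \in \Hom_{\mc{D}_2}^i(FM, FN)$ choose $N \hookrightarrow N'$ effacing $x$ with cokernel $N''$ in $\mc{C}_1$; the long exact sequences associated to $0 \to N \to N' \to N'' \to 0$ on both sides, combined with the inductive hypothesis in degree $i-1$, realise $x$ as the image of a class coming from $\Hom_{\mc{D}_1}^{i-1}(M, N'')$, which upgrades to the desired isomorphism in degree $i$ after a standard diagram chase.
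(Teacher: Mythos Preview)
Your proposal is correct and follows exactly the route the paper indicates: the paper's proof is literally the two lines ``$(a)\Rightarrow(b)$ is immediate; $(b)\Rightarrow(a)$ by induction and the long exact sequence,'' with $(b)\Leftrightarrow(c)$ left as an exercise, and you have supplied precisely the intended d\'evissage via truncation triangles and the five lemma. One small caveat: your parenthetical ``obtained for instance from a one-step injective-style resolution'' is slightly misleading, since $\mc{C}_1$ need not have enough injectives; the effaceability in $D^b(\mc{C}_1)$ comes instead from the Yoneda description of $\Ext$ (represent $y$ by an acyclic complex $N\to C^1\to\cdots\to C^i\to M$ and observe that pushing along $N\hookrightarrow C^1$ kills it), which is exactly what the paper sets up in the surrounding discussion.
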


\begin{proof}
(a) $\implies$ (b) is immediate. \\
(b) $\implies$ (a) can be shown by induction and the long exact sequence. 
\end{proof}

\begin{example}
Let $X$ be $K(\pi, 1)$. Then 
\[
D^b(\Rep^{f.d.} k\pi) \xrightarrow{\sim} D_\Lambda^b(X).
\]
\end{example}

\noindent 
Example \ref{not K pi one} illustrated that this is not necessarily the case when $X$ is not $K(\pi, 1)$. 

\subsection{Yoneda extensions} 
To understand part (c) of Lemma \ref{exts agree}, we need some facts about Yoneda extensions. Let $\mc{A}$ be an abelian category. For objects $M,N$ in $\mc{A}$, define a category $E^i(M,N)$ with
\begin{itemize}
    \item objects: acyclic complexes 
    \[
    N \rightarrow C^1 \rightarrow C^2 \rightarrow \cdots \rightarrow C^i \rightarrow M
    \]
    \item morphisms: chain complex maps of the form
    \[
    \begin{tikzcd}
    N \arrow[r] \arrow[d, "id"] & C^1 \arrow[d] \arrow[r] & C^2 \arrow[d] \arrow[r] & \cdots \arrow[r] & C^i \arrow[r] \arrow[d] & M \arrow[d, "id"] \\
    N \arrow[r] & \widetilde{C}^1 \arrow[r] & \widetilde{C}^2 \arrow[r] & \cdots \arrow[r] & \widetilde{C}^i \arrow[r] & M 
    \end{tikzcd}
    \]
\end{itemize}
\begin{lemma}
\label{definition lemma}(/Definition) 
\begin{align*}
\Ext^i(M,N) &= \text{ connected components of $E^i(M,N)$} \\
0 &\leftrightarrow \text{ split complexes in $E^i(M,N)$}
\end{align*}
\begin{remark} In Lemma \ref{definition lemma}, two extensions are in the same ``connected component'' if one can pass from one to the other going along arrows in our categories \emph{in either direction}. More formally, we can build a space with 0 (resp. 1) simplices given by objects (resp. arrows) in $E^i(M,N)$; then connected component takes on its topological meaning. One can show two objects  
\[
    N \rightarrow C^1 \rightarrow C^2 \rightarrow \cdots \rightarrow C^i \rightarrow M \text{ and }     N \rightarrow \widetilde{C}^1 \rightarrow \widetilde{C}^2 \rightarrow \cdots \rightarrow \widetilde{C}^i \rightarrow M
\]
in $E^i(M,N)$ are in the same connected component if there exists a commutative  diagram 
    \[
    \begin{tikzcd}
    N \arrow[r] & C^1 \arrow[r] & C^2 \arrow[r] & \cdots \arrow[r] & C^i \arrow[r] & M  \\
    N \arrow[r] \arrow[d, "id"'] \arrow[u, "id"] & D^1 \arrow[r] \arrow[u] \arrow[d] & D^2 \arrow[d] \arrow[u] \arrow[r] & \cdots \arrow[r] & D^i \arrow[r] \arrow[d] \arrow[u] & M \arrow[d, "id"'] \arrow[u, "id"] \\
    N \arrow[r] & \widetilde{C}^1 \arrow[r] & \widetilde{C}^2 \arrow[r] & \cdots \arrow[r] & \widetilde{C}^i \arrow[r] & M 
    \end{tikzcd}
    \]
with 
\[
N \rightarrow D^1 \rightarrow D^2 \rightarrow \cdots \rightarrow D^i \rightarrow M
\]
also in $E^i(M,N)$. 
\end{remark}
\end{lemma}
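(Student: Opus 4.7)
The plan is to exhibit a bijection $\Phi: \pi_0(E^i(M,N)) \xrightarrow{\sim} \Ext^i(M,N)$, where we interpret $\Ext^i(M,N)$ as $\Hom_{D^b(\mc{A})}(M, N[i])$, and to check that split extensions are sent to $0$. First I would define $\Phi$ on objects: given a Yoneda extension $0 \to N \to C^1 \to \cdots \to C^i \to M \to 0$, form the complex $D^\bullet$ concentrated in degrees $-i, -i{+}1, \ldots, 0$, with $D^{-i} = N$ and $D^{-i+j} = C^j$. Acyclicity of the extension means $H^\bullet(D^\bullet)$ is concentrated in degree $0$ with value $M$, so the truncation map $D^\bullet \xrightarrow{\sim} M$ is a quasi-isomorphism. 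The naive projection $D^\bullet \twoheadrightarrow N[i]$ onto the leftmost term then gives a roof $M \xleftarrow{\sim} D^\bullet \to N[i]$, i.e.\ an element of $\Ext^i(M,N)$. A morphism in $E^i(M,N)$ between two extensions induces a chain map $D^\bullet \to \widetilde{D}^\bullet$ commuting with both the quasi-isomorphisms to $M$ and the projections to $N[i]$, so $\Phi$ descends to $\pi_0$.

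Next I would handle surjectivity. Given $\xi \in \Hom_{D^b(\mc{A})}(M, N[i])$, represent it by a roof $M \xleftarrow{\sim} E^\bullet \to N[i]$ where $E^\bullet$ is a complex in $\mc{A}$ concentrated in degrees $-i, \ldots, 0$ (one can always reduce to this shape by truncating and using that $M$ and $N$ are in degree $0$; in the presence of enough projectives or injectives this is standard, otherwise one uses a concrete construction inside $\mc{A}$ by repeatedly pulling back short exact sequences). Splicing in $N$ at the left via the map $E^{-i} \to N$ and ensuring exactness by passing to the image yields a Yoneda extension whose image under $\Phi$ is $\xi$. The splitting complex produces a nullhomotopy, giving the correspondence split $\leftrightarrow 0$ in one direction.

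The key step is injectivity: if two Yoneda extensions $C^\bullet$ and $\widetilde{C}^\bullet$ give the same class in $\Ext^i$, then they are connected by a zigzag in $E^i(M,N)$. The plan is the classical Baer-type construction: build a third extension $D^\bullet$ together with morphisms $D^\bullet \to C^\bullet$ and $D^\bullet \to \widetilde{C}^\bullet$ in $E^i(M,N)$ by a term-by-term combination of pullbacks (on the right) and pushouts (on the left) of $C^\bullet$ and $\widetilde{C}^\bullet$ over their common endpoints $M$ and $N$. Equality in $\Ext^i$ translates, via $\Phi$, into the existence of a chain homotopy between the associated chain maps $D^\bullet_C \to N[i]$ and $D^\bullet_{\widetilde{C}} \to N[i]$; this homotopy is exactly the data needed to realize $D^\bullet$ inside $\mc{A}$ as sitting above both extensions. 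The converse (nullity of $\Phi$ implies splitness up to equivalence) is the special case $\widetilde{C}^\bullet$ split.

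The main obstacle is this last injectivity step, because it requires turning an abstract equality in $\Hom_{D^b(\mc{A})}(M, N[i])$ (a class of roofs modulo refinement) into a concrete zigzag inside the combinatorial category $E^i(M,N)$. The translation is formal when $\mc{A}$ has enough injectives or projectives, where Ext can be computed by resolutions and the comparison theorem gives the needed connecting morphisms; in general one leans on the fact that $\mc{A}$ is abelian to perform iterated pullback/pushout, and patience with signs and degree shifts. Everything else — construction of $\Phi$, functoriality, and the split-extension case — is essentially bookkeeping once the derived-category picture is set up.
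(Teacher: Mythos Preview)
The paper does not actually prove this statement: note the parenthetical ``(/Definition)'' in the heading. In the paper's logic, the connected components of $E^i(M,N)$ are being \emph{taken} as the definition of Yoneda $\Ext^i$, and the identification with $\Hom^i_{D(\mc{A})}(M,N)$ is deferred to the exercise immediately following. So there is no proof in the paper to compare against; your proposal is effectively a solution to that exercise rather than to the lemma as stated.

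That said, your approach to the exercise is the standard one and is correct in outline. The construction of $\Phi$ via the roof $M \xleftarrow{\sim} D^\bullet \to N[i]$ is exactly right (for $i=1$ this is the familiar connecting map of the distinguished triangle $N \to C \to M \to N[1]$), and your identification of the injectivity step as the crux is accurate. One point to watch: your surjectivity argument assumes you can arrange the roof so that $E^\bullet$ is concentrated in degrees $[-i,0]$; truncation handles the outer degrees, but getting the map to $N[i]$ to factor through such a truncation requires a small argument in the absence of projectives or injectives. The paper's exercise explicitly says ``make no assumptions on $\mc{A}$'', so the iterated pullback/pushout route you allude to is indeed what is needed there, and it is worth spelling out at least once rather than invoking it as a fallback.
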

\begin{exercise}
Show directly that with $\Ext^i$ defined as in Lemma \ref{definition lemma}, 
\[
\Ext^i(M,N) = \Hom_{D(\mc{A})}^i(M,N).
\]
Make no assumptions on $\mc{A}$.
\end{exercise}

\vspace{3mm}
\noindent
What about functoriality? In our usual definition of $\Ext^i$ in the derived category, it is obvious that $\Ext^i$ is a functor. With this new definition, it is not so obvious. In other words, given $N \rightarrow N'$, how to we obtain a morphism $\Ext^i(M,N) \rightarrow \Ext^i(M, N')$? Here is how: 
\begin{itemize}
    \item Given with $N \rightarrow N'$ and an element of $\Ext^i(M,N)$ represented by the complex 
    \[
     N \rightarrow C^1 \rightarrow C^2 \rightarrow \cdots \rightarrow C^i \rightarrow M,
    \]
    we obtain a complex representing an element of $\Ext^i(M, N')$ by forming the push-out
    \[
    \begin{tikzcd}
    N \arrow[r] \arrow[d] & C^1 \arrow[d] \arrow[r] & C^2 \arrow[d, equal] \arrow[r] & \cdots \arrow[r] & C^i \arrow[r] \arrow[d, equal] & M \arrow[d, equal] \\
    N' \arrow[r] & P \arrow[r] & C^2 \arrow[r] & \cdots \arrow[r] & C^i \arrow[r] & M 
    \end{tikzcd}
    \]
    Here $P=(C^1 \oplus N')/N$. This gives a morphism $\Ext^i(M,N) \rightarrow \Ext^i(M, N')$. 
    \item Similarly, for $M \rightarrow M'$, we can use the pull-back to form a morphism $\Ext^i(M,N) \rightarrow \Ext^i(M', N)$. 
\end{itemize}

An important consequence of this is the following. If $x \in \Ext^i(M,N)$ is represented by the complex \[
N \xrightarrow{f} C^1 \rightarrow C^2 \rightarrow \cdots \rightarrow C^i \rightarrow M,
\]
then applying the construction above to the morphism $f:N \rightarrow C^1$, we obtain
    \[
    \begin{tikzcd}
    N \arrow[r] \arrow[d] & C^1 \arrow[d] \arrow[r] & C^2 \arrow[d, equal] \arrow[r] & \cdots \arrow[r] & C^i \arrow[r] \arrow[d, equal] & M \arrow[d, equal] \\
    C^1 \arrow[r] & (C^1 \oplus C^1)/N \arrow[r] & C^2 \arrow[r] & \cdots \arrow[r] & C^i \arrow[r] & M 
    \end{tikzcd}
    \]
    But since $(C^1 \oplus C^1)/N=C^1 \oplus (C^1/N)$, the lower sequence splits, and hence represents zero in $\Ext^i(M,C^1)$. In other words, 
    \begin{align*}
        \Ext^i(M,N) &\xrightarrow{\Ext^i(M, f)} \Ext^i(M, C^1) \\
        x &\longmapsto 0.
    \end{align*}
    This is the origin of effaceability. 
    
    \begin{exercise}
    Prove (ii) $\implies$ (iii) and (iii) $\implies$ (ii) in Lemma \ref{exts agree}. (Hint: See Geordie's hand-written notes if you are stuck.) 
    \end{exercise}
    
    \subsection{A key ingredient}
    A key ingredient in Beilinson's theorem is:
    
    \begin{theorem}
    \label{open k pi one set}
    Let $X/\C$ be a smooth variety. Then there exists a Zariski open set $U \subset X$ which is $K(\pi, 1)$. 
    \end{theorem}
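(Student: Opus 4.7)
The strategy is to proceed by induction on $n = \dim X$, reducing the higher-dimensional case to the case of curves via a fibration argument. The key tool is the theory of \emph{elementary fibrations} (sometimes called Artin good neighbourhoods, SGA~4, Exp.~XI): the claim is that after shrinking $X$ to a suitable Zariski open, one can arrange a morphism $f : U \to S$ with $S$ smooth of dimension $n-1$ and with fibres that are smooth affine curves. Granting such fibrations, iteratively shrinking base and fibre will produce a $K(\pi,1)$ open set.

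The base case $n=1$ is straightforward. After deleting finitely many points we may assume $X$ is a smooth affine curve. Its universal cover $\widetilde{X}$ is a non-compact simply connected Riemann surface, so by uniformisation $\widetilde{X}$ is biholomorphic to $\C$ or to the upper half-plane; both are contractible. Hence $\pi_i(X) = 0$ for $i \geq 2$, i.e.\ $X$ is $K(\pi,1)$.

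For the inductive step, suppose the theorem is known in dimensions less than $n$ and let $X$ be smooth of dimension $n$. Shrinking if necessary, we want to fit $X$ into a diagram
\[
X \;\hookrightarrow\; \overline{X} \;\xrightarrow{\;\overline{f}\;}\; S,
\]
where $S$ is smooth of dimension $n-1$, $\overline{f}$ is smooth projective with geometrically connected fibres of dimension $1$, and $D := \overline{X} \setminus X$ is finite étale over $S$. The restriction $f : X \to S$ is then a smooth morphism whose fibres are smooth affine curves. By Ehresmann's theorem (or by further shrinking), $f$ is a locally trivial fibration in the classical topology. By the inductive hypothesis, after further shrinking $S$ we may assume $S$ is $K(\pi,1)$. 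By the base case each fibre is $K(\pi,1)$. The homotopy long exact sequence
\[
\cdots \to \pi_i(\text{fibre}) \to \pi_i(X) \to \pi_i(S) \to \pi_{i-1}(\text{fibre}) \to \cdots
\]
then forces $\pi_i(X) = 0$ for all $i \geq 2$, so $X$ itself is $K(\pi,1)$ (after replacing $X$ by $f^{-1}(S)$).

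The main obstacle is the production of the elementary fibration, i.e.\ the geometric step of finding $\overline{X} \to S$ with the properties above. The plan is to choose a generically finite projection $X \dashrightarrow \mathbb{P}^{n-1}$, resolve indeterminacies, take a normal projective compactification $\overline{X}$ mapping to some blow-up $S$ of $\mathbb{P}^{n-1}$, and then use Bertini together with generic smoothness to shrink $S$ so that the map becomes smooth with connected one-dimensional fibres and the boundary divisor becomes étale over $S$. Carrying this out honestly is the technical heart of Artin's argument and is where nearly all the work lies; once it is in hand, the homotopy-theoretic fibration argument above assembles into the theorem with little extra effort.
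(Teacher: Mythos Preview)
Your proof is correct and follows essentially the same strategy as the paper: induction on dimension, with the inductive step handled by producing a fibration whose fibres are smooth affine curves and whose base is lower-dimensional, then applying the long exact sequence of homotopy groups.

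The differences are organisational rather than substantive. You invoke Artin's elementary fibrations from SGA~4 as a black box, while the paper unpacks this construction more concretely: it first reduces (via Noether normalisation and a finite \'etale cover) to the case of a standard open in $\mathbb{A}^n$, and then uses a second Noether normalisation to project the complementary hypersurface finitely onto $\mathbb{A}^{n-1}$, producing a map to $\mathbb{A}^{n-1}$ whose fibres over a suitable open are copies of $\C$ minus finitely many points. Your framing is cleaner for someone who already knows the SGA~4 machinery; the paper's version is more hands-on and makes the role of Noether normalisation explicit. One small point: your appeal to Ehresmann is only justified because you have the full elementary fibration structure (a relative compactification with \'etale boundary), since Ehresmann itself requires properness --- you do set this up correctly, but it is worth being aware that a bare smooth morphism with affine curve fibres would not suffice.
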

    
    This theorem explains philosophically why Beilinson's theorem is true: for perverse sheaves, we can always refine our stratification to include this $K(\pi, 1)$ open set. We will see this more precisely when we discuss the proof of Beilinson's theorem. 
    
    \begin{example}
    Any curve becomes $K(\pi, 1)$ after deleting a point! 
    \[
    \includegraphics[scale=0.6]{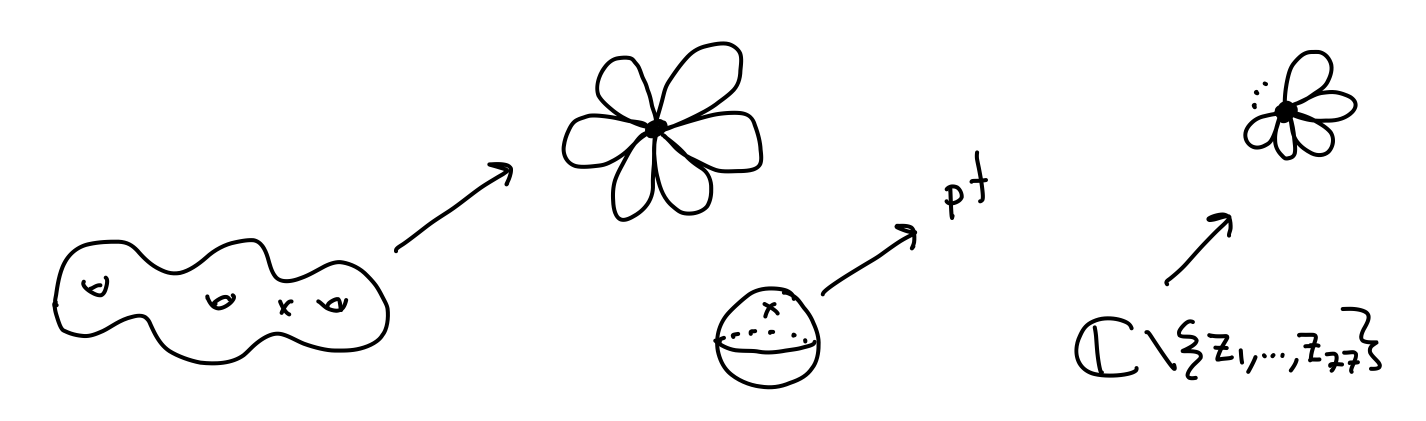}
    \]
    (Here the arrows indicate homotopy equivalences and the $x$'s are missing points.) 
    \end{example}
    We will roughly explain the proof of Theorem \ref{open k pi one set}. There are two main ingredients:
    \begin{enumerate}
        \item ``Noether normalization,'' {\bf (NN)}: If $Z \subset \mathbb{A}^n$ is of dimension $d$, then a generic projection $\mathbb{A}^n \rightarrow \mathbb{A}^d$ is finite when restricted to $Z$. Here's a caricature:
            \[
    \includegraphics[scale=0.5]{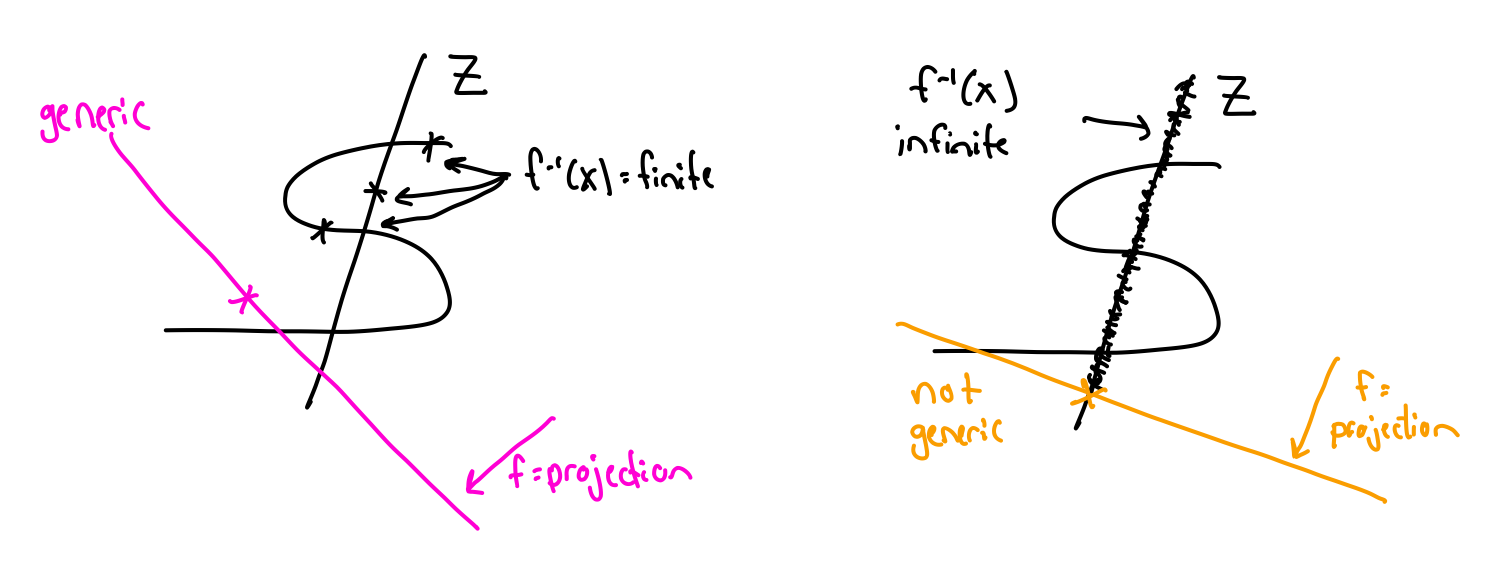}
    \]
        \item ``Extensions of $K(\pi,1)$ are $K(\pi, 1)$,'' ($\bm{EK(\pi, 1)}$): If 
        \[
        \begin{tikzcd} F \arrow[r, hookrightarrow] & E \arrow[d] \\ & B \end{tikzcd}
        \]
        is a fibre bundle, then if $B$ is $K(\pi, 1)$, $E$ is connected, and either $F$ is discrete or $K(\pi, 1)$, then $E$ is $K(\pi, 1)$. (Follows from the long exact sequence of homotopy groups.)
    \end{enumerate}
    
    With these tools in our toolbox, the proof proceeds as follows. 
    
    \vspace{3mm}
    \noindent
    {\bf Step 1:} Reduction to $X \subset \mathbb{A}^1$, Zariski standard open\footnote{``Zariski standard open'' means that $X$ is of the form $D(f)=\{z \in X \mid f(z)\neq 0\}$ for some regular function $f$.}. Without loss of generality, we may assume that $X$ is affine. By {\bf(NN)}, we can find a finite map $f:X \rightarrow \mathbb{A}^d$. Over a standard open $U' \subset \mathbb{A}^d$, this map is \'{e}tale. Hence we have a fibration
    \[
    \begin{tikzcd}
    \text{finite} \arrow[r,hookrightarrow]& U \arrow[d]\\
    & U' \end{tikzcd}
    \]
    so by ($\bm{EK(\pi, 1)}$), if $U'$ is $K(\pi, 1)$, then $U$ is $K(\pi, 1)$.
    
    \vspace{3mm}
    \noindent
    {\bf Step 2:} Induction on dimension. By {\bf(NN)}, there exists a projection $f:\mathbb{A}^n \rightarrow \mathbb{A}^{n-1}$ such that on the hypersurface $Z=X \backslash U$, $f$ is finite with fibres consisting of $m$ points:
    \[
\begin{tikzcd}
U \arrow[r, hookrightarrow] & \mathbb{A}^n \arrow[d, "f"] & Z \arrow[l, hookrightarrow] \arrow[d, "f"] & \{m \text{ points}\} \arrow[l, hookrightarrow]\\
& \mathbb{A}^{n-1} & \mathbb{A}^{n-1} \arrow[l, equal]& 
\end{tikzcd}
\]
Over an open $V \subset \mathbb{A}^{n-1}$, this map will be \'{e}tale, hence 
\[
\begin{tikzcd}
U' \arrow[d] \arrow[r, hookrightarrow] & U \arrow[d] \\ V' \arrow[r, hookrightarrow] & V 
\end{tikzcd}
\]
is fibred in $\C \backslash \{m \text{ points}\}$. By induction, we can shrink $V$ to $V'$ such that $V'$ is $K(\pi, 1)$. Hence $U'$ is $K(\pi, 1)$ too by ($\bm{EK(\pi, 1)}$). \qed 

\subsection{Proof of Beilinson's theorem for curves}

The goal for the rest of the lecture is to prove Beilinson's theorem for curves; that is, for a curve $X$, we wish to show that
\[
D^b(P_X) \simeq D^b_c(X).
\]
To start, we have the following fact \cite{BBD, Beilinson}. There exists a triangulated functor 
\[
\mathrm{real}: D^b(P_X) \rightarrow D^b_c(X) 
\]
called the ``realization functor'' which is the identity on $P_X$. (This holds for general $X$, not just curves.) We will show that $\mathrm{real}$ is an equivalence of categories. 

By Lemma \ref{exts agree}, it is enough to show that for $M,N \in P_X$, 
\[
\Hom^i_{D^b(P_X)}(M, N) \simeq \Hom^i_{D^b_c(X)}(M,N).
\]
Moreover, because $P_X$ is finite length, we can use the long exact sequence in Ext to reduce to the case where $M,N$ are irreducible objects. 

Recall that in lecture \ref{lecture 17} we classified irreducible perverse sheaves on curves. There were two types: (1) skyscrapers, and (2) $IC$ sheaves supported everywhere.

\vspace{3mm}
\noindent
{\bf Step 1:} Let $M,N$ be skyscrapers: $M=i_*k_x$, $N=i'_*k_y$. Then in $D^b_c(X)$, 
\begin{align*}
    \Hom^j(i_*k_x, i'_*k_y) &= \Hom^j(k_x, i^!i'_*k_y) \\
    &= \begin{cases}
    k & \text{ if }x=y, j=0, \\
    0 & \text{ otherwise}. \end{cases} 
\end{align*}
Showing that 
\[
\Hom_{D^b(P_X)}(k_x, k_y)=0
\]
if $x \neq y$ is easier, and left as an exercise. The tricker case is show that $\Ext^i_{P_X}(k_x, k_x)=0$ for $i>0$. To do this, we will use the quiver description of $P_X$ and Yoneda Exts. Recall that
\[
P_X \simeq \left\{ \left. \begin{tikzcd} V_1 \arrow[r, "c", shift left] & V_0 \arrow[l, "v", shift left] \end{tikzcd} \right| c \circ v + id \text{ invertible } \right\}.  
\]
Then an element of $\Ext^1(k_x, k_y)$ has a representative of the form 
\[
k_k \rightarrow F \rightarrow k_x.
\]
In quiver language, such an extension is a complex
\[
k \rightarrow V_0 \rightarrow k.
\]
But any such complex fits into a diagram 
\[
\begin{tikzcd}
0 \arrow[r] \arrow[d, shift left] & V_1 \arrow[r] \arrow[d, shift left, "c"] & 0 \arrow[d, shift left] \\
k \arrow[u, shift left] \arrow[r] & V_0 \arrow[u, shift left, "v"] \arrow[r] & k \arrow[u, shift left]
\end{tikzcd}.
\]
The top sequence is exact, so $V_1=0$. But the bottom sequence is also exact, so $V_1=0$ implies that the bottom sequence must split. Hence our original element is zero in $\Ext^1(k_x, k_y)$, so $\Ext^1(k_x, k_y)=0$. 

Showing that $\Ext^2(k_x, k_y)=0$ is more challenging. An element of $\Ext^2(k_x, k_y)$ has a representative of the form
\[
k \rightarrow V_0^1 \rightarrow V_0^2 \rightarrow k.
\]
This fits into a diagram 
\[
\begin{tikzcd}
0 \arrow[r] \arrow[d, shift left] & V_1^1 \arrow[r] \arrow[d, shift left, "c^1"] & V_1^2 \arrow[r] \arrow[d, shift left, "c^2"] & 0 \arrow[d, shift left]\\
k \arrow[u, shift left] \arrow[r] & V_0^1 \arrow[r] \arrow[u, shift left, "v^1"] & V_0^2 \arrow[r] \arrow[u, shift left, "v^2"] & k \arrow[u, shift left]
\end{tikzcd}
\]
We want to show that we can reduce this to an $\Ext$ supported at $x$. But the basic issue is that there is nothing telling us that the middle terms must be supported on a point. In sheaf language, this is equivalent to the fact that the existence of a sequence 
\[
i_*k_x \rightarrow \mc{F} \rightarrow \mc{G} \rightarrow i_*k_x 
\]
whose first and last terms are supported on a point does not imply that the middle terms $\mc{F}, \mc{G}$ are.

There are two ways we can get around this. First, the ``stupid way''. We can assume that $c \circ v$ is nilpotent (exercise!), then we have a natural map 
\[
\begin{tikzcd}
0 \arrow[r] \arrow[d, shift left] & V_1^1 \arrow[r] \arrow[d, shift left, "c^1"] & V_1^2 \arrow[r] \arrow[d, shift left, "c^2"] & 0 \arrow[d, shift left]\\
k \arrow[u, shift left] \arrow[r] & V_0^1 \arrow[r] \arrow[u, shift left, "v^1"] & V_0^2 \arrow[r] \arrow[u, shift left, "v^2"] & k \arrow[u, shift left]
\end{tikzcd} \leftarrow 
\begin{tikzcd}
0 \arrow[r] \arrow[d, shift left] & \im v^1 \arrow[r] \arrow[d, shift left, "c'^1"] & \im v^2 \arrow[r] \arrow[d, shift left, "c'^2"] & 0 \arrow[d, shift left]\\
k \arrow[u, shift left] \arrow[r] & V_0^1 \arrow[r] \arrow[u, shift left, "v^1"] & V_0^2 \arrow[r] \arrow[u, shift left, "v^2"] & k \arrow[u, shift left]
\end{tikzcd}.
\]
Continuing this process, we have a map 
\[
\begin{tikzcd}
0 \arrow[r] \arrow[d, shift left] & \im v^1 \arrow[r] \arrow[d, shift left, "c'^1"] & \im v^2 \arrow[r] \arrow[d, shift left, "c'^2"] & 0 \arrow[d, shift left]\\
k \arrow[u, shift left] \arrow[r] & V_0^1 \arrow[r] \arrow[u, shift left, "v^1"] & V_0^2 \arrow[r] \arrow[u, shift left, "v^2"] & k \arrow[u, shift left]
\end{tikzcd}
\leftarrow 
\begin{tikzcd}
0 \arrow[r] \arrow[d, shift left] & \im v^1 \arrow[r] \arrow[d, shift left, "c'^1"] & \im v^2 \arrow[r] \arrow[d, shift left, "c'^2"] & 0 \arrow[d, shift left]\\
k \arrow[u, shift left] \arrow[r] & \im c'^1 \arrow[r] \arrow[u, shift left, "v'^1"] & \im c'^2 \arrow[r] \arrow[u, shift left, "v'^2"] & k \arrow[u, shift left]
\end{tikzcd}.
\]
Continuing in this way, we eventually obtain a top sequence of zeros, which lets us conclude that our lower sequence splits. This procedure works for all higher Exts. 

Alternatively, we could use Beilinson's approach using vanishing cycles, which works in greater generality (and in fact provides the skeleton of his argument in the general case). Choose a map $f:X \rightarrow \C$ with zero set $x \in X$. Let $C^\cdot$ be the complex
\[
i_*k_x \rightarrow \mc{F}^1 \rightarrow \cdots \rightarrow \mc{F}^j \rightarrow i_{*}k_x
\]
representing an element in $\Ext^j(i_*k_x, i_*k_x)$. Then Beilinson showed that \begin{equation}
    \label{beilinsons fact}
C^\cdot \simeq \phi_f(C^\cdot),
\end{equation}
where $\phi_f:P_X \rightarrow P_{\{x\}}$ is the vanishing cycles functor defined in Lecture \ref{lecture 18}. The complex $\phi_f(C^\cdot)$ is supported on $x$, so this proves that $C^\cdot=0 \in \Ext^j(i_*k_x, i_*k_x)$. To prove (\ref{beilinsons fact}), we can use the exact sequences from Lecture \ref{lecture 18}:
\[
\begin{tikzcd}
C^\cdot \arrow[r] & C^\cdot \oplus \Xi_f(C^\cdot) \arrow[r] & (C^\cdot \oplus_f(C^\cdot)) / j_!(C^\cdot) \\
& & \phi_f(C^\cdot) \arrow[u] 
\end{tikzcd}
\]
\vspace{3mm}
\noindent
{\bf Step 2:} Let $M$ be irreducible with full support and $N$ a skyscraper supported at $\{x\}$. Let 
\[
j:U=X \backslash \{x\} \hookrightarrow X.
\]
Recall that because the immersion $j$ is affine, $j_!$ and $j_*$ are exact, preserve perverse sheaves, and fit into adjoint pairs $(j^*, j_*), (j_!, j^!)$. The adjunction maps give an exact sequence 
\[
K \hookrightarrow j_!M_U \twoheadrightarrow M,
\]
where $K$ is a perverse sheaf supported on $\{x\}$. The corresponding long exact sequence gives the diagram
\[
\begin{tikzcd}
\cdots \arrow[r] & \Hom^i_{P_X}(M,N) \arrow[d] \arrow[r] & \Hom^i_{P_X}(j_!M_U, N) \arrow[d] \arrow[r] &\Hom^i_{P_X}(K, N) \arrow[d] \arrow[r] &  \cdots \\
\cdots \arrow[r] & \Hom^i_{D^b_c(X)}(M, N) \arrow[r] & \Hom^i_{D^b_c(X)}(j_!M_U, N) \arrow[r] & \Hom^i_{D^b_c(X)}(K, N) \arrow[r] & \cdots 
\end{tikzcd}
\]
Using the adjunctions, we have 
\[
\Hom^i_{P_X}(j_!M_U, N) = \Hom^i_{P_U}(M_U, N_U)=0 
\]
because $N_U=0$. Similarly, $\Hom_{D_c^b(X)}^i(j_!M_U, N)=0$, so the middle vertical arrow is an isomorphism $0 \simeq 0$. The right vertical arrow is an isomorphism by Step 1. Hence we conclude by induction that all other vertical arrows must also be isomorphisms. 

\vspace{3mm}
\noindent
{\bf Step 3:} Let $M$ and $N$ be irreducible objects in $P_X$ with full support. Then by Lemma \ref{open k pi one set}, we can choose $U \subset X$ such that $M_U, N_U$ are (shifts of) local systems and $U$ is $K(\pi, 1)$. Note that the inclusion $j:U \hookrightarrow X$ is still affine. Because $N$ is irreducible, we have a short exact sequence 
\[
N \hookrightarrow j_*j^*N = j_*N_U \twoheadrightarrow k.
\]
Then by the long exact sequence in Hom, we have the diagram
\[
\begin{tikzcd}
\cdots \arrow[r] & \Hom^i_{P_X}(M,N) \arrow[r] \arrow[d] & \Hom^i_{P_X}(M, j_*N_U) \arrow[r] \arrow[d] & \Hom^i_{P_X}(M, k) \arrow[r] \arrow[d] & \cdots \\
\cdots \arrow[r] & \Hom^i_{D^b_c(X)}(M, N) \arrow[r] & \Hom^i_{D^b_c(X)}(M, j_*N_U) \arrow[r] & \Hom_{D^b_c(X)}^i(M, k) \arrow[r] &\cdots
\end{tikzcd}
\]
By adjunction, we have 
\[
\Hom_{P_X}^i(M, j_*N_U)=\Hom^i(M_U, N_U).
\]
Hence the middle vertical arrow is an isomorphism because $U$ is $K(\pi, 1)$. The right vertical arrow is an isomorphism by Step 2, and hence the left vertical arrow must also be an isomorphism.  \qed 

\vspace{5mm}
\noindent
{\bf The Moral:} If we fix a stratification, there can be complicated structure going on in $D^b(P_X)$ which doesn't only come from fundamental groups. However, if we're allowed to refine our stratification as much as we want, then we can choose open strata which are $K(\pi, 1)$, and all information in the category comes from local systems. 

\pagebreak
\section{Lecture 20: Perverse sheaves for affine stratifications}
\label{lecture 20}

Recall the setting of the last few lectures: Fix $k$ to be our field of coefficients, and let $X$ be an algebraic variety over $\C$, with stratification
\[
X = \bigsqcup_{\lambda \in \Lambda} X_\lambda.
\]
The abelian category of $\Lambda$-constructible perverse sheaves, $\Perv_\Lambda(X)$, is a subcategory of the triangulated category $D_\Lambda^b(X)$, the derived category of $\Lambda$-constructible sheaves. Moreover, there is a realization functor 
\[
\mathrm{real}:D^b(\Perv_\Lambda(X))\rightarrow D^b_\Lambda(X).
\]

Last week we saw that for a fixed stratification, $\mathrm{real}$ is rarely an equivalence; however, if we are allowed to refine stratifications, we have Beilinson's theorem:
\begin{theorem}
(Beilinson) $D^b(\Perv(X)) \xrightarrow{\sim} D^b_c(X)$.
\end{theorem}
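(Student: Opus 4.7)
The plan is to adapt the three-step strategy from the curve case. By the existence of the realization functor and Lemma 19.3 (the ``Exts agree'' criterion), it suffices to show that for $M, N \in \Perv(X)$, the natural maps
\[
\Hom^i_{D^b(\Perv(X))}(M, N) \to \Hom^i_{D^b_c(X)}(M, N)
\]
are isomorphisms for all $i \geq 0$. Fixing a stratification refined enough for both $M$ and $N$, the category $\Perv_\Lambda(X)$ is of finite length, so the five-lemma applied to the long exact sequence in $\Ext$ reduces us to the case where $M$ and $N$ are simple perverse sheaves, i.e.\ intersection cohomology sheaves of the form $IC(\overline{X_\lambda}, \mc{L})$.

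I would then induct on $d := \max(\dim \supp M, \dim \supp N)$. The base case $d = 0$ is exactly Step 1 of the curve proof: for distinct skyscrapers all Exts vanish by adjunction on both sides, and for skyscrapers at a single point $x$, Beilinson's identity $C^\cdot \simeq \phi_f(C^\cdot)$ (for any $f : X \to \C$ vanishing only at $x$) forces every Yoneda $i$-extension to be represented by a complex supported at $x$, collapsing it to zero for $i \geq 1$. For the inductive step, if $\supp N \not\subseteq \supp M$, pick an open $j : U \hookrightarrow X$ meeting $\supp N$ nontrivially but disjoint from a generic part of $\supp M$. The adjunction triangle $K \hookrightarrow j_! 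N_U \twoheadrightarrow N$ (or its dual for $j_*$) exhibits $N$ as an extension of $j_! N_U$, whose Exts with $M$ are computed on $U$ where one support is strictly smaller, with a $K$ of strictly smaller-dimensional support. Both flanks can then be handled by induction.

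The remaining case is $\supp M = \supp N = Z$. Here I would invoke Theorem 19.9 to find a smooth $U \subset Z$ that is $K(\pi, 1)$ and on which both $M|_U$ and $N|_U$ are shifts of local systems. The exercise preceding Lemma 19.3 then identifies Exts of local systems on $U$ (on \emph{either} side) with $\Ext^*_{k\pi_1(U)}$; this anchors the comparison on the open stratum. To propagate this to $X$, extend $U$ inside $Z$ by removing a hypersurface cut out by some $f : X \to \C$, and glue using Beilinson's machinery from Lecture 18: the functors $j_!, j_*, \Xi_f, \psi_f$ fit into the exact sequences
\[
0 \to j_! \to \Xi_f \to \psi_f \to 0, \qquad 0 \to \psi_f \to \Xi_f \to j_* \to 0,
\]
which give parallel long exact sequences in $D^b(\Perv(X))$ and $D^b_c(X)$. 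Since $\psi_f$ lowers support dimension, the induction hypothesis applies to all terms except $j_! M_U, j_* N_U$, which are handled by the $K(\pi, 1)$ input via further adjunction.

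The main obstacle will be verifying that $\psi_f, \Xi_f, \phi_f$ and the structural exact sequences commute with $\mathrm{real}$ in a functorial way, so that the long exact sequences on both sides actually match term-by-term and one obtains a genuine five-lemma argument rather than a dimension count. A secondary subtlety is that, unlike in the curve case, $IC$ sheaves on higher-dimensional varieties are not mere shifts of constructible sheaves, so one must carefully track the perverse truncations hidden in $IC(\overline{X_\lambda}, \mc{L}) = j_{!*}(\mc{L}[\dim X_\lambda])$ when computing Ext via Yoneda complexes — this is precisely what Beilinson gluing is designed to bookkeep for us.
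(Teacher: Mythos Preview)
The paper only proves this theorem for curves (Section 19.6), so you are going beyond what the notes do. Your outline is essentially Beilinson's actual strategy in the general case: induction on dimension of support, anchored by the $K(\pi,1)$ neighbourhoods of Theorem~19.9 and propagated via the gluing functors $\psi_f, \Xi_f, \phi_f$. The base case and the ``same support'' step are set up correctly, and you are right that the crux is checking that the structural exact sequences for $\psi_f$ and $\Xi_f$ are intertwined by $\mathrm{real}$ --- this is exactly the point Beilinson has to address, and it is handled by the fact that these functors are built out of the six operations (plus limits), all of which $\mathrm{real}$ respects.

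Your treatment of the ``supports differ'' case is slightly garbled, though. You write ``pick an open $U$ meeting $\supp N$ nontrivially but disjoint from a generic part of $\supp M$'' and then invoke a triangle $K \hookrightarrow j_! N_U \twoheadrightarrow N$. What you actually want is closer to the paper's Step~2: take $U$ to be the complement of $\supp N$ (say, if $\dim \supp N < \dim \supp M$), so that $j$ is an affine open immersion after possibly shrinking, and use the perverse exact sequence $K \hookrightarrow j_! M_U \twoheadrightarrow M$ with $K$ supported on $\supp N$. Then $\Hom^\bullet(j_! M_U, N) = \Hom^\bullet(M_U, j^! N) = 0$ since $N_U = 0$, and $K$ has strictly smaller-dimensional support, so induction applies. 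The symmetric case (using $N \hookrightarrow j_* N_U$) handles $\dim \supp M < \dim \supp N$. As written, your choice of $U$ does not guarantee that either $M_U$ or $N_U$ vanishes, so the adjunction does not immediately kill the middle term.

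One further caution: in the equal-support step you need $U \subset Z$ smooth before invoking Theorem~19.9, since that theorem is stated for smooth varieties. This is harmless (pass to the smooth locus first), but worth saying explicitly since $Z = \supp M$ may well be singular.
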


Today, we will show that if each strata $X_\lambda$ is an affine space, then we have such an equivalence for a fixed stratification.  
\begin{theorem}
(Beilinson--Ginzburg--Soergel) If $\Lambda$ is a stratification of $X$ such that $X_\lambda$ is affine, then $D^b(\Perv_\Lambda(X)) \xrightarrow{\sim} D_\Lambda^b(X).$
\end{theorem}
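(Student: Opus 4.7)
The plan is to apply the criterion of Lemma \ref{exts agree}(b) to the realization functor $\mathrm{real}: D^b(\Perv_\Lambda(X)) \to D^b_\Lambda(X)$. Since $\mathrm{real}$ restricts to the identity on $\Perv_\Lambda(X)$ and both $t$-structures are nondegenerate, it suffices to check that
\[
\Ext^i_{\Perv_\Lambda(X)}(M,N) \xrightarrow{\sim} \Hom^i_{D^b_\Lambda(X)}(M,N) \qquad \text{for all } M,N \in \Perv_\Lambda(X),\; i \geq 0.
\]
Because $\Perv_\Lambda(X)$ is a finite-length abelian category, the long exact sequence in $\Ext$ reduces us to the case $M=L_\lambda$, $N=L_\mu$ of simple objects. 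Moreover, since each stratum $X_\lambda \cong \mathbb{A}^{d_\lambda}$ is simply connected, the only irreducible local system on $X_\lambda$ is the constant sheaf, so the simple objects are precisely $L_\lambda := IC(\overline{X_\lambda})$ for $\lambda \in \Lambda$.

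First I would introduce standard and costandard perverse sheaves, which is where the affineness hypothesis enters decisively. Because each $X_\lambda$ is affine, the locally closed inclusion $j_\lambda : X_\lambda \hookrightarrow X$ is an affine morphism, so $(j_\lambda)_!$ and $(j_\lambda)_*$ are $t$-exact for the perverse $t$-structure. Consequently
\[
\Delta_\lambda := (j_\lambda)_! k_{X_\lambda}[d_\lambda] \qquad \text{and} \qquad \nabla_\lambda := (j_\lambda)_* k_{X_\lambda}[d_\lambda]
\]
are perverse, and $L_\lambda$ is the image of the canonical map $\Delta_\lambda \to \nabla_\lambda$, giving short exact sequences
\[
0 \to K_\lambda \to \Delta_\lambda \to L_\lambda \to 0, \qquad 0 \to L_\lambda \to \nabla_\lambda \to K'_\lambda \to 0
\]
with $K_\lambda$, $K'_\lambda$ supported on the strict boundary $\overline{X_\lambda} \setminus X_\lambda$. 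By adjunction,
\[
\Hom^i_{D^b_\Lambda(X)}(\Delta_\lambda, \nabla_\mu) \;\cong\; \Hom^i_{D^b(X_\mu)}\!\bigl(j_\mu^* \Delta_\lambda,\, k_{X_\mu}[d_\mu]\bigr),
\]
which vanishes if $\lambda \neq \mu$ (since $(j_\lambda)_!$ is extended by zero onto $X_\mu$), and when $\lambda=\mu$ equals $H^i(X_\lambda;k)$; because $X_\lambda \cong \mathbb{A}^{d_\lambda}$ is contractible, this is $k$ in degree $0$ and $0$ in positive degrees.

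With the Ext vanishing between standards and costandards in hand, I would finish by dévissage on the closure order on $\Lambda$, exploiting the $(\Delta, \nabla)$-structure of $\Perv_\Lambda(X)$. Filtering $L_\mu$ upward by $\nabla$'s attached to smaller strata, and dually resolving it downward by $\Delta$'s attached to smaller strata — both filtrations produced inductively from the recollement and the short exact sequences above — reduces the comparison
\[
\Ext^i_{\Perv_\Lambda(X)}(L_\lambda, L_\mu) \xrightarrow{\sim} \Hom^i_{D^b_\Lambda(X)}(L_\lambda, L_\mu)
\]
to the vanishing statement already established, via a repeated five-lemma argument that relates higher $\Ext$'s between simples to higher $\Hom$'s between $\Delta_\lambda$'s and $\nabla_\mu$'s.

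The hardest part will be running this dévissage cleanly. Concretely, one needs an inductive construction, along the partial order on strata, of coresolutions $0 \to L_\mu \to \nabla_\mu \to M^1_\mu \to M^2_\mu \to \cdots$ by objects $M^j_\mu$ admitting $\nabla$-flags on strictly smaller strata, together with dual $\Delta$-flagged resolutions — essentially the statement that $\Perv_\Lambda(X)$ is quasi-hereditary in the sense of Cline--Parshall--Scott with standards $\Delta_\lambda$. Once in place, the Ext comparison follows formally (it is equivalent to the effaceability criterion of Lemma \ref{exts agree}(c)), and it is precisely at this step that the vanishing of $H^{>0}(X_\lambda;k)$ for affine $X_\lambda$ gets repeatedly exploited to absorb every would-be higher $\Ext$ into a larger standard/costandard module.
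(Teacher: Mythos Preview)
Your approach is sound in spirit and identifies the right ingredients, but the paper organizes the argument differently and more efficiently. Rather than comparing $\Ext^i$ between simples by d\'evissage, the paper first proves outright that $\Perv_\Lambda(X)$ is a highest weight category (checking the six axioms), then invokes general highest weight machinery (Theorem~\ref{BGS}) to produce projective objects $P_\lambda$ with $\Delta$-filtrations, and finally compares $\Hom^i(P_\lambda,\nabla_\mu)$ on both sides: these vanish for $i>0$ in $D^b(\Perv_\Lambda(X))$ by projectivity and in $D^b_\Lambda(X)$ by your $(\Delta,\nabla)$-orthogonality together with the $\Delta$-filtration. Your d\'evissage on simples would work but is noisier; the projectives package the induction once and for all.

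There is one point you underspecify that the paper handles carefully: establishing the quasi-hereditary structure requires showing $\Ext^2_{\Perv_\Lambda(X)}(\Delta_\lambda,\nabla_\mu)=0$, and this does \emph{not} follow from the computation in $D^b_\Lambda(X)$ alone, since a priori the abelian-category Ext and the derived-category Hom might differ. The paper closes this gap by combining the injection $\Ext^2_{\mc{B}}\hookrightarrow \Ext^2_{\mc{A}}$ for a Serre subcategory $\mc{B}\subset\mc{A}$ (Exercise~\ref{ext 2}) with Beilinson's theorem $D^b(\Perv(X))\simeq D^b_c(X)$, which lets one compute the ambient $\Ext^2$ as a Hom in $D^b_c(X)$ and then apply adjunction. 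Your sketch gestures at ``quasi-hereditary'' as the needed input but does not explain how to verify it; this Ext$^2$ step is where the real work lies.
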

\begin{remark}
Last week we saw that the failure of the strata to be $K(\pi, 1)$ was the obstruction to $\mathrm{real}$ being an equivalence. Affine spaces are contractible, so this obstruction disappears\footnote{Caution: take this explanation with a grain of salt. The proof of Beilinson's theorem is geometric (via vanishing cycles), and the proof of the BGS theorem is algebraic (via highest weight categories), so a direct comparison doesn't exactly work.}.
\end{remark}

\begin{remark}
Serious homological algebra goes into Beilinson's construction of $\mathrm{real}$. However, if we allow ourselves to use the Riemann--Hilbert correspondence, we have
\[
\begin{tikzcd}
\mc{D}-\mathrm{mod}^{hol, r.s.}_{\Lambda\text{-const}} \arrow[d] \arrow[r, "\sim", leftrightarrow] & \Perv_\Lambda(X)  \\
D^b\left(\mc{D}-\mathrm{mod}^{hol, r.s}_{\Lambda \text{-const}}\right) \arrow[r, "\mathrm{R.H.}"'] & D^b_\Lambda(X) 
\end{tikzcd}.
\]
 Under the equivalence between perverse sheaves and $\mc{D}$-modules given by the top arrow, the bottom arrow in this diagram is $\mathrm{real}$. This gives a high concept (rather than high homological algebra!) construction of the realization functor. 
\end{remark}

The BGS theorem and the techniques involved in proving it will be invaluable as we approach Bezrukavnikov's equivalence. The proof of the BGS theorem is algebraic, using what are today called {\em highest weight categories}. 

\subsection{Highest weight categories}
\begin{definition}
Let $\mc{A}$ be a $k$-linear category. We say that $\mc{A}$ is {\em highest weight} if the following six conditions hold. 
\begin{enumerate}
\item $\mc{A}$ is finite length. 
\item The set $\{ \text{simple objects in }\mc{A} \}/_{\simeq}$ is finite. 
\item For any simple object $L \in \mc{A}$, $\End (L)=k$.

Let $\Lambda$ be an indexing set for isomorphism classes of simple objects and denote by $L_\lambda \in \mc{A}$ the simple object corresponding to $\lambda \in \Lambda$. Assume that $\Lambda$ is a poset (this is part of the data determining a highest weight category). For any closed subset $T \subset \Lambda$ (that is, if $\lambda' \leq \lambda$ and $\lambda \in T$, then $\lambda' \in T$), denote by 
\[
\mc{A}_T = \langle L_\lambda \mid \lambda \in T \rangle_{\mathrm{Serre}}
\]
the Serre subcategory generated by the simple objects $L_\lambda$ with $\lambda \in T$. Assume moreover that for each $\lambda \in \Lambda$, there are objects $\Delta_\lambda$ (``standard'' object), $\nabla_\lambda$ (``costandard'' object) in $\mc{A}$ and maps $\Delta_\lambda \rightarrow L_\lambda$, $L_\lambda \rightarrow \nabla_\lambda$.

\item If $T \subseteq \Lambda$ is closed, then $\Delta_\lambda \rightarrow L_\lambda$ (resp. $L_\lambda \rightarrow \nabla_\lambda$) is a projective cover\footnote{A {\em projective cover} of an object $M$ in a category $\mc{C}$ is a morphism $P \xrightarrow{f} M$ out of a projective object $P \in \mc{C}$ which is a {\em superfluous epimorphism}, meaning that every morphism $N \xrightarrow{g} P$ with the property that $f \circ g$ is an epimorphism is itself an epimorphism.} (resp. injective hull) in $\mc{A}_T$\footnote{Note that we are not requiring $\Delta_\lambda$ (resp. $\nabla_\lambda$) to be projective (resp. injective) objects in $\mc{A}$, just in the Serre subcategory $\mc{A}_T$.}.
\item For $\lambda \in \Lambda$, 
\begin{align*}
    \ker(\Delta_\lambda \rightarrow L_\lambda) &\in \mc{A}_{<\lambda}, \text{ and } \\
    \coker(L_\lambda \rightarrow \nabla_\lambda) &\in \mc{A}_{<\lambda}. 
\end{align*}
This implies that the composition series eggs of $\Delta_\lambda$ and $\nabla_\lambda$ must have the following form 
\[
    \includegraphics[scale=0.3]{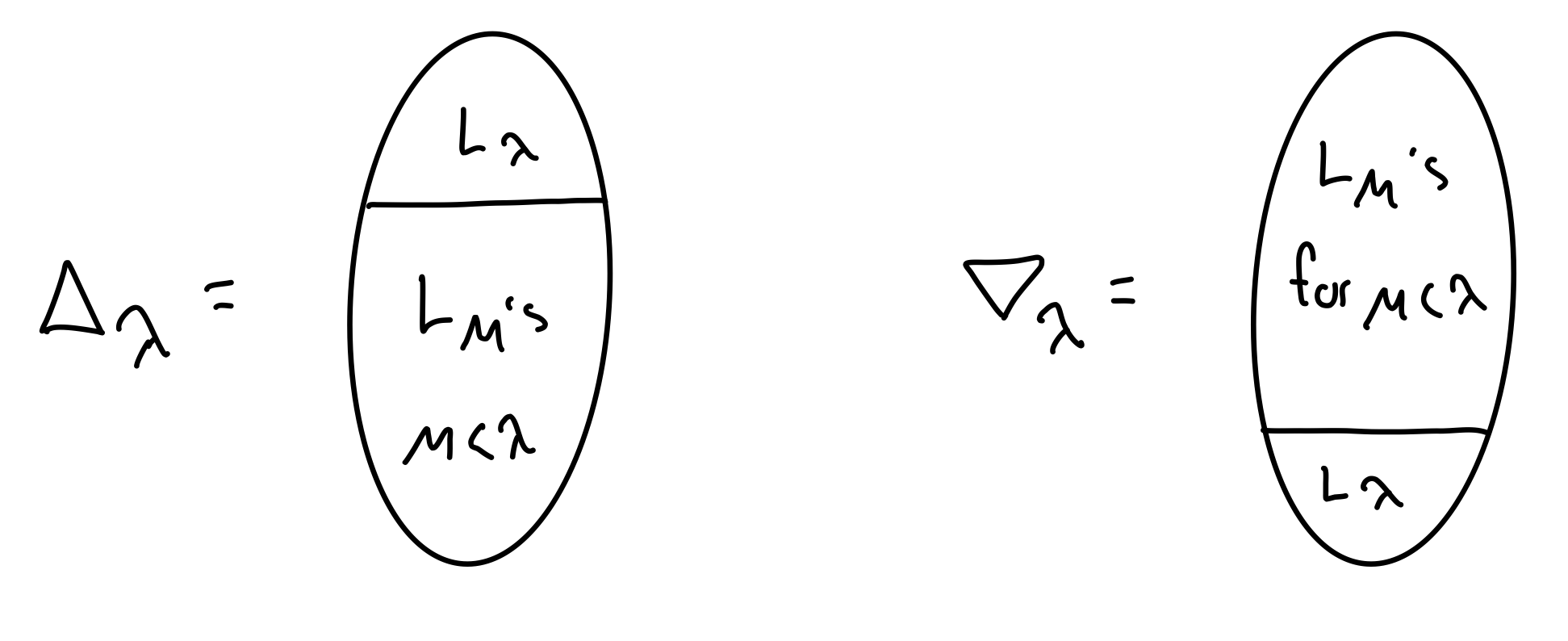}
\]
\item $\Ext^2(\Delta_\lambda, \nabla_\mu)=0$ for all $\lambda, \mu \in \Lambda$.\footnote{This condition is the most mysterious, and often the hardest to show.}
\end{enumerate}
\end{definition}
\begin{theorem}
\label{BGS}
(Beilinson--Ginzburg--Soergel) Let $\mc{A}$ be highest weight. Then $\mc{A}$ has enough projective and injective objects. Moreover, the projective cover $P_\lambda$ of $L_\lambda$ has a standard filtration of the form 
\[
\includegraphics[scale=0.2]{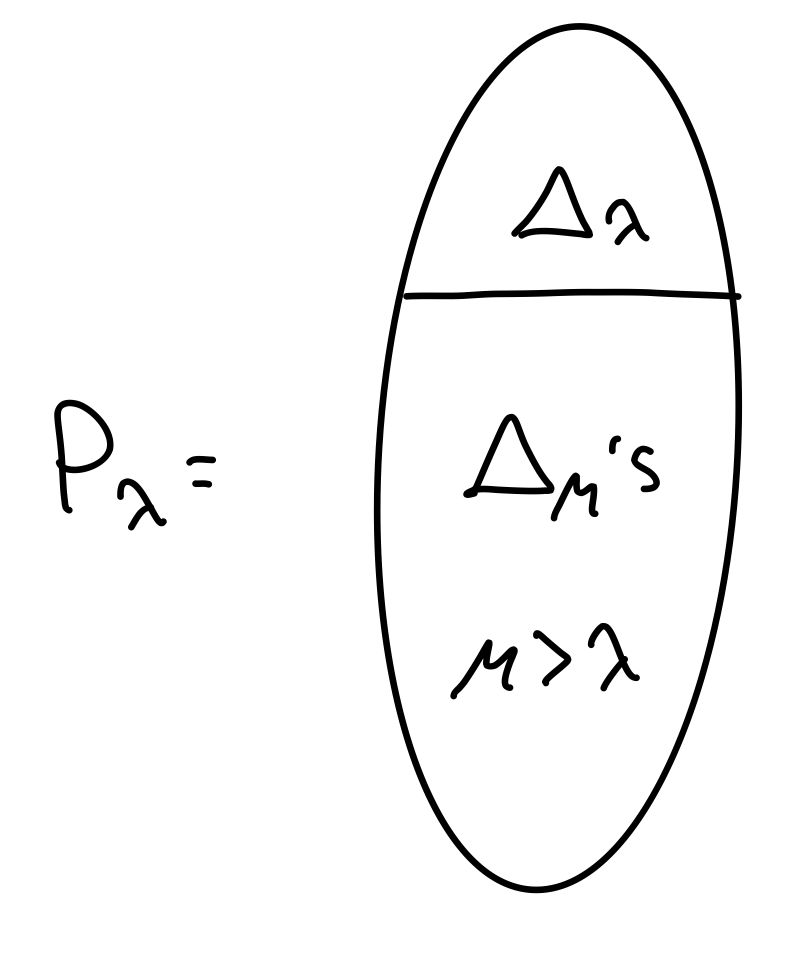}
\]
Similarly, the injective hull $I_\lambda$ has a costandard filtration of the form 
\[
\includegraphics[scale=0.2]{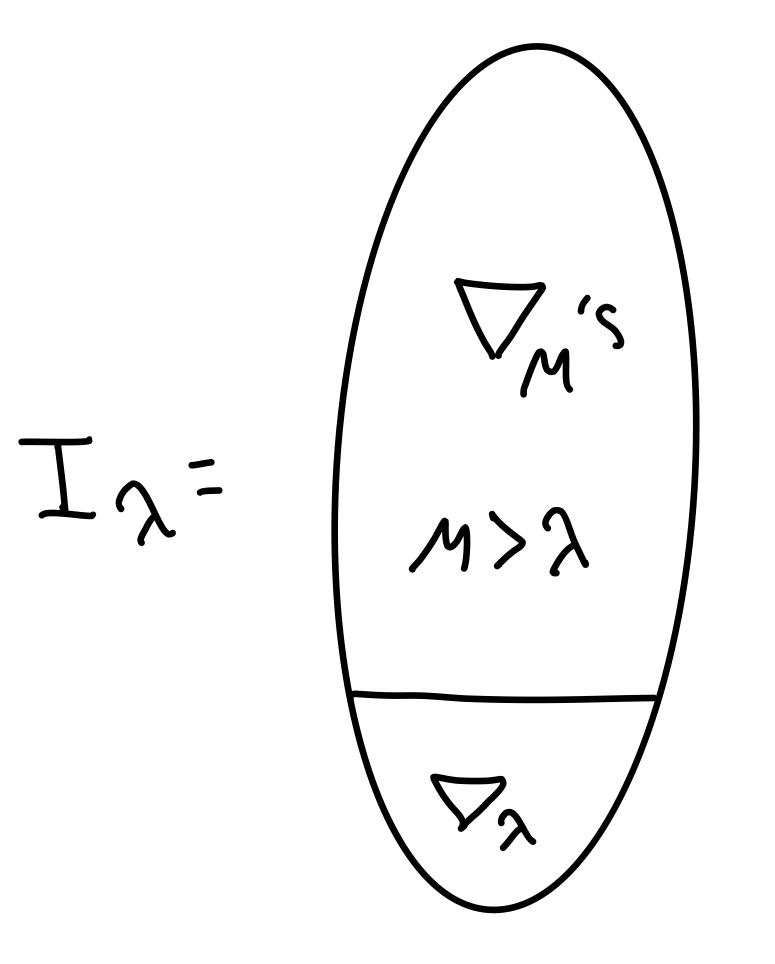}
\]
\end{theorem}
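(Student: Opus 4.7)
My plan is to prove this by induction on a linear extension of the poset $\Lambda$. Since the opposite of a highest weight category is again highest weight with the roles of $\Delta_\lambda$ and $\nabla_\lambda$ swapped, the statement about injective hulls $I_\lambda$ with $\nabla$-flags follows formally from the statement about projective covers $P_\lambda$ with $\Delta$-flags, so I will focus only on the projective side. Fix a linear refinement $\lambda_1, \lambda_2, \ldots, \lambda_n$ of $\Lambda$ such that $T_k := \{\lambda_1, \ldots, \lambda_k\}$ is closed for every $k$. For each $\lambda = \lambda_{k_0}$ I will inductively construct, for $k \geq k_0$, a projective cover $P_\lambda^{(k)}$ of $L_\lambda$ in $\mc{A}_{T_k}$ equipped with a $\Delta$-flag whose subquotients are $\Delta_\mu$ with $\mu \leq \lambda$ and $\mu \in T_k$, with $\Delta_\lambda$ on top. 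The final $P_\lambda := P_\lambda^{(n)}$ will be the desired projective cover in $\mc{A}$.

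The first ingredient I would establish is the key vanishing $\Ext^1_\mc{A}(\Delta_\lambda, \nabla_\mu) = 0$ for all $\lambda, \mu$. Given an extension $0 \to \nabla_\mu \to E \to \Delta_\lambda \to 0$, choose a closed subset $T$ of $\Lambda$ containing $\lambda$ and $\mu$ in which $\lambda$ is maximal (e.g. $T = \{\nu : \nu \leq \lambda \text{ or } \nu \leq \mu\}$). Because $\mc{A}_T$ is a Serre subcategory, $E$ lies in $\mc{A}_T$; by condition 4 the object $\Delta_\lambda$ is projective in $\mc{A}_T$, so the sequence splits. The base case of the induction is $k = k_0$: in $\mc{A}_{T_{k_0}}$ the element $\lambda$ is maximal, so condition 4 gives $P_\lambda^{(k_0)} := \Delta_\lambda$ as the projective cover, carrying a trivial $\Delta$-flag.

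For the inductive step, suppose $P := P_\lambda^{(k)}$ has been built with a $\Delta$-flag and set $\mu := \lambda_{k+1}$, so that $\mu$ is the unique new maximal element of $T_{k+1}$. The space $V := \Ext^1_{\mc{A}_{T_{k+1}}}(P, L_\mu)$ is finite-dimensional because $\mc{A}$ is finite length. The universal extension produces a sequence
\[
0 \to L_\mu \otimes V^\ast \to E \to P \to 0
\]
with $\Ext^1(E, L_\mu) = 0$. I then wish to promote this to an extension by $\Delta_\mu$. From $0 \to K_\mu \to \Delta_\mu \to L_\mu \to 0$, where $K_\mu := \ker(\Delta_\mu \to L_\mu) \in \mc{A}_{<\mu}$, the long exact sequence in $\Ext$ shows that every class in $V$ lifts to $\Ext^1(P, \Delta_\mu)$ provided $\Ext^2_\mc{A}(P, K_\mu) = 0$. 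Having built $P_\lambda^{(k+1)}$ this way, one verifies by the standard argument (projectivity tested by vanishing of $\Ext^1$ against simples, checked using the enlarged $\Delta$-flag together with the $\Ext^1$-vanishing above) that it is indeed the projective cover of $L_\lambda$ in $\mc{A}_{T_{k+1}}$, and its $\Delta$-flag has the prescribed form.

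The main obstacle I expect is exactly the $\Ext^2$ vanishing needed to lift $L_\mu$-extensions to $\Delta_\mu$-extensions: one must show $\Ext^2_\mc{A}(M, N) = 0$ whenever $M$ admits a $\Delta$-flag and $N$ lies in $\mc{A}_{<\mu}$. By d\'evissage along the $\Delta$-flag of $M$ and the composition series of $N$, this reduces to showing $\Ext^2_\mc{A}(\Delta_\nu, L_\sigma) = 0$ for appropriate $\nu$ and $\sigma$. A secondary d\'evissage along $L_\sigma \hookrightarrow \nabla_\sigma$ trades $\Ext^2(\Delta_\nu, L_\sigma)$ for $\Ext^2(\Delta_\nu, \nabla_\sigma)$ (which vanishes by condition 6) against $\Ext^1(\Delta_\nu, \coker(L_\sigma \hookrightarrow \nabla_\sigma))$; since the cokernel lies in $\mc{A}_{<\sigma}$, an inner induction on the poset controls this remainder using the already-established $\Ext^1$ vanishing. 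Once this technical estimate is in hand, the induction closes, and the realization of $P_\lambda$ as projective cover with $\Delta$-flag is immediate; the dual construction then yields the injective hulls $I_\lambda$ with the analogous $\nabla$-flags.
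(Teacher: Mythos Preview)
Your inductive framework matches the paper's: build projective covers in $\mc{A}_{T_k}$ via universal extensions as $T_k$ grows by one maximal element $\mu$ at a time. The paper extends $P := P_\lambda^{(k)}$ directly by copies of $\Delta_\mu$, whereas you extend by $L_\mu$ and then lift. Since $P$ is projective in $\mc{A}_{T_k}$ and $K_\mu \in \mc{A}_{T_k}$, one has $\Ext^1(P,K_\mu)=0$, so $\Ext^1(P,\Delta_\mu) \hookrightarrow \Ext^1(P,L_\mu)$ and the two constructions coincide once your lifting is justified. The paper's direct route is slightly cleaner but faces the same essential obstacle: showing the resulting object is projective in $\mc{A}_{T_{k+1}}$ still comes down to an $\Ext^2$ vanishing, which is what the paper calls ``a bit more work using axiom~(6)''.

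The genuine gap is your argument for this vanishing, $\Ext^2_{\mc{A}}(P,K_\mu)=0$. Your proposed d\'evissage to $\Ext^2_{\mc{A}}(\Delta_\nu, L_\sigma)=0$ for individual $\nu,\sigma$, followed by an ``inner induction'' via $L_\sigma \hookrightarrow \nabla_\sigma$, does not close: you are left with $\Ext^1(\Delta_\nu, \nabla_\sigma/L_\sigma)$, and this is \emph{not} killed by the established $\Ext^1(\Delta,\nabla)=0$, because $\nabla_\sigma/L_\sigma$ has no $\nabla$-flag and its composition factors $L_\tau$ may lie strictly above $\nu$. Concretely, in the principal block of category $\mc{O}$ for $\mf{sl}_3$ one computes $\Ext^2(\Delta_{w_0}, L_{s_1}) \cong \Ext^1(L_{w_0}, \nabla_{s_1}/L_{s_1}) \cong k$, and the pair $(\nu,\sigma)=(w_0,s_1)$ does occur in your d\'evissage at the step $\mu = e$. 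What actually makes $\Ext^2_{\mc{A}}(P,K_\mu)=0$ hold is not a property of individual standards but that $P$ is \emph{projective} in $\mc{A}_{T_k}$: embedding $K_\mu \hookrightarrow I$ with $I$ an injective in $\mc{A}_{T_k}$ carrying a $\nabla$-flag gives $\Ext^2_{\mc{A}}(P,I)=0$ by axiom~(6) and $\Ext^1_{\mc{A}}(P, I/K_\mu)=0$ by projectivity, whence the claim. This uses the \emph{injective} side of the inductive hypothesis, so the projective and injective constructions must be run simultaneously rather than deducing one from the other by passing to $\mc{A}^{\mathrm{op}}$ only at the end.
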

\begin{proof}
For full details, see \cite[Theorem 3.2.1]{BGS}. 

\vspace{2mm}
\noindent
{\bf Sketch:} Instead of proving the theorem as stated, we prove that for any closed subset $T \subseteq \Lambda$, $\mc{A}_T$ has enough projectives (resp. injectives), and they admit standard (resp. costandard) filtrations. We construct the projective cover $P_\lambda^T$ of $L_\lambda$ inductively on $T$:
\begin{itemize}
    \item Let $\mu \in T$ be maximal, and $\lambda \neq \mu \in T$. Assume that the projective cover $P_\lambda^{T \backslash \{\mu\}}$ of $L_\lambda$ in $\mc{A}_{T \backslash \{\mu\}}$ is already constructed.
    \[
    \includegraphics[scale=0.3]{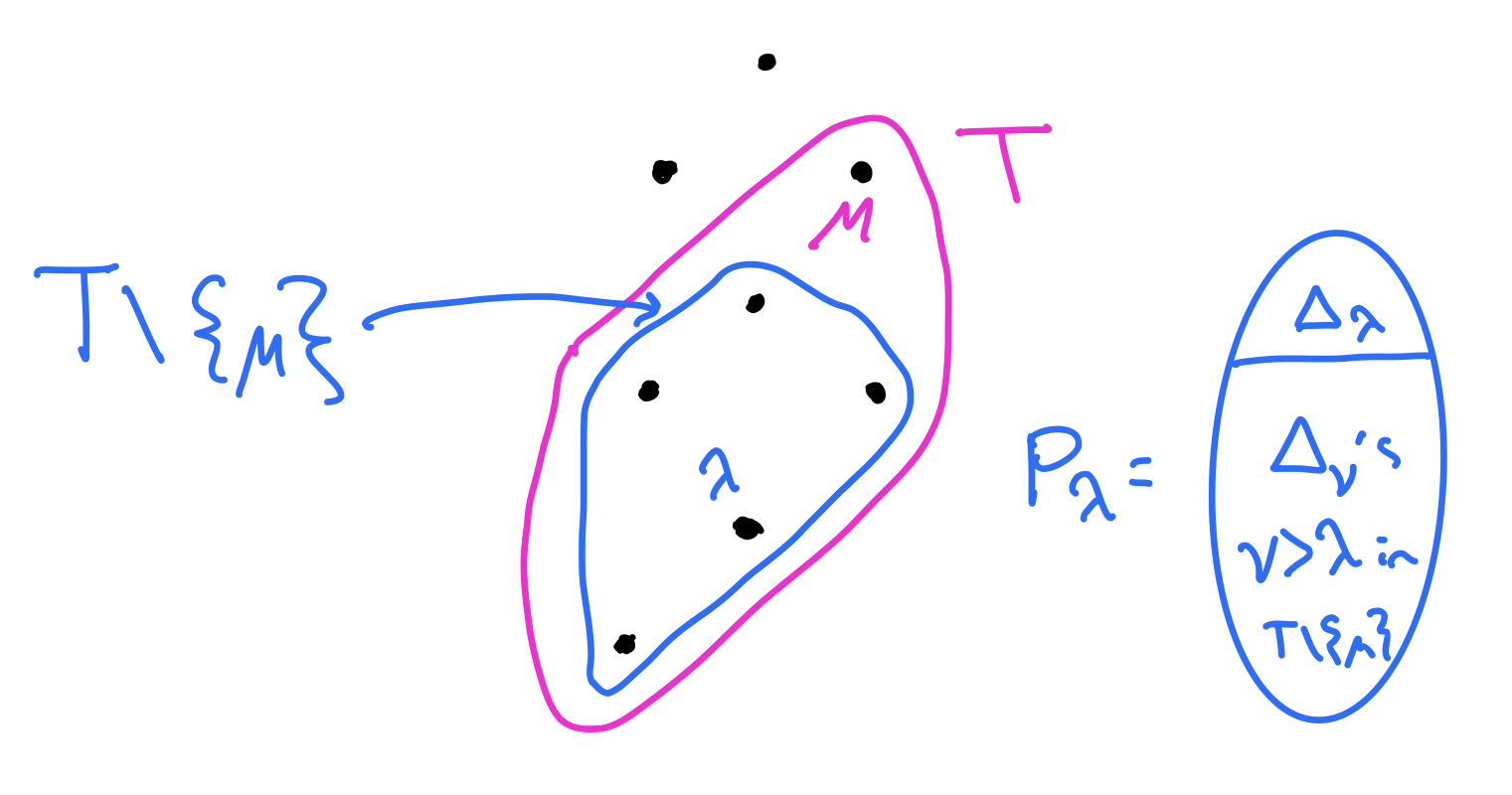}
    \]
    \item Let $E=\Ext^1(P_\lambda^{T \backslash\{\mu\}}, \nabla_\mu)$ be the ext group. 
    \item Every element of $E$ gives rise to an extension
    \[
    \Delta_\mu \hookrightarrow P' \twoheadrightarrow P_\lambda.
    \]
    It turns out that there exists a ``universal'' extension 
    \[
    E^* \otimes \Delta_\mu \hookrightarrow \widetilde{P} \twoheadrightarrow P_\lambda^{T\backslash\{\mu\}} 
    \]
    from which all others are constructed via push-out. (Challenge: construct it!)
    \item {\bf Claim:} $\widetilde{P}$ is the projective cover we are seeking. 
    \begin{proof}
    Use long exact sequence in $\Hom$:
    \[
    \begin{tikzcd}
    \Hom(P_\lambda^{T \backslash \{\mu\}}, \Delta_\mu)  \arrow[r]
& \Hom(\widetilde{P}, \Delta_\mu) \arrow[r]
\arrow[d, phantom, ""{coordinate, name=Z}]
& E \otimes \Hom(\Delta_\mu, \Delta_\lambda) \arrow[dll,
"\sim",
rounded corners,
to path={ -- ([xshift=2ex]\tikztostart.east)
|- (Z) [near end]\tikztonodes
-| ([xshift=-2ex]\tikztotarget.west)
-- (\tikztotarget)}] \\
\Ext^1(P_\lambda^{T \backslash \{\mu\}}, \Delta_\mu) \arrow[r]
& \Ext^1(\widetilde{P}, \Delta_\mu) \arrow[r]
& E \otimes \Ext^1(\Delta_\mu, \Delta_\mu) 
    \end{tikzcd}
    \]
The last term is zero by axoim (4) of a highest weight category. Hence $\Ext^1(\widetilde{P}, \Delta_\mu)=0$. A bit more work using axiom (6) shows that $\widetilde{P} = P_\lambda^T$. 
    \end{proof}
\end{itemize}

\begin{definition}
In a highest weight category $\mc{A}$, an object $T$ is {\em tilting} if it has a standard and costandard filtration. The indecomposable tilting objects in $\mc{A}$ are also indexed by $\Lambda$. 
\end{definition}
Similar arguments give injective and tilting objects. 
\end{proof}

\vspace{5mm}
\noindent
{\bf Important objects in a highest weight category}:
\[
\includegraphics[scale=0.4]{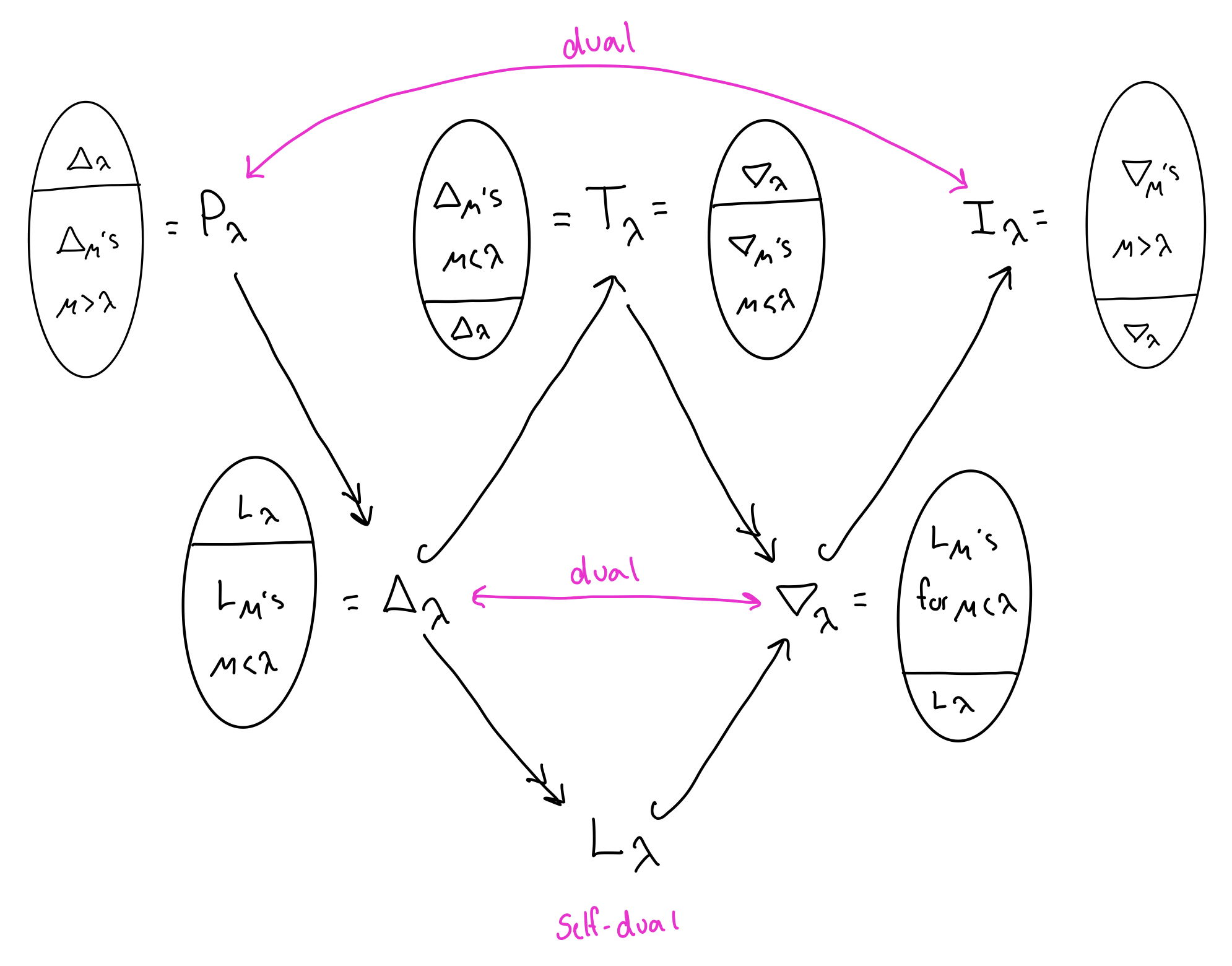}
\]
If our category also has a notion of duality compatible with the highest weight structure, then the costandard/standard and projective/injective objects are dual, as indicated by the pink arrows above. 
\begin{exercise}
\label{ext 2}
Let $\mc{B} \subset \mc{A}$ be a Serre subcategory of an abelian category $\mc{A}$. By definition, for objects $M,N \in \mc{B}$,
\[
\Ext^1_{\mc{B}}(M,N) \xrightarrow{\sim} \Ext^1_\mc{A}(M,N). 
\]
Show that  
\[
\Ext^2_{\mc{B}}(M,N) \hookrightarrow \Ext^2_{\mc{A}}(M,N). 
\]
Hint: Use effaceability and the long exact sequence of Ext. 
\end{exercise}

\subsection{Perverse sheaves for an affine stratification are a highest weight category}

\begin{theorem}
If $X=\bigsqcup_{\lambda \in \Lambda}X_\lambda$ is stratified by affine spaces, then $\Perv_\Lambda(X)$ is a highest weight category. 
\end{theorem}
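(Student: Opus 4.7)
The plan is to verify the six axioms of a highest weight category with data
$\Lambda$ the poset of strata (with $\mu\leq\lambda$ iff $X_\mu\subseteq\overline{X_\lambda}$),
$L_\lambda=IC(\overline{X_\lambda},k)$, standards
$\Delta_\lambda=j_{\lambda!}k_{X_\lambda}[d_\lambda]$, and costandards
$\nabla_\lambda=j_{\lambda*}k_{X_\lambda}[d_\lambda]$, where $j_\lambda:X_\lambda\hookrightarrow X$
is the inclusion of the stratum and $d_\lambda=\dim_\C X_\lambda$. Axioms (1), (2),
(3) should be essentially immediate: because each $X_\lambda\cong\mathbb{A}^{d_\lambda}$
is simply connected, the only irreducible local system on $X_\lambda$ is the
constant one, so the simples are exactly the $L_\lambda$ for $\lambda\in\Lambda$
(a finite set) and $\End(L_\lambda)=k$; finite length of $\Perv_\Lambda(X)$ then
follows from the standard recollement / Beilinson-glueing description, since at
each step one glues a finite length category on a closed stratum to a finite
length category on an open complement.

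For axioms (4) and (5), I would invoke the standard properties of $j_{\lambda!}$,
$j_{\lambda*}$ and of intermediate extension. The cone of
$\Delta_\lambda\twoheadrightarrow L_\lambda$ and of $L_\lambda\hookrightarrow\nabla_\lambda$
is supported on $\overline{X_\lambda}\setminus X_\lambda$, a union of strata $X_\mu$
with $\mu<\lambda$, so lies in $\mathcal{A}_{<\lambda}$. To see that
$\Delta_\lambda\to L_\lambda$ is a projective cover in $\mathcal{A}_T$ for any
closed $T\ni\lambda$, I would use the adjunction
$\Hom(j_{\lambda!}F,G)=\Hom(F,j_\lambda^*G)$ (valid when $j_\lambda$ is open in its
support) together with the vanishing argument of the next paragraph to get
$\Ext^{i}_{\mathcal{A}_T}(\Delta_\lambda,M)=0$ for $i>0$ and $M\in\mathcal{A}_T$;
dually for $\nabla_\lambda$.

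The heart of the proof is axiom (6). I would compute $\Ext^{\bullet}$ in the
ambient triangulated category $D^b_\Lambda(X)$ and then use Exercise \ref{ext 2}
to transfer the vanishing to $\Perv_\Lambda(X)$. By the adjunction
$(j_\mu^*,j_{\mu*})$,
\[
\Hom^{i}_{D^b_\Lambda(X)}(\Delta_\lambda,\nabla_\mu)
=\Hom^{i}\bigl(j_\mu^*j_{\lambda!}k_{X_\lambda}[d_\lambda],\,k_{X_\mu}[d_\mu]\bigr).
\]
If $\lambda\neq\mu$ the strata are disjoint, the extension-by-zero $j_{\lambda!}$ has
zero stalks along $X_\mu$, so the whole $\Hom$-complex vanishes. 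If $\lambda=\mu$,
the expression collapses to $\Hom^{i}(k_{X_\lambda},k_{X_\lambda})=H^{i}(X_\lambda)=
H^{i}(\mathbb{A}^{d_\lambda})$, which is $k$ for $i=0$ and vanishes for $i>0$
because affine space is contractible. In particular
$\Ext^{2}_{D^b_\Lambda(X)}(\Delta_\lambda,\nabla_\mu)=0$, and Exercise \ref{ext 2}
then gives the required vanishing in $\Perv_\Lambda(X)$.

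The only step I would expect to require any care is axiom (4), specifically
arranging the projective cover statement inside each Serre subcategory
$\mathcal{A}_T$ rather than in all of $\mathcal{A}$ (the standards are typically
not projective in the whole category); the key observation is that within
$\mathcal{A}_T$ one may restrict to the locally closed union $U_T=\bigcup_{\mu\in T}X_\mu$,
on which $\lambda$ can be assumed maximal (taking sub-objects and quotients is
compatible with Serre subcategory structure), so $j_\lambda$ becomes an open
embedding and the adjunction applies cleanly. Everything else should follow
from the general six-functor yoga together with the crucial input
$H^{>0}(\mathbb{A}^d)=0$ that makes the Ext-orthogonality of $\Delta$'s and $\nabla$'s
strong enough to detect a highest weight structure.
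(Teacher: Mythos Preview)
Your outline matches the paper's proof closely: same poset, same choice of $\Delta_\lambda=j_{\lambda!}k_{X_\lambda}[d_\lambda]$ and $\nabla_\lambda=j_{\lambda*}k_{X_\lambda}[d_\lambda]$, same verification of axioms (1)--(5), and the same adjunction computation $\Hom^i(\Delta_\lambda,\nabla_\mu)=\delta_{\lambda\mu}\,H^i(\mathbb{A}^{d_\lambda})$ in the derived category for axiom (6). For axiom (4) in particular, your remark that one should restrict to $\overline{X_\lambda}$ (or the union of strata in $T$) so that $j_\lambda$ becomes an open immersion is exactly what the paper does.

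There is one genuine gap, in axiom (6). You compute $\Hom^2_{D^b_\Lambda(X)}(\Delta_\lambda,\nabla_\mu)=0$ and then invoke Exercise~\ref{ext 2} to ``transfer the vanishing to $\Perv_\Lambda(X)$''. But Exercise~\ref{ext 2} compares $\Ext^2$ in a Serre subcategory $\mc{B}\subset\mc{A}$ of \emph{abelian} categories; it says nothing about the relationship between $\Ext^2$ in the heart of a $t$-structure and $\Hom^2$ in the ambient triangulated category. A priori we do not know that $D^b(\Perv_\Lambda(X))\simeq D^b_\Lambda(X)$ --- indeed, that is the content of Corollary~\ref{real}, which is deduced \emph{from} the highest weight structure, so using it here would be circular.

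The paper closes this gap by a two-step argument: first apply Exercise~\ref{ext 2} correctly to the Serre inclusion $\Perv_\Lambda(X)\subset\Perv(X)$ to get $\Ext^2_{\Perv_\Lambda(X)}\hookrightarrow\Ext^2_{\Perv(X)}$; then invoke Beilinson's theorem $D^b(\Perv(X))\simeq D^b_c(X)$ (which was established in Lecture~\ref{lecture 19}) to identify $\Ext^2_{\Perv(X)}$ with $\Hom^2_{D^b_c(X)}$, where your adjunction computation applies. Alternatively, you could argue directly that for any heart $\mc{A}$ of a $t$-structure on $\mc{D}$, the natural map $\Ext^2_{\mc{A}}(M,N)\to\Hom^2_{\mc{D}}(M,N)$ is injective (break a Yoneda $2$-extension into two $\Ext^1$'s and use that $\Ext^1_{\mc{A}}=\Hom^1_{\mc{D}}$), but this is not what Exercise~\ref{ext 2} says.
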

\begin{proof} 
First, note that $j_\lambda:X_\lambda \hookrightarrow X$ is affine and hence 
\begin{align*}
    \Delta_\lambda &:= j_{\lambda!} k_{X_\lambda}[\dim X_\lambda] \text{ and }\\
    \nabla_\lambda &:= j_{\lambda*} k_{X_\lambda}[\dim X_\lambda] 
\end{align*}
are perverse sheaves. Moreover, we have canonical maps 
\begin{equation}
\label{delta nabla triangle}
\Delta_\lambda \twoheadrightarrow IC_\lambda \hookrightarrow \nabla_\lambda, 
\end{equation}
where $IC_\lambda:=j_{\lambda !*}k_{X_\lambda}$ is the IC sheaf corresponding to the trivial local system on $X_\lambda$. 
\begin{enumerate}
    \item $\Perv_\Lambda(X)$ is finite-length. \checkmark 
    \item Simple objects in $\Perv_\Lambda(X)$ are parameterized by pairs $(\mc{L}, \lambda)$, where $\mc{L}$ is a local system on $X_\lambda$. Because $X_\lambda$ is affine, it is contractible, and thus admits a single local system. Hence there are finitely many simple objects in $\Perv_\Lambda(X)$. \checkmark 
    \item By Schur's Lemma, $\End(IC_\lambda)$ is a division algebra over $k$.
    \begin{claim}
    If $D$ is a division algebra over a field $k$, any non-zero algebra homomorphism $D \rightarrow k$ is an isomorphism.
    \end{claim}
    \begin{proof}
    Let $\varphi:D \rightarrow k$ be a nonzero algebra homomorphism. The kernel of $\varphi$ is an ideal in $D$, but the division algebras have no non-trivial ideals, so $\ker \varphi = \{0\}$. Because $\varphi$ is a $k$-algebra homomorphism, $\varphi(1)=1$. Hence for all $\ell \in k$, $\varphi( \ell \cdot 1) = \ell \varphi(1) = \ell$ and $\mathrm{im} \varphi = k$.  
    \end{proof}
    Assume $X_\lambda \subset X$ is open. Then $IC_\lambda|_{X_\lambda}=k_{X_\lambda}[\dim X_\lambda]$. Hence the restriction map
    \[
    \End(IC_\lambda) \xrightarrow{\text{restriction}} \End(IC_\lambda|_{X_\lambda}) \simeq k
    \]
    is a nonzero morphism from a division algebra to $k$. The claim lets us conclude that $\End(IC_\lambda)\simeq k$. 
    
    For any $\lambda$, the inclusion $X_\lambda \hookrightarrow \overline{X}_\lambda$ is open because $X_\lambda$ is locally closed. There is an equivalence of categories 
    \[
    \begin{array}{c} \text{perverse sheaves} \\
    \text{supported on a} \\
    \text{closed subvariety } \\ Z \subset X \end{array} \xleftrightarrow{\sim} \Perv(Z),
    \]
so    
    \[
    \End_{\Perv_\Lambda(X)}(IC_\lambda) \simeq \End_{\Perv_\Lambda(\overline{X}_\lambda)}(IC_\lambda). 
    \]
    Hence by applying the argument above to the open inclusion $X_\lambda \hookrightarrow \overline{X}_\lambda$, we conclude that $\End(IC_\lambda)=k$ for all $\lambda$. \checkmark 
    
    \item The maps in (\ref{delta nabla triangle}) give a surjection $\Delta_\lambda \twoheadrightarrow IC_\lambda$ and an injection $IC_\lambda \hookrightarrow \nabla_\lambda$. Because the partial order on $\Lambda$ is given by closure of strata, for a closed subset $\{\leq \lambda\} \subset \Lambda$, we have 
    \[
    \Perv_\Lambda(X)_{\{\leq \lambda\}} = \Perv_{\Lambda} (\overline{X}_\lambda).
    \]
    To establish axiom (4), we must show two things: (a) that $\Delta_\lambda$ is a projective object in $\Perv_\Lambda(\overline{X}_\lambda)$, and (b) that $\Delta_\lambda \twoheadrightarrow IC_\lambda$ is a projective cover. 
    
    To show that $\Delta_\lambda$ is projective, we will show that $\Hom_{\Perv_\Lambda(\overline{X}_\lambda)}(\Delta_\lambda, \cdot)$ is exact. Because $X_\lambda \xhookrightarrow{j} \overline{X}_\lambda$ is open, for any $\mc{F} \in \Perv_\Lambda(\overline{X}_\lambda)$,
    \begin{align*}
    \Hom(j_!k_{X_\lambda}[\dim X_\lambda], \mc{F}) &= \Hom(k_{X_\lambda}[\dim X_\lambda], j^!\mc{F}) \\
    &= \Hom(k_{X_\lambda}[\dim X_\lambda], j^*\mc{F})
    \\
    &=\Hom(k_{X_{\lambda}}[\dim X_\lambda], \mc{F}|_{X_\lambda}) \\
    &= \mc{F}|_{X_\lambda}.
    \end{align*}
    Restriction to an open subvariety is an exact functor, so we conclude that $\Hom(\Delta_\lambda, \cdot )$ is exact. 
    
    Because $\Perv_{\Lambda}(\overline{X}_\lambda)$ is a finite-length abelian category (in particular, it is Krull-Schmidt), to show that $\Delta \twoheadrightarrow IC_\lambda$ is a projective cover, it suffices to show that $\Delta_\lambda$ is indecomposable. Now, the endomorphism ring
    \[
    \End(\Delta_\lambda) = \Hom(j_!k_{X_\lambda}, j_! k_{X_\lambda}) = \Hom(k_{X_\lambda}, j^! j_! k_{X_\lambda}) = \End(k_{X_\lambda}) = k
    \]
    is local, so $\Delta_\lambda$ is indecomposable. 
    
    Showing that the injection $IC_\lambda \hookrightarrow \nabla_\lambda$ is an injective hull follows from a similar argument. \checkmark 

    \item The fact that $\ker(\Delta_\lambda \twoheadrightarrow IC_\lambda), \coker(IC_\lambda \hookrightarrow \nabla_\lambda) \in \Perv_{<\lambda}(X)$ is clear from the composition series eggs and the definition of $IC_\lambda = \im(J_{\lambda!} \rightarrow j_{\lambda *})$:
    \[
    \includegraphics[scale=0.3]{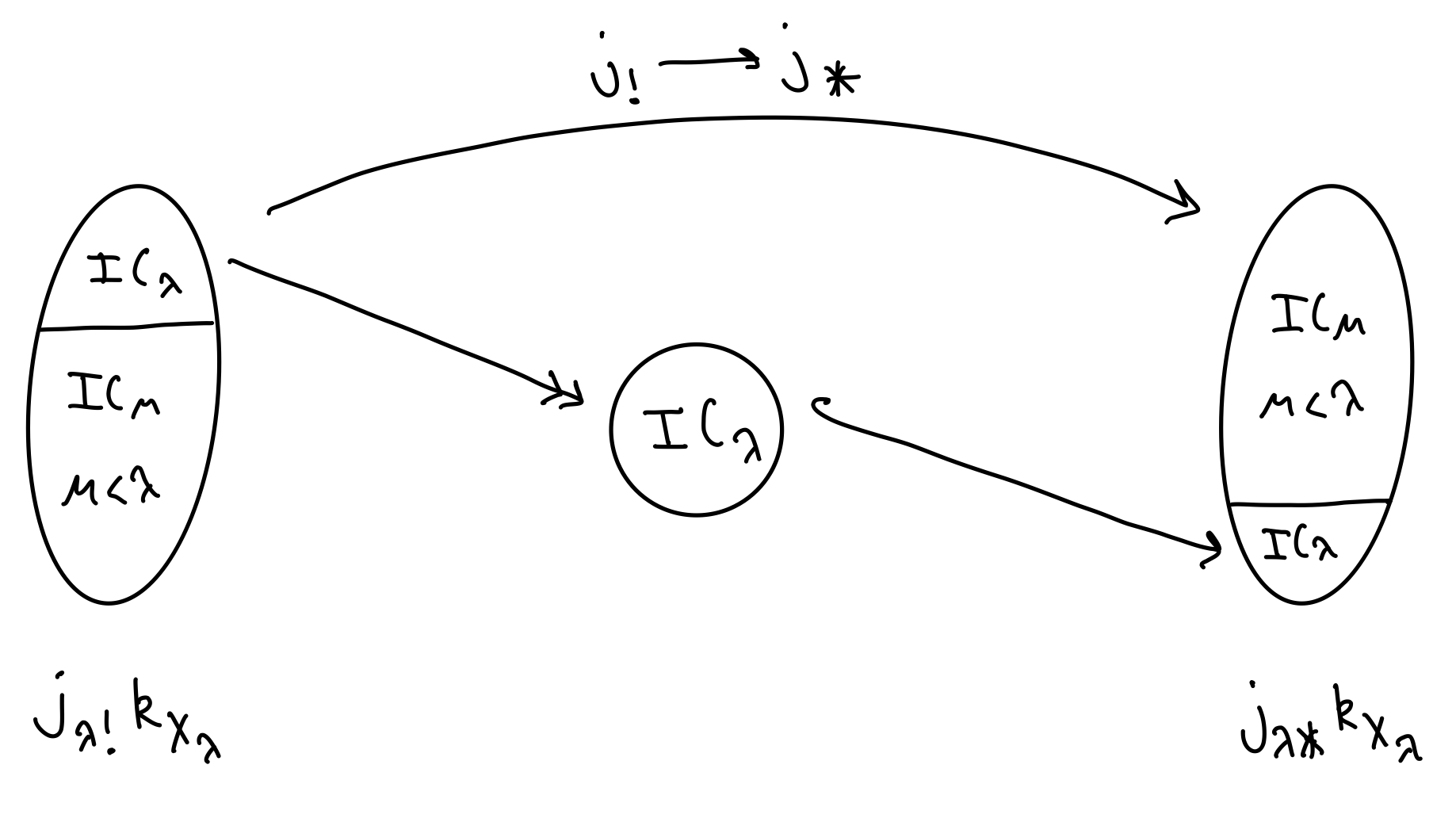} \checkmark 
    \]
    \item To see why condition (6) holds, first observe that by Exercise \ref{ext 2},
    \[
    \Ext^2_{\Perv_\Lambda(X)}(\Delta_\lambda, \nabla_\mu) \hookrightarrow \Ext^2_{\Perv(X)}(\Delta_\lambda, \nabla_\mu).
    \]
    Then, by Beilinson's theorem, 
    \[
    \Ext^2_{\Perv(X)}(\Delta_\lambda, \nabla_\mu)=\Hom^2_{D^b_c(X)}(\Delta_\lambda, \nabla_\mu).
    \]
    Once we are in the derived category, we have full access\footnote{What we mean by this is that the functor $j_\lambda^*$ is not exact if $X_\lambda$ is not open, so it does not preserve perverse sheaves (see beginning of lecture \ref{lecture 19}).} to the adjoint pairs $(j_!, j^!), (j^*, j_{ *})$. Hence we can use adjunctions to compute
    \begin{align*}
    \Hom^2_{D^b_c(X)}(\Delta_\lambda, \nabla_\mu) &= \Hom_{D^b_c(X)}(j_{\lambda!}k_{X_\lambda}[\dim X_\lambda], j_{\mu *} k_{X_\mu}[\dim X_\mu+2]) \\
    &= \Hom_{D^b_c(X)}(j_\mu^*j_{\lambda !}k_{X_\lambda}[\dim X_\lambda], k_{X_\mu}[\dim X_\mu + 2]).
    \end{align*}
    Because the functor $j_{\lambda!}$ is extension by zero, $j_\mu^*j_{\lambda !}k_{X_\lambda}=0$ for $\mu \neq \lambda$. If $\mu=\lambda$, then $j_\mu^*j_{\lambda !}k_{X_\lambda}$ is a $1$-dimensional vector space. There are no higher Exts in the category of vector spaces, so we conclude that 
    \[
    \Ext^2_{\Perv_\lambda(X)}(\Delta_\lambda, \nabla_\mu)=0. \checkmark 
    \]
\end{enumerate}
\end{proof}

\begin{remark}
It's unusual for a category of perverse sheaves to have enough projectives. By proving that $\Perv_\Lambda(X)$ is a highest weight category, we have just algebraically produced a collection of projective perverse sheaves $\{P_\lambda\}_{\lambda \in \Lambda}$. As far as we are aware, it is not known how to construct these perverse sheaves geometrically.  
\end{remark}

\begin{corollary}
\label{real}
If $\Lambda$ is a stratification of $X$ by affine spaces, 
\[
\mathrm{real}:D^b(\Perv_\Lambda(X)) \xrightarrow{\sim} D^b_\Lambda(X)
\]
is an equivalence of categories. 
\end{corollary}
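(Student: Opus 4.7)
The plan is to apply Lemma \ref{exts agree}. The realization functor $\mathrm{real}$ is triangulated and restricts to the identity on the hearts, and both bounded derived categories are generated by their (nondegenerate) perverse hearts. So by part (b) of that lemma, the task reduces to showing that for every $M, N \in \Perv_\Lambda(X)$ the natural map
\[
\Ext^i_{\Perv_\Lambda(X)}(M, N) \longrightarrow \Hom^i_{D^b_\Lambda(X)}(M, N)
\]
is an isomorphism for all $i \geq 0$.

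Both sides are $\delta$-functors in each variable separately, and the natural transformation between them commutes with the connecting maps. A standard five-lemma dévissage, valid because $\Perv_\Lambda(X)$ is finite length and $\Lambda$ is finite (axiom (2) of the highest weight structure), reduces first to the case $M = IC_\lambda$, $N = IC_\mu$. The short exact sequences $0 \to K \to \Delta_\lambda \to IC_\lambda \to 0$ and $0 \to IC_\mu \to \nabla_\mu \to C \to 0$ supplied by axiom (5), with $K \in \Perv_\Lambda(X)_{<\lambda}$ and $C \in \Perv_\Lambda(X)_{<\mu}$, combined with a Noetherian induction on the poset $\Lambda$, then bring the problem down to the single case $(M, N) = (\Delta_\lambda, \nabla_\mu)$, where I claim both sides are canonically isomorphic to $\delta_{\lambda\mu}\delta_{i,0}\, k$.

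On the left, this is a standard consequence of the highest weight structure: using the projective cover $P_\lambda \twoheadrightarrow \Delta_\lambda$ supplied by Theorem \ref{BGS}, whose kernel carries a standard filtration with subquotients $\Delta_\nu$ for $\nu > \lambda$, an upward induction on $\Lambda$ yields $\Ext^{>0}_{\Perv_\Lambda(X)}(\Delta_\lambda, \nabla_\mu) = 0$, while $\Hom(\Delta_\lambda, \nabla_\mu) = \delta_{\lambda\mu}\, k$ comes from the composition $\Delta_\lambda \twoheadrightarrow IC_\lambda \hookrightarrow \nabla_\mu$ together with the fact that composition factors of $\Delta_\lambda$ (resp. $\nabla_\mu$) are indexed by $\leq \lambda$ (resp. $\leq \mu$). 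On the right, the $(j_\mu^*, j_{\mu *})$ adjunction gives
\[
\Hom^i_{D^b_\Lambda(X)}\bigl(j_{\lambda!}k_{X_\lambda}[\dim X_\lambda],\, j_{\mu *}k_{X_\mu}[\dim X_\mu]\bigr) \;\cong\; \Hom^{\,i + \dim X_\mu - \dim X_\lambda}\bigl(j_\mu^* j_{\lambda!} k_{X_\lambda},\, k_{X_\mu}\bigr),
\]
and $j_\mu^* j_{\lambda!} k_{X_\lambda}$ vanishes for $\lambda \neq \mu$ (extension by zero has vanishing stalks off $X_\lambda$), while for $\lambda = \mu$ it reduces to $H^i(X_\lambda; k)$. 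At this final step the affineness hypothesis enters decisively: $X_\lambda \cong \mathbb{A}^{\dim X_\lambda}$ is contractible, so its cohomology is concentrated in degree zero. The main obstacle in executing the plan is organising the two nested dévissages (composition series of $M$ and $N$, plus induction on $\Lambda$ through the standard/costandard triangles) so that the whole argument funnels into the single contractibility input; conceptually, affineness is precisely what breaks the non-$K(\pi,1)$ obstruction observed in Lecture \ref{lecture 19}.
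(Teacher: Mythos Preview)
Your proof is correct, but takes a more roundabout route than the paper. The paper tests the realization functor directly on the pairs $(P_\lambda, \nabla_\mu)$ rather than d\'evissaging through simples: on the abelian side $\Hom^{>0}(P_\lambda, \nabla_\mu) = 0$ is immediate from projectivity, while on the geometric side the standard filtration of $P_\lambda$ from Theorem~\ref{BGS} reduces to $\Hom^{>0}_{D^b_\Lambda(X)}(\Delta_\lambda, \nabla_\mu) = 0$, which is exactly the adjunction-plus-contractibility computation you carry out. Your approach lands at the same endpoint $(\Delta_\lambda, \nabla_\mu)$ but via a two-layer d\'evissage (first down to simples by composition series, then back up through the $\Delta \to IC$ and $IC \to \nabla$ sequences with a Noetherian induction on $\Lambda$), after which you must separately invoke the highest weight machinery to get $\Ext^{>0}_{\Perv}(\Delta_\lambda, \nabla_\mu) = 0$. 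The paper's choice of projectives as test objects absorbs that last step for free, making the argument a couple of lines. Both proofs rest on the same structural inputs (existence of projectives with standard filtrations, and the adjunction identity for $(\Delta,\nabla)$ in $D^b_\Lambda$), so neither is more elementary---the paper's is just more efficiently organized. One small point: your ``upward induction on $\Lambda$'' for the left-side $\Ext$-vanishing should really be a downward induction, since the base case is $\lambda$ maximal (where $\Delta_\lambda = P_\lambda$).
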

\begin{proof}
Both of the sets $\{\Delta_\lambda\}_{\lambda \in \Lambda}$ and $\{\nabla_\lambda\}_{\lambda \in \Lambda}$ generate $D^b(\Perv_\Lambda(X))$ and $D_\Lambda^b(X)$. By the upper triangularity of $P_\lambda$, the set $\{P_\lambda\}_{\lambda \in \Lambda}$ also generates each category. Hence it is enough to show
\[
\mathrm{real}:\Hom^i_{D^b(\Perv_\Lambda(X))}(P_\lambda, \nabla_\mu) \xrightarrow{\sim} \Hom^i_{D_\lambda^b(X)}(P_\lambda, \nabla_\mu).
\]

In $D^b(\Perv_\Lambda(X))$, 
\[
\Hom^i(P_\lambda, \nabla_\mu) = \begin{cases} 0 & \text{ if }i>0, \\
\Hom_{\Perv_\Lambda(X)}(P_\lambda, \nabla_\mu) & \text{ if }i=0, 
\end{cases}
\]
by the projectivity of $P_\lambda$. 

In $D_\Lambda^b(X)$, 
\[
\Hom^i(P_\lambda, \nabla_\mu) = \begin{cases} 0 &\text{ if }i>0, 
\\
\Hom_{D^b_\Lambda(X)}(P_\lambda, \nabla_\mu) & \text{ if }i=0 \end{cases}
\]
as well. This is because  $\Hom^i_{D^b_\Lambda(X)}(\Delta_\lambda, \nabla_\mu) = 0$ for $i>0$ by adjunction, so the long exact sequence in $\Ext$ and the standard filtration of $P_\lambda$ imply that $\Hom^i(P_\lambda, \nabla_\mu)=0$ for $i>0$. 
\end{proof}
\begin{remark}
In the proof above, the vanishing of $\Hom^i$ for $i>0$ happens for quite different reasons in each of the two categories. In $D^b(\Perv_\Lambda(X)$, the reason is algebraic (projectivity of an object), whereas in $D^b(X)$, the reason is topological. 
\end{remark}

\subsection{Where are we going for the next few weeks?}

For the rest of this lecture, we will reconnect with the big picture of this course and describe our plan for the upcoming weeks. 

Let $(X \supset R, X^\vee \supset R^\vee)$ be a root datum, and let $G, G^\vee$ be the corresponding dual groups over $\C$. Associated to this datum, we have a finite Weyl group $W_f$, and an affine Weyl group $W=W_f \ltimes \Z X^\vee$. Let $H$ be the affine Hecke algebra, and $Z$ its center. In October and November of last year, we constructed the following diagram. 
\[
\begin{tikzcd}
Z \simeq (\Z X^\vee) ^{W_f} \arrow[d, hookrightarrow] \arrow[r, "\sim", dash] &  R_{G^\vee}=[\Coh \mathrm{pt}/_{G^\vee}] \arrow[d, "\text{pull-back}"] \\
H  \arrow[r, "\sim", "KL"', dash] & \left[K^{G^\vee \times \C^\times}(St)\right]
\end{tikzcd}
\]
The isomorphism on the bottom line is the Kazhdan--Lusztig isomorphism, which was merely stated (not yet even sufficiently explained, let alone proved!), and the injection on the left is Bernstein's description of the center of of the affine Hecke algebra. Our goal for the rest of the course is to categorify this picture. This is done via Bezrukavnikov's equivalence, which very roughly is an equivalence of the form
\[
\begin{tikzcd}
\left( \begin{array}{c} \text{constructible} \\ \text{affine Hecke} \\ \text{category} \end{array}, * \right) \arrow[r, "\sim", "B"', dash] & \left( \begin{array}{c} \text{coherent} \\ \text{affine Hecke} \\ \text{category} \end{array}, * \right). 
\end{tikzcd}
\]
The constructible affine Hecke category on the LHS should be (a variant of) a category of construcible sheaves on the affine flag variety of $G$, and the coherent affine Hecke category on the RHS should be (a variant of) a category of $G \times \C^\times$-equivariant coherent sheaves on the Steinberg variety. When we take Grothendieck groups, we should recover the first diagram. 

\vspace{3mm}
\noindent
{\bf Philosophy for now}: Before we can understand this story on the level of categories, we need to better understand the Kazhdan--Lusztig isomorphism.

\vspace{3mm}
\noindent
{\bf General strategy for understanding $KL$}:
\begin{enumerate}
    \item On the Bernstein generators $T_i$, $\mc{O}_\lambda$, define
    \begin{align*}
        T_i &\mapsto \mc{Q}_i \text{ (to be explained next week)}\\
        \mc{O}_\lambda &\mapsto \mc{O}_\lambda \text{ (pull-back of $\mc{O}(\lambda)$ on $G^\vee / B^\vee$ to }T^*G^\vee / B^\vee \xhookrightarrow{\text{diagonal}} St) 
    \end{align*}
    \begin{remark}
    It is very challenging to verify the relations directly! (See Leonardo Maltoni's May 24, 2019 talk in the Informal Friday Seminar.) 
    \end{remark}
    \item The affine Hecke algebra $H$ has two important modules, defined as follows. Let 
    \begin{align*}
        \mathrm{triv}: H_f &\rightarrow \Z[q^{\pm 1}], T_i \mapsto q \\
        \mathrm{sgn}: H_f &\rightarrow \Z[q^{\pm 1}], T_i \mapsto -1
    \end{align*}
    be the (quantized versions of) the trivial and sign representation of the finite Hecke algebra $H_f$. Define two $H$-modules 
\[
M:= H \otimes_{H_f} \mathrm{triv}, \hspace{5mm} N:= H \otimes_{H_f} \mathrm{sgn}
\]
by left multiplication on the first tensor factor. These are, respectively, the {\bf spherical module} and {\bf antispherical module} of the Hecke algebra corresponding to the parabolic subgroup $W_f \subset W$. 
\begin{remark}
By the Bernstein presentation, 
\[
H = \Z[q^{\pm 1}][X^\vee] \otimes H_f.
\]
Hence, as $\Z[q^{\pm 1}]$-modules, 
\[
M \cong N \cong \Z[q^{\pm 1}][X^\vee].
\]
Thus, as modules over $\Z[q^{\pm 1}][X^\vee]$, both the spherical and anti-spherical modules are free of rank $1$. 
\end{remark}
\item By convolution formalism, both $K^{G \times \C^\times}(G^\vee / B^\vee)$ and $K^{G \times \C^\times}(T^* G^\vee/ B^\vee)$ are $K^{G^\vee \times \C^\times}(St)$-modules. 
\begin{remark}
Note that 
\[
\Z[q^{\pm 1}][X^\vee]=K^{B^\vee \times \C^\times}(\mathrm{pt}) \cong K^{G^\vee \times \C^\times}(G^\vee / B^\vee) \xrightarrow[\text{pull-back}]{\sim} K^{G^\vee \times \C^\times}(T^* G^\vee / B^\vee). 
\]
\end{remark}
\item Compute actions of generators in $M$ (resp. $N$) and match them with actions in $K^{G^\vee \times \C^\times}(G^\vee / B^\vee)$ (resp. $K^{G^\vee \times \C^\times}(T^* G^\vee / B^\vee)$ under (a choice of) the above isomorphisms. Because all of these modules are faithful, this implies the Kazhdan--Lusztig isomorphism. 

\begin{remark}
We really only need to do this computation for either $M$ or $N$ to use this argument to deduce the Kazhdan--Lusztig isomorphism. 
\end{remark}
\end{enumerate}

\noindent
{\bf Next week}: We implement 1-4. 

\vspace{5mm}
\noindent
{\bf Following week}: We take the first step toward Bezrukavnikov's equivalence by explaining the Arkhipov--Bezrukavnikov's theorem that 
\[
\mc{AS} \xrightarrow{\sim} D^b\left(\Coh^{G^\vee \times \C^\times}(T^*G^\vee / B^\vee)\right),
\]
where $\mc{AS}$ is the categorical antispherical module. 
\pagebreak
\section{Lecture 21: Equivariant $K$-theory of the Steinberg variety}
\label{lecture 21}

Our goal for the next few lectures is to prove the Kazhdan--Lusztig isomorphism:
\[
H_\mathrm{affine} \simeq K^{G^\vee \times \C^\times}(\mathrm{Steinberg}).
\]
This is an isomorphism of the affine Hecke algebra with the $G^\vee \times \C^\times$-equivariant $K$-theory of the Steinberg variety. Recall from the end of last lecture that our general strategy for proving this isomorphism is to find a vector space on which each algebra acts faithfully by the same operators. 

\begin{remark}
The reader might have the (correct) impression that this is a rather indirect way of seeing that two algebras are isomorphic. However, there are several instances of very indirect techniques to obtain isomorphisms, equivalences, or correspondences in the Langlands program. Another example of this is Soergel's functor, which proves an equivalence of two categories (one geometric and one representation theoretic) by matching their images in a third world of algebra (so-called Soergel bimodules).
\end{remark}

\noindent
References for this lecture are:
\begin{itemize}
    \item Kazhdan--Lusztig, Proof of the Deligne--Langlands conjecture for Hecke algebras, \cite{KL},
    \item Chriss-Ginzburg, Representation theory and complex geometry, Chapters 6 \& 7, \cite{chris-ginzburg},
    \item Henderson, Notes on affine Hecke algebras and $K$-theory.
\end{itemize}

\subsection{Equivariant $K$-theory}

Let $X$ be a scheme with an action by a group $G$. Associated to $X$ are the $G$-equivariant $K$-groups for $i \in \Z_{\geq 0}$:
\[
G \circlearrowright X \rightsquigarrow K^G_i(X). 
\]
When $i=0$, this is 
\[
K_0^G(X) = \begin{array}{c} \text{Grothendieck group of the exact category} \\ \text{of $G$-equivariant perfect complexes on $X$} \\
\text{(i.e. bounded complexes of vector bundles)} \end{array}.
\]
If $X$ is smooth, then every $G$-equivariant coherent sheaf has a resolution by $G$-equivariant vector bundles and thus  
\[
K_0^G(X) = \begin{array}{c} \text{Grothendieck group of the category} \\ \text{of $G$-equivariant coherent sheaves on X} \end{array}.
\]
\begin{remark}
At some future point we should expand on higher $K$-groups, but that time is not today. For those who are interested, Quillen's work and the groundbreaking paper \cite{TT} of Thomason-Trobaugh are highly recommended. In \cite{TT}, they show that if $U \subset X$ is open, we have a sequence\footnote{We can think of this like the long exact sequence in cohomology, but unlike cohomology, where the long exact sequence came almost simultaneously with its definition, this took twenty years after the initial definition of $K$-theory to prove.} 
\[
\cdots \rightarrow K_i(X \text{ on }Y) \rightarrow K_i(X) \rightarrow K_i(U) \rightarrow K_{i-1}(X \text{ on }Y) \rightarrow \cdots
\]
which is almost exact, except that $K_0(X) \rightarrow K_0(U)$ might not be onto for singular $X$. (This result was already proved by Quillen for smooth $X$.)
\end{remark}

In the arguments to come, we only care about $K_0^G$, and at certain points we will simply assert that certain boundary maps vanish; e.g., we have a short exact sequence 
\[
K_0^G(X \text{ on }Y) \hookrightarrow K^G_0(X) \twoheadrightarrow K_0^G(U).
\]
We set  $K^G(X):=K^G_0(X)$.

\vspace{3mm}
\noindent
{\bf Examples}
\begin{enumerate}
    \item $K^G(\mathrm{pt})=R_G= \begin{array}{c} \text{Grothendieck group of finite-dimensional} \\ \text{algebraic representations of }G \end{array}$ 
    \item We claim that $K(\mathbb{P}^1)=\Z[x^{\pm 1}]/(x-1)^2$. How can we see that this is true? To start, every vector bundle on $\mathbb{P}^1$ is a sum of line bundles. Hence there is a surjection 
    \begin{align*}
        \Z[x^{\pm 1}] &\twoheadrightarrow K(\mathbb{P}^1) \\
        x^m &\mapsto \mc{O}(m).
    \end{align*}
    The tautological exact sequence of vector bundles 
    \[
    \mc{O}(-1) \hookrightarrow \mc{O}^{\oplus 2}_{\mathbb{P}^1} \twoheadrightarrow \mc{O}(1)
    \]
    shows that under the surjection above, 
    \[
    x^{-1} - 2 + x \mapsto 0.
    \]
    Hence $(x-1)^2 \mapsto 0$. This shows us that we have a map $\phi:\Z[x^{\pm 1}]/(x-1)^2 \rightarrow K(\mathbb{P}^1)$. 
    
    \vspace{3mm}
    \noindent
    {\bf Claim:} $K(\mathbb{P}^1) = \Z[\mc{O}] \oplus \Z[\mc{O}(1)]$. 
    \vspace{3mm}
    
    The arguments above imply that $[\mc{O}]$ and $[\mc{O}(1)]$ span $K(\mathbb{P}^1)$. It remains to show that they are linearly independent. 
    
    \vspace{2mm}
    \noindent 
    {\em Proof 1}: For $X$ proper, $K(X)$ carries an intersection form:
    \[
    \langle \mc{F}, \mc{G} \rangle := \chi(\mc{F} \otimes \mc{G}), 
    \]
    where $\chi$ is the Euler characteristic. Computing the pairings with respect to this form, we get the following table. 
    \[
    \begin{tabular}{c|cc} 
    & $\mc{O}$ & $\mc{O}(1)$  \\ 
    \hline
    $\mc{O}$ & $1$ & $2$   \\ 
    $\mc{O}(1)$ & $2$ & $3$  \\
   \end{tabular}
    \]
    We can see that the determinant is $-1$, so $\mc{O}$ and $\mc{O}(1)$ are linearly independent. \qed
    
    \vspace{2mm}
    \noindent
    {\em Proof 2:} We have seen in Tom's course that 
    \begin{align*}
        D^b(\Coh \mathbb{P}^1) &\simeq D^b\left(\Rep( \begin{tikzcd}[ ampersand replacement=\&] \bullet \arrow[r, shift left] \arrow[r, shift right] \& \bullet \end{tikzcd}) \right) \\
        \mc{O} &\mapsto L_0:=\begin{tikzcd}[ ampersand replacement=\&] k \arrow[r, shift left] \arrow[r, shift right] \& 0 \end{tikzcd}\\
        \mc{O}(1) &\mapsto L_1:=\begin{tikzcd}[ ampersand replacement=\&] 0 \arrow[r, shift left] \arrow[r, shift right] \& k \end{tikzcd}
    \end{align*}
    The $K$-group of any finite length abelian category has a basis given by the classes of simple objects. In the case of $\Rep( \begin{tikzcd}[ ampersand replacement=\&] \bullet \arrow[r, shift left] \arrow[r, shift right] \& \bullet \end{tikzcd})$, the two simple objects are $L_0$ and $L_0$. Hence we have 
    \[
    K(\PP^1) = K(D^b(\Coh(\PP^1))) = K\left(D^b\left(\Rep( \begin{tikzcd}[ ampersand replacement=\&] \bullet \arrow[r, shift left] \arrow[r, shift right] \& \bullet \end{tikzcd}) \right)\right) = \Z[L_0] \oplus \Z[L_1]. \qed
    \]

    \begin{remark}
    The second proof illustrates a common phenomenon: derived equivalences can have interesting consequences for $K$-theory. 
    \end{remark}
\end{enumerate}

\vspace{5mm}
\noindent
{\bf Fact:} Proper maps $f: X \rightarrow Y$ induce maps in $K$-theory:
\[
p_*: K^G(X) \rightarrow K^G(Y)
\]
The map $p_*$ is given by $p_*(F)=\sum(-1)^iR^if_*(F)$. 
\begin{example}
\begin{enumerate}
\item Projection to a point, $p:\PP^1 \rightarrow \mathrm{pt}$, induces the following map on (non-equivariant) $K$-theory:
\begin{align*}
    p_*:K(\PP^1)\cong \Z[x^{\pm 1}]/(x-1)^2 &\rightarrow K(\mathrm{pt})\cong \Z \\
    x^m &\mapsto \chi(\mc{O}(m)) = m+1
\end{align*}
\item Let $B = \left\{ \bp * & * \\ 0 & * \ep \right\} \subset \SL_2$ act on $\PP^1$ in the standard way. Then the map that $p$ induces on $B$-equivariant $K$-theory is given by the Weyl character formula. Indeed, we can identify 
\[
K^G(\PP^1) = K^B(\mathrm{pt}) = \Z[X(B)] = \Z[x^{\pm 1}]. 
\]
Then under the composition
\[
K^G(\PP^1) \rightarrow K^G(\mathrm{pt}) \rightarrow K^B(\mathrm{pt}) = \Z[x^{\pm 1}].
\]
the element $x^m \in K^B(\PP^1)\cong \Z[x^{\pm 1}]$ maps to $\frac{x^m - x^{-m-2}}{1-x^{-2}} \in K^B(\mathrm{pt})\cong \Z[x^{\pm 1}]$. 
\end{enumerate}
\end{example}

\subsection{Equivariant $K$-theory of the Steinberg variety}

Let 
\[
G \supset B \supset T
\]
be a complex reductive group containing a Borel subgroup containing a maximal torus. Let $X=X(T)$ be the character lattice, $W_f$ the finite Weyl group, and $W=W_f \ltimes \Z X$ the affine Weyl group of the dual\footnote{This convention is introduced to avoid including many checks (to indicate Langlands dual groups) in the rest of this lecture and the next.} root system. Let 
\[
\mc{N} \subset \mf{g}
\]
be the nilpotent cone in the Lie algebra of $G$, and 
\[
\widetilde{\mc{N}}=\{(x, \mf{b}) \in \mc{N} \times \mc{B} \mid x \in \mf{b} \} =T^*\mc{B} \rightarrow \mc{N}
\]
be the Springer resolution. Recall that the {\em Steinberg variety} is defined to be 
\[
St:= \widetilde{\mc{N}} \times_\mc{N} \widetilde{\mc{N}} = \{(x, \mf{b}, \mf{b}') \mid x \in \mf{b}, x \in \mf{b}' \}.
\]
We also explained in Lecture \ref{lecture 13} how this could be realized as the conormal space to the space $\mc{B} \times \mc{B}$ with the stratification by $G$-orbits (which are parameterized by $W$). 

\begin{example}
Let $G=\SL_2$. Then $\mc{B} = \PP^1$, and the $G$-orbit stratification on $\mc{B} \times \mc{B}$ is given by 
\[
\mc{B} \times \mc{B} = \Delta \sqcup Y,
\]
where $Y:=(\PP^1 \times \PP^1) \backslash \Delta$. An illustration:
\[
    \includegraphics[scale=0.25]{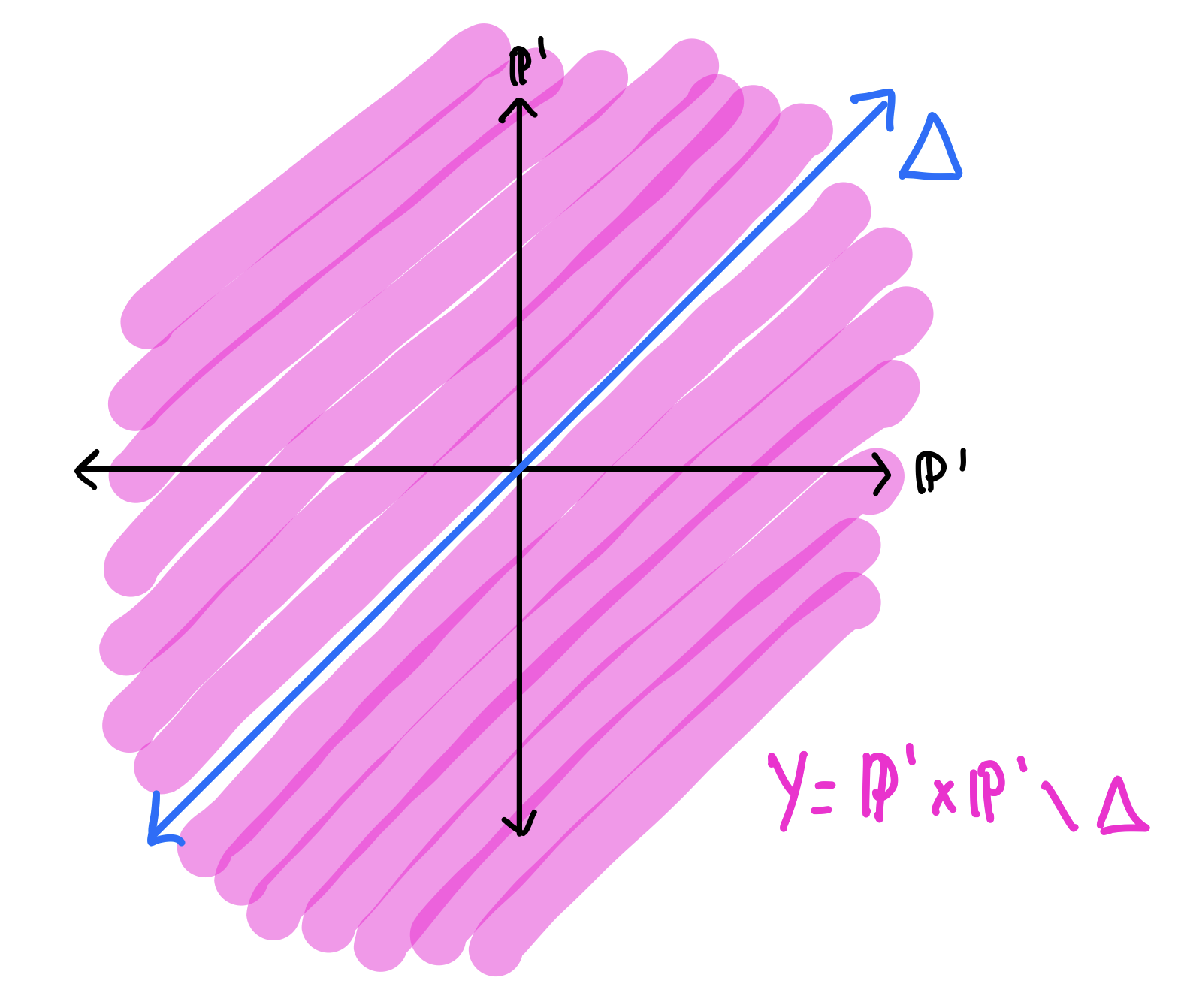}  
\]
Then the Steinberg variety is 
\[
T^*_\Delta(\PP^1 \times \PP^1) \sqcup T_Y^*(\PP^1 \times \PP^1) = T^* \PP^1 \sqcup Y.
\]
An illustration:
    \[
    \includegraphics[scale=0.4]{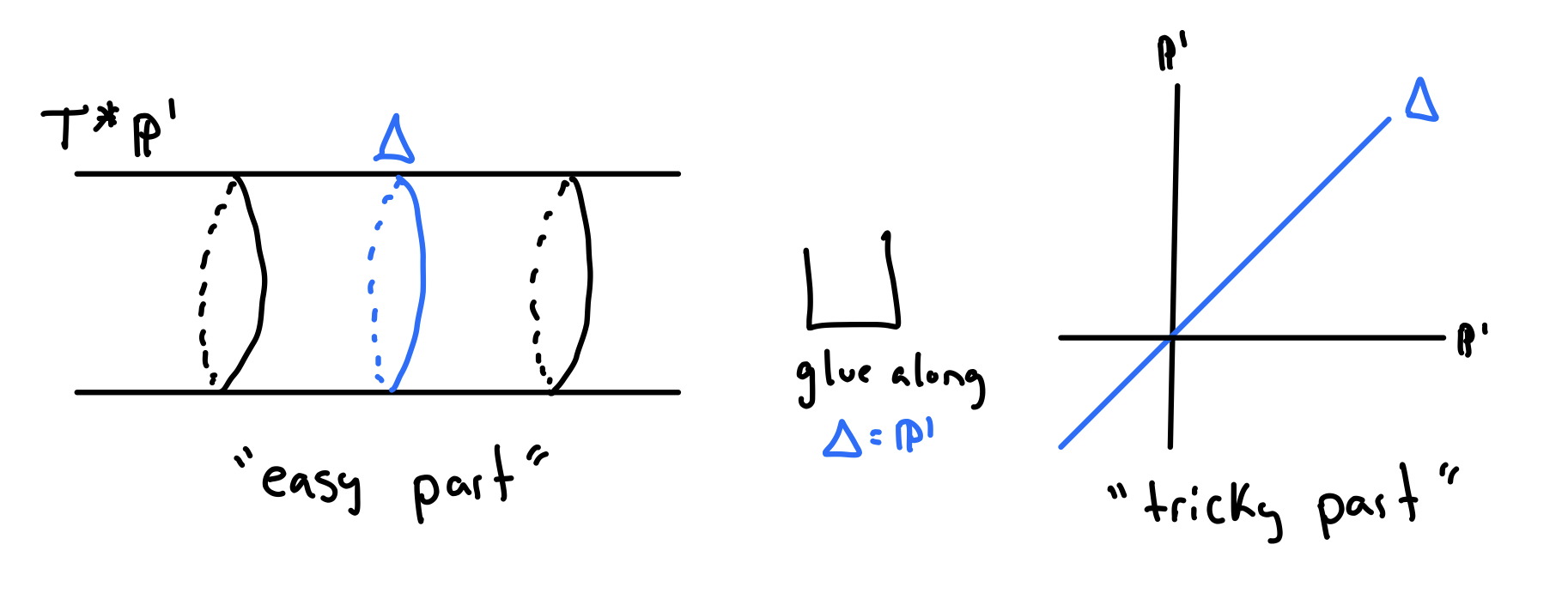} 
    \]
In the ``easy part,'' projection onto each component of $\mc{B} \times \mc{B}$ is an isomorphism. In the ``tricky part,'' projection onto each component hits the zero section in $T^*\PP^1$. 
\end{example}
The group $G \times \C^\times$ acts on $St$ via 
\begin{equation}
    \label{C star action}
(g, z) \cdot (z, \mf{b}, \mf{b}') = (z^2gx, g\mf{b}, g\mf{b}').
\end{equation}

\vspace{3mm}
\noindent
{\bf Algebra structure on $K^{G \times \C^\times}(St)$:}
\vspace{3mm}

Recall that $K^{G \times \C^\times}(St)$ has an algebra structure given by convolution in $D^b(\Coh)$ (see Lectures \ref{lecture 14} and \ref{lecture 15} for a refresher): 
\[
\mc{F} * \mc{G} := p_{13*}(p_{12}^* \mc{F} \otimes p_{23}^* \mc{G}), 
\]
where $p_{ij}$ are the canonical projections \[
\begin{tikzcd}
& \widetilde{\mc{N}} \times_\mc{N} \widetilde{\mc{N}} \times_\mc{N} \widetilde{\mc{N}} \arrow[dl, "p_{12}"'] \arrow[d, "p_{13}"] \arrow[dr, "p_{23}"] & \\
St & St & St 
\end{tikzcd}
\]

\vspace{3mm}
\noindent
{\bf Module structure on $K^{G \times \C^\times}(\widetilde{\mc{N}})$}:
\vspace{3mm}

Similarly, convolution also gives $K^{G \times \C^\times}(\widetilde{\mc{N}})$ the structure of a module for $K^{G \times \C^\times}(St)$:
\[
\mc{F} * \mc{G} := p_{1*}(\mc{F} \times p_{2}^* \mc{G}),
\]
where 
\[
\begin{tikzcd}
& \widetilde{\mc{N}}\times_{\mc{N}} \widetilde{\mc{N}} \arrow[dl, "p_1"'] \arrow[dr, "p_2"] & \\
\widetilde{\mc{N}} & & \widetilde{\mc{N}} 
\end{tikzcd}
\]

Recall that our goal is to prove that 
\[
K^{G \times \C^\times}(St) \cong H.
\]
We will accomplish that through a subgoal, which is to show that 
\[
K^{G \times \C^\times}(\widetilde{\mc{N}}) \cong \text{anti-spherical module}.
\]
We can work toward the subgoal by examining the vector space structure of $K^{G \times \C^\times}(\widetilde{\mc{N}})$ more closely. 

First, note that we have a map
\[
K^{G \times \C^\times}(\widetilde{\mc{N}}) \xleftarrow[back]{pull} K^{G \times \C^\times}(\mc{B}) = K^{B \times \C^\times}(\mathrm{pt}) = \Z[v^{\pm 1}][X]. 
\]
Because $\widetilde{\mc{N}}$ is a vector bundle over $\mc{B}$, pull-back is an isomorphism. Hence 
\[
K^{G \times \C^\times}(\widetilde{\mc{N}}) = \Z[v^{\pm 1}][X].
\]

Now we turn to the vector space structure on $K^{G \times \C^\times}(St)$. The space $\mc{B} \times \mc{B}$ has a filtration via closures of $G$-orbits:
\[
\emptyset =Z_{-1} \subset Z_0 \subset \cdots \subset Z_m = \mc{B} \times \mc{B},
\]
with $Z_i \backslash Z_{i+1} \simeq G \cdot (x_iB, B) \simeq G\times_B X_i$, where $X_i=B x_i B/B$ is a Bruhat cell. This induces a filtration of the Steinberg variety:
\[
\emptyset = \widetilde{Z}_{-1} \subset \widetilde{Z}_0 \subset \cdots \subset \widetilde{Z}_m = St,
\]
with $\widetilde{Z}_i\backslash \widetilde{Z}_{i+1} \simeq T^*_{X_i}(\mc{B} \times \mc{B}) =:T^*_{x_i}.$  We have
\[
K^{G \times \C^\times}(T_{x_i}^*) = K^{G \times \C^\times}(X_i) = K^{B \times \C^\times} (X_i) = K^{B \times \C^\times}(\mathrm{pt}). 
\]
All boundary maps vanish when we apply $K^{G \times \C^\times}$ to the above filtration, and a little work (see \cite{chris-ginzburg}) yields: 
\[
K^{G \times \C^\times}(St) = \bigoplus_{x \in W_f} \Z[v^{\pm 1}][X][\mc{O}_{\overline{T^*_{x_i}}}]. 
\]
\begin{remark}
Compare this to the decomposition 
\[
H = \bigoplus_{x \in W_f} \Z[v^{\pm 1}][X] T_x.
\]
\end{remark}

\subsection{The spherical and anti-spherical modules}

Recall Bernstein's presentation of the affine Hecke algebra (Theorem \ref{Bernstein presentation}): $H$ is an algebra over $\Z[v^{\pm 1}]$ with generators
\[
\begin{array}{c} \{H_s\mid s \in S_f\} \\
\text{(finite part)} \end{array} \hspace{5mm} \text{ and } \hspace{5mm} \begin{array}{c} \{ \theta_\lambda\mid \lambda \in X\} \\ \text{(lattice part)} \end{array}.
\]
The generators $\{H_s\mid s \in S_f\}$ generate a copy of the finite Hecke algebra $H_f$ and the generators $\{\theta_\lambda\mid \lambda \in X \}$ generate $\Z[v^{\pm 1}][X]$. The relations are:
\begin{enumerate}
    \item $H_s^2 = (v^{-1}-v)H_1+1$ for all $s \in S$ + braid relations (finite part),
    \item $\theta_\lambda \theta_\gamma = \theta_{\lambda + \gamma}$ for all $\lambda, \gamma \in X$ (lattice part), and
    \item {\bf the most important relation}: For  a simple reflection $s=s_\alpha \in S$ and $\lambda \in X$,
    \[
    H_s \theta_{s \lambda} - \theta_\lambda H_s = (v-v^{-1}) \left(\frac{\theta_\lambda - \theta_{s \lambda}}{1 - \theta_{-\alpha}}\right). 
    \]
\end{enumerate}
The first relation can be rewritten as 
\[
(H_s +v)(H_s - v^{-1})=0,
\]
which implies that $H_f$ has two natural rank $1$ modules:
\begin{align*}
    \mathrm{triv}:H_s \mapsto v^{-1}, \\
    \mathrm{sgn}: H_s \mapsto -v.
\end{align*}
From these we construct two induced $H$-modules: 
\begin{align*}
    M &= H \otimes_{H_f} \mathrm{triv}, \text{ ``spherical module''} \\
    N&= H \otimes_{H_f} \mathrm{sgn}, \text{ ``anti-spherical module''} 
\end{align*}
As modules over the lattice part, each is isomorphic to $\Z[v^{\pm 1}][X]$. 

\begin{remark} In what follows, we will abuse notation and write
\[
\theta_\lambda:=\theta_\lambda \otimes 1 \in N.
\]
\end{remark}

Using the third relation in the affine Hecke algebra, we can compute the action of the generator $\theta_s \in H$ on $\theta_{s\lambda} \in N$ in the anti-spherical module:
\begin{align*}
    H_s \cdot \theta_{s\lambda} &=(v-v^{-1}) \left(\frac{\theta_\lambda - \theta_{s \lambda}}{1 - \theta_{-\alpha}}\right) + \theta_\lambda H_s \\
    &= v \left(\frac{\theta_\lambda - \theta_{s \lambda} - \theta_{\lambda} + \theta_{\lambda - \alpha}}{1-\theta_{-\alpha}}\right) - v^{-1} \left(\frac{\theta_\lambda - \theta_{s \lambda}}{1-\theta_{-\alpha}}\right) \\
    &=v \left(\frac{\theta_{\lambda - \alpha} - \theta_{s \lambda}}{1 - \theta_{-\alpha}}\right) - v^{-1} \left(\frac{\theta_\lambda - \theta_{s \lambda}}{1- \theta_{-\alpha}}\right). 
\end{align*}
This formula looks nicer if we instead compute the action in terms of the Kazhdan--Lusztig generator $b_s:=H_s + v$:
\begin{align*}
b_s \cdot \theta_{s \lambda} &= v  \left(\frac{\theta_{\lambda - \alpha} - \theta_{s \lambda}}{1 - \theta_{- \alpha}}\right) - v^{-1} \left(\frac{\theta_\lambda - \theta_{s \lambda}}{1 - \theta_{-\alpha}}\right)+v \theta_{s \lambda}\\
&= v \left(\frac{\theta_{\lambda - \alpha} - \theta_{s \lambda-\alpha}}{1 - \theta_{- \alpha}}\right) - v^{-1} \left(\frac{\theta_\lambda - \theta_{s \lambda}}{1 - \theta_{-\alpha}}\right)\\
&= (v\theta_{-\alpha} - v^{-1}) \left( \frac{\theta_\lambda - \theta_{s \lambda}}{1-\theta_{-\alpha}} \right).
\end{align*}
In other words, 
\begin{equation}
    \label{star star}
b_s \cdot \theta_\lambda = (v^{-1} - v\theta_{-\alpha}) \left( \frac{\theta_\lambda - \theta_{s \lambda}}{1 - \theta_{-\alpha}} \right).
\end{equation}

We'll pick up here next week with a very similar looking computation in $K$-theory. 
\pagebreak
\section{Lecture 22: Proof of the Kazhdan--Lusztig isomorphism}
\label{lecture 22}

In today's lecture we will complete the proof of the Kazhdan--Lusztig isomorphism. First, we establish and streamline notation. 

\subsection{Notation and set-up}
Let 
\[
G \supset B \supset T
\]
be a complex reductive group, Borel subgroup, and maximal torus. Let $X$ be the character lattice, and $W_f \subset W$ the finite and affine Weyl groups with simple reflections $S_f \subset W_f$ and $S \subset W$. We denote by $\mc{B}$ the flag variety, which we realize as the variety of Borel subalgebras of $\mf{g} = \Lie{G}$. The group $G$ acts on the product $\mc{B} \times \mc{B} $, and the orbits give a stratification with strata parameterized by $W_f$:
\[
G \circlearrowright \mc{B} \times \mc{B} = \bigsqcup_{x \in W_f} \mathbb{O}_x.
\]
The orbit $\OO_x$ consists of pairs of flags/Borel subalgebras in relative position $x$. For example, $\OO_\mathrm{id}=\Delta$ (the diagonal in $\mc{B} \times \mc{B}$), and $\OO_{w_0}$ (where $w_0 \in W_f$ is the longest element) is open in $\mc{B} \times \mc{B}$.

Let 
\[
\widetilde{\mc{N}}=T^*\mc{B} \rightarrow \mc{N} \subset \mf{g}
\]
be the Springer resolution. The Steinberg variety is 
\[
St = \widetilde{\mc{N}} \times_{\mc{N}} \widetilde{\mc{N}} = \bigsqcup_{x \in W_f} T^*_{\OO_x}(\mc{B} \times \mc{B}) = \bigcup_{w \in W_f} \Lambda_x,
\]
where $T^*_{\OO_x}(\mc{B} \times \mc{B})$ is the conormal bundle of the $G$-orbit $\OO_x$, whose closure $\Lambda_x := \overline{T^*_{\OO_x}(\mc{B} \times \mc{B})}$ is an irreducible component of $St$. 

Last week we discussed the following example. 
\begin{example}
Let $G=\SL_2$, so $\mc{B}=\PP^1$. The $G$-orbit stratification of $\mc{B} \times \mc{B}$ is:
\[
    \includegraphics[scale=0.4]{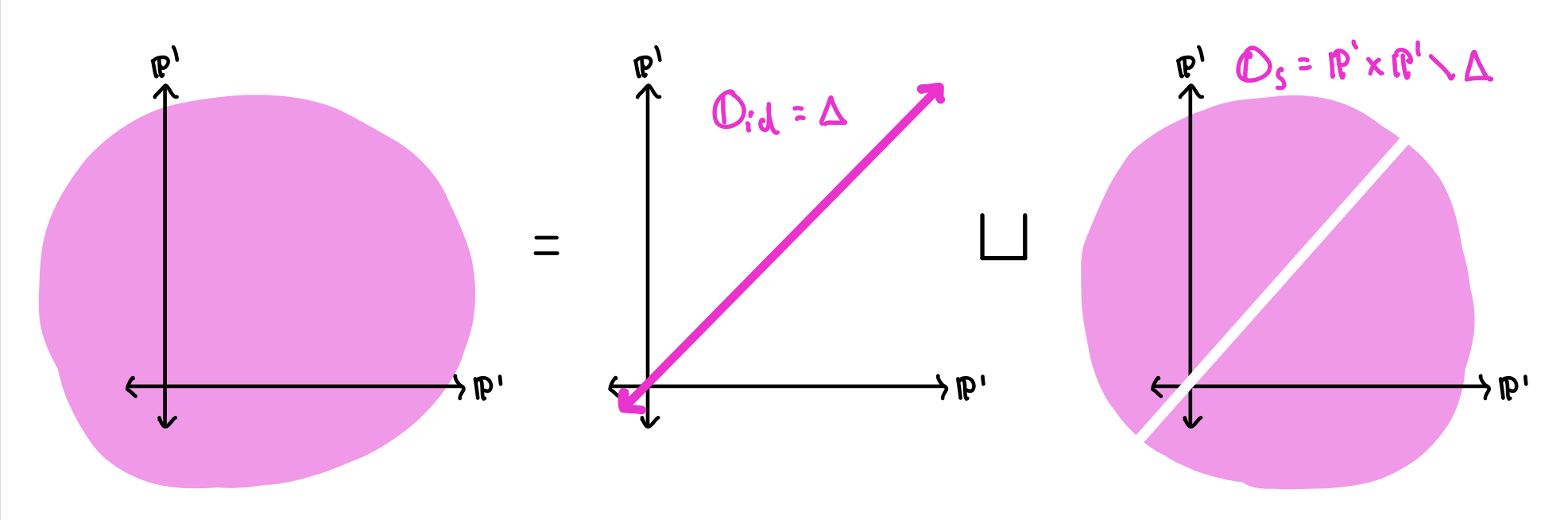} 
\]
The corresponding stratification of the Steinberg variety is 
\[
St = T^*_{\OO_{\mathrm{id}}}(\mc{B} \times \mc{B}) \sqcup T^*_{\OO_s} (\mc{B} \times \mc{B}).
\]
The closures of the strata give the irreducible components $\Lambda_x$, $x \in W_f$ of $St$, which are glued together as follows:
\[
St =\vcenter{\hbox{\includegraphics[scale=0.4]{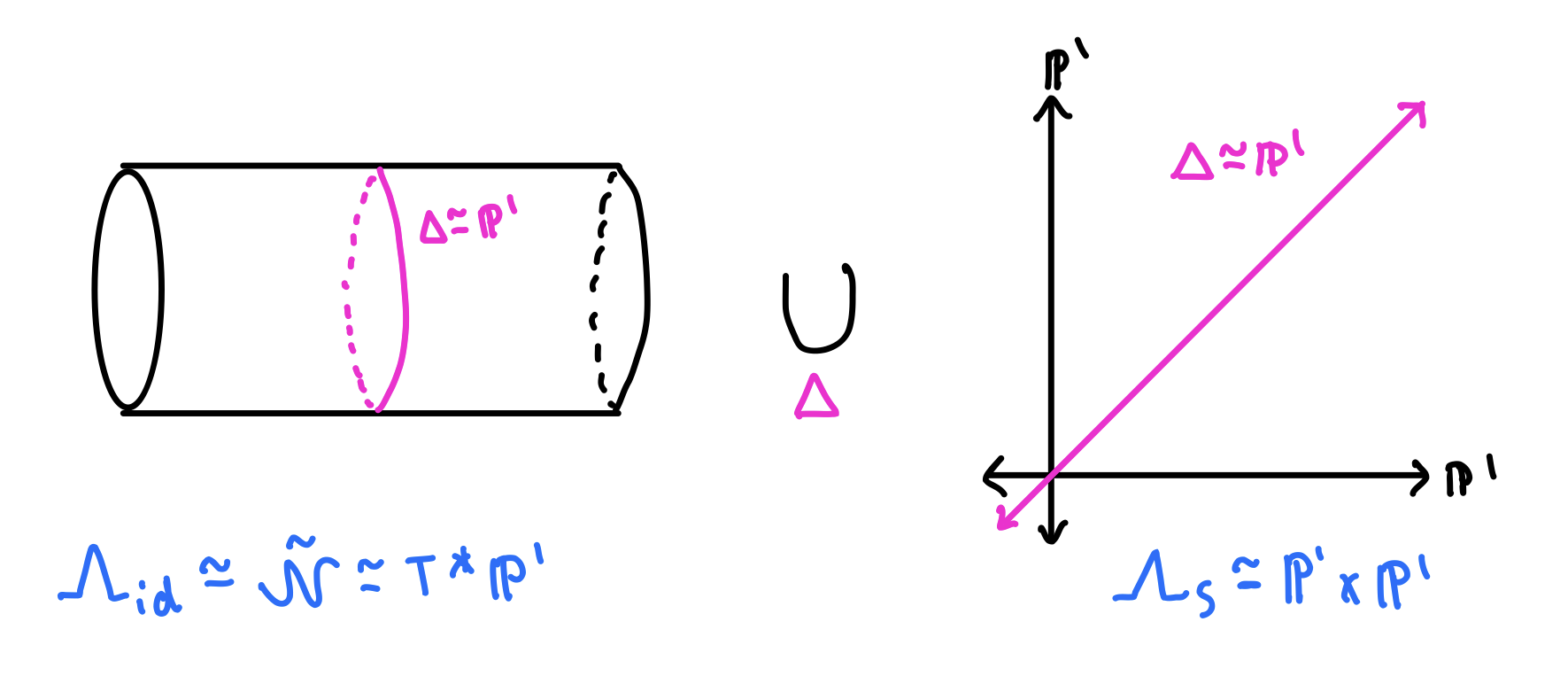} }}
\]
\end{example}
\begin{remark}
In general, the components range from $\Lambda_{\mathrm{id}} \simeq \widetilde{\mc{N}}$ to $\Lambda_{w_0} = \mc{B} \times \mc{B}$. They are not smooth in general, and their intersection pattern is extremely complicated. For example, it is not known in general when two $\Lambda_x$ and $\Lambda_y$ intersect in codimension $1$. 
\end{remark}

Last week we introduced the anti-spherical module 
\begin{align*}
    N=H \otimes_{H_f} \mathrm{sgn} &\simeq \Z[v^{\pm 1}][X] \\
    \theta_\lambda:=\theta_\lambda \otimes 1 &\mapsfrom e^\lambda
\end{align*}
for the affine Hecke algebra $H$ and established the {\bf Demazure--Lusztig formula} 
\begin{equation}
\label{bs action}
    b_s \cdot \theta_\lambda = (v^{-1} - v \theta_{-\alpha}) \left( \frac{\theta_\lambda - \theta_{s\lambda}}{1 - \theta_{-\alpha}} \right).
\end{equation}
Here $H_f =\langle H_s \rangle_{s \in S_f}$ is the finite Hecke algebra and $b_s:=H_s + v$ is the Kazhdan--Lusztig generator corresponding to the simple reflection $s=s_\alpha \in S_f$. 

\begin{remark}
It is not immediately obvious that the formula $\left( \frac{\theta_\lambda - \theta_{s \lambda}}{1-\theta_{-\alpha}} \right)$ gives an element in the lattice part of $H$. However, by rewriting 
\[
\left( \frac{\theta_\lambda - \theta_{s \lambda}}{1 - \theta_{-\alpha}} \right) = (\theta_\lambda + \theta_{\lambda - \alpha} + \cdots + \theta_{s\lambda + \alpha}),
\]
we see that $b_s \cdot \theta_\lambda \in \Z[v^{\pm 1}][X]$. (Here we assume $\langle \lambda, \alpha^\vee \rangle \ge 0$.)
\end{remark}

We want to show that a similar formula holds for the $K^{G \times \C^\times}(St)$-action on $K^{G \times \C^\times}(\widetilde{\mc{N}})$. We will first do so in the special case of $G=\SL_2$, then move on to the general formulation.

\subsection{The case of $\SL_2$}
We start with a baby version of the calculation. We identify $K^G(\PP^1)$ with $\Z[x^{\pm 1}]$ via the chain of isomorphisms 
\[
    K^G(\PP^1) \simeq K^G(G/B) \simeq K^B(\mathrm{pt}) \simeq \Z[x^{\pm 1}]
\]
Under this identification, $\mc{O}(m) \mapsto x^m$. We have a Cartesian square
\[
\begin{tikzcd}
\PP^1 \times \PP^1 \arrow[r, "p_2"] \arrow[d, "p_1"] & \PP^1 \arrow[d, "p_1'"]\\ 
\PP^1 \arrow[r, "p_2'"] & \mathrm{pt}
\end{tikzcd}
\]

\noindent
{\bf Claim 1:} The map $p_{1*}p_2^*:K^G(\PP^1) \rightarrow K^G(\PP^1)$ is given by $x^m \mapsto \frac{x^m - x^{-m-2}}{1-x^{-2}}$. 
\begin{proof}
By smooth base change, $p_{1*}p_2^* = p_2'^*p_{1*}'$. Then the claim follows from Weyl's character formula. 
\end{proof}

\vspace{3mm}
\noindent
{\bf Claim 2:} $p_{1*}(\mc{O}(-2, 0) \otimes p_2^*( - )):K^G(\PP^1) \rightarrow K^G(\PP^1)$ is given by $x^m \mapsto x^{-2} \left(\frac{x^m - x^{-m-2}}{1-x^{-2}}\right)$.
\begin{proof}
The notation $\mc{O}(m,n)$ refers to $p_1^*\mc{O}(m) \otimes p_2^*\mc{O}(n)$. By the projection formula, $p_{1*}(p_1^*(-2) \otimes p_2^*(-)) = \mc{O}(-2) \otimes p_{1*}p_2^*(-)$. The claim then follows from Claim 1. 
\end{proof}

Now we move up to $\widetilde{\mc{N}}$. Recall that the pull-back $q^*$ of the projection $\widetilde{\mc{N}}=T^*\PP^1 \xrightarrow{q} \PP^1$ gives an isomorphism 
\[
K^{G \times \C^\times}(\widetilde{\mc{N}}) \xleftarrow[q^*]{\sim} K^{G \times \C^\times}(\PP^1) = \Z[v^{\pm 1}][X]. 
\]
Recall that $\C^\times$ acts on $\widetilde{\mc{N}}$ via scaling by $z^2$ (equation (\ref{C star action})). For the rest of this section, set $\mc{O}:=\mc{O}_{\widetilde{\mc{N}}}$. 

We have projections
\[
\includegraphics[scale=0.38]{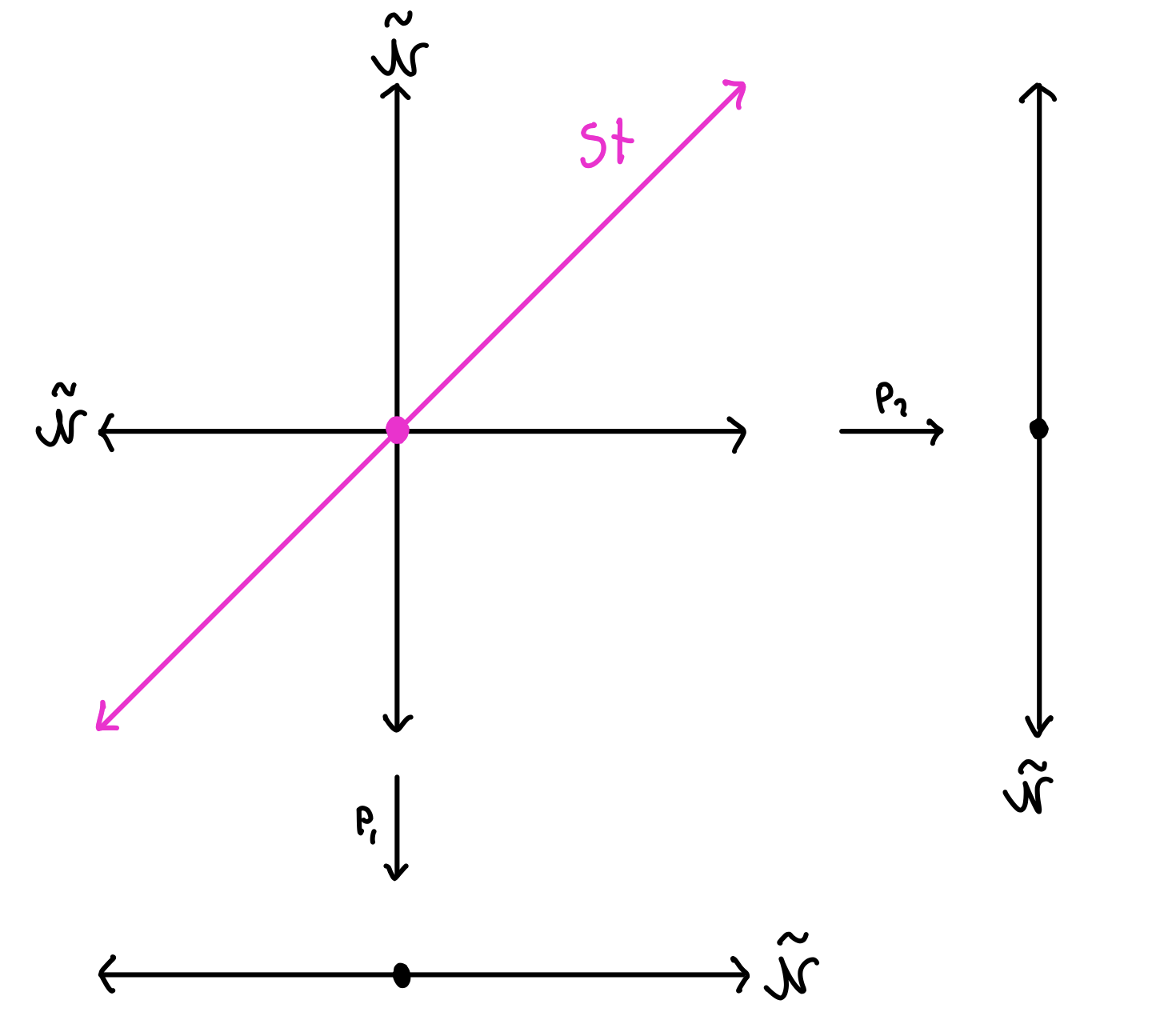} 
\]
Set 
\[
Q_s:= \mc{O}_{\PP^1 \times \PP^1}(-2, 0)
\]
on $\Lambda_s \simeq \PP^1 \times \PP^1 \subset St$. We will see that (up to some simple scalar factors), $Q_s$ acts via convolution on $K^{G \times \C^\times}(St)$ in the same way that $b_s$ acts on $N$. To do so, we want to calculate
\[
p_{1*}(Q_s \otimes p_2^*\mc{O}(m)) \in K^{G \times \C^\times}(\widetilde{\mc{N}}). 
\]

\noindent
{\bf Claim 3:} Let $\PP^1 \xhookrightarrow{i} \widetilde{\mc{N}}$. Then $p_{1*}(Q_s \otimes p_2^*\mc{O}(m))=\left( \frac{x^m - x^{-m-2}}{1-x^{-2}} \right) \left[i_*\mc{O}_{\PP^1}(-2)\right].$

\begin{exercise}
\begin{enumerate}
    \item Suppose $\pi: E \rightarrow Y$ is a vector bundle, $\mc{F}$ a quasi-coherent sheaf on $Y$, $\mc{G}$ a locally free sheaf on $Y$, and $i:Y \hookrightarrow E$ the zero section. Show that 
    \[
    i_*\mc{F} \otimes \pi^* \mc{G} = i_*(\mc{F} \otimes \mc{G}).
    \]
    \item Use 1. applied to $\widetilde{\mc{N}} \times \widetilde{\mc{N}} \rightarrow \PP^1 \times \PP^1$ to deduce Claim 3. 
\end{enumerate}
\end{exercise}

It remains to express $\left[i_*\mc{O}_{\PP^1}(-2)\right]$ in terms of our basis of $K^{G \times \C^\times}(\widetilde{\mc{N}})$; that is, to understand $i_*\mc{O}_{\PP^1}(-2)$ in terms of vector bundles. 

\vspace{5mm}
\noindent
{\bf Useful basic technique:} Given a vector bundle $q:V \rightarrow Y$, there is a bijection 
\[
\left\{ \begin{array}{c} \text{quasi-coherent sheaves} \\ \text{ on the total space of $V$} \end{array} \right\} \simeq \left\{ \begin{array}{c} \text{quasicoherent sheaves of} \\ \text{modules over $q_*\mc{O}_V\simeq \Sym^\bullet(V^*)$} \end{array} \right\}.
\]
(More generally, this holds for any affine morphism, see Hartshorne.) 

\vspace{5mm}
In our case, the projection 
\[
\widetilde{\mc{N}}=T^*\PP^1 \rightarrow \PP^1
\]
is the line bundle associated to $\mc{O}(-2)$, and  
\[
q_*\mc{O} \simeq \Sym^\bullet(\mc{O}_{\PP^1}(2))=\mc{O}_{\PP^1} \oplus \mc{O}_{\PP^1}(2) \oplus \mc{O}_{\PP^1}(4) \oplus \cdots . 
\]
Additionally, $\C^\times$ acts on $T^*\PP^1 \rightarrow \PP^1$ as multiplication by {\color{red} $z^{2}$} along the fibres, which corresponds to multiplication by {\color{red} $z^{-2}$} in the second factor in the direct sum decomposition above. Hence we have an exact sequence 
\[
{\color{red} z^{-2}} \mc{O}(2) \hookrightarrow \mc{O} \twoheadrightarrow i_* \mc{O}_{\PP^1}.
\]

\begin{remark}
Locally, this is the short exact sequence
\[
k[x] \xhookrightarrow{\cdot x} k[x] \twoheadrightarrow k,
\]
but globally, we have some twisting. This is a special example of the Koszul resolution of the zero section of a vector bundle. 
\end{remark}

Hence in $K^{G\times \C^\times}(\widetilde{\mc{N}})$, we have 
\[
\left[i_*\mc{O}_{\PP^1}\right] = [\mc{O}] - v^{-2}[\mc{O}(2)].
\]
By tensoring with $\mc{O}(-2)$ we obtain
\[
\left[ i_* \mc{O}_{\PP^1}(-2) \right] = \left[\mc{O}(-2)\right] - v^{-2}[\mc{O}].
\]
Putting it all together, we see that 
\[
x^m \xmapsto{[Q_{s}]*} (x^{-2}-v^{-2}) \left( \frac{x^m - x^{-m-2}}{1-x^{-2}}\right). 
\]
Hence
\[
x^m \xmapsto{-[vQ_s]*} (v^{-1} - vx^{-2}) \left( \frac{x^m - x^{-m-2}}{1-x^{-2}}\right). 
\]
We only need two last pieces of information to establish the Kazhdan--Lusztig isomorphism for $\SL_2$, which we leave as exercises.
\begin{exercise}
The map $\theta_m \mapsto x^{m-1}$ intertwines the actions of $b_s$ and $-[vQ_s]*$.  
\end{exercise}
\begin{exercise}
Both the representation $N$ of $H$ and the representation $K^{G \times \C^\times}(\widetilde{\mc{N}})$ of $K^{G \times \C^\times}(St)$ are faithful. 
\end{exercise}

\subsection{The general case}
Roughly speaking, in the $\SL_2$ example, we have seen the meaning in $K$-theory of the Demazure--Lusztig formula
\[
b_s \cdot \theta_\lambda = (v^{-1} - v \theta_{-\alpha}) \left(\frac{\theta_\lambda - \theta_{s \lambda}}{1-\theta_{-\alpha}}\right). 
\]
The first factor $(v^{-1} - v \theta_{-\alpha})$ comes from the ``Koszul resolution,'' and the second factor $\left(\frac{\theta_\lambda - \theta_{s \lambda}}{1-\theta_{-\alpha}}\right)$ from the ``push-pull for $\PP^1$''. We will now explain that the same philosophy works in general. 

\vspace{5mm}
\noindent
{\bf Relative cotangent bundle:} 
Let 
\[
Z \xrightarrow{f} Y
\]
be a smooth map. (Think submersion = fibration.) 
\[
\includegraphics[scale=0.25]{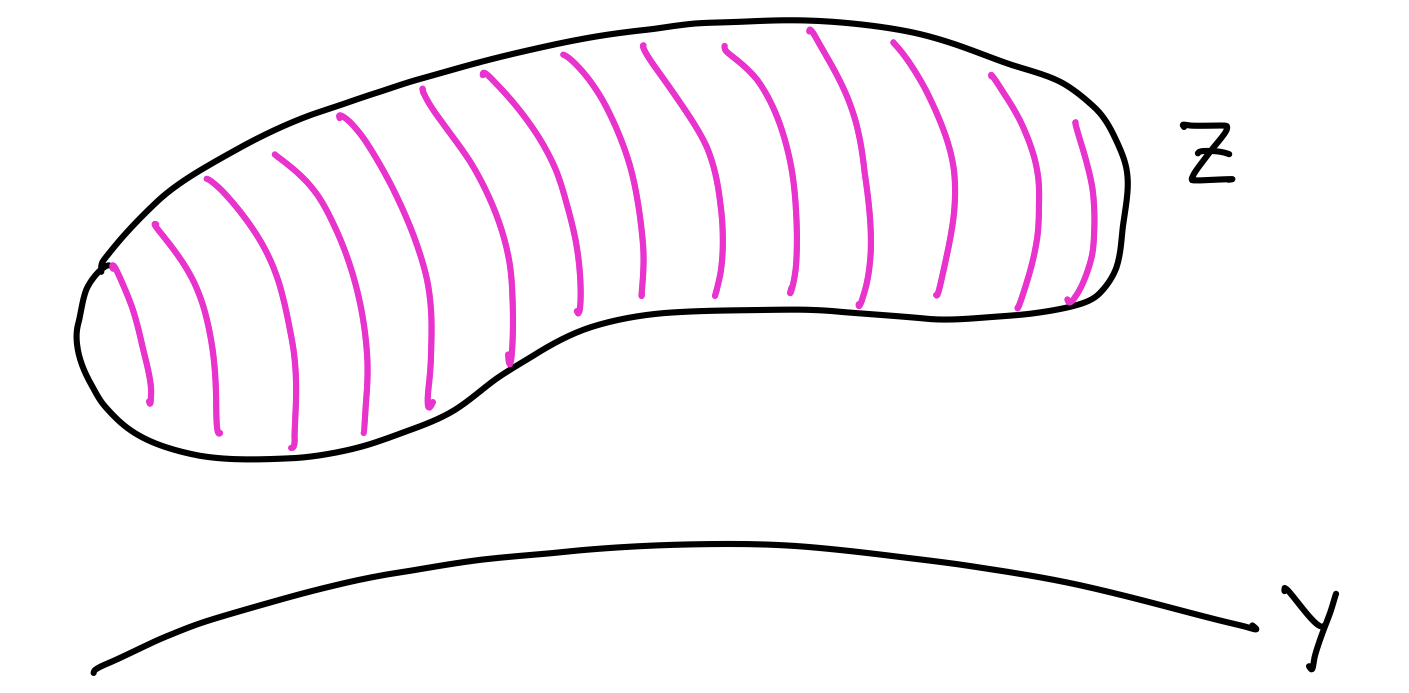} 
\]
The relative tangent bundle is 
\[
T_{Z/Y}:=\ker(df:TZ \rightarrow TY).
\]
We have a short exact sequence 
\[
T_{Z/Y} \hookrightarrow T_Z \twoheadrightarrow f^*T_Y. 
\]
Dually, 
\[
f^*T_Y^* \hookrightarrow T_Z^* \twoheadrightarrow T_{Z/Y}^*.
\]
A caricature: 
\[
\includegraphics[scale=0.4]{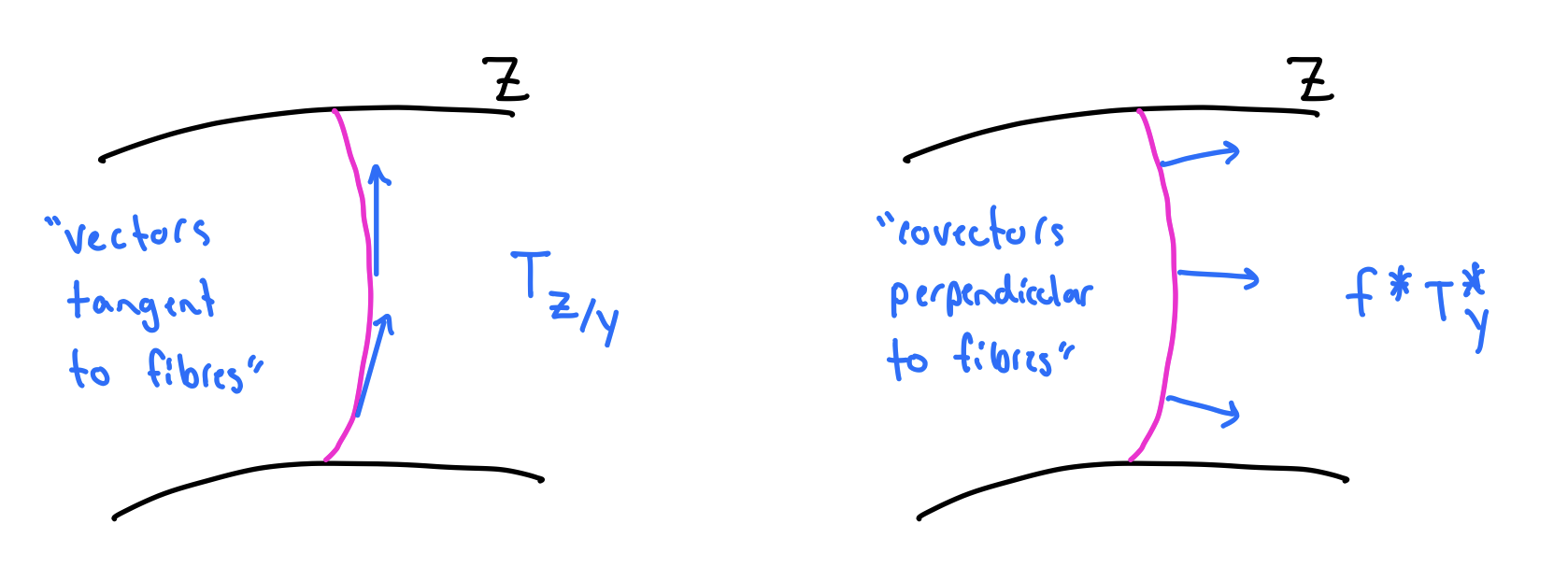} 
\]

In our setting, we have a $\PP^1$ fibration 
\[
\mc{B} \xrightarrow{\pi_s} \mc{B}_s, 
\]
where $\mc{B}_s$ is the variety of parabolic subalgebras of type $s$. 
\[
\includegraphics[scale=0.25]{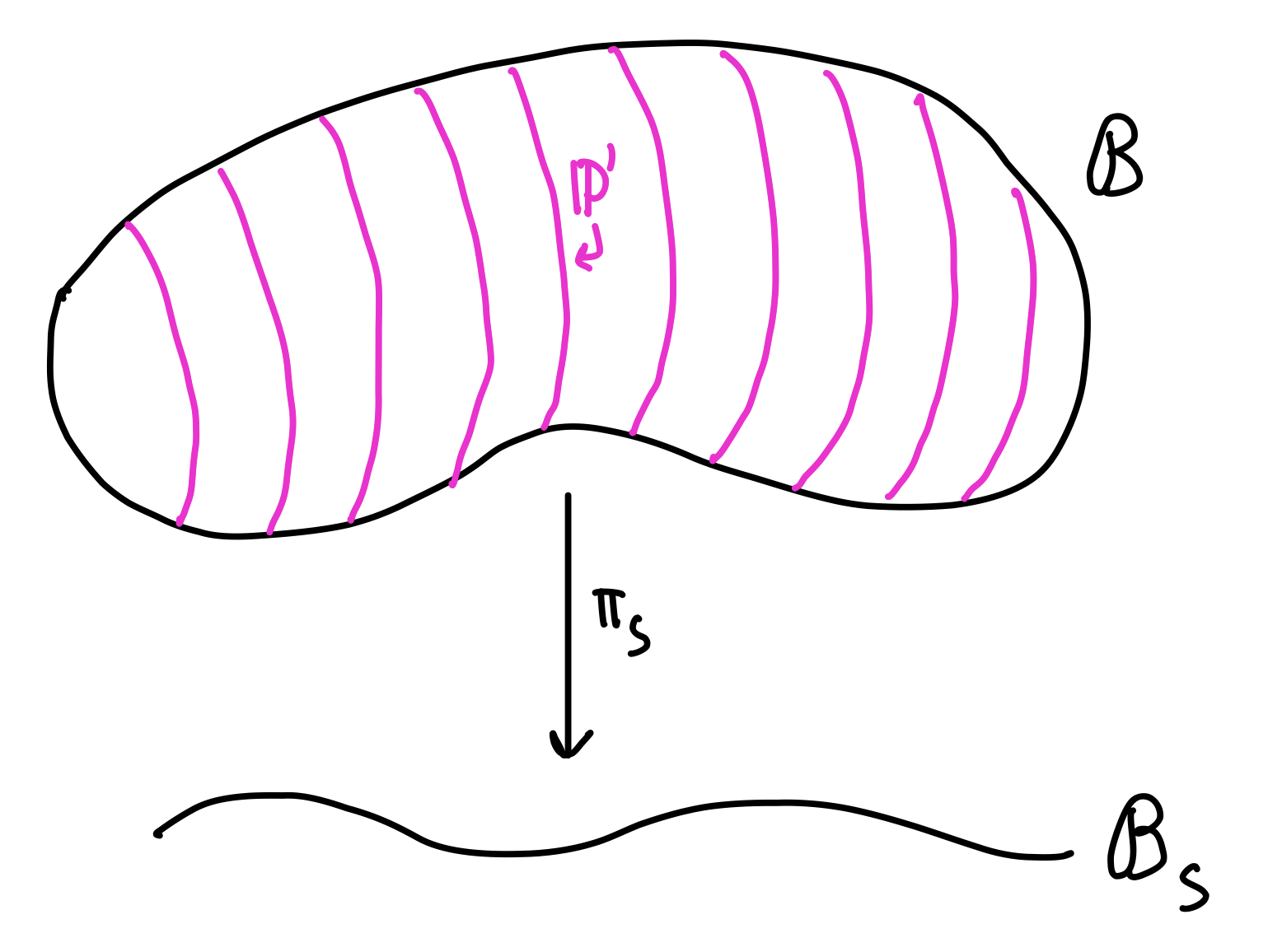} 
\]

\vspace{5mm}
\noindent
{\bf Fundamental Cartesian diagram:} 
\[
\begin{tikzcd}
\overline{\OO}_s \arrow[r, "p_1"] \arrow[d, "p_2"'] & \mc{B} \arrow[d, "\pi_s"] \\ \mc{B} \arrow[r, "\pi_s"] &\mc{B}_s 
\end{tikzcd}
\]
Here $\OO_s \subset \mc{B} \times \mc{B}$ is the $G$-orbit consisting of pairs of flags in relative position $s$. All maps in this diagram are $\PP^1$-fibrations. We identify $K^G(\mc{B}) \simeq \Z[X]$ via $\mc{O}_{\mc{B}}(\lambda) \mapsto e^\lambda$. 

\vspace{3mm}
\noindent
{\bf Claim 4:} $\pi_{s*}\pi_s^*: K^G(\mc{B}) \rightarrow  K^G(\mc{B})$ maps $e^\lambda \mapsto \frac{e^\lambda - e^{s(\lambda) - \alpha}}{1 - e^{-\alpha}}$.
\begin{proof}
Again, this is simply an instance of the Weyl character formula for $\PP^1$. 
\end{proof}
We claim that the corresponding Cartesian diagram ``upstairs'' is
\[
\begin{tikzcd}
\Lambda_s \arrow[r] \arrow[d] & \widetilde{\mc{N}}_s  \arrow[d] \\
\widetilde{\mc{N}}_s \arrow[r] &T^*\mc{B}_s
\end{tikzcd}
\]
To see that this is the correct lift of the first Cartesian square, we start with the lower right corner
\[
T^*\mc{B}_s = \{(\mf{p}, x) \mid x \in \mathrm{nilrad}(\pi_s(\mf{b})\},
\]
which implies that the upper right and lower left corners are
\[
\widetilde{\mc{N}}_s = \pi_s^*T^*_{\mc{B}/\mc{B}_s} = \{(\mf{b}, x) \mid x \in \mathrm{nilrad}(\pi_s(\mf{b})) \} \subset \widetilde{\mc{N}}. 
\]
Hence the fibre product completing the diagram in the upper left corner is given by 
\[
 \left\{(\mf{b}, \mf{b}', x) \mid \begin{array}{c} \mf{b}, \mf{b}' \text{ in relative position $s$, and } \\ x \in \mathrm{nilrad}(\pi_s(\mf{b}))=\mathrm{nilrad}(\pi_s(\mf{b}')) \end{array} \right\} = \Lambda_s. 
\]

Together, these two Cartesian squares form a ``Cartesian cube:''
\[
\begin{tikzcd}[row sep=scriptsize, column sep=scriptsize]
& \Lambda_s \arrow[dl] \arrow[rr] \arrow[dd, red] & & \widetilde{\mc{N}}_s \arrow[dl] \arrow[dd, red] \\
\widetilde{\mc{N}}_s \arrow[rr, crossing over] \arrow[dd, red] & & T^*\mc{B}_s \\
& \overline{\OO}_s \arrow[dl] \arrow[rr] & & \mc{B} \arrow[dl] \\
\mc{B} \arrow[rr] & & \mc{B}_s \arrow[from=uu, crossing over, red]\\
\end{tikzcd}
\]
Define
\[
Q_s:=q^*\Omega_{\overline{\OO}_s/\mc{B}}
\]
to be the relative $1$-forms with respect to the second projection. As earlier, Claim 4 gives 
\[
p_{1*}(Q_s \otimes p_2^* \mc{O}(\lambda)) = \left(\frac{e^\lambda - e^{s(\lambda) - \alpha}}{1-e^{-\alpha}}\right) \left[i_*\mc{O}_{\widetilde{\mc{N}}_s}(-\alpha)\right],
\]
where $\widetilde{\mc{N}}_s \hookrightarrow \widetilde{\mc{N}}$ is the inclusion. 

What remains is to express $\left[i_*\mc{O}_{\widetilde{\mc{N}}_s}(-\alpha)\right]\in K^{G \times \C^\times}(\widetilde{\mc{N}})$ in the basis of line bundles. Again, we can do so using a Koszul-type resolution. There is a short exact sequence of $B$-modules
\[
\mf{p}_s/\mf{b} \hookrightarrow \mf{g}/\mf{b} \twoheadrightarrow \mf{g}/\mf{p}_s. 
\]
The corresponding exact sequence of vector bundles on $\mc{B}=G/B$ is 
\[
L_\alpha \hookrightarrow T_\mc{B} \twoheadrightarrow \pi_s^*T_{\mc{B}_s},
\]
where $L_\alpha$ is the line bundle on $\mc{B}$ associated to $\alpha \in X$. Passing to symmetric algebras, we obtain the Koszul resolution
\[
{\color{red} z^{-2}} \Sym^\bullet(T_\mc{B})(\alpha) \hookrightarrow \Sym^\bullet(T_\mc{B}) \twoheadrightarrow \Sym^\bullet(\pi_s^*T_\mc{B}).
\]
\begin{remark}
This should be thought of as the vector bundle version of the short exact sequence 
\[
k[x_1, \ldots, x_n] \xhookrightarrow{\cdot x_n} k[x_1, \ldots, x_n] \twoheadrightarrow k[x_1, \ldots, x_{n-1}]. 
\]
The ${\color{red} z^{-2}}$ comes from the $\C^\times$-action.
\end{remark}
In other words,
\[
{\color{red} z^{-2}} \mc{O}_{\widetilde{\mc{N}}}(\alpha) \hookrightarrow \mc{O}_{\widetilde{\mc{N}}} \twoheadrightarrow i_*\mc{O}_{{\widetilde{\mc{N}}}_s}.
\]
{\bf The result:}
\[
\left[i_*\mc{O}_{\widetilde{\mc{N}}_s}(-\alpha)\right] = \left[ \mc{O}_{\widetilde{\mc{N}}}(-\alpha)\right] - v^{-2} \left[ \mc{O}_{\widetilde{\mc{N}}}\right].
\]
From here, the proof follows from two exercises analogous to those in the previous section. 
\begin{exercise}
Check that $\theta_\lambda \mapsto e^{\lambda \pm \rho}$ intertwines $b_s \cdot$ and $-[vQ_s]*$. 
\end{exercise}
\begin{exercise}
Complete the proof by showing that $N$ (resp. $K^{G \times \C^\times}(\widetilde{\mc{N}})$) are faithful modules over $H$ (resp. $K^{G \times \C^\times}(\widetilde{\mc{N}})$). 
\end{exercise}

\begin{remark}
Geordie isn't quite sure about whether we should have $\alpha$ or $-\alpha$ above, so the reader should take the final arguments with a grain of salt. 
\end{remark}
\pagebreak
\section{Lecture 23: Gaitsgory's central sheaves}
\label{lecture 23}

Last week we finished the proof of the Kazhdan--Lusztig isomorphism:
\[
K^{G^\vee \times \C^\times}(St) \simeq H.
\]
This is an isomorphism of algebras. We established this isomorphism by proving that two faithful modules for these algebras are isomorphic: 
\begin{equation}
\label{module iso}
K^{G^\vee \times \C^\times}(\widetilde{\mc{N}}) \simeq N = H \otimes_{H_f} \mathrm{sgn}.
\end{equation}
For the next several weeks (months?), we will work toward categorifying the Kazhdan--Lusztig isomorphism via Bezrukavnikov's equivalence. The first step towards a categorification is the Arkhipov-Bezrukavnikov theorem, which categorifies (\ref{module iso}). 
\begin{theorem}
\label{AB}
\cite{AB} 
\[
\begin{tikzcd}
D^b(\Coh^{G^\vee \times \C^\times}(\widetilde{\mc{N}})) \arrow[r, dash, "\sim"] \arrow[d] &D^\mathrm{mix}_{IW} \arrow[d] \\
D^b(\Coh^{G^\vee}(\widetilde{\mc{N}})) \arrow[r, dash, "\sim"] & D_{IW}
\end{tikzcd}
\]
Here $D_{IW}^\mathrm{mix}$ and $D_{IW}$ are the mixed and unmixed version of the antispherical category. 
\end{theorem}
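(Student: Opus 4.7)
The plan is to construct a functor from the constructible side to the coherent side using Gaitsgory's central sheaves, then verify it is fully faithful and essentially surjective by comparing generators. The central ingredient is the nearby cycles functor
$$Z : \Rep(G^\vee) \xrightarrow{\sim} \Perv_{G^\vee(\mathcal{O})}(\Gr) \longrightarrow \Perv_{Iw}(Fl)$$
obtained by degenerating two points on a curve to one (in the Beilinson–Drinfeld Grassmannian). Gaitsgory's theorem asserts that the image $Z(V)$ is central in the monoidal category $(D^b_{Iw}(Fl), *)$, and that each $Z(V)$ carries a canonical monodromy endomorphism $m_V$ obtained from the logarithm of the monodromy of the nearby cycles. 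The pair $(V \mapsto Z(V), m_V)$ encodes not just a representation of $G^\vee$, but a representation together with a commuting nilpotent operator — that is, an element of $\Coh^{G^\vee}(\mathcal{N}^\vee)$ (via the standard equivalence $\Coh^{G^\vee}(\mathcal{N}^\vee) \simeq \Rep(G^\vee) \otimes \C[\mathcal{N}^\vee]$-modules).

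First I would define the functor. Let $\Delta^{IW}_e \in D_{IW}$ be the "Whittaker unit" (the Iwahori–Whittaker standard at the base point of the affine flag variety). Define
$$F : D^b(\Coh^{G^\vee \times \C^\times}(\widetilde{\mathcal{N}})) \longrightarrow D^{\mathrm{mix}}_{IW}$$
by sending a line bundle $\mathcal{O}_{\widetilde{\mathcal{N}}}(\lambda)$ (pulled back from $\mathcal{B}^\vee$) to $\Delta^{IW}_\lambda := J_\lambda * \Delta^{IW}_e$, where $J_\lambda$ is the Wakimoto sheaf associated to $\lambda \in X^\vee$. This is forced by the Bernstein lattice part of the categorified antispherical module, and matches the $K$-theoretic isomorphism from Lecture \ref{lecture 22} (line bundles $\leftrightarrow$ lattice generators $\theta_\lambda$). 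To extend this to the full $\Rep(G^\vee)$-action, one uses the central functor $Z$ to construct a $\Rep(G^\vee)$-module structure on $D_{IW}$; the monodromy $m_V$ promotes this to an action of $\Coh^{G^\vee}(\mathcal{N}^\vee)$, and promotion to $\widetilde{\mathcal{N}} = G^\vee \times^{B^\vee} \mathfrak{n}^\vee$ comes from the interaction with the $J_\lambda$ action (giving $B^\vee$-equivariance).

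Next I would verify fully faithfulness on generators. The coherent side is generated, as a triangulated category, by the line bundles $\{\mathcal{O}_{\widetilde{\mathcal{N}}}(\lambda)\}_{\lambda \in X^\vee}$ (since $\widetilde{\mathcal{N}} \to \mathcal{B}^\vee$ is affine and $\mathcal{B}^\vee$ has a full exceptional collection). On the constructible side, the standards $\Delta^{IW}_w$ (indexed by $W_f\backslash W_{\mathrm{ext}}$) generate $D^{\mathrm{mix}}_{IW}$. The key computation is to match the Hom complexes:
$$\Hom^{\bullet}_{\Coh}(\mathcal{O}, \mathcal{O}(\lambda)) \;\cong\; \Hom^{\bullet}_{IW}(\Delta^{IW}_e, \Delta^{IW}_\lambda).$$
Both can be computed as cohomology: on the coherent side as $\Gamma(\widetilde{\mathcal{N}}, \mathcal{O}(\lambda))$, and on the constructible side via the Wakimoto-convolution, which reduces to affine Demazure character–type computations. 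The $\C^\times$-equivariance on the coherent side matches the mixed / Tate grading on the constructible side.

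The hard part will be constructing and controlling the central sheaves $Z(V)$ with their monodromy, and showing that the monodromy on $Z(V) * \Delta^{IW}_e$ is precisely the tautological nilpotent endomorphism corresponding to the universal element in $\mathfrak{n}^\vee \subset \mathcal{N}^\vee$. This requires a detailed analysis of the Beilinson–Drinfeld degeneration, a computation of stalks of nearby cycles sheaves in terms of Mirković–Vilonen cycles, and a matching with the Kostant section / universal centralizer. Essential surjectivity is then comparatively easy: the standards $\Delta^{IW}_\lambda$ generate $D^{\mathrm{mix}}_{IW}$, and we have arranged for them to lie in the image of $F$. The unmixed version is obtained by forgetting the $\C^\times$-action (i.e., the Tate twist), which corresponds to passing from mixed to unmixed sheaves.
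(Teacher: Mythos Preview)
Your overall strategy matches the paper's: build the functor from the coherent side to the constructible side using Gaitsgory's central functor $Z$, Wakimoto sheaves $J_\lambda$, and the monodromy derivation, then verify it is an equivalence. However, two key ingredients are either missing or too vague in your proposal.

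First, the functor construction. You say the promotion from $\mathfrak{g}^\vee/G^\vee$ to $\widetilde{\mathcal{N}}$ ``comes from the interaction with the $J_\lambda$ action'', but this hides the main input. The paper's mechanism is the \emph{highest weight arrows} $b_\lambda : J_\lambda \hookrightarrow Z(V_\lambda)$ coming from the top of the Wakimoto filtration on $Z(V_\lambda)$. These satisfy Pl\"ucker relations (compatibility with the Chevalley multiplication on $k[G^\vee/U^\vee]$) and the crucial relation $N_{V_\lambda}\circ b_\lambda = 0$. By a Tannakian-flavoured technical lemma (the paper's Theorem~\ref{technical lemma}), this package of data---central functor, $\otimes$-derivation, and Pl\"ucker arrows with the vanishing condition---is precisely what is needed to extend $F$ from $\Rep(G^\vee)$ to $\Coh^{G^\vee}_{\mathrm{free}}(\widetilde{\mathcal{N}})$. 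Without the $b_\lambda$ you only get a functor from $\Coh^{G^\vee}(\mathfrak{g}^\vee)$, not from $\widetilde{\mathcal{N}}$.

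Second, the equivalence check. The paper does not directly match $\Hom^\bullet(\mathcal{O},\mathcal{O}(\lambda))$ with $\Hom^\bullet(\Delta^{IW}_e,\Delta^{IW}_\lambda)$ as you propose. Instead it separates the argument: \emph{faithfulness} is proved by restricting to the regular nilpotent orbit $\mathcal{N}^\vee_{\mathrm{reg}}$, where $\Coh^{G^\vee}(\mathcal{N}^\vee_{\mathrm{reg}}) \simeq \Rep Z_{G^\vee}(N_0)$, and matching this via Tannakian formalism with the quotient category $P_I^{id} = P_I/\langle IC_x : x\neq id\rangle$; since restriction to $\mathcal{N}^\vee_{\mathrm{reg}}$ is faithful on free sheaves, so is $\widetilde{F}$. \emph{Fullness} is then reduced to checking $\Ext^i(V\otimes\mathcal{O},\mathcal{O}(\lambda)) \to \Ext^i(Av_\chi Z(V), Av_\chi J_\lambda)$ for dominant $\lambda$; the coherent side vanishes for $i\neq 0$ by Frobenius splitting of $T^*\mathcal{B}^\vee$, and a dimension count finishes the argument. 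Your direct Hom-matching plan is not obviously wrong, but the paper's route via the regular orbit and Frobenius splitting is what actually makes the computation tractable.
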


\begin{remark}
If one wants to convince oneself that this is a deep equivalence, one only needs to note that the ``easy'' $\C^\times$-action on the LHS corresponds to the ``hard'' Frobenius action on the RHS! 
\end{remark}

For the remainder of the lecture, we will build the machinery of Gaitsgory's central sheaves, then focus on the example of the natural representation of $\GL_2$. We will return to the Arkhipov-Bezrukavnikov theorem next week. 

\subsection{A change of notation in the Hecke algebra}
We will make a slight change of notation from previous lectures. In the affine Hecke algebra, replace
\[
    H_x \rightsquigarrow \delta_x 
\]
for all $x \in W_f$, so our standard basis of the finite Hecke algebra is now called $\{\delta_x \}_{x \in W_f}$ and the Kazhdan--Lusztig basis is $\{b_x\}_{x \in W_f}$. We make this change so that from now on, small letters indicate objects in the Hecke algebra $H$ and big letters indicate objects in the Hecke category $\mc{H}$.

\subsection{Gaitsgory's central sheaves}
For more details on this construction, see notes from Emily's IFS talks on the Beilinson-Drinfeld Grassmannian\footnote{Talk notes available at \url{https://sites.google.com/view/ifssydney/home}}. 

Recall that within the affine Hecke algebra $H$, we have a large commutative subalgebra 
\[
\mc{L} = \bigoplus_{\lambda \in X^\vee} \Z[v^{\pm 1}]\theta_\lambda \subset H.
\]
In Lecture \ref{lecture 12} we discussed Bernstein's description of the center of $H$ in terms of this subalgebra.
\begin{theorem}
\label{bernstein center}
\[
Z(H) \simeq \mc{L}^{W_f} \simeq \Z[v^{\pm 1}] \otimes_{\Z} [\Rep G^\vee]. 
\]
\end{theorem}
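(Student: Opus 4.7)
The plan is to establish the two inclusions $\mathcal{L}^{W_f} \subseteq Z(H)$ and $Z(H) \subseteq \mathcal{L}^{W_f}$ separately, using the Bernstein presentation (Theorem \ref{Bernstein presentation}) as the main tool, and then to invoke the classical Chevalley--Weyl identification of the representation ring with $W_f$-invariants of the character lattice to get the second isomorphism.

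For the inclusion $\mathcal{L}^{W_f} \subseteq Z(H)$, since $\mathcal{L}$ is a commutative subalgebra, any $f = \sum_\lambda c_\lambda \theta_\lambda \in \mathcal{L}^{W_f}$ commutes with all $\theta_\mu$ automatically. Because $H$ is generated by $\mathcal{L}$ together with the finite generators $\{\delta_s \mid s \in S_f\}$, it suffices to check $\delta_s f = f \delta_s$ for each simple reflection $s = s_\alpha$. Using relation (3) of the Bernstein presentation, a direct manipulation yields
\[
\delta_s \theta_\lambda - \theta_{s\lambda}\delta_s \;=\; (v-v^{-1})\,\frac{\theta_{s\lambda} - \theta_\lambda}{1 - \theta_\alpha},
\]
so grouping each pair $\{\lambda, s\lambda\}$ in the sum for $\delta_s f - f\delta_s$ yields a coefficient proportional to $c_\lambda - c_{s\lambda}$, which vanishes since $f$ is $W_f$-invariant. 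The main technical point to verify carefully here is that the fractions appearing on the right-hand side genuinely lie in $\mathcal{L}$ (rather than only a localization), which is handled by the geometric-series rewriting used in the checks after Theorem \ref{Bernstein presentation}.

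For the reverse inclusion $Z(H) \subseteq \mathcal{L}^{W_f}$, one exploits that the Bernstein presentation exhibits $H$ as a free right $\mathcal{L}$-module with basis $\{\delta_x\}_{x \in W_f}$. Given $z \in Z(H)$, write $z = \sum_{x \in W_f} a_x \delta_x$ with $a_x \in \mathcal{L}$. Iterating relation (3) gives the triangularity statement
\[
\delta_x \theta_\mu \;=\; \theta_{x\mu}\,\delta_x \;+\; \sum_{y < x} f_{x,y}(\mu)\,\delta_y, \qquad f_{x,y}(\mu) \in \mathcal{L},
\]
which is the workhorse of the argument and the step I expect to require the most care. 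Imposing $z\theta_\mu = \theta_\mu z$ and reading off the coefficient of $\delta_x$ for $x$ maximal with $a_x \ne 0$ gives $a_x(\theta_{x\mu} - \theta_\mu) = 0$ in the integral domain $\mathcal{L}$; choosing $\mu$ such that $x\mu \ne \mu$ (which is possible since $W_f$ acts faithfully on $X^\vee$) forces $a_x = 0$ unless $x = e$. Thus $z = a_e \in \mathcal{L}$, and running the computation of the first inclusion in reverse, the relation $\delta_s z - z \delta_s = 0$ now forces $a_e$ to be $s$-symmetric for every simple $s$, hence $W_f$-invariant.

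Finally, the identification $\mathcal{L}^{W_f} \simeq \mathbb{Z}[v^{\pm 1}] \otimes_\mathbb{Z} [\Rep G^\vee]$ is purely classical: since $G^\vee$ is the Langlands dual group with character lattice $X^\vee$ and Weyl group $W_f$, the Chevalley restriction/Weyl character theorem gives $[\Rep G^\vee] \simeq \mathbb{Z}[X^\vee]^{W_f}$, and tensoring with $\mathbb{Z}[v^{\pm 1}]$ identifies this with $\mathcal{L}^{W_f}$. The hardest part of the argument is unquestionably the triangularity identity for $\delta_x \theta_\mu$; everything else is bookkeeping around the Bernstein relation.
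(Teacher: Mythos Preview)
Your proof is correct and is the standard argument (essentially Lusztig's). However, the paper does not actually prove this theorem: in Lecture~\ref{lecture 12} it is stated as Bernstein's theorem without proof, and in Lecture~\ref{lecture 23} it is restated and then given an alternative \emph{perspective} via the identification $Z(\mc{H}^{\mathrm{aff}}) \simeq \mc{H}^{\mathrm{sph}}$ coming from matching images in the space of $G(\mc{O})$-invariant functions on $\mc{F}\ell$. That discussion is motivational (setting up Gaitsgory's central functor) rather than a self-contained proof, so your argument is genuinely filling a gap the notes leave open.

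Two minor comments. First, your displayed Bernstein relation has $1-\theta_\alpha$ in the denominator; with the paper's conventions (Theorem~\ref{Bernstein presentation}) one gets $1-\theta_{-\alpha}$ after the substitution $\lambda \mapsto s\lambda$. This is harmless for the argument since all you use is the antisymmetry of the right-hand side under $\lambda \leftrightarrow s\lambda$, but you may want to align conventions. Second, the step ``running the computation of the first inclusion in reverse'' deserves one more line: once $z \in \mc{L}$, write $\delta_s z = (s \cdot z)\,\delta_s + g$ with $g \in \mc{L}$ (from your triangularity identity for $x=s$), and compare the $\delta_s$-coefficient with that of $z\delta_s$ to get $s \cdot z = z$; this is exactly what you mean, but spelling it out avoids any appearance of circularity.
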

Another perspective from which we can view this description of the center of the affine Hecke algebra was discussed in Emily's IFS talk. Temporarily, let $\mc{K}=\mathbb{F}_q((t)) \supset \mc{O}=\mathbb{F}_q[[t]]$, and build Hecke algebras
\[
\mc{H}^\mathrm{aff}=\Fun^{\mathrm{c.s.}}_I(\mc{F}\ell, \C), \hspace{5mm} \mc{H}^\mathrm{sph} = \Fun^\mathrm{c.s.}_{G(\mc{O})}(\mc{G}r, \C).
\]
Here ``c.s.'' stands for ``compactly supported'', $\mc{F}\ell=G(\mc{K})/I$ is the affine flag variety and $\mc{G}r=G(\mc{K})/G(\mc{O})$ is the affine Grassmannian. These function spaces gain the structure of algebras via convolution. The spherical Hecke algebra $\mc{H}^{\mathrm{sph}}$ is a commutative convolution algebra, $\mc{H}^\mathrm{aff}$ is not. There are natural maps
\[
\begin{tikzcd}
\mc{H}^\mathrm{aff} \arrow[rd, "\int_{G(\mc{O})/I}"] & \\
 & \Fun^\mathrm{c.s.}_{G(\mc{O})}(\mc{F}\ell, \C).\\
\mc{H}^\mathrm{sph} \arrow[ur, "\text{pull-back}"] &  
\end{tikzcd}
\]
Under these maps, the image of the center $Z(\mc{H}^\mathrm{aff})$ and the image of $\mc{H}^\mathrm{sph}$ agree:
\[
\begin{tikzcd}
Z(\mc{H}^\mathrm{aff}) \arrow[rd, "\sim"] & \\ 
& \text{images agree} \\
\mc{H}^\mathrm{sph} \arrow[ru, "\sim"] & 
\end{tikzcd}
\]
This provides an isomorphism
\begin{equation}
\label{isomorphism Emily}
Z(\mc{H}^\mathrm{aff}) \simeq \mc{H}^\mathrm{sph}. 
\end{equation}

The analogous statement for the affine Hecke algebra $H$ is as follows. Let $w_f \in W_f$ be the longest element, and 
\[
b_{w_f}=\sum_{x \in W_f} v^{\ell(w_f) - \ell(x)}\delta_x \in H
\]
the corresponding Kazhdan--Lusztig basis element. Define 
\[
H^\mathrm{sph}:=(b_{w_f}\cdot H) \cap (H \cdot b_{w_f}).  
\]
By the Satake isomorphism, 
\[
H^\mathrm{sph} \simeq \mc{L}^{W_f}.
\]
Then we have maps
\[
\begin{tikzcd}
H \arrow[dr, "h \mapsto h \cdot b_{w_f}"] &  \\
 & H\cdot b_{w_f},\\
 H^\mathrm{sph} \arrow[ur, "\text{identity}"] & 
\end{tikzcd}
\text{ sending }\hspace{2mm}
\begin{tikzcd}
Z(H) \arrow[dr, "\sim"] &  \\
 & \text{images agree}.\\
 H^\mathrm{sph}\simeq \mc{L}^{W_f} \arrow[ur, "\sim"] & 
\end{tikzcd}
\]
This gives us the isomorphism in Theorem \ref{bernstein center}: 
\begin{align*}
    Z &\xrightarrow{\sim} H^\mathrm{sph} \\
    h &\mapsto h \cdot b_{w_f}
\end{align*}

Gaitsgory lifted this statement to the level of categories. For the remainder of the lecture set $\mc{O} = \C[[t]], \mc{K} = \C((t))$, $\mc{G}r=G(\mc{K})/G(\mc{O})$, $\mc{F}\ell=G(\mc{K})/I$. Recall that 
\[
\pi:\mc{F}\ell \rightarrow \mc{G}r
\]
is a $G/B$-fibration. We replace
\begin{align*}
\mc{H}^\mathrm{sph} &\rightsquigarrow \Perv_{G(\mc{O})}(\mc{G}r), \text{ ``Satake category,'' and }\\
\mc{H}^\mathrm{aff} &\rightsquigarrow \Perv_I(\mc{F}\ell), \text{ ``Hecke category ''}. 
\end{align*}
Both of these categories obtain a monoidal structure, see Emily's IFS talk. Gaitsgory upgraded the isomorphism \ref{isomorphism Emily} to a central functor. 
\begin{theorem}
There exists a central functor\footnote{This means that for all $\mc{F} \in \Perv_{G(\mc{O})}(\mc{G}r), \mc{G} \in \Perv_I(\mc{F}\ell)$, $Z(\mc{F}) * \mc{G}$ is perverse, so is $\mc{G} * Z(\mc{F})$, and we have a canonical isomorphism $Z(\mc{F}) * \mc{G} \simeq \mc{G} * Z(\mc{F})$.}
\[
Z: \Perv_{G(\mc{O})}(\mc{G}r) \rightarrow \Perv_I(\mc{F}\ell).
\]
Moreover, the diagram 
\[
\begin{tikzcd}
D^b_I(\mc{F}\ell) \arrow[rd, "\pi_*"] & \\
& D^b_I(\mc{G}r)\\
\Perv_{G(\mc{O})}(\mc{G}r) \arrow[ru, "\text{forget}", "\text{equivariance}"'] & 
\end{tikzcd}
\]
commutes. 
\end{theorem}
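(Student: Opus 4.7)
The plan is to build $Z$ via nearby cycles in a one-parameter degeneration of $\mc{G}r$ into $\mc{F}\ell$. Concretely, I would introduce the Beilinson--Drinfeld-style family $p: \widetilde{\mc{G}r} \to \mathbb{A}^1$ parametrising quadruples $(x, \mc{F}, \sigma, \epsilon)$, where $x \in \mathbb{A}^1$, $\mc{F}$ is a $G$-bundle on the formal disc around $0$, $\sigma$ is a trivialisation of $\mc{F}$ away from $\{0, x\}$, and $\epsilon$ is a $B$-reduction of the fibre $\mc{F}|_0$. Standard gluing/uniformisation identifies $p^{-1}(0) \cong \mc{F}\ell$, while translating $\mc{F}$ by $-x$ identifies $p^{-1}(\mathbb{G}_m) \cong \mc{G}r \times \mathbb{G}_m$. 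Crucially, $I$ acts on $\widetilde{\mc{G}r}$ through its action on the Iwahori reduction at $0$, and under the generic identification this action factors through $G(\mc{O})$ acting naturally on $\mc{G}r$. Given $\mc{F} \in \Perv_{G(\mc{O})}(\mc{G}r)$, set
\[
Z(\mc{F}) := \Psi_p\bigl(p_{\mathbb{G}_m}^* \mc{F}\,[1]\bigr),
\]
where $\Psi_p$ denotes unipotent nearby cycles for $p$.

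The shift $[1]$ compensates for the extra $\mathbb{G}_m$-direction so that $p_{\mathbb{G}_m}^* \mc{F}[1]$ is perverse on the smooth generic family; the classical theorem that $\Psi_p[-1]$ is perverse $t$-exact then gives $Z(\mc{F}) \in \Perv_I(\mc{F}\ell)$. The $I$-equivariance is automatic since $\Psi_p$ is compatible with smooth pullback and the pullback $p_{\mathbb{G}_m}^*\mc{F}$ is $I$-equivariant.

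The centrality is the main difficulty, and I would establish it by iterating the construction over $\mathbb{A}^2$. Form $p^{(2)}: \widetilde{\mc{G}r}^{(2)} \to \mathbb{A}^2$ parametrising the analogous data for two moving points $(x_1, x_2)$ together with a $B$-reduction at $0$. Beilinson--Drinfeld factorisation identifies the restriction to $\{x_1 = 0\}$ with a convolution diagram computing $Z(-)*(-)$ on the generic $x_2$ direction (taking a perverse sheaf $\mc{G} \in \Perv_I(\mc{F}\ell)$ arising from $x_2$), and symmetrically the restriction to $\{x_2 = 0\}$ with a diagram computing $(-)*Z(-)$. The two iterated nearby-cycles functors at the origin along the coordinate directions are canonically isomorphic, yielding a functorial isomorphism $Z(\mc{F})*\mc{G} \simeq \mc{G}*Z(\mc{F})$. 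The principal obstacle here is controlling perversity of these convolutions throughout the iteration: one needs a stratified-smallness estimate for the convolution morphism on $\mc{G}r$ (ultimately the same semismallness input that underlies geometric Satake), together with an Artin-vanishing-type bound to ensure the relevant nearby cycles remain concentrated in the correct perverse degree.

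Finally, for the commutativity of the triangle, note that the projection $\pi: \mc{F}\ell \to \mc{G}r$ extends across the family to a proper morphism $\widetilde{\pi}: \widetilde{\mc{G}r} \to \mc{G}r \times \mathbb{A}^1$, where the target is the trivial family parametrising $(x, \mc{F}', \sigma')$ with no reduction at $0$; this $\widetilde{\pi}$ is an isomorphism over $\mathbb{G}_m$ and restricts to $\pi$ over $0$. Because $\Psi$ commutes with proper pushforward, we obtain
\[
\pi_* Z(\mc{F}) \;\cong\; \Psi\bigl(\widetilde{\pi}_*\, p_{\mathbb{G}_m}^*\mc{F}[1]\bigr) \;\cong\; \Psi\bigl(\mc{F} \boxtimes \overline{\mathbb{Q}}_\ell[1]\bigr) \;\cong\; \mc{F},
\]
the last isomorphism holding since nearby cycles of a constant family recovers the generic fibre with trivial monodromy. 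This is exactly the commutativity of the triangle after identifying $\pi_* Z(\mc{F}) \in D^b_I(\mc{G}r)$ with the image of $\mc{F}$ under the forgetful functor from $\Perv_{G(\mc{O})}(\mc{G}r)$.
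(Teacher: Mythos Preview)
The paper does not prove this theorem; it is stated as Gaitsgory's result and then illustrated through the extended $\GL_2$ example (where the Beilinson--Drinfeld family degenerating $\PP^1$ to two $\PP^1$'s glued at a point is worked out explicitly). Your proposal is exactly the construction Gaitsgory uses and that the paper alludes to via ``Gaitsgory's family'' and the references to the IFS talks on the Beilinson--Drinfeld Grassmannian: define $Z$ as nearby cycles along the one-parameter family whose generic fibre is $\mc{G}r \times G/B$ and whose special fibre is $\mc{F}\ell$, then deduce centrality from a two-parameter iteration and the commutation of iterated nearby cycles. So your sketch is correct and is precisely the approach the paper gestures at but leaves to the references.

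One small point worth tightening: in your centrality argument you speak of $\mc{G} \in \Perv_I(\mc{F}\ell)$ ``arising from $x_2$'', but a general Iwahori-equivariant perverse sheaf on $\mc{F}\ell$ does not spread to a sheaf on the generic fibre of the family. The actual argument degenerates $\mc{G}r$ to $\mc{F}\ell$ in one factor while carrying $\mc{G}$ on a fixed copy of $\mc{F}\ell$ throughout, so that the convolution $Z(\mc{F}) * \mc{G}$ (resp.\ $\mc{G} * Z(\mc{F})$) is itself obtained as nearby cycles of the family $\mc{F} \boxtimes \mc{G}$ (resp.\ $\mc{G} \boxtimes \mc{F}$) on the appropriate twisted product; the identification then comes from the factorisation isomorphism swapping the order of the two points away from the diagonal, together with commutation of nearby cycles with the convolution pushforward. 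Your description is morally right but the bookkeeping of which object lives where in the family needs to be stated more carefully.
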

\begin{remark}
\begin{enumerate}
    \item It is best to think of a ``central functor to a monoidal category $(M, *)$'' as a ``functor to the Drinfeld centre of $(M, *)$''. Geordie is not 100\% sure, but he thinks that $Z$ actually lands in the {\em symmetric center} (i.e. set of objecs with symmetric braiding) of the Drinfeld center. 
    \item Everything that Bezrukavnikov does is built on $Z$, so we should get used to thinking about it!
    \item If $\mc{G}', \mc{G} \in \Perv_I(\mc{F}\ell)$, then usually $\mc{G}' * \mc{G}$ is not perverse. It's a miracle that $Z(\mc{F})*\mc{G}$ always is! 
\end{enumerate}
\end{remark}

\subsection{Extended example: the natural representation of $\GL_2$} 

For the remainder of this lecture, we will work out the details of this construction for the simplest non-trivial example: the natural representation of $\GL_2$. This example is very beautiful and instructive, so we will discuss it in detail. 

Set $G=\GL_2$, $G^\vee = \GL_2$, and $\mathrm{nat}=$ the natural representation of $\GL_2$. Let $\mc{F}_\mathrm{nat} \in \Perv_{G(\mc{O})}(\mc{G}r)$ be the corresponding perverse sheaf under geometric Satake. Our goal is to describe $Z(\mc{F}_\mathrm{nat})$. Here is an outline of what we'll do:

\vspace{3mm}
\noindent
{\bf Algebra:} (See exercises at end of section.) To start, we have 
\[
H_{\GL_2}=H_{\SL_2}\ltimes \langle \delta_{\varpi} \rangle = \langle \delta_s, \delta_{s_0}, \delta_\varpi \rangle=H_f \otimes \Z[v^{\pm 1}][\theta_1, \theta_2]. 
\]
Here $\delta_\varpi$ is the generator of length zero elements, $s \in S_f$ is the finite simple reflection, and $s_0$ is the affine simple reflection. The elements $\theta_1=\delta_\varpi \delta_s$ and $\theta_2=\delta_\varpi \delta_{s_0}^{-1}$ are the Bernstein generators. Hence 
\[
z_\mathrm{nat} = \theta_1 + \theta_2 = \delta_\varpi(\delta_s + \delta_{s_0}^{-1}) = \delta_\varpi(\delta_s^{-1} + \delta_{s_0}).
\]
A natural question arises: How can we produce $z_\mathrm{nat}$ geometrically? 

\vspace{3mm}
\noindent
{\bf Geometry:} Let $k \in \{\Q, \R, \C\}$. Both $\mc{F}\ell$ and $\mc{G}r$ have components which are indexed by $\Z$ (measuring the valuation of the determinant of the lattice/the polynomial degree of the representation under geometric Satake), and the projection 
\[
\mc{F}\ell_{\GL_2}=\mc{F}\ell_{\SL_2} \times \Z \rightarrow \mc{G}r_{\GL_2} = \mc{G}r_{\SL_2} \times \Z
\]
is a $\PP^1$ fibration. The natural representation corresponds to a sheaf $k_{\PP^1}[1]$ on $\mc{G}r_{\SL_2} \times \{1\}$. Hence all of our arguments will take place on the components $\mc{G}r_{\SL_2} \times \{1\}$ and $\mc{F}\ell_{\SL_2} \times \{1\}$, so we can ignore the rest. 

\vspace{3mm}
\noindent
{\bf Gaitsgory's family:} Gaitsgory constructs a family of varieties over the disc:
\[
\includegraphics[scale=0.4]{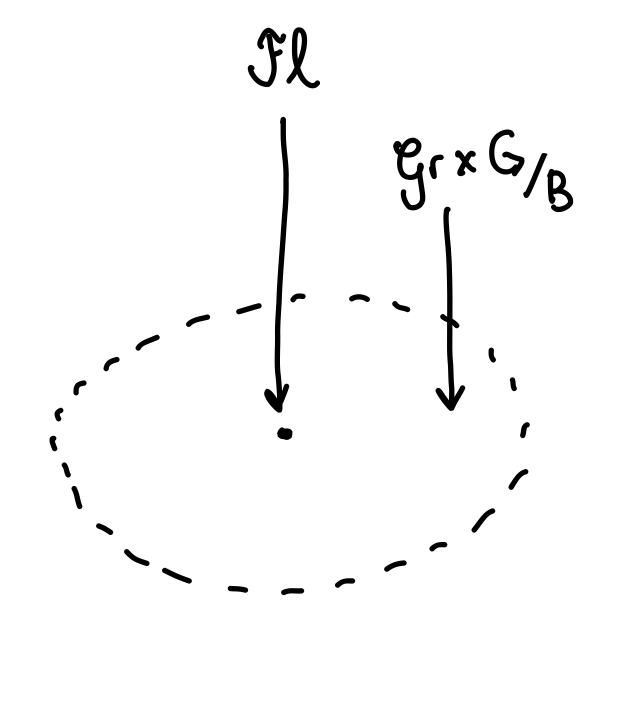}
\]
The $\PP^1 \times \{B/B\}$ on which the sheaf $k_{\PP^1}[1]$ is supported in the fibre $\mc{G}r \times G/B$ degenerates to $\PP^1 \times \PP^1$ (the intersection of two Schubert curves) in $\mc{F}\ell$:
\[
\includegraphics[scale=0.25]{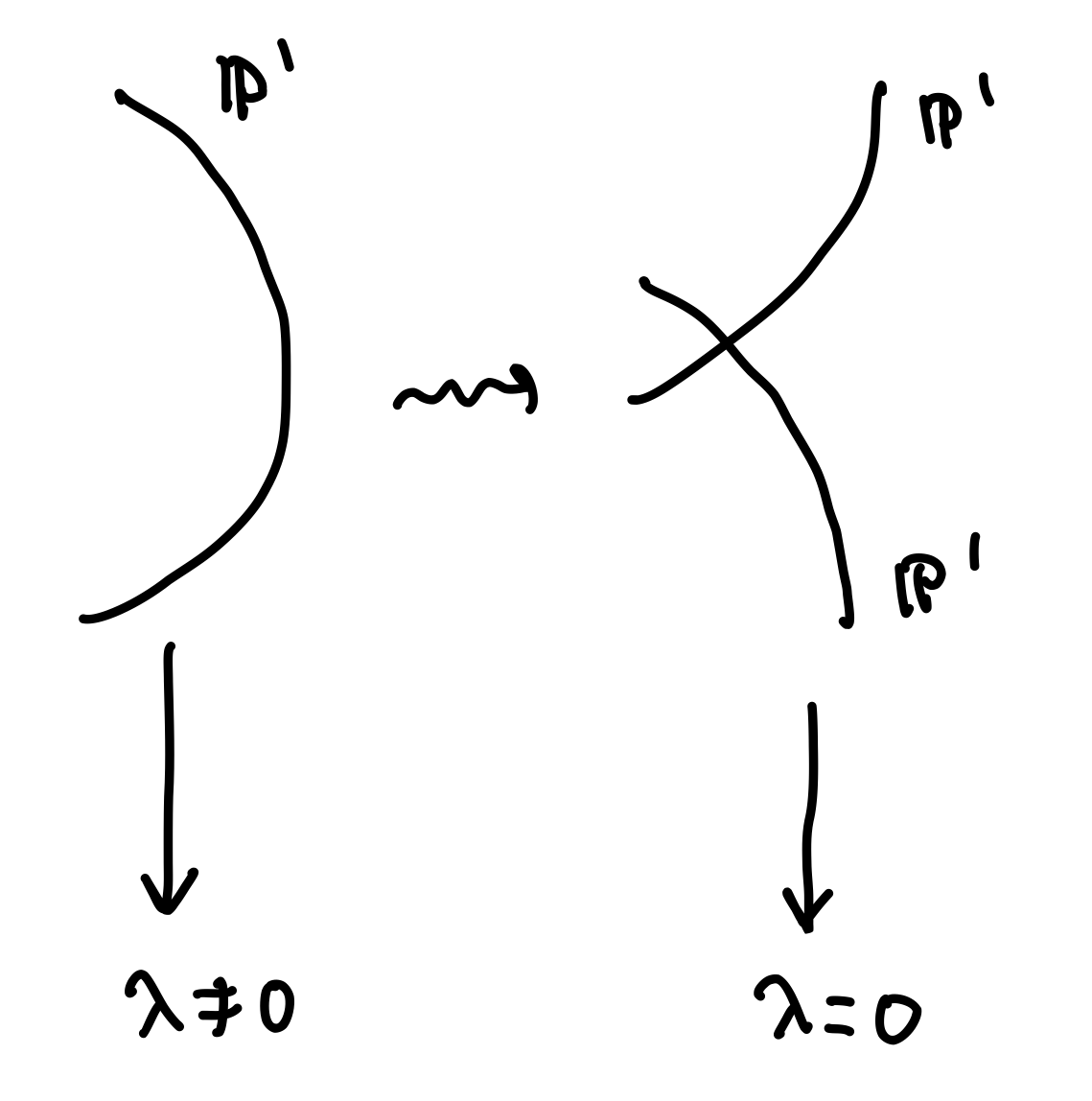}
\]
We will see that $z_\mathrm{nat}$ is categorified by nearby cycles. Moreover, we will see that the ``Wakimoto filtration''
\[
\includegraphics[scale=0.25]{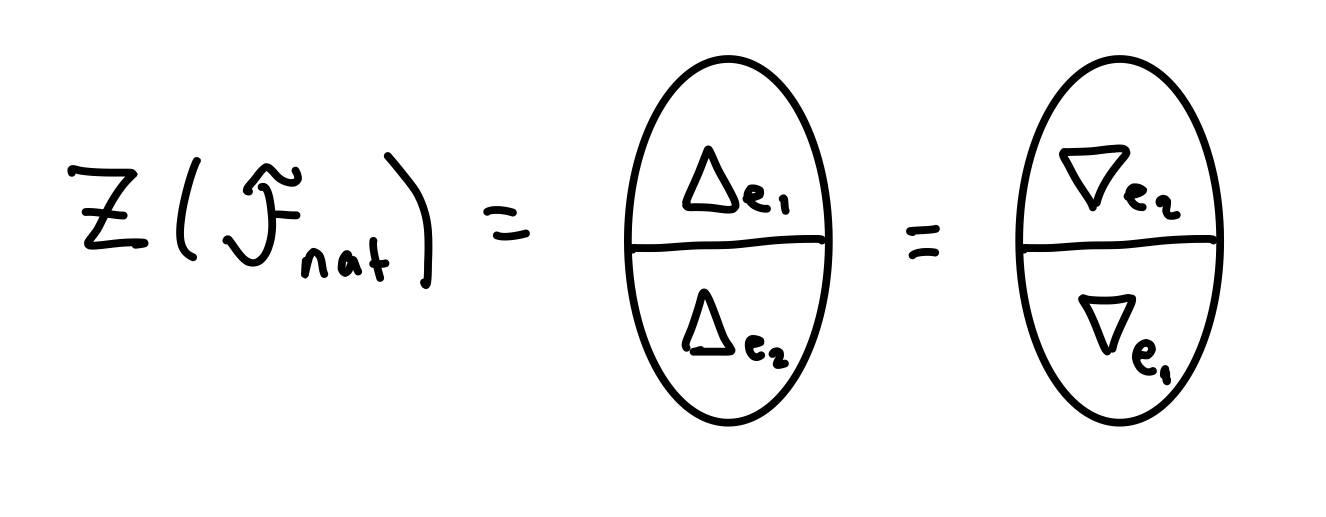}
\]
categorifies the relationships 
\[
z_\mathrm{nat}=\delta_\varpi \delta_s + \delta_\varpi \delta_{s_0}^{-1} = \delta_\varpi \delta_s^{-1} + \delta_\varpi \delta_{s_0}
\]
in $H$. Here it goes! 

\vspace{5mm}
\noindent
{\bf Beilinson gluing:} We want to understand perverse sheaves on $\{xy=0\} \subset \PP^2$; i.e. two $\PP^1$'s meeting transversally at a point. To do so, we can start by examining perverse sheaves on $\{xy=0\} \subset \C^2$, with stratification 
\[
\Lambda = \{0\} \sqcup \C^\times_{x\text{-axis}} \sqcup \C^\times_{y\text{-axis}}. 
\]
Beilinson gluing (Lectures \ref{lecture 17} and \ref{lecture 18}) tells us how! Take $f=x+y$. The zero locus of $f$ restricted to $\{xy=0\}$ is $\{0\}$. By Beilinson gluing,
\begin{align*}
\Perv_\Lambda(\{xy=0\})&=\left\{ \begin{array}{c} \mc{F} \text{ perverse on $\C^\times_x \sqcup \C^\times_y$}, \\ V_0 \text{ perverse on }\{0\} \end{array} + \begin{tikzcd}[ ampersand replacement=\&] \psi_f(\mc{F})\arrow[dr] \arrow[rr, "\mu - 1"] \& \&\psi_f(V) \\ \& V_0 \arrow[ur] \& \end{tikzcd} \right\}\\
&=\left\{ \left. \begin{tikzcd}
[ampersand replacement=\&] V_y \arrow[loop left, "\mu_y"] \arrow[dr, "c_y", shift right] \& \& V_x \arrow[loop right, "\mu_x"] \arrow[dl, "c_x", shift right] \\ 
\& V_0 \arrow[ul, "v_y", shift right] \arrow[ru, "v_x", shift right]\& 
\end{tikzcd} \right| \begin{array}{c} \text{ for all }z \in \{x, y\}, \\ v_z \circ c_z = \mu_z-1, \text{ and } \\
c_y \circ v_x = 0 = c_x \circ v_y \end{array} \right\}. 
\end{align*}
Hence if $\overline{X}=\PP^1_x \cup \PP^1_y=\{xy=0\} \subset \PP^2$ (these will be our intersection of Schubert varieties later), and 
\[
\Lambda = \{0\} \sqcup \C_x \sqcup \C_y,
\]
then we obtain the same description as above, except that the point at infinity forces $\mu_x=\mu_y=1$. In other words, 
\[
\Perv_\Lambda(\overline{X}) = \left\{ \left. \begin{tikzcd}
[ampersand replacement=\&] V_y  \arrow[dr, shift right] \& \& V_x  \arrow[dl, shift right] \\ 
\& V_0 \arrow[ul,shift right] \arrow[ru, shift right]\& 
\end{tikzcd} \right| \begin{array}{c} v_z \circ c_z = 0 \\  \text{ for } z \in \{x, y\}, \text{ and } \\ c_y \circ v_x = 0 = c_x \circ v_y \end{array} \right\}. 
\]

\begin{exercise}
Let 
\[
\mc{Y}=\{ ((x:y:z), \lambda) \in \PP^2 \times \mathbb{A}^1 \mid xy=\lambda z^2 \} \subset \PP^2 \times \mathbb{A}^1, 
\]
and $\mc{Y}_0=\mc{Y} \backslash f^{-1}(0)$, where $f:\PP^2 \times \mathbb{A}^1 \rightarrow \mathbb{A}^1$ is projection:
\[
\includegraphics[scale=0.3]{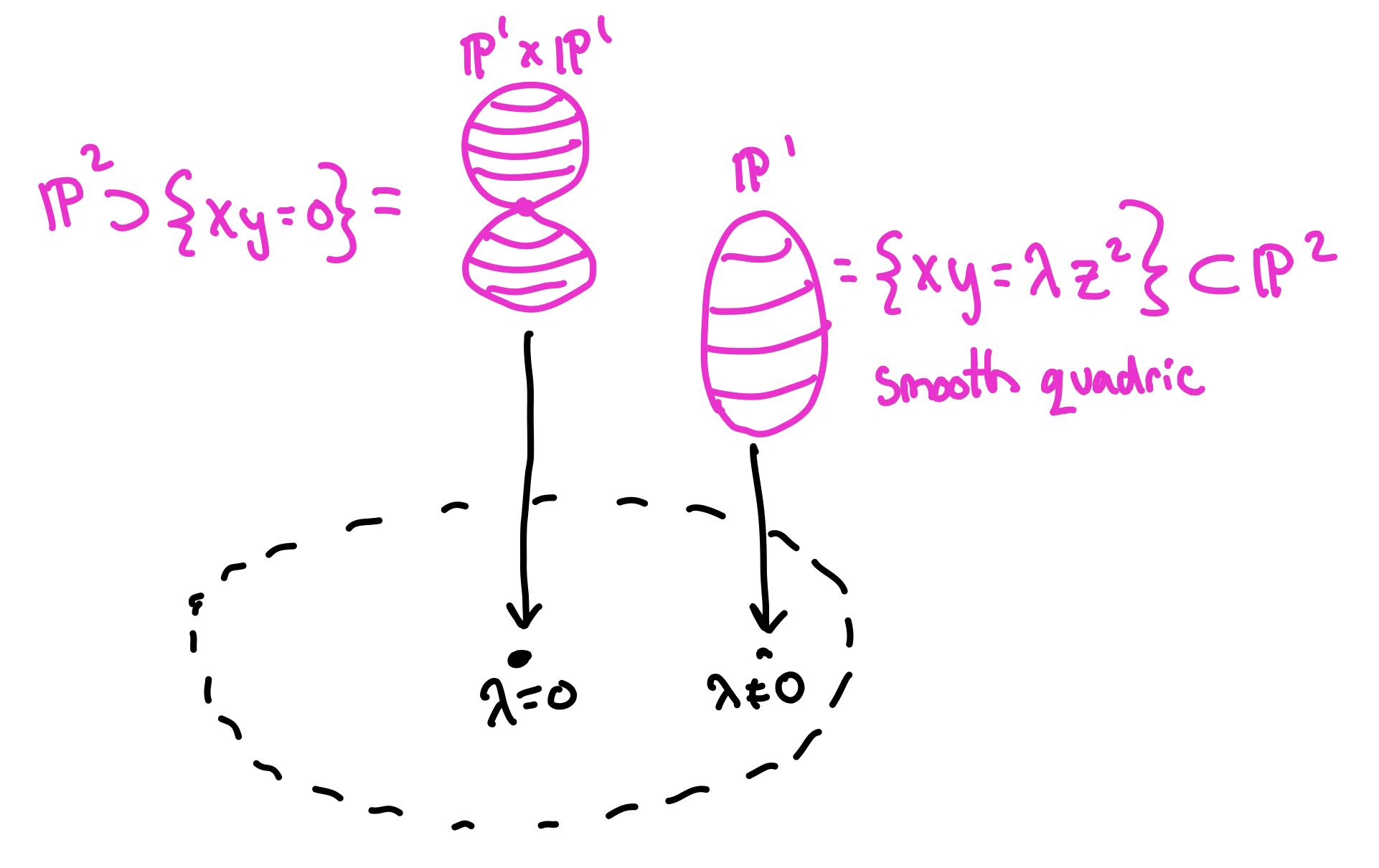}
\]
\end{exercise}
\begin{enumerate}
    \item Show that $\psi_f k_{\mc{Y}_0}[2]$ is described under the equivalence above by the diagram 
    \[
    \includegraphics[scale=0.3]{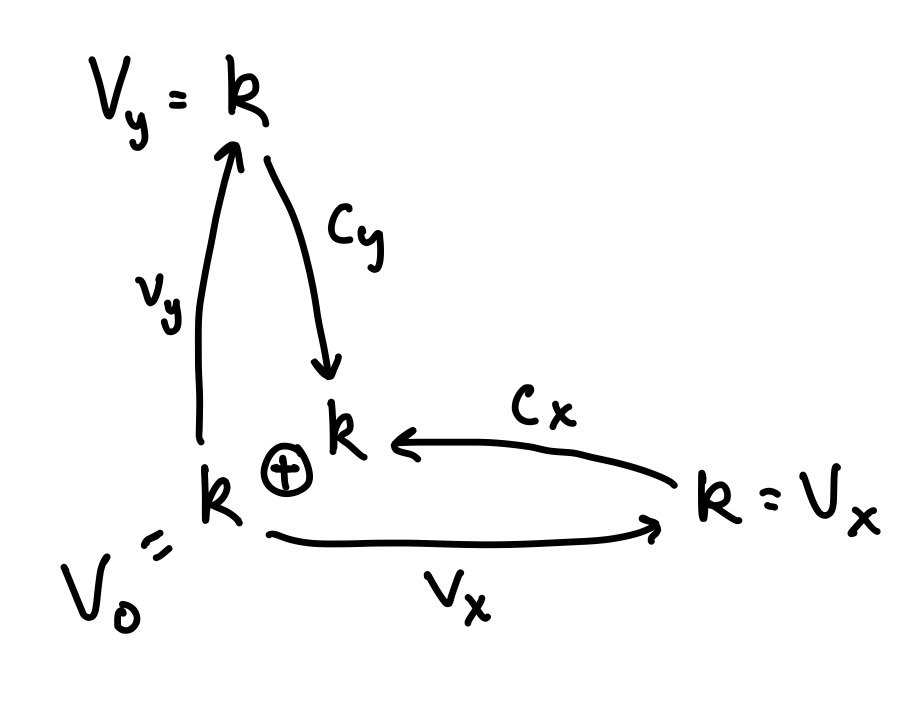}
    \]
    \item Deduce that $\psi_f k_{\mc{Y}_0}[2]$ has composition series 
    \[
    \includegraphics[scale=0.25]{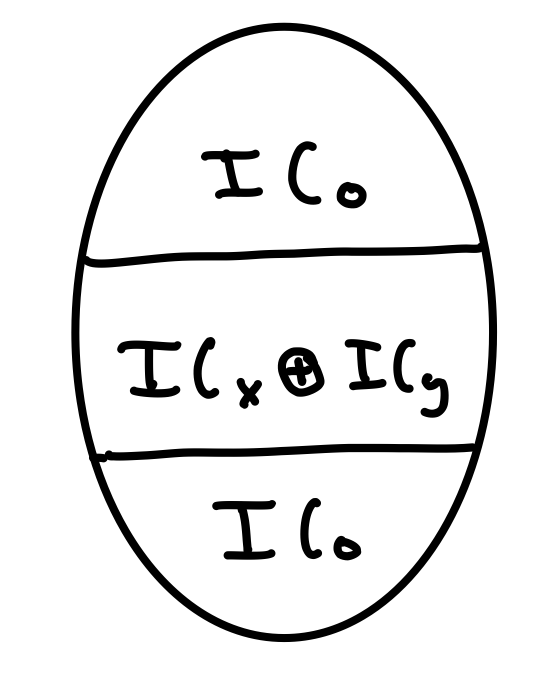}
    \]
    \item Check that the monodromy {\color{red} $\mu$} is given by 
    \[
    \includegraphics[scale=0.25]{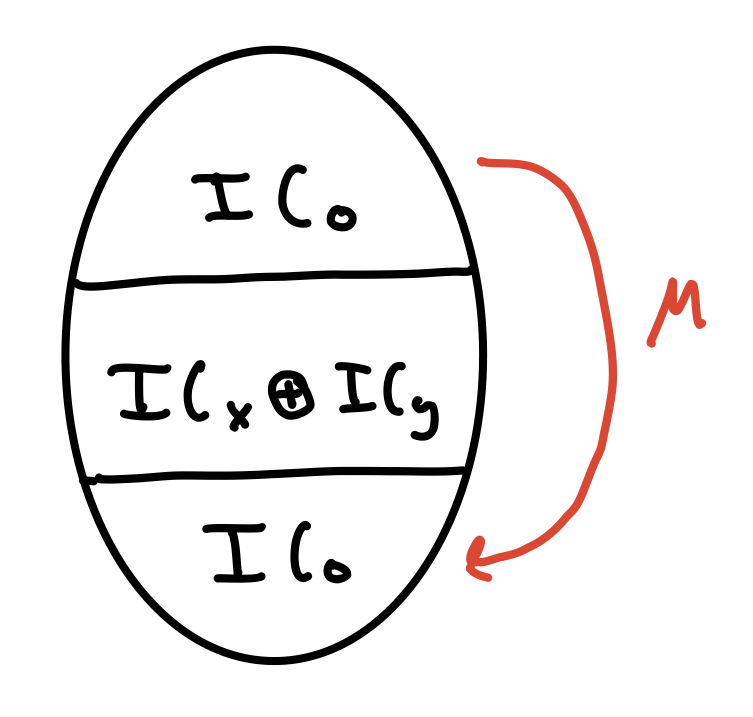}
    \]
\end{enumerate}
\begin{remark}
Let 
\[
\C_x \xhookrightarrow{j} \overline{X} \xhookleftarrow{j'} \C_y
\]
be the natural inclusions. Then 
\[
\includegraphics[scale=0.3]{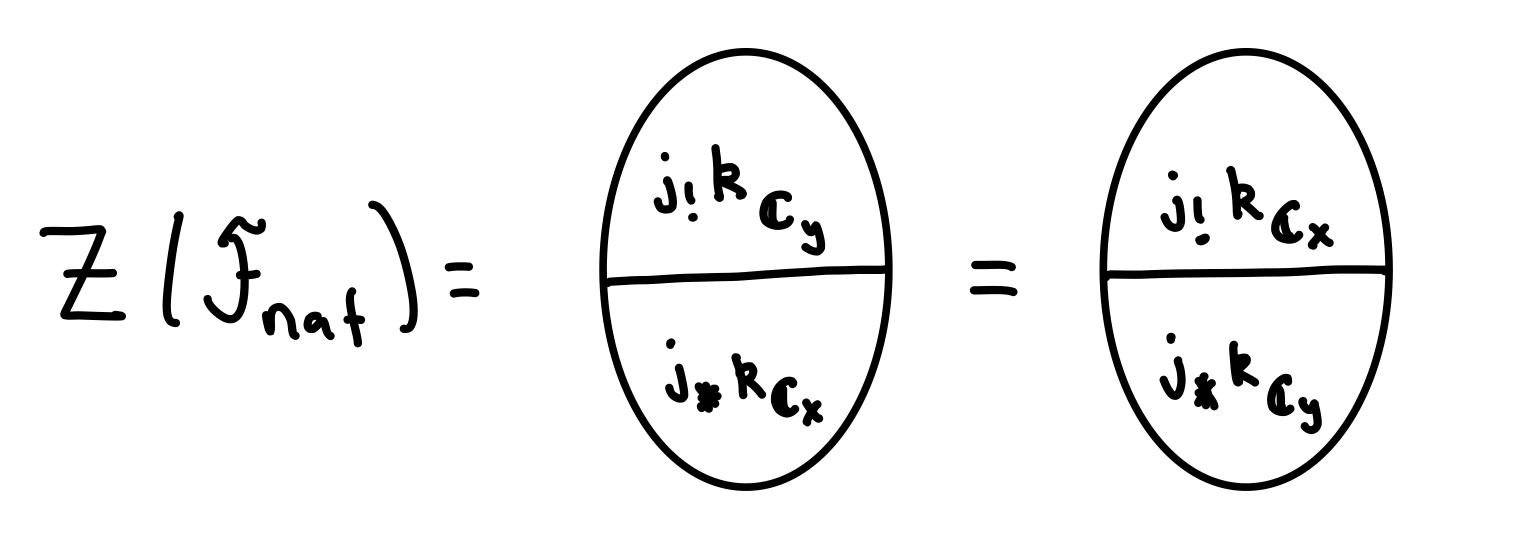}
\]
This is the {\bf Wakimoto filtration}, which categorifies the relation 
\[
z_\mathrm{nat}=\delta_\varpi \delta_s + \delta_\varpi \delta_{s_0}^{-1} = \delta_\varpi \delta_s^{-1} + \delta_\varpi \delta_{s_0}.
\]
in $H$.
\end{remark}

\noindent
{\bf Connect this all to Gaitsgory's picture:} Let $\mathscr{E}_\mathrm{triv}$ be the trivial rank $2$ vector bundle on $\mathbb{A}^1$, and $\mc{L}_\mathrm{triv}=\C[t] e_1 \oplus \C[t] e_2$ the trivial lattice in $\C((t))^2$. In Emily's IFS talks, she introduced the Beilison--Drinfeld Grassmannian, $\mc{BD}$, which provides a $\mc{G}r$-fibration over $\mathbb{A}^1$:
\[
\mc{BD}:=\left\{ (x, \mathscr{E}, \beta) \left| \begin{array}{c} \mathscr{E} \text{ rank 2 vector bundle} \\ \beta:\mathscr{E}|_{X-\{x\}} \xrightarrow{\sim} \mathscr{E}_\mathrm{triv}|_{X - \{x\}}  \end{array} \right. \right\} \xrightarrow{\mc{G}r} \mathbb{A}^1.
\]
She also introduced Gaitsgory's souped-up version of the the Beilinson-Drinfeld Grassmannian, which in the example of $\GL_2$ is a $\mc{F}\ell$-fibration over $\{0\}$ and a $\mc{G}r \times \PP^1$-fibration over $\mathbb{A}^1 \backslash \{0\}$:
\[
\begin{tikzcd}
& \left\{ (x, \mathscr{E}, \beta, \mc{F}) \left| \begin{array}{c} \mathscr{E} \text{ rank 2 vector bundle} \\ \beta:\mathscr{E}|_{X-\{x\}} \xrightarrow{\sim} \mathscr{E}_\mathrm{triv}|_{X - \{x\}} \\ \mc{F} \text{ flag in } \mathscr{E}_0 \end{array} \right. \right\} \arrow[dl, "\mc{F}\ell"'] \arrow[dr, "\mc{G}r \times \mathbb{P}^1"]
 & \\ 
\{0\} & & \mathbb{A}^1 \backslash \{0\} \end{tikzcd}
\]
Note that a flag in $\mathscr{E}_0$ is simply the choice of a line in the two-dimensional vector space $\mathscr{E}_0$.
Within $\mc{BD}$ we have a finite-dimensional closed subvariety $\mc{G}\subset \mc{BD}$. Under the lattice description of $\mc{G}r_{\GL_2}$,
\[
\mc{G}:=\{ \mc{L} \subset \mc{L}_\mathrm{triv} \mid \dim \mc{L}_\mathrm{triv}/\mc{L} = 1 \}. 
\]
We can cover $\mc{G}$ with two charts: if $\lambda$ is the coordinate on  $\mathbb{A}^1$, 
\[
U_0=\left\langle \bp 1 \\ a \ep, \bp 0 \\ t-\lambda \ep  \right\rangle, \hspace{2mm} U_\infty = \left\langle \bp b \\ 1 \ep, \bp t-\lambda \\ 0 \ep \right\rangle. 
\]
Each chart is isomorphic to $\mathbb{A}^2$, and we see that $\mc{G}$ is a trivial $\PP^1$-bundle over $\mathbb{A}^1$. 

The analogous subvariety in Gaitsgory's version is 
\[
\mc{Y} = \left\{ (\mc{L}, \ell) \left| \begin{array}{c} \mc{L} \subset \mc{L}_\mathrm{triv} \text{ a lattice s.t.} \\ \dim \mc{L}_\mathrm{triv}/\mc{L} = 1, \text{ and} \\ \ell \subset \mc{L}/t\mc{L} \end{array} \right. \right\} \xrightarrow{\PP^1\text{-bundle}} \mc{G}. 
\]
Consider the following closed subvariety of $\mc{Y}$:
\[
\mc{Y}^\mathrm{Sp} = \{ (\mc{L}, \ell) \mid \ell \subset \ker(\mc{L} \hookrightarrow \mc{L}_\mathrm{triv} \twoheadrightarrow \mc{L}_\mathrm{triv}/t \mc{L}_\mathrm{triv}=\C \oplus \C \twoheadrightarrow \C \oplus \C / (\C \oplus 0)) \}.
\]

What are the fibres of $\mc{Y}^\mathrm{Sp}$ over $\mathbb{A}^1$? Well, for any $(\mc{L}, \ell) \in \mc{Y}^\mathrm{Sp}$, $\mc{L}$ fits into an exact sequence
\[
\mc{L} \hookrightarrow \mc{L}_\mathrm{triv} \twoheadrightarrow \mc{O}_x,
\]
where $\mc{O}_x$ is a skyscraper. There are two cases:
\begin{enumerate}
    \item If $x \neq 0$, then 
\[
\mc{L}_0 \simeq (\mc{L}_\mathrm{triv})_0.
\]
Hence $\ell$ is uniquely determined, and the fibre is $\PP^1$. 
\item If $x =0$, then 
\[
\mc{L}_0 \xrightarrow{\phi} (\mc{L}_\mathrm{triv})_0
\]
has rank $1$. We have two possibilities: either (1) $\im \phi = \C \oplus 0$, in which case $\ell$ is free, but $\mc{L}$ is fixed, so the fibre is $\PP^1$, or (2) $\ell = \ker \phi$, in which case $\mc{L}$ is free and $\ell$ is fixed, so the fibre is again $\PP^1$. We conclude that the fibre over $0$ is $\PP^1 \bigcup_{\{0\}} \PP^1$, as we had hoped.
\end{enumerate}

In Geordie's hand-written notes, there are charts which show that locally, this degeneration is given by the following picture:
\[
\includegraphics[scale=0.3]{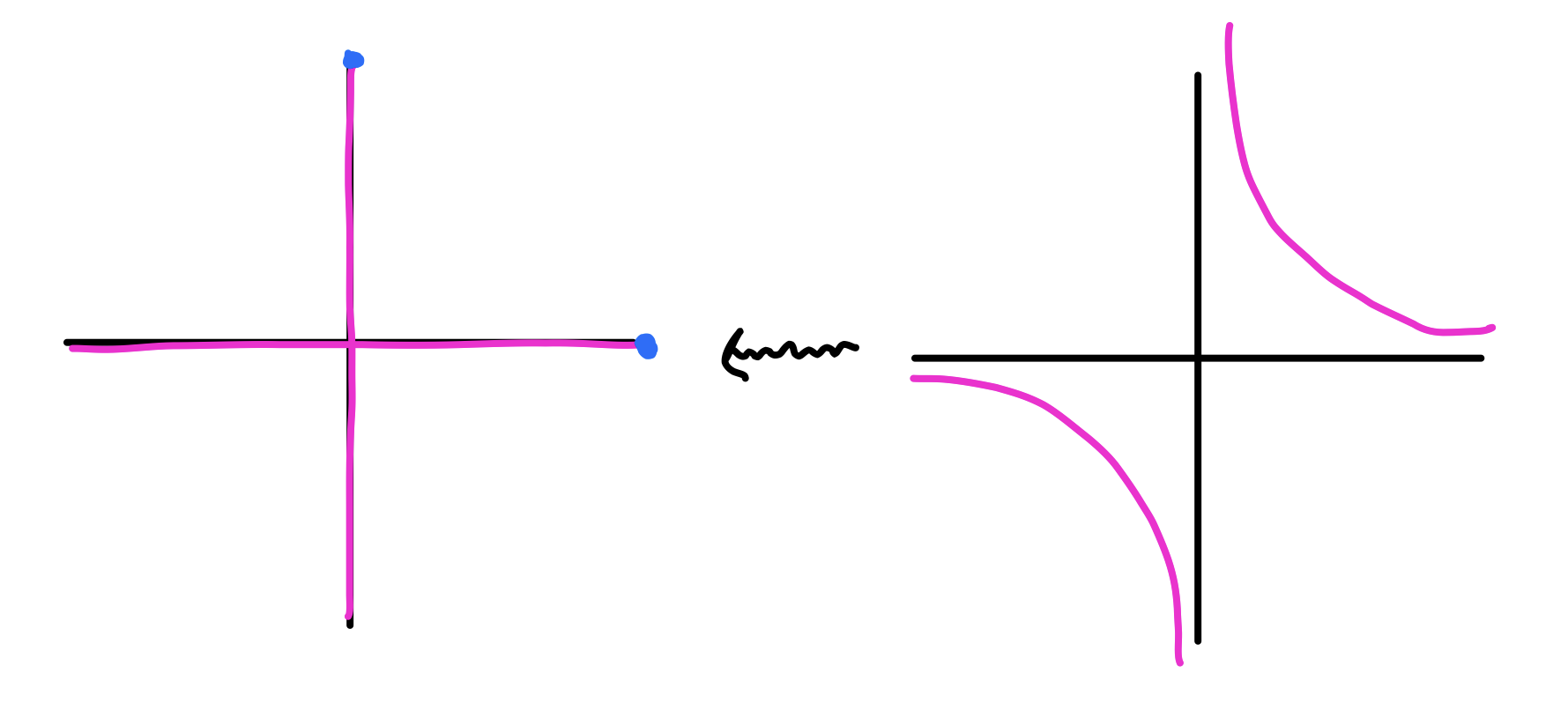}
\]
Furthermore, one can check that this degeneration is isomorphic to the degenerating quadric in $\PP^2$ discussed earlier. 

\subsection{Exercises}
These exercises will examine the structure of the affine Hecke algebra for $\GL_2$. We have
\[
X=\Z e_1 \oplus \Z e_2, \hspace{2mm} X^\vee = \Z e_1^* \oplus \Z e_2^*, \hspace{2mm} \Phi = \{\pm (e_1-e_2)\}, \hspace{2mm}  \Phi^\vee = \{ \pm(e_1^* - e_2^*)\}. \]

\begin{remark}
The group $\GL_2$ is Langlands self-dual, so we don't need to worry too much about which side of Langlands duality we are on.
\end{remark}

The affine Weyl group is 
\[
W=W_f \ltimes \left( \Z e_2 \oplus \Z e_2 \right). 
\]
A picture:
\[
\includegraphics[scale=0.3]{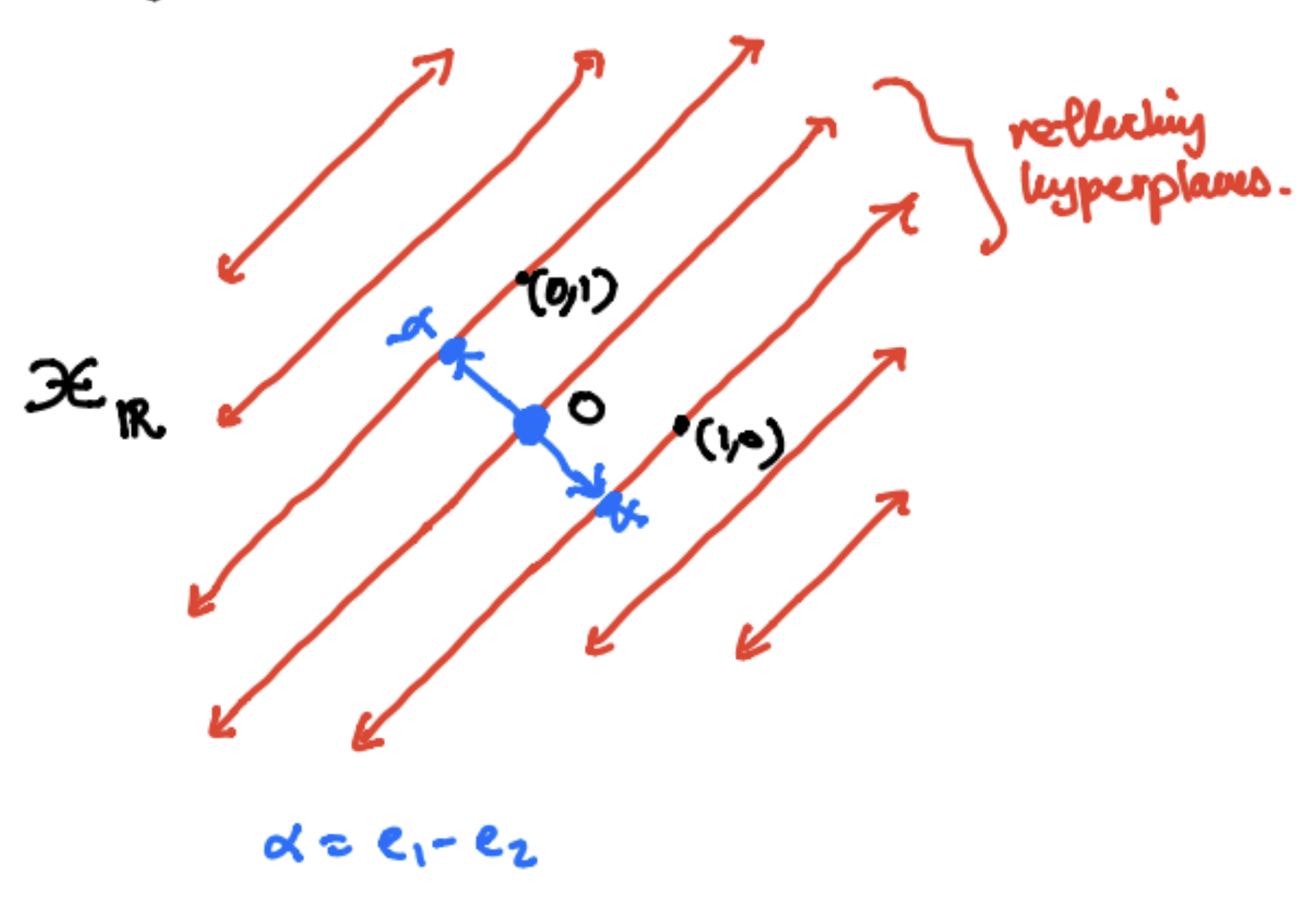}
\]
Given $\gamma \in \Z e_1 \oplus \Z e_2$, write $t_\gamma$ for the corresponding translation. 

\begin{enumerate}
    \item Show that the set of {\em length zero elements} 
    \[
    \Omega = \{ x \in W \mid x \text{ preserves strip } \vcenter{\hbox{\includegraphics[scale=0.2]{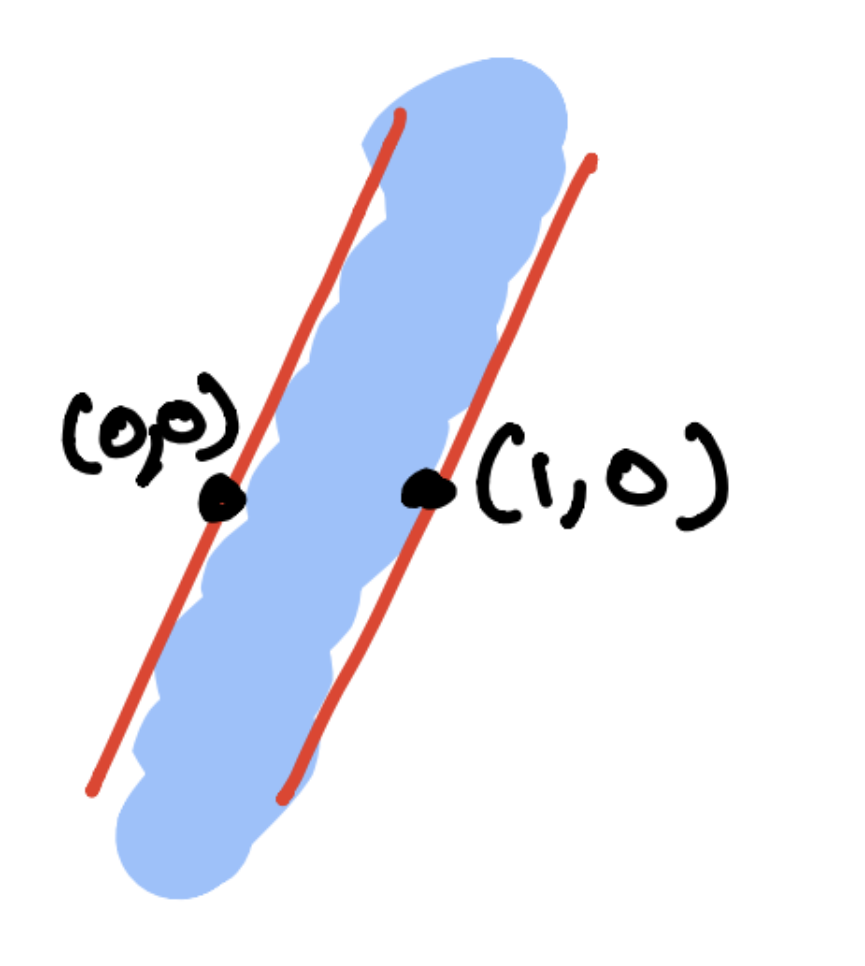}}}  \}
    \]
    is a free abelian group generated by $t_{e_1} s \in W$, where $s=s_\alpha$ is the finite simple reflection.
    \item Set $\varpi = t_{e_1}s, s_0 = t_\alpha s$. Check that $\varpi s = s_0 \varpi$, $\varpi s_0 = a \varpi$, and $\varpi^2$ is central. 
    \item Show that the Bernstein generators are given by 
    \[
    \theta_{e_1} = \delta_\varpi \delta_s, \hspace{2mm} \theta_{e_2}^{-1}=\delta_{\varpi^{-1}}\delta_s \theta_{e_2} = \delta_s^{-1} \delta_\varpi = \delta_\varpi \delta_{s_0}^{-1}. 
    \]
    {\em Hint:} 
    \[
    \includegraphics[scale=0.2]{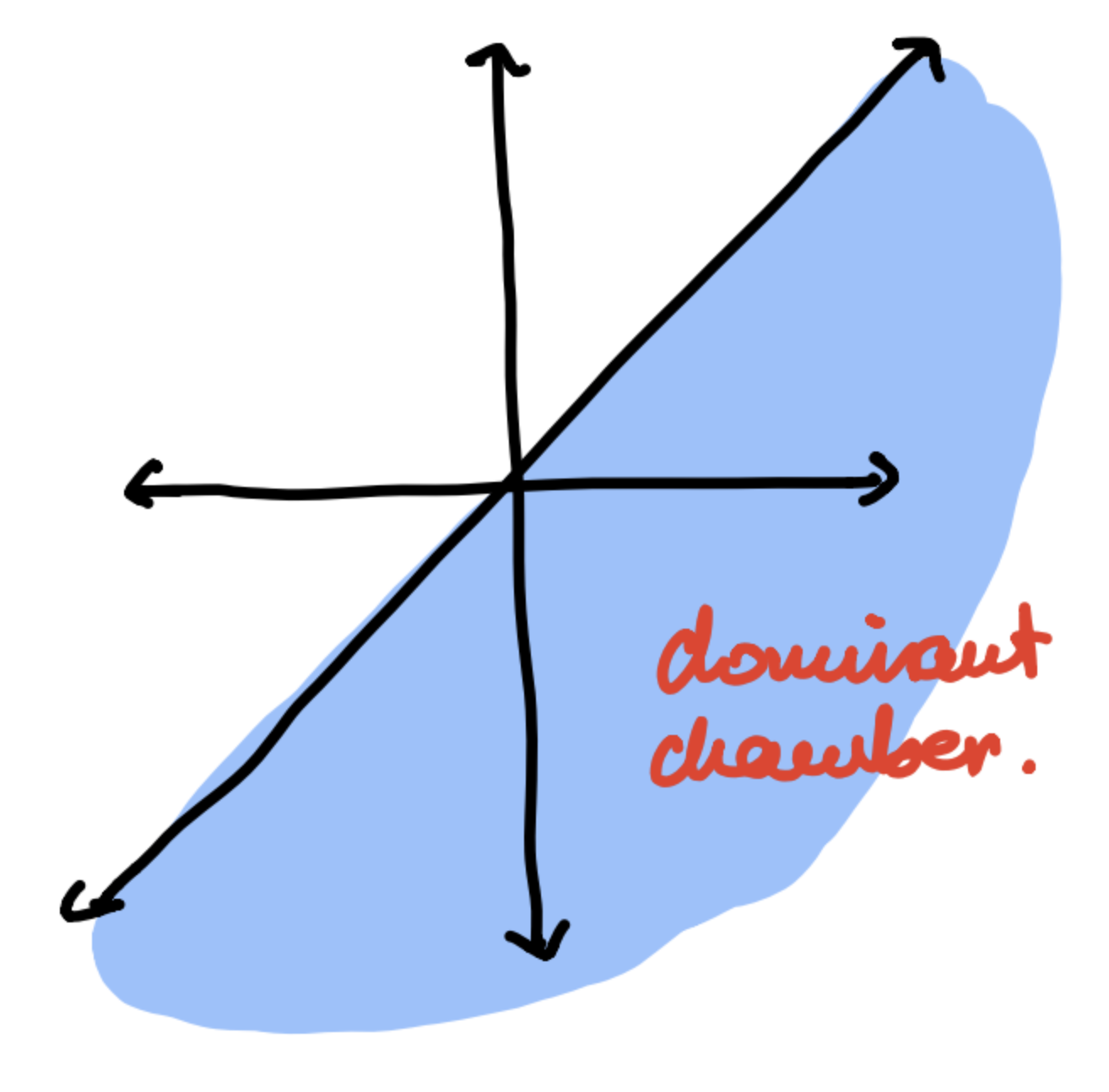}
    \]
    \item Verify that 
    \[
    z_\mathrm{nat}=\theta_{e_1} + \theta_{e_2} = \delta_\varpi (\delta_s + \delta_{s_0}^{-1}) = \delta_\varpi(\delta_s^{-1} + \delta_{s_0})
    \]
    is central. 
    \item Check that $z_\mathrm{nat} \cdot b_s = \delta_\varpi b_{s_0 s}$. 
    \item (Challenge) Prove that $Z(H)$ is generated by $\delta_\varpi^2$ and $z_\mathrm{nat}$ and show that 
    \begin{align*}
        Z(H) &\xrightarrow{\sim} [\Rep \GL_2] \\
        z_\mathrm{nat} &\mapsto \C^2 \\
        \varpi^2 &\mapsto \det
    \end{align*}
    is an isomorphism of rings. 
\end{enumerate}
\pagebreak
\section{Lecture 24: A hitchhiker's guide to the Hecke category}
\label{lecture 24}

In Lecture \ref{lecture 16}, we motivated the constructible side of Bezrukavnikov's equivalence via Grothendieck's function-sheaf dictionary. The material from that lecture is useful background for today's lecture, and the reader may benefit by reviewing Lecture 16 before continuing. 

Let $X$ be a quasi-projective variety over $\mathbb{F}_q$. There is a generating series built out of $\#X(\mathbb{F}_{q^m})$ for $m \geq 1$, the {\bf zeta function}:
\[
\zeta(X, s) = \exp \left( \sum_{m \geq 0} \frac{\# X(\mathbb{F}_{q^m})}{m} q^{-ms} \right). 
\]
Weil noticed that when $X$ is a smooth projective $n$-dimensional variety, its $\zeta$ function has some remarkable properties, which he packaged into the {\bf Weil conjectures}:
\begin{enumerate}
    \item Rationality: $\zeta(X,s)$ can be written as a rational function in $q^{-s}$.
    \item Functional equation: $\zeta(X,s)$ and $\zeta(X, 1-s)$ agree (up to a simple and explicit scalar). 
    \item Riemann hypothesis: roots of $\zeta$ have a specific form. 
\end{enumerate}
\begin{example}
If $X=\PP^1_{\mathbb{F}_q}$, then 
\begin{align*}
\zeta(X, s) &= \exp\left( \sum_{m \geq 0} \frac{1+q^m}{m} q^{-ms} \right) \\
&=\exp \left( \sum_{m \geq 0 } \frac{(q^{-s})^m}{m} + \sum_{m \geq 0} \frac{(q^{1-s})^m}{m} \right) \\
&= \frac{1}{1-q^{-s}} \cdot \frac{1}{1-q^{1-s}}.
\end{align*}
We see from this that: (1) $\zeta(X, s)$ is rational, (2) $\zeta(X,s)=-q^{1-s}\zeta(X, 1-s)$, and (3) we can write 
\[
\zeta(X,s)=\frac{1}{P_0(q^{-1})P_2(q^{-s})},
\]
with $P_0=(1-T)$ and $P_2=1-qT$. Here the Riemann hypothesis is the (elementary) statement that all roots of $P_0$ (resp. $P_2$) have roots of norm $1$ (resp. $q$). 
\end{example}
\begin{example}
If $X$ is a smooth elliptic curve, then 
\[
\zeta(X, s)= \frac{P_1(q^{-s})}{P_0(q^{-s})P_2(q^{-s})} = \frac{P_1(q^{-s})}{(1-q^{-s})(1-q^{1-s})}, 
\]
where $P_1$ has two roots, which are conjugate and have norm $q^{1/2}$ (``Weil numbers of weight $1$''). 
\end{example}

\begin{remark}
The case of an elliptic curve (due to Hasse) predates the Weil conjectures and was an important ingredient in their formulation. Another important ingredient was Artin's computation of the zeta function of a hyperelliptic curve, discussed in Lecture \ref{sato-tate}.
\end{remark}
\begin{remark}
We saw the significance of the roots of $P_1$ (which determine how many points $E$ has) in Lecture \ref{sato-tate} on the Sato-Tate conjecture.
\end{remark}
In the Weil conjectures, {\bf rationality} follows from the Grothendieck-Lefschetz trace formula:
\[
\#X(\mathbb{F}_q) = \mathrm{supertrace}\left(\mathrm{Frob} \circlearrowright H^*_{\text{\'{e}t}}(X, \overline{\mathbb{Q}}_\ell)\right).
\]
The {\bf functional equation} follows from Poincar\'e duality in \'etale cohomology, and the Riemann hypothesis (the most difficult part) follows from purity. This is the statement that the eigenvalues of Frobenius in degree $i$ are all ``Weil numbers of weight $i$''.
\begin{remark} Notice the strange appearance of the auxiliary prime $\ell \neq p$ in the Grothendieck-Lefschetz trace formula. Why do we need to make this choice? Such questions are usually labelled questions of ``independence of $\ell$'' in the literature. We will also see such questions arise in the Hecke category below. 
\end{remark}
The proof of the Weil conjectures needed the full artillery of $D^b_c(X, \overline{\Q}_\ell)$ and its six functor formalism. It used a relative version of the Grothendieck-Lefschetz trace formula, which associated to a sheaf $\mc{F}$ a collection of functions given by ``trace of Frobenius'':
\[
\mc{F} \in D^b_c(X, \overline{\mathbb{Q}}_\ell) \rightsquigarrow \left\{ f^m_{\mc{F}}: X(\mathbb{F}_{q^m}) \rightarrow \overline{\mathbb{Q}}_\ell \right\}. 
\]
Then, as we saw in Lecture \ref{lecture 16}, 
\[
\left[ D^b_c(X, \overline{\mathbb{Q}}_\ell)\right] \hookrightarrow \prod_{m \geq 1} \Fun(X(\mathbb{F}_{q^m}) \rightarrow \overline{\Q}_\ell),
\]
so the collection of functions associated to $\mc{F}$ completely determines its class in the Grothendieck group. This leads to Grothendieck's philosophy of ``dictionaire functions faisceaux'':
\[
\text{functions } \leftrightarrow \text{ sheaves}.
\]
This philosophy can be summed up with the slogan 

\vspace{2mm}
\begin{center}
    {\em ``interesting functions should arise from interesting sheaves''.}
\end{center}

\subsection{The Hecke category: setting the scene}

Let $G$ be split reductive over $\mathbb{F}_q$, and $B \subset G$ a Borel subgroup. Let $\mc{K} = \mathbb{F}_q((t))$, and denote by $G_\mc{K}$ the loop group and $I \subset G_\mc{K}$ the corresponding Iwahori subgroup. Recall that the Hecke algebra (either finite or affine) has its origins as a convolution algebra of bi-invariant functions on a group (see Lecture \ref{lecture 16} for more on this perspective). Using Grothendieck's function-sheaf dictionary, a natural categorification of these functions is the following:  
\[
\begin{tikzcd}
H_f \arrow[rr, bend left, "v \mapsto \frac{1}{\sqrt{q}}"]& ``=" & \left( \Fun_{B(\F_q) \times B(\F_q)} (G(\F_q), \C), * \right) \arrow[r, squiggly, "\mathrm{categorify}"] & \left(D^b_{B \times B}(G, \overline{\Q}_\ell), *\right)  
\end{tikzcd}
\]
\[
\begin{tikzcd}
H \arrow[rr, bend left, "v \mapsto \frac{1}{\sqrt{q}}"]& ``=" & \left( \Fun_{I(\F_q) \times I(\F_q)} (G_\mc{K}(\F_q), \C), * \right) \arrow[r, squiggly, "\mathrm{categorify}"] & \left(D^b_{I \times I}(G_\mc{K}, \overline{\Q}_\ell), *\right)  
\end{tikzcd}
\]
\begin{remark}
The loop group $G_\mc{K}$ is very infinite-dimensional, so we usually work with
\[
\left(D^b_I(\mc{F}\ell, \overline{\Q}_\ell), * \right),
\]
instead. (Here $\mc{F}\ell = G_k/I$ is the affine flag variety.) 
\end{remark}
\begin{remark}
One can see from the above that $H_f$ is ``independent of $q$''. We have one abstract algebra which specialises to all Hecke algebras at once. This is one desirable feature that we are hoping to categorify. 
\end{remark}
Our goal for this lecture is to find a good categorification of the Hecke algebra. It might appear that we've already accomplished this. However, the categories $\left(D^b_{B \times B}(G, \overline{\Q}_\ell), *\right)$ and $\left(D^b_{I \times I}(G_\mc{K}, \overline{\Q}_\ell), *\right)$ are not quite right for several reasons. In the remainder of the lecture, we will explain why these categories are wrong, then slowly fix them. We will concentrate on the finite case (i.e. the Hecke algebra for the finite Weyl group). The affine case is similar. 

\vspace{5mm}
\noindent
{\bf What we want:} A triangulated monoidal category $\mc{H}$ such that: 
\begin{enumerate}
    \item $([\mc{H}], *)\simeq H$ (i.e. $\mc{H}$ categorifies the Hecke algebra), 
    \item $\mc{H}$ is ``independent of $q$ and $\ell,$'' and 
    \item $\mc{H}$ admits a triangulated monoidal functor to $D^b_{B \times B}(G, \overline{\Q}_\ell)$ such that the diagram 
    \[
    \begin{tikzcd}
    \mc{H} \arrow[r] \arrow[d] & D^b_{B \times B}(G, \overline{\Q}_\ell) \arrow[d] \\
    H \arrow[r, "v \mapsto \frac{1}{\sqrt{q}}"] & \left( \Fun_{B(\F_q) \times B(\F_q)}(G(\F_q), \C), * \right) 
    \end{tikzcd}
    \]
    commutes for all $\ell$ and $q$.
\end{enumerate}
\begin{remark}
The existence of such an object is tacitly implied by Bezrukavnikov's equivalence, as $\ell$ and $q$ are nowhere to be seen on the coherent side.
\end{remark}
Now we try and fail and try and fail and try and fail, and then, finally, succeed. 

\subsection{First try}
We start with the most obvious choice:
\[
\mc{H} = D^b_{B \times B}(G, \overline{\Q}_\ell).
\]
\noindent
{\bf Objections:} 
\begin{enumerate}
    \item Depends on $q$ and $\ell$. 
    \item If $G$ is the trivial group, then 
    \[
    \mc{H} = D^b_c(\Spec\F_q, \overline{\Q}_\ell) \hookrightarrow D^b_c\left( \begin{array}{c} \text{continuous representations} \\ \text{of } \pi_1^\text{\'et}(\Spec \F_q)=\widehat{\Z} \text{ on} \\ \overline{\Q}_\ell\text{-vector spaces} \end{array} \right).
    \]
    This is almost an equivalence. In fact, one has
    \begin{equation}
        \label{too big}
    [\mc{H}] = \Z[\overline{\Z}_\ell^\times],
    \end{equation}
    which is way too big. (See \cite[Proposition 5.1.2]{BBD} and the remark following it.) 
    
    Note that the fact that this Grothendieck group is way too big persists for any group: rather than being an algebra over $\Z[v^{\pm 1}]$, our putative definition is an algebra over (\ref{too big}).
\end{enumerate}

\subsection{Second try}
In the first try, we failed to obtain requirement 1 of our desired categorification because our category had a Grothendieck group which was too big. We can attempt to fix this by passing to the algebraic closure. Try 
\begin{align*}
\mc{H} &= D^b_{B \times B}(G, \overline{\Q}_\ell) \text{ for }G/\overline{\F}_q, \text{ or} \\
\mc{H} &= D^b_{B \times B}(G, \Q) \text{ for }G/\C. 
\end{align*}

\noindent
{\bf Objections:}
\begin{enumerate}
    \item If $G$ is the trivial group, then 
    \[
    D^b_{B \times B}(G, \Q) = D^b\left( \begin{array}{c} \text{finite dimensional} \\ \text{vector spaces} \end{array} \right).
    \]
    Hence
    \[
    [\mc{H}] = \Z,
    \]
    which is too small. (We expect $\Z[v^{\pm 1}]$ for the Grothendieck group in this case.) But perhaps this is just a trivial-case phenomenon, we can test with a slightly bigger example. 
    \item Recall (c.f. Lecture \ref{lecture 16}) that in $\SL_2$, the quadratic relation in $\Fun_{B \times B}(\SL_2(\F_q),\C)$ came from the fact that
    \[
    \ind_{\SL_2(\F_q)} * \ind_{\SL_2(\F_q)} = (1+q) \ind_{\SL_2(\F_q)}.
    \]
    Why was this again? Well, the multiplication map $\mathrm{mult}:\SL_2 \times_B \SL_2 \rightarrow \SL_2$ factors through $\SL_2/B \times \SL_2 = \PP^1 \times \SL_2$:
    \[
    \begin{tikzcd}
    \SL_2 \times_B \SL_2 \arrow[rd, "\mathrm{mult}"'] \arrow[rr, "\sim"] & & \PP^1 \times \SL_2 \arrow[ld, "\mathrm{projection}"] \\
    & \SL_2 &
    \end{tikzcd}
    \]
    Here the horizontal arrow is the map $(g,h) \mapsto (gB, gh)$. So we obtain the formula above by pushing forward the constant sheaf on either side of the isomorphism, and the $(1+q)$ comes from $\#\PP^1(\F_q)=1+q$. 
    
    In $D^b_{B \times B}(\SL_2, \Q)$, the same diagram shows 
    \[
    \Q_{\SL_2} * \Q_{\SL_2} = p_*\Q_{\PP^1 \times \SL_2} = H^*(\PP^1) \otimes \Q_{\SL_2} = \Q_{\SL_2} \oplus \Q_{\SL_2}[-2]
    \]
    In the Grothendieck group, this gives 
    \begin{equation}
        \label{in the grothendieck group}
    [\Q_{\SL_2}]^2=2[\Q_{\SL_2}].
    \end{equation}
    \begin{exercise} The map 
    \begin{align*}
        \Z[W] \rightarrow \left[D^b_{B \times B}(\SL_2, \Q) \right] \\
        s \mapsto [\Q_{\SL_2}]-[\Q_{B/B}]
    \end{align*}
    is an isomorphism of algebras. 
    \end{exercise}
\end{enumerate}

We see from the exercise that  the trivial group wasn't just an anomaly - with this definition of $\mc{H}$, the Grothendieck group really is too small. Also notice that we want to replace the $2$ in (\ref{in the grothendieck group}) by $(1+q)$. With the Grothendieck-Lefschetz trace formula in mind, we wish to replace an Euler characteristic $(2)$ by the trace of Frobenius $(1+q)$. Therefore we are led to the following conclusion:  
    
    \vspace{2mm}
    \noindent
{\bf The moral:} Somehow we need to remember weights!

\subsection{Third try}
Again, let $G/\F_q$. This time, we set
\[
\mc{H} = \langle \bm{IC}_x \mid x \in W_f \rangle_{[\Z], (\Z), \Delta} \subset D^b_{B \times B}(G, \overline{\Q}_\ell). 
\]
In this incarnation of $\mc{H}$, we are keeping track of weight by introducing ``Tate twists,''  denoted above by $(\Z)$. 

\vspace{5mm}
\noindent
{\em What is a Tate twist?} Define 
\[
\overline{\Q}_\ell(-1):=H^2_c(\mathbb{A}^1)=\overline{\Q}_\ell \in \mathrm{Sh}_{\text{\'et}}(\Spec \F_q). 
\]
On $\overline{\Q}_\ell(-1)$, $\mathrm{Frob}$ acts via multiplication by $q$. Let $p:X \rightarrow \Spec{\F_q}$. For $\mc{F} \in D^b_c(X, \overline{\Q}_\ell)$, set 
\[
\mc{F}(m):=\mc{F} \otimes \left( p^*(\Q_\ell(-1)^{\otimes (-m)}\right).
\]
(Here we are using that $\Q_\ell(-1)$ is invertible, as an \'etale sheaf on a point. Thus it makes sense to take any integral tensor power.)

\vspace{3mm}
\noindent
{\bf Very nontrivial fact:} This definition of $\mc{H}$ is closed under convolution. (For the experts: This is a consequence of the Decomposition Theorem and the fact that the objects are ``already semi-simple over $\mathbb{F}_q$''.) 

\begin{remark}
({\em Technical point}) In the Hecke algebra, it is very useful to introduce a square root of $q$. To reflect this, we often choose a square root of $q$ in $\overline{\Q}_\ell$ and use it to define $\overline{\Q}_\ell(-1/2)$, the ``square root of Tate twist''. This can also be done purely formally if one prefers. 
\end{remark}

Now we are in good shape: If $G$ is the trivial group, 
    \[
    \mc{H}=\left\langle \left. \overline{\Q}_\ell \left(\frac{m}{2}\right)\right| m \in \Z \right\rangle_{\Delta} \subset D^b(\Spec \F_q).
    \]
    Hence the Grothendieck group is 
    \[
    [\mc{H}] \xrightarrow{\sim} \Z[v^{\pm 1}], v \mapsto \overline{\Q}_\ell(-1/2). 
    \]
    So this $\mc{H}$ passes our first test. Great! 

Using Grothendieck's theory of weights, there is an alternative version of this third try. Recall (see ``Scholze's motivic plane'' in Lecture \ref{lecture 10}): 
\[
\begin{tikzcd}
& & \text{motives} \arrow[dl, squiggly] \arrow[dr, squiggly] & & \\
& \begin{array}{c} \text{\'etale} \\
\overline{\Q}_\ell\text{-sheaves}\end{array} \arrow[d, squiggly] & & \begin{array}{c} \text{Saito's mixed} \\ \text{Hodge modules} \end{array} \arrow[d, squiggly] & \\
& \begin{array}{c} \text{Frobenius}\\ \text{actions on \'etale}\\ \text{cohomology} \arrow[rr, leftrightarrow, red] \arrow[dl, squiggly] \end{array} & & \begin{array}{c} \text{mixed Hodge} \\ \text{structures} \end{array} \arrow[dr, squiggly] \\
\begin{array}{c}\text{Weil} \\ \text{conjectures} \end{array}& X/\F_q& & X/\C & \begin{array}{c}\text{Hodge} \\
\text{theory} \end{array}
\end{tikzcd}
\]
Under the red arrow above, we have a second incarnation of our third try:
\[
\mc{H} = \langle \bm{IC}_x \mid x \in W_f \rangle_{[\Z], (\frac{1}{2} \Z), \Delta} \subset D^b_{B \times B} \left( \begin{array}{c} \text{mixed Hodge} \\ \text{modules} \end{array} \right).
\]
This version of $\mc{H}$ also has the correct Grothendieck group. It is closed under convolution by the Decomposition Theorem. 

\vspace{3mm}
\noindent
{\bf A subtlety:} These two incarnations $\mc{H}_\text{\'etale}$ and $\mc{H}_{mHm}$ of our third attempt are really different. If $G$ is the trivial group, then 
\begin{itemize}
    \item in $\mc{H}_\text{\'etale}$, 
    \[
    \Ext^1\left(\overline{\Q}_\ell, \overline{\Q}_\ell\left(\frac{m}{2}\right)\right)=\begin{cases} \overline{\Q}_\ell & \text{ if } m=0; \\ 0 & \text{otherwise}. \end{cases} 
    \]
    A non-trivial self-extension of $\overline{\Q}_\ell$ is given by $\overline{\Q}_\ell^2$, with Frobenius acting by Jordan block $\bp 1 & 1 \\ 0 & 1 \ep$. Hence, there are extensions between objects of the same weights, but no extensions between objects of different weights. 
    \item However, in $\mc{H}_{mHm}$, there are no extensions between objects of the same weight, but there are extensions between objects of different weights\footnote{Stating exactly what these extension groups are would require us to be more precise about what version of mixed Hodge structures we are using (integral, real, complex, etc.). However, the basic phenomenon that different weights extend persists in all of them.}. 
\end{itemize}

\begin{remark}
Under Bezrukavnikov's equivalence, we expect  
\[
\text{weight} \leftrightarrow \text{weight of $\C^\times$-action}.
\]
So there should be no Exts at all for the trivial group! 
\end{remark}

\noindent
{\bf A historical incarnation of this issue:} Let $\mc{O}_0$ be the principal block of category $\mc{O}$. There is a finite-dimensional $\C$-algebra $A$ such that $\mc{O}_0 \simeq A\text{-mod}_{\mathrm{f.g.}}$. In 1990, Soergel showed that $A$ admits a $\Z$-grading $\widetilde{A}$ defined over $\Q$, and $\widetilde{A} \text{-gmod}_{\mathrm{f.g.}}$ provides a grading on category $\mc{O}$. This grading explains the $q$ in the Kazhdan--Lusztig polynomials. Hence we have the following diagram: 
\[
\begin{tikzcd}
A\text{-mod}_\mathrm{f.g.} \arrow[d, dash, "\sim"] & \widetilde{A}\text{-gmod}_\mathrm{f.g.} \arrow[l] & \\
\mc{O}_0 \arrow[r, dash, "\sim"] & \begin{array}{c}\text{certain} \\ \mc{D}\text{-modules} \end{array} \arrow[r, dash, "\sim"] & \Perv_{(B)}(G/B) 
\end{tikzcd}
\]
What is the geometric meaning of the grading on $\mc{O}_0$ coming from $\widetilde{A}$? This was explained in \cite{BGS}:
\begin{align*}
    \widetilde{A}_{\overline{\Q}_\ell}\text{-gmod} &\simeq \begin{array}{c} \text{some geometric} \\ \text{category} \end{array} \leftrightarrow D^b_B(G/B, \overline{\Q}_\ell), \\ 
    \widetilde{A}_\C \text{-gmod} &\simeq \begin{array}{c} \text{some geometric}\\ \text{category} \end{array} \leftrightarrow D^b_B(MHM_{G/B}).
\end{align*}
In each case, the graded version is explained by some category related, but not equal, to mixed sheaves. In both cases, some cooking is involved to remove the problematic Exts from earlier. The ``cooking'' is different in each case. 
\begin{remark}
Recent motivic versions (Soergel-Wendt \cite{SW} and Soergel-Wendt-Virk, \cite{SVW}) explain how to remove the cooking.
\end{remark}

\subsection{Fourth (and final) try}
We have finally converged on the correct formulation. Let
\[
\mc{H}_\mathrm{s.s}=\langle \bm{IC}_x \mid x \in W_f \rangle_{\oplus, [\Z]} \subset D^b_{B \times B}(G, \Q).
\]
This is an additive category, but it is {\em not} triangulated. Thus it consists of all semi-simple complexes in $D^b_{B \times B}(G, \Q)$. It is closed under convolution by the Decomposition Theorem. We need to make make one necessary change of notation.

\vspace{2mm}
\noindent
{\bf Redefine:} $(1):=[1]$ on $\mc{H}_\mathrm{s.s.}$. 

\vspace{2mm}
\begin{definition}
The {\em geometric Hecke category} is 
\[
\mc{H}:=K^b(\mc{H}_\mathrm{s.s.}).
\]
This category has two shift functors: $(n)$ denotes the shift in $\mc{H}_\mathrm{s.s.}$ and $[n]$ denotes the shift in $K^b$. 
\end{definition}

Before declaring victory, we should verify that this satisfies all of our desired properties for the trivial group. If $G$ is the trivial group, then 
\[
\mc{H}_\mathrm{s.s.} =D^b_c(\mathrm{pt}) = D^b\left( \begin{array}{c} \text{finite-dimensional} \\ \text{vector spaces} \end{array} \right) = \begin{array}{c} \text{finite-dimensional}\\ \text{graded vector spaces.} \end{array}
\]
(Here the final equality is as {\em additive} categories.) Hence
\[
\mc{H} = K^b\left( \begin{array}{c} \text{finite-dimensional}\\ \text{graded vector spaces} \end{array} \right). 
\]
Note that this category has no extensions, so our woes of our third attempt have disappeared. Moreover, we have 
\[
[\mc{H}] = \Z[v^{\pm 1}],
\]
with $v$ corresponding to the shift of grading. Moreover, there are no $q$'s or $\ell$'s in sight. Hurrah! 

\begin{remark}
\begin{enumerate}
    \item We have $[\mc{H}]=[\mc{H}_{s.s.}]=H_f$.
    \item A similar definition over $\overline{\F}_q$ with $\overline{\Q}_\ell$-sheaves leads to an equivalent category (after extensions of scalars to $\overline{\Q}_\ell$). This proves independence of $q$ and $\ell$. The proof is by showing that $\mc{H}_\mathrm{s.s.} \simeq \mathbb{S}\mathrm{Bim}$, the category of Soergel bimodules\footnote{We hope to explain this sometime in the future!}. 
    \item With difficulty, one can construct monoidal realisation functors:
    \[
    \begin{tikzcd}
    & (\mc{H}_\text{\'etale},*) \\
    (\mc{H}, *)\arrow[ur] \arrow[dr] & \\
    & (\mc{H}_\mathrm{mHm}, *) 
    \end{tikzcd}
    \]
\end{enumerate}
\end{remark}
This provides considerable evidence that $\mc{H}$ is the right object. 
\begin{remark}
It is possible that although $\mc{H}$ is the right object, it may not yet have the right definition. Recent motivic versions \cite{SVW} probably provide the ``correct'' definition.
\end{remark}
\pagebreak
\section{Lecture 25: The categorical anti-spherical module and its symmetries}
\label{lecture 25}

Recall that we are working towards proving the following theorem of Arkhipov--Bezrukavnikov:
\begin{theorem} \cite{AB}
There exists an equivalence of triangulated categories
\[
\begin{tikzcd}
D^{G^\vee \times \G_m}(\widetilde{\mc{N}}) \arrow[r, dash, "\sim"] & \mc{M}^\mathrm{asph}.
\end{tikzcd}
\]
This equivalence categorifies the isomorphism
\[
\begin{tikzcd}
K^{G^\vee \times \G_m}( \widetilde{\mc{N}} ) \arrow[r, dash, "\sim"] & H \otimes_{H_f} \mathrm{sgn}.
\end{tikzcd}
\]
\end{theorem}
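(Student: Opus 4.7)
The plan is to construct the equivalence by matching generators and relations on both sides, following the Arkhipov--Bezrukavnikov strategy of reducing to the Bernstein presentation of the affine Hecke algebra. First I would make precise the categorical antispherical module $D^\mathrm{mix}_{IW}$ as a category of Iwahori--Whittaker equivariant (mixed) perverse sheaves on $\mc{F}\ell$, carrying a right action of the Hecke category $\mc{H}$ of Lecture~\ref{lecture 24} by convolution. The Iwahori--Whittaker equivariance is the geometric incarnation of tensoring with the sign representation of $H_f$: any Iwahori--Whittaker sheaf $\mc{F}$ satisfies $\mc{F} * B_s = 0$ for each finite simple reflection $s$, in direct analogy with $H \otimes_{H_f} \mathrm{sgn}$ annihilating $H_s + v$.

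Next, on the coherent side I would use the pullback isomorphism $K^{G^\vee \times \C^\times}(\mc{B}^\vee) \xrightarrow{\sim} K^{G^\vee \times \C^\times}(\widetilde{\mc{N}})$ from Lecture~\ref{lecture 21} to identify the classes $[\mc{O}_{\widetilde{\mc{N}}}(\lambda)]$ as a $\Z[v^{\pm 1}][X^\vee]$-basis, and the convolution action of the classes $[Q_s]$ computed in Lecture~\ref{lecture 22} as categorifying the $b_s$-action via the Demazure--Lusztig formula. On the constructible side, the analogous basis is given by the \emph{Wakimoto sheaves} $J_\lambda := \Delta_{t_\mu} * \nabla_{t_{-\nu}}$ where $\lambda = \mu - \nu$ with $\mu, \nu$ dominant; independence of the decomposition is a direct categorical lift of the well-definedness argument for $H_\lambda$ in Lecture~\ref{lecture 12}.

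The functor $F : D^\mathrm{mix}_{IW} \to D^b\Coh^{G^\vee \times \C^\times}(\widetilde{\mc{N}})$ is then determined by declaring $F(J_\lambda) = \mc{O}_{\widetilde{\mc{N}}}(\lambda)$ and intertwining the right $\mc{H}$-action by matching convolution with $B_s$ to convolution with a coherent lift of $[Q_s] \in K^{G^\vee \times \C^\times}(\mathrm{St})$. The essential technical input is a monoidal functor $\mc{H} \to D^b\Coh^{G^\vee \times \C^\times}(\mathrm{St})$ lifting the Kazhdan--Lusztig isomorphism; constructing such a coherent realization of $\mc{H}$ requires Gaitsgory's central sheaves from Lecture~\ref{lecture 23}, which via geometric Satake produce a functor $\Rep(G^\vee) \to Z(\mc{H})$ identifying the Bernstein center of $\mc{H}$ with $\mc{O}_{\widetilde{\mc{N}}/G^\vee}$-modules on the coherent side.

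The main obstacle will be verifying that $F$ is fully faithful. Both sides are generated as triangulated $\mc{H}$-module categories by a single object (the Iwahori--Whittaker unit on one side and $\mc{O}_{\widetilde{\mc{N}}}$ on the other), so essential surjectivity reduces to hitting one generator. For fully faithfulness, I would exploit the highest-weight structure of Lecture~\ref{lecture 20}: both categories should admit standard, costandard, and tilting objects indexed by minimal coset representatives $W^f \subset W$, and it suffices to compute $\Hom^\bullet(\Delta_x, \nabla_y)$ on each side and match. On the coherent side this is a Koszul-style calculation yielding a rank-one free $\Z[v^{\pm 1}]$-module concentrated in a single degree; on the constructible side it follows from affinity of Iwahori--Whittaker orbits together with the standard adjunctions. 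The technical heart, and the hardest step, is matching the weight grading (coming from mixed sheaves) on the constructible side with the $\C^\times$-weight on the coherent side, exactly as foreshadowed by the bottom row of the theorem statement where forgetting both gradings still yields an equivalence.
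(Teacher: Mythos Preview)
Your proposal has the right ingredients---Wakimoto sheaves, Gaitsgory's central functor, the Iwahori--Whittaker category---but the architecture is inverted in a way that makes the argument circular. You propose as ``essential technical input'' a monoidal functor $\mc{H} \to D^b\Coh^{G^\vee \times \C^\times}(\mathrm{St})$ lifting the Kazhdan--Lusztig isomorphism. That functor \emph{is} Bezrukavnikov's equivalence, and it is proved \emph{using} the Arkhipov--Bezrukavnikov theorem as a key input, not the other way around. You cannot assume it here.

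The paper's construction goes in the opposite direction: it builds a functor $\widetilde{F}: \Coh^{G^\vee}_{\mathrm{free}}(\widetilde{\mc{N}}) \to P_I$ \emph{from} the coherent side \emph{to} the constructible side, without ever touching the Steinberg variety or a monoidal realisation of $\mc{H}$. The mechanism is Tannakian and ``soft'' (Theorem~\ref{technical lemma}): to produce such a functor one only needs (i) a $\otimes$-functor $\Rep G^\vee \to P_I$, supplied by Gaitsgory's central functor $Z$; (ii) a $\otimes$-derivation of $Z$, supplied by the monodromy of nearby cycles; and (iii) highest-weight arrows $b_\lambda: J_\lambda \to Z(V_\lambda)$ satisfying Pl\"ucker relations, supplied by the Wakimoto filtration of central sheaves. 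The deequivariantisation principle of Lectures~\ref{lecture 27}--\ref{lecture 29} then packages this data into the desired functor. You never need to categorify the $\mc{H}$-action on the coherent side.

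Your fully-faithfulness strategy also diverges from the paper. Matching $\Hom^\bullet(\Delta_x,\nabla_y)$ presupposes a highest-weight structure on $D^b\Coh^{G^\vee}(\widetilde{\mc{N}})$ with identifiable standards and costandards; such a structure (the exotic $t$-structure) is a \emph{consequence} of the equivalence, not an input. The paper instead proves faithfulness by restricting to the regular nilpotent locus: the quotient $P_I^{id} = P_I/\langle \IC_x : x \neq id\rangle$ is identified via Tannakian formalism with $\Coh^{G^\vee}(\mc{N}_\mathrm{reg}) \simeq \Rep Z_{G^\vee}(N_0)$, and since $\Coh^{G^\vee}_{\mathrm{free}}(\widetilde{\mc{N}}) \to \Coh^{G^\vee}(\mc{N}_\mathrm{reg})$ is faithful, so is $\widetilde{F}$. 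Fullness is checked by an explicit computation of $\Ext^i_{\Coh^{G^\vee}(\widetilde{\mc{N}})}(V\otimes \mc{O}, \mc{O}(\lambda))$ for $\lambda$ dominant, where the higher Ext vanishing comes from Frobenius splitting of $T^*\mc{B}^\vee$, and the degree-zero piece is $\Hom_{G^\vee}(V, V_\lambda)$---matched on the Whittaker side by a direct calculation.
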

Today we will define the category $\mc{M}^\mathrm{asph}$ and discuss the philosophy of higher representation theory. 

\subsection{The affine Hecke category}
For the remainder of this course, $k \in \{\Q, \R, \C\}$. Last lecture in our quest to find the correct definition of the geometric Hecke category, we defined
\[
\mc{H}^\mathrm{ss}_f:=\langle \bm{IC}_x \mid x \in W_f \rangle_{\oplus, [\mathbb{Z}]} \subset D_{B \times B}(G, k). 
\]
Here ``ss'' stands for semi-simple. This category is additive, but not triangulated. 
\begin{remark}
(Technical point) It is convenient to normalize $\bm{IC}_x$ so that it corresponds to the usual IC sheaf on $G/B$ under the equivalence
\[
D_{B \times B}(G, k) \simeq D_B(G/B, k);
\]
e.g.
\[
\bm{IC}_{id}=k_B, \bm{IC}_s=k_{P_s}[1], \ldots, \bm{IC}_x|_{BxB}=k_{B x B}[\ell(x)].
\]
\end{remark}
We used $\mc{H}_f^\mathrm{ss}$ to define the geometric Hecke category:
\[
\mc{H}_f:=K^b(\mc{H}^\mathrm{ss}_f).
\]
This is a triangulated category (in contrast to $\mc{H}_f^\mathrm{ss}$). It has two natural shift functors: 
\begin{align*}
    [1] &=\text{ triangulated shift functor, and } \\
    (1) &=\text{ functor induced by the shift functor on } \mc{H}^\mathrm{ss}_f.
\end{align*}
A picture of these shifts on an object $\mc{F} \in \mc{H}_f$ is
\[
\includegraphics[scale=0.2]{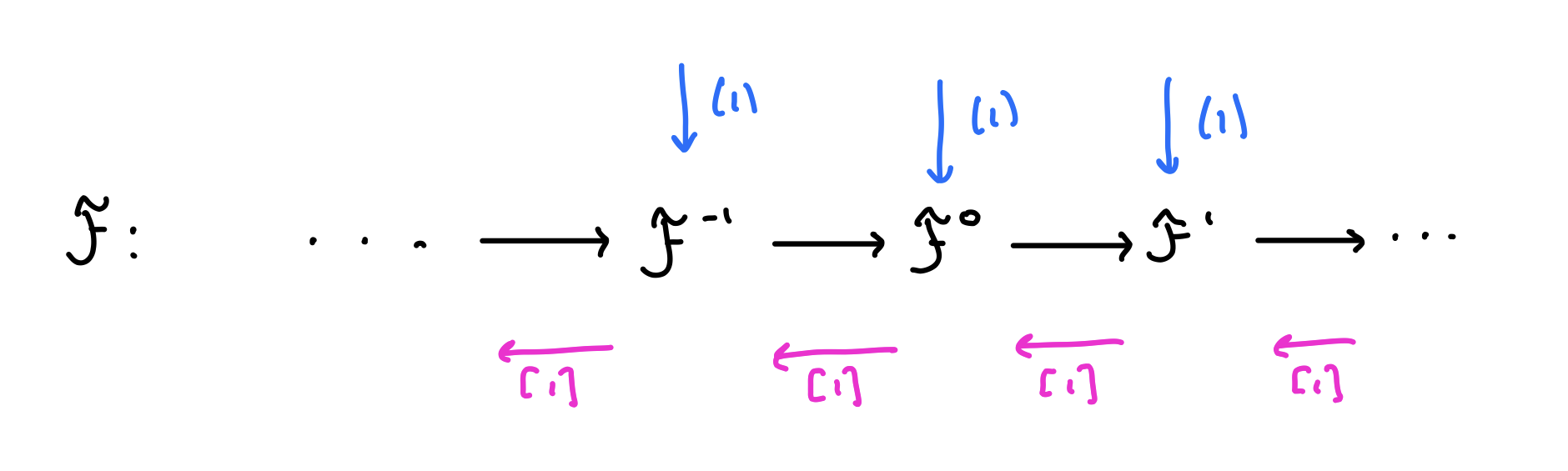}
\]
Here each $\mc{F}^i \in \mc{H}_f^\mathrm{ss}$. 
\begin{theorem}
\label{categorifies}
The map $b_s \mapsto [\bm{IC}_s]$ induces an isomorphism of $\Z[v^{\pm 1}]$-algebras 
\[
\begin{tikzcd}
H_f \arrow[r, dash, "\sim"] & \left[\mc{H}_f^\mathrm{ss}\right]_{\oplus} \simeq [\mc{H}_f]_\Delta.
\end{tikzcd}
\]
Here the subscript $\oplus$ denotes the split Grothendieck group of an additive category, and the subscript $\Delta$ denotes the triangulated Grothendieck group of a triangulated category. Under this isomorphism, the Kazhdan--Lusztig basis element $b_x$ corresponds to the class of an intersection cohomology complex, in formulas
\[
b_x \mapsto [\bm{IC}_x].
\]
\end{theorem}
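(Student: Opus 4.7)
The plan is to verify the relations in $H_f$ hold in $[\mc{H}_f^{\mathrm{ss}}]_\oplus$, which gives a well-defined algebra map; then to establish both isomorphisms in the statement. The comparison $[\mc{H}_f^{\mathrm{ss}}]_\oplus \simeq [\mc{H}_f]_\Delta$ is a general fact: for any additive Karoubian category $\mc{A}$, the natural map $[\mc{A}]_\oplus \to [K^b(\mc{A})]_\Delta$ sending $M \mapsto [M]$ (viewed as a complex concentrated in degree zero) is an isomorphism, since every bounded complex in $\mc{A}$ is equivalent in the Grothendieck group to the alternating sum of its terms. So the real content is the first isomorphism.

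To show the map $b_s \mapsto [\bm{IC}_s]$ is well-defined, I would check the quadratic and braid relations hold for $\bm{IC}_s$. The quadratic relation $b_s^2 = (v+v^{-1})b_s$ should categorify to
\[
\bm{IC}_s * \bm{IC}_s \simeq \bm{IC}_s(1) \oplus \bm{IC}_s(-1).
\]
This is computed exactly as in Lecture \ref{lecture 16}: the convolution is computed via the $\PP^1$-fibration $G/B \to G/P_s$, whose push-pull gives $H^*(\PP^1) \otimes \bm{IC}_s$, and $H^*(\PP^1)$ splits as $k(1) \oplus k(-1)$ under the cohomological shift $(1)=[1]$. The braid relations follow from the fact that for $s,t\in S_f$ with $m_{st} < \infty$, both iterated convolutions $\bm{IC}_s * \bm{IC}_t * \cdots$ (with $m_{st}$ factors, alternating) are semisimple complexes on the same Schubert variety $\overline{BwB}$ (where $w$ is the longest element of the parabolic $\langle s,t\rangle$), and a direct summand argument (or explicit Bott--Samelson calculation) identifies them. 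Throughout, one uses that $\mc{H}_f^{\mathrm{ss}}$ is closed under convolution, which is the Decomposition Theorem applied to the proper map $G \times_B G \to G$.

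For surjectivity of the resulting map $H_f \to [\mc{H}_f^{\mathrm{ss}}]_\oplus$, observe that every $\bm{IC}_x$ appears as a direct summand (with multiplicity one, in degree $(0)$) of a Bott--Samelson object $\bm{IC}_{s_{i_1}} * \cdots * \bm{IC}_{s_{i_k}}$ for any reduced expression $x = s_{i_1}\cdots s_{i_k}$; this is the Decomposition Theorem plus the fact that the Bott--Samelson resolution is semismall and birational onto $\overline{BxB}$. Hence the classes $[\bm{IC}_x]$ lie in the image, and since the isomorphism classes $\{\bm{IC}_x(n) \mid x\in W_f,\, n\in\Z\}$ form a $\Z$-basis of $[\mc{H}_f^{\mathrm{ss}}]_\oplus$ (by Krull--Schmidt and the classification of simple perverse sheaves), the image is all of $[\mc{H}_f^{\mathrm{ss}}]_\oplus$.

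For injectivity, and for the refined statement $b_x \mapsto [\bm{IC}_x]$, the main obstacle is matching the Kazhdan--Lusztig basis with the IC basis; this is the heart of the theorem. The strategy is to use the characterization of the KL basis: $b_x$ is the unique element of $H_f$ that is self-dual under the bar involution and satisfies $b_x \in \delta_x + \sum_{y<x} \Z[v,v^{-1}] \delta_y$ with the leading coefficient $1$ and lower terms in $v\Z[v]$. On the geometric side, $[\bm{IC}_x]$ is self-dual (Verdier duality fixes $\bm{IC}_x$ up to our normalization, and on the Grothendieck group this translates to invariance under the bar involution $v \mapsto v^{-1}$), and the support/stalk conditions of a perverse IC sheaf give exactly the triangularity condition with $v$-positive lower terms (the KL positivity phenomenon). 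By the uniqueness characterization, $[\bm{IC}_x] = b_x$ in $[\mc{H}_f^{\mathrm{ss}}]_\oplus$, giving a $\Z[v^{\pm 1}]$-basis correspondence and hence injectivity (and thus the full isomorphism). The hard part is genuinely this matching step, and it is what forces us to work with $\mc{H}_f^{\mathrm{ss}}$ (rather than a naive version without the grading shift $(1)$): without the weight/shift bookkeeping we could not see the variable $v$, and without the Decomposition Theorem's positivity we could not see that the coefficients in the transition matrix are polynomials in $v$ rather than $v^{\pm 1}$.
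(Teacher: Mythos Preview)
The paper states this theorem without proof (it appears in Lecture~25 as background, with roots in the discussion of Lecture~24), so there is no paper proof to compare against. Your sketch is largely on the right track, and Step~4 --- identifying $[\bm{IC}_x]$ with $b_x$ via the self-duality plus support characterization of the Kazhdan--Lusztig basis --- is correct and is the genuine content of the theorem. However, Step~1 contains an error.

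The Kazhdan--Lusztig generators $b_s$ do \emph{not} satisfy the braid relations: for $m_{st}=3$ one has $b_s b_t b_s = b_{sts} + b_s$ while $b_t b_s b_t = b_{sts} + b_t$, so $b_s b_t b_s \neq b_t b_s b_t$. Correspondingly, the Bott--Samelson convolutions are not isomorphic: $\bm{IC}_s * \bm{IC}_t * \bm{IC}_s \simeq \bm{IC}_{sts} \oplus \bm{IC}_s$ whereas $\bm{IC}_t * \bm{IC}_s * \bm{IC}_t \simeq \bm{IC}_{sts} \oplus \bm{IC}_t$. So ``checking the braid relations for $\bm{IC}_s$'' is not a meaningful step, and your claim that a direct-summand argument identifies the two alternating products is false. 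The presentation of $H_f$ by the quadratic relation plus braid relations is for the standard generators $\delta_s$, not for $b_s$; a presentation in the $b_s$ involves more intricate two-colour relations (depending on $m_{st}$) which are harder to verify directly on the geometric side.

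The clean fix is to run the argument in the opposite direction: define a character map $\mathrm{ch}:[\mc{H}_f^{\mathrm{ss}}]_\oplus \to H_f$ by recording graded stalk dimensions in the standard basis $\{\delta_x\}$ (this is precisely the function--sheaf dictionary of Lectures~16 and~24). That this map is a $\Z[v^{\pm 1}]$-algebra homomorphism follows from proper base change for the convolution diagram --- no relations need to be checked. Your Step~4 then shows $\mathrm{ch}([\bm{IC}_x]) = b_x$, and since both sides are free $\Z[v^{\pm 1}]$-modules on bases indexed by $W_f$ (Krull--Schmidt on the left, Kazhdan--Lusztig on the right), $\mathrm{ch}$ is an isomorphism. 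Your Step~2 on $[\mc{H}_f^{\mathrm{ss}}]_\oplus \simeq [\mc{H}_f]_\Delta$ is fine.
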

The same construction works in the affine case. Define an additive category
\[
\mc{H}^\mathrm{ss}:=\langle \bm{IC}_x \mid x \in W \rangle_{\oplus, [\Z]} \subset D^b_I(\mc{F}\ell, k)
\]
for $\mc{F}\ell:=G((t))/I$, and a triangulated category
\[
\mc{H} := K^b(\mc{H}^\mathrm{ss}). 
\]
The analogue of Theorem \ref{categorifies} holds:
\begin{theorem}
\[
H \simeq [\mc{H}^\mathrm{ss}]_\oplus \simeq [\mc{H}]_\Delta, b_x \mapsto [\bm{IC}_x]. 
\]
\end{theorem}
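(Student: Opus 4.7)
The plan is to mirror the argument outlined for the finite case (Theorem \ref{categorifies}), adapting it to the affine setting where the new complications are that $W$ is infinite and $\mc{F}\ell$ is an ind-scheme rather than a finite-dimensional variety. The proof naturally splits into four essentially independent steps: (i) show that $\mc{H}^\mathrm{ss}$ is closed under convolution, so that convolution descends to a ring structure on $[\mc{H}^\mathrm{ss}]_\oplus$; (ii) identify $[\mc{H}^\mathrm{ss}]_\oplus$ as a free $\Z[v^{\pm 1}]$-module with basis $\{[\bm{IC}_x]\}_{x \in W}$; (iii) construct a $\Z[v^{\pm 1}]$-algebra map $\phi \colon H \to [\mc{H}^\mathrm{ss}]_\oplus$ sending $b_s \mapsto [\bm{IC}_s]$ and show it is an isomorphism; (iv) show that the natural map $[\mc{H}^\mathrm{ss}]_\oplus \to [\mc{H}]_\Delta$ is an isomorphism.

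For (i), the crucial point is that for any $x, y \in W$ the convolution $\bm{IC}_x * \bm{IC}_y$ is the proper pushforward of a semisimple perverse sheaf along a stratified map whose total space is smooth; by the Decomposition Theorem (applied on any finite-dimensional Schubert subvariety containing the support, since everything in sight is ind-proper) it decomposes as a direct sum of shifts of $\bm{IC}_z$'s. This is the only point where we genuinely need $k$ to have characteristic zero. For (ii), the $I$-orbits on $\mc{F}\ell$ are the affine Schubert cells $IxI/I \cong \mathbb{A}^{\ell(x)}$ for $x \in W$; by the BBD classification of simples for an affine stratification (Lecture~\ref{lecture 20}), the simple objects of $\Perv_I(\mc{F}\ell, k)$ are exactly the $\bm{IC}_x$, each with endomorphism ring $k$. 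Hence $\mc{H}^\mathrm{ss}$ is Krull--Schmidt with indecomposables $\{\bm{IC}_x(n)\}_{x \in W, n \in \Z}$, so $[\mc{H}^\mathrm{ss}]_\oplus$ is a free $\Z[v^{\pm 1}]$-module with basis $\{[\bm{IC}_x]\}_{x \in W}$ and $v = [(1)]$.

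For (iii), the quadratic relation is the key computation and proceeds identically to the finite case: using the isomorphism $P_s \times^I P_s / I \cong \mathbb{P}^1 \times P_s/I$ for each (affine) simple reflection $s$, we compute $\bm{IC}_s * \bm{IC}_s \simeq H^\bullet(\mathbb{P}^1) \otimes \bm{IC}_s \simeq \bm{IC}_s(1) \oplus \bm{IC}_s(-1)$, which gives $[\bm{IC}_s]^2 = (v + v^{-1})[\bm{IC}_s]$, i.e.\ the quadratic relation for $b_s$. When $\ell(xy) = \ell(x) + \ell(y)$ the multiplication map $\overline{IxI} \times^I \overline{IyI}/I \to \overline{IxyI}/I$ is birational onto its image and one obtains $\bm{IC}_x * \bm{IC}_y \simeq \bm{IC}_{xy}$; iterating and comparing reduced expressions gives the braid relations. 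These exhaust the defining relations of $H$ in the Iwahori--Matsumoto presentation, so $\phi$ is well-defined. That $\phi$ is an isomorphism then follows by upper-triangularity in the Bruhat order: writing $b_x = \delta_x + \sum_{y < x} h_{y,x}(v)\,\delta_y$ and $[\bm{IC}_x] = [\bm{IC}_x|_{IxI/I}] + (\text{lower}) = [j_{x!}k] + (\text{lower terms in the Bruhat order})$, both $\phi(b_x)$ and $[\bm{IC}_x]$ are upper triangular with a common leading term, so $\phi$ sends one $\Z[v^{\pm 1}]$-basis to another.

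Finally, step (iv) is formal: for any Krull--Schmidt additive category $\mc{A}$, the map $[\mc{A}]_\oplus \to [K^b(\mc{A})]_\Delta$, $[X] \mapsto [X]$, is an isomorphism, with inverse $[C^\bullet] \mapsto \sum_i (-1)^i [C^i]$ shown to be well-defined on distinguished triangles using the standard stupid-truncation argument. The main obstacle is really step (i)--(iii)(a): closure of $\mc{H}^\mathrm{ss}$ under convolution together with the concrete computation of $\bm{IC}_s * \bm{IC}_s$. Both rely on the Decomposition Theorem and are the reason we restrict to $k \in \{\Q, \R, \C\}$. Once these are in hand, the remainder of the argument is a formal consequence of the Iwahori--Matsumoto presentation of $H$, the affine paving of $\mc{F}\ell$, and the elementary geometry of the $\mathbb{P}^1$-fibrations $P_s/I \to \mathrm{pt}$.
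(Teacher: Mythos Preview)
There is a genuine gap in step (iii). Your claim that $\bm{IC}_x * \bm{IC}_y \simeq \bm{IC}_{xy}$ whenever $\ell(xy) = \ell(x) + \ell(y)$ is false. Already for $G$ of type $A_2$ one has $\bm{IC}_{st} * \bm{IC}_s \simeq \bm{IC}_{sts} \oplus \bm{IC}_s$, reflecting the Hecke algebra identity $b_s b_t b_s = b_{sts} + b_s$; birationality of the convolution map gives only that $\bm{IC}_{xy}$ is a \emph{summand}, and the Decomposition Theorem produces further summands supported on smaller strata. Relatedly, the Kazhdan--Lusztig generators $b_s$ do \emph{not} satisfy the braid relations (e.g.\ $b_s b_t b_s \neq b_t b_s b_t$ above), so checking ``braid relations'' for $b_s \mapsto [\bm{IC}_s]$ is not the right thing to do. The correct route is the one taken for the braid group action in Lecture~\ref{lecture 30}: it is the \emph{standard} sheaves that satisfy $\Delta_x * \Delta_y \simeq \Delta_{xy}$ when lengths add (Lemma~\ref{deltas convolve} and its affine analogue). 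Working in $[\mc{H}]_\Delta$, one has $[\Delta_s] = [\bm{IC}_s] - v[\bm{IC}_{id}]$ from the obvious triangle, and then $\delta_s \mapsto [\Delta_s]$ is well-defined because the Iwahori--Matsumoto relations (quadratic and braid) hold on the nose for the $\Delta_s$'s.

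A second and deeper gap: even once the map is well-defined and shown to be an isomorphism by upper-triangularity, you have not established that $b_x \mapsto [\bm{IC}_x]$ for general $x$. Your upper-triangularity argument only shows that both $\{\phi(b_x)\}$ and $\{[\bm{IC}_x]\}$ are $\Z[v^{\pm 1}]$-bases with the same leading terms; it does not identify them. That identification is precisely the content of the Kazhdan--Lusztig conjecture: one needs that the character of $[\bm{IC}_x]$ in the $\delta$-basis is self-dual under the bar involution (Verdier duality) and has off-diagonal coefficients in $v\Z[v]$ (pointwise purity of intersection cohomology, from \cite{BBD} or Saito). These two properties uniquely characterise $b_x$, and neither is a formal consequence of the highest-weight structure or the Decomposition Theorem alone.
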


\subsection{The categorical (anti-)spherical module}
In the Hecke algebra, we have the quadratic relation for each $s \in S$:
\[
\delta_s^2=(v^{-1}-v)\delta_s+1, \text{ or reformulated, } (\delta_s + v)(\delta_x - v^{-1})=0. 
\]
This leads to the existence of two $H_f$-modules, $\mathrm{sgn}$ and $\mathrm{triv}$, where $\delta_s$ acts by $-v$ and $v^{-1}$, respectively, and two corresponding induced modules for $H$:
\begin{align*}
M^\mathrm{sph}&:=\mathrm{triv} \otimes_{H_f} H \text{ the spherical module, and }\\
M^{\mathrm{asph}} &:= \mathrm{sgn} \otimes_{H_f} H \text{ the antispherical module}. 
\end{align*}
\begin{remark}
We have made a notational switch from denoting the anti-spherical module by $N$ (c.f. Lecture \ref{lecture 21}) to $M^\mathrm{asph}$. This is because we wish to denote categorifications by the corresponding script letters and the symbol $\mc{N}$ has already been assigned to the nilpotent cone. 
\end{remark}
\begin{remark}
We have also made a switch from left to right modules. From now on, the spherical and anti-spherical module will be considered as right $H$-modules to align with \cite{AB}. 
\end{remark}

In Lecture \ref{lecture 20}, we discussed how as right modules over the lattice part $\mc{L}=\bigoplus_{\lambda \in X^\vee} \Z[v^{\pm 1}]\theta_\lambda$ of $H$, $M^\mathrm{sph}$ is a free module of rank $1$:
\[
M^\mathrm{sph} = \mathrm{triv} \otimes_{H_f}H = \mathrm{triv} \otimes_{H_f} H_f \otimes \mc{L} \simeq \mc{L}.
\]
A similar statement holds for $M^\mathrm{asph}$. This description of the (anti-)spherical module, which follows from the Bernstein presentation of the affine Hecke algebra, is the ``coherent perspective'' of $M^\mathrm{(a)sph}$. On the other hand, using the Coxeter structure of $W$, we can view $M^\mathrm{(a)sph}$ from a ``constructible perspective'' as follows. Any $w \in W$ can be written as $w_\mathrm{fin}{^f w}$ for $w_\mathrm{fin} \in W_f$ and $^f w \in ^fW$, where $^fW$ is the set of minimal coset representatives in $W_f \backslash W$. Hence we have an isomorphism 
\[
H_f \otimes \left( \bigoplus_{x \in {^fW}} \Z[v^{\pm 1}] \delta_x\right)  \xrightarrow{\sim} H,
\]
and $M^\mathrm{(a)sph}$ has a ``standard basis''
\[
\{\delta_x^\mathrm{(a)sph}:= 1 \otimes \delta_x \mid x \in {^fW}\}. 
\]
\begin{exercise} (Fun!) Compute the bijection 
\[
\begin{array}{c} \text{dominant} \\ \text{alcoves} \end{array} \xleftrightarrow{\sim} {^fW} \xleftrightarrow{\sim} W_f \backslash W=W_f \backslash (W_f \ltimes \Z \Phi^\vee) = \Z \Phi^\vee
\]
in a few examples (e.g. for $\SL_2, \SL_3, \ldots$). 
\end{exercise}

Using Kazhdan--Lusztig combinatorics, we can give descriptions of the spherical and anti-spherical module in terms of the Kazhdan--Lusztig basis of $H$. We dedicate the next part of the lecture to doing this. 
\begin{exercise}
\label{KL exercise}
Prove the following:
\begin{enumerate}
    \item If $x \in W$ and $s \in S$ are such that $xs<x$, then $b_x b_s = (v+v^{-1}) b_x$.
    \item  If $t \in S$ and $x \in W$ are such that $tx<x$ and $s \in S$ is arbitrary, then in the decomposition
    \[
    b_x b_s = \sum n_y b_y,
    \]
    $n_y \neq 0$ implies $ty<y$. Conclude from this that $\{b_x \mid tx<x\}$ span a right ideal in $H$.
\end{enumerate}
\end{exercise}

\begin{lemma}
(realisations of $M^\mathrm{sph}$ and $M^\mathrm{asph}$ via Kazhdan--Lusztig theory) Let $w_f$ be the longest element in $W_f$. 
\begin{enumerate}
    \item The map $1 \otimes 1 \mapsto b_{w_f}$ induces an isomorphism of right $H$-modules 
    \[
    M^\mathrm{sph} \xrightarrow{\sim} b_{w_f} H. 
    \]
    \item The $\Z[v^{\pm 1}]$-span of $\{b_s \mid s \not \in {^f W} \}$ is a right ideal. Moreover, 
    \[
    M^\mathrm{asph} \simeq H/\langle b_x \mid x \not \in {^fW} \rangle .
    \]
\end{enumerate}
\end{lemma}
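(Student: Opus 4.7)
The plan is to treat the two parts separately, with part (2) being the more interesting one. Throughout I write $b_s$ for the Kazhdan--Lusztig element $\delta_s + v$, and recall that on $\mathrm{triv}$ the element $b_s$ acts as $v+v^{-1}$, whereas on $\mathrm{sgn}$ it acts as $0$.

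For part (1), I would define a right $H$-module map $\phi \colon M^\mathrm{sph} \to b_{w_f} H$ by $1 \otimes h \mapsto b_{w_f} h$. Well-definedness is precisely Exercise \ref{KL exercise}(1): for every $s \in S_f$ we have $w_f s < w_f$, hence $b_{w_f} b_s = (v+v^{-1}) b_{w_f}$, which matches the action of $b_s$ on $\mathrm{triv}$. Surjectivity onto $b_{w_f} H$ is tautological. For injectivity, I would use that $\ell(yx) = \ell(y) + \ell(x)$ whenever $y \in W_f$ and $x \in {^fW}$, so that $H$ is free as a left $H_f$-module on the basis $\{\delta_x : x \in {^fW}\}$; combined with the identity $b_{w_f} H_f = \Z[v^{\pm 1}] \cdot b_{w_f}$ (again from Exercise \ref{KL exercise}(1)) this yields a decomposition $b_{w_f} H = \bigoplus_{x \in {^fW}} \Z[v^{\pm 1}] \cdot b_{w_f} \delta_x$. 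Thus $\phi$ carries the $\Z[v^{\pm 1}]$-basis $\{1 \otimes \delta_x : x \in {^fW}\}$ of $M^\mathrm{sph}$ bijectively onto a basis of $b_{w_f} H$.

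For part (2), the ideal statement is an easy corollary of Exercise \ref{KL exercise}(2): since $x \notin {^fW}$ if and only if there exists $s \in S_f$ with $sx < x$, the span $I$ of $\{b_x : x \notin {^fW}\}$ equals $\sum_{s \in S_f} \langle b_x : sx < x\rangle_{\Z[v^{\pm 1}]}$, and each summand is a right ideal, hence so is $I$. For the identification $M^\mathrm{asph} \simeq H/I$, I would consider the surjective right $H$-module map $\psi \colon H \to M^\mathrm{asph}$, $h \mapsto 1 \otimes h$, and argue that $\ker \psi = I$. The heart of the matter is $I \subseteq \ker \psi$: I would show $1 \otimes b_x = 0$ for $x \notin {^fW}$ by induction on $\ell(x)$. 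Choosing $s \in S_f$ with $sx < x$ and setting $y := sx$ (so that $sy = x > y$), the standard Kazhdan--Lusztig product formula gives
\[
b_s b_y = b_x + \sum_{\substack{z < y\\ sz < z}} \mu(z,y)\, b_z.
\]
Applying $1 \otimes (-)$ kills the left-hand side because $1 \cdot b_s = 0$ in $\mathrm{sgn}$; every $z$ appearing in the sum satisfies $sz < z$ and $\ell(z) < \ell(x)$, so $1 \otimes b_z = 0$ by induction, forcing $1 \otimes b_x = 0$.

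The reverse inclusion $\ker \psi \subseteq I$ is then a rank count over the PID $\Z[v^{\pm 1}]$: since $\{b_x : x \in W\}$ is a $\Z[v^{\pm 1}]$-basis of $H$, the quotient $H/I$ is free with basis $\{b_x + I : x \in {^fW}\}$; and since $H$ is free as a left $H_f$-module on $\{\delta_x : x \in {^fW}\}$ (as in part (1)), we have $M^\mathrm{asph} = \mathrm{sgn} \otimes_{H_f} H \simeq \bigoplus_{x \in {^fW}} \Z[v^{\pm 1}]\cdot(1 \otimes \delta_x)$, also free of rank $|{^fW}|$. The induced surjection $H/I \twoheadrightarrow M^\mathrm{asph}$ is thus a surjection between free $\Z[v^{\pm 1}]$-modules of the same finite rank, hence an isomorphism. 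The only step that requires nontrivial input is the product formula $b_s b_y = b_x + \sum \mu(z,y) b_z$; this is the same Kazhdan--Lusztig fact that underlies Exercise \ref{KL exercise}(2), so once that exercise is granted, everything reduces to careful bookkeeping with cosets, Bruhat order, and ranks of free modules.
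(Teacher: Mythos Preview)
Your proof of part (1) is correct and close to the paper's; the paper phrases injectivity via upper triangularity ($b_{w_f}\delta_x = \delta_{w_f x} + \text{lower terms}$) rather than through the left $H_f$-module decomposition, but these are equivalent.

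In part (2), your induction establishing $I \subseteq \ker\psi$ via the Kazhdan--Lusztig product formula is correct. (The paper instead constructs the map in the opposite direction, $M^{\mathrm{asph}} \to H/I$, whose well-definedness is immediate since each $b_s$ with $s \in S_f$ already lies in $I$; this sidesteps your induction entirely.) The genuine gap is in your final rank-count: the set ${^fW}$ is \emph{infinite}---it is in bijection with the coroot lattice, as the exercise immediately preceding the lemma points out---so your claim about ``free $\Z[v^{\pm 1}]$-modules of the same finite rank'' is false. (Incidentally, $\Z[v^{\pm 1}]$ is not a PID either, though that is not the decisive issue; the point is that a surjection between free modules of equal \emph{infinite} rank need not be injective.)

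The repair is exactly the upper-triangularity argument the paper alludes to as ``analogous'': show that your surjection $H/I \twoheadrightarrow M^{\mathrm{asph}}$ carries the basis $\{b_x + I : x \in {^fW}\}$ to a basis. Writing $b_x = \delta_x + \sum_{y<x} h_{y,x}\,\delta_y$ and observing that for any $y \in W$ one has $1 \otimes \delta_y = (-v)^{\ell(w)}(1 \otimes \delta_{y'})$, where $y = w y'$ with $w \in W_f$ and $y' \in {^fW}$ (so $\ell(y') \le \ell(y)$), one sees that the transition matrix from $\{1 \otimes b_x : x \in {^fW}\}$ to the standard basis $\{1 \otimes \delta_x : x \in {^fW}\}$ is unitriangular with respect to length, hence invertible.
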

\begin{proof}
By Exercise \ref{KL exercise}.1, $b_{w_f}b_s = (v+ v^{-1}) b_{w_f}$ for all $s \in S_f$, so 
\[
b_{w_f} \delta_s = b_{w_f} (b_s - v \delta_{id}) = v^{-1} b_{w_f}
\]
for all $s \in S_f$. By Frobenius reciprocity, we have a map 
\[
M^\mathrm{sph} \xrightarrow{\phi} b_{w_f}H, \hspace{2mm} \delta_{id}^\mathrm{sph} \mapsto b_{w_f}.
\]
For $x \in {^f W}$
\[
\delta_x^\mathrm{sph} \xmapsto{\phi} b_{w_f} \delta_x = \delta_{w_f x} + \text{ lower terms}. 
\] 
By upper triangularity, we conclude that $\phi$ is an isomorphism. This proves 1. 

By Exercise \ref{KL exercise}.2, $\langle b_x \mid x \not \in {^f W}\rangle$ is a right ideal in $H$. Hence $H/\langle b_x \mid x \not \in {^f W} \rangle$ is a right $H$-module. In $H/\langle b_x \mid x \not \in {^f W} \rangle$, 
\[
0=1 \cdot b_s = \delta_s + v
\]
for $s \in S_f$, so 
\[
1 \cdot \delta_s = -v \cdot 1.
\]
By Frobenius reciprocity, we have a map 
\[
M^\mathrm{asph} \rightarrow H/ \langle B_x \mid x \not \in {^f W} \rangle,
\]
and an analogous argument to the one above shows that this map is an isomorphism. This proves 2. 
\end{proof}

\begin{remark}
One can use the other Kazhdan--Lusztig basis $\{b_x'\}$ to realise $M^\mathrm{sph}$ as a quotient and $M^\mathrm{asph}$ as a submodule. This is categorified by Koszul duality. 
\end{remark}

Now it makes sense to define the categorical versions of $M^\mathrm{sph}$ and $M^\mathrm{asph}$:
\begin{align*}
    \mc{M}^{\mathrm{sph}, \mathrm{ss}} &:= \langle \bm{IC}_{w_f} * \mc{H}^\mathrm{ss} \rangle_{\ominus} \subset \mc{H}^\mathrm{ss}, \\
    \mc{M}^\mathrm{sph} &:= \langle \bm{IC}_{w_f} * \mc{H} \rangle_\Delta \subset \mc{H}, \\
    \mc{M}^\mathrm{asph, ss} &:= \mc{H}^\mathrm{ss} / \langle \bm{IC}_x \mid x \not \in {^fW} \rangle_{\oplus, (\Z)}, \\
    \mc{M}^\mathrm{asph} &:= \mc{H} / \langle \bm{IC}_x \mid x \not \in {^fW} \rangle_{\Delta, (\Z)}.
\end{align*}
Here the quotients are quotients of additive categories, and $\ominus$ denotes the closure under direct summands. 
\begin{lemma}
\[
\mc{M}^\mathrm{asph} =K^b(\mc{M}^\mathrm{asph, ss}). 
\]
\end{lemma}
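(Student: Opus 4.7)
The plan is to construct a natural triangulated comparison functor $\Phi:\mc{M}^{\mathrm{asph}}\to K^b(\mc{M}^{\mathrm{asph},\mathrm{ss}})$ and to verify that it is an equivalence. First I would apply $K^b$ term-by-term to the additive quotient functor $\pi:\mc{H}^{\mathrm{ss}}\to\mc{M}^{\mathrm{asph},\mathrm{ss}}$, obtaining a triangulated functor $\widetilde{\pi}:\mc{H}=K^b(\mc{H}^{\mathrm{ss}})\to K^b(\mc{M}^{\mathrm{asph},\mathrm{ss}})$. Each generator $\bm{IC}_x$ with $x\notin{}^fW$ is killed by $\pi$, hence by $\widetilde{\pi}$, so $\widetilde{\pi}$ vanishes on the thick subcategory $\langle\bm{IC}_x\mid x\notin{}^fW\rangle_{\Delta,(\Z)}$. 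By the universal property of the Verdier quotient, $\widetilde{\pi}$ descends to the desired $\Phi$.

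For essential surjectivity, given a bounded complex $C^{\bullet}\in K^b(\mc{M}^{\mathrm{asph},\mathrm{ss}})$, I would use that each term $C^i$ is a direct summand of a finite sum of images $\overline{\bm{IC}_x}(n)$ with $x\in{}^fW$ and so lifts to an object $\widetilde{C}^i\in\mc{H}^{\mathrm{ss}}$ (use Krull--Schmidt in $\mc{H}^{\mathrm{ss}}$ and the fact that $\pi$ is full to lift idempotents and morphisms). Each differential lifts to some $\widetilde{d}^i:\widetilde{C}^i\to\widetilde{C}^{i+1}$; the composition $\widetilde{d}^{i+1}\widetilde{d}^i$ is killed by $\pi$, so by definition of the additive quotient it factors through an object of $\mc{I}^{\mathrm{ss}}$. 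A standard cone manipulation then produces a genuine complex in $K^b(\mc{H}^{\mathrm{ss}})$ whose image under $\widetilde{\pi}$ is isomorphic to $C^{\bullet}$, and its class in $\mc{M}^{\mathrm{asph}}$ is the required preimage.

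The main step, and the main obstacle, is full faithfulness. Since both categories are triangulated and generated by the images of the $\bm{IC}_x(n)[m]$ with $x\in{}^fW$, it suffices to check that $\Phi$ induces a bijection on $\Hom(\bm{IC}_x,\bm{IC}_y(n)[m])$ for $x,y\in{}^fW$. By the calculus of roofs a morphism in $\mc{M}^{\mathrm{asph}}$ is represented by a zig-zag $\bm{IC}_x\xleftarrow{s}X'\xrightarrow{f}\bm{IC}_y(n)[m]$ with $\mathrm{cone}(s)\in\mc{I}$. The crux is to show that $X'$ can be replaced, without changing the class of the roof, by a complex all of whose terms are direct sums of $\bm{IC}_z(k)$ with $z\in{}^fW$. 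To do so I would use Krull--Schmidt in $\mc{H}^{\mathrm{ss}}$ to split each term $X'^i=X'^i_{{}^fW}\oplus X'^i_{\notin}$. The off-diagonal components of the differential factor through $\mc{I}^{\mathrm{ss}}$, so modifying by appropriate coboundaries in the quotient reduces the problem to an inductive collapse of the ``non-antispherical'' summands, using that $\mathrm{cone}(s)$ already lies in $\mc{I}$.

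The hard part will be making this collapse rigorous: one must show that the subcomplex of non-antispherical summands, together with the factorizations through $\mc{I}^{\mathrm{ss}}$ provided by the quotient, can be absorbed into the roof so that the original morphism survives. Equivalently, one must show that any null-homotopy in $K^b(\mc{M}^{\mathrm{asph},\mathrm{ss}})$ between morphisms lifted from $K^b(\mc{H}^{\mathrm{ss}})$ lifts to a null-homotopy in $\mc{M}^{\mathrm{asph}}$; this is another instance of the same Krull--Schmidt splitting argument applied to the homotopy itself. Once this simplification is justified, the zig-zag lives entirely in the subcomplex spanned by antispherical generators, and its image under $\Phi$ recovers the given morphism, completing the proof.
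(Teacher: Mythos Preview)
The paper does not actually prove this lemma: it is stated without proof and the text moves immediately to the next theorem. So there is no ``paper's proof'' to compare against.

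That said, your outline is the right shape. The cleanest way to organize it is to first observe that the thick triangulated subcategory $\langle \bm{IC}_x \mid x \notin {^fW}\rangle_{\Delta,(\Z)}$ of $\mc{H}=K^b(\mc{H}^{\mathrm{ss}})$ coincides with $K^b(\mc{I}^{\mathrm{ss}})$, where $\mc{I}^{\mathrm{ss}}=\langle \bm{IC}_x\mid x\notin{^fW}\rangle_{\oplus,(\Z)}$; this is immediate from stupid truncations. The lemma then reduces to the purely categorical statement $K^b(\mc{H}^{\mathrm{ss}})/K^b(\mc{I}^{\mathrm{ss}})\simeq K^b(\mc{H}^{\mathrm{ss}}/\mc{I}^{\mathrm{ss}})$. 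This is a general fact about Krull--Schmidt additive categories when $\mc{I}^{\mathrm{ss}}$ is the additive hull of a subset of indecomposables: the splitting $X=X_{\mc{I}}\oplus X_{^fW}$ of every object lets you replace any complex by an isomorphic one whose terms lie in the ``antispherical'' summand, with the $\mc{I}$-part absorbed into a contractible subcomplex. This is exactly the ``collapse'' you describe, and once phrased this way it is a standard Gaussian-elimination argument in $K^b$ rather than a delicate roof manipulation.

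Your sketch of full faithfulness via roofs is more laborious than necessary; the splitting-off-contractible-summands argument handles both essential surjectivity and full faithfulness at once, and avoids the step you flagged as ``hard''. If you want a reference where this style of lemma is spelled out carefully (in closely related settings), look at the papers of Achar--Riche or Riche--Williamson on parity sheaves and tilting perverse sheaves, where identical quotient-versus-$K^b$ comparisons appear.
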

\begin{theorem}
The category $\mc{M}^\mathrm{(a)sph}$ is a right $\mc{H}$-module. The map $\delta_{id}^\mathrm{asph} \mapsto \bm{IC}_{id}$ induces an isomorphism of right $H$-modules 
\[
M^\mathrm{(a)sph} \simeq [\mc{M}^{(a)sph, ss}]_{\oplus} = [\mc{M}^\mathrm{(a)sph}]_\Delta. 
\]
\end{theorem}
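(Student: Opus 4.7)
The plan is to split the proof into three parts: (i) verify that the module structure is well-defined, (ii) identify the split Grothendieck group of the additive categories with the appropriate submodule/quotient of $H$, and (iii) observe that passing from the additive category to its bounded homotopy category preserves Grothendieck groups.

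For the module structure, the spherical case is essentially tautological: $\mc{M}^\mathrm{sph}$ is defined as the triangulated closure of $\bm{IC}_{w_f} * \mc{H}$ inside $\mc{H}$, which is manifestly stable under right convolution by $\mc{H}$. The antispherical case is subtler. We must show that $\langle \bm{IC}_x \mid x \notin {^fW}\rangle_{\oplus,(\Z)}$ is a right ideal in $\mc{H}^\mathrm{ss}$ under convolution. Since the Bott--Samelson type objects $\bm{IC}_{s_1}*\cdots*\bm{IC}_{s_k}$ split into summands covering every $\bm{IC}_x$, it suffices to check stability under right convolution by $\bm{IC}_s$ for $s\in S$. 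By the Decomposition Theorem, $\bm{IC}_x * \bm{IC}_s \in \mc{H}^\mathrm{ss}$ decomposes as a direct sum of shifts of $\bm{IC}_y$'s, and the multiplicities are the coefficients $n_y$ appearing in $b_x b_s = \sum n_y b_y$ (by the previously established isomorphism $H \simeq [\mc{H}^\mathrm{ss}]_\oplus$). Exercise~\ref{KL exercise}(2) forces $ty < y$ for all $y$ with $n_y \neq 0$, whenever $tx<x$, so every summand still lies in the subcategory. This gives the right $\mc{H}$-module structure on both $\mc{M}^\mathrm{(a)sph,ss}$ and $\mc{M}^\mathrm{(a)sph}$.

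For the Grothendieck group identification, I would construct the map $M^\mathrm{asph} \to [\mc{M}^\mathrm{asph,ss}]_\oplus$ by Frobenius reciprocity: since the image of each $b_x$ ($x \notin {^fW}$) vanishes in the quotient, the right $H$-equivariant assignment $\delta^\mathrm{asph}_{id} \mapsto [\bm{IC}_{id}]$ extends uniquely. Using the realisation $M^\mathrm{asph} \simeq H/\langle b_x \mid x \notin {^fW}\rangle$ from the Lemma, surjectivity follows because the classes $[\bm{IC}_x]$ for $x \in {^fW}$ span $[\mc{M}^\mathrm{asph,ss}]_\oplus$. Injectivity follows from the fact that quotients of additive categories by full additive subcategories induce quotients on split Grothendieck groups, together with the isomorphism $H \simeq [\mc{H}^\mathrm{ss}]_\oplus$. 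For $M^\mathrm{sph}$, the same strategy works: the Lemma realises $M^\mathrm{sph} \simeq b_{w_f} H$, and $[\mc{M}^\mathrm{sph,ss}]_\oplus$ is the $\Z[v^{\pm 1}]$-submodule of $[\mc{H}^\mathrm{ss}]_\oplus = H$ generated by $[\bm{IC}_{w_f} * \bm{IC}_x] = b_{w_f} b_x$ for $x \in W$, which is exactly $b_{w_f} H$.

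Finally, the equality $[\mc{M}^\mathrm{(a)sph,ss}]_\oplus = [\mc{M}^\mathrm{(a)sph}]_\Delta$ is a general fact: for any Krull--Schmidt additive category $\mc{A}$, the natural map $[\mc{A}]_\oplus \to [K^b(\mc{A})]_\Delta$ sending an object to its class concentrated in degree zero is an isomorphism, with inverse given by the alternating sum of class-wise terms. I expect the main obstacle to be carefully verifying the closure of the antispherical ideal under convolution, because it requires lifting the purely algebraic statement of Exercise~\ref{KL exercise}(2) to the categorical level; the cleanest route is through the Decomposition Theorem and the already-established categorification of $H$ by $\mc{H}^\mathrm{ss}$, which reduces the categorical statement to its already-known decategorified counterpart.
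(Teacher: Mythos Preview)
The paper states this theorem without proof, so there is nothing to compare against directly; your proposal correctly supplies the argument the lecture notes leave implicit. The three ingredients are all in order: the Decomposition Theorem together with the established isomorphism $H \simeq [\mc{H}^\mathrm{ss}]_\oplus$ reduces the right-ideal property of $\langle \bm{IC}_x \mid x \notin {^fW}\rangle$ to the combinatorial statement of Exercise~\ref{KL exercise}(2); the Lemma's realisation $M^\mathrm{asph} \simeq H/\langle b_x \mid x \notin {^fW}\rangle$ then matches the split Grothendieck group of the additive quotient; and $[\mc{A}]_\oplus \simeq [K^b(\mc{A})]_\Delta$ is a general fact for any additive category (Krull--Schmidt is not needed --- the inverse is the Euler characteristic $[C^\bullet] \mapsto \sum (-1)^i [C^i]$, well-defined since contractible complexes split as sums of two-term identity complexes).

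One point worth making explicit in your injectivity step: to see that the images of $\bm{IC}_x$ for $x \in {^fW}$ remain nonzero and pairwise non-isomorphic indecomposables in the quotient category, use that $\End_{\mc{H}^\mathrm{ss}}(\bm{IC}_x) = k$ (these are simple perverse sheaves) and that by Krull--Schmidt no $\bm{IC}_x$ with $x \in {^fW}$ is a summand of an object of the ideal. This ensures the quotient is again Krull--Schmidt with the expected indecomposables, so its split Grothendieck group is exactly $H/\langle b_x \mid x \notin {^fW}\rangle$ rather than some further quotient.
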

\begin{remark}
In future lectures, we will discuss in more detail what it means for a category to be a module over a monoidal category like $\mathcal{H}$. 
\end{remark}

\subsection{Symmetries of categories}
The remainder of this lecture will be a discussion of higher representation theory. We start at the beginning: {\bf Representation theory} is the study of linear symmetry (of groups, algebras, etc.). This field stems from the underlying observation:

\vspace{3mm}
\begin{center}
{\em Group actions are difficult, but they become easier once we linearize.}
\end{center}
\vspace{2mm}

\begin{example}
\begin{enumerate}
    \item Linearizing the action of $S^1 \circlearrowright S^1$ (resp. $SO(3) \circlearrowright S^2$) leads to the theory of Fourier series (resp. spherical harmonics). 
\item If $X$ is a variety over $\Q$, studying $X(\Q)$ is hard. This can be expressed as a question about the action of the absolute Galois group of $\Q$ on all points of $X(\overline{\Q})$. A central technique in modern number theory is instead to study the linear problem $\Gal(\overline{\Q}/\Q) \circlearrowright H^i(X, \overline{\Q}_\ell).$ 
\end{enumerate}
\end{example}
{\bf 2-representation theory} is the study of symmetries of linear categories (additive, abelian, triangulated, etc.). These symmetries take the form of monoidal categories. 

\vspace{2mm}
\noindent
{\bf Philosophy:} (learned from Manin) In differential geometry, if we have a group acting on a manifold $G \circlearrowright M$, we get a lot of leverage by linearizing and studying $G \circlearrowright L^2(M, \C)$. In algebraic geometry, if we want to do the same thing for a group $G$ acting on an algebraic variety $X$, a useful technique is to study is $G \circlearrowright \Coh(X), \mathrm{QCoh}(X), D^b(\Coh(X))$. The categories $\mathrm{QCoh}(X), \Coh(X), D^b(\Coh(X))$ can be thought of as {\em 2-linearizations} of $X$.
\vspace{2mm}

\begin{remark}
(Side remark, c.f. \cite{manin}) Let $\mc{F}$ be a sheaf on $X$. Early approaches to studying such objects ($\sim$ 1950s) emphasized \v{C}ech coverings. Later, Grothendieck shifted the emphasis to injective resolutions. In sheaf cohomology 
\[
H^i(X, \mc{F})
\]
there is a non-linear variable ($X$), and a linear variable ($\mc{F}$). We can interpret Grothendieck's shift as a movement from the non-linear variable to the linear variable. 
\end{remark}

\noindent
{\bf Long range hope:} From the perspective of $2$-representation theory, our current goal is to show that the natural symmetries of 
\[
D^{G^\vee \times \mathbb{G}_m}(
\widetilde{\mc{N}}) \text{ and } \mc{M}^\mathrm{asph}
\]
agree, and yield the Hecke category. 

\vspace{2mm}

A priori the symmetries on each side look rather different:
\begin{align*}
    \mathrm{RHS} &\circlearrowleft \mc{H} \\
    \mathrm{LHS} &\circlearrowleft \Rep G^\vee, \Rep \G_m, \mathrm{Pic}^{G^\vee \times \G_m}(\widetilde{\mc{N}}), \ldots 
\end{align*}

\vspace{2mm}

\subsection{A notational interlude:} We will use the language of stacks. 
\begin{itemize}
    \item All stacks we study will be of the form $Y/H$, where $Y$ is a quasi-projective variety and $H$ is an affine algebraic group. 
    \item See Emily's IFS talk ``Sheaves on Stacks'' for an excellent introduction to coherent sheaves on stacks. 
    \item The most important fact (which will be used repeatedly) is \begin{align*}
        \mathrm{QCoh}(Y/H)&\simeq \mathrm{QCoh}^H(Y), \\
        \Coh(Y/H) &\simeq \Coh^H(Y).
    \end{align*}
    \item {\bf Basic observation:} If $Y$ is a stack as above, then 
    \begin{enumerate}
        \item $\mathrm{QCoh}(Y)$ and the category of vector bundles on $Y$ are symmetric monoidal categories. 
        \item If $X \xrightarrow{f} Y$ is a map of stacks, then $\mathrm{QCoh}$ is a module over $\mathrm{QCoh}(Y)$ via 
        \[
        \mc{F} * \mc{G} := f^*(\mc{F}) \otimes_{\mc{O}_X} \mc{G}
        \]
        for $\mc{F} \in \mathrm{QCoh}(Y)$, $\mc{G} \in \mathrm{QCoh}(X)$. (Note that here $*$ denotes the action map of $\mathrm{QCoh}(X)$ and not convolution.)
    \end{enumerate}
\end{itemize}

\subsection{The basic approach of Arkhipov-Bezrukavnikov}
The basic approach of \cite{AB} is the following. We wish to show that 
\[
D^{G^\vee \times \G_m} (\widetilde{\mc{N}}) \simeq \mc{M}^\mathrm{asph}.
\]
(For the moment, we ignore $\G_m$.) We have the following maps of stacks:
\[
\begin{tikzcd}
\widetilde{\mc{N}}/G^\vee \arrow[d, "q"] & & \mathrm{QCoh}(\widetilde{\mc{N}}/G^\vee) \\
\mf{g}^\vee/G^\vee \arrow[d, "p"]& \rightsquigarrow & \mathrm{QCoh}(\mf{g}^\vee / G^\vee) \arrow[u, hookrightarrow, "q^*"] \\
\mathrm{pt}/G^\vee & & \mathrm{QCoh}(\mathrm{pt}/G^\vee) \arrow[u, hookrightarrow, "p^*"] 
\end{tikzcd}
\]
The maps of stacks in the column on the left induce the increasing chain of symmetric monoidal categories in the column on the right. Step by step, we will construct an action of the symmetric monoidal categories in this chain on $\mc{M}^\mathrm{asph}$. The action of $\mathrm{QCoh}(\mathrm{pt}/G^\vee)$ will come from Gaitsgory's central functors, the action of $\mathrm{QCoh}(\mf{g}^\vee / G^\vee)$ from the monodromy endomorphism of nearby cycles, and the action of $\mathrm{QCoh}(\widetilde{\mc{N}}/G^\vee)$ from ``Wakimoto arrows'' (which are yet to be defined). This procedure yields a functor 
\[
\mathrm{QCoh}(\widetilde{\mc{N}}/G^\vee) \rightarrow \mc{M}^\mathrm{asph},
\]
which we will argue is an equivalence. 
\pagebreak
\section{Lecture 26: Monoidal categories and their actions}
\label{lecture 26}

In representation theory, we study 
\[
{A \atop \text{algebra}} {\circlearrowright \atop \hspace{2mm}} { M \atop \text{vector space}}. 
\]
This process lets us draw conclusions about $M$ (composition series, semi-simplicity, etc.)

In $2$-representation theory, we study 
\[
{ \mc{A} \atop \text{monoidal category}} {\circlearrowright \atop \hspace{2mm} } {\mc{M} \atop \text{category}}.
\]
What does this process let us conclude about $\mc{M}$? Today we'll examine this question. 

\vspace{3mm}
\noindent
{\bf A theme of this lecture:} There are two approaches to higher algebra:
\begin{enumerate}[label=(\alph*)]
    \item {\em Generators and relations}: carry ``just enough'' coherence data, or
    \item {\em Holistic}: carry ``all'' coherence data. 
\end{enumerate}
Approach (a) is rigid, but (sometimes) computable; whereas approach (b) is flexible but (often) incomputable. Historically, (a) is usually developed first, but (b) proves itself to be more powerful in the long run. The following familiar example from algebraic topology illustrates this. 
\begin{example}
\label{algebraic topology} Given a reasonable topological space $X$, there are two approaches to computing homology $H_*(X)$:
\begin{enumerate}[label=(\alph*)]
\item triangulate your space as efficiently as possible and compute homology using a ``small'' complex (simplicial homology), or 
\item consider all possible singular $n$-simplices and build an enormous chain complex from the spaces $\Hom(\Delta_n, X)$ (singular homology).  
\end{enumerate}
The first approach is by ``generators and relations,'' whereas the second is ``holistic''. 
\end{example}
\begin{remark}
(For those who know about simplicial sets) The two approaches in Example \ref{algebraic topology} underly two approaches to studying the homotopy theory of $X$. 
\end{remark}

\subsection{What is a monoidal category?}
There are two approaches. 

\vspace{3mm}
\noindent
{\bf Generators and relations:}
A monoidal category is a category $\mc{A}$ equipped with a bifunctor 
\[
\otimes: \mc{A} \times \mc{A} \rightarrow \mc{A},
\]
a unit 
\[
\ind \in \mc{A},
\]
and natural transformations
\[
\alpha_{X,Y,Z}:(X \otimes Y) \otimes Z \xrightarrow{\sim} X \otimes (Y \otimes Z),
\]
\[
\lambda_X: \ind \otimes X \xrightarrow{\sim} X, \text{ and }
\]
\[
\rho_X: X \otimes \ind \xrightarrow{\sim} X
\]
such that the diagrams 
\[
\begin{tikzcd}
& \left( (X \otimes Y) \otimes Z\right) \otimes W \arrow[dl, "\alpha_{X, Y, Z} \otimes W"'] \arrow[drr, "\alpha_{X \otimes Y, Z, W}"] & & \\
\left(X \otimes (Y \otimes Z) \right) \otimes W \arrow[ddr, "\alpha_{X, Y \otimes Z, W}"'] & & & (X \otimes Y) \otimes (Z \otimes W) \arrow[d, "\alpha_{X, Y, Z \otimes W}"] \\
& & & X \otimes \left( Y \otimes (Z \otimes W) \right) \\
& X \otimes \left( (Y \otimes Z) \otimes W \right) \arrow[urr, "X \otimes \alpha_{Y, Z, W}"']
\end{tikzcd}
\]
and 
\[
\begin{tikzcd}
(X \otimes \ind) \otimes Y \arrow[dr, "\rho_X \otimes id_Y"']  \arrow[rr, "\rho_X \otimes \lambda_X"]  & &X \otimes (\ind \otimes Y) \arrow[dl, "id_X \otimes \lambda_Y"] \\
& X \otimes Y &
\end{tikzcd}
\]
commute for all objects $X, Y, Z, W$. We refer to the first diagram as the {\em pentagon} and the second as the {\em unit}. With this set-up, we can also formulate the notion of a {\em monoidal functor} between monoidal categories, though we won't state this definition precisely today. 

\vspace{3mm}
\noindent
{\bf Basic claim:} (``MacLane's coherence theorem'') Any two maps consisting of associators and units agree. 

\vspace{3mm}
\noindent
{\bf Holistic approach:} It's possible to give an alternative (and perhaps better?) definition of a monoidal category using the holistic approach, it can be found in \cite[\S 1.1]{Lurie}. This definition is complicated so we won't state it precisely here, but the rough idea is the following. 

Let $\mc{A}^{\otimes n}$ be the category of sequences of $n$-elements in $\mc{A}$. A monoidal category is equivalence to a whole host of functors of the flavor 
\begin{align*}
\mc{A}^{\otimes 5} &\rightarrow \mc{A}^{\otimes 3} \\
(X_1, X_2, X_3, X_4, X_5) &\mapsto (X_1 \otimes X_2, \ind, X_3 \otimes X_4 \otimes X_5),
\end{align*}
together with many many compatibilities.

\subsection{Modules for monoidal categories}
Similarly, we can formulate the notion of a module for a monoidal category either in terms of generators and relations, or holistically.

\vspace{3mm}
\noindent
{\bf Generators and relations:} A left module $M$ for a monoidal category $\mc{A}$ consists of a functor 
\[
\otimes: \mc{A} \times \mc{M} \rightarrow \mc{M},
\]
together with natural transformations
\[
\beta_{X, Y, M}: (X \otimes Y) \otimes M \xrightarrow{\sim} X \otimes (Y \otimes M),
\]
\[
\lambda_M: \ind \otimes M \xrightarrow{\sim} M, \text{ and }
\]
\[
\rho_M: M \otimes \ind \xrightarrow{\sim} M
\]
such that the analogous pentagon and unit diagrams commute for all objects $X, Y \in \mc{A}$ and $M \in \mc{M}$. 

\vspace{3mm}
\noindent
{\bf Holistic:} A similar approach is possible, see \cite[\S2]{Lurie}. 

\vspace{3mm}
In representation theory, we are familiar with the fact that representations of an algebra are equivalent to modules over that algebra. An analogous statement holds in $2$-representation theory. A {\em representation} of $\mc{A}$ is a pair $(\mc{M}, \phi)$ of a category $\mc{M}$ and a $\otimes$-functor $\phi: \mc{A} \rightarrow \End(\mc{M})$. 

\begin{proposition}
\[
\left\{ \mc{A}\text{-modules} \right\} \simeq \left\{ \mc{A}\text{-representations} \right\}.
\]
\end{proposition}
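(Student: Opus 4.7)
The plan is to construct functors in both directions between the $2$-categories of $\mc{A}$-modules and $\mc{A}$-representations, and then to check they are mutually (weakly) inverse. This is the categorical analogue of the classical bijection between left $A$-modules $M$ and algebra homomorphisms $A \to \End(M)$, with the only genuine work being the bookkeeping of coherence data.

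First I would build the functor from modules to representations. Given an $\mc{A}$-module $(\mc{M}, \otimes, \beta, \lambda, \rho)$, define $\phi_X := X \otimes (-) \in \End(\mc{M})$ for each $X \in \mc{A}$, and, for a morphism $f: X \to Y$, let $\phi_f$ be the natural transformation whose component at $M$ is $f \otimes \mathrm{id}_M$. The bifunctoriality of $\otimes$ makes $\phi: \mc{A} \to \End(\mc{M})$ a functor. Next I would promote $\phi$ to a monoidal functor: the module associator gives, for each pair $X, Y$, a natural isomorphism $\phi_{X \otimes Y} \xrightarrow{\sim} \phi_X \circ \phi_Y$ whose component at $M$ is $\beta_{X, Y, M}$, and the unitor $\lambda$ gives $\phi_\ind \xrightarrow{\sim} \mathrm{id}_\mc{M}$. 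The pentagon axiom for $\beta$ (with respect to triples in $\mc{A}$ acting on $M$) is exactly the hexagon coherence for the tensorator of a monoidal functor; the unit triangles for $\lambda$ and $\rho$ become the unit coherence of $\phi$. This step is essentially a line-by-line translation, which is the reason the two definitions are equivalent.

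Second, I would build the functor in the reverse direction. Given a monoidal functor $\phi: \mc{A} \to \End(\mc{M})$ with tensorator $\mu_{X,Y}: \phi_{X \otimes Y} \xrightarrow{\sim} \phi_X \circ \phi_Y$ and unit $\epsilon: \phi_\ind \xrightarrow{\sim} \mathrm{id}_\mc{M}$, define $X \otimes M := \phi_X(M)$. The bifunctoriality of $\otimes$ comes from the functoriality of $\phi$ and of the evaluation $\mathrm{ev}: \End(\mc{M}) \times \mc{M} \to \mc{M}$. The module associator is $\beta_{X,Y,M} := (\mu_{X,Y})_M$, and the left unitor is $\lambda_M := \epsilon_M$; the right unitor can be induced from the unit isomorphism of $\mc{A}$. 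The hexagon coherence for $\mu$ and the unit coherence for $\epsilon$ then yield the pentagon and unit axioms required of an $\mc{A}$-module.

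Third, I would verify that the two constructions are mutually inverse up to natural isomorphism of ($2$-)functors: starting with a module $\mc{M}$, passing to $\phi$ and then back recovers the original action $(X, M) \mapsto X \otimes M$ on the nose, with $\beta, \lambda, \rho$ preserved by construction; starting with $(\mc{M}, \phi)$, the associated module reconstructs $\phi$ up to the canonical identification $\phi_X = X \otimes (-)$. Morphisms on both sides (module morphisms versus monoidal natural transformations of $\phi$) match because a natural transformation commuting with the action of every $X \in \mc{A}$ is the same datum as a monoidal natural transformation of the corresponding $\phi$.

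The main obstacle, and really the only one, is the coherence bookkeeping: one must choose conventions (left versus right action, strict versus weak, precise form of $\End(\mc{M})$ as a monoidal category under composition) and then check that the pentagon and unit diagrams for the module translate term-by-term into the hexagon and unit diagrams for a monoidal functor. Once a convention is fixed this is a diagram chase with no surprises; the holistic formulation of \cite{Lurie} in fact takes this equivalence as essentially definitional, which is another confirmation that no substantive content is hiding behind the coherence data.
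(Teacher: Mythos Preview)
Your proposal is correct and is the standard approach. The paper does not actually prove this proposition: immediately after stating it, the paper leaves the verification that this is an equivalence of $2$-categories as an exercise, with the parenthetical remark ``Which Geordie assumes is true but hasn't done.'' So you have supplied precisely what the paper omits, and your outline (construct $\phi_X = X \otimes (-)$ in one direction, $X \otimes M := \phi_X(M)$ in the other, then match the module pentagon/unit axioms with the monoidal-functor hexagon/unit coherences) is exactly the expected diagram chase.
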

\begin{exercise}
(Which Geordie assumes is true but hasn't done.) Check that this is an equivalence of $2$-categories. 
\end{exercise}

\subsection{Examples of modules over monoidal categories}
\label{examples of modules}
In representation theory, we started by studying groups, then realized that many of our questions about linear group symmetry could be formulated in terms of representations of algebras. In a similar vein, we can start $2$-representation theory by building categories from groups. 

Given a discrete group $G$, construct a category $\mc{A}_G$ as follows: 
\begin{itemize}
    \item {\bf Objects:} $\{r_g \mid g \in G\}$ satisfying $r_g \otimes r_h = r_{gh}$, $\ind = r_{id}$, and $\alpha, \lambda, \rho$ are all the identity. 
    \item {\bf Morphisms:} $\Hom(r_g, r_h) = \emptyset$ for $g \neq h$, and $\End(r_g) = \{id_{r_g}\}$. 
\end{itemize}

\noindent
{\bf What is a module over $\mc{A}_G$?} Given a category $\mc{M}$ and a $\otimes$-functor $F: \mc{A}_G \rightarrow \End(\mc{M})$, define $F_g:=F(r_g)$. Then the $\otimes$-functor $F$ gives us natural isomorphisms 
\[
\mu_{g h}:F_g F_h \xrightarrow{\sim} F_{gh} \text{ and } \epsilon:F_{id} \xrightarrow{\sim} id_{\mc{M}} 
\]
such that 
\[
\begin{tikzcd}
F_g F_h F_k \arrow[r, "F_g \mu_{h, k}"] \arrow[d, "\mu_{g, h}F_k"'] & F_g F_{hk} \arrow[d, "\mu_{g, hk}"] \\
F_{gh} F_k \arrow[r, "\mu_{gh, k}"] & F_{ghk}
\end{tikzcd}, \quad \hspace{2mm} 
\begin{tikzcd}
F_{id} F_g \arrow[r, "\mu_{id, g}"] \arrow[d, "\epsilon F_g"'] & F_g \arrow[dl, "id"] \\
F_g
\end{tikzcd}, \quad \text{ and }
\begin{tikzcd}
F_g F_{id} \arrow[r, "\mu_{g, id}"] \arrow[d, "F_g \epsilon"'] & F_g \arrow[dl, "id"] \\
F_g
\end{tikzcd}
\]
commute for all $g, h, k \in G$. This is called a {\em strict action} of $G$ on $\mc{M}$. 
\begin{exercise}
\label{categorical Z action}
Show that giving an action of $\Z$ on $\mc{M}$ is the same as giving an autoequivalence $E$ of $\mc{M}$. 
\end{exercise}

\begin{example}
\label{not enough coherence}
(An example to urge caution) What does it mean to give an action of $\Z/2\Z$ on $\mc{M}$? 
\begin{itemize}
    \item {\bf First guess:} A category $\mc{M}$ with an autoequivalence $E:\mc{M} \rightarrow \mc{M}$ together with a natural isomorphism $m:E^2 \xrightarrow{\sim} id_\mc{M}$. 

    This is wrong! Here's why: It follows from the definition of a categorical action that the diagram \begin{equation}
    \label{EEE}
        \begin{tikzcd}
        EEE \arrow[d, "mE"'] \arrow[r, "Em"] & E \\ E \arrow[ur, dash, "id"']
        \end{tikzcd}
    \end{equation}
    commutes. (This is a consequence of the previously displayed commutative square.)  But we can cook up an example of data as above where it doesn't. Let $\mc{M}$ be the category with two objects $X$ and $Y$ and morphisms 
    \[
    \Hom(X,Y)=\Hom(Y,X)=0, \hspace{ 2mm} \End(X)=\End(Y)=k,
    \]
    where $k$ is a field. 
    Define $E:\mc{M} \rightarrow \mc{M}$ by $E(X)=Y$, $E(Y)=X$, and $id_X \xmapsto{E} id_Y \xmapsto{E} id_X$. A picture of this category:
    \[
    \includegraphics[scale=0.2]{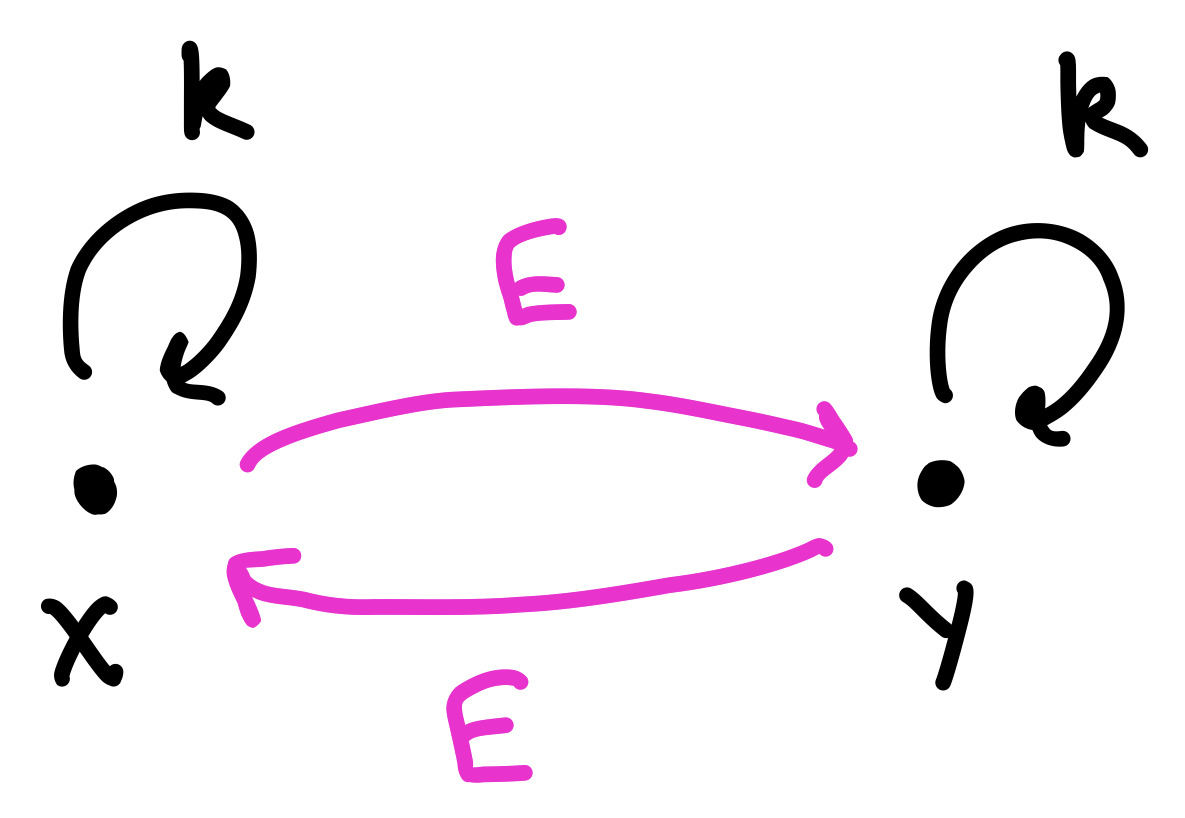}
    \]
    
    Note that $E^2=id_\mc{M}$, so any $m:E^2 \xrightarrow{\sim} id_\mc{M}$ is a natural transformation $m:id_\mc{M} \rightarrow id_\mc{M}$; i.e. $m \in Z(\mc{M})$. Hence 
    \[
    m=\begin{cases} a & \text{ on } \End(X) \\ b & \text{ on } \End(Y) \end{cases}
    \]
    for some $a, b \in k$.  
Now in this example, the diagram (\ref{EEE}) on $X$ becomes 
\[
\begin{tikzcd}
Y \arrow[r, "a"] \arrow[d, "b"'] & Y \\ Y \arrow[ur, "id_Y"'] & 
\end{tikzcd}
\]
This diagram commutes if and only if $a=b$. However, any choice of $a$ and $b$ defines a natural transformation $m:E^2 \rightarrow id_\mc{M}$, so our first guess must be wrong! 
\item {\bf Revised guess (Exercise):}  Show that the data of our first guess along with the requirement that (\ref{EEE}) commutes does determine an action of $\Z/2\Z$ on $\mc{M}$. 
\end{itemize}
\end{example}

\subsection{What is going on here?} 
We will briefly take a small diversion and discuss a beautiful dictionary related to these concepts. As we discussed in the beginning of this lecture, in the generators and relations approach to categorical actions, we want to include ``just enough'' coherence data. Example \ref{not enough coherence} illustrates that sometimes it can be subtle to determine how much coherence data is enough. The reader might be wondering:

\begin{center}
    How do we determine the necessary coherences? 
\end{center}

It turns out that they are determined by the ``cells of $BG$''. We will illustrate what we mean by this through a series of examples.  
\begin{example} Associated to $G$ is its classifying space $BG$\footnote{Recall that $BG$ is only defined up to homotopy. We will ignore this point below.}. 
\begin{enumerate}
    \item For $G=\Z$, $BG=S^1$. This is a CW complex consisting of one $0$-cell and one $1$-cell. Recall that in Exercise \ref{categorical Z action} we showed that an action of $\Z$ on a category $\mc{M}$ is given by an autoequivalence $E: \mc{M} \rightarrow \mc{M}$:
    \[
    \includegraphics[scale=0.2]{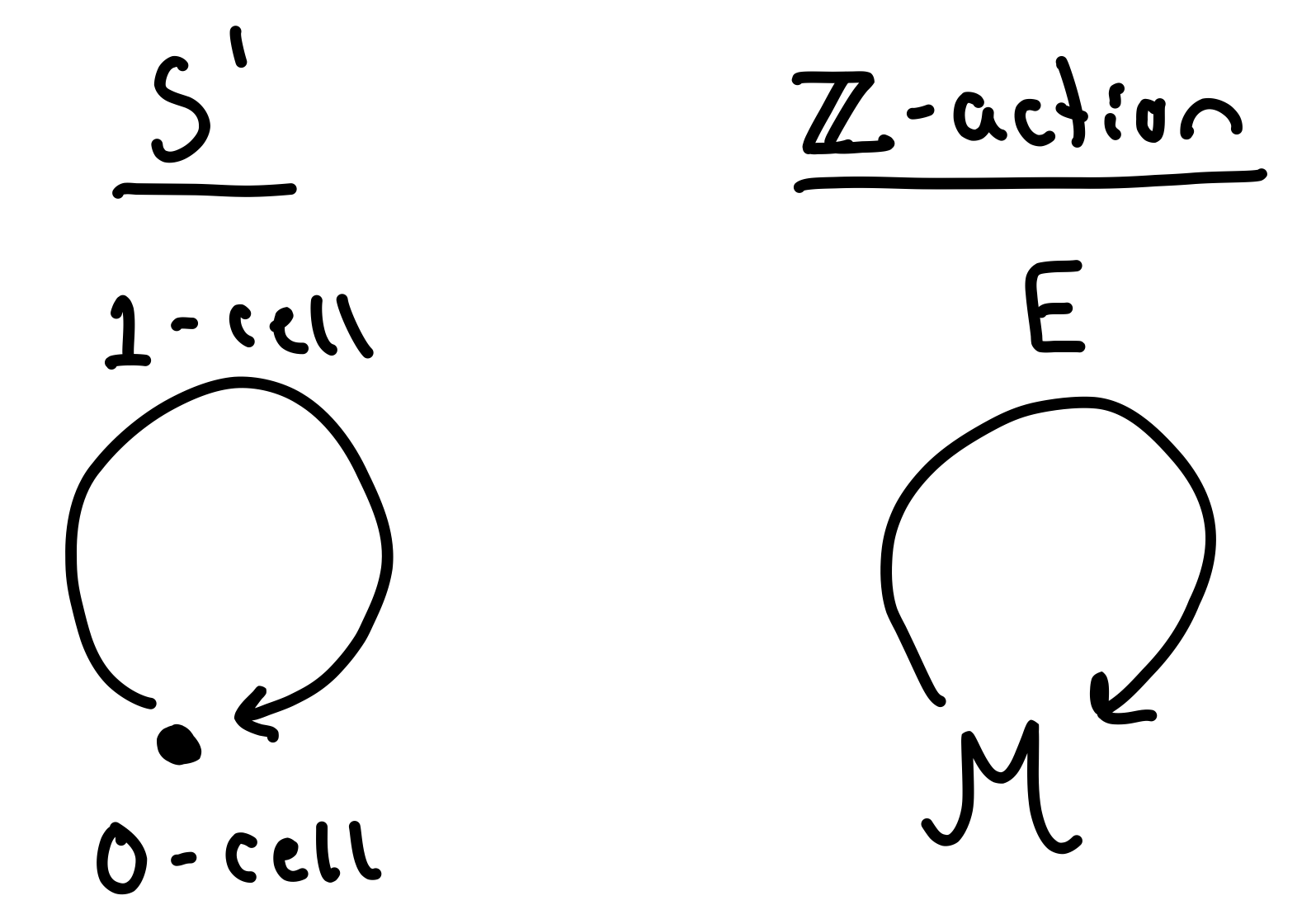}
    \]
    Hence we have one piece of ``coherence data'' for each cell of $BG$.
    \item For $G=\Z/2\Z$, $BG=\R\PP^\infty=S^\infty / \{ \pm 1\}$.  The sphere $S^\infty$ has a CW complex structure with two $n$-cells for each $n \in \Z_{\geq 0}$:
    \[
    \includegraphics[scale=0.2]{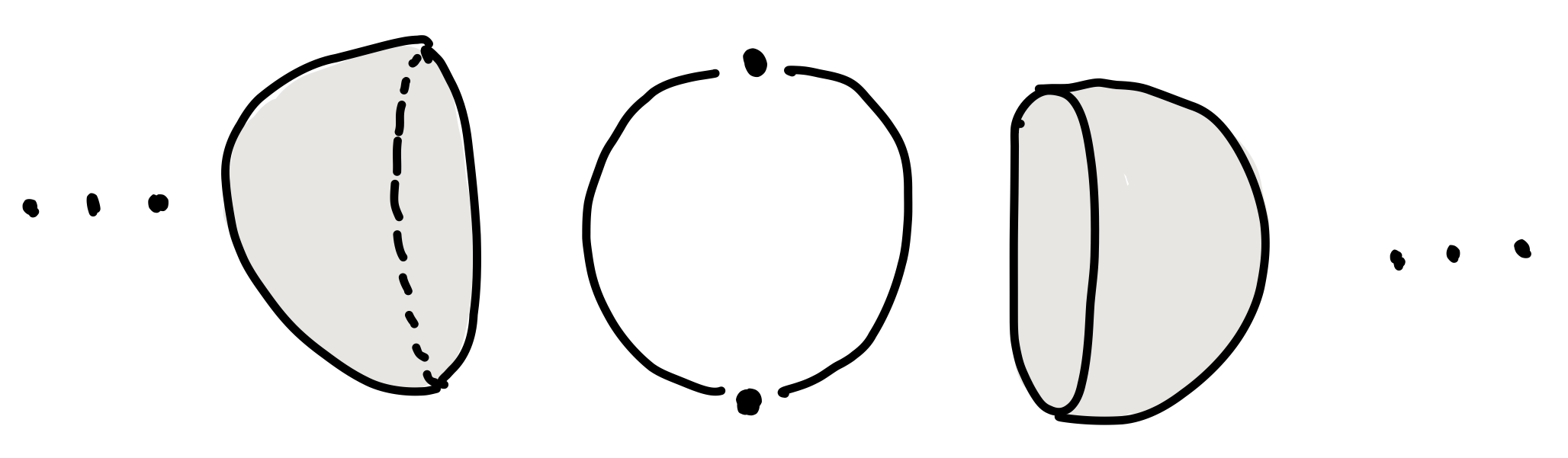}
    \]
    Moreover, the action of $\Z/2\Z$ on this cell complex is cellular. Hence $\R\PP^\infty$ is a CW complex with one $n$-cell for each $n \in \Z_{\geq 0}$. We saw in Example \ref{not enough coherence} that an action of $\Z/2\Z$ on $\mc{M}$ is determined by $(\mc{M}, E, m:E^2 \simeq id_\mc{M}, (\ref{EEE}))$:
    \[
    \includegraphics[scale=0.2]{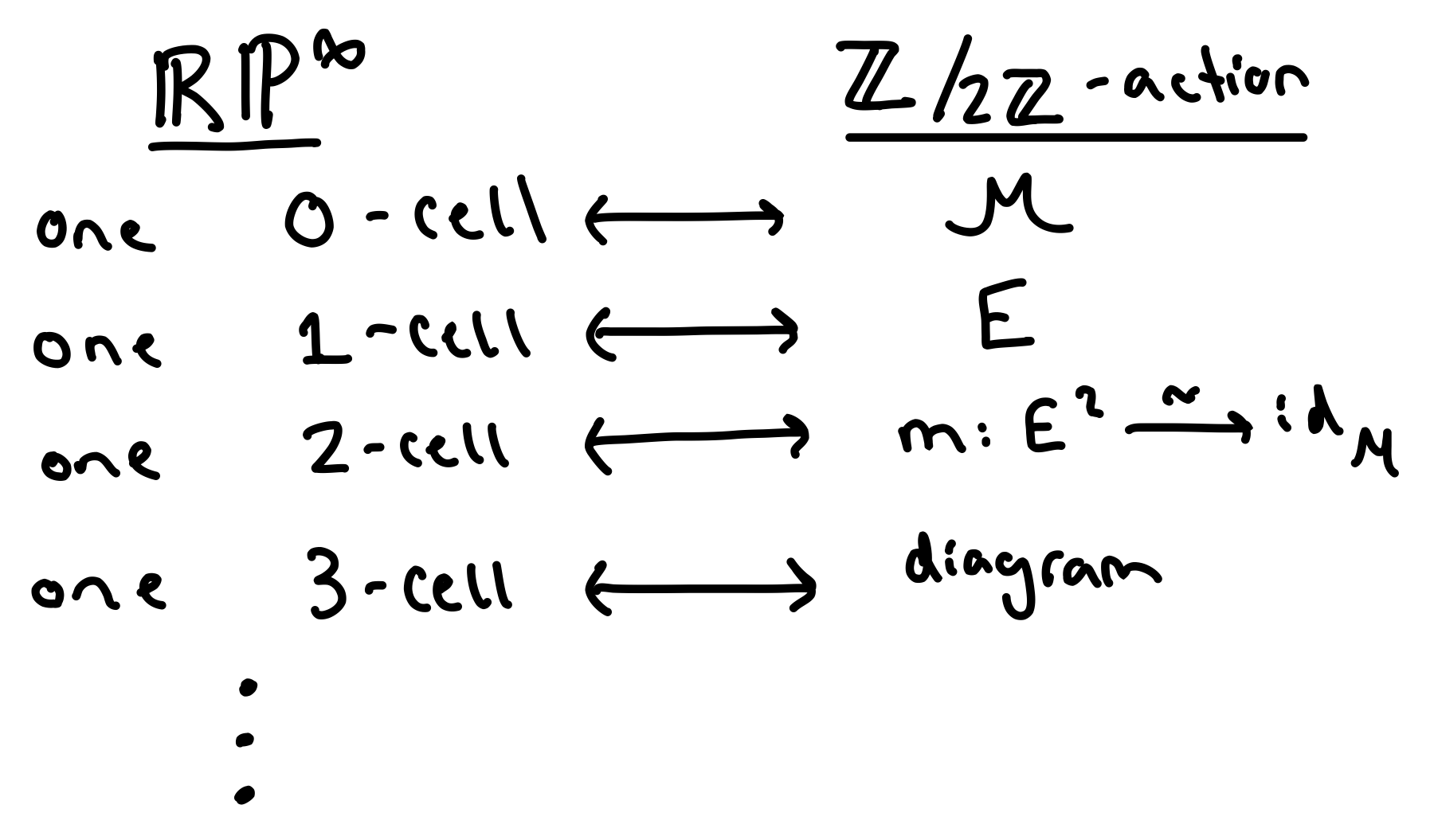}
    \]
    Again, for each $n$-cell with $0 \leq n \leq 3$, we have a corresponding piece of ``coherence data''.
    \item For any group $G$, the Milnor construction of $BG$ yields a CW complex strucure with one $0$-cell, $G$ 1-cells, $G \times G$ 2-cells, $G \times G \times G$ 3-cells, etc. Comparing this to the  definition of a $G$-action on $\mc{M}$ in Section \ref{examples of modules}, we see:
    \[
    \includegraphics[scale=0.2]{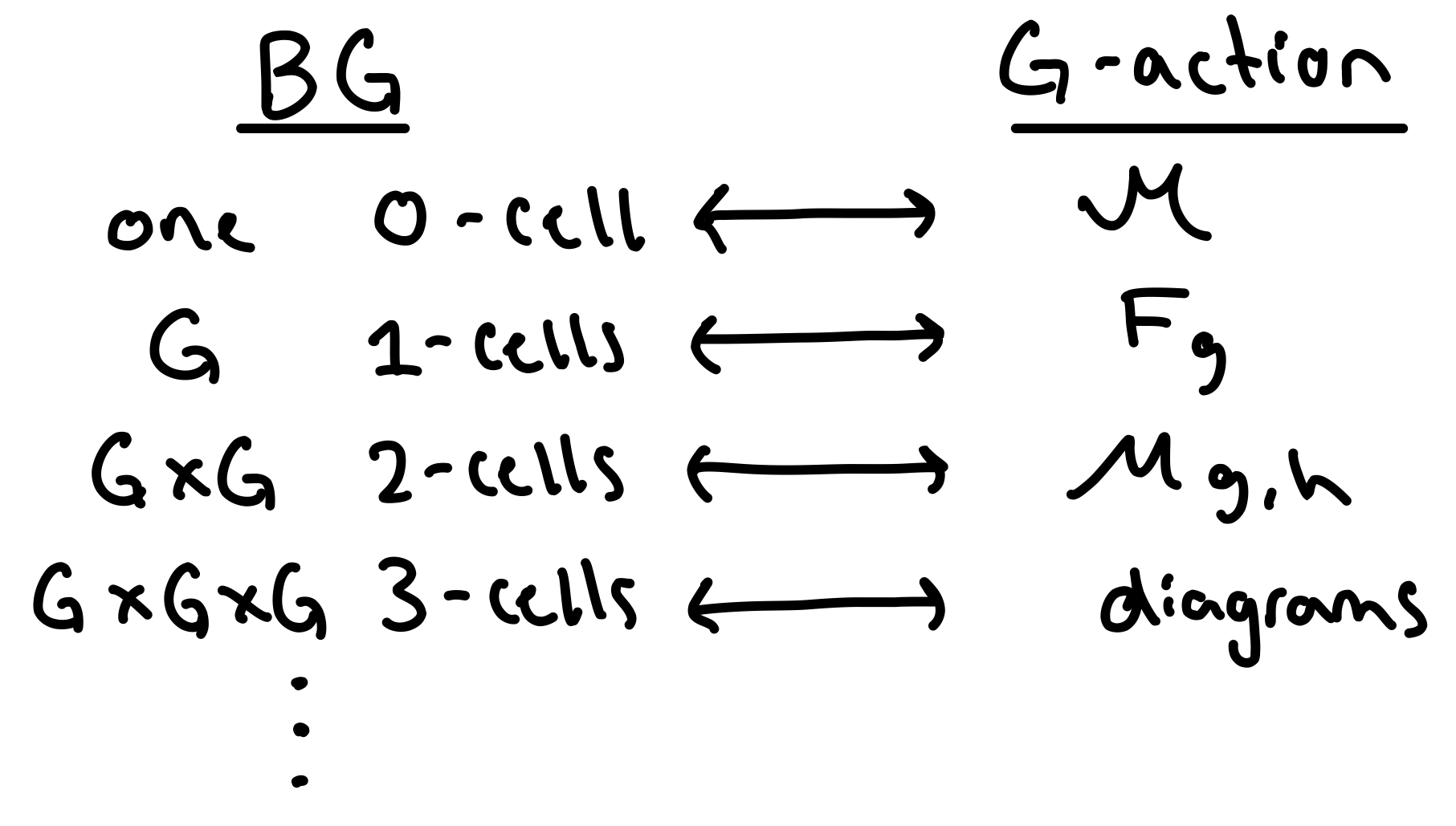}
    \]
    So each of piece of coherence data in the definition of a $G$-action corresponds to an $n$-cell of $BG$.
\end{enumerate}
\end{example}
In each example above, the CW complex structure gives us the necessary coherence data to determine the action of $G$. In parts 1 and 2, we used a CW complex with a small number of cells to obtain ``just enough'' coherence data, illustrating the ``generators and relations'' approach. In part 3, we used a CW complex with many cells to capture ``all'' coherence data of the $G$-action, illustrating the ``holistic'' approach. 

This pattern is very pretty, but {\bf why do we stop at $3$-cells?} This is because we are acting on $1$-categories. In general, 
    \[
    \includegraphics[scale=0.2]{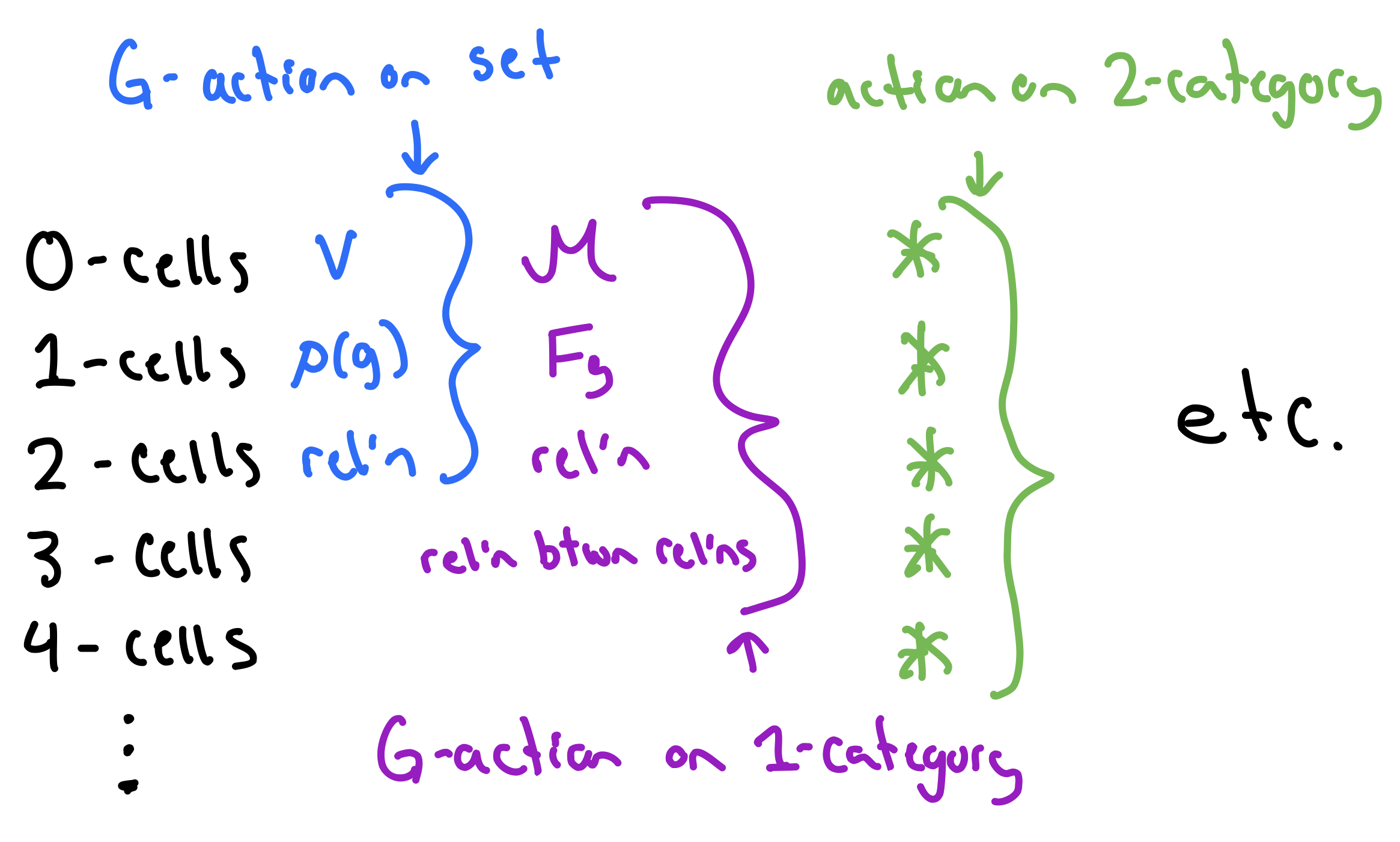}
    \]
    
\subsection{Example: Representations of the Verlinde category} 

For some reason, an action of the category $\mc{A}_G$ is not as powerful as a $G$-action on a vector space. 

\vspace{2mm}
\noindent
{\bf Reasons:} (Speculative)
\begin{enumerate}
    \item We need more structure to categorify linear algebra. For example, what is the eigenvalue of a functor? If $\Z \circlearrowright \mc{M}$, how do we take its logarithm? In examples of categorical actions (e.g. of braid groups in highest weight representation theory), there are often interesting morphisms $F_g \rightarrow F_h$ and these need to be studied. 
    \item It is possible that this isn't the right generalization. In this course, a much more important action will be 
    \[
    \Rep G \circlearrowright \mc{M},
    \]
    where $G$ is a linear algebraic group. In other words, we will study ``representations of representations of $G$''.
\end{enumerate}

Next lecture we will discuss the general case; today we will give a simple and beautiful example. To do so we need to introduce the Verlinde category. 

Recall that for each $m \geq 0$, $\mf{sl}_2(\C)$ has a unique simple module $V_m$ of highest weight $m$ and dimension $m+1$. In the category $\Rep{\mf{sl}_2(\C)}$, the module $V_0$ is the monoidal unit, and other representations multiply according to the rule 
\begin{equation}
\label{multiplication in RepG}
V_1 \otimes V_m \simeq V_m \otimes V_1 \simeq V_{m+1} \oplus V_{m-1}.
\end{equation}

For any positive integer $\ell \geq 1$, there exists a monoidal category $C_\ell$ which can be seen as a ``finite'' version of $\Rep{\mf{sl}_2(\C)}$. It has $\ell+1$ simple objects, $V_0, \ldots, V_\ell$, $V_0$ is the unit, and multiplication\footnote{One can remember (\ref{multiplication in Verlinde}) by noticing that it is the same as (\ref{multiplication in RepG}), except that all classes $V_m$ for $m>\ell$ are zero.} is given by 
\begin{equation}
\label{multiplication in Verlinde}
V_1 \otimes V_m = V_m \otimes V_1 = V_{m+1} \oplus V_{m-1} \text{ for }1 \leq m < \ell, \text{ and } V_\ell \otimes V_1 = V_1 \otimes V_\ell \simeq V_{\ell-1}.
\end{equation}
\begin{remark}
There are two realizations of $C_\ell$ (see \cite{Ostrik}, \cite{Kac}). The first is as the category of level $\ell$ representations of the affine Lie algebra $\widehat{\mf{sl}}_2(\C)$ with its fusion product. The second is as the semi-simplification of representations of the Lusztig form of the quantum group of $\mf{sl}_2$ at a root of unity. In both settings, $C_\ell$ is braided but (in contrast to $\Rep \mf{sl}(2, \C)$) not symmetric. The braiding will not play a role below. 
\end{remark}
\begin{theorem}
Let $\mc{A}=C_\ell$. Then 
\[
\left\{ \begin{array}{c} \text{ semi-simple abelian $\mc{A}$-modules $\mc{M}$ such that} \\ \text{1. $\mc{M}$ has finitely many simple objects, and } \\
\text{2. $\mc{M}$ is generated by a single simple object} \end{array} \right\}_{/{\simeq}} \xleftrightarrow{\sim} \left\{ \begin{array}{c} \text{simply laced} \\ \text{Dynkin diagrams} \\ \text{with Coxeter number $\ell+2$} \end{array} \right\}. 
\]
\end{theorem}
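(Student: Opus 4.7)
The plan is to reduce the problem to the classification of symmetric non-negative integer matrices with spectral radius strictly less than $2$, which by a classical theorem of J.\ H.\ Smith consists precisely of the simply laced Dynkin diagrams of types $A$, $D$, $E$.

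Given a module $\mc{M}$ with simple objects $\{X_i\}_{i \in I}$, I would extract its \emph{fusion matrix} $N = (n_{ij})$ defined by $V_1 \otimes X_i \simeq \bigoplus_j n_{ij} X_j$. Since the recursion $V_1 \otimes V_m \simeq V_{m+1} \oplus V_{m-1}$ expresses each $V_m$ as an integer polynomial in $V_1$ (essentially the Chebyshev polynomial $U_m$ of the second kind), $V_1$ monoidally generates $C_\ell$ and the entire $C_\ell$-action on $\mc{M}$ is determined by $N$. The self-duality $V_1 \simeq V_1^*$ together with rigidity gives $n_{ij} = n_{ji}$, so $N$ is symmetric. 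The cut-off relation $V_{\ell+1} = 0$ (equivalent to $V_\ell \otimes V_1 \simeq V_{\ell-1}$) forces $U_{\ell+1}(N/2) = 0$, so the eigenvalues of $N$ lie in $\{2\cos(k\pi/(\ell+2)) : 1 \le k \le \ell+1\}$ and in particular the spectral radius of $N$ is strictly less than $2$. The hypothesis that $\mc{M}$ is generated by a single simple $X_{i_0}$ translates into connectedness of the graph $\Gamma_N$ with adjacency matrix $N$, since every $X_j$ occurs as a summand of some $V_1^{\otimes m} \otimes X_{i_0}$, producing a walk of length $m$ from $i_0$ to $j$.

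By Smith's theorem, $\Gamma_N$ is then a simply laced Dynkin diagram. To pin down its Coxeter number $h$, I would use the Perron--Frobenius eigenvector $v$ of $N$: writing its eigenvalue as $2\cos\theta$, the action of $V_m$ on $v$ is multiplication by $U_m(\cos\theta) = \sin((m+1)\theta)/\sin\theta$, which must be non-negative for $m = 0, 1, \ldots, \ell$ (since $V_m \otimes X_{i_0}$ is a genuine direct sum of simples with non-negative multiplicities) and vanish for $m = \ell+1$. The vanishing forces $(\ell+2)\theta \in \pi \Z_{>0}$, and combined with non-negativity of $\sin(m\theta)$ up to $m = \ell+1$ this pins down $\theta = \pi/(\ell+2)$. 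Since the top eigenvalue of the adjacency matrix of an ADE diagram is $2\cos(\pi/h)$, we conclude $h = \ell+2$. This constructs the map from modules to Dynkin diagrams with Coxeter number $\ell+2$.

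The main obstacle is the reverse direction: for each simply laced Dynkin diagram $\Gamma$ with $h(\Gamma) = \ell+2$ one must exhibit an actual $C_\ell$-module $\mc{M}_\Gamma$ with fusion graph $\Gamma$, and then show uniqueness up to equivalence. The fusion matrix only controls the action on the Grothendieck group, whereas realising a module requires the full associator/unit coherence data of Section~\ref{lecture 26}. For type $A_{\ell+1}$ one can take $\mc{M} = C_\ell$ acting on itself by left multiplication, and the types $D$, $E_6$, $E_7$, $E_8$ arise from conformal embeddings of $\widehat{\mf{sl}}_2$ into larger affine Lie algebras (or equivalently from selected module categories of Lusztig's quantum $\mf{sl}_2$ at a $(2\ell+4)$-th root of unity). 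Uniqueness, given the fusion graph, is the subtlest step and is carried out by showing that rigidity and pivotality of $C_\ell$ rigidify the coherence data sufficiently to prevent inequivalent lifts; this is the content of the classification of Kirillov--Ostrik.
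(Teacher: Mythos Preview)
Your approach is essentially the same as the paper's: both extract the graph $\Gamma_N$ from the action of $V_1$ on simples, use the self-(bi)adjointness of $V_1$ to get symmetry, and then appeal to spectral constraints coming from the Verlinde cut-off $V_{\ell+1} = 0$ to identify $\Gamma_N$ as a simply laced Dynkin diagram. You are considerably more explicit than the paper (which just says ``some linear algebra''), spelling out the Chebyshev polynomial identity $U_{\ell+1}(N/2) = 0$, invoking Smith's theorem by name, and giving a clean Perron--Frobenius argument to pin down the Coxeter number as $\ell+2$. You also address the reverse direction (existence via conformal embeddings, uniqueness via Kirillov--Ostrik) more carefully than the paper, which only says one must ``work harder to rule out the tadpole and show that the graph determines $\mc{M}$''.

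One small gap worth flagging: the paper explicitly notes that the spectral argument alone leaves open the possibility that $\Gamma_N$ is a \emph{tadpole} (a path with a loop at one end), and that this case must be ruled out separately. Your invocation of Smith's theorem tacitly assumes $\Gamma_N$ is a simple graph with no loops, i.e.\ that $n_{ii} = 0$ for all $i$; but nothing in the symmetry of $N$ alone forces this, and a single vertex with a loop already has spectral radius $1 < 2$. Ruling out loops typically requires something beyond the Grothendieck-group data, for instance the pivotal/ribbon structure on $C_\ell$ (a parity argument via the twist on $V_1$). The paper does not fill in this step either, but it at least names it; you should too.
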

\begin{example}
The category $C_{10}$ admits $3$ module categories on the right hand side, corresponding to the $A_{11}, D_7$, and $E_6$ Dynkin diagrams. 
\end{example}

\begin{proof}
(Sketch) Let us explain how to go from the LHS to RHS. Starting with $\mc{M}$ on the LHS, we can build a graph as follows (cf. McKay correspondence):
\begin{itemize}
      \item {\bf Vertices}: One vertex for every simple module in $\mc{M}$.
      \item {\bf Edges:} An edge between $M$ and $M'$ if $M$ is a summand of $V_1 \otimes M'$. 
   \end{itemize}
Just as in $\Rep{\mf{sl}_2(\C)}$, $V_1$ is self-biadjoint in $\mc{A}$, so 
\[
M \text{ is a summand of } V_1 \otimes M' \iff M' \text{ is a summand of }V_1 \otimes M.
\]
Hence the graph associated to any $\mc{A}$-module $\mc{M}$ in this way is undirected.

From this graph, one can use (\ref{multiplication in Verlinde}) to deduce how $V_m$ acts on the Grothendieck group $[\mc{M}]$ for all $m$. For example, if $D$ denotes the adjacency matrix of the graph associated to $\mc{M}$, then the action of $V_2$ on $\mc{M}$ is described by $D^2-id$ because $[V_2]=[V_1^{\otimes 2}]-[V_0]$ in $C_\ell$. Now some linear algebra allows one to deduce that the graph we construct from $\mc{M}$ must either be a simply laced affine Dynkin diagram or a ``tadpole'' $\vcenter{\hbox{\includegraphics[scale=0.1]{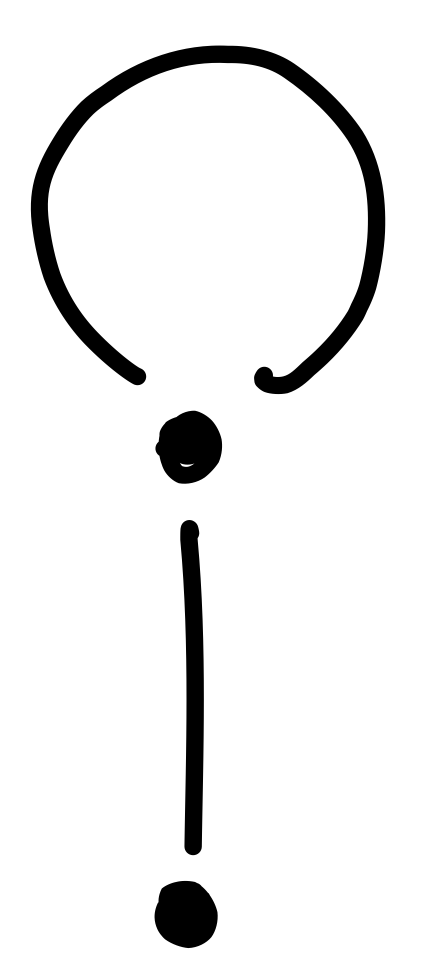}}}$. Then we can work harder to rule out the tadpole and show that the graph determines $\mc{M}$ up to equivalence.  
\end{proof}

\begin{remark}
In lectures, it was incorrectly stated that one can classify module categories over $\Rep{\mf{sl}_2(\C)}$ in a similar fashion. It turns out that the answer here is much more complicated. For example, in this setting, graphs of the form $\vcenter{\hbox{\includegraphics[scale=0.1]{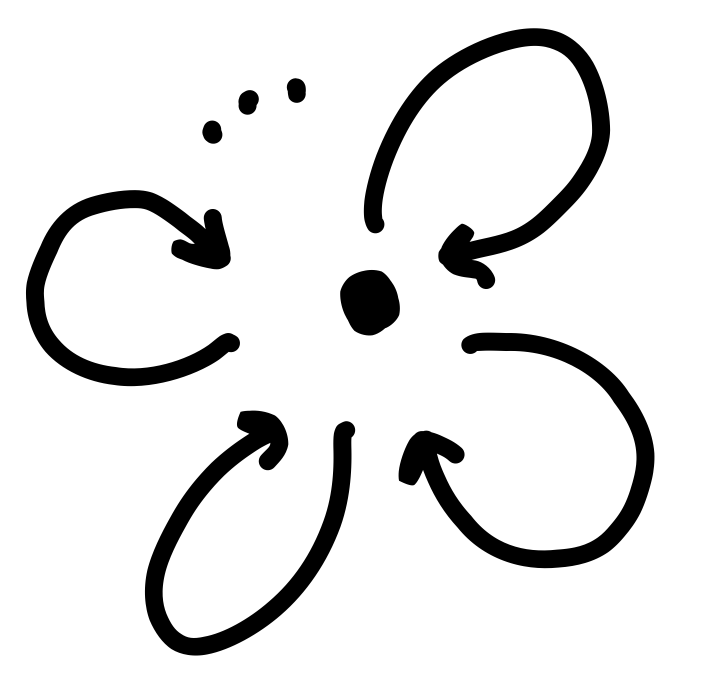}}}$ with any $n\ge 2$ loops appear. For more details\footnote{An explanation of why these flower-shaped graphs appear can also be found in the notes from Anna's talk ``Representations of representations of $\mf{sl}_2(\C)$'' at Macquarie University, available on her website \url{www.maths.usyd.edu.au/u/romanova}.}, the reader is referred to \cite{EO}. 
\end{remark}

\pagebreak
\section{Lecture 27: Abelian categories over stacks}
\label{lecture 27}

\subsection{Modules over algebras in monoidal categories}

{\bf General goal:} Get a feeling for module categories (i.e. categories acted on by monoidal categories). 
\vspace{2mm}

One of the first surprises in representation theory is that what appears to be external search (the search for all representations of a group) turns out to be an internal search (all representations occur in the regular representation). A similar phenomenon happens in the representation theory of monoidal categories.

Given a monoidal category $(\mc{A}, \otimes)$, an {\em algebra} $A$ in $\mc{A}$ is an object $A$ together with morphisms 
$\ind \rightarrow A$ and $m:A \otimes A \rightarrow A$ satisfying the axioms that one might expect. Given an algebra, let 
\[
\Mod_\mc{A}(A) = \text{ category of right $A$-modules in }\mc{A},
\]
where a right $A$-module in $\mc{A}$ is an object $M$ and a morphism $a:M \otimes A \rightarrow M$ satisfying the usual right module axioms. For objects $X \in \mc{A}$ and $M \in \Mod_{\mc{A}}(A)$, $X \otimes M$ is a right $A$-module via $id_X \otimes a$, so the category $\Mod_\mc{A}(A)$ is a left $\mc{A}$-module. 

\vspace{2mm}
\noindent
{\bf Meta Theorem:} ``$\mc{A}$-modules are $A$-modules.'' 
\vspace{2mm}

The Meta Theorem captures the external-becomes-internal phenomenon we see in representation theory: the external search for $\mc{A}$-modules becomes an internal search for algebras in $\mc{A}$. For a precise version of the Meta Theorem in the case of a rigid semisimple category $\mc{A}$ with finitely many simple objects, see \cite{Ostrik}. 

\begin{example} {\bf Module categories for finite groups.}

Let $G$ be a finite group and let $k$ be an algebraically closed field, such that $k[G]$ is semi-simple. Let $H \subset G$ be a subgroup. The restriction functor 
\[
\Rep G \rightarrow \Rep H
\]
is a $\otimes$-functor, and it gives $\Rep H$ the structure of a $\Rep G$-module. The following exercise shows that this $\mc{A}$-module is an $A$-module (for $\mc{A}=\Rep G$, $A=k[G/H]$). 
\begin{exercise}
Show that $k[G/H]$ is an algebra in $\Rep G$, and $\Rep H \simeq \Mod_{\Rep G} -k[G/H]$. 
\end{exercise}

\begin{center}
{\bf Question:} Are these all semisimple indecomposible $\Rep G$-modules?
\end{center}

\begin{theorem}
\label{subgroups with sauce}
\cite{Ostrik} 
\[
\left\{ \begin{array}{c} \text{indecomposible} \\ \text{module categories} \\ \text{over }\Rep G \end{array} \right\}_{/ \simeq} \xleftrightarrow{\sim} \left\{ (H, \omega) \left| \begin{array}{c} H \subset G \text{ subgroup} \\ \omega \in H^2(H, k^\times) \end{array}\right. \right\}_{G\text{-conjugacy}}.\\
\]
Under this correspondence, the pair $(H, \omega)$ corresponds to the category $\Rep^1(\widetilde{H})$ of representations of the central extension 
\[
k^\times \rightarrow \widetilde{H} \rightarrow H
\]
on which $k^\times$ acts via the identity character.
\end{theorem}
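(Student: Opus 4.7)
The plan is to apply the Meta Theorem to reduce the classification to one of algebra objects in $\Rep G$, then identify these algebras via a transitive $G$-set with fibre a twisted matrix algebra. Given an indecomposable semisimple module category $\mc{M}$ over $\Rep G$, pick any nonzero object $M \in \mc{M}$ and form the internal endomorphism algebra $A := \underline{\End}(M) \in \Rep G$, characterised by $\Hom_\mc{M}(X \otimes M, M) = \Hom_{\Rep G}(X, A)$ for all $X \in \Rep G$. The Meta Theorem then gives $\mc{M} \simeq \Mod_{\Rep G}(A)$ via $N \mapsto \underline{\Hom}(M, N)$, so it suffices to classify the possible $A$'s up to Morita equivalence in $\Rep G$.

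An algebra in $\Rep G$ is a finite-dimensional $k$-algebra equipped with a $G$-action by algebra automorphisms, and semisimplicity of $\mc{M}$ forces $A$ to be semisimple as a plain algebra. The $G$-action permutes its simple (Wedderburn) factors, and indecomposability of $\mc{M}$ makes this permutation action transitive. Choose a factor $A_0$ and let $H \leq G$ be its stabiliser; then $A \cong \bigoplus_{gH \in G/H} g \cdot A_0$ as a $G$-algebra, well-defined up to $G$-conjugacy once we forget the choice of $A_0$. Each factor is a matrix algebra, so $A_0 \simeq \End_k(V)$ for some $V$, and the residual $H$-action on $\End_k(V)$ is (because all algebra automorphisms of a matrix algebra are inner) the conjugation action coming from a projective representation $H \to \PGL(V)$. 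Such a projective representation is tautologically the data of a central extension $k^\times \to \widetilde H \to H$ together with an honest lift $\widetilde H \to \GL(V)$ on which $k^\times$ acts by the identity character; the isomorphism class of the extension is the desired class $\omega \in H^2(H, k^\times)$.

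Conversely, from the data $(H, \omega)$ one constructs $V$ as a representation of $\widetilde H$ on which $k^\times \subset \widetilde H$ acts by the identity character, forms the $H$-equivariant matrix algebra $A_0 := \End_k(V)$, and $G$-equivariantly induces it to $A := \bigoplus_{gH \in G/H} g \cdot A_0 \in \Rep G$. A direct calculation shows
\[
\Mod_{\Rep G}(A) \;\simeq\; \Mod_{\Rep H}(A_0) \;\simeq\; \Rep^1(\widetilde H),
\]
where the first equivalence comes from projecting an $A$-module in $\Rep G$ onto its $A_0$-summand (the other summands are determined by $G$-equivariance), and the second is Morita equivalence for the matrix algebra $\End_k(V)$ enhanced with the $H$-equivariance (equivalently, $\widetilde H$-equivariance with central character the identity). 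The two assignments $\mc{M} \rightsquigarrow (H,\omega)$ and $(H,\omega) \rightsquigarrow \Rep^1(\widetilde H)$ are mutually inverse on equivalence classes, after quotienting by $G$-conjugacy on the right.

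The main obstacle will be the last two verifications, which are bookkeeping-heavy: first, that the residual $H$-action on $A_0 = \End_k(V)$ canonically produces a well-defined class in $H^2(H, k^\times)$ independent of the auxiliary choices (the simple factor $A_0$ up to $G$-translation, and the model $V$ within its Morita class), and second, that the equivalence $\Mod_{\Rep G}(A) \simeq \Rep^1(\widetilde H)$ really does intertwine the natural left $\Rep G$-actions on the two sides — the left-hand side by internal tensor product, the right-hand side by restriction along $\Rep G \to \Rep H$ pulled back along $\widetilde H \twoheadrightarrow H$. Semisimplicity of $k[G]$, the fact that all automorphisms of a matrix algebra are inner, and a careful Frobenius-reciprocity-style computation for the induced algebra $A$ are the key inputs that make these steps go through.
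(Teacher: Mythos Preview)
Your proposal is correct and follows exactly the route the paper indicates: the paper does not give a detailed proof, but remarks that ``the proof of Theorem \ref{subgroups with sauce} is easy once one has the Meta Theorem,'' and your argument is precisely the standard unpacking of that hint --- reduce to algebra objects in $\Rep G$, classify these as $G$-equivariant semisimple algebras, and extract $(H,\omega)$ from the stabiliser of a Wedderburn factor and the resulting projective $H$-action on it. This is Ostrik's argument, and there is nothing to add.
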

We can sum up this example with the following motto:

\begin{center}
 ``Module categories over $\Rep{G}$ are the same thing as subgroups with sauce.''
\end{center}
\end{example}

\begin{remark} 
\begin{enumerate}
    \item The proof of Theorem \ref{subgroups with sauce} is easy once one has the Meta Theorem. 
    \item Objects on the LHS are not always monoidal categories as they are when the central extension is trivial. 
    \item {\bf Question:} In stacks language, $\Rep H$ as a $\Rep G$-module corresponds to $\mathrm{pt}/H \rightarrow \mathrm{pt}/G$. Can one interpret $H^2(H, k^\times)$ as twistings of $\mathrm{QCoh}(\mathrm{pt}/H)$? 
\end{enumerate}
\end{remark}

\subsection{Linear algebra over a stack}
\label{linear algebra over a stack}
For the remainder of this lecture, we will summarize a short note of Gaitsgory on categories with actions of algebraic stacks \cite{Gaitsgory}. Recall our motivation: In lecture \ref{lecture 25}, we introduced the ``constructable'' anti-spherical category $\mc{M}^\mathrm{asp}$. We wish to show that $\mc{M}^\mathrm{asp} \simeq D^{G^\vee \times \mathbb{G}_m}(\widetilde{\mc{N}})$, and our strategy for doing so is to view $\mc{M}^\mathrm{asp}$ progressively as a module over $\mathrm{QCoh}(\mathrm{pt}/G^\vee)$, then $\mathrm{QCoh}(\mf{g}^\vee / G^\vee)$, then $\mathrm{QCoh}(\widetilde{\mc{N}}/G^\vee)$.  The following language will be useful. 

\vspace{2mm}
\noindent
{\bf The notion of a category over an algebraic stack:}

Let $\mc{C}$ be a $k$-linear category. Assume that $\mc{C}$ is closed under inductive limits. (E.g. $\Vect$ and $\mathrm{QCoh}(X)$ are okay, $\Vect_{\mathrm{f.d.}}$ and $\Coh(X)$ are not.) Let $\mc{Y}$ be a stack. Our goal is to address the following question:

\vspace{2mm}
\begin{center}
    What does it mean for $\mc{C}$ to be linear over $\mc{Y}$?
\end{center}
\vspace{2mm}

\noindent
{\bf Step 1:} Start by assuming $\mc{Y}$ is an affine scheme; i.e. $\mc{Y}=\Spec{A}$ for some $k$-algebra $A$. In this case, $\mc{C}$ being linear over $\mc{Y}$ should mean that $\mc{C}$ is $A$-linear; i.e. we have a map
\[
A \rightarrow Z(\mc{C}):=\End(id_\mc{C}).
\]
\begin{example}
If $X \xrightarrow{f} \Spec{A}$, then $\mathrm{QCoh}(X)$ is $A$-linear. 
\end{example}

\noindent
{\bf Claim:} If $\mc{C}$ is $A$-linear, then $\mathrm{QCoh}(\mc{Y})$ acts on $\mc{C}$; i.e. we have 
\[
\mathrm{QCoh}(\mc{Y}) \times \mc{C} \rightarrow \mc{C}.
\]

\noindent
{\bf Example 4.5.}
(continued) $\mathrm{QCoh}(\Spec{A})$ acts on $\mathrm{QCoh}(X)$ via $f^*(-) \otimes (-)$.

\begin{proof}
We want to define $M \otimes X$ for $M \in \mathrm{QCoh}(\mc{Y})$, $X \in \mc{C}$. If $M=A^I$ (recall that $\mc{Y}=\Spec{A}$), define $M \otimes X:=X^I$. In general, choose a presentation 
\[
A^I \rightarrow A^J \twoheadrightarrow M
\]
of $M$, and define $M \otimes X:= \coker(A^I \otimes X \rightarrow A^J \otimes X)$, where the $ij^{\mathrm{th}}$-matrix coefficient is given by the $a_{ij}$ action on $\Hom(X, X)$. 
\begin{exercise}
Show that this is independent of presentation. 
\end{exercise}
We conclude that $\mathrm{QCoh}(\mc{Y}) \circlearrowright \mc{C}$.
\end{proof}

On the other hand, given $\mathrm{QCoh}(\Spec{A}) \circlearrowright \mc{C}$, we can also go back; that is, we can recover the $A$-linear structure on $\mc{C}$. Indeed, because $\ind_{\mathrm{QCoh}(\Spec{A})}=A$, for any $M \in \mc{C}$ we have a map 
\[
\End(A)=A \rightarrow \End(\ind \otimes M)=\End(M). 
\]
Hence we obtain a map $A \rightarrow Z(\mc{C})$; i.e. an $A$-linear structure on $\mc{C}$. 

We would like to have some notion of base change in this setting. But first we need a pull-back. For a map $A' \leftarrow A$ of $k$-algebras, we want functors $f_*, f^*$ fitting into a diagram 
\[
\begin{tikzcd}
\mc{C}' \arrow[r, shift right, "f_*"'] \arrow[d, dashrightarrow] & \mc{C} \arrow[l, shift right, "f^*"'] \arrow[d, dashrightarrow]\\
\Spec(A')=:S' \arrow[r] &\Spec(A)=:S
\end{tikzcd}
\]
We define the category $\mc{C}'=\mc{C} \times_S S'$ as follows: 
\begin{itemize}
    \item {\bf Objects:} objects of $\mc{C}$, equipped with an additional action of $A'$ such that 
    \[
    \begin{tikzcd}
    & A \arrow[dl] \arrow[dr] & \\ 
    A' \arrow[rr] & & \End(M) 
    \end{tikzcd}
    \]
    commutes. 
    \item {\bf Morphisms:} morphisms in $\mc{C}$ compatible with the $A'$-action. 
\end{itemize}
\begin{exercise}
(If you are stuck, ask Emily!) Show that $\mc{C}'=\mc{C} \times_{S} S'$ is abelian.
\end{exercise}

\begin{example}
Let $\mc{C}=\mathrm{QCoh}(S)=A$-Mod. Then $\mc{C} \times_S S'$ is the category of $A$-modules with $A'$-actions; that is, 
\[
\mc{C} \times_S S' = A'\text{-Mod}=\mathrm{QCoh}(S').
\]
\end{example}

Returning to the general setting, we see that our desired functors \[
\begin{tikzcd}
\mc{C} \times_S S' \arrow[r, shift right, "f_*"'] \arrow[d, dashrightarrow] & \mc{C} \arrow[l, shift right, "f^*"'] \arrow[d, dashrightarrow] \\
S' \arrow[r] & S
\end{tikzcd}
\]
are given by 
\begin{align*}
f_*&= \text{ forget $A'$-structure}, \\
f^* &: X \mapsto A' \otimes X.
\end{align*}
It is not difficult to see that they form an adjoint pair $(f^*, f_*)$. 

\subsection{A brief review of descent}

{\bf The idea:} Often in mathematics, we can construct an object by constructing it ``locally''.

\vspace{2mm}
More specifically, the principle of descent is that for some mathematical object $Z$,
\[
\text{giving }Z \text{ on $X$} \longleftrightarrow \begin{array}{c} \text{giving pieces of $Z$ on} \\ \text{a ``cover'' of $X$} + \text{glue}.
\end{array}
\]
\begin{remark}
One of Grothendieck's great insights is that here ``cover'' can be interpreted {\em very} generally.
\end{remark}

\noindent
Different types of objects require different glue:
\begin{itemize}
    \item {\bf Function:} Define a function $f$ on $X$ by giving functions $f_i$ on $U_i$ such that 
    \[
    f_i|_{U_{ij}}=f_j|_{U_{ij}}.
    \]
    In this setting the glue is a ``truth value'' (i.e. the functions agree or don't). If you want to be more fancy, you can interpret a function as a ``sheaf of truth values''.
    \item {\bf Sheaf:} Define a sheaf $\mc{F}$ by giving sheaves $\mc{F}_i$ on $U_i$ and morphisms $\alpha_{ij}:\mc{F}|_{U_{ij}} \rightarrow \mc{F}|_{U_{ij}}$ such that 
    \[
    \begin{tikzcd}
    \mc{F}_i|_{U_{ijk}} \arrow[rr, "\alpha_{ij}"] \arrow[dr, "\alpha_{ik}"'] & & \mc{F}_j|_{U_{ijk}} \arrow[dl, "\alpha_{jk}"] \\
    & \mc{F}_k|_{U_{ijk}}& 
    \end{tikzcd}
    \]
    commutes. In this setting, the morphisms are the glue, which is subject to the restriction imposed by the diagram. 
    \item {\bf Category:} Define a category $\mc{C}$ by giving categories $\mc{C}_i$ on $U_i$, functors $\alpha_{ij}:\mc{C}_i|_{U_{ij}} \rightarrow \mc{C}_j|_{U_{ij}}$, and natural transformations $\beta_{ijk}:\alpha_{jk}\circ \alpha_{ij} \xrightarrow{\sim} \alpha_{ik}$ such that 
    \begin{equation}
    \label{coherence}
    \begin{tikzcd}
    & \alpha_{kl} \circ \alpha_{jk} \circ \alpha_{ij} \arrow[dl] \arrow[dr] & \\ \alpha_{jl} \circ \alpha_{ij} \arrow[dr]  & & \alpha_{kl} \circ \alpha_{ik} \arrow[dl] \\ 
    & \alpha_{il} & 
    \end{tikzcd}
    \end{equation}
    commutes. In this setting, both the functors and natural transformations make up the glue. 
\end{itemize}

\noindent
{\bf Topological approach:} Again coherence conditions come from a topological space. Assume for simplicity $X=\bigcup_
{i \in I} U_i$ is a cover (in the classical sense). Then there is a bijection 
\begin{align*}
    \text{simplicial complexes with $I$ vertices} &\leftrightarrow \text{the indexing set $I$ of the cover} \\
    \text{$n$-simplices} &\leftrightarrow \begin{array}{c}\text{$n$-element subsets $K$ of $I$} \\ \text{such that $\cap_{i \in K} U_i \neq \emptyset$}.\end{array} \end{align*}
\begin{example}
    \[
    \includegraphics[scale=0.2]{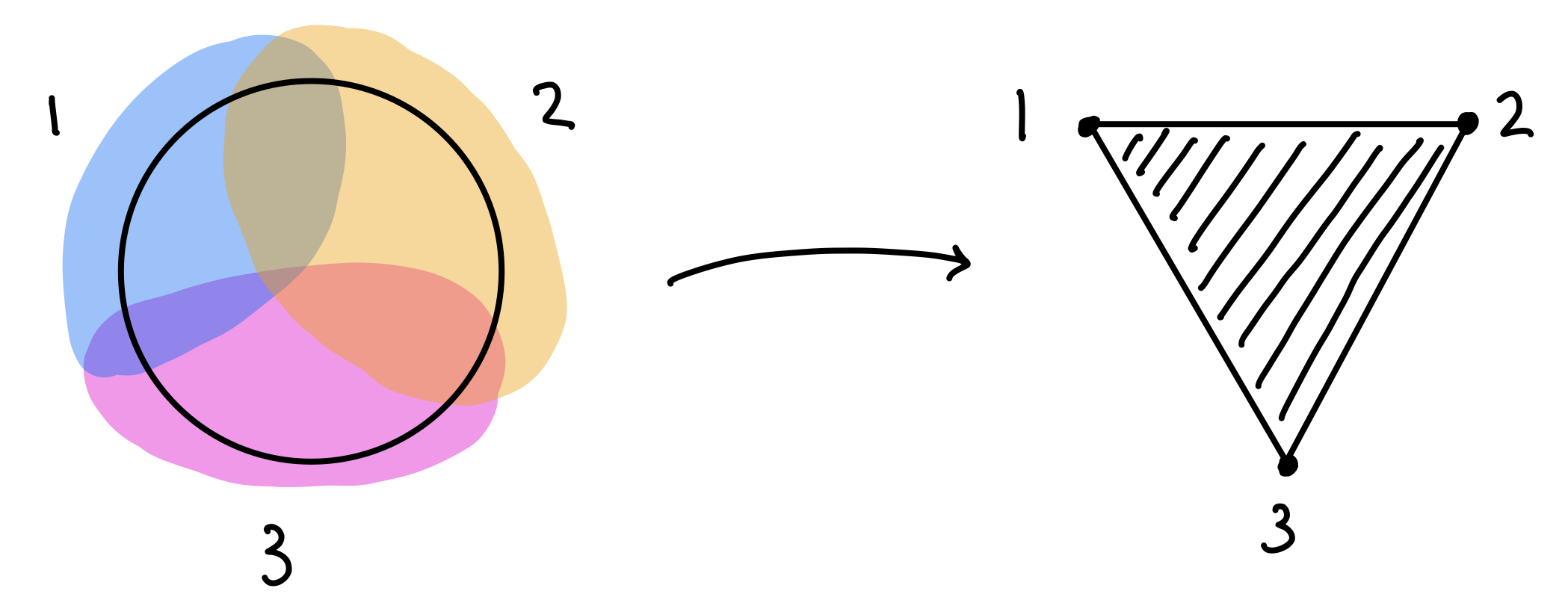}
    \]
\end{example}
Coherence conditions come from cells:
\begin{itemize}
    \item {\bf Function:} 0 and 1 cells. 
    \item {\bf Sheaf:} 0, 1, and 2 cells. 
    \item {\bf Category:} 0, 1, 2, and 3 cells. 
    \item {\bf Etc.} 
\end{itemize}
\begin{example}
    Coherence condition (\ref{coherence}) above corresponds to the $3$-cell in the $3$-simplex:
        \[
    \includegraphics[scale=0.2]{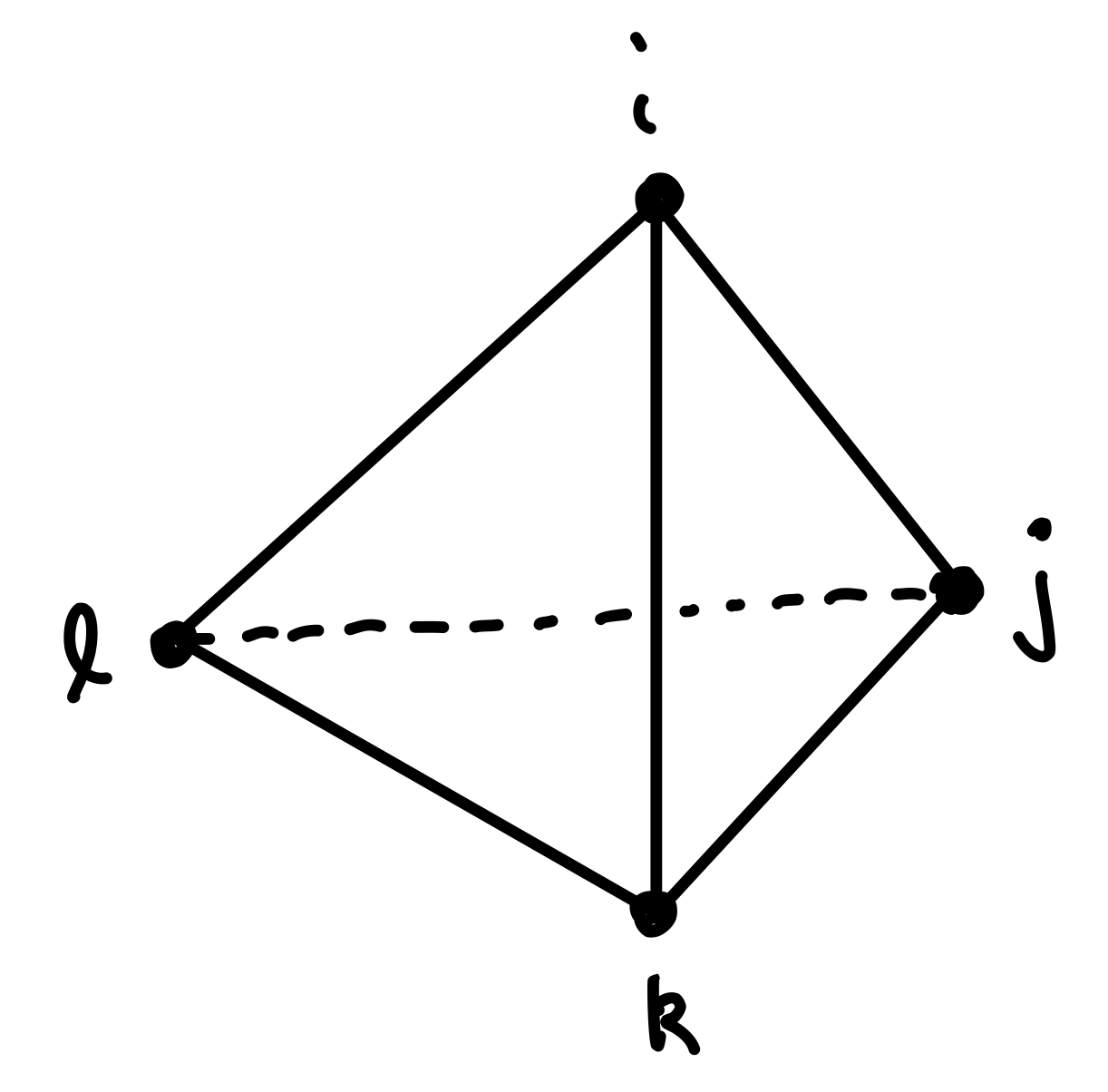}
    \]
\end{example}

\subsection{Back to sheaves over stacks}
Return to the setting of the end of Section \ref{linear algebra over a stack} and the diagram \[
\begin{tikzcd}
\mc{C} \times_S S' \arrow[r, shift right, "f_*"'] \arrow[d, dashrightarrow] & \mc{C} \arrow[l, shift right, "f^*"'] \arrow[d, dashrightarrow] \\
S' \arrow[r] & S
\end{tikzcd}
\]
\begin{lemma} (Key Lemma) Sheaves of categories satisfy descent with respect to flat covers of affine schemes; that is, if $S' \rightarrow S$ is a flat cover of affine schemes, then, 
\[
\begin{array}{c} \text{sheaf of categories}\\ \text{over $S$} \end{array}  \xrightarrow{\sim} \begin{array}{c} \text{sheaf of categories over $\mc{C}'$} \\ + \text{ descent data} \end{array}.
\]
\end{lemma}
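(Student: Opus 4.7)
The plan is to deduce the Key Lemma from comonadic (Barr–Beck) descent applied to the adjunction $(f^*,f_*)$ constructed earlier in the lecture. Write $S=\Spec A$ and $S'=\Spec A'$ with $A\to A'$ faithfully flat, and set $S''=S'\times_S S'$ and $S'''=S'\times_S S'\times_S S'$. A sheaf of categories on $S$ is a $k$-linear category $\mc{C}$ closed under inductive limits together with a map $A\to Z(\mc{C})$; descent data consists of a sheaf of categories $\mc{C}'$ on $S'$ together with an equivalence $\phi\colon p_1^*\mc{C}'\xrightarrow{\sim} p_2^*\mc{C}'$ over $S''$ satisfying the usual cocycle identity on $S'''$ and the unit condition over the diagonal.

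The forward functor sends $\mc{C}$ to $\mc{C}\times_S S'$ with its tautological cocycle, since base change to $S''$ through either projection yields the same category $\mc{C}\times_S S''$. For the inverse, given $(\mc{C}',\phi)$, I would set $\mathrm{Desc}(\mc{C}',\phi)$ to have as objects pairs $(X',\theta)$ with $X'\in\mc{C}'$ and $\theta\colon p_1^*X'\xrightarrow{\sim}\phi(p_2^*X')$ satisfying the cocycle on $S'''$, and as morphisms the arrows in $\mc{C}'$ commuting with $\theta$. Inductive limits in this category exist because each $p_i^*$ preserves them, and kernels and cokernels are computed in $\mc{C}'$ and inherit natural descent structures; here flatness of each projection is needed so that $p_i^*$ commutes with kernels.

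The heart of the argument is showing these two constructions are mutually inverse, which I would carry out via Barr–Beck. The adjunction $(f^*,f_*)$ produces a comonad $T=f^*f_*$ on any $\mc{C}'$ over $S'$, and when $\mc{C}'=\mc{C}\times_S S'$ the category of $T$-comodules matches the category of objects of $\mc{C}'$ equipped with descent data. The natural comparison functor $\mc{C}\to T\text{-comod}(\mc{C}')$ is an equivalence provided $f^*$ is conservative and preserves and reflects equalizers of $f^*$-split pairs. Both conditions reduce, object by object, to the analogous statements for the faithfully flat ring map $A\to A'$ applied to the $A$-modules $\Hom_{\mc{C}}(M,N)$ and to the presentations of objects of $\mc{C}$ as coequalizers of free modules employed earlier in the lecture: conservativity follows because $M\otimes_A A'=0$ forces $M=0$ under faithful flatness, and preservation/reflection of the relevant equalizers is a direct consequence of flatness.

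The main obstacle I expect is not the formal invocation of Barr–Beck but the bookkeeping required to match the comonad $T=f^*f_*$ on $\mc{C}'$ with the simplicial diagram $S''\rightrightarrows S'$: concretely, one must identify $f^*f_*$ with the endofunctor $(A'\otimes_A A')\otimes_{A'}(-)$ and verify that its comultiplication corresponds to the cocycle datum carried by descent over $S'''$. The assumption that $\mc{C}$ is closed under inductive limits is essential here, since it allows one to extend this identification from the case of free modules, where it is tautological, to arbitrary objects by writing each as a colimit, exactly as in the proof earlier in the lecture that $\mathrm{QCoh}(S)$ acts on any $A$-linear category. Once this identification is in place, the equivalence of $\mc{C}$ with the category of descent data follows from the standard comonadic formalism, yielding the desired descent statement for sheaves of categories.
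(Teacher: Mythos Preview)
The paper does not actually prove this Key Lemma: it is stated as a black box in Lecture 27 and the exposition immediately moves on to defining sheaves of categories over a general stack. So there is no ``paper's proof'' to compare against; your task was to supply what the notes omit.

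Your approach via comonadic Barr--Beck descent is the standard and correct one for this kind of statement, and the overall architecture of your argument is sound: identify the comonad $f^*f_*$ on $\mc{C}'$ with tensoring by $A'\otimes_A A'$, match its coalgebra category with descent data, and verify the Barr--Beck hypotheses using faithful flatness of $A\to A'$. One small point to tighten: conservativity of $f^*$ is not literally a statement about the $A$-modules $\Hom_{\mc{C}}(M,N)$ as you wrote, but rather the assertion that $A'\otimes_A(-)$ reflects isomorphisms on objects of $\mc{C}$. In the abelian setting (which is implicit in the lecture, cf.\ the exercise asking to show $\mc{C}\times_S S'$ is abelian) this follows by applying faithful flatness to kernels and cokernels; your phrasing obscures this slightly. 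Otherwise the sketch is correct and fills the gap left in the notes.
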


Now let $\mc{Y}$ be an algebraic stack in the faithfully-flat sense, with affine diagonal. This means that any morphism 
\[
\Spec{A} \rightarrow \mc{Y}
\]
with source an affine scheme is affine. Recall from Emily's IFS talk {\em Sheaves on stacks}, that we define a quasi-coherent sheaf on $\mc{Y}$ by imagining that there is a quasi-coherent sheaf ``$\mc{F}$'' on $\mc{Y}$ and axiomatising what this would mean for $f^*$``$\mc{F}$'' for all $f:\Spec{A} \rightarrow \mc{Y}$.

Given an algebraic stack $\mc{Y}$ as above, let $\mathrm{Sch}_{\mc{Y}}^\mathrm{aff}$ denote the category of affine schemes over $\mc{Y}$. We define a {\em sheaf of categories over $\mc{Y}$} to be an assignment $\mc{C}^\mathrm{sh}$:
\begin{enumerate}
    \item $S\in \mathrm{Sch}_\mc{Y}^\mathrm{aff} \mapsto \mc{C}_S$, linear over $S$; 
    \item for each $S' \rightarrow S$ in $\mathrm{Sch}_\mc{Y}^\mathrm{aff}$, an $S$-linear functor $f^*:\mc{C}_S \rightarrow \mc{C}_{S'}$ inducing an equivalence $\mc{C}_S \times_S S' \xrightarrow{\sim}\mc{C}_{S'}$;
    \item for $S'' \xrightarrow{g} S' \xrightarrow{f} S$ in $\mathrm{Sch}_\mc{Y}^\mathrm{aff}$, an isomorphism $g^* \circ f^* \simeq (f \circ g)^*$ such that $3$-way compatibility holds.
\end{enumerate}
\begin{remark}
This is holistic definition. We could also give a ``generators and relations'' definition via a fixed flat cover. 
\end{remark}
\begin{example}
    If $\mc{Y}=\mathrm{pt}$, then the assignment $S \mapsto \mathrm{QCoh}(S)$ is a sheaf of categories on $\mc{Y}$. 
\end{example}
\begin{exercise}
Given $\mc{C}^\mathrm{sh}$ on $\mc{Y}$, define $\Gamma(\mc{Y}, \mc{C}^\mathrm{sh})$ and show that it is a $\Vect_\mc{Y}$-module. (Here $\Vect_{\mc{Y}}$ is the category of vector bundles on $\mc{Y}$.)
\end{exercise}
Suppose that $\mc{Y}$ satisfies:
\begin{enumerate}
    \item $\mc{Y}$ is locally Noetherian and every quasicoherent sheaf is a limit of coherent sheaves, and 
    \item every coherent sheaf is a quotient of a vector bundle (e.g. ``enough ample line bundles'').
\end{enumerate}
Then we have the following theorem. (See Gaitsgory's note \cite{Gaitsgory} for a proof.)
\begin{theorem}
\begin{align*}
    \begin{array}{c} \text{sheaves of categories} \\ 
    \text{over }\mc{Y} \end{array} &\leftrightarrow \Vect_{\mc{Y}}\text{-modules} \\
    \mc{C}^\mathrm{sh} &\mapsto \Gamma(\mc{Y}, \mc{C}^\mathrm{sh}).
\end{align*}
\end{theorem}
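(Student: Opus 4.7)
The plan is to exhibit quasi-inverse functors between the two sides. The forward direction $\mc{C}^{\mathrm{sh}} \mapsto \Gamma(\mc{Y}, \mc{C}^{\mathrm{sh}})$ is handed to us by the exercise above, and its $\Vect_\mc{Y}$-module structure is inherited from the $S$-linear structures on each $\mc{C}_S$ via the gluing data in axiom (3). The substance of the theorem is the backward direction together with the check that these are inverse.

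For the backward direction, given a $\Vect_\mc{Y}$-module $\mc{M}$, we first use conditions (i) and (ii) on $\mc{Y}$ to canonically extend the $\Vect_\mc{Y}$-action to a $\mathrm{QCoh}(\mc{Y})$-action on $\mc{M}$ (every quasi-coherent sheaf is a colimit of vector bundles, and $\mc{M}$ is cocomplete by assumption; on any vector bundle the action is already given, so one defines the action on a general quasi-coherent sheaf by the analogous cokernel construction sketched for affine $\mc{Y}=\Spec A$ in \S\ref{linear algebra over a stack}). Then, for each affine $f:S\to \mc{Y}$, we set
\[
(\mc{C}_\mc{M})_S := \mathrm{QCoh}(S) \otimes_{\mathrm{QCoh}(\mc{Y})} \mc{M},
\]
which is naturally an $S$-linear (equivalently $\mathrm{QCoh}(S)$-linear) category. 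For a morphism $g:S'\to S$ over $\mc{Y}$ the restriction functor $g^*:(\mc{C}_\mc{M})_S\to (\mc{C}_\mc{M})_{S'}$ is pullback of module categories along $\mathrm{QCoh}(S)\to \mathrm{QCoh}(S')$, with three-way compatibility (3) inherited formally from that of pullbacks.

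The first nontrivial check is base change (axiom 2): $(\mc{C}_\mc{M})_S\times_S S'\simeq (\mc{C}_\mc{M})_{S'}$. Unwinding definitions this reduces to associativity of the relative tensor product of module categories, namely
\[
\mathrm{QCoh}(S')\otimes_{\mathrm{QCoh}(S)}\bigl(\mathrm{QCoh}(S)\otimes_{\mathrm{QCoh}(\mc{Y})}\mc{M}\bigr)\simeq \mathrm{QCoh}(S')\otimes_{\mathrm{QCoh}(\mc{Y})}\mc{M},
\]
which holds formally; affineness of the diagonal of $\mc{Y}$ (ensuring morphisms from affine schemes to $\mc{Y}$ are affine) guarantees that no higher-derived phenomena intervene. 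With this in place one verifies that $\mc{C}^{\mathrm{sh}}_\mc{M}$ genuinely is a sheaf of categories by invoking the Key Lemma: sheaves of categories satisfy descent along flat covers of affine schemes, and our assignment has been arranged precisely so that the descent datum along any such cover is tautologically the one coming from base change.

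The main obstacle, where the two conditions on $\mc{Y}$ and the Key Lemma really do the work, is checking that the two constructions are quasi-inverse. For the round trip $\mc{M}\mapsto \mc{C}^{\mathrm{sh}}_\mc{M}\mapsto \Gamma(\mc{Y},\mc{C}^{\mathrm{sh}}_\mc{M})$ one picks a faithfully flat affine cover $p:U\to \mc{Y}$ (which exists since $\mc{Y}$ is an algebraic stack) and uses the Key Lemma to compute
\[
\Gamma(\mc{Y},\mc{C}^{\mathrm{sh}}_\mc{M})\simeq \mathrm{eq}\bigl((\mc{C}_\mc{M})_U\rightrightarrows (\mc{C}_\mc{M})_{U\times_\mc{Y}U}\bigr),
\]
identifying the right-hand side with $\mc{M}$ via the faithfully-flat descent of $\Vect_\mc{Y}$-modules (here the hypotheses on $\mc{Y}$ re-enter: they let us test a natural $\Vect_\mc{Y}$-module map for being an equivalence after pulling back to $U$, since every quasi-coherent sheaf on $\mc{Y}$ is built from vector bundles). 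The opposite round trip $\mc{C}^{\mathrm{sh}}\mapsto \Gamma(\mc{Y},\mc{C}^{\mathrm{sh}})\mapsto \mc{C}^{\mathrm{sh}}_{\Gamma(\mc{Y},\mc{C}^{\mathrm{sh}})}$ is then reduced, stratum by stratum $S\to \mc{Y}$, to the analogous statement for the affine case of \S\ref{linear algebra over a stack}, where the claim is essentially tautological once one has identified $\mc{C}_S$ with the $\mathrm{QCoh}(S)$-module generated by global sections under the $\Vect_\mc{Y}$-action. The delicate point throughout is keeping track of $2$-morphisms in the descent data, which is why the holistic formulation of a sheaf of categories (rather than a generators-and-relations one with respect to a fixed cover) is convenient for this argument.
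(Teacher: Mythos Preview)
The paper does not actually prove this theorem: immediately after stating it, the text says ``(See Gaitsgory's note \cite{Gaitsgory} for a proof.)'' So there is no argument in the paper to compare against, only the surrounding scaffolding (the affine base-change construction $\mc{C}\times_S S'$ and the Key Lemma on flat descent for sheaves of categories).

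Your sketch is a reasonable outline of the argument in Gaitsgory's note, and you correctly identify where the hypotheses enter: conditions (i) and (ii) are what let you promote a $\Vect_\mc{Y}$-action to a $\mathrm{QCoh}(\mc{Y})$-action, and the Key Lemma is what makes the round trips work. One point worth flagging: you define $(\mc{C}_\mc{M})_S := \mathrm{QCoh}(S)\otimes_{\mathrm{QCoh}(\mc{Y})}\mc{M}$ via an abstract relative tensor product of module categories. In the abelian-category setting of these notes (and in Gaitsgory's note), the construction is more concrete and closer to what the paper does in the affine case: for $f:S\to\mc{Y}$ affine, $f_*\mc{O}_S$ is an algebra object in $\mathrm{QCoh}(\mc{Y})$, and one takes $(\mc{C}_\mc{M})_S$ to be the category of $f_*\mc{O}_S$-modules inside $\mc{M}$. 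This is the global analogue of the paper's definition of $\mc{C}\times_S S'$ as ``objects of $\mc{C}$ with an additional $A'$-action,'' and it sidesteps the need to make sense of a 2-categorical tensor product of abelian categories. Your associativity and base-change checks then become the statements that modules over $f_*\mc{O}_S$ base-change correctly, which is elementary. Apart from this packaging difference, your strategy matches the intended one.
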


\begin{example}
\label{key example}
    (Key example) Let $G$ be a linear algebraic group. Then 
    \[
    \begin{array}{c} \text{sheaf of categories} \\ \text{on }\mathrm{pt}/G \end{array} \leftrightarrow \Vect_{\mathrm{pt}/G} \simeq \Rep^{\mathrm{f.d.}}G \text{-module}.
    \]

To see this, recall Emily's approach to understanding sheaves on $\mathrm{pt}/G$. She started with a prestack $(\mathrm{pt}/G)^\mathrm{triv}$ given by \[
(\mathrm{pt}/G)^\mathrm{triv}(S) = \text{trivial $G$-bundle on $S$} \circlearrowleft G(S),
\]
then applied stackification to obtain the stack $\mathrm{pt}/G$. A coherent sheaf on $(\mathrm{pt}/G)^\mathrm{triv}$ is the data 
\[
V + G(A) \circlearrowright V \otimes A
\]
for every affine $k$-scheme $S=\Spec{A}$. In other words, an algebraic representation of $G$. 

Similarly, a category over $(\mathrm{pt}/G)^\mathrm{triv}$ is the data
\[
k\text{-linear category } \mc{C} + G(A) \circlearrowright \mc{C}_S
\]
for all affine $S=\Spec{A}$. 

Consider the case of a $G$-scheme $X$. Then the assignment 
\[
S/k \text{ affine} \mapsto \mathrm{QCoh}(X_S) \circlearrowleft G(S)
\]
is a sheaf of categories over $\mathrm{pt}/G$. On the other hand, 
\[
\mathrm{QCoh}^G(X) = \mathrm{QCoh}(X/G)
\]
is a $\Rep{G}$-module. Under the above equivalence, these correspond to one another. 
\begin{remark}
We will have more to say about this correspondence in a slightly different language in the next lecture.
\end{remark}
\end{example}

\pagebreak
\section{Lecture 28: (De)equivariantisation}
\label{lecture 28}

We explained last week that if $\mc{Y}$ is an algebraic stack satisfying certain assumptions, then \[
\begin{array}{c} \text{sheaf of abelian}\\ \text{categories $\mc{C}^\mathrm{sh}$ on $\mc{Y}$} \end{array} \longleftrightarrow \begin{array}{c} \Vect_{\mc{Y}}\text{-module} \\
\text{(i.e. category with an action}\\
\text{of vector bundles on $\mc{Y}$)} \end{array}
\]
The most basic example of this is the following. Let $G$ be a linear algebraic group, and $\mc{Y} = \mathrm{pt}/G$. Recall that $\mathrm{pt}/G$ is the stackification of the prestack $(\mathrm{pt}/G)^\mathrm{triv}$, which is given by the assignment 
\[
S \mapsto G \times S
\]
of a scheme $S$ to the trivial $G$-bundle on $S$. As a stack in groupoids, this assignment sends
\[
\includegraphics[scale=0.15]{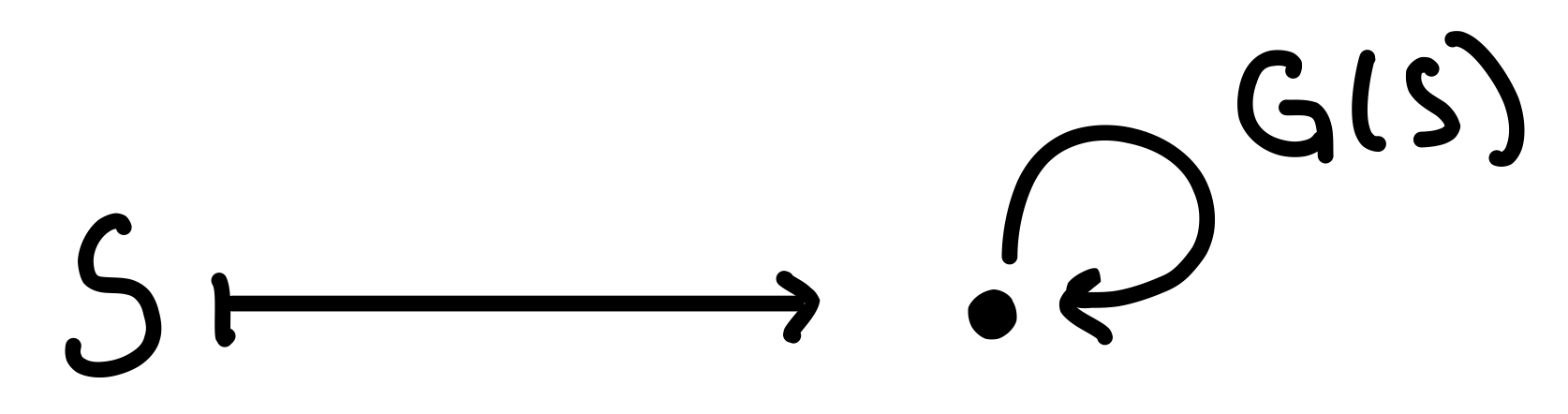}
\]

Recall that a quasi-coherent sheaf on $\mathrm{pt}/G$ is the data of a vector space $V$ (the value of this sheaf on a point), together with an action of the $S$-points $G(S)$ on $V \times S$ for all test schemes $S$. This is known as an algebraic representation. 

Similarly, a sheaf of categories on $\mathrm{pt}/G$ (an object on the left hand side of the correspondence above) is the data of a category $\mc{C}$ (the value of this sheaf on a point), together with an action of the $S$-points $G(S)$ on $\mc{C}_S$ for all test schemes $S$. 

Hence for the stack $\mathrm{pt}/G$, the correspondence above matches \[
\left( S \mapsto \mc{C}_S= \begin{array}{c} \text{ abelian category} \\ \text{with $G(S)$-action} \end{array}\right) \longleftrightarrow \Rep_{f.d.}{G}\text{-module}.
\]

\subsection{A simple example of this phenomenon} 
\begin{remark}
This example should have come earlier, but Geordie didn't discover the very clear reference \cite{DGNO} providing this perspective until last week. 
\end{remark}

Let $G$ be a finite group and $\mc{C}$ an additive monoidal category. 
\begin{theorem}
There are functors ``equivariantisation'' and ``deequivariantisation'' which allow us to move between categories with $G$-actions and categories with $\Rep{G}$-actions:  
\[
\begin{tikzcd}
\left\{ \begin{array}{c} \text{$k$-linear categories} \\ \text{with $G$-action} \end{array} \right\} \arrow[rr, shift right, "\text{equivariantisation}"'] \arrow[dr, leftrightarrow] & & \left\{ \begin{array}{c} \text{$k$-linear categories} \\ \text{with $\Rep{G}$-action} \end{array} \right\} \arrow[ll, shift right, "\text{deequivariantisation}"'] \arrow[dl, leftrightarrow] \\
& \left\{ \begin{array}{c} \text{$\Rep{G}$-enriched} \\ \text{categories} \end{array} \right\} &
\end{tikzcd}
\]
On Karoubian $k$-linear categories, the upper two arrows provide equivalences of categories. The lower arrows can be made into equivalences too, with additional minor assumptions. \end{theorem}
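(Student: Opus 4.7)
The plan is to unpack everything through an algebra object in $\Rep{G}$ that plays the role of the ``regular representation,'' namely $\mc{O}(G) := \Fun(G,k)$. As a vector space, $\mc{O}(G) = k^G$, it is a commutative algebra under pointwise multiplication, and carries a $G$-action by (say) right translation that makes it a commutative algebra in the symmetric monoidal category $\Rep{G}$. The basic reason the whole story works is the elementary identity $\mc{O}(G)\text{-mod in }\Rep{G} \simeq \Vect$, which categorifies the fact that $\mc{O}(G)$ is the ``group scheme of $G$'' and trivializes the $G$-action after base change to itself.

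First I would define the three functors explicitly. Given a $k$-linear category $\mc{C}$ with a strict $G$-action $g \mapsto F_g$, let the equivariantisation $\mc{C}^G$ have objects $(X,\{\phi_g : F_g X \xrightarrow{\sim} X\})$ satisfying the cocycle condition $\phi_g \circ F_g(\phi_h) = \phi_{gh}$, and morphisms those $f : X \to Y$ commuting with the $\phi_g$'s. A natural $\Rep{G}$-action is obtained by declaring $V \otimes (X,\phi)$ to be the object $V \otimes X$ (interpreted in the $\Vect$-module structure) equipped with the diagonal equivariant structure built from the $G$-action on $V$ and the $\phi_g$'s. Conversely, if $\mc{M}$ carries a $\Rep{G}$-action, define the deequivariantisation $\mc{M}_G$ to be the category of $\mc{O}(G)$-modules in $\mc{M}$, i.e.\ pairs $(M, a : \mc{O}(G) \otimes M \to M)$ with $a$ associative and unital. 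A $G$-action on $\mc{M}_G$ comes from the left-translation action of $G$ on $\mc{O}(G)$, which commutes with the multiplicative right-translation structure. Finally, the $\Rep{G}$-enriched category associated to $(\mc{C},G)$ has the same objects as $\mc{C}$ and hom objects $\underline{\Hom}(X,Y) = \bigoplus_{g \in G} \Hom_{\mc{C}}(X, F_g Y)$ with the obvious $G$-action; the corresponding construction from a $\Rep{G}$-action on $\mc{M}$ is $\underline{\Hom}(M,N) = \Hom_{\mc{M}}(\mc{O}(G) \otimes M, N)$, where $G$ acts via its action on $\mc{O}(G)$.

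Next I would construct the adjunction unit and counit and check they are isomorphisms. For $X \in \mc{C}$ there is a natural ``induction'' equivariant object $\mathrm{Ind}(X) := (\bigoplus_{g} F_g X, \phi^{\mathrm{reg}})$, and the deequivariantisation of $\mc{C}^G$ unravels to an $\mc{O}(G)$-module structure on $\mathrm{Ind}(X)$ that isolates $X$ as the image of the idempotent $e_e \in \mc{O}(G)$ (the characteristic function of the identity). The hard part will be showing that $X$ is recovered as a direct summand, and this is precisely where the Karoubian hypothesis is essential: one needs to split the idempotent corresponding to $e_e$. Dually, starting from $(\mc{M},\Rep{G})$, the counit identifies $M \in \mc{M}$ with the ``trivially equivariant'' $\mc{O}(G)$-module $\mc{O}(G) \otimes M$ equipped with its $G$-action, and again the inverse equivalence involves splitting an idempotent built from the averaging operator $\tfrac{1}{|G|}\sum_g g \in k[G]$. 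The assumption $|G| \in k^\times$ (implicit in the theorem since $G$ is finite and we are working in characteristic zero or coprime to $|G|$) makes all relevant averaging operators well-defined, and Karoubianness converts these projectors into actual direct-sum decompositions. The verification that the composites are naturally isomorphic to the identity then reduces to a computation with matrix coefficients of the regular representation, which can be packaged as Fourier decomposition $\mc{O}(G) \simeq \bigoplus_{V \in \Irr G} V^* \otimes V$.

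Finally, the lower arrows follow essentially formally from the upper equivalence, provided one installs the expected compatibility: an enriched category $\mc{D}$ over $\Rep{G}$ produces a category with $G$-action by taking underlying $k$-linear homs (forgetting the $G$-equivariance), and produces a category with $\Rep{G}$-action by the tautological enrichment. The ``minor additional assumptions'' referred to in the statement amount to requiring that $\mc{D}$ be tensored over $\Rep{G}$ (so that the enriched Yoneda lemma is available and one can reconstruct the $\Rep{G}$-module structure from the enrichment) and again Karoubian. With these in place, the two passages $\mathrm{enr} \leftrightarrow G$-action and $\mathrm{enr} \leftrightarrow \Rep{G}$-action are mutually inverse equivalences, and their composition recovers the upper (de)equivariantisation equivalence. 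The main technical obstacle across the whole proof is the single point of splitting idempotents coming from $\mc{O}(G)$; once this is handled, all the remaining work is bookkeeping with natural transformations.
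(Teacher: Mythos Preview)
Your proposal is correct and follows essentially the same blueprint as the paper: equivariantisation via cocycle-equipped objects, deequivariantisation as modules over the function algebra $\mc{O}(G)$ in $\Rep G$, and the regular representation mediating the round trip. The paper itself only sketches the constructions (deferring to \cite{DGNO}), so you have in fact supplied more detail than the lecture notes do, particularly on the role of the Karoubian hypothesis in splitting the idempotent $e_e \in \mc{O}(G)$ to recover $X$ from $\mathrm{Ind}(X)$.

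One small variance worth flagging: the paper builds the $\Rep G$-enrichment from a $G$-action by first restricting to \emph{equivariant} objects and observing that $\Hom_{\mc{M}}(M,M')$ then carries a natural $G$-action (with $\Hom_{\mc{M}^G}(M,M') = \Hom_{\mc{M}}(M,M')^G$), whereas you write the enriched hom as $\bigoplus_g \Hom_{\mc{C}}(X, F_g Y)$ for arbitrary objects. These produce enriched categories with different object sets, but they agree after Karoubi completion (the paper's remark that any $X$ is a summand of $\mathrm{Ind}(X) = \bigoplus_g F_g X$ is exactly what bridges the two), so there is no substantive discrepancy. Your formula for $\underline{\Hom}(M,N)$ on the $\Rep G$-module side also matches the paper's representable-functor definition once one plugs in the regular representation.
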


\vspace{1mm}
\begin{center}
{\bf How do these functors work? }
\end{center}

\subsection{Equivariantisation} Suppose $G$ acts on a category $\mc{M}$; i.e. we have morphisms $F_g:\mc{M} \rightarrow \mc{M}$ for each element $g \in G$ and natural isomorphisms $\mu_{gh}:F_g \circ F_h \xrightarrow{\sim} F_{gh}$ for each pair $g, h \in G$. (See lecture \ref{lecture 26} for a review of this notion.) 

An {\em equivariant object} in $\mc{M}$ is a tuple $(X, \{u_g\}_{g \in G})$ where $X \in \mc{M}$ and $u_g:F_g(X) \xrightarrow{\sim} X$ such that the diagram 
\[
\begin{tikzcd}
F_g(F_h(X))\arrow[d, "\mu_{gh}(X)"] \arrow[r, "F_g(u_h)"] & F_g(X) \arrow[d, "u_g"] \\ 
F_{gh}(X) \arrow[r, "u_{gh}"] & X
\end{tikzcd}
\]
commutes. 

An {\em equivariant morphism} in $\mc{M}$ is a morphism $X \xrightarrow{f} Y$ in $\mc{M}$ which commutes with all $u_g$; that is, a morphism such that the diagram 
\[
\begin{tikzcd}
F_g(X) \arrow[d, "F_g(f)"'] \arrow[r, "u_g"] & X \arrow[d, "f"]\\ F_g(Y) \arrow[r, "u_g"] & Y 
\end{tikzcd}
\]
commutes for all $u_g$. 

The {\em equivariantisation} $\mc{M}^G$ of $\mc{M}$ is the category whose objects are equivariant objects in $\mc{M}$ and whose morphisms are equivariant morphisms in $\mc{M}$. 

\begin{remark}
The category $\mc{M}^G$ is a $\Rep{G}$-module. Indeed, given $V \in \Rep{G}$ and $(X, \{u_g\}_{g \in G}) \in \mc{M}^G$, define $V \otimes X:=(V \otimes X, {u_g^\vee}_{g \in G})$, where \[
u_g^\vee:F_g(V \otimes X) \simeq V \otimes F_g(X) \xrightarrow{\rho_g \otimes u_g} V\otimes X. 
\]
Here $\rho:G \rightarrow \Aut(V)$ is the $G$-representation structure on $V$. 
\end{remark}

\begin{exercise} If $X$ is a $G$-scheme, show that 
\begin{enumerate}
    \item $G$ acts on $\Coh{X}$, and 
    \item the equivariantisation of $\Coh(X)$ is equivalent to the category of $G$-equivariant sheaves on $X$:
    \[
    (\Coh(X))^G \simeq \Coh^G(X).
    \]
\end{enumerate}
\end{exercise}

\subsection{Deequivariantisation}
Let $A$ be the algebra of functions from $G$ to $k$. The algebra $A$ carries commuting left and right $G$-actions:
\begin{itemize}
    \item Left action: $(g \cdot f)(x)=f(g^{-1}x)$
    \item Right action: $(f \cdot g)(x) = f(xg)$. 
\end{itemize}
The left action of $G$ gives $A$ the structure of a $G$-representation, and it is an algebra object in the category $\Rep{G}$. The right $G$-action means that $G$ acts on $A$, preserving its $\Rep{G}$-algebra structure.

If $\mc{N}$ is a $\Rep{G}$-module, define a category $\mc{N}_G$, the {\em deequivariantisation} of $\mc{N}$, by 
\begin{itemize}
    \item {\bf Objects:} $A$-modules in $\mc{N}$; i.e. objects $Y$ together with a morphism $a:A \otimes Y \rightarrow Y$ satisfying the appropriate module conditions
    \item {\bf Morphisms:} $A$-linear morphisms in $\mc{N}$.
\end{itemize}

\begin{example}
Let $\mc{N}$ be the category of $G$-equivariant coherent sheaves on $X$, and $A=k[G]$ as above. Then 
\begin{align*}
    \mc{N}_G &= \text{$A$-modules in }\Coh^G(X) \\
    &= \mc{O}_X[G]\text{-modules in }\Coh^G(X) \\
    &= \Coh^G(G \times X) \\
    &= \Coh(X). 
\end{align*}
\end{example}

\subsection{$\Rep{G}$ enrichments}

Both equivariantisation and deequivariantisation can be realised as a two-step process in which the first step passes through a  $\Rep{G}$-enrichment (i.e. a category whose Hom spaces are objects in $\Rep{G}$) of our original category. We describe this perspective now. 

Let $\mc{M}$ be a category with an action of $G$, and let $M, M'$ be equivariant objects in $\mc{M}$. 

\begin{claim}
\label{two steps}
$\Hom_\mc{M}(M, M')$ is a $G$-module, and 
\[
\Hom_{\mc{M}^G}(M, M') = \Hom_\mc{M}(M, M')^G.
\]
\end{claim}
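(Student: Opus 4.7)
The plan is to produce the $G$-action on $\Hom_{\mc{M}}(M, M')$ from the equivariance data on $M$ and $M'$, then identify its fixed points with the equivariant morphisms. Concretely, writing $M = (M, \{u_g\})$ and $M' = (M', \{u'_g\})$, for $f \in \Hom_\mc{M}(M, M')$ I would define
\[
g \cdot f \; := \; u'_g \circ F_g(f) \circ u_g^{-1},
\]
a morphism $M \to M'$. The verification that this makes $\Hom_\mc{M}(M, M')$ a $G$-module reduces to checking $(gh)\cdot f = g\cdot(h\cdot f)$ and $e\cdot f = f$. The first is a diagram chase: rewrite $(gh)\cdot f = u'_{gh} \circ F_{gh}(f) \circ u_{gh}^{-1}$ and insert the natural isomorphism $\mu_{gh}: F_g F_h \xrightarrow{\sim} F_{gh}$, then invoke the coherence square
\[
\begin{tikzcd}
F_g(F_h(X)) \arrow[d, "\mu_{gh}(X)"'] \arrow[r, "F_g(u_h)"] & F_g(X) \arrow[d, "u_g"] \\
F_{gh}(X) \arrow[r, "u_{gh}"] & X
\end{tikzcd}
\]
of the equivariant objects $M$ and $M'$ (combined with functoriality $F_g(u'_h)\circ F_g F_h(f) = F_g(u'_h \circ F_h(f))$) to collapse the result to $u'_g \circ F_g(u'_h \circ F_h(f) \circ u_h^{-1}) \circ u_g^{-1} = g \cdot (h \cdot f)$. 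The unit axiom is analogous but easier, using the unit condition on the $G$-action.

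For the second assertion, one simply unwinds the definition of an equivariant morphism: $f : M \to M'$ is a morphism in $\mc{M}^G$ iff the square
\[
\begin{tikzcd}
F_g(M) \arrow[d, "F_g(f)"'] \arrow[r, "u_g"] & M \arrow[d, "f"] \\
F_g(M') \arrow[r, "u'_g"] & M'
\end{tikzcd}
\]
commutes for every $g$, i.e.\ $f \circ u_g = u'_g \circ F_g(f)$, which rearranges precisely to $f = u'_g \circ F_g(f) \circ u_g^{-1} = g \cdot f$. Hence $f$ is equivariant iff $f$ is $G$-fixed under the action just constructed, giving $\Hom_{\mc{M}^G}(M, M') = \Hom_\mc{M}(M, M')^G$.

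The main obstacle is purely bookkeeping: ensuring that the $\mu_{gh}$'s are correctly inserted and cancelled when verifying associativity of the action. Nothing deeper than the coherence axioms of a (strict or pseudo) $G$-action is needed, so if one has set up the action holistically enough to make the equivariant-object axiom well-posed, the computation is essentially formal. I would also remark (as a brief coda) that the same formulas show $\Hom_\mc{M}(M,M')$ is functorial in $M, M' \in \mc{M}^G$ as a $G$-module, which is exactly the $\Rep G$-enrichment of $\mc{M}^G$ alluded to before the claim and which will be invoked in the next step of the (de)equivariantisation discussion.
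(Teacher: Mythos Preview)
Your proposal is correct and follows exactly the same approach as the paper: you define $g\cdot f = u'_g \circ F_g(f) \circ u_g^{-1}$ (which is precisely the paper's defining diagram) and observe that the second statement unwinds directly from the definition of equivariant morphism. The paper's proof is a two-line sketch, so your version simply fills in the bookkeeping details it omits.
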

\begin{proof}
The action is given by 
\[
(M \xrightarrow{f} M') \xmapsto{g \cdot} \left( \begin{tikzcd}
F_g(M) \arrow[r, "F_g(f)"] \arrow[d, "\sim"] & F_g(M') \arrow[d, "\sim"] \\ 
M \arrow[r, "g \cdot f"] & M'
\end{tikzcd} \right).
\]
The second statement follows from the definitions. 
\end{proof}
From $\mc{M}$, we can construct a $\Rep{G}$-enriched category by considering just the equivariant objects of $\mc{M}$, but keeping all morphisms between those objects. This is a category where $G$ fixes all objects but moves morphisms. Claim \ref{two steps} shows that from this $\Rep{G}$-enrichment of $\mc{M}$, we can then pass to the equivariantisation of $\mc{M}$ by only keeping morphisms which are fixed by $G$. 

On the other hand, given a $\Rep{G}$-module $\mc{N}$, we can also construct a $\Rep{G}$-enriched category. For $X,Y \in \mc{N}$, consider the functor 
\[
\Rep{G} \rightarrow k: V \mapsto \Hom(V \otimes X, Y).
\]
Because $\Rep{G}$ is semisimple, any $k$-linear functor on $\Rep{G}$ is representable. The functor above is represented by 
\[
\underline{\Hom}(X, Y) \in \Rep{G}; 
\]
i.e. 
\[
\Hom_{\Rep{G}}(V, \underline{\Hom}(X,Y)) \simeq \Hom_{\mc{N}}(V \otimes X, Y).
\]
Hence the category consisting of the objects of $\mc{N}$ with morphisms given by $\underline{\Hom}(X,Y)$ is a $\Rep{G}$-enriched category. Forgetting the $\Rep{G}$-enrichment and taking the Karoubi completion of this category results in the deequivariantisation $\mc{N}_G$ of $\mc{N}$. Note that $\Hom_{\mc{N}}(X, Y)= \underline{\Hom}(X, Y)^G$.

\vspace{2mm}
\noindent
{\bf Attempt at a big picture:}
\[
\includegraphics[scale=0.4]{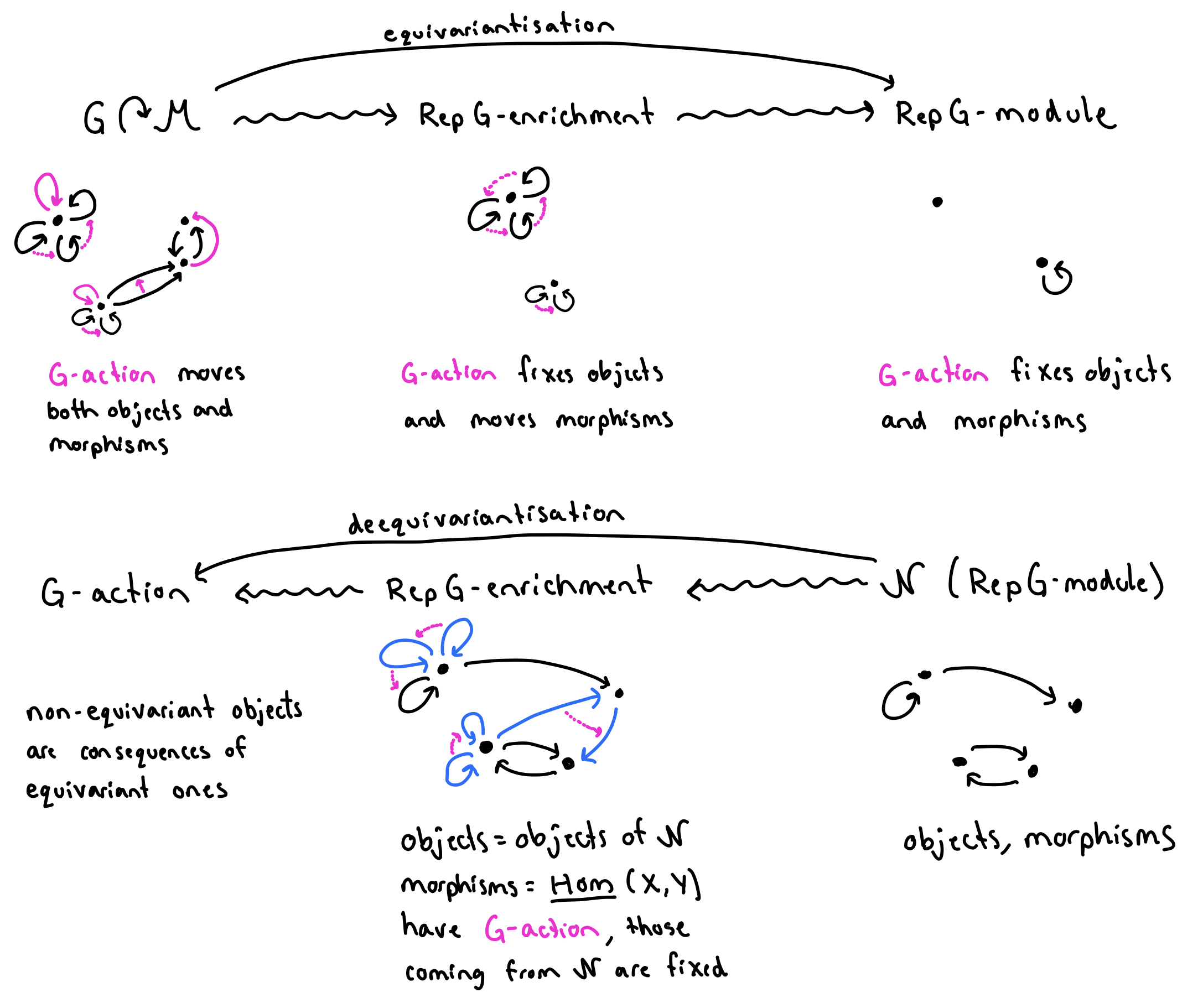}
\]
\begin{remark}
The fact that non-equivariant objects are consequences of equivariant ones is not so surprising in the case of a finite group $G$: Any $X$ is a summand of $\Ind(X) = \bigoplus F_g(X)$, which has a canonical equivariant structure.
\end{remark}

\subsection{Deequivariantisation principle}

Now we return to the setting of the beginning of the lecture. Let $S=\Spec{A}$ be an affine scheme with an action of a linear algebraic group $H$ (e.g. $S=\mf{g}^\vee$, $H=G^\vee$).  A key tool in \cite{AB, Bez} is the {\em deequivariantisation principle}, which answers the question:

\vspace{2mm}
\begin{center}
  {\bf How can we make a category $\mc{C}$ linear over $S/H$?}  
\end{center}
\vspace{2mm}

\noindent
The deequivariantisation principle states that it is enough to give:
\begin{enumerate}
    \item a $\Rep{H}$-module structure on $\mc{C}$; and 
    \item an $H$-equivariant $A$-linear structure on $\mc{C}_\mathrm{deeq}$\footnote{Here $\mc{C}_\mathrm{deeq}$ is the deequivariantisation of $\mc{C}$, the category we denoted in the previous section by $\mc{C}_G$. We are making this notational switch to align with Bezrukavnikov's notation in what is coming.} (which is equivalent to an $H$-equivariant $A$-action on $\Hom_{\mc{C}}(X, \mc{O}_H \otimes Y)$). 
\end{enumerate}

\noindent
{\bf Big diagram:} 
\[
\includegraphics[scale=0.25]{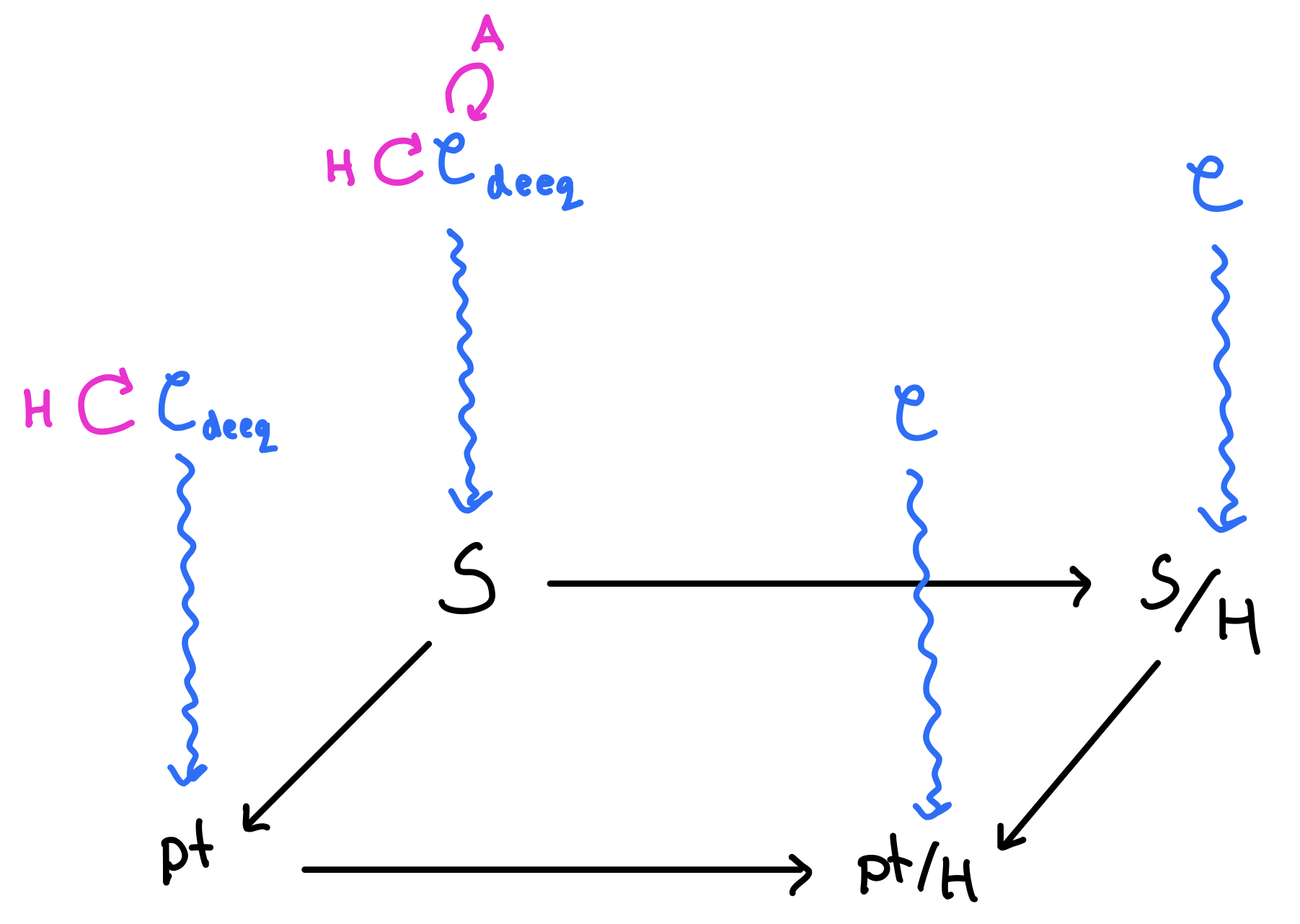}
\]
Recall that since $\mc{C}$ is a $\Rep{H}$-module, $\mc{C}_\mathrm{deeq}$ consists of $\mc{O}_H$-modules in $\mc{C}$, and $\mc{O}_H$ is an ind-object in $\Rep_\mathrm{f.d.}{H}$. Hence the  deequivariantisation functor sends 
\[
\mc{C} \rightarrow \mc{C}_\mathrm{deeq}:X \mapsto \mc{O}_H \otimes X.
\]
\begin{exercise}
The functor $\mc{O}_H \otimes (-)$ is left adjoint to the forgetful functor $\mc{C}_\mathrm{deeq} \rightarrow \mc{C}$. 
\end{exercise}

Moreover, 
\[
\Hom_{\mc{C}_\mathrm{deeq}}(\mc{O}_H \otimes X, \mc{O}_H \otimes Y) = \Hom_{\mc{C}}(X, \mc{O}_H \otimes Y). 
\]

\begin{example}
What is a sheaf of categories on $\mathbb{A}^1/\mathbb{G}_m$? By the deequivariantisation principle, we need to give:
\begin{enumerate}
    \item A $\Rep{\G_m}$-action on $\mc{C}$; i.e. an auto-equivalence $M \mapsto M(1):=\mathrm{nat} \otimes M$ of $\mc{C}$. (Recall that $\Rep{\G_m}$ is freely generated\footnote{as a $k$-linear additive tensor category} by the natural representation $\mathrm{nat}$.) 
    \item A $\G_m$-equivariant  $\mc{O}_{\mathbb{A}^1}=k[X]$-linear structure on $\mc{C}_\mathrm{deeq}$; i.e. a $k[X]$-graded module structure on 
    \begin{align*}
        \Hom_{\mc{C}_\mathrm{deeq}}(\mc{O}_{\G_m} \otimes M, \mc{O}_{\G_m} \otimes N) &= \Hom_{\mc{C}}(M, \bigoplus_{m \in \Z} N(m)) \\
        &= \bigoplus_{m \in \Z} \Hom_\mc{C}(M, N(m)) 
    \end{align*}
    with $M$ in degree $1$. In other words, this is the same data as a morphisms of functors $id \rightarrow (1)$. 
\end{enumerate}
We conclude that a sheaf of categories on $\A^1/\G_m$ consists of: (1) a category $\mc{C}$, an auto-equivalence $M \mapsto M(1)$ of $\mc{C}$, and a natural transformation $X: id \rightarrow (1)$. 

\vspace{2mm}
\noindent
{\bf The point of this example:} 
\[
{ \text{sheaf over $\A^1/\G_m$ } \atop \text{``geometric''} } {\longleftrightarrow \atop \hspace{2mm}} {\text{ auto-equivalence }(1) + \text{ morphism }id \rightarrow(1) \atop \text{``combinatorial''}} 
\]
This pattern is a feature in \cite{AB} and \cite{Bez}. 
\end{example}

\begin{exercise}
(Worthwhile). For each $\mc{Y}$ in the diagram \[
\begin{tikzcd}
& \A^1 \arrow[dr] & \\
0 \arrow[r] & 0/\G_m \arrow[r] & \A^1/\G_m \\
& \G_m/\G_m \arrow[ur] & 
\end{tikzcd}
\]
calculate $\mc{C}_\mc{Y}$ in terms of the above data.
\end{exercise}

\noindent
{\bf A bigger fish:} What does it mean to give a sheaf of categories on $\mf{g}/G$? 

\vspace{2mm}
Based on the deequivariantisation principle, this is
\[
\Rep{G}\text{-module $\mc{C}$ } +  {\text{ $G$-equivariant $\mc{O}_\mf{g}$-linear} \atop \text{structure on }\mc{C}_\mathrm{deeq} }. 
\]
\noindent
{\bf Key observation:} (To be explained below.) Via Tannakian formalism, the $\mc{O}_\mf{g}$-linear structure on $\mc{C}_\mathrm{deeq}$ is simply an ``endomorphism''. 

\subsection{Tannakian formalism}
Let $G/k$ be a group scheme, and $\mathrm{For}:\Rep{G} \rightarrow k$ the forgetful functor. We have a $k$-group functor:
\begin{align*}
    A \mapsto \Aut^\otimes (\mathrm{For} \otimes A) = \text{ $\otimes$-automorphisms of $A \otimes \mathrm{For}$}.
\end{align*}
(A $\otimes$-automorphism of $A \otimes \mathrm{For}$ is a collection of functors $F_v:V \otimes A \rightarrow V \otimes A$ for all $V \in \Rep_\mathrm{f.d.}{G}$ such that 
\[
\begin{tikzcd}
(V \otimes A) \otimes_A (V' \otimes A) \arrow[d, "\sim"] \arrow[r, "F_V \otimes F_{V'}"] & (V \otimes A) \otimes_A (V' \otimes A) \arrow[d, "\sim"] \\
(V \otimes V') \otimes A \arrow[r, "F_{V \otimes V'}"] & (V \otimes V') \otimes A
\end{tikzcd}
\]
commutes.) 

We discover that ``$G$ can be recovered from its category of representations'': 
\begin{theorem}
\label{group}
\[
G(A) \simeq \Aut^\otimes (A \otimes \mathrm{For}).
\]
\end{theorem}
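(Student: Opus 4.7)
\textbf{Proof plan for Theorem \ref{group}.} The plan is to construct a map $G(A) \to \Aut^\otimes(A\otimes \mathrm{For})$ in the obvious way and then invert it using the (ind-)regular representation. To keep things concrete I would first assume $G$ is an affine group scheme, so $G = \Spec \mathcal{O}_G$ and $\Rep G$ is equivalent to the category of (finite-dimensional) $\mathcal{O}_G$-comodules. The natural map sends $g \in G(A) = \Hom_{k\text{-alg}}(\mathcal{O}_G, A)$ to the collection of $A$-linear automorphisms $\rho_V(g) : V \otimes A \to V\otimes A$, defined for each $V \in \Rep_{\mathrm{f.d.}} G$ by pushing the comodule structure $V \to V \otimes \mathcal{O}_G$ along $\mathrm{id}_V \otimes g : V \otimes \mathcal{O}_G \to V\otimes A$. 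That this is natural in $V$ and compatible with the tensor product is a routine diagram chase using that a $G$-equivariant map of representations is a map of $\mathcal{O}_G$-comodules.

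For injectivity, suppose $\rho_V(g) = \rho_V(g')$ for all $V$. Every finite-dimensional subcomodule of the regular $\mathcal{O}_G$-comodule $\mathcal{O}_G$ (viewed via comultiplication) witnesses the two algebra maps $g,g' : \mathcal{O}_G \to A$ agreeing on a generating family of elements, and since $\mathcal{O}_G$ is the union of its finite-dimensional subcomodules we conclude $g = g'$. This step is clean and mostly formal.

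For surjectivity — which I expect to be the main obstacle — I would use the following recipe. Given $F \in \Aut^\otimes(A \otimes \mathrm{For})$, apply $F$ to each finite-dimensional subcomodule $V \subset \mathcal{O}_G$ and check (using naturality) that these glue into a single $A$-linear automorphism $F_{\mathcal{O}_G}$ of $\mathcal{O}_G \otimes A$. Then define
\[
g_F : \mathcal{O}_G \to A, \qquad g_F(f) := (\epsilon \otimes \mathrm{id}_A)\bigl(F_{\mathcal{O}_G}(f \otimes 1)\bigr),
\]
where $\epsilon:\mathcal{O}_G \to k$ is the counit. The core of the proof is to check that $g_F$ is a $k$-algebra map, i.e.\ that it respects multiplication and unit. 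The unit is trivial, and multiplicativity is exactly where the tensor compatibility of $F$ is needed: the multiplication $\mathcal{O}_G \otimes \mathcal{O}_G \to \mathcal{O}_G$ is a $G$-equivariant map (for the diagonal action on the source and translation on the target), so naturality of $F$ forces $F_{\mathcal{O}_G \otimes \mathcal{O}_G} = F_{\mathcal{O}_G} \otimes_A F_{\mathcal{O}_G}$ to intertwine it. Evaluating at $1\otimes \epsilon \otimes \epsilon$ yields $g_F(fh) = g_F(f) g_F(h)$. Finally one checks $g \mapsto \rho_\bullet(g)$ and $F \mapsto g_F$ are mutually inverse: one direction is immediate from how $g_F$ was defined, and the other uses that any $V$ embeds $G$-equivariantly into $\mathcal{O}_G^{\oplus n}$, so $F_V$ is determined by $F_{\mathcal{O}_G}$, which is in turn determined by $g_F$.

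The main genuine difficulty I anticipate is the bookkeeping around the ind-object $\mathcal{O}_G$: one has to be careful that $F$, which a priori is only defined on finite-dimensional representations, really does assemble into a well-defined endomorphism of the ind-comodule $\mathcal{O}_G$, and that multiplicativity can be tested there. For general (not necessarily affine) group schemes $G$ the statement should be reduced to the affine case, for instance by replacing $\mathcal{O}_G$ with the coordinate ring of a faithfully flat affine cover, but we will only need the affine (in fact, reductive) case in the sequel, so I would leave that generality as a remark.
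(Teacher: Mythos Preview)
Your proof is correct and follows the standard Tannakian reconstruction argument (as in Deligne--Milne or Waterhouse). However, the paper does not actually prove this theorem: it states it as a known fact from Tannakian formalism, observes that the map $G(A) \to \Aut^\otimes(A \otimes \mathrm{For})$ exists because $A$-points of $G$ clearly give $\otimes$-automorphisms, and then simply declares that ``the miracle is that this map is an isomorphism.'' So there is nothing to compare against --- you have supplied a proof where the paper gives none.

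Your anticipated difficulty (assembling $F$ on the ind-object $\mathcal{O}_G$ from its values on finite-dimensional subcomodules) is real but routine: naturality of $F$ with respect to the inclusions among finite-dimensional subcomodules is exactly what makes the colimit well-defined. The multiplicativity check is also correct as you describe it, since multiplication $\mathcal{O}_G \otimes \mathcal{O}_G \to \mathcal{O}_G$ is a morphism of comodules for the right regular coaction. One small point worth making explicit: to conclude that $F_V$ is determined by $F_{\mathcal{O}_G}$ for arbitrary $V$, you use that every finite-dimensional comodule embeds in a finite sum of copies of $\mathcal{O}_G$; this is a standard fact about comodules over a Hopf algebra and poses no difficulty.
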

How can we think about this theorem? The $A$-points of $G$ clearly give $\otimes$-automorphisms, so we have a map 
\[
G(A) \rightarrow \Aut^\otimes (\mathrm{For} \otimes A).
\]
The miracle is that this map is an isomorphism. 

\vspace{2mm}
\noindent
{\bf Question:} How can we recover the Lie algebra $\mf{g}=\Lie{G}$ from Tannakian formalism?

\begin{definition} A $\otimes$-derivation of $\mathrm{For} \otimes A$ is an endomorphism 
\[
N_V: V \otimes A \rightarrow V \otimes A
\]
for all $V \in \Rep{G}$ such that $N_{V \otimes V'}=N_V \otimes 1 + 1 \otimes N_{V'}$. \end{definition}

\begin{theorem}
\label{Lie}
\[
\mf{g} \otimes A \simeq \End^\otimes(\mathrm{For} \otimes A).
\]
\end{theorem}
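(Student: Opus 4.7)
The plan is to deduce Theorem \ref{Lie} from Theorem \ref{group} via the standard ``infinitesimal'' trick: realise the Lie algebra as the kernel of reduction mod $\epsilon$ on $A[\epsilon]/(\epsilon^2)$-points, and chase this identification through the Tannakian equivalence. Concretely, for any $k$-algebra $A$ there is a natural isomorphism
\[
\mf{g} \otimes A \;\simeq\; \ker\!\left(G(A[\epsilon]/(\epsilon^2)) \xrightarrow{\;\epsilon \mapsto 0\;} G(A)\right),
\]
which is essentially the definition of $\mf{g}$ as the tangent space at $1 \in G$, extended scalar-wise to $A$. I would take this as the starting point; it is a standard fact once $G$ is a group scheme over $k$.

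Next I would apply Theorem \ref{group} to the ring $A[\epsilon]/(\epsilon^2)$, giving
\[
G(A[\epsilon]/(\epsilon^2)) \;\simeq\; \Aut^{\otimes}\!\bigl(\mathrm{For} \otimes A[\epsilon]/(\epsilon^2)\bigr),
\]
and identify the subgroup mapping to the identity in $\Aut^{\otimes}(\mathrm{For} \otimes A)$. Such a $\otimes$-automorphism, being the identity modulo $\epsilon$, must act on $V \otimes A[\epsilon]/(\epsilon^2)$ by $\mathrm{id}_{V \otimes A} + \epsilon N_V$ for a uniquely determined family of $A$-linear endomorphisms $N_V : V \otimes A \to V \otimes A$. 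The assignment $(\mathrm{id} + \epsilon N) \mapsto N = (N_V)_V$ gives the candidate map $\mf{g}\otimes A \to \End^{\otimes}(\mathrm{For} \otimes A)$, which I would check is functorial in $V$ and $A$-linear.

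The key step is then to verify that the $\otimes$-automorphism condition on $\mathrm{id} + \epsilon N$ translates exactly into the $\otimes$-derivation condition on $N$. Expanding on $V \otimes V'$ modulo $\epsilon^2$ one computes
\[
(\mathrm{id} + \epsilon N_V)\otimes(\mathrm{id} + \epsilon N_{V'}) \;=\; \mathrm{id} \otimes \mathrm{id} + \epsilon(N_V \otimes \mathrm{id} + \mathrm{id} \otimes N_{V'}),
\]
so compatibility with the tensor structure forces $N_{V \otimes V'} = N_V \otimes \mathrm{id} + \mathrm{id} \otimes N_{V'}$, which is precisely the defining property of a $\otimes$-derivation. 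Conversely, any such collection $N$ exponentiates (trivially, since $\epsilon^2 = 0$) to a $\otimes$-automorphism $\mathrm{id} + \epsilon N$ of $\mathrm{For} \otimes A[\epsilon]/(\epsilon^2)$, so the map is a bijection. Chasing the unit axioms shows compatibility with units, and naturality in $A$ upgrades this to an isomorphism of $k$-functors.

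The main obstacle is of course Theorem \ref{group} itself (the reconstruction theorem), which we are allowed to assume; granted that, everything above is formal manipulation with dual numbers. A minor subtlety I would want to be careful about is the passage from ``on finite-dimensional $V$'' to ``a well-defined element of $\mf{g}\otimes A$''---one must check that the family $(N_V)_V$ is determined by a single element of $\mf{g}\otimes A$ and not merely a compatible system of endomorphisms, but this follows from applying the argument to the regular representation (or using that $\mf{g}$ is itself reconstructed as derivations of the coordinate Hopf algebra of $G$).
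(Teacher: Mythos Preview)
Your proposal is correct and follows precisely the approach the paper intends: the paper leaves this as an exercise with the hint to deduce it from Theorem~\ref{group} using $\mf{g}=\ker\!\left(G(k[\epsilon]/(\epsilon^2)) \to G(k)\right)$, and you have carried out exactly that argument with the expected details (writing automorphisms as $\mathrm{id}+\epsilon N$ and matching the $\otimes$-automorphism condition to the $\otimes$-derivation condition).
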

\begin{exercise}
Deduce Theorem \ref{Lie} from Theorem \ref{group} using 
\[
\mf{g}=\ker\left(G\left(k[\epsilon]/(\epsilon^2)\right)\right) \rightarrow G(k).
\]
\end{exercise}
A rather startling consequence of this is the following. If $A$ is a $k$-algebra {\color{blue} with a $G$-action}, then 
\begin{align*}
\begin{array}{c} {\color{blue} \text{$G$-equivariant}} \\ \text{$\otimes$-derivation} \\ V \mapsto V \otimes A \end{array} = \begin{array}{c} \text{element} \\ x \in {\color{blue}(}\mf{g} \otimes A {\color{blue})^G} \end{array} &\iff \Hom^{{\color{blue}G}}_{\color{black}\mathrm{v.s.}}(\mf{g}^*, A)\\
&\iff \begin{array}{c} {\color{blue} \text{$G$-equivariant}} \\ \Hom_\mathrm{alg.}(\mc{O}_\mf{g}, A) \end{array}\\
&\iff {\color{blue} \Spec A/G \rightarrow \mf{g}/G}.
\end{align*}

The moral is that, for a $G$-equivariant algebra, equipping the functor $V \mapsto V\otimes A$ with a $G$-equivariant tensor derivation is the same as equipping its category of equivariant sheaves with a $\mf{g}/G$-linear structure. 
\pagebreak
\section{Lecture 29: Coherent sheaves on base affine space}
\label{lecture 29}

Today we will discuss coherent sheaves on base affine space. But before doing so, we'll start with a brief reminder on where we're going. 

Let $\Coh_\mathrm{free}^{G^\vee}(\widetilde{\mc{N}}) \subset \Coh^{G^\vee}(\widetilde{\mc{N}})$ be the full subcategory consisting of $V \otimes \mc{O}_{\widetilde{\mc{N}}}(\lambda)$ for $V \in \Rep{G}^\vee$ and $\lambda$ a character of $T^\vee$. As in Lectures \ref{lecture 24} and \ref{lecture 25}, let $\mc{H}$ be the affine Hecke category and $\mc{M}^\mathrm{asph}$ the categorical anti-spherical module. Our goal is to give an equivalence of categories $\Coh^{G^\vee}(\widetilde{\mc{N}}) {\color{blue} \xrightarrow{\sim}} \mc{M}^\mathrm{asph}$. To do so, we will construct a monoidal functor $F$ fitting into the following diagram.

\[
\begin{tikzcd}
\Coh^{G^\vee}(\widetilde{\mc{N}}) \arrow[rrr, bend left, blue, "\text{want}"]&  \Coh_\mathrm{free}^{G^\vee}(\widetilde{\mc{N}}) \arrow[l, hookrightarrow] \arrow[r, "\text{construct}", "F"'] & \mc{H} \arrow[r, twoheadrightarrow] & \mc{M}^\mathrm{asph}
\end{tikzcd}
\]
Once we have constructed $F:\Coh_\mathrm{free}^{G^\vee}(\widetilde{\mc{N}}) \rightarrow \mc{M}^\mathrm{asph}$, we can extend $F$ to a functor from $\Coh^{G^\vee}(\widetilde{\mc{N}})$ using the fact that any coherent sheaf admits a resolution via vector bundles. (This is analogous to the extension from an $A$-linear structure to an action of $\mathrm{QCoh}(\Spec{A})$ which occurred at the start of Lecture \ref{lecture 27}.)

How might we go about constructing such a functor $F$? The key philosophical observation is that $F$ can be built ``softly'' once one has a functor $\Rep{G^\vee} \rightarrow \mc{H}$. More precisely, one of the main technical lemmas  in \cite{AB} (in a somewhat diluted form, to aid comprehensibility at this point) is the following. 
\begin{theorem}
\label{technical lemma}
Let $\mc{C}$ be an additive monoidal category, and $F:\Rep{G^\vee} \rightarrow \mc{C}$ a $\otimes$-functor. 
\begin{enumerate}
    \item If $N$ is a $\otimes$-derivation of $F$ (cf. Lecture \ref{lecture 28}), then $F$ extends to a $\otimes$-functor 
    \[
    F:\Coh_\mathrm{free}(\mf{g}^\vee/G^\vee) \rightarrow \mc{C}.
    \]
    \item Suppose that we can upgrade $F$ to a $\otimes$-functor $\Rep{(G^\vee \times T^\vee)} \rightarrow \mc{C}$. Moreover, suppose that we have arrows 
    \[
    b_\lambda: F(V_\lambda) \rightarrow F(k_\lambda)
    \]
    satisfying the ``Pl\"{u}cker relations'' (and an ``acyclicity condition''). Then $F$ extends to a functor 
    \[
    F: \Coh_\mathrm{free}^{G^\vee}(\mc{B}^\vee) \rightarrow \mc{C}.
    \]
    (Here $\mc{B}^\vee$ denotes the flag variety of $G^\vee$.) 
    Combining 1 and 2 we obtain a functor 
    \[
    F:\Coh_\mathrm{free}^{G^\vee}(\mc{B}^\vee \times \mf{g}^\vee)\rightarrow \mc{C}.
    \]
    \item Suppose that for all $\lambda$, the arrow $b_\lambda \circ N_{V_\lambda}:F(V_\lambda) \rightarrow F(V_\lambda) \rightarrow F(k_\lambda)$ is equal to zero. Then $F$ extends to
    \[
    F:\Coh_\mathrm{free}^{G^\vee}(\widetilde{\mc{N}})\rightarrow \mc{C}.
    \]
\end{enumerate}
\end{theorem}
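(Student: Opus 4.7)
The plan is to apply the deequivariantisation principle of Lecture \ref{lecture 28} in three stages, using Tannakian formalism to translate between ``categorical'' data (derivations, highest-weight arrows) and ``geometric'' data (morphisms of stacks into $\mf{g}^\vee/G^\vee$, $\mc{B}^\vee/G^\vee$, $\widetilde{\mc{N}}/G^\vee$). At each stage, the $\otimes$-functor $F$ equips $\mc{C}$ with the structure of a $\Rep(-)$-module, equivalently a sheaf of categories over the relevant classifying stack; enlarging the source category amounts to promoting this to a sheaf over a larger stack, which by the deequivariantisation principle is controlled by extra algebraic structure on the deequivariantisation $\mc{C}_{\mathrm{deeq}}$.

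For part 1, by the deequivariantisation principle, extending $F$ to a sheaf of categories over $\mf{g}^\vee/G^\vee$ is equivalent to producing a $G^\vee$-equivariant $\mc{O}_{\mf{g}^\vee}$-linear structure on $\mc{C}_{\mathrm{deeq}}$. By the Tannakian identification $\End^\otimes(\mathrm{For}\otimes A) \simeq \mf{g}^\vee \otimes A$ (the ``Lie'' form of Theorem \ref{group}), together with the observation at the end of Lecture \ref{lecture 28} that a $G^\vee$-equivariant $\otimes$-derivation is the same data as a $G^\vee$-equivariant algebra map $\mc{O}_{\mf{g}^\vee} \to \mathrm{End}(\mathrm{For})$, the datum $N$ supplies exactly this structure. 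Evaluating on free equivariant sheaves produces the desired $F:\Coh_{\mathrm{free}}(\mf{g}^\vee/G^\vee) \to \mc{C}$.

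For part 2, the key input is the classical description of the basic affine space $\overline{G^\vee/U^\vee}$ as an affine $G^\vee\times T^\vee$-scheme whose coordinate ring is $\bigoplus_{\lambda\in X^*(T^\vee)_+} V_\lambda^*$, with multiplication determined by the canonical highest-weight projections $V_\lambda \otimes V_\mu \twoheadrightarrow V_{\lambda+\mu}$; the Plücker relations are precisely the algebra axioms for this ring, and the acyclicity condition is Borel--Weil--Bott vanishing ensuring no higher cohomology gets in the way. Through the upgraded functor $F:\Rep(G^\vee\times T^\vee)\to \mc{C}$, the family $\{b_\lambda\}$ assembles into an algebra object in the $(G^\vee\times T^\vee)$-equivariant deequivariantisation of $\mc{C}$ that represents $\overline{G^\vee/U^\vee}$; the resulting sheaf of categories lives over $\overline{G^\vee/U^\vee}/(G^\vee\times T^\vee) = \mathrm{pt}/B^\vee = \mc{B}^\vee/G^\vee$. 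Combining with part 1 and checking that the two extensions are compatible over $\mathrm{pt}/G^\vee$ yields $F:\Coh_{\mathrm{free}}^{G^\vee}(\mc{B}^\vee\times\mf{g}^\vee)\to \mc{C}$.

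For part 3, the Springer resolution $\widetilde{\mc{N}} \hookrightarrow \mc{B}^\vee \times \mf{g}^\vee$ is cut out, Borel-by-Borel, by the condition $x\in\mathrm{nilrad}(\mf{b})$; equivalently, $x$ annihilates the highest-weight line of $V_\lambda$ for every dominant $\lambda$. Under the Tannakian translation the element $x$ is encoded by $N$ and the highest-weight quotient by $b_\lambda$, so the composite $b_\lambda \circ N_{V_\lambda}$ is precisely the obstruction. Imposing its vanishing for all dominant $\lambda$ is the defining closed condition for $\widetilde{\mc{N}} \subset \mc{B}^\vee\times \mf{g}^\vee$, so the functor of part 2 descends to $F:\Coh_{\mathrm{free}}^{G^\vee}(\widetilde{\mc{N}}) \to \mc{C}$. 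The main obstacle will be part 2: one must verify that the Plücker relations are not only necessary but also sufficient to produce a genuine algebra object in the deequivariantisation representing $\overline{G^\vee/U^\vee}$ (and not some thickening), and check the compatibility of the three interacting pieces of structure—the $\Rep G^\vee$-action, the $T^\vee$-grading, and the $b_\lambda$'s—under the monoidal product. The acyclicity hypothesis is there precisely to control $\mathrm{Ext}^1$'s that would otherwise spoil this reconstruction; the remaining parts 1 and 3 are essentially formal consequences of Tannakian formalism once this is in place.
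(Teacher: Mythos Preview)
Your proposal is correct and takes essentially the same approach as the paper. The paper does not give a self-contained proof (the theorem is attributed to \cite{AB}), but the ingredients it develops in Lectures~\ref{lecture 28}--\ref{lecture 29} are exactly those you invoke: the Tannakian identification of $\otimes$-derivations with $\mf{g}^\vee/G^\vee$-linear structures for part~1; the explicit description of $k[G^\vee/U^\vee]=\bigoplus_{\lambda\in X_+}V_\lambda$ with Chevalley multiplication, the arrows $B_\lambda$, and the final Proposition of Lecture~\ref{lecture 29} (Pl\"ucker arrows $\Leftrightarrow$ $G^\vee\times T^\vee$-equivariant algebra map into $\mc{O}$) for part~2; and the defining condition $x\in\mathrm{nilrad}(\mf{b})$ for $\widetilde{\mc{N}}\subset\mc{B}^\vee\times\mf{g}^\vee$ for part~3. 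One small point: the ``acyclicity condition'' in the paper is about sheaves on the boundary $\overline{G^\vee/U^\vee}\setminus G^\vee/U^\vee$ mapping to zero (so that the functor descends from the affine closure to the quasi-affine $G^\vee/U^\vee$), which is related to but not quite the same as Borel--Weil--Bott vanishing.
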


For the remainder of the lecture we will work entirely on the Langland's dual side, so from here on, we drop all checks from our notation.
\subsection{Serre's description of coherent sheaves on $\PP^n$}
Serre's paper \cite{FAC} was very influential, and is recommended reading for any readers who have not done so. It was one of the first papers dealing in detail with coherent sheaves and their cohomology. The last part gives a beautiful description of coherent sheaves on projective space, which we now recall.

The variety $\PP^n$ has a standard affine cover 
\[
\PP^n = \bigcup_{i=0}^n U_i, \hspace{2mm} U_i \simeq \A^n. 
\]
Using this cover, one way to view a coherent sheaf on $\PP^n$ is as 
\[
\begin{array}{c} \text{collection of $n+1$ modules} \\ \text{over $k[x_1, \ldots , x_n]$} \end{array} \hspace{3mm} + \hspace{3mm}  \text{glue}. 
\]
But this is not very practical. Instead, we can make the observation that 
\[
\Coh{\PP^n} = \Coh^{\G_m}(\A^{n+1} \backslash \{0\}) \simeq \Coh^{\G_m}(\A^{n+1})/\Coh^{\G_m}(\{0\}).
\]
If you haven't thought about this before, the second equivalence shouldn't be entirely obvious. Now, \[
\begin{tikzcd}
\Coh^{\G_m}(\A^{n+1}) & = & \begin{array}{c} \text{finitely generated graded} \\ S=k[x_0, \ldots, x_n]\text{-modules} \end{array} \\
\Coh^{\G_m}(\mathrm{pt}) \arrow[u, hookrightarrow] & = & \begin{array}{c} \text{finite-dimensional} \\ \text{modules} \end{array} \arrow[u, hookrightarrow]
\end{tikzcd}
\]
Hence 
\[
\Coh \PP^n = \left( \begin{array}{c} \text{finitely generated} \\ \text{graded $S$-modules} \end{array} \right) \left/{\left\langle \begin{array}{c} \text{finite dimensional} \\ \text{$S$-modules} \end{array} \right\rangle.}\right.
\]
Here $\langle - \rangle$ denotes a ``Serre subcategory'', and the quotient is a ``Serre quotient''. (Perhaps this language shouldn't be a surprise!) If this is new to you, the following exercise is recommended. 
\begin{exercise}
\begin{enumerate}
    \item Convince yourself that it gives the right answer for $\Coh(\PP^0)$. 
    \item The functor $\Coh\PP^n \rightarrow S$-grmod is given by 
    \[
    \Gamma:=\Hom(\bigoplus_{n \in \Z} \mc{O}(n), -).
    \]
    Hence describe $\Gamma(\mc{O}(n))$ and $\Gamma(\text{skyscraper})$. 
\end{enumerate}
\end{exercise}
\begin{remark}
We can view these descriptions through the lens of descent. The ``classical'' cover $\PP^n = \bigcup U_i$ leads to our not-very-useful description. The cover 
\[
\A^{n+1}\backslash\{0\} \rightarrow \PP^n 
\]
leads to Serre's description. Note that $\A^{n+1}\backslash \{0\}$ is ``almost'' affine. 
\end{remark}

\subsection{How can we describe coherent shaves on $G/B$?}
If $G=\SL_2$, $G/B\simeq \PP^1$, and we can use Serre's description of coherent sheaves on $\PP^n$ to describe coherent sheaves on $G/B$. For other groups, a similar philosophy can be employed. At first glance, three options present themselves for describing coherent sheaves on $G/B$:
\begin{enumerate}
    \item via covers (not very useful); 
    \item using $\Coh^G(G/B) \simeq \Coh(\mathrm{pt}/B) \simeq \Rep{B}$; 
    \item via base affine space (analogue of Serre's description). 
\end{enumerate}
What is {\bf base affine\footnote{Note that it is almost never affine, so the nomenclature is a little strange!} space}? Let $U \subset B$ be the unipotent radical. Then $G/U$ is a $G \times T$-variety via the $T$-action \[
gU\cdot t = gtU. 
\]
This is well-defined because $T$ normalizes $U$. Hence we have a quotient map 
\[
G/U \rightarrow G/B \simeq (G/U)/T.
\]
\begin{example}
Let $G=\SL_2$. Then $G \circlearrowright \C^2$ and 
\[
\mathrm{stab}\bp 1 \\ 0 \ep = \bp 1 & * \\ 0 & 1 \ep = U.
\]
Hence $G/U \simeq$ orbit of $\bp 1 \\ 0 \ep \simeq \C^2 \backslash \{0\}$. The $T$-action is given by scaling: $x \cdot \lambda = \lambda x$, and 
\[
\left( \C^2 \backslash \{0\} \right) / \C^\times = \PP^1 \C = G/B.
\]
\end{example}
In general, $G/U$ is {\em quasi-affine}, meaning that it is an open set inside an affine variety. 

\vspace{2mm}
\noindent
{\bf Why?} Denote by $V_\lambda$  a simple highest weight module of $G$ and $v_\lambda \in V_\lambda$ a fixed highest weight vector. We can choose $\{\lambda_1, \ldots  , \lambda_n\}$ such that the stabiliser of 
\[
v:=(v_{\lambda_1}, \ldots, v_{\lambda_n}) \in V:= V_{\lambda_1} \oplus \cdots \oplus V_{\lambda_n}
\]
in $G$ is $U$. Hence 
\begin{align*}
G/U &\hookrightarrow V\\
g &\mapsto gv
\end{align*}
gives a map $G/U \xhookrightarrow{\text{open}}\overline{G/U}\subset V$ and $\overline{G/U}$ is affine. 

\vspace{5mm}
\noindent
{\bf Ring of functions:} Assume $\mathrm{char}k=0$. Then we have the Peter-Weyl theorem:
\[
k[G]=\bigoplus_{\lambda \in X_+} V_\lambda \oplus V_\lambda^* \text{ as $G\times G$-representations}.
\]
Hence,
\[
k[G/U] \simeq k[G]^U\simeq \bigoplus_{\lambda \in X_+} V_\lambda \otimes (V_\lambda^*)^U \simeq \bigoplus_{\lambda \in X_+} V_\lambda.
\]
The first isomorphism follows from the fact that the right action of $U$ on $G$ is free, and the final isomorphism follows from $(V_\lambda^*)^U \simeq \C$. This gives us a description of $k[G/U]$ in terms of representations of $G$, but how do we describe multiplication from this perspective? 

The torus $T$ acts on $(V_\lambda^*)^U$ via the character $\lambda$, so $T$ acts on $V_\lambda\simeq V_\lambda \otimes (V_\lambda^*)^U$ with character $\lambda$. The multiplication map 
\[
m:\left( \bigoplus_{\lambda \in X_+} V_\lambda \right) \otimes \left(\bigoplus_{\lambda \in X_+} V_\lambda \right) \rightarrow \bigoplus_{\lambda \in X_+} V_\lambda 
\]
is $T$-equivariant, so it must map  
\[
V_\lambda \otimes V_\mu \rightarrow V_{\lambda + \mu}.
\]
The space $\Hom_G(V_\lambda \otimes V_\mu, V_{\lambda + \mu})$ is one-dimensional, hence there is a unique map 
\[
m_{\lambda, \mu}:V_\lambda \otimes V_\mu \rightarrow V_{\lambda + \mu}: v_\lambda \otimes v_\mu \mapsto v_{\lambda + \mu}.
\]
(Note that the multiplication map could also be zero on this component, but this can be ruled out.) This gives the multiplication in $k[G/U]$.
\begin{remark}
This is sometimes referred to as {\em Chevalley multiplication} on $k[G/U]$. 
\end{remark}
\begin{example}
If $G=\SL_2$, $k[G/U]=k[x,y]=\bigoplus_{m \in \Z} k[x,y]_m$, and multiplication 
\[
k[x, y]_m \otimes k[x,y]_{m'} \rightarrow k[x,y]_{m + m'} 
\]
is the unique $\SL_2$-equivariant morphism which sends $x^m\otimes x^{m'}$ to $x^{m+m'}$. 
\end{example}

\noindent
{\bf Important Notation:} 
\[
{G/U \atop \text{base affine space}}{\xhookrightarrow{\text{open dense}} \atop \hspace{2mm}} {\hspace{2mm} \overline{G/U} \atop \hspace{2mm}} {= \Spec k[G/U] \atop \text{affine closure} }
\]
We should think that these two spacs are ``almost the same'', like $\A^{n+1}$ and $\A^{n+1} \backslash \{0\}$. 

\begin{exercise}
(Well worth thinking about!) Describe the ideal of the boundary of $G/U$ inside $\overline{G/U}$ in terms of the above.
\end{exercise}

\subsection{Motivational interlude}
The algebra $k[G/U]=\bigoplus_{\lambda \in X_+} V_\lambda$ is a $T$-algebra. The deequivariantisation principle of Lecture \ref{lecture 28} tells us that to make a category $\mc{M}$ linear over $G/B=(G/U)/T$ is the same as giving: 
\begin{enumerate}
    \item a $\Rep{T}$-module $\mc{M}$ $\iff$ an action of the character lattice $X$ of $T$ on a category $\mc{M}$. We will denote this action via $\lambda \cdot M = M(\lambda)$. 
    \item a $T$-equivariant $k[G/U]$-linear structure on $\mc{M}_\mathrm{deeq}$; i.e. for all $M, M' \in \mc{M}$, a $T$-equivariant map 
    \[
    k[G/U] \xrightarrow{\phi_{M, M'}} \Hom(M, \mc{O}_T \otimes M') = \Hom(M, \bigoplus M'(\lambda)). 
    \]
    By the description of $k[G/U]$ above, this is the same data as a collection of maps 
    \[
    V_\lambda \rightarrow  \Hom(M, M'(\lambda)) 
    \]
    for each $\lambda \in X_+$ (the $\lambda$-isotypic components of $\phi_{M, M'}$); or, equivalently (as we will explain momentarily), a collection of ``highest weight arrows''
    \[
    b_\lambda \in \Hom(V_\lambda \otimes M, M'(\lambda))
    \]
    such that the associated map $\phi_{M,M'}$ is $K[G/U]$-linear (``Drinfeld--Pl\"{u}cker relations''). 
    \item with the property that sheaves on the complement 
    \[
    \overline{G/U} \backslash G/U
    \]
    act by zero. 
\end{enumerate}
\begin{remark}
This explains the arrows in Theorem \ref{technical lemma}.2. Note that $\mc{C}$ from Theorem \ref{technical lemma} is $\End{\mc{M}}$ here. 
\end{remark}
\subsection{Arrows between coherent sheaves on $\overline{G/U}$}
Now we will examine the ``highest weight arrows'' in the previous section more carefully. Set 
\[
\mc{O}:=k[G/U] = \bigoplus_{\lambda \in X_+} V_\lambda
\]
with Chevalley multiplication and $X$-grading. Then 
\begin{align*}
    \Hom_{G\times T \text{-eqvt} \atop \text{vect bndls} } (V_\lambda \otimes \mc{O}, \mc{O}(\lambda)) &= \Hom_{G\times T \text{-eqvt}}(V_\lambda, \mc{O}(\lambda)) \\
    &= \Hom_{G\text{-eqvt}} (V_\lambda, V_\lambda)\\
    &=\C 
\end{align*}
is spanned by 
\[
B_\lambda:V_\lambda \otimes \mc{O} \rightarrow \mc{O}(\lambda),
\]
with the property that 
\[
B_\lambda|_{V_\lambda \otimes V_\mu}=m_{\lambda, \mu}: V_\lambda \otimes V_\mu \rightarrow V_{\lambda + \mu}.
\]
\begin{lemma}
These arrows satisfy {\em Pl\"{u}cker relations}:
\[
B_\lambda \otimes B_\mu = B_{\lambda + \mu} \circ (m_{\lambda, \mu} \otimes id_\mc{O}).
\]
That is, the diagram
\[
\begin{tikzcd}
(V_\lambda \otimes \mc{O}) \otimes_\mc{O} (V_\mu \otimes \mc{O}) \arrow[d, equals] \arrow[rr, "B_\lambda \otimes B_\mu"] & & \mc{O}(\lambda) \otimes_\mc{O} \mc{O}(\mu) \arrow[d, "\sim", "\text{mult}"'] \\
(V_\lambda \otimes V_\mu) \otimes \mc{O} \arrow[dr, "m_{\lambda, \mu}"] & & \mc{O}(\lambda + \mu) \\
& V_{\lambda +\mu} \otimes \mc{O} \arrow[ur, "B_{\lambda + \mu}"] & 
\end{tikzcd}
\]
commutes. 
\end{lemma}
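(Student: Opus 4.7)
The plan is to prove the lemma by a standard Schur-type argument: show that the space of candidate morphisms is one-dimensional, and then verify agreement on a single highest-weight vector.

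First I would observe that both the clockwise and counterclockwise compositions in the diagram are $G \times T$-equivariant $\mathcal{O}$-linear morphisms
\[
V_\lambda \otimes V_\mu \otimes \mathcal{O} \longrightarrow \mathcal{O}(\lambda+\mu),
\]
after making the canonical identification $(V_\lambda \otimes \mathcal{O}) \otimes_\mathcal{O} (V_\mu \otimes \mathcal{O}) \simeq V_\lambda \otimes V_\mu \otimes \mathcal{O}$. The key computation, exactly parallel to the one given just before the lemma for $\Hom(V_\lambda \otimes \mathcal{O}, \mathcal{O}(\lambda))$, is
\[
\Hom_{G\times T\text{-eqvt}}(V_\lambda \otimes V_\mu \otimes \mathcal{O},\ \mathcal{O}(\lambda+\mu)) \;=\; \Hom_{G\text{-eqvt}}(V_\lambda \otimes V_\mu,\ V_{\lambda+\mu}),
\]
by reading off the $T$-weight $0$ part on both source and target. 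By complete reducibility and the fact that $V_{\lambda+\mu}$ occurs with multiplicity exactly one in $V_\lambda \otimes V_\mu$ (the Cartan component, at highest weight $\lambda+\mu$), this $\Hom$-space is one-dimensional, spanned by $m_{\lambda,\mu}$.

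Hence the two compositions differ at most by a scalar, and it suffices to check they agree on a single nonzero vector. I would test them on the highest-weight vector $v_\lambda \otimes v_\mu \otimes 1 \in V_\lambda \otimes V_\mu \otimes V_0 \subset V_\lambda \otimes V_\mu \otimes \mathcal{O}$. The counterclockwise route sends
\[
v_\lambda \otimes v_\mu \otimes 1 \;\xmapsto{\,m_{\lambda,\mu} \otimes \mathrm{id}\,}\; v_{\lambda+\mu} \otimes 1 \;\xmapsto{\,B_{\lambda+\mu}\,}\; v_{\lambda+\mu},
\]
using that $B_{\lambda+\mu}$ restricted to $V_{\lambda+\mu} \otimes V_0$ is $m_{\lambda+\mu,0}$, which is the identity on highest-weight vectors. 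The clockwise route sends $(v_\lambda \otimes 1) \otimes (v_\mu \otimes 1)$ to $v_\lambda \otimes v_\mu \in V_\lambda \otimes V_\mu \subset \mathcal{O}(\lambda) \otimes_\mathcal{O} \mathcal{O}(\mu)$ under $B_\lambda \otimes B_\mu$, and then the multiplication isomorphism $\mathcal{O}(\lambda) \otimes_\mathcal{O} \mathcal{O}(\mu) \xrightarrow{\sim} \mathcal{O}(\lambda+\mu)$ is, by the very definition of Chevalley multiplication, precisely the map $m_{\lambda,\mu}$ on the $V_\lambda \otimes V_\mu$ summand, sending $v_\lambda \otimes v_\mu$ to $v_{\lambda+\mu}$. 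Both compositions therefore produce $v_{\lambda+\mu}$, so they agree.

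I do not expect any genuine obstacle here: the argument is a textbook application of the ``1-dimensional $\Hom$-space plus check on a highest weight vector'' strategy, and the only small subtlety is bookkeeping the $T$-gradings so that the $\Hom$-space identification really is one-dimensional (this is what forces the target shift $\mathcal{O}(\lambda+\mu)$ to match the $T$-weights $\lambda, \mu$ coming from the two sources). If one wanted a more conceptual proof, the cleanest formulation is that $B_\bullet : \bigoplus_\lambda V_\lambda \otimes \mathcal{O} \to \bigoplus_\lambda \mathcal{O}(\lambda)$ is nothing other than the tautological action of the graded algebra $\mathcal{O} = \bigoplus_\lambda V_\lambda$ on itself, and the Plücker relation is just the associativity of this action evaluated on highest-weight generators.
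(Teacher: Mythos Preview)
Your proof is correct, but it takes a slightly longer route than the paper's. The paper's proof is a single line: since both compositions are $\mathcal{O}$-linear maps out of the free $\mathcal{O}$-module $V_\lambda \otimes V_\mu \otimes \mathcal{O}$, it suffices to check agreement on the generating subspace $V_\lambda \otimes V_\mu \otimes 1$, and there both sides are literally the Chevalley multiplication $m_{\lambda,\mu}$ by definition of $B_\lambda$, $B_\mu$, $B_{\lambda+\mu}$ on the relevant graded pieces.

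Your Schur-lemma step computing $\Hom_{G\times T}(V_\lambda \otimes V_\mu \otimes \mathcal{O}, \mathcal{O}(\lambda+\mu)) \cong \Hom_G(V_\lambda \otimes V_\mu, V_{\lambda+\mu})$ is correct but unnecessary: once you have $\mathcal{O}$-linearity you can check on all of $V_\lambda \otimes V_\mu$ directly, not just the highest-weight vector, and then no multiplicity-one statement is needed. In fact you essentially say this yourself in your final paragraph---the ``cleaner formulation'' you describe there (the Pl\"{u}cker relation as associativity of the tautological action of $\mathcal{O}$ on itself) \emph{is} the paper's proof. So nothing is wrong, but your main argument is a detour around the one-line observation you already made at the end.
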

\begin{proof}
It is enough to check that the restriction to $V_\lambda \otimes V_\mu$ commutes. This becomes the definition of Chevalley multiplication.
\end{proof}
\begin{remark}
Recall that the classical Pl\"{u}cker relations describe the homogeneous equations defining the Grassmannian of $k$-planes in $\C^n$ in its ``Pl\"{u}cker'' embedding inside $\PP(\Lambda^k\C^n)$. Analogously, the above theory can be used to describe the defining relations of the flag variety in its embedding inside a product of Grassmannians. For more on this (and the connection to moduli of vector bundles and Drinfeld's compactification), see \cite[\S 4]{FGV}. 
\end{remark}

We will end today's lecture by describing the analogue in this setting of the $\mathrm{pt}/G \rightsquigarrow \mf{g}/G$ upgrade that we discussed last lecture. Here, we have an upgrade 
\[
\mathrm{pt}/G \rightsquigarrow G\backslash (\overline{G/U})/T.
\]
\begin{proposition}
Let $A$ be a $G \times T$-algebra. Suppose for all $\lambda \in X_+$, we are given a $G$-equivariant morphism 
\[
b_\lambda:V_\lambda \otimes A \rightarrow A(\lambda)
\]
satisfying the Pl\"{u}cker relations. Then there exists a unique $G \times T$-equivariant homomorphism 
\[
\phi:\mc{O} \rightarrow A
\]
such that $b_\lambda = \phi_*(B_\lambda):=id_A \otimes B_\lambda$.
\end{proposition}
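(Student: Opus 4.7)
The plan is to use the compatibility condition $\phi_*(B_\lambda) = b_\lambda$ as a forcing rule that pins $\phi$ down on each isotypic summand $V_\lambda \subset \mc{O}$, and then verify multiplicativity using the Plücker relations.

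Since $(V_\lambda^*)^U$ is one-dimensional of $T$-weight $\lambda$, each summand $V_\lambda$ of $\mc{O} = \bigoplus_{\lambda \in X_+} V_\lambda$ sits in uniform $T$-weight $\lambda$, so this is the $T$-grading. A $G \times T$-equivariant linear $\phi: \mc{O} \to A$ therefore amounts to a family of $G$-equivariant maps $\phi_\lambda: V_\lambda \to A^\lambda$ into the $T$-weight-$\lambda$ component $A^\lambda \subset A$. Now, for $x \in V_\lambda$ one has $B_\lambda(x \otimes 1_\mc{O}) = m_{\lambda, 0}(x \otimes 1) = x$, so after base change along any candidate $\phi$ we get $\phi_*(B_\lambda)(x \otimes 1_A) = \phi(x)$. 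The requirement $\phi_*(B_\lambda) = b_\lambda$ thus forces $\phi_\lambda(x) := b_\lambda(x \otimes 1_A)$, which simultaneously dictates the definition and establishes uniqueness. $T$-equivariance of $b_\lambda$ guarantees the image lies in $A^\lambda$, and $G$-equivariance is inherited from $b_\lambda$; the $\phi_\lambda$ then assemble to a $G \times T$-equivariant linear map $\phi: \mc{O} \to A$.

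For multiplicativity, take $x \in V_\lambda$ and $y \in V_\mu$, so that in $\mc{O}$ one has $xy = m_{\lambda,\mu}(x \otimes y) \in V_{\lambda+\mu}$. Applying the hypothesised Plücker relation
\[
b_\lambda \otimes_A b_\mu = b_{\lambda+\mu} \circ (m_{\lambda,\mu} \otimes \mathrm{id}_A)
\]
to the element $(x \otimes 1_A) \otimes_A (y \otimes 1_A)$ of $(V_\lambda \otimes A) \otimes_A (V_\mu \otimes A)$, and using the canonical identification $A(\lambda) \otimes_A A(\mu) \xrightarrow{\sim} A(\lambda+\mu)$ given by multiplication in $A$, the left-hand side becomes $b_\lambda(x \otimes 1_A) \cdot b_\mu(y \otimes 1_A) = \phi(x)\phi(y)$, while the right-hand side becomes $b_{\lambda+\mu}(xy \otimes 1_A) = \phi(xy)$. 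Unit preservation $\phi(1_\mc{O}) = 1_A$ follows from the standard normalisation $b_0 = \mathrm{id}_A$ built into the Plücker package; the relation at $\lambda = \mu = 0$ alone only forces $b_0$ to be an equivariant algebra endomorphism of $A$, so this extra unit axiom is genuinely needed. The main step to get right is the multiplicativity check, whose essential content is the identification $A(\lambda) \otimes_A A(\mu) \simeq A(\lambda+\mu)$ coming from the algebra structure on $A$; once that bookkeeping is arranged the rest is formal.
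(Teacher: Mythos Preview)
Your proof is correct and follows essentially the same approach as the paper: both unwind the condition $b_\lambda = \phi_*(B_\lambda)$ to force $\phi|_{V_\lambda} = b_\lambda|_{V_\lambda \otimes 1_A}$, giving uniqueness and the definition simultaneously, and then deduce multiplicativity from the Pl\"ucker relations. You actually supply the multiplicativity verification and the unit discussion in detail, whereas the paper leaves these as ``one can check''.
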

\begin{proof}
What does the last condition mean? It means that
\[
\begin{tikzcd}
V_\lambda \otimes \mc{O} \arrow[d, "B_\lambda"'] \arrow[r] & V_\lambda \otimes A \arrow[d, "b_\lambda"] \\
\mc{O}(\lambda) \arrow[r, "\phi(\lambda)"] &A(\lambda)
\end{tikzcd}
\]
commutes; i.e.
\[
\phi|_{V_\lambda} = b_\lambda|_{V_\lambda \otimes 1}.
\]
This determines $\phi$ uniquely: define $\phi_\lambda := b_\lambda|_{V_\lambda \otimes 1}:V_\lambda \rightarrow A$ and $\phi=\bigoplus \phi_\lambda$. This is clearly $G \times T$-equivariant. One can check that $\phi$ is a homomorphism, which amounts to the Pl\"{u}cker relations holding. 
\end{proof}
\pagebreak
\section{Lecture 30: Constructible sheaves on finite flag varieties and braid groups}
\label{lecture 30}

Today we return to the constructible world. In this lecture and the following one, we will see how the formalism of the last two lectures is used to produce a $\otimes$-functor 
\[
\Coh_\mathrm{free}^{G^\vee}(\widetilde{\mc{N}}) \rightarrow P_I \subset D_I^b(\mc{F}\ell). 
\]
The key tool is provided by BGK central sheaves and certain easier sheaves called {\em Wakimoto sheaves}. Wakimoto sheaves are special to the affine setting; however, to warm up (it's been a few weeks since we've been on this side!), we'll spend today's lecture in the finite case. 

\subsection{A fun calculation}
 Choose points $x \neq y$ on $\PP^1$:
\[
\includegraphics[scale=0.2]{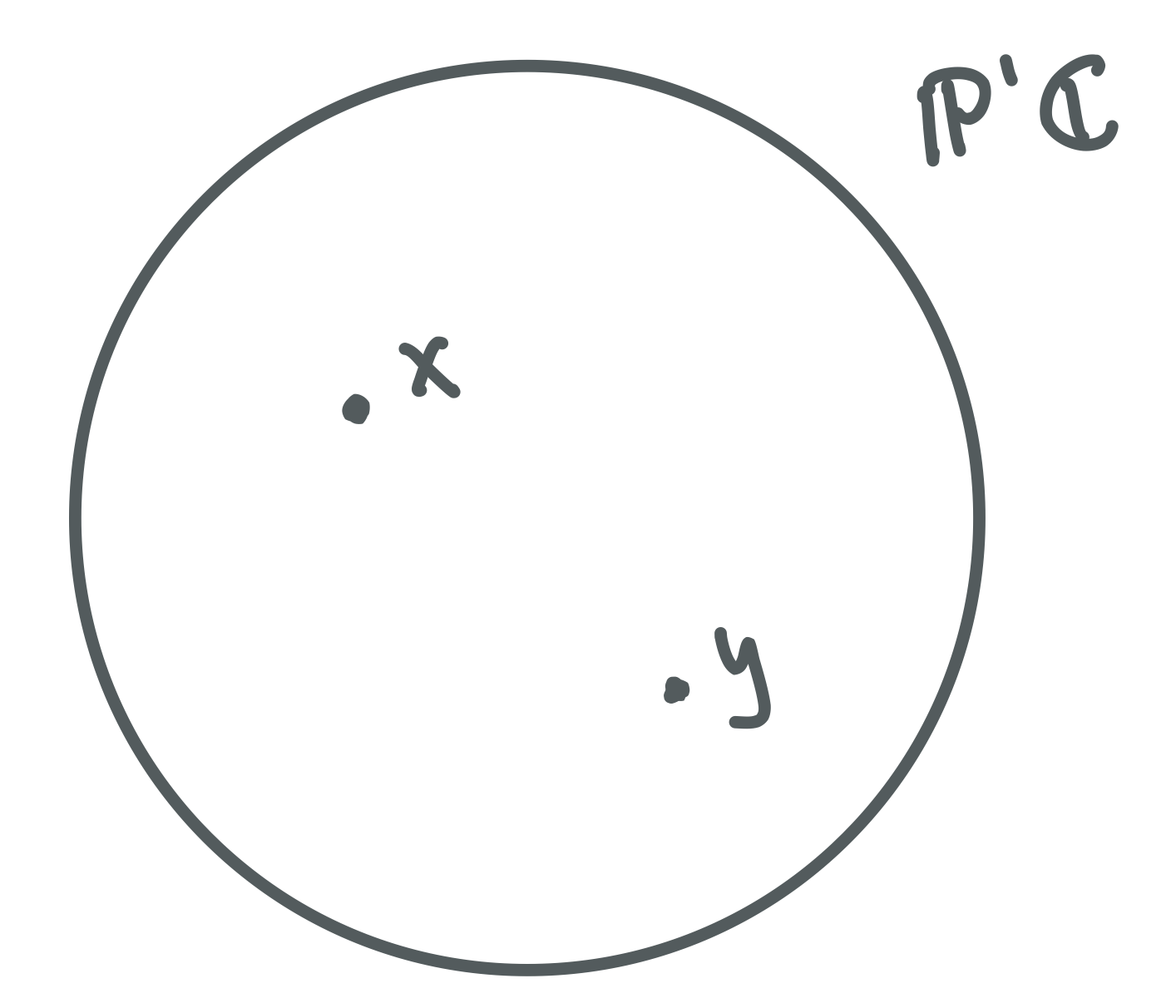}
\]
There are two ways to include these points into $\PP^1$:
\[
\begin{tikzcd}
U:=\PP^1 \backslash \{x, y\} \arrow[r, hookrightarrow, "j_y"] \arrow[d, hookrightarrow, "j_x"] & \PP^1 \backslash \{x \} \arrow[d, hookrightarrow, "j_x"] \\
\PP^1 \backslash \{y\} \arrow[r, hookrightarrow, "j_y"] & \PP^1 
\end{tikzcd}
\]
\begin{exercise}
Show that 
\[
j_{x!}j_{y*}\Q_U \simeq j_{y*} j_{x!} \Q_U =: \Q_{x!, y*}.
\]
({\em Hint}: construct a map and then show that it is an isomorphism.) 
\end{exercise}
A picture: 
\[
\includegraphics[scale=0.2]{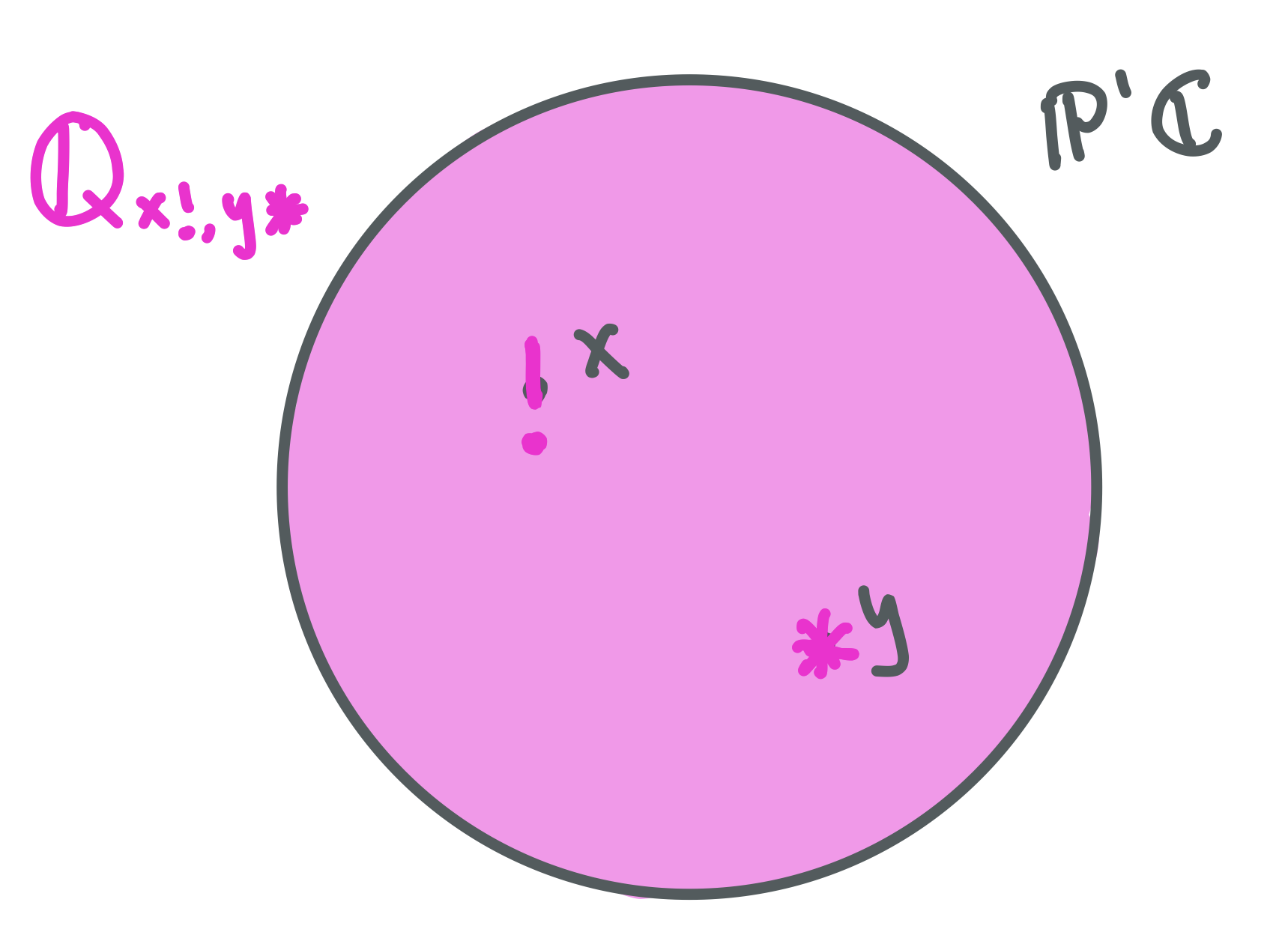}
\]
\begin{claim}
$H^*(\PP^1, \Q_{x!, y*})=0$. 
\end{claim}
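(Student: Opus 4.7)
The plan is to exploit the stated exercise identifying $\mathbb{Q}_{x!,y*} = j_{x!}j_{y*}\mathbb{Q}_U$ with $j_{y*}j_{x!}\mathbb{Q}_U$, and then to compute the cohomology using the second description, which immediately converts the problem into an easy calculation on the contractible affine line $V' := \mathbb{P}^1 \setminus \{y\} \simeq \mathbb{A}^1$.

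First, since $Rj_{y*}$ is right adjoint to $j_y^*$, for any sheaf $\mathcal{F}$ on $V'$ one has $R\Gamma(\mathbb{P}^1, Rj_{y*}\mathcal{F}) = R\Gamma(V',\mathcal{F})$. Applied to $\mathcal{F} = j_{x!}\mathbb{Q}_U$, where now $j_x\colon U \hookrightarrow V'$ is the inclusion of the complement of the single point $x \in V'$, this gives
\[
H^*(\mathbb{P}^1, \mathbb{Q}_{x!,y*}) \;\cong\; H^*(V', j_{x!}\mathbb{Q}_U).
\]

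Now I would invoke the standard open/closed distinguished triangle on $V'$, namely
\[
j_{x!}\mathbb{Q}_U \longrightarrow \mathbb{Q}_{V'} \longrightarrow (i_x)_*\mathbb{Q}_{\{x\}} \xrightarrow{+1},
\]
and apply $R\Gamma(V',-)$. Because $V' \simeq \mathbb{A}^1$ is contractible, $H^*(V',\mathbb{Q}_{V'})$ is $\mathbb{Q}$ concentrated in degree $0$, and $H^*(V', (i_x)_*\mathbb{Q}_{\{x\}}) = H^*(\{x\},\mathbb{Q})$ is also $\mathbb{Q}$ concentrated in degree $0$. The induced map between them is, by construction, the restriction of the constant function $1$ on $V'$ to its value at $x$, hence is an isomorphism $\mathbb{Q} \xrightarrow{\sim} \mathbb{Q}$. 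The long exact sequence then forces $H^i(V', j_{x!}\mathbb{Q}_U) = 0$ for every $i$, which is the claim.

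The only genuinely non-formal input is the exchange isomorphism $j_{x!}Rj_{y*} \simeq Rj_{y*}j_{x!}$, which is precisely what the preceding exercise asks one to prove. I expect this to be the main obstacle for a reader doing the exercise from scratch; the standard route is to build the comparison map via the adjunctions $(j_{x!},j_x^*)$ and $(j_y^*,Rj_{y*})$ and then check it is an isomorphism on stalks at each of the four strata $U$, $\{x\}$, $\{y\}$, and the rest of $\mathbb{P}^1$, where the verification is immediate. Alternatively one can deduce it from the base change isomorphism applied to the Cartesian square of open immersions formed by $U, V' = \mathbb{P}^1\setminus\{y\}, \mathbb{P}^1\setminus\{x\}, \mathbb{P}^1$. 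Once this exchange is granted the whole calculation is three lines.
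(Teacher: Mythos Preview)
Your proof is correct and is essentially the same as the paper's, just organized in the opposite order. The paper applies the open/closed distinguished triangle $j_{x!}j_x^! \to \mathrm{id} \to i_{x*}i_x^*$ to $\mathbb{Q}_{y*}$ on $\PP^1$ and then (implicitly, via the adjunction $H^*(\PP^1,\mathbb{Q}_{y*}) = H^*(\C)$) passes to $V'$ when computing cohomology; you first pass to $V'$ via the exercise and the adjunction for $j_{y*}$, and then apply the open/closed triangle there. Either way one lands on the same long exact sequence with $\mathbb{Q} \xrightarrow{\sim} \mathbb{Q}$ in degree $0$. The only minor difference is that the paper identifies the first term of its triangle with $\mathbb{Q}_{x!,y*}$ via base change rather than by invoking the full exchange isomorphism from the exercise, but this is the same base change you mention at the end.
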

\begin{proof}
Let $i_x:\{x\} \hookrightarrow \PP^1$ be the inclusion. By applying the functorial distinguished triangle  
\[
j_{x!}j_x^! \rightarrow id \rightarrow i_{x*}i_x^* \xrightarrow{+1}
\]
to $\Q_{y*}:= j_{y*}\Q_{\PP^1 \backslash \{y\}}$, we obtain a distinguished triangle 
\[
j_{x!}j_x^! \Q_{y*} \rightarrow \Q_{y*} \rightarrow i_{x*} i_x^* \Q_{y*} \xrightarrow{+1}.
\]
By base change, $j_{x!}j_x^! \Q_{y*} \simeq \Q_{x!, y*}$, so the long exact sequence in cohomology
\[
\begin{tikzcd}
 &H^*(\Q_{x!, y*})& H^*(\Q_{y*})=H^*(\C)  & H^*(i_{x*} i_x^* \Q_{y*})  \\
2 & 0 \arrow[r] & 0 \arrow[r] \arrow[d, phantom, ""{coordinate, name=Z}]
& 0  \\
1 & 0 \arrow[r]& 0 \arrow[d, phantom, ""{coordinate, name=Y}] \arrow[r] & 0 \arrow[ull,
rounded corners,
to path={ -- ([xshift=2ex]\tikztostart.east)
|- (Z) [near end]\tikztonodes
-| ([xshift=-2ex]\tikztotarget.west)
-- (\tikztotarget)}] \\
0 & 0 \arrow[r] & \Q \arrow[r, "\sim"] & \Q \arrow[ull,
rounded corners,
to path={ -- ([xshift=2ex]\tikztostart.east)
|- (Y) [near end]\tikztonodes
-| ([xshift=-2ex]\tikztotarget.west)
-- (\tikztotarget)}]
\end{tikzcd}
\]
implies the claim.
\end{proof}
\begin{exercise}
\begin{itemize}
    \item Prove the claim again as follows: use Artin vanishing (which we will state later) and the previous exercise to show that $H^i(\PP^1, \Q_{x!, y*}[1])=0$ for $i \neq 0$, then compute that 
    \[
    \chi(\PP^1, \Q_{x!, y*})=0
    \]
    to deduce the claim.
    \item Find yet another proof of the claim by interpreting $H^*(\PP^1, \Q_{x!, y*})$ as certain locally finite chains. 
\end{itemize}
\end{exercise}

\subsection{The finite Hecke category}

Fix 
\[
G \supset B \supset T
\]
as usual, taken over $\C$. In what follows we will take our coefficients in $\Q$. To simplify the discussion, we will work in 
\[
D^b_{B \times B}(G, \Q) \simeq D^b_{B}(G/B, \Q). 
\]
\begin{remark}
As we learned in Lecture \ref{lecture 24}, this is an approximation to the Hecke category. Everything we do below may be lifted to the Hecke category $\mc{H}$ with a little more effort. However, we choose to focus on $D^b_{B \times B}(G, \Q)$ in today's lecture to illustrate the ideas more clearly. 
\end{remark}

Recall that $D^b_B(G/B)$ comes with a convolution structure
\[
*:D^b_B(G/B, \Q) \times D^b_B(G/B, \Q) \rightarrow D^b_B(G/B, \Q). 
\]
The $B$-orbits on $G/B$ stratify $G/B$ by the Bruhat decomposition
\[
G/B=\bigsqcup_{x \in W_f} BxB/B. 
\]
Hence we have two related (but not equal!) derived categories, each containing a corresponding category of perverse sheaves: the construcible (with respect to the Bruhat stratification) derived category 
\[
D^b_{(B)}(G/B, \Q) \supset P_{(B)},
\]
and the $B$-equivariant derived category 
\[
D^b_B(G/B, \Q) \supset P_B.
\]
Convolution gives a right action 
\[
 D^b_{(B)}(G/B, \Q) \circlearrowleft D_B^b(G/B, \Q). 
\]
\begin{notation}
For $\mc{F} \in D_B(G/B)$, denote by $\dot{\mc{F}}:=\Q_{B/B} * \mc{F} = \mathrm{For}(\mc{F}) \in D_{(B)}^b(G/B)$ the object we get by ``forgetting equivariance''. This gives a map 
\[
P_B \rightarrow P_{(B)}.
\]
\begin{remark}
The map $\mc{F} \mapsto \dot{\mc{F}}$ is fully faithful, but the image is not closed under extensions.
\end{remark}
\end{notation}
Let $j_x:BxB/B \hookrightarrow G/B$. Define  
\[
\Delta_x:=j_{x!}\Q_{BxB/B}[\ell(x)], \hspace{3mm} \nabla_x:=j_{x*}\Q_{BxB/B}[\ell(x)] 
\]
in $P_{B}$. 
\begin{remark}
\label{not highest weight}
Recall that in Lecture \ref{lecture 20}, we showed that $\{\dot{\Delta}_x\}$ and $\{\dot{\nabla}_x\}$ are standard and costandard objects in a highest weight structure on $P_{(B)}$. In general, the category $P_B$ does not admit a highest weight structure. 
\end{remark}

\begin{example}
\label{equivariant category not highest weight}
Let $G=\SL_2$. As we discussed in Lecture \ref{lecture 16}, we can use Beilinson's description 
\[
\Perv_{(B)}(\mathbb{P}^1) \simeq \Rep \left( \left. \begin{tikzcd} \bullet \arrow[r, "e"', shift right] & \bullet \arrow[l, "f"', shift right] \end{tikzcd} \right| fe=0 \right)
\]
of the category $P_{(B)}$ to see that it has five indecomposible objects:
\begin{enumerate}
    \item $\dot{\IC}_{id} \longleftrightarrow \left(\begin{tikzcd} 0 \arrow[r, shift right] & k \arrow[l, shift right] \end{tikzcd}\right)$,
    \item $\dot{\IC}_s\longleftrightarrow \left(\begin{tikzcd} k \arrow[r, shift right] & 0 \arrow[l, shift right] \end{tikzcd}\right)$,
    \item $\dot{\Delta}_s\longleftrightarrow \left(\begin{tikzcd}  k \arrow[r, "\sim"', shift right] & k \arrow[l, "0"', shift right] \end{tikzcd}\right)$,
    \item $\dot{\nabla}_s\longleftrightarrow \left(\begin{tikzcd} k \arrow[r, "0"', shift right] & k \arrow[l, "\sim"', shift right] \end{tikzcd}\right)$,
    \item  $T_s\longleftrightarrow \left(\begin{tikzcd} k \arrow[r, shift right] & k\oplus k \arrow[l, shift right] \end{tikzcd}\right)$, where the arrow $\rightarrow$ embeds $k$ into the first factor of $k \oplus k$ and the arrow $\leftarrow$ projects onto the second factor.
\end{enumerate}    
The object $T_s$ is called the ``big projective'' or ``big tilting sheaf''. 

We can also use a modified version Beilinson's description to describe the indecomposible objects in the category $P_B$. The $B$-equivariance implies that the micolocal monodromy is zero, so we have an extra condition on our quiver representations:
\[
\Perv_B(\mathbb{P}^1) \simeq \Rep \left( \left. \begin{tikzcd} \bullet \arrow[r, "e"', shift right] & \bullet \arrow[l, "f"', shift right] \end{tikzcd} \right| fe=ef=0 \right). 
\]
Hence there are four indecomposible objects in $P_B$: $\IC_{id}$, $\IC_s$, $\Delta_s$, and $\nabla_s$, which correspond to the quiver representations $1-4$ above. The quiver representation $5$ corresponding to the big projective does not satisfy the condition that $ef=0$, so we have no analogue to $T_s$ in $P_B$. With this description, it is easy to see that $P_B$ is not a highest weight category. For instance, minimal projective resolutions of $\IC_{id}$ and $\IC_s$ are
\begin{align*}
    \cdots \rightarrow \Delta_s \rightarrow \nabla_s \rightarrow \Delta_s \rightarrow \nabla_s \rightarrow \Delta_s &\twoheadrightarrow \IC_s, \\
    \cdots \rightarrow \nabla_s \rightarrow \Delta_s \rightarrow \nabla_s \rightarrow \Delta_s \rightarrow \nabla_s &\twoheadrightarrow \IC_{id},
\end{align*}
so $P_B$ does not have finite homological dimension, and hence cannot admit a highest weight structure. 

\end{example}

\subsection{The Braid group} 

Recall that $W_f$ has a presentation \[
W_f = \langle s \in S_f \mid s^2=id, \underbrace{st\ldots}_{m_{st}} = \underbrace{ts \ldots}_{m_{st}} \rangle. 
\]
The {\em braid group} associated to $W_f$ is 
\[
B_{W_f} := \langle \sigma_s, s \in S_f \mid \underbrace{\sigma_s \sigma_t \ldots}_{m_{st}} = \underbrace{\sigma_t \sigma_s \ldots}_{m_{st}} \rangle.
\]
Let $\underline{B}_{W_f}$ be the corresponding monoidal category\footnote{See Lecture \ref{lecture 26}, where this was denoted $\mc{A}_{B_{W_f}}$.}. We have the following important and beautiful theorem. 
\begin{theorem}
\label{beautiful theorem}
The assignment $\sigma_s \mapsto \Delta_s$ extends to a monoidal functor 
\[
\underline{B}_{W_f} \rightarrow D^b_B(G/B, \Q). 
\]
\end{theorem}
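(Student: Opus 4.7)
The plan is to identify the two properties of the $\Delta_s$ needed to promote the assignment $\sigma_s \mapsto \Delta_s$ to a monoidal functor out of $\underline{B}_{W_f}$, then verify each in turn. Recall that $\underline{B}_{W_f}$ is built from the braid group $B_{W_f}$ as in Lecture \ref{lecture 26}: its objects are $r_g$ with $r_g \otimes r_h = r_{gh}$ strictly, and the only morphisms are identities. Since $B_{W_f}$ is presented by the $\sigma_s$ ($s \in S_f$) modulo the braid relations alone, it suffices to show (i) $\Delta_s$ is invertible under $*$ in $D^b_B(G/B, \Q)$, and (ii) $\underbrace{\Delta_s * \Delta_t * \cdots}_{m_{st}} \simeq \underbrace{\Delta_t * \Delta_s * \cdots}_{m_{st}}$ for all $s, t \in S_f$ with $m_{st} < \infty$. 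The functor is then defined on a word by $F(\sigma_{s_1}^{\epsilon_1} \cdots \sigma_{s_n}^{\epsilon_n}) := \Delta_{s_1}^{\epsilon_1} * \cdots * \Delta_{s_n}^{\epsilon_n}$ with $\Delta_s^{-1} := \nabla_s$, and both inputs reduce to computations on flag varieties of rank $\leq 2$.

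For the braid relations, I would first establish the multiplicativity lemma: if $\ell(xy) = \ell(x) + \ell(y)$, then $\Delta_x * \Delta_y \simeq \Delta_{xy}$. The key geometric input is that the multiplication map
\[
m \colon \overline{BxB} \times^B \overline{ByB} \longrightarrow \overline{BxyB}
\]
is proper, surjective, and---under the length-additivity hypothesis---an isomorphism over the open stratum $BxyB/B$. By base change and the projection formula, the convolution $\Delta_x * \Delta_y$ is computed as $m_!(j_{x!}\Q \,\tilde\boxtimes\, j_{y!}\Q)[\ell(x)+\ell(y)]$, which this isomorphism identifies with $j_{xy!}\Q_{BxyB/B}[\ell(xy)] = \Delta_{xy}$ (vanishing on the complement of the open stratum via a dimension count and properness). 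Iterating, any reduced expression $s_{i_1}\cdots s_{i_k}$ gives $\Delta_{s_{i_1}} * \cdots * \Delta_{s_{i_k}} \simeq \Delta_{s_{i_1}\cdots s_{i_k}}$. For $s \neq t$ with $m_{st} < \infty$, both sides of the braid relation are reduced expressions for the longest element $w_{st}$ of the rank-$2$ parabolic $\langle s, t \rangle$, so both convolve to $\Delta_{w_{st}}$.

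The main obstacle is invertibility, which amounts to showing $\Delta_s * \nabla_s \simeq \delta_e \simeq \nabla_s * \Delta_s$, where $\delta_e := \Q_{B/B}$. My plan is to reduce to a computation on the Schubert curve $P_s/B \simeq \PP^1$ via the fibration $\pi_s \colon G/B \to G/P_s$. Both $\Delta_s$ and $\nabla_s$ are supported on $\overline{BsB}/B$, so the support of $\Delta_s \,\tilde\boxtimes\, \nabla_s$ in $G \times^B G/B$ projects under the multiplication $\mu$ into $\overline{BsB} \cdot \overline{BsB}/B = P_s/B$. Proper base change identifies the stalk of $\Delta_s * \nabla_s$ at $B/B$ with the global cohomology on $\PP^1$ of a sheaf of exactly the form $\Q_{s!, e*}$ appearing in the ``fun calculation'' opening this lecture; a parallel analysis at a point of $BsB/B$ shows the convolution is acyclic there, forcing support at $B/B$ alone. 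Extracting $\delta_e$ with the correct shift from the one non-vanishing cohomology group---and organising the various perverse normalisations---is the principal technical difficulty; the $\PP^1$-input is precisely the fun calculation itself. With (i) and (ii) in place, monoidality reduces to associativity of $*$ together with the isomorphisms just produced for any two words representing the same braid, completing the construction.
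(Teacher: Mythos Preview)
Your proposal follows essentially the same route as the paper---the multiplicativity lemma $\Delta_x * \Delta_y \simeq \Delta_{xy}$ for length-additive pairs, plus invertibility of $\Delta_s$ via the fun calculation on $\PP^1$ (this is the paper's ``Sketch \#2'')---so the substance is correct.

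The one genuine gap is your claim that ``it suffices to show (i) and (ii)''. As Lecture~\ref{lecture 26} warns (the $\Z/2\Z$ example), producing isomorphisms witnessing each defining relation of a group is \emph{not} in general enough to build a monoidal functor out of $\underline{G}$: one must check that any two chains of such isomorphisms between the same pair of words agree. The paper handles this by invoking Deligne's theorem (Theorem~\ref{Deligne}), which reduces the coherence problem for $\underline{B}_{W_f}$ to a finite package: invertible objects $F_x$ indexed by $x \in W_f$, canonical isomorphisms $F_x * F_y \xrightarrow{\sim} F_{xy}$ whenever $\ell(xy)=\ell(x)+\ell(y)$, and a single commutative square for length-additive triples. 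Your multiplicativity lemma supplies exactly this data, and the square commutes because the isomorphisms all arise from the same identification of open Bruhat cells---but this coherence step needs to be stated and checked, not folded into ``monoidality reduces to associativity of $*$''.
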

\begin{corollary}
\label{braid group action}
There is a (strict) action of $\underline{B}_{W_f}$ on $D^b_{(B)}(G/B)$. 
\end{corollary}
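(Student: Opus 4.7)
The plan is to assemble the desired action by composing the monoidal functor produced by Theorem~\ref{beautiful theorem} with the convolution action of $D^b_B(G/B,\Q)$ on $D^b_{(B)}(G/B,\Q)$.

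First, Theorem~\ref{beautiful theorem} provides a monoidal functor
\[
F \colon \underline{B}_{W_f} \longrightarrow D^b_B(G/B,\Q), \qquad \sigma_s \longmapsto \Delta_s.
\]
Next, I would upgrade the right convolution action $D^b_{(B)}(G/B,\Q) \circlearrowleft D^b_B(G/B,\Q)$ recalled at the beginning of the lecture into a monoidal functor
\[
\Phi \colon D^b_B(G/B,\Q) \longrightarrow \End(D^b_{(B)}(G/B,\Q)), \qquad \mc{G} \longmapsto \bigl(- * \mc{G}\bigr),
\]
where the target carries its usual strict monoidal structure given by composition of endofunctors. The monoidality of $\Phi$ (i.e.\ the coherent natural isomorphisms $(\mc{F}*\mc{G}_1)*\mc{G}_2 \simeq \mc{F}*(\mc{G}_1*\mc{G}_2)$) is the standard associativity of convolution, which follows from base change and proper base change for the maps used to define $*$.

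Composing, $\Phi \circ F \colon \underline{B}_{W_f} \to \End(D^b_{(B)}(G/B,\Q))$ is a monoidal functor into a strict monoidal category, which by definition is precisely the data of a strict action of $\underline{B}_{W_f}$ on $D^b_{(B)}(G/B,\Q)$ in the sense of Lecture~\ref{lecture 26} (each $\sigma_s$ acts as the endofunctor $(-) * \Delta_s$, and the coherence data is inherited from the monoidal coherence of $F$ and $\Phi$). The fact that these endofunctors are autoequivalences -- which justifies the word "action" of the braid group rather than of the braid monoid -- will follow from the standard identities $\Delta_s * \nabla_s \simeq \nabla_s * \Delta_s \simeq \Delta_{\mathrm{id}}$, so the inverse of the action of $\sigma_s$ is given by convolution with $\nabla_s$.

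The main obstacle in this proof is entirely contained in Theorem~\ref{beautiful theorem}: one must check that $\Delta_s$ actually satisfies the braid relations in $D^b_B(G/B,\Q)$, i.e.\ that
\[
\underbrace{\Delta_s * \Delta_t * \cdots}_{m_{st}} \;\simeq\; \underbrace{\Delta_t * \Delta_s * \cdots}_{m_{st}},
\]
together with the absence of the usual quadratic Hecke relation (there is no nontrivial relation $\Delta_s^2 \simeq \cdots$ in this monoidal category, in contrast to the situation after passing to the Hecke algebra). Once that theorem is granted, the corollary above is a formal consequence.
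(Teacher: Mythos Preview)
Your proposal is correct and matches the paper's approach exactly: the corollary is treated as an immediate consequence of Theorem~\ref{beautiful theorem} together with the right convolution action $D^b_{(B)}(G/B) \circlearrowleft D^b_B(G/B)$ recalled earlier in the lecture, and the paper does not even write out a separate argument for it. Your explicit unpacking via the composition $\Phi \circ F$ is precisely the intended reasoning.
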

\begin{proof} (of Theorem \ref{beautiful theorem})
We will use the following ``generators and relations'' theorem of Deligne \cite{Deligne}.

\begin{theorem}
\label{Deligne}
Let $\mc{A}$ be a monoidal category. Giving a monoidal functor \[
F:\underline{B}_{W_f} \rightarrow \mc{A}
\]
is the same as giving 
\begin{itemize}
    \item a collection $\{F_x \in \mc{A}\}_{x \in W_f}$ of invertible objects, and
    \item a collection $\{F_x F_y \xrightarrow{\sim} F_{xy}\}_{x, y \in W_f \text{ with }\ell(xy) = \ell(x)+\ell(y)}$ of isomorphisms,
\end{itemize}
such that 
\begin{equation}
    \label{Deligne condition}
\begin{tikzcd}
F_x F_y F_z \arrow[d] \arrow[r] & F_{xy}F_z \arrow[d]\\
F_x F_{yz} \arrow[r] & F_{xyz}
\end{tikzcd}
\end{equation}
commutes whenever $\ell(xyz) = \ell(x) + \ell(y) + \ell(z)$. 
\end{theorem}
We will show that there exists a monoidal functor
\[
F:\underline{B}_{W_f} \rightarrow (D^b_{(B)}(G/B), *)
\]
sending $\sigma_s \mapsto \Delta_s$ by providing the data of Theorem \ref{Deligne}. Most of this data can be obtained from the following lemma:
\begin{lemma}
\label{deltas convolve}
If $\ell(xy) = \ell(x) + \ell(y)$, then $\Delta_x * \Delta_y \simeq \Delta_{xy}$ canonically. 
\end{lemma}

\noindent
{\em Proof.}
Recall that $\Delta_x := j_{x!}(\Q_{BxB/B}[\ell(x)])$. Unpacking definitions, we have 
\begin{align*}
\Delta_x * \Delta_y &= m_*\left(j_!\Q_{BxB\times_B ByB/B}[\ell(x) + \ell(y)]\right). 
\end{align*}
If $\ell(x)+\ell(y) = \ell(xy)$, then we have a commutative diagram:
\[
\begin{tikzcd}
BxB \times_B ByB/B \arrow[r, "m'"] \arrow[d, "j"] & BxyB/B \arrow[d, "j_{xy}"] \\
G \times_B G/B \arrow[r, "m"] & G/B
\end{tikzcd}.
\]
Hence, 
\[
\Delta_x * \Delta_y = j_{xy!}\left(\Q_{BxyB/B}[\ell(xy)]\right) = \Delta_{xy}. \qed
\]
\vspace{2mm}

 Lemma \ref{deltas convolve} provides the canonical isomorphisms $F_x F_y \xrightarrow{\sim} F_{xy}$ of Deligne's theorem, and it is easy to check that (\ref{Deligne condition}) commutes if $\ell(xyz) = \ell(x) + \ell(y) + \ell(z)$. Hence to complete the proof, it remains to show that $\Delta_x$ are invertible. Because  
\[
\Delta_x = \Delta_{s_1} * \cdots * \Delta_{s_m}
\]
for $x = s_1 \cdots s_m$, it is enough to show that $\Delta_s$ is invertible for all $s \in S_f$. 

\begin{lemma} For $s \in S_f$,
\[
\Delta_s * \nabla_s \simeq \nabla_s * \Delta_s \simeq \Delta_{id} = \nabla_{id}.
\]
\end{lemma}
\begin{proof}
(Sketch \#1, a ``hygenic'' proof) First, one can easily reduce to $\SL_2$. Here we have a distinguished triangle of perverse sheaves
\[
\Q_{\PP^1}[1] = \IC_s  \rightarrow \nabla_s \rightarrow \IC_{id}=\Q_{id} \xrightarrow{+1}.
\]
Now, using the fact that convolution with $\Delta_s$ preserves distinguished triangles, we have another distinguished triangle
\[
\Delta_s * IC_s= \Q_{\PP^1} \rightarrow \Delta_s * \nabla_s \rightarrow \Delta_s \xrightarrow{+1}.
\]
From this, we conclude that $\Delta_s * \nabla_s = \ker(\Delta_s \rightarrow \IC_s) = \Q_{id}$, using that the connecting map is actully a map between perverse sheaves to make sense of the kernel.
\end{proof}

\begin{exercise}
Fill in the details of this argument.
\end{exercise}

\begin{proof}
(Sketch \# 2, a more intuitive proof) We have an isomorphism
\begin{align*}
    \SL_2 \times_B \SL_2/B &\xrightarrow{\sim} \PP^1 \times \PP^1 \\
    (g, g'B) &\mapsto (gB, gg'B) 
\end{align*}
Under this isomorphism, $B \times_B \SL_2/B$ corresponds to ${\color{blue}Z}:=x$-axis, and $\SL_2 \times_B B/B$ corresponds to ${\color{magenta} \Delta}:=$ diagonal in the following picture:
\[
\includegraphics[scale=0.3]{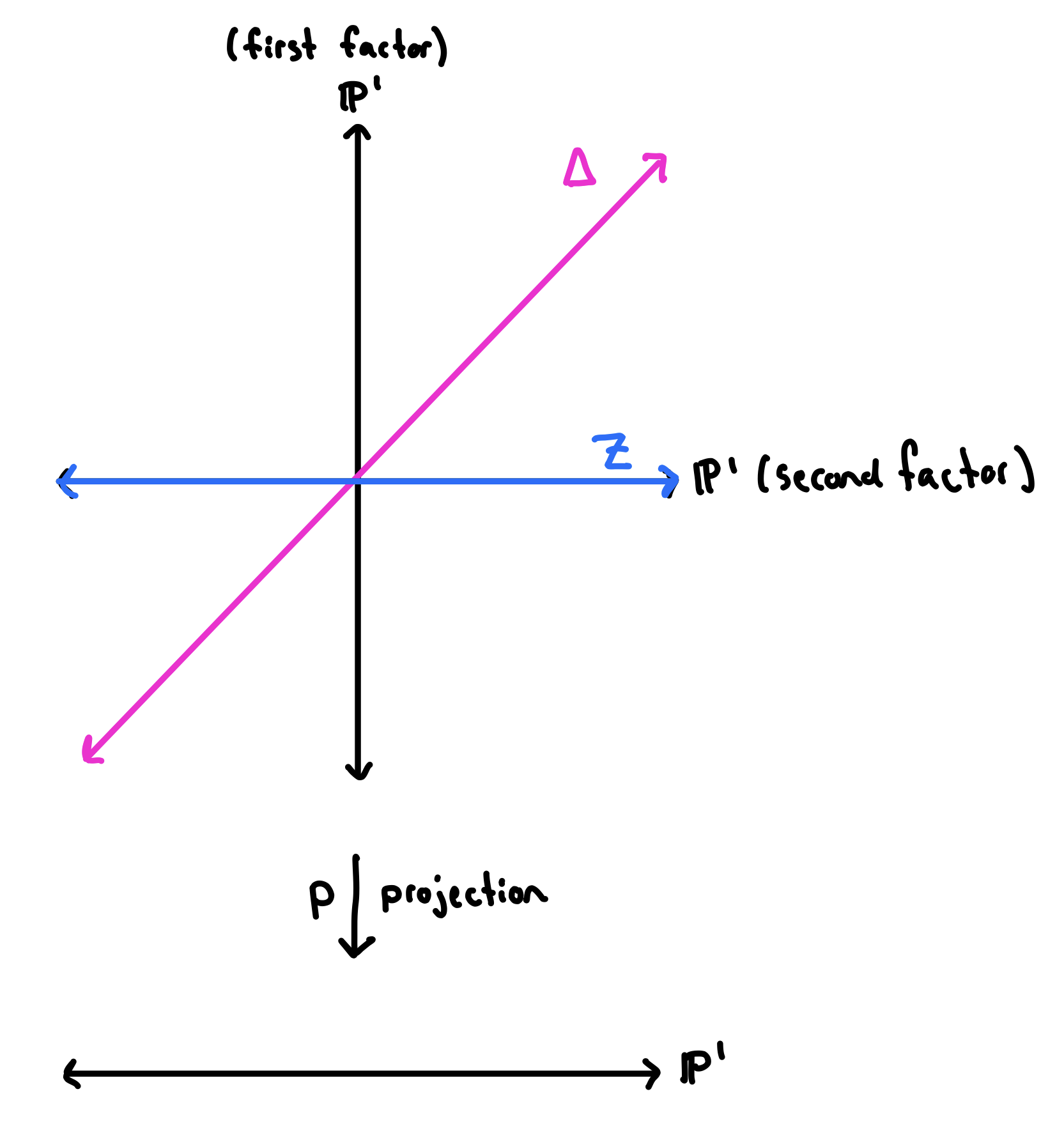}
\]
After forgetting equivariance, 
\begin{equation}
    \label{enough}
\Delta_s * \nabla_s = p_{*}\left((j_{\Delta*}\Q_{\PP^1 \times \PP^1 \backslash \Delta}) \otimes (j_{Z!}\Q_{\PP^1 \otimes \PP^1 \backslash Z} [2])\right), 
\end{equation}
where $j_\Delta: \PP^1 \times \PP^1 \backslash \Delta \hookrightarrow \PP^1 \times \PP^1$ and $j_Z: \PP^1 \times \PP^1 \backslash Z \hookrightarrow \PP^1 \times \PP^1$. By proper base change, we can compute stalks:
\[
(\Delta_s * \nabla_s)_x = \begin{cases} H^*(\PP^1, \Q_{0!, x*}) & \text{if }x \neq 0; \\
H^{*+2}(\PP^1, j_{0!} \Q) & \text{if }x=0. \end{cases}
\]
In the second case, we have $H^{*+2}(\PP^1, j_{0!}\Q)=\Q$, and by the fun computation from earlier, we have $H^*(\PP^1, \Q_{0!, x*})=0$. Hence, 
\[
\Delta_s * \nabla_s \simeq \Q_{B/B}
\]
which completes the proof.
\end{proof}

\begin{exercise}
Fill in the details of this argument. 
\end{exercise}
This completes the proof of Theorem \ref{beautiful theorem}.
\end{proof}

\subsection{Further properties of the braid group action}
We will spend the rest of the lecture exploring the properties of the elements associated to braid group elements. All of this is in preparation for the affine case next week. 

The $\{\Delta_y\}$ and $\{\nabla_y\}$ form ``exceptional collections'':
\begin{align*}
    \Hom^\bullet(\Delta_x, \Delta_y) &= 0 \text{ unless } x \leq y; \\
    \Hom^\bullet(\nabla_x, \nabla_y) &= 0 \text{ unless } x \geq y; \\ 
    \Hom^\bullet(\Delta_x, \nabla_y) &= 0 \text{ if } x \neq y. 
\end{align*}
Indeed, ignoring shifts,
\[
\Hom^\bullet(\nabla_x, \nabla_y) = \Hom^\bullet(j_{x*}\Q_{X_x}, j_{y*} \Q_{X_y} ) = \Hom^\bullet(j_y^*j_{x*} \Q_{X_x}, \Q_{X_y}) = 0 \text{ unless } y \leq x,
\]
and similarly for $\Hom^\bullet(\Delta_x, \Delta_y)$. (Here $X_x = BxB/B$ is the Bruhat cell associated to $x \in W_f$.) 
\begin{remark}
In Lecture \ref{lecture 20}, we showed that 
\[
\Hom^i(\dot{\Delta}_x, \dot{\nabla}_y) = \begin{cases} \Q & \text{if } i=0 \text{ and }x=y; \\
0 & \text{otherwise}. \end{cases}
\]
This was how we established the necessary $\Ext^2$ vanishing property of the highest weight structure on $P_{(B)}$. In contrast, 
\[
\Hom^\bullet(\Delta_x, \nabla_y) = \begin{cases} H^\bullet_B(\mathrm{pt}) & \text{if } x=y; \\ 
0 & \text{if } x \neq y. \end{cases}
\]
Note that in this case, it is not true that the derived category of $P_B$ agrees with the equivariant derived category, so we cannot use this computation to conclude that $\Ext^2$ vanishing fails in $P_B$. However, it is an indication that $P_B$ might not be highest weight, and indeed it is not in general. (See Remark \ref{not highest weight}.) 
\end{remark}

\begin{remark}
In $\underline{B}_{W_f}$, Hom spaces are very easy. By contrast, morphisms between $\Delta_x$'s are complicated and quite mysterious. For example, $\Ext^\bullet(\text{Vermas})$ is an unsolved problem. 
\end{remark}

Recall that convolution gives a right action 
\[
D^b_{(B)}(G/B) \circlearrowleft D^b_B(G/B) \supset P_B.
\]
Within $D^b_{(B)}$ we have the highest weight category $P_{(B)} \subset D^b_{(B)}(G/B)$. The convolution of two perverse sheaves is not generally perverse, so the subcategory $P_{(B)}$ is not preserved under the action of $P_B$.  

\begin{definition}
A perverse sheaf $\mc{F} \in P_{(B)}$ is {\em convolution exact} if $\mc{F} * \mc{G} \in P_{(B)}$ for any $\mc{G} \in P_B$. 
\end{definition}

\noindent
{\bf Mirkovic's observation:} A perverse sheaf $\mc{F} \in P_{(B)}$ is convolution exact if and only if $\mc{F}$ is tilting. 
\begin{proof}
($\implies$) By highest weight formalism, 
\[
\mc{F} \text{ is tilting } \iff \Hom^{>0}(\mc{F}, \dot{\nabla}_y) \overset{(*)}{=} 0 \overset{(!)}{=}\Hom^{>0}(\dot{\Delta}_x, \mc{F}).
\]
We will check that equality $(!)$ holds for $\mc{F}$ convolution exact, $(*)$ is similar. 

Because $\mc{F}$ is convolution exact, $\mc{F} * \Delta_{w_0}$ is perverse. This implies that 
\[
\mc{F} * \Delta_{w_0} \in {^pD}^{\leq 0} = \langle \dot{\Delta}_y[d] \mid y \in W_f, d \in \Z_{\geq 0} \rangle_{ext},
\]
where the subscript ``ext'' denotes the closure under extensions. Hence 
\[
\mc{F} = \mc{F} * \Delta_{w_0} * \nabla_{w_0} \in \langle \dot{\Delta}_y * \nabla_{w_0}[d]=\dot{\nabla}_{yw_0} \mid y \in W_f, d \in \Z_{\geq 0} \rangle_{ext}.
\]
Recall that $w_0=y^{-1}\cdot yw_0$ and $\ell(w_0)=\ell(y^{-1}) + \ell(yw_0)$, so $\nabla_{w_0}=\nabla_{y^{-1}}*\nabla_{yw_0}$. Now any object in $\langle \dot{\Delta}_y * \nabla_{w_0}[d]=\dot{\nabla}_{yw_0} \mid y \in W_f, d \in \Z_{\geq 0} \rangle_{ext}$ satisfies (!), so we are done.

For the ($\impliedby$) direction, we need a very important property of affine morphisms. 

\begin{theorem}
\label{affine morphisms}
Let $f:X \rightarrow Y$ be affine. Then $f_*$ preserves ${^pD}^{\geq 0}$, and $f_!$ preserves ${^pD}^{\geq 0}$. 
\end{theorem}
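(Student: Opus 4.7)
The statement as worded asserts that both $f_*$ and $f_!$ preserve ${^pD}^{\geq 0}$, but the second half is almost certainly a typo: the standard Artin vanishing theorem says $f_*$ preserves ${^pD}^{\geq 0}$ while $f_!$ preserves ${^pD}^{\leq 0}$. Since these two statements are swapped by Verdier duality (using $\mathbb{D} f_* \simeq f_! \mathbb{D}$, and that $\mathbb{D}$ interchanges ${^pD}^{\leq 0}$ and ${^pD}^{\geq 0}$), it is enough to prove one. The plan below focuses on proving $f_* \bigl({^pD}^{\geq 0}(X)\bigr) \subseteq {^pD}^{\geq 0}(Y)$ and then invoking duality for the companion statement.

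The first step is to unwind the perverse $t$-structure pointwise. Recall that $\mc{G} \in {^pD}^{\geq 0}(Y)$ if and only if, for every stratum $T \xhookrightarrow{i_T} Y$, the complex $i_T^!\mc{G}$ lives in degrees $\geq -\dim_\C T$. So, starting from $\mc{F} \in {^pD}^{\geq 0}(X)$, one wishes to bound $i_T^! f_* \mc{F}$ from below by $-\dim T$. Using smooth base change along a compatible stratification of $X$, this reduces to showing the following: if $g:U \to T$ is an affine morphism between smooth varieties and $\mc{H}$ is a constructible sheaf on $U$ placed in a degree dictated by the perverse hypothesis on $\mc{F}$, then the cohomology of $R^i g_* \mc{H}$ vanishes in the expected range. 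Passing to stalks and invoking the fact that the stalk of $g_*\mc{H}$ at $t \in T$ is computed as a limit of $H^\bullet\bigl(g^{-1}(V), \mc{H}\bigr)$ over small neighbourhoods $V$ of $t$, the problem becomes: control $H^i(W, \mc{H})$ for $W$ an affine variety.

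The key input is then the classical Artin vanishing theorem: if $W$ is an affine complex variety of dimension $n$, and $\mc{H}$ is any constructible sheaf on $W$, then $H^i(W, \mc{H}) = 0$ for $i > n$. This is the geometric heart of the statement; everything else is bookkeeping. One standard proof uses Morse theory on the real manifold underlying $W$ (Andreotti--Frankel), noting that an $n$-dimensional Stein manifold has the homotopy type of a CW complex of real dimension $\leq n$. An alternative is the inductive slicing argument in \cite[\S 4]{BBD}, where one picks a regular function $h: W \to \A^1$ expressing $W$ as a family of affine varieties of one lower dimension and runs a Leray spectral sequence. Either route gives the sharp bound. Combining this bound with the degree shift coming from the perverse normalisation yields exactly the inequality $i_T^! f_* \mc{F} \in D^{\geq -\dim T}$.

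The main obstacle is the cited Artin vanishing theorem itself: the reduction from the perverse statement to cohomological vanishing on affines is essentially formal manipulation with the $t$-structure and base change, but the vanishing theorem is a genuinely nontrivial geometric fact about affine varieties. In a lecture setting this is typically imported as a black box from \cite{BBD}, and the same will be done here. Once the $f_*$ half is in hand, the $f_!$ half (in whichever form is intended, ${^pD}^{\leq 0}$ being the standard one) follows by applying Verdier duality to $\mathbb{D}\mc{F}$ and using that $f$ is affine if and only if its behaviour under both duals is controlled.
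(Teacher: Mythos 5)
You have identified \emph{which} of the two clauses is a typo incorrectly, and as a result the statement you set out to prove is false. The paper's own remark immediately after the theorem makes the intended reading clear: for $Y=\mathrm{pt}$ and $\mathcal{F}$ perverse on affine $X$, one has $H^{>0}(X,\mathcal{F})=0$ (so $f_*\mathcal{F}\in D^{\leq 0}$) and $H^{<0}_c(X,\mathcal{F})=0$ (so $f_!\mathcal{F}\in D^{\geq 0}$); the usage in the proof of Mirkovic's observation that follows confirms this. The intended statement is therefore that $f_*$ preserves ${^pD}^{\leq 0}$ and $f_!$ preserves ${^pD}^{\geq 0}$, i.e.\ the first clause is the typo, not the second. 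You have it backwards, and the backwards version is simply false: take $f\colon \mathbb{A}^1 \to \mathrm{pt}$ and $\mathcal{F}=\Q_{\mathbb{A}^1}[1]$, a perverse sheaf; then $f_*\mathcal{F}=H^*(\mathbb{A}^1,\Q)[1]=\Q[1]$ lives in cohomological degree $-1$, which is not in $D^{\geq 0}(\mathrm{pt})={^pD}^{\geq 0}(\mathrm{pt})$. So $f_*$ does \emph{not} preserve ${^pD}^{\geq 0}$ in general for an affine morphism.

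The direction error also shows up internally in your argument: the version of Artin vanishing you cite, $H^i(W,\mathcal{H})=0$ for $i>\dim W$, gives an \emph{upper} bound on the degrees of cohomology, i.e.\ information of the form ``$f_*\mathcal{F}\in {^pD}^{\leq ?}$''. It supplies no lower bound whatsoever and therefore cannot possibly establish $f_*\mathcal{F}\in {^pD}^{\geq 0}$. (The reduction step via ``smooth base change'' is also shaky — $i_T$ is a locally closed immersion, not a smooth map — but that is secondary.) A correct outline would be: prove $f_*$ preserves ${^pD}^{\leq 0}$ by reducing, via the standard stratification/d\'evissage argument as you sketch, to the bound $H^i$ vanishes for $i$ above the expected threshold on affine pieces, which \emph{is} what Artin vanishing gives; then obtain $f_!$ preserving ${^pD}^{\geq 0}$ by Verdier duality using $\mathbb{D}f_*\simeq f_!\mathbb{D}$. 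Your duality step at the end is fine; it is the labelling of which half is Artin vanishing and which half is its dual that needs to be swapped throughout.
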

\begin{remark}
Recall Artin vanishing: if $\mc{F}/X$ is perverse and $X$ is affine,
\[
H^{>0}(f_*\mc{F})=H^{>0}(X, \mc{F}) = 0 = H^{<0}_c(X, \mc{F}) = H^{<0}(f_! \mc{F}).
\]
This is exactly Theorem \ref{affine morphisms} when $Y=\mathrm{pt}$.
\end{remark}
\begin{exercise}
Can you prove the theorem for $\mc{D}$-modules?
\end{exercise}
Now back to our setting. 
\begin{proposition}
The maps 
\[
BxB \times_B G/B \xrightarrow{\lambda_x} G/B
\text{ and }
G \times_B BsB/B \xrightarrow{\rho_x} G/B
\]
are affine. 
\end{proposition}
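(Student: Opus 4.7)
The plan is to trivialize both maps using the standard isomorphism
\[
G \times_B G/B \xrightarrow{\sim} G/B \times G/B, \qquad (g, hB) \mapsto (gB, ghB),
\]
and then verify affineness by hand for second projections.

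Under this isomorphism, $\lambda_x : BxB \times_B G/B \to G/B$ corresponds to the second projection $BxB/B \times G/B \to G/B$. Indeed, $(g, hB) \mapsto (gB, ghB)$ sends the subspace $BxB \times_B G/B$ onto the product $BxB/B \times G/B$ (the first coordinate $gB$ ranges freely in $BxB/B$, and then the second coordinate is arbitrary), and $\lambda_x(g,hB) = ghB$ is precisely the second coordinate of the image. Since $BxB/B \simeq \mathbb{A}^{\ell(x)}$ is affine, projection from an affine-scheme-times-$G/B$ onto $G/B$ is an affine morphism (affineness can be checked locally on the target, so it reduces to the fact that a product of affines is affine).

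For $\rho_x$, the same isomorphism identifies $G \times_B BxB/B$ with the $G$-orbit $\OO_x \subset G/B \times G/B$ (pairs of Borels in relative position $x$), and $\rho_x$ becomes the second projection $\OO_x \to G/B$. This is a Zariski locally trivial fibration with fiber $BxB/B \simeq \mathbb{A}^{\ell(x)}$: cover $G/B$ by translates of the open cell $U_- \cdot B/B$, over each of which the projection $G/B \times G/B \to G/B$ trivializes and restricts on $\OO_x$ to a trivial $\mathbb{A}^{\ell(x)}$-bundle. Since affineness is local on the base, the map $\rho_x$ is affine.

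The main obstacle, if any, is just bookkeeping: one has to check that the isomorphism $G \times_B G/B \simeq G/B \times G/B$ really does carry the source subvariety to the expected product (respectively, orbit), and that the trivialization of the second projection over an open cover of $G/B$ restricts nicely to $\OO_x$. Both points are immediate from the definitions of Bruhat cells and the $B$-action, so no serious computation is needed.
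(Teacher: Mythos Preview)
Your argument is correct and takes a somewhat different route from the paper. The paper's proof lifts to $G$: writing $BxB \simeq U_x \cdot x \cdot B$, one gets $BxB \times_B G \simeq U_x \times G$, so the multiplication map $BxB \times_B G \to G$ is a morphism between affine varieties, hence affine; then one descends along the fppf cover $G \to G/B$ to conclude that $\lambda_x$ is affine. Your approach instead works directly on $G/B$: the standard isomorphism $G \times_B G/B \simeq G/B \times G/B$ identifies $\lambda_x$ with the second projection from the product $BxB/B \times G/B$, which is visibly affine since $BxB/B \cong \mathbb{A}^{\ell(x)}$. This is arguably cleaner, as it avoids the lift-and-descend step entirely. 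For $\rho_x$ the paper gives no separate argument, whereas you supply one via the identification with the second projection $\OO_x \to G/B$ and $G$-equivariant local triviality; this is fine, though one could shorten it by observing that the swap $(a,b) \mapsto (b,a)$ carries this projection to the \emph{first} projection from $\OO_{x^{-1}}$, which is manifestly a Zariski-locally trivial $Bx^{-1}B/B$-bundle (coming from $G \to G/B$). The only cosmetic point: your phrase ``the projection $G/B \times G/B \to G/B$ trivializes'' is slightly off, since that projection is already globally trivial; what you mean is that $\OO_x$ trivializes over translates of the big cell, which is what your subsequent sentence makes clear.
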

\begin{proof}
(Idea) We can write $BxB \simeq U_x \cdot \{x\} \cdot B$, hence we have a commutative diagram
\[
\begin{tikzcd}
BxB \times_B G \arrow[r, "\text{mult}"] \arrow[d, dash, "\sim"] & G \arrow[d, dash, "\sim"] \\
U_x \times G \arrow[r, "\text{mult}"] & G
\end{tikzcd}
\]
The space $U_x \times G$ is affine, so $\mathrm{mult}:BxB \times_B G \rightarrow G$ is affine, hence $\lambda_x$ is affine (being affine is local in the flat topology). 
\end{proof}
\begin{exercise} Show that 
\begin{align*}
    \Delta_x * ( - ) &= \lambda_{x!} \lambda_x^*( - ) [\ell(x)], \\ 
    \nabla_x * ( - ) &= \lambda_{x*} \lambda_x^* (-) [\ell(x)], \\ 
    (-) * \Delta_x &= \rho_{x!} \rho_x^* (-) [\ell(x)], \\
    (-) * \nabla_x &= \rho_{x*} \rho_x^* ( - ) [\ell(x)].
\end{align*}
(Note: $\lambda_x^! \simeq \lambda_x^*[2 \ell(x)]$ because $\lambda_x$ is smooth of relative dimension $\ell(x)$.)
\end{exercise}
\begin{corollary}
\label{corollary 1}
$\Delta_x * (-)$, $( - ) * \Delta_x$ preserve ${^p D}^{\geq 0}$, and $\nabla_x * (-)$, $(-) * \nabla_x$ preserve $^p D^{\leq 0}$, whenever this makes sense.
\end{corollary}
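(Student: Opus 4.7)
The plan is to chain together the functorial identities of the preceding exercise with two t-exactness inputs: the smoothness of $\lambda_x$ and $\rho_x$ (making the shifted pullback t-exact) and the affineness of these maps (making the shriek and star pushforwards t-exact in complementary directions via Theorem 30.8). Throughout I will use the correct reading of Theorem 30.8, consistent with the Artin-vanishing remark $H^{<0}(f_!\mc{F}) = 0 = H^{>0}(f_*\mc{F})$ for $\mc{F}$ perverse and $f$ affine: namely, $f_!$ is left t-exact (preserves $^pD^{\geq 0}$) and $f_*$ is right t-exact (preserves $^pD^{\leq 0}$). Note that the directions of t-exactness for $f_!$ and $f_*$ are reversed from the general (non-affine) case; this reversal is exactly why $\Delta_x$ (a shriek extension) ends up governing $^pD^{\geq 0}$ and $\nabla_x$ (a star extension) ends up governing $^pD^{\leq 0}$.

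First I will treat the left action. The exercise gives $\Delta_x * (-) = \lambda_{x!}\lambda_x^*(-)[\ell(x)]$, where $\lambda_x \colon BxB \times_B G/B \to G/B$ is a fibration with fiber $BxB/B \cong \A^{\ell(x)}$, hence smooth of relative dimension $\ell(x)$. Because $\lambda_x$ is smooth of this relative dimension, $\lambda_x^*[\ell(x)]$ is t-exact for the perverse t-structure, so sends $^pD^{\geq 0}$ to $^pD^{\geq 0}$. By the Proposition, $\lambda_x$ is affine, and hence Theorem 30.8 yields that $\lambda_{x!}$ preserves $^pD^{\geq 0}$. Composing, $\Delta_x * (-)$ preserves $^pD^{\geq 0}$. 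For $\nabla_x * (-) = \lambda_{x*}\lambda_x^*(-)[\ell(x)]$, the identical argument applies except that now the outer functor $\lambda_{x*}$ preserves $^pD^{\leq 0}$ by Theorem 30.8, so $\nabla_x * (-)$ preserves $^pD^{\leq 0}$.

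The right action is handled in the same manner via the map $\rho_x \colon G \times_B BxB/B \to G/B$. Its smoothness of relative dimension $\ell(x)$ is clear (the fibers are again isomorphic to $BxB/B$), and its affineness follows from the same Bruhat-style argument as given in the Proposition for $\lambda_x$: one writes $BxB \simeq U_x \cdot \{x\} \cdot B$ and uses that $U_x$ is affine together with the locality of affineness in the flat topology. The exercise then supplies $(-)*\Delta_x = \rho_{x!}\rho_x^*(-)[\ell(x)]$ and $(-)*\nabla_x = \rho_{x*}\rho_x^*(-)[\ell(x)]$, and the two-step argument above (smooth pullback with shift, then affine pushforward) gives that $(-)*\Delta_x$ preserves $^pD^{\geq 0}$ and $(-)*\nabla_x$ preserves $^pD^{\leq 0}$.

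There is no real obstacle once Theorem 30.8 and the smoothness-shift t-exactness are granted; the only point requiring care is bookkeeping the direction of t-exactness for affine $f_!$ versus affine $f_*$, which is the opposite of the general convention and is precisely what makes the statement come out as claimed. The clause ``whenever this makes sense'' is there to cover variants where the convolution may not be a priori bounded (for example in the affine Hecke setting), but in the finite-$G/B$ setting all functors involved are honest endofunctors of $D^b_{(B)}(G/B)$ and the argument above applies directly.
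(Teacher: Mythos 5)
Your proof is correct and takes precisely the route the paper intends: combine the four identities of the preceding exercise ($\Delta_x * (-) = \lambda_{x!}\lambda_x^*(-)[\ell(x)]$, etc.), the $t$-exactness of $\lambda_x^*[\ell(x)]$ and $\rho_x^*[\ell(x)]$ coming from smoothness of relative dimension $\ell(x)$, and the affine-pushforward Theorem \ref{affine morphisms}. One small quibble with an aside: the directions of $t$-exactness for affine $f_!$ and $f_*$ are not \emph{reversed} from the general case so much as \emph{supplemented} by them — for any $f$, $f_!$ already preserves $^pD^{\leq 0}$ and $f_*$ already preserves $^pD^{\geq 0}$; affineness adds the complementary inclusions you need here, so that $j_!$ and $j_*$ of an affine open immersion are in fact perverse $t$-exact in both directions. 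This does not affect your argument.
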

\begin{corollary}
\label{corollary 2}
$\Delta_x * \nabla_y$ and $\nabla_y * \Delta_x$ are perverse for all $x, y \in W_f$. 
\end{corollary}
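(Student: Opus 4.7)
The plan is to deduce Corollary \ref{corollary 2} directly from Corollary \ref{corollary 1} by applying the two halves of that statement to $\Delta_x * \nabla_y$ from opposite sides. The key point is that a perverse sheaf is exactly an object lying in ${}^p D^{\leq 0} \cap {}^p D^{\geq 0}$, so proving perversity of $\Delta_x * \nabla_y$ amounts to two separate bounds, each of which is handed to us by Corollary \ref{corollary 1}.

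First, I would use that $\nabla_y$ is perverse, hence in particular lies in ${}^p D^{\geq 0}$. By Corollary \ref{corollary 1}, left convolution by $\Delta_x$ preserves ${}^p D^{\geq 0}$, so $\Delta_x * \nabla_y \in {}^p D^{\geq 0}$. Next, I would use that $\Delta_x$ is perverse, hence in ${}^p D^{\leq 0}$, and that by Corollary \ref{corollary 1} right convolution by $\nabla_y$ preserves ${}^p D^{\leq 0}$; therefore $\Delta_x * \nabla_y \in {}^p D^{\leq 0}$. Combining the two bounds gives $\Delta_x * \nabla_y \in {}^p D^{\leq 0} \cap {}^p D^{\geq 0} = P_{(B)}$, so it is perverse.

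For the second assertion, that $\nabla_y * \Delta_x$ is perverse, the argument is symmetric: apply left convolution by $\nabla_y$ (which preserves ${}^p D^{\leq 0}$) to the perverse sheaf $\Delta_x$ to conclude $\nabla_y * \Delta_x \in {}^p D^{\leq 0}$, and apply right convolution by $\Delta_x$ (which preserves ${}^p D^{\geq 0}$) to the perverse sheaf $\nabla_y$ to conclude $\nabla_y * \Delta_x \in {}^p D^{\geq 0}$.

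There is essentially no obstacle here once Corollary \ref{corollary 1} is in hand; the only subtlety is keeping track of which side of the convolution one is acting on, and noting that it is crucial that the two bounds come from \emph{opposite} sides (left action by $\Delta_x$ and right action by $\nabla_y$). If one instead tried to use only one-sided statements, one would get a bound in either ${}^p D^{\leq 0}$ or ${}^p D^{\geq 0}$ but not both. The real content of the corollary therefore rests entirely in Corollary \ref{corollary 1}, which in turn rests on the affineness of the maps $\lambda_x, \rho_x$ together with Theorem \ref{affine morphisms}.
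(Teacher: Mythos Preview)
Your proof is correct and is exactly the argument the paper intends: Corollary \ref{corollary 2} is stated immediately after Corollary \ref{corollary 1} without further proof, and the deduction is precisely the two-sided sandwich you give, using $\Delta_x * (-)$ to land in ${}^p D^{\geq 0}$ and $(-) * \nabla_y$ to land in ${}^p D^{\leq 0}$. One tiny notational quibble: since $\Delta_x, \nabla_y \in P_B$ and convolution is the monoidal structure on $D^b_B(G/B)$, the result lives in $P_B$ rather than $P_{(B)}$, but this is immaterial to the argument.
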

With this we can complete the proof of Mirkovic's observation. Recall that 
\[
{^pD}^{\leq 0}_B = \langle \Delta_x [d] \mid x \in W_f, d \in \Z_{\geq 0} \rangle. 
\]
By Corollary \ref{corollary 2}, 
\[
\langle \dot{\nabla}_y \mid y \in W \rangle * {^p D}^{\leq 0}_B \subset {^p D}^{\leq 0}.
\]
Hence 
\[
\text{tilting}* {^pD}_B^{\leq 0} \subset {^pD}^{\leq 0}. 
\]
Similary, ${^pD}^{\geq 0} = \langle \nabla_x[-d] \mid x \in W_f, d \in \Z_{\geq 0} \rangle$, so 
\[
\text{tilting}*{^pD}^{\geq 0}_B \subset {^pD}^{\geq 0}.
\]
This lets us conclude that 
\[
\text{tilting} * P_B \subset P_{(B)}. \hfill 
\]
This completes the proof of Mirkovic's observation. 
\end{proof}

\begin{exercise}
\begin{enumerate}
    \item Use these ideas to show that if $\mc{F} \in P_B$ is convolution exact, then $\mc{F} = \Delta_{id}$, (i.e. there are no interesting convolution exact sheaves in $P_B$). 
    
    \begin{remark}
    We will see next week that there are many convolution exact $\mc{F} \in P_I \subset D_I(\mc{F}\ell)$ in the affine case.
    \end{remark}
    \item Let $T_x$ be indecomposible tilting. Show that 
    \[
    T_x * \Q_{P_s/B}[1] = 0 
    \]
    unless $x=id$. (This is related to the fact that $H^*(G/B, T_x)=0$ unless $x = id$.) Use this to give another proof of the ($\impliedby$) direction of Mirkovic's observation. 
\end{enumerate}

\end{exercise}
\pagebreak
\section{Lecture 31: Affine flags, affine braids, and Wakimoto sheaves}
\label{lecture 31}

Last week we established the existence of a monoidal functor
\begin{align*}
\underline{B}_{W_f} &\rightarrow (D^b_B(G/B), *) \\ \sigma_x &\mapsto \Delta_x
\end{align*}
and described several of its nice properties. Today we'll describe the situation in the affine case. 

As in previous lectures, let $\mc{F}\ell = G((t))/I$, where $I \subset G((t))$ is an Iwahori subgroup. Denote by $W=X^\vee \rtimes W_f$ the affine Weyl group. For simplicity, assume that $G$ is simply connected, so $W$ is a Coxeter group\footnote{If we drop this assumption, $W$ is a quasi-Coxeter group. This adds a few complications to proofs, but doesn't change the story in any major way.}. 

The same construction as we described last week in the finite case yields a monoidal functor 
\begin{align*}
\underline{B}_W &\rightarrow (D^b_B(G/B, \Q), *) \\ 
\sigma_x &\mapsto \Delta_x.
\end{align*}

\subsection{Wakimoto sheaves}
In Lecture \ref{lecture 12}, we described Bernstein's presentation of the extended affine Hecke algebra. We briefly recall this construction now. In Lecture \ref{lecture 11}, we proved the Iwahori-Matsumoto Lemma (Lemma \ref{IM}): for $w \in W_f, \lambda \in X^\vee$, 
\[
\ell(t_\lambda w_f) = \sum_{\alpha \in R_+ \atop w_f(\alpha)>0} | \langle \lambda, \alpha \rangle | + \sum_{\alpha \in R_+ \atop w_f(\alpha)<0} | \langle \lambda, \alpha \rangle - 1 |.
\]
A consequence of this is that if $\lambda', \lambda'' \in X^\vee$ are dominant, then $\ell(t_{\lambda'} t_{\lambda''}) = \ell(t_{\lambda'}) + \ell(t_{\lambda ''})$. (This is also intuitively clear because $t_{\lambda'}$ and $t_{\lambda''}$ translate in the same direction.) Hence in the Hecke algebra $H$\footnote{Here we are aligning notation with Lecture \ref{lecture 23}, which differs slightly from Lecture \ref{lecture 12}.},
\[
\delta_{t_{\lambda'}} \delta_{t_{\lambda''}} = \delta_{t_{\lambda'+ \lambda''}} = \delta_{t_{\lambda''}} \delta_{t_\lambda'}.
\]
We can write any $\lambda \in X^\vee$ as $\lambda = \lambda' - \lambda''$ with $\lambda', \lambda''\in X^\vee$ dominant. Hence we have have a well-defined map 
\[
\Z[v^{\pm 1}][X^\vee] \hookrightarrow H: \lambda \mapsto \theta_\lambda := \delta_{t_{\lambda'}} \delta_{t_{\lambda''}}^{-1}. 
\]
Bernstein used this map to describe the center of $H$.

Wakimoto sheaves categorify this construction; i.e. given $\lambda \in X^\vee$, write $\lambda = \lambda' - \lambda''$ for $\lambda', \lambda'' \in X^\vee_+$ and define
\[
J_\lambda:= \Delta_{t_{\lambda'}} * \nabla_{t_{-\lambda''}} = \Delta_{t_{\lambda'}}*(\Delta_{t_{\lambda''}})^{-1}.
\]
The $J_\lambda$ are called {\em Wakimoto sheaves}. 

\begin{example}
If $\lambda$ is dominant, then $J_\lambda = \Delta_{t_\lambda}$. If $\lambda$ is anti-dominant, then $J_\lambda = \nabla_{t_{\lambda}}$. 
\end{example}

\begin{lemma}
The map $\lambda \mapsto J_\lambda$ extends to a monoidal functor $\underline{X}^\vee \mapsto (D^b_I(\mc{F}\ell), *),$ and hence to a monoidal functor
\[
(\Rep T^\vee, \otimes) \mapsto (D^b_I(\mc{F}\ell), *).
\]
\end{lemma}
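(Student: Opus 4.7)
The plan is to factor the desired functor through the affine braid-group functor $\underline{B}_W \to (D^b_I(\mc{F}\ell), *)$, which is the affine analogue of Theorem \ref{beautiful theorem} already asserted at the start of this lecture. First I would construct a group homomorphism
\[
\iota : X^\vee \longrightarrow B_W, \qquad \lambda = \lambda' - \lambda'' \longmapsto \sigma_{t_{\lambda'}} \sigma_{t_{\lambda''}}^{-1},
\]
using any decomposition of $\lambda$ as a difference of dominant coweights. Composing the induced functor $\underline{\iota} : \underline{X}^\vee \to \underline{B}_W$ with the affine braid-group functor would then yield the required monoidal functor $\underline{X}^\vee \to D^b_I(\mc{F}\ell)$ sending $r_\lambda$ to $\Delta_{t_{\lambda'}} * \Delta_{t_{\lambda''}}^{-1} = \Delta_{t_{\lambda'}} * \nabla_{t_{-\lambda''}} = J_\lambda$, using that $\Delta_x^{-1} \simeq \nabla_{x^{-1}}$ in the braid-group picture (verified exactly as in the finite case last week by reduction to $\SL_2$).

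The key input for constructing $\iota$ is the identity $\sigma_{t_a} \sigma_{t_b} = \sigma_{t_{a+b}}$ for $a, b \in X^\vee_+$. This is immediate from the defining relations of $B_W$ together with the Iwahori--Matsumoto length formula (Lemma \ref{IM}), which gives $\ell(t_a t_b) = \ell(t_a) + \ell(t_b)$ whenever $a$ and $b$ are dominant. Granted this, well-definedness of $\iota(\lambda)$ is automatic: if $\lambda = \lambda' - \lambda'' = \mu' - \mu''$ with all four summands dominant, then $\lambda' + \mu'' = \mu' + \lambda''$, so both $\sigma_{t_{\lambda'}} \sigma_{t_{\mu''}}$ and $\sigma_{t_{\mu'}} \sigma_{t_{\lambda''}}$ equal $\sigma_{t_{\lambda' + \mu''}}$, and rearranging yields $\sigma_{t_{\lambda'}} \sigma_{t_{\lambda''}}^{-1} = \sigma_{t_{\mu'}} \sigma_{t_{\mu''}}^{-1}$. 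Multiplicativity $\iota(\lambda)\iota(\mu) = \iota(\lambda + \mu)$ reduces in the same way to the fact that $\sigma_{t_{\lambda''}}$ and $\sigma_{t_{\mu'}}$ commute in $B_W$, which is seen once more by noting that both orderings of the corresponding products have length $\ell(t_{\lambda'' + \mu'})$.

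To upgrade from $\underline{X}^\vee$ to $\Rep T^\vee$: since $X^\vee$ is the character lattice of $T^\vee$, the category $\Rep T^\vee$ can be identified with finite-dimensional $X^\vee$-graded $k$-vector spaces, and is the free $k$-linear additive (symmetric) monoidal category generated by the one-dimensional objects $k_\lambda$, $\lambda \in X^\vee$. Any monoidal functor from $\underline{X}^\vee$ to a $k$-linear additive monoidal category with finite direct sums therefore extends, essentially uniquely, to a $k$-linear monoidal functor out of $\Rep T^\vee$; concretely $V = \bigoplus_\lambda V_\lambda \mapsto \bigoplus_\lambda V_\lambda \otimes_k J_\lambda$. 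The main obstacle is the construction of $\iota$ itself: despite $B_W$ being highly non-abelian, one must produce an abelian subgroup indexed by $X^\vee$. This is the categorical shadow of Bernstein's lemma that the $\theta_\lambda$ form a commutative subalgebra of $H$, and length additivity for pairs of dominants does essentially all of the work; the remaining verifications are formal bookkeeping with monoidal coherence.
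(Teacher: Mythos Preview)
Your proof is correct and is exactly the argument the paper intends. The paper does not actually supply a proof of this lemma: it states the result immediately after defining $J_\lambda = \Delta_{t_{\lambda'}} * (\Delta_{t_{\lambda''}})^{-1}$ and moves on, treating the lemma as a direct consequence of the affine braid-group functor announced at the start of the lecture together with the well-definedness argument already given for $\theta_\lambda$ in Lecture~\ref{lecture 12}. Your factoring through $\iota : X^\vee \to B_W$, with well-definedness and multiplicativity reduced to the commutation $\sigma_{t_a}\sigma_{t_b} = \sigma_{t_{a+b}} = \sigma_{t_b}\sigma_{t_a}$ for dominant $a,b$ via length additivity, is precisely the categorical lift of that Lecture~\ref{lecture 12} computation, and the extension to $\Rep T^\vee$ by additive linearisation is the intended final step.
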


We saw last week that $\Delta_x * \nabla_y$ is perverse for all $x, y \in W_f$. An easy adaptation of this to the affine case implies the following corollary. 
\begin{corollary}
For all $\lambda \in X^\vee$, $J_\lambda$ is perverse; i.e. $J_\lambda \in P_I \subset D^b_I(\mc{F}\ell)$. 
\end{corollary}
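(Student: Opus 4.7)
The plan is to deduce the corollary directly from the (affine analogue of the) two $t$-exactness statements of Corollaries~\ref{corollary 1} and~\ref{corollary 2} of Lecture~\ref{lecture 30}. Concretely, writing $\lambda=\lambda'-\lambda''$ with $\lambda',\lambda''\in X^\vee_+$, the defining formula
\[
J_\lambda \;=\; \Delta_{t_{\lambda'}}*\nabla_{t_{-\lambda''}}
\]
expresses $J_\lambda$ as a convolution of a $!$-extension of a shifted constant sheaf on the Iwahori orbit indexed by a dominant translation with a $*$-extension on the orbit indexed by an antidominant translation. Once we know that convolving on the left with $\Delta_{t_{\lambda'}}$ is right $t$-exact and convolving on the right with $\nabla_{t_{-\lambda''}}$ is left $t$-exact (for the perverse $t$-structure on $D^b_I(\mc{F}\ell)$), we see that
\[
J_\lambda = \Delta_{t_{\lambda'}}*\nabla_{t_{-\lambda''}} \in {}^p D^{\le 0}\cap {}^p D^{\ge 0} = P_I,
\]
since $\nabla_{t_{-\lambda''}}$ and $\Delta_{t_{\lambda'}}$ are themselves perverse.

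It therefore remains to establish the affine analogue of Corollary~\ref{corollary 1}: for every $x\in W$, the functors $\Delta_x*(-)$ and $(-)*\Delta_x$ are right $t$-exact, while $\nabla_x*(-)$ and $(-)*\nabla_x$ are left $t$-exact. As in the finite case, the strategy is to realise these functors as pushforward/pullback along the multiplication maps
\[
\lambda_x \colon IxI\times_I G((t))/I \longrightarrow G((t))/I, \qquad
\rho_x \colon G((t))/I\times_I IxI/I \longrightarrow G((t))/I,
\]
to rewrite
$\Delta_x*(-)=\lambda_{x!}\lambda_x^*(-)[\ell(x)]$, $\nabla_x*(-)=\lambda_{x*}\lambda_x^*(-)[\ell(x)]$,
and similarly for $\rho_x$, and then to invoke the fundamental $t$-exactness theorem for affine morphisms (Theorem~\ref{affine morphisms}). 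The shifted pullback $\lambda_x^*[\ell(x)]$ is perverse $t$-exact because $\lambda_x$ is smooth of relative dimension $\ell(x)$, so everything reduces to checking that $\lambda_x$ and $\rho_x$ are affine morphisms (of ind-schemes).

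The main thing to verify, and the only real obstacle, is this affineness of $\lambda_x$ and $\rho_x$ in the affine flag setting. In the finite case we used the decomposition $BxB \simeq U_x\cdot x\cdot B$ with $U_x$ a finite-dimensional affine space to conclude that multiplication $BxB\times_B G \to G$ is affine. The same idea works here: each Iwahori double coset $IxI \subset G((t))$ admits a decomposition $IxI \simeq U_x \cdot x \cdot I$ where $U_x$ is a genuine finite-dimensional affine space of dimension $\ell(x)$ (this is a standard fact about the affine Bruhat decomposition, and is the reason Schubert cells in $\mc{F}\ell$ are honest affine spaces). Being affine is local in the flat topology, so the resulting multiplication maps $\lambda_x$ and $\rho_x$ are affine, and Theorem~\ref{affine morphisms} applies. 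With this in hand, the two $t$-exactness statements hold verbatim as in Lecture~\ref{lecture 30}, and the corollary follows.

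Finally, the argument is independent of the choice of decomposition $\lambda=\lambda'-\lambda''$: this is precisely the content of the preceding lemma, which upgrades $\lambda\mapsto J_\lambda$ to a monoidal functor from $\underline{X}^\vee$ to $(D^b_I(\mc{F}\ell),*)$, so $J_\lambda$ is a well-defined object of $D^b_I(\mc{F}\ell)$ before we even check perversity.
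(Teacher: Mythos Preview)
Your proposal is correct and follows exactly the route the paper takes: the corollary is the affine analogue of Corollary~\ref{corollary 2}, which in turn rests on the affineness of the multiplication maps $\lambda_x,\rho_x$ and Artin vanishing (Theorem~\ref{affine morphisms}). You have in fact spelled out the ``easy adaptation to the affine case'' that the paper leaves implicit.

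One small correction: you have the directions of $t$-exactness swapped. According to Corollary~\ref{corollary 1} (and the Artin vanishing remark that follows Theorem~\ref{affine morphisms}: $f_!$ preserves ${}^pD^{\ge 0}$ and $f_*$ preserves ${}^pD^{\le 0}$), the functor $\Delta_x*(-)=\lambda_{x!}\lambda_x^*[\ell(x)]$ preserves ${}^pD^{\ge 0}$ (left $t$-exact), while $(-)*\nabla_y$ preserves ${}^pD^{\le 0}$ (right $t$-exact). This does not affect your conclusion, since $\Delta_{t_{\lambda'}}$ and $\nabla_{t_{-\lambda''}}$ are both perverse and hence lie in ${}^pD^{\le 0}\cap{}^pD^{\ge 0}$; the two half-exactness statements still combine to give $J_\lambda\in P_I$, just with the roles of $\le 0$ and $\ge 0$ interchanged from what you wrote.
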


\begin{notation}
Let 
\begin{align*}
\mc{A} &= \text{ Wakimoto-filtered objects in $P_I$}, \\
\mathrm{gr}\mc{A} &= \text{ image of }\Rep T^\vee \text{ in } D^b_I(\mc{F}\ell).
\end{align*}
Note that $\mathrm{gr}\mc{A}$ is not a full subcategory in $D^b_I(\mc{F}\ell)$. This notation will make sense in a moment. 
\end{notation}

\begin{remark}
It is immediate from the definitions that $\mc{A}$ and $\mathrm{gr}\mc{A}$ are monoidal subcategories in $P_I$. Note that this is in contrast to the finite case, where $P_B$ has no interesting monoidal subcategories. 
\end{remark}

Recall that in the finite case,
\begin{align*}
    \Hom^\bullet (\Delta_x, \Delta_y) &= 0 \text{ unless } x \leq y, \\
    \Hom^\bullet(\nabla_x, \nabla_y) &=0 \text{ unless } y \leq x. 
\end{align*}
On $X^\vee$, we have the {\em periodic order:}
\[
\lambda \leq \mu \text{ if and only if } \mu - \lambda \in \Z X^\vee_+.
\]
\begin{lemma}
\label{homs} With respect to the periodic order,
\begin{align*}
    \Hom^\bullet(J_\lambda, J_\mu) &=0 \text{ unless } \lambda \leq \mu, \\
    \Hom^\bullet(J_\lambda, J_\lambda) &= \Q.
\end{align*}
\end{lemma}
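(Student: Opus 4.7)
The plan is to reduce everything to a computation of $\mathrm{Hom}$ between two standard objects indexed by translations, then invoke the Bruhat-order vanishing from Lecture \ref{lecture 30} together with a classical identification of Bruhat order with periodic order on translations.

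First, since $\lambda \mapsto J_\lambda$ extends to a monoidal functor from $\underline{X}^\vee$, each $J_\lambda$ is invertible with inverse $J_{-\lambda}$, so right convolution with $J_{-\lambda}$ is an autoequivalence of $D^b_I(\mc{F}\ell)$. Applying it I obtain
\[
\Hom^\bullet(J_\lambda, J_\mu) \;\cong\; \Hom^\bullet(J_\lambda * J_{-\lambda},\, J_\mu * J_{-\lambda}) \;=\; \Hom^\bullet(\Delta_{id}, J_{\mu-\lambda}).
\]
Setting $\nu := \mu - \lambda$, it suffices to show that $\Hom^\bullet(\Delta_{id}, J_\nu)$ vanishes unless $\nu \in \mathbb{Z}X^\vee_+$, and equals $\mathbb{Q}$ when $\nu = 0$.

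Second, I write $\nu = \nu' - \nu''$ with $\nu',\nu''$ dominant and of \emph{disjoint support} in the fundamental coweights, so that the decomposition is canonical and $\nu$ lies in the positive coroot cone iff $\nu'' = 0$ would be too strong --- more precisely, this disjoint-support normalization keeps $\nu'$ and $\nu''$ as small as possible. By definition $J_\nu = \Delta_{t_{\nu'}} * \nabla_{t_{-\nu''}}$, and since $\nabla_{t_{-\nu''}}$ is invertible with inverse $\Delta_{t_{\nu''}}$ (the affine analogue of $\nabla_x * \Delta_{x^{-1}} = \Delta_{id}$ from last lecture), I apply the corresponding autoequivalence once more to get
\[
\Hom^\bullet(\Delta_{id}, \Delta_{t_{\nu'}} * \nabla_{t_{-\nu''}}) \;\cong\; \Hom^\bullet(\Delta_{t_{\nu''}}, \Delta_{t_{\nu'}}).
\]
Now I invoke the exceptional collection property $\Hom^\bullet(\Delta_x, \Delta_y) = 0$ unless $x \leq y$ in Bruhat order, proved in the finite case in Lecture \ref{lecture 30} and extending verbatim to $W$: by adjunction this Hom reduces to the costalk of $j_{t_{\nu'}!}\mathbb{Q}$ along the cell indexed by $t_{\nu''}$, which vanishes unless $X_{t_{\nu''}} \subset \overline{X_{t_{\nu'}}}$. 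When $\nu' = \nu''$, the relevant cell is an affine space, so the Hom is $\mathbb{Q}$ (giving the second assertion of the lemma with $\nu' = \nu'' = 0$, so $J_0 = \Delta_{id}$).

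The main obstacle, and the genuine combinatorial content of the lemma, is to identify the Bruhat order on translations with the periodic order on coweights: for dominant $\nu',\nu''$ in the disjoint-support form, one must show
\[
t_{\nu''} \leq t_{\nu'} \text{ in affine Bruhat order} \iff \nu' - \nu'' \in \mathbb{Z}X^\vee_+.
\]
The length function side is easy --- by Iwahori--Matsumoto (Lemma \ref{IM}) we have $\ell(t_{\nu'}) - \ell(t_{\nu''}) = 2\langle \nu' - \nu'', \rho\rangle$, which is consistent with the claim --- but refining length comparison to Bruhat order requires a genuinely Coxeter-theoretic input, e.g. the subword/reflection characterization of the Bruhat order applied to a specific reduced expression for $t_{\nu'}$ obtained by decomposing $\nu'$ into a sum of fundamental coweights. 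I would follow the standard account (Lusztig, Stembridge) of the Bruhat order on alcoves in the dominant chamber, where this is exactly the statement that one can pass from the fundamental alcove to $t_{\nu'} \cdot A_0$ through the intermediate alcove $t_{\nu''} \cdot A_0$ by removing simple reflections. This combinatorial reduction is the heart of the lemma; once it is in hand, the sheaf-theoretic steps above are essentially formal consequences of invertibility of $J_\lambda$ and the exceptional-collection behavior of standard sheaves on Bruhat cells.
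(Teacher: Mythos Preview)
Your approach is essentially the same as the paper's: reduce to a Hom between two $\Delta_{t_?}$'s using invertibility of Wakimotos, then cite the identification of Bruhat order with periodic order on translations. The paper organises this more efficiently, and in a way that avoids the subtlety you flag at the end. Rather than first reducing to $\Hom(\Delta_{id},J_\nu)$ and then using a disjoint-support decomposition $\nu=\nu'-\nu''$, the paper simply convolves on the right by $J_\gamma$ for $\gamma$ sufficiently dominant: since $(-)*J_\gamma$ is an autoequivalence and $\lambda+\gamma,\mu+\gamma$ are then dominant, one gets
\[
\Hom^\bullet(J_\lambda,J_\mu)\cong\Hom^\bullet(\Delta_{t_{\lambda+\gamma}},\Delta_{t_{\mu+\gamma}}),
\]
and the needed combinatorial input is exactly the standard fact that for \emph{sufficiently dominant} weights, $t_{\lambda+\gamma}\le t_{\mu+\gamma}$ in Bruhat order iff $\lambda+\gamma\le\mu+\gamma$ in the periodic order.

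The point is that your version of the combinatorial claim --- $t_{\nu''}\le t_{\nu'}$ iff $\nu'-\nu''\ge 0$ for disjoint-support dominant $\nu',\nu''$ --- is not the form one finds in the literature, precisely because your $\nu',\nu''$ typically sit on walls of the dominant chamber. To justify it you would end up shifting by a regular dominant $\gamma$ anyway, which is the paper's move. So your extra reduction step buys nothing and makes the citation harder; better to convolve by $J_\gamma$ at the outset. The second assertion $\Hom^\bullet(J_\lambda,J_\lambda)=\Q$ is not argued in the paper's proof either; both you and the paper are tacitly taking this as the easy endomorphism computation for $\Delta_{id}$.
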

\begin{proof} For $\gamma$ sufficiently dominant, 
\begin{align*}
    \Hom^\bullet(J_\lambda, J_\mu) &= \Hom^\bullet( J_\lambda * J_\gamma, J_\mu * J_\gamma) \\
    &= \Hom^\bullet (\Delta_{t_{\lambda + \gamma}}, \Delta_{t_{\mu + \gamma}} ) \\
    &= 0
\end{align*}
unless $t_{\lambda + \gamma} \leq t_{\mu + \gamma}$. It is a known fact about the Bruhat order in affine type that for sufficiently dominant weights we have 
\[
t_{\lambda + \gamma} \leq t_{\mu + \gamma}\iff \lambda+ \gamma \leq \mu+ \gamma. 
\]
Hence $\Hom^\bullet(J_\lambda, J_\mu)= 0$ unless $\lambda \leq \mu$.
\end{proof}
\begin{example}
Let $G=\SL_2$, so $X^\vee = \Z \alpha^\vee$. The Homs between Wakimoto sheaves only go in one direction:
\[
\includegraphics[scale=0.25]{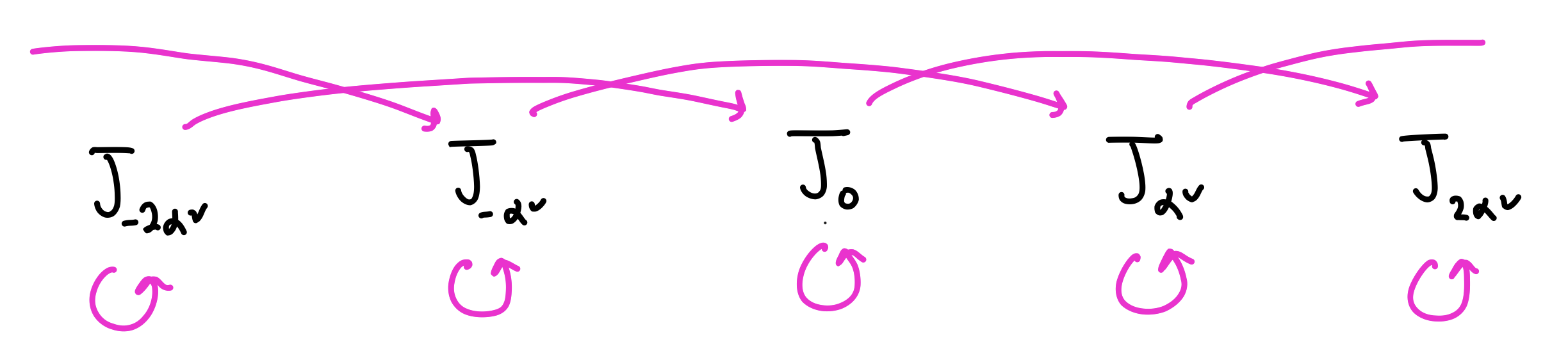}
\]
\end{example}
Lemma \ref{homs} implies:
\begin{exercise}
Every $\mc{F} \in \mc{A}$ admits a functorial\footnote{in particular unique!} filtration (called a ``Wakimoto filtration'') $\mc{F}_I$ indexed by upper closet subsets $I \subset X^\vee$ such that if $\lambda \in I$ is minimal, then 
\[
\mc{F}_I/\mc{F}_{I \backslash \{\lambda\}} \simeq \bigoplus mJ_\lambda^{\oplus m_\lambda}.
\]
\end{exercise}

\begin{remark}
This exercise is an analogue of a standard (but nonetheless beautiful) feature of highest weight categories: (co)standard filtrations are unique when they exist. 
\end{remark}

The exercise gives us a functor
\begin{align*}
    \mathrm{gr}:\mc{A} &\mapsto \mathrm{gr}\mc{A} \\
    \mc{F} &\mapsto \bigoplus \mc{F}_{\{ \geq \lambda\}}/\mc{F}_{\{>\lambda\}}, 
\end{align*}
which sends an object to its ``associated graded under the Wakimoto filtration''. 
\begin{exercise}
The Wakimoto filtration is compatible with $*$, and hence $\mathrm{gr}$ is a $\otimes$-functor. 
\end{exercise}
\subsection{Motivational interlude}
We'll take a brief motivational interlude to explain how Wakimoto sheaves fit into our bigger picture. 

Move back to the coherent side, and let $G^\vee \supset B^\vee$, $R_+^\vee$ be as usual. Any $G^\vee$-module $V$ has a $B^\vee$-filtration indexed by upper closed subsets $I \subset X^\vee$, namely
\[
V_I = \bigoplus_{\lambda \in I} V_\lambda \subset V,
\]
and the associated graded of this filtration is isomorphic to a direct sum of one-dimensional modules $k_\lambda$. 

\begin{example}
If $G=\SL_2$, a picture of this filtration is:
\[
\includegraphics[scale=0.3]{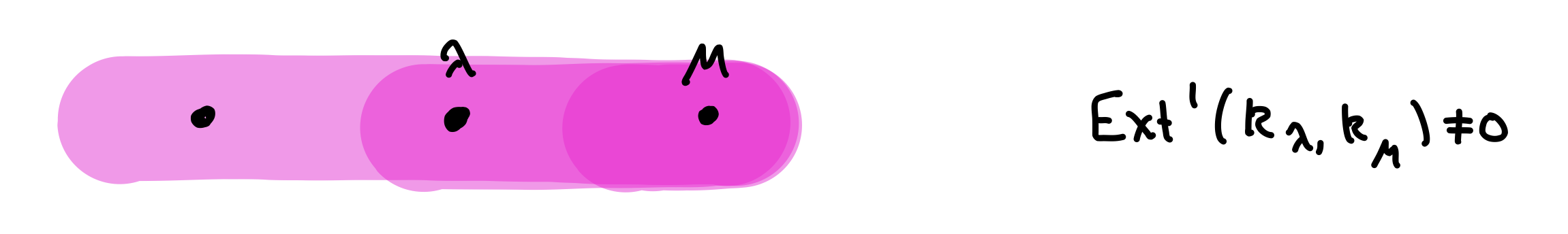}
\]
\end{example}
In fact, $\Hom^\bullet(k_\lambda, k_\mu)=0$ unless $\lambda \leq \mu$. Similarly on $\widetilde{\mc{N}}$, we have 
\[
\Hom_{\Coh^{G^\vee}(\widetilde{\mc{N}})}(\mc{O}(\lambda), \mc{O}(\mu))=0 \text{ unless } \lambda \leq \mu.
\]
Our equivalence will end up mapping 
\[
\mc{O}(\lambda) \mapsto J_\lambda.
\]

\subsection{Beilinson--Gaitsgory--Kottwitz (BGK) central sheaves}
Let $\mc{G}r = G((t))/G[[t]]$ be the affine Grassmannian and $\mc{F}\ell = G((t))/I$ the affine flag variety. 

\begin{theorem}
There exists a central functor 
\[
Z:P_{G(\mc{O})}(\mc{G}r) \rightarrow P_I = P_I(\mc{F}\ell) \subset D_I(\mc{F}\ell).
\]
Moreover, $Z$ is equipped with a $\otimes$-derivation $N$. 
\end{theorem}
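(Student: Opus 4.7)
The construction has been foreshadowed in Section~\ref{lecture 23} via the example of the natural representation of $\GL_2$, and the plan is to globalise that picture. I would build Gaitsgory's interpolating family $f:\mc{Y} \to \mathbb{A}^1$ whose generic fibre (over $\lambda \ne 0$) is a $\mc{G}r$--bundle with an extra flag datum, canonically isomorphic to $\mc{G}r \times G/B$, and whose special fibre (over $\lambda = 0$) is $\mc{F}\ell$. Concretely: parametrise quadruples $(\lambda,\mathscr{E},\beta,\mc{F})$ where $\mathscr{E}$ is a $G$--bundle on $\mathbb{A}^1$, $\beta$ trivialises $\mathscr{E}$ away from $\{0,\lambda\}$, and $\mc{F}$ is a Borel reduction of the fibre $\mathscr{E}_0$. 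This is the souped--up Beilinson--Drinfeld Grassmannian already sketched in the excerpt. Then, given $\mc{F} \in P_{G(\mc{O})}(\mc{G}r)$, the factorisation structure of $\mc{BD}$ lets me spread $\mc{F}$ out over $\mc{Y}|_{\mathbb{A}^1\setminus\{0\}}$ as a perverse sheaf $\widetilde{\mc{F}}$ isomorphic to $\mc{F} \boxtimes \mathrm{IC}_{G/B}$, and I \emph{define}
\[
Z(\mc{F}) := \psi_f\bigl(\widetilde{\mc{F}}\bigr),
\]
where $\psi_f$ is the unipotent nearby cycles functor of Section~\ref{lecture 18}.

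\textbf{Perversity, equivariance, monoidality.} The first easy tasks are to check that $Z(\mc{F})$ is a perverse sheaf on $\mc{F}\ell$ and is $I$--equivariant. Perversity follows because unipotent nearby cycles (appropriately shifted) is $t$--exact for the perverse $t$--structure, a fact established in Section~\ref{lecture 18}. The $I$--equivariance comes from tracking the loop group symmetries of the family $\mc{Y}$: the $G(\mc{O})$--equivariance of $\mc{F}$ on the generic fibre, combined with the natural $B$--action on the flag, degenerates to $I$--equivariance at $\lambda = 0$. Monoidality $Z(\mc{F}_1 * \mc{F}_2) \simeq Z(\mc{F}_1) * Z(\mc{F}_2)$ follows from a two--point variant: work over $\mathbb{A}^1 \times \mathbb{A}^1$ with a family whose generic fibre factorises into two copies of $\mc{G}r$ (via the factorisation structure of $\mc{BD}$) and whose fibre over $(0,0)$ is $\mc{F}\ell$; nearby cycles commutes with convolution along this factorisation.

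\textbf{Centrality.} This is where the main work lies. To show that $Z(\mc{F}) * \mc{G} \simeq \mc{G} * Z(\mc{F})$ canonically for every $\mc{G} \in P_I$, the plan is to set up a family over $\mathbb{A}^1 \times \mathbb{A}^1$ (points $\lambda_1, \lambda_2$), where we allow $\mc{G}$ to live on a second, independent copy of $\mc{F}\ell$ attached at $\lambda_2$ and $\mc{F}$ to degenerate from $\mc{G}r$ to $\mc{F}\ell$ at $\lambda_1$. Away from the diagonal $\lambda_1 = \lambda_2$, factorisation gives a canonical symmetry swapping the two loop--group factors and hence a canonical isomorphism between the two orderings of the convolution. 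Taking iterated (unipotent) nearby cycles as $(\lambda_1,\lambda_2) \to (0,0)$ along two different paths produces $Z(\mc{F}) * \mc{G}$ and $\mc{G} * Z(\mc{F})$ respectively, and the factorisation symmetry furnishes the central isomorphism. The main obstacle, and the most delicate point, is to verify that the resulting isomorphism is perverse (so that $Z(\mc{F}) * \mc{G}$ is itself perverse and not merely a complex with perverse cohomology); this requires the universal local acyclicity of $\widetilde{\mc{F}}$ with respect to $f$, which in turn rests on purity of Satake sheaves and the smoothness properties of the BD family. I expect this to follow from Gaitsgory's original argument \cite{Gaitsgory}.

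\textbf{The $\otimes$--derivation $N$.} The unipotent nearby cycles functor $\psi_f$ carries by construction a canonical unipotent monodromy automorphism $\mu$, so that $N := \log(\mu) = \sum_{k \geq 1} \tfrac{(-1)^{k+1}}{k}(\mu - 1)^k$ is a well--defined nilpotent endomorphism of the functor. The Leibniz rule $N_{\mc{F}_1 * \mc{F}_2} = N_{\mc{F}_1} \otimes \mathrm{id} + \mathrm{id} \otimes N_{\mc{F}_2}$ is then formal from the two--point factorisation of Step on monoidality: under the isomorphism $Z(\mc{F}_1 * \mc{F}_2) \simeq Z(\mc{F}_1) * Z(\mc{F}_2)$ arising from iterated nearby cycles at $(\lambda_1,\lambda_2) = (0,0)$, the monodromy around the total locus $\lambda_1\lambda_2 = 0$ decomposes as the sum of monodromies around $\lambda_1 = 0$ and $\lambda_2 = 0$. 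Passing to logarithms gives the $\otimes$--derivation identity and completes the construction.
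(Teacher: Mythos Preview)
The paper does not actually prove this theorem. It is stated in Lecture~31 and immediately followed by a remark that ``the necessary compatibilities are checked in the appendix `braiding compatibilities' by Gaitsgory to Bezrukavnikov's \cite{Bez-tensor} and in more detail in a book in progress by Achar--Riche.'' The only place the construction is carried out in any detail is Lecture~23, where the nearby-cycles description of $Z(\mathrm{nat})$ for $\GL_2$ is worked out explicitly as an extended example.

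Your proposal is a faithful outline of Gaitsgory's construction and matches both the $\GL_2$ example in Lecture~23 and what the cited references do: build the souped-up Beilinson--Drinfeld family, define $Z$ via nearby cycles, and extract $N$ as the logarithm of monodromy. The monoidality and centrality arguments via two-point factorisation are also the standard ones. One small correction: the reference \cite{Gaitsgory} in this paper points to Gaitsgory's note on sheaves of categories over stacks, not to the central sheaves construction; the relevant source is Gaitsgory's appendix to \cite{Bez-tensor}. Also, your invocation of ``purity of Satake sheaves'' for the perversity of $Z(\mc{F}) * \mc{G}$ is not quite how the argument runs---the perversity is deduced more directly from the $t$-exactness of nearby cycles applied to the relevant family, together with the fact that the family is flat. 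But these are matters of emphasis; the overall plan is correct and is precisely what the paper defers to the literature.
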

\begin{remark}
The necessary compatibilities are checked in the appendix ``braiding compatabilities'' by Gaitsgory to Bezrukavnikov's \cite{Bez-tensor} and in more detail in a book in progress by Achar-Riche. 
\end{remark}

For the purposes of stating the next theorem, we (temporarily) extend the definition of Wakimoto sheaves: write $w \in W$ as $w=t_\lambda w_f$ for $t_\lambda \in X^\vee, w_f \in W_f$, and let
\[
J_w = J_\lambda * \nabla_{w_f}.
\]
The affine analogue of Mirkovic's observation from last lecture is the following. 
\begin{theorem}
\label{filtrations}
\cite{AB} \begin{enumerate}
    \item Any convolution exact $\mc{F} \in P_I$ is $\{J_w \mid w \in W\}$-filtered. 
    \item If in addition $\mc{F}$ is ``weakly central'' (i.e. $\mc{F} * \mc{G} \simeq \mc{G} * \mc{F}$ for all $\mc{G}$), then $\mc{F}$ is Wakimoto-filtered (i.e. only $J_\lambda$ for $\lambda \in X^\vee$ occur in the filtration above). 
\end{enumerate}
\end{theorem}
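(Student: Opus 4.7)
The plan is to mimic Mirkovic's argument from the finite case (Lecture \ref{lecture 30}), substituting the role played there by standard/costandard objects with Wakimoto sheaves, and using convolution with a very dominant translation $J_\gamma = \Delta_{t_\gamma}$ ($\gamma \in X^\vee_+$) as a stabilisation device that carries us into a region of $\mc{F}\ell$ where highest-weight-type arguments are available. For part (1), given $\mc{F} \in P_I$ convolution exact, I would take $\gamma \in X^\vee_+$ sufficiently dominant (relative to the finite support type of $\mc{F}$) so that $\mc{F} * J_\gamma$ is perverse (by convolution exactness) and is supported on $I$-orbits indexed by elements $t_\gamma w$ with the length-additivity $\ell(t_\gamma w) = \ell(t_\gamma) + \ell(w)$ holding. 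On this "dominant enough" locus, Lemma \ref{deltas convolve} and the affine analogue of Corollary \ref{corollary 1} imply that $\Delta_{t_\gamma w} \simeq J_\gamma * \Delta_w$, so the usual $\Delta$-filtration of $\mc{F} * J_\gamma$ (obtained stratum-by-stratum from the Bruhat decomposition, which is here finite because the support is finite) is a filtration by $\{J_\gamma * \Delta_w\}_{w \in W}$.

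Convolving back with $J_{-\gamma} = \nabla_{t_{-\gamma}}$, which is inverse to $J_\gamma$ as a monoidal object, produces a filtration of $\mc{F} \simeq (\mc{F} * J_\gamma) * J_{-\gamma}$ whose subquotients are $J_\gamma * \Delta_w * J_{-\gamma}$. Writing $w = t_\lambda w_f$ and using associativity together with the commutation relation $J_\gamma * \nabla_{w_f} \simeq \nabla_{w_f} * J_{w_f^{-1}(\gamma)}$ (provable via the monoidal functor from $\underline{B}_W$ and length-additivity for sufficiently dominant $\gamma$), one identifies the subquotient as $J_\lambda * \nabla_{w_f} = J_w$ in the extended sense. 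This gives the required $\{J_w\}$-filtration. The uniqueness and functoriality of Wakimoto filtrations (once they exist) follow from the vanishing in Lemma \ref{homs} extended to the $\{J_w\}$-indexing, exactly as in the exercise after that lemma.

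For part (2), the input is that the Wakimoto-type filtration constructed in part (1) is functorial, hence is preserved by every automorphism of $\mc{F}$ and, more importantly, is natural in $\mc{F}$. If $\mc{F}$ is weakly central, pick any $\mc{G} \in P_I$ and consider the canonical isomorphism $\beta_\mc{G}: \mc{F} * \mc{G} \xrightarrow{\sim} \mc{G} * \mc{F}$. Applied to $\mc{G} = J_\mu$, the isomorphism $\beta_{J_\mu}$ must carry the Wakimoto filtration of $\mc{F} * J_\mu$ to that of $J_\mu * \mc{F}$, and on associated graded pieces induces identifications $J_w * J_\mu \simeq J_\mu * J_w$ in $\mathrm{gr}\mc{A}$. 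Writing $w = t_\lambda w_f$, this reads $J_{\lambda + \mu} * \nabla_{w_f} \simeq J_{\lambda + w_f(\mu)} * \nabla_{w_f}$ (using the commutation above), and running this over varying $\mu$ forces $w_f(\mu) = \mu$ for all $\mu \in X^\vee$; hence $w_f = \mathrm{id}$ and only the translations $w = t_\lambda$ contribute, which is precisely the Wakimoto-filtered condition.

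The main obstacle will be the bookkeeping step in part (1) that identifies $J_\gamma * \Delta_w * J_{-\gamma}$ with $J_w$ canonically, and in particular showing that the filtration obtained this way is independent of the auxiliary dominant $\gamma$ and is functorial in $\mc{F}$. Once this functoriality is established, the centrality argument in part (2) is essentially formal, reducing to the non-commutation of $\nabla_{w_f}$ ($w_f \neq \mathrm{id}$) with generic Wakimotos in $\mathrm{gr}\mc{A}$, which in turn is visible from the Demazure-Lusztig-type formula for the action of $H_f$ on the lattice part of $H$ that we saw in Lecture \ref{lecture 22}.
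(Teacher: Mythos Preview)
The paper omits the proof entirely (``We omit the proof of Theorem \ref{filtrations}, but note that it is not much more difficult than the proof of Mirkovic's observation that we described last week''), so I am evaluating your sketch on its own terms.

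Your overall strategy is right, and matches what the paper hints at. But part (1) has a genuine gap. You assert that $\mc{F} * J_\gamma$ has a $\Delta$-filtration ``obtained stratum-by-stratum from the Bruhat decomposition''. Perversity alone does not give this: the support filtration of a perverse sheaf $\mc{G}$ has subquotients $j_{x!}j_x^*\mc{G}$, and these are honest copies of $\Delta_x$ (no shift) only if $j_x^*\mc{G}$ is concentrated in degree $-\ell(x)$. Perversity gives $j_x^*\mc{G} \in {}^pD^{\leq 0}$, i.e.\ degrees $\le -\ell(x)$, but not the other inequality. (Concretely: $\nabla_x$ is perverse but has no $\Delta$-filtration.) The missing half is exactly what Mirkovic's argument supplies in the finite case: one uses convolution exactness a second time (e.g.\ that $\mc{F} * \nabla_{t_{-\gamma'}}$ is also perverse) to force $j_x^!\mc{G}$ into degree $-\ell(x)$ as well. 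Once both stalk and costalk conditions hold, the support and cosupport filtrations give genuine $\Delta$- and $\nabla$-filtrations, and then twisting back by $J_{-\gamma}$ yields the $\{J_w\}$-filtration via the identity $J_w * J_\gamma = J_{w t_\gamma}$. There is also a left/right slip: you convolve $\mc{F}$ on the right by $J_\gamma$, so the subquotients should involve $\Delta_w * J_\gamma$, not $J_\gamma * \Delta_w$.

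Your part (2) argument is essentially correct, with one clarification. The commutation $J_\gamma * \nabla_{w_f} \simeq \nabla_{w_f} * J_{w_f^{-1}(\gamma)}$ does hold --- it is the Bernstein presentation of the affine braid group, $\sigma_{w_f} Y_\mu \sigma_{w_f}^{-1} = Y_{w_f(\mu)}$ --- though ``length-additivity'' is not quite the justification. Using it, one gets $J_w * J_\mu = J_{w t_\mu}$ and $J_\mu * J_w = J_{t_\mu w}$. The conclusion, however, is not that each individual $J_w$ is forced to satisfy $w_f = \mathrm{id}$; rather, weak centrality forces the multiplicity function $w \mapsto m_w$ to be constant on $X^\vee$-conjugation orbits in $W$. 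For $w = t_\lambda w_f$ with $w_f \neq \mathrm{id}$, this orbit is infinite (conjugation by $t_\mu$ shifts $\lambda$ by $\mu - w_f(\mu) \neq 0$), which is incompatible with $\mc{F}$ having finite support. So $m_w = 0$ unless $w_f = \mathrm{id}$.
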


\begin{corollary}
$Z(\mc{G})$ is Wakimoto-filtered! i.e., 
\[
Z: P_{G(\mc{O})}(\mc{G}r) \rightarrow \mc{A}
\]
\end{corollary}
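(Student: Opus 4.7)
The plan is to verify that $Z(\mc{G})$ satisfies the two hypotheses of Theorem \ref{filtrations}(2), whereupon the corollary follows immediately. Both hypotheses are essentially repackagings of the defining properties of a central functor, as recalled in Lecture \ref{lecture 23}: for all $\mc{G} \in \Perv_{G(\mc{O})}(\mc{G}r)$ and all $\mc{F} \in \Perv_I(\mc{F}\ell)$, the convolutions $Z(\mc{G}) * \mc{F}$ and $\mc{F} * Z(\mc{G})$ are perverse, and there is a canonical isomorphism $Z(\mc{G}) * \mc{F} \simeq \mc{F} * Z(\mc{G})$.

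First I would note that $Z(\mc{G}) \in P_I$ by construction of $Z$ as a functor landing in $\Perv_I(\mc{F}\ell)$. Next I would observe that the perversity of $Z(\mc{G}) * \mc{F}$ for every $\mc{F} \in P_I$ is precisely the statement that $Z(\mc{G})$ is convolution exact. This is hypothesis (1) of Theorem \ref{filtrations}, giving a $\{J_w \mid w \in W\}$-filtration of $Z(\mc{G})$. Finally, the canonical isomorphism $Z(\mc{G}) * \mc{F} \simeq \mc{F} * Z(\mc{G})$ valid for all $\mc{F} \in P_I$ is exactly the weak centrality hypothesis. Hence Theorem \ref{filtrations}(2) upgrades the $\{J_w\}$-filtration to a filtration by $\{J_\lambda \mid \lambda \in X^\vee\}$, which is by definition a Wakimoto filtration. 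This places $Z(\mc{G})$ in $\mc{A}$, as required.

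There is essentially no obstacle at this stage: all the real content is buried in (a) the construction and braiding compatibilities of Gaitsgory's central functor $Z$ (which supply both convolution exactness and weak centrality in one stroke), and (b) Theorem \ref{filtrations}, the affine analogue of Mirkovic's observation, whose proof in \cite{AB} is the genuinely hard input. Once those two results are in hand, the corollary is a one-line deduction. The only minor point to double-check is that ``weakly central'' in Theorem \ref{filtrations} is interpreted as admitting such a canonical isomorphism with every object of $P_I$ (not merely some abstract commutativity), which is indeed what the central functor provides.
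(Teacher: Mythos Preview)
Your proposal is correct and matches the paper's approach exactly: the corollary is stated immediately after Theorem \ref{filtrations} with no separate proof, so the intended argument is precisely the one-line deduction you give, namely that the central functor properties of $Z$ supply both convolution exactness and weak centrality, whereupon part (2) of the theorem applies.
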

We omit the proof of Theorem \ref{filtrations}, but note that it is not much more difficult than the proof of Mirkovic's observation that we described last week. 

The following is beautiful and bears a striking resemblance to Mirkivic-Vilonen's theorem on weight functors. (In fact, according to \cite[Remark 6]{AB}, they are ``equivalent''.)

\begin{theorem}
\begin{enumerate}
    \item The following diagram commutes up to canonical isomorphism.
    \[
    \begin{tikzcd}
    \Rep G^\vee \simeq P_{G(\mc{O})}(\mc{G}r) \arrow[r, "Z"] \arrow[d, "\mathrm{res}"] & \mc{A} \arrow[d, "\mathrm{gr}"] \\
    \Rep T^\vee \arrow[r, "\sim"] & \mathrm{gr}\mc{A} 
    \end{tikzcd}
    \]
    \item We have 
    \[
    H^i_c(\iota_w^*(Z(V))) = \begin{cases} V_\nu & \text{ if }w=\nu \in X^\vee, i=\ell(\nu); \\
    0 & \text{ otherwise.} \end{cases}
    \]
\end{enumerate}
\end{theorem}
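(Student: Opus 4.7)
My plan is to prove (1) first and then derive (2) from it, with the bulk of the work going into the structural statement about Wakimoto filtrations and a stalk computation for Wakimoto sheaves.

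First, I would reduce (1) to a statement about monoidal functors. By the corollary, both $\mathrm{gr}\circ Z$ and the composition $\mathrm{res}$ followed by the identification $\Rep T^\vee \xrightarrow{\sim} \mathrm{gr}\mc{A}$ are tensor functors $\Rep G^\vee \to \mathrm{gr}\mc{A}$. I would construct a natural isomorphism between them geometrically. The key input is Gaitsgory's construction of $Z$ as nearby cycles on the Beilinson--Drinfeld Grassmannian: starting from an $\IC$-sheaf on the generic fiber $\mc{G}r$ (supported on $\overline{\mc{G}r^\lambda}$ for $V=V_\lambda$), the nearby cycles land in $P_I(\mc{F}\ell)$. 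The Wakimoto filtration on $Z(V_\lambda)$ then arises geometrically by intersecting with a filtration on $\mc{F}\ell$ coming from $N((t))$-orbits (semi-infinite orbits), and the associated graded piece indexed by $\nu$ is a shifted constant sheaf on the component $I\cdot t_\nu\cdot I / I$ tensored with the compactly supported cohomology of $S_\nu\cap \overline{\mc{G}r^\lambda}$. By Mirkovi\'c--Vilonen this latter space is canonically $(V_\lambda)_\nu$, giving the claim on associated gradeds, and the isomorphism is natural in $V$.

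For (2), I would first establish the formula on individual Wakimoto sheaves:
\[
H^i_c(\iota_w^* J_\nu) = \begin{cases} \mathbb{Q} & w=\nu\in X^\vee,\ i=\ell(\nu),\\ 0 & \text{otherwise.}\end{cases}
\]
For dominant $\nu$ this is immediate since $J_\nu=\Delta_{t_\nu}=j_{t_\nu,!}\mathbb{Q}[\ell(\nu)]$ is the extension by zero of a (shifted) constant sheaf on a single Iwahori orbit. For anti-dominant $\nu$, $J_\nu=\nabla_{t_\nu}$ and one checks the analogous statement using that $\nabla_{t_\nu}$ has vanishing costalks off the open orbit. For general $\nu=\nu'-\nu''$, writing $J_\nu=\Delta_{t_{\nu'}}*\nabla_{t_{-\nu''}}$ and analyzing the fiber of the convolution map over an Iwahori orbit, one reduces to a cell-by-cell computation that is controlled by Iwahori--Matsumoto length counting. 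Then the main statement follows from the Wakimoto filtration of $Z(V)$ established in (1): the long exact sequences for successive quotients collapse because the contributions from distinct $J_\nu$'s are concentrated on distinct Iwahori orbits.

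The main obstacle is the geometric identification in Step~1: showing that the Wakimoto filtration of $Z(V)$ \emph{is} the one induced by semi-infinite orbits, so that MV can be invoked to compute its graded pieces. This is the heart of the Arkhipov--Bezrukavnikov construction and requires the compatibility between nearby cycles, perverse truncation, and the stratification of $\mc{F}\ell$ by translates of $N((t))$-orbits, together with verifying that the resulting graded pieces are indeed (direct sums of shifted) Wakimoto sheaves rather than more general objects of $\mc{A}$. Once this geometric picture is in place, the rest is bookkeeping via the filtration; in particular, part (2) can be regarded as a reformulation, stalkwise at translation elements, of the Mirkovi\'c--Vilonen weight-functor theorem pulled back through the nearby cycles functor, which is exactly the content of the remark following the theorem.
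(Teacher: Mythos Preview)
The paper does not actually prove this theorem: it is stated in Lecture~31 as a result (from \cite{AB}), accompanied only by the remark that according to \cite[Remark 6]{AB} the two parts are ``equivalent'' to the Mirkovi\'c--Vilonen weight-functor theorem. So there is no proof in the paper to compare your proposal against.

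Your strategy---reduce to MV via the nearby-cycles construction of $Z$ and the identification of the Wakimoto filtration with the semi-infinite filtration---is the correct one and is indeed how Arkhipov--Bezrukavnikov proceed. You also correctly identify that the geometric matching of filtrations is the real content.

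One point to flag: in your treatment of part~(2), you claim that the formula $H^i_c(\iota_w^* J_\nu)$ is ``immediate'' for dominant $\nu$ because $J_\nu=\Delta_{t_\nu}$ is supported on a single \emph{Iwahori} orbit. But $\iota_w$ is the inclusion of a \emph{semi-infinite} orbit $S_w=N_{\C((t))}\cdot wI/I$, not an Iwahori orbit. Pulling back $\Delta_{t_\nu}$ along $\iota_w$ gives the compactly-supported cohomology of $S_w\cap I t_\nu I/I$, and this intersection is typically nonempty for many $w$; the vanishing you need is not a support statement. What actually happens is that the argument for (2) on Wakimoto sheaves requires exactly the same semi-infinite geometry you singled out as the main obstacle in (1). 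So your decomposition ``prove (1), then (2) follows from the filtration plus an easy calculation on $J_\nu$'' is a bit circular: the calculation on $J_\nu$ is not easy independently of that geometry. In practice one proves (2) more directly from Gaitsgory's theorem $\pi_* Z(V)\simeq \mathrm{Sat}(V)$ together with the compatibility of semi-infinite orbits on $\mc{F}\ell$ and $\mc{G}r$ under $\pi$, and then (1) and (2) really are two faces of the same MV computation, as the paper's remark indicates.
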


\begin{remark}
Part 2. is included for experts. The notation involved is the following: The $N_{\C((t))}$-orbits on $\mc{F}\ell$ are indexed by $W$. Given $w \in W$, let $S_w=N_{\C((t))} \cdot wI/I$ and let $\iota_w: S_w \hookrightarrow \mc{F}\ell$ denote its inclusion.
\end{remark}

\noindent
We now have all of the ingredients that we need to define our functor:
\begin{enumerate}
    \item $Z:\Rep{G^\vee} \rightarrow \mc{A}$; 
    \item $F:\Rep T^\vee \xrightarrow{\sim} \mathrm{gr}\mc{A} \subset \mc{A}$ (not full);
    \item $N=$ nilpotent $\otimes$-derivation of $Z$, coming from monodromy of vanishing cycles;
    \item $b_\lambda:J_\lambda \rightarrow Z(V_\lambda)$ highest weight arrow coming from Wakimoto filtration.
\end{enumerate}

Now we need to check:

\begin{enumerate} [label=(\alph*)]
    \item {\bf Pl\"{u}cker relations:} Thus we need to check that the following diagram commutes:
    \[
    \begin{tikzcd}
    J_\lambda \otimes J_\mu \arrow[d] \arrow[r, "b_\lambda \otimes b_\mu"] & Z(V_\lambda) * Z(V_\mu) = V(V_\lambda \otimes V_\mu) \arrow[d, "Z(m_{\lambda, \mu})"] \\
    J_{\lambda + \mu} \arrow[r, "b_{\lambda + \mu}"] & Z(V_{\lambda + \mu})
    \end{tikzcd}
    \]
    This is immediate from the multiplicativity of the Wakimoto functor. 
    \item {\bf Compatibility between $b_\lambda$ and $N$:} (relates $\mf{g}^\vee$ and $G^\vee/N^\vee$)
    
    We want a commutative diagram 
    \[
    \begin{tikzcd}
    J_\lambda \arrow[r, "b_\lambda"] \arrow[dr, "0"] & Z(V_\lambda) \arrow[d, "N_{V_\lambda}"]\\
     & Z(V_\lambda)
    \end{tikzcd}
    \]
    \begin{proof}
    $Z(V_\lambda)$ has a Wakimoto filtration $\vcenter{\hbox{\includegraphics[scale=0.1]{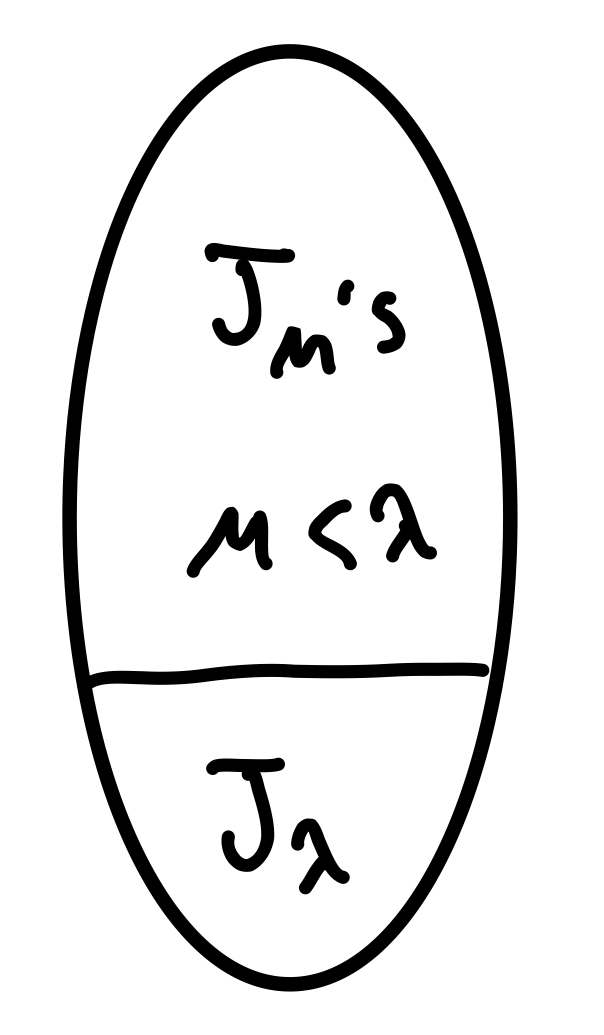}}}$. Because $\Hom(J_\lambda, J_\mu)=0$ for $\lambda>\mu$, $N_{V_\lambda} \circ b_\lambda$ maps $J_\lambda$ into $J_\lambda \subset Z(V_\lambda)$, hence $N_{V_\lambda}$ induces an endomorphism of $J_\lambda$, which has to be zero because $\End(J_\lambda)=\Q$ and $N$ is nilpotent. 
    \end{proof}
\end{enumerate}

Now we have the following set-up:
\[
\begin{tikzcd}
\widehat{\widetilde{\mc{N}}}_\mathrm{aff} \arrow[r, hookrightarrow] & \mf{g}^\vee \times \overline{G^\vee/ U^\vee} \\
\widehat{\widetilde{\mc{N}}} \arrow[d, "T^\vee\text{-bundle}"] \arrow[r, hookrightarrow] \arrow[u, hookrightarrow, "\mathrm{open}"'] & \mf{g}^\vee \times G^\vee/U^\vee \arrow[u, hookrightarrow, "\mathrm{open}"'] \arrow[d, "T^\vee\text{-bundle}"] \\
\widetilde{\mc{N}} \arrow[r, hookrightarrow] & \mf{g}^\vee \times \mc{B} 
\end{tikzcd}
\]
(For the definition of $\widehat{\widetilde{\mc{N}}}$, see \cite{AB}.) By a variant of the Tannakian formalism discussed in previous lectures, the above data gives a functor 
\[
\Coh_{\mathrm{free}}^{G^\vee \times T^\vee} (\widehat{\widetilde{\mc{N}}}) \rightarrow \mc{A}. 
\]
From this we obtain a functor 
\[
K^b\left(\Coh_{\mathrm{free}}^{G^\vee \times T^\vee} (\widehat{\widetilde{\mc{N}}})\right) \rightarrow K^b(\mc{A}).
\]
The functor passes to derived categories, complexes on $\partial \widehat{\widetilde{\mc{N}}}$ go to zero\footnote{This fact must sadly remain a black box here, due to time constraints.}, and we arrive at our desired functor:
\[
\widetilde{F}: D^{G^\vee}(\widetilde{\mc{N}}) \rightarrow D^b(P_I).
\]
All that remains is to see that $\widetilde{F}$ induces an equivalence with the anti-spherical module. 

\begin{example}
We'll end today's lecture with an illustrative example. Recall that we described $Z(\mathrm{nat})$ explicitly for $G=\GL_2$ in Lecture \ref{lecture 23}. We arrived at the following picture. (Here the notation aligns with the exercises of Lecture \ref{lecture 23}.) 
\[
\includegraphics[scale=0.4]{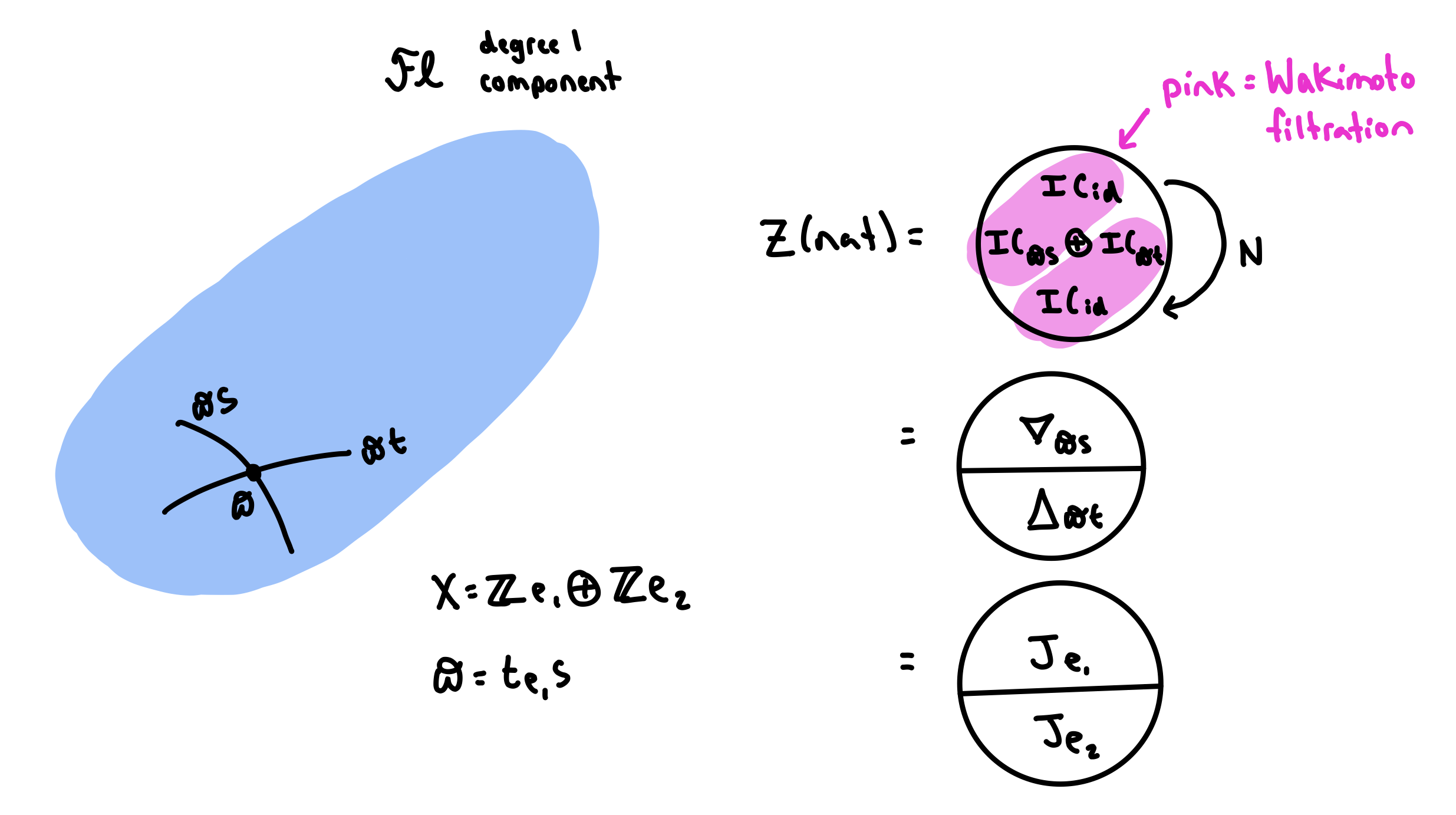}
\]
\end{example}
\pagebreak
\section{Lecture 32: Whittaker sheaves and Arkhipov-Bezrukavnikov theorem} 
\label{lecture 32}

We'll start today with a simple and beautiful example. But first we need to introduce some notation. 

\subsection{Averaging functors}
Let $N \circlearrowright X$ be the action of an algebraic group, and denote by $m$ and $p$ the corresponding action and projection maps:
\[
\begin{tikzcd}
N \times X \arrow[r, "m"] \arrow[d, "p"'] &X \\
X & 
\end{tikzcd}
\]
We define two functors
\[
\begin{tikzcd}
D^b_c(X) \arrow[r, "Av_{N*}"', bend right] \arrow[r, "Av_{N!}", bend left] & D^b_N(X)
\end{tikzcd}
\]
by 
\begin{align*}
    Av_{N*}&:= m_*(\Q_N \boxtimes (-) [\dim N]) \simeq m_*p^*[\dim N] \simeq m_* p^! [-\dim N], \\
    Av_{N!} &:= m_!(\mathbb{Q}_N \boxtimes (-) [\dim N]).
\end{align*}
One can imagine these functors as ``smearing out'' (integrating) our sheaf over the $N$-orbits to make it equivariant.
\begin{remark}
\label{averaging facts}
\begin{enumerate}
    \item We have 
\[
\mathbb{D} Av_{N*} \simeq Av_{N!} \mathbb{D}. 
\]
\item The functors $Av_{N*}$ and $Av_{N!}$ fit into adjoint pairs
\[
(\mathrm{For}[-\dim N], Av_{N*}) \text{ and } (Av_{N!}, \mathrm{For}[\dim{N}]). 
\]
\item The functor $Av_{N*}$ preserves ${^pD}^{\leq 0}$ and $Av_{N!}$ preserves ${^pD}^{\geq 0}$. 
\end{enumerate}
\end{remark}
\begin{remark}
These are a special case of the induction and restriction functors of Bernstein--Lunts. 
\end{remark}

\subsection{Another fun calculation}
Let $X=\PP^1$ and $N = \bp 1 & * \\ 0 & 1 \ep$. Then one can show that $N$-equivariant perverse sheaves are equivalent to perverse sheaves with respect to the $B$-orbit stratification: $P_N = P_{(B)}$. (This follows from the fact that the ``forget equivariance" functor is fully faithful when the group is unipotent.) As we've discussed in previous lectures, there are $5$ indecomposable objects in $P_{(B)}$: $\Delta_s, \nabla_s, \IC_s, \IC_{id}$, and $T_s=P_{id}$. Their relationship is captured by the following diagram:
\[
\begin{tikzcd}
& \IC_{id} \arrow[dl, hookrightarrow] & \\
\Delta_s \arrow[r, hookrightarrow] \arrow[dr, twoheadrightarrow] &P_{id}=T_s \arrow[r, twoheadrightarrow] & \nabla_s \arrow[ul, twoheadrightarrow] \\
& \IC_s \arrow[ur, hookrightarrow] & 
\end{tikzcd}
\]
We have described geometric constructions for $\Delta_s, \nabla_s, \IC_s, \IC_{id}$, but we established the existence of $T_s$ via formal properties of highest weight categories. Is there a geometric construction of $T_s=P_{id}$ as well? 

The answer to this question is yes, and it can be realised using averaging functors. Recall the perverse sheaf $\Q_{x!, y*}[1]$ for $x, y \neq 0$ that we saw in Lecture \ref{lecture 30}:
\[
\includegraphics[scale=0.15]{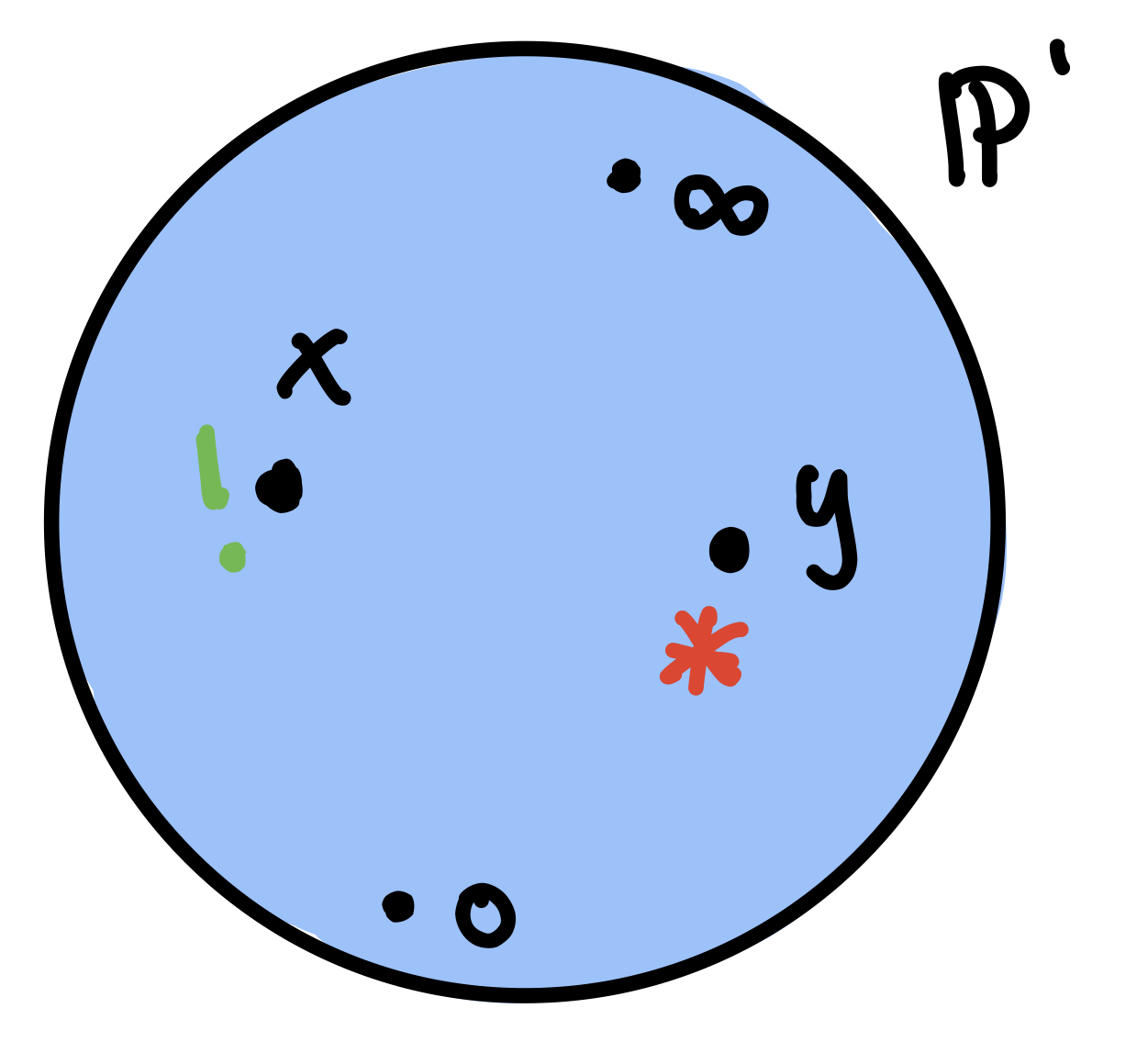}
\]
\begin{lemma}
\[
Av_{N*}(\Q_{x!, y*}[1]) \simeq Av_{N!}(\Q_{x!, y*}[1]) \simeq T_s.
\]
\end{lemma}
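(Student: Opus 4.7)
The strategy is to identify both averaged sheaves with the big tilting $T_s = P_{id}$, using Mirkovic's observation (Lecture \ref{lecture 30}) that in $P_N = P_{(B)}$, $T_s$ is, up to isomorphism, the unique indecomposable tilting perverse sheaf with full support on $\PP^1$. So it suffices to prove for each of $Av_{N*}(\Q_{x!,y*}[1])$ and $Av_{N!}(\Q_{x!,y*}[1])$ the following four properties: (a) perversity in $P_N$, (b) tilting, (c) full support, and (d) indecomposability. Each averaged sheaf then independently equals $T_s$, giving both isomorphisms in the lemma.

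The most delicate step is perversity. By Remark \ref{averaging facts}(3), $Av_{N*}$ preserves ${^pD}^{\leq 0}$ and $Av_{N!}$ preserves ${^pD}^{\geq 0}$; these are only half of the bounds needed, since averaging does not preserve perversity in general (for instance, a direct push-pull computation gives $Av_{N*}(\IC_s) = \IC_s[1]$, which is not perverse). The reverse bounds for the specific sheaf $\Q_{x!,y*}[1]$ should follow from the vanishing $H^*(\PP^1, \Q_{x!,y*}) = 0$ of the fun calculation at the start of the lecture: its ``Jordan block'' nature should force the potential obstructions coming from the two $\IC_s$-composition factors of $\Q_{x!,y*}[1]$ to cancel after averaging. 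The case of $Av_{N!}$ reduces to that of $Av_{N*}$ via Verdier duality (Remark \ref{averaging facts}(1)) together with $\mathbb{D}(\Q_{x!,y*}[1]) = \Q_{y!,x*}[1]$.

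For tilting and composition factors, I would propagate the structure of $\Q_{x!,y*}[1]$ through the averaging. This sheaf assembles via distinguished triangles from $\IC_s = \Q_{\PP^1}[1]$ and the skyscrapers $i_{x*}\Q$, $i_{y*}\Q$ at points in the free $N$-orbit $\A^1 \subset \PP^1$. Since $N$ acts freely and transitively on $\A^1$, direct push-pull applied to the skyscraper components gives $Av_{N*}(i_{x*}\Q) \simeq \nabla_s$ and $Av_{N!}(i_{x*}\Q) \simeq \Delta_s$. Combined with the contribution of the $\IC_s$ piece, this yields a $\nabla$-filtration on $Av_{N*}(\Q_{x!,y*}[1])$ and a $\Delta$-filtration on $Av_{N!}(\Q_{x!,y*}[1])$, both with composition factors $\IC_s, \IC_{id}, \IC_s$, matching those of $T_s$. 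Full support is automatic since $m$ is surjective and the source has full support.

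The final obstacle is indecomposability, where the vanishing $H^*(\PP^1, \Q_{x!,y*}) = 0$ re-enters: via the adjunctions of Remark \ref{averaging facts}(2), I expect this to force $\End(Av_{N*}(\Q_{x!,y*}[1]))$ to be a local ring (isomorphic to $k[\epsilon]/(\epsilon^2)$, matching the known $\End(T_s)$). Once this is in hand, Mirkovic's characterization pins down both averaged sheaves as $T_s$, completing the proof. The main obstacle I foresee is the perversity argument—making rigorous the claim that the ``Jordan block'' structure of $\Q_{x!,y*}[1]$ produces the relevant cancellations—perhaps most cleanly via a direct computation of the two-term complex of averaged $\IC_s$-pieces that assembles $Av_{N*}(\Q_{x!,y*}[1])$ from the vanishing of the global cohomology.
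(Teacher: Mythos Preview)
Your strategy diverges from the paper's and has real gaps. The filtration argument does not establish tilting: propagating the composition series of $\Q_{x!,y*}[1]$ through $Av_{N*}$ yields at best a $\nabla$-filtration, and through $Av_{N!}$ a $\Delta$-filtration, but tilting requires \emph{both} filtrations on the \emph{same} object. Moreover, since $Av_{N*}(\IC_s) = \IC_s[1]$ is not perverse, applying $Av_{N*}$ to the three-step composition series yields distinguished triangles rather than short exact sequences of perverse sheaves; you would need to verify that the connecting maps are nonzero to collapse them to a perverse filtration, and you do not. (Incidentally, the composition factors of $T_s$ are $\IC_{id}, \IC_s, \IC_{id}$, not $\IC_s, \IC_{id}, \IC_s$: your own pieces give $2[\nabla_s] - [\IC_s] = 2[\IC_{id}] + [\IC_s]$ in the Grothendieck group.) Both the perversity and the indecomposability arguments remain hopes rather than proofs.

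The paper's route bypasses all of this. It directly computes two invariants of $Av_{N*}(\Q_{x!,y*}[1])$: the restriction to the open cell gives $j^*Av_{N*}(\Q_{x!,y*}[1]) \simeq \Q_\C[1]$ (averaging commutes with $j^*$, so this reduces to a cohomology calculation on $\C$), and the adjunction $(\mathrm{For}[-1], Av_{N*})$ gives $\Hom^\bullet(i_*\Q_0, Av_{N*}(\Q_{x!,y*}[1])) = \Hom^\bullet(\Q_0, i^!\Q_{x!,y*}[2]) = \Q$ concentrated in degree $0$. Together with the half-bound $Av_{N*}(\Q_{x!,y*}[1]) \in {^pD}^{\leq 0}$, these give perversity. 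For the identification, the paper uses $H^*(Av_{N*}(\Q_{x!,y*}[1])) = H^*(\Q_{x!,y*}[1]) = 0$ from the fun calculation: since $T_s$ is the unique indecomposable in $P_{(B)}$ with vanishing hypercohomology, only copies of $T_s$ can appear as summands, and the open-cell computation pins the multiplicity to one. No tilting or endomorphism-ring argument is needed.
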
 For $x, y \neq 0$, we have 
\begin{proof}
{\bf Step 1:} Let $j: \C \hookrightarrow \PP^1$ be inclusion of the open Bruhat cell. Averaging functors commute with equivariant inclusions, so we have 
\[
j^* Av_{N*}(\Q_{x!, y*}[1]) \simeq Av_{N*}(j^* \Q_{x!, y*}[1]). 
\]
This is an $N$-equivariant sheaf on $\C$, so it is determined by its global sections. 
\begin{exercise}
\label{fun calculation}
$H^i(\C, \Q_{x!, y*}[1]) = \begin{cases} \Q & \text{ for }i=0, \\ 0 & \text{ otherwise}. \end{cases}$

(Hint: See the ``fun calculation'' from Lecture \ref{lecture 30}.) 
\end{exercise}
The shift by $[1]$ in the definition of $Av$ produces a shift by $[2]$ above, hence by Exercise \ref{fun calculation}, we have:
\[
j^* Av_{N*}(\Q_{x!, y*}[1]) \simeq Av_{N*}(j^* \Q_{x!, y*}[1]) = \Q_{\C}[1]. 
\]
{\bf Step 2:} Let $i:\{0\} \hookrightarrow \PP^1$ be inclusion of the closed Bruhat cell. Then by adjunction,
\begin{align*}
    \Hom^\bullet_{D^b_N}(i_* \Q_0, Av_{N*} \Q_{x!, y*}[1]) &\simeq \Hom^\bullet( \mathrm{For}(i_* \Q_0)[-1], \Q_{x!, y*}[1]) \\
    &\simeq \Hom^\bullet (\Q_0, i^!\Q_{x!, y*}[2]) \\
    &= \begin{cases} \Q &\text{ if } \bullet = 0, \\ 0 &\text{ otherwise}. \end{cases}
\end{align*}
{\bf Step 3:} By Remark \ref{averaging facts}, we know that $Av_{N*} \Q_{x!, y*}[1] \in {^pD}^{\leq 0}$. Moreover, by steps 1 and 2, $Av_{N*} \Q_{x!, y*} \in {^pD}^{\geq 0}$, so we conclude that $Av_{N*} \Q_{x!, y*} \in P_N (\PP^1).$ Hence $Av_{N*} \Q_{x!, y*}$ decomposes into a direct sum of indecomposable perverse sheaves:
\[
Av_{N*} \Q_{x!, y*}[1] \simeq \IC_{id}^{\oplus m_{id}} \oplus \cdots \oplus T_s^{\oplus n}
\]
{\bf Step 4:} Finally, by the fun calculation of last week, we have
\[
H^*(Av_{N*} \Q_{x!, y*}) = H^*(\Q_{x!, y*}) = 0.
\]
This implies that only $T_s$ can occur in the decomposition of $Av_{N*}\Q_{x!, y*}[1]$ into indecomposables, and by steps 1 and 2, there must be only a single copy. We conclude that 
\[
Av_{N*} \Q_{x!, y*}[1]=T_s,
\]
as desired.
\end{proof}

\noindent
{\bf Another construction:} (which Geordie learned from K. Vilonen)

Rough idea: Perhaps, it is easy to calculate the composition series of $\Q_{x!, y*}$. In this case, one could dream of taking a limit as $x, y \rightarrow 0$ to obtain the big tilting sheaf:
\[
\includegraphics[scale=0.3]{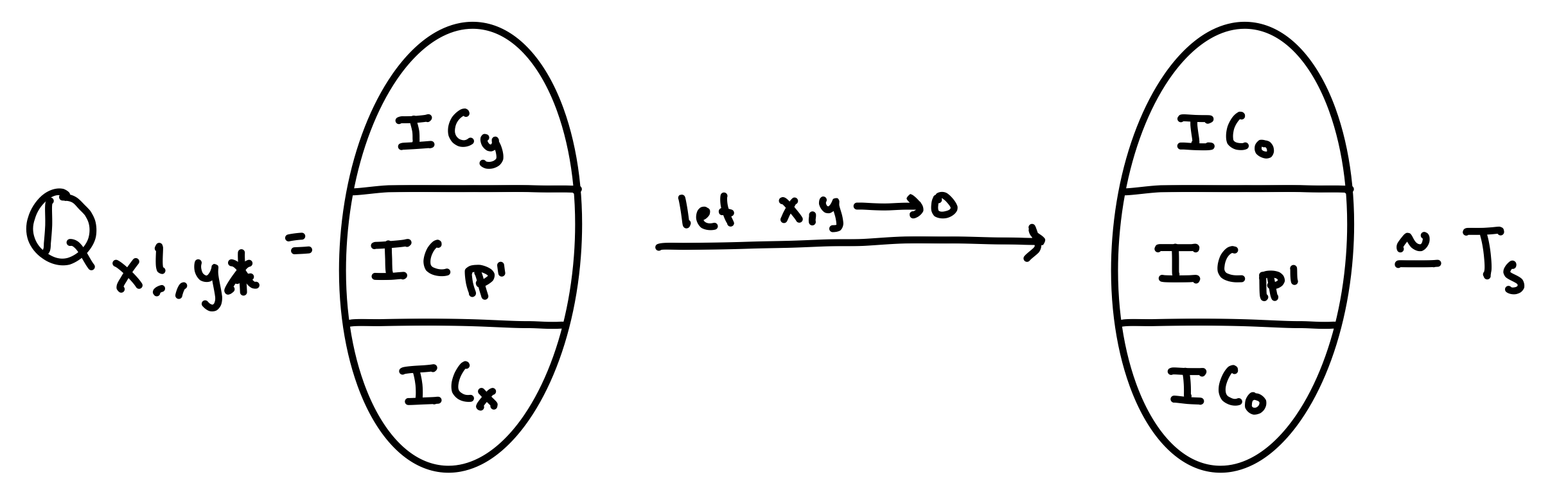}
\]
{\bf Question:} How do we formalize ``taking a limit''? 

\vspace{3mm}
\noindent
{\bf Answer:} Nearby cycles!
\[
\includegraphics[scale=0.3]{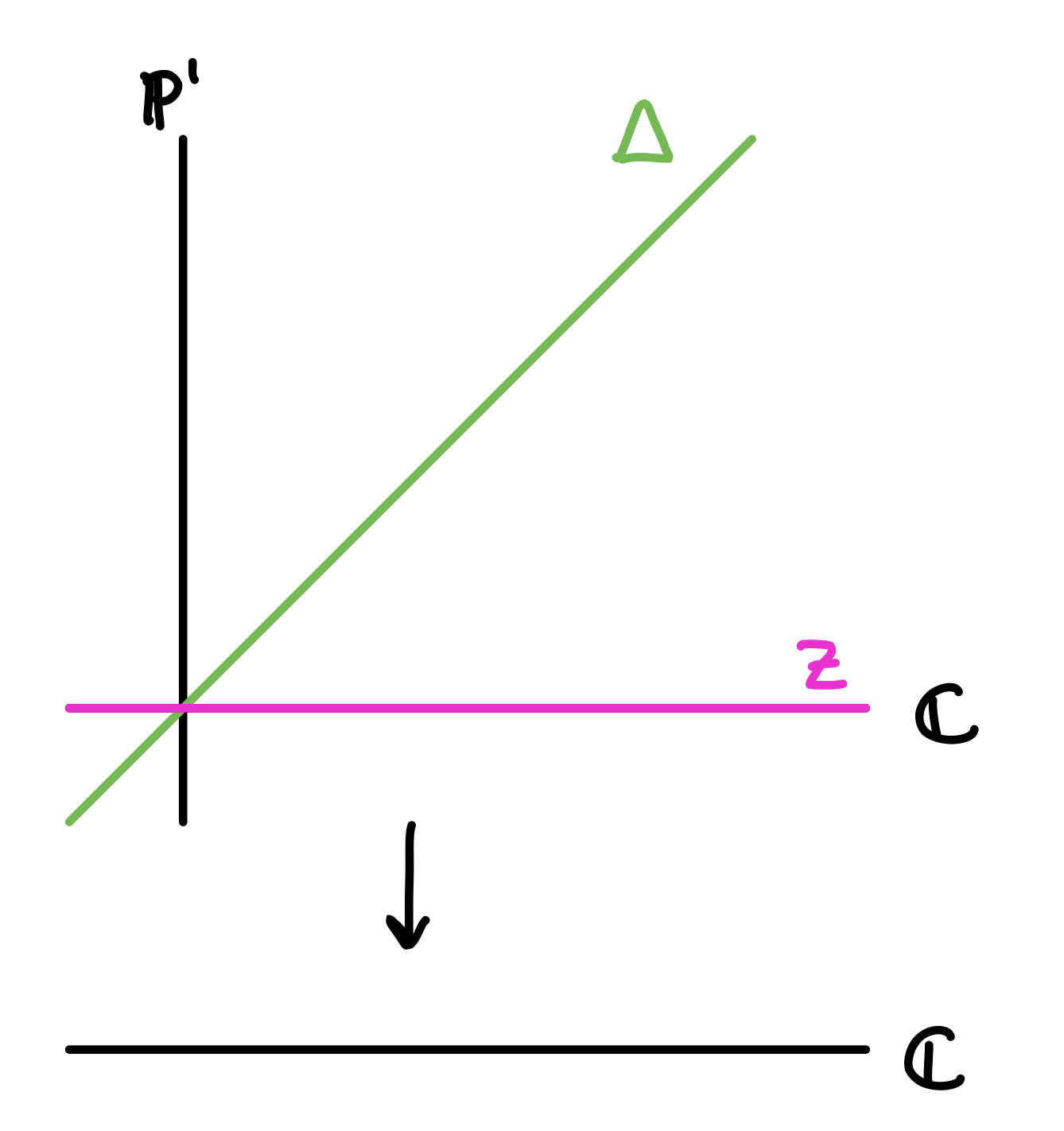}
\]
Let 
\[
\mc{F} = (j_! \Q_{(\PP^1 \times \C) \backslash \Delta}) \otimes (j_* \Q_{(\PP^1 \times \C)\backslash Z})[2]. 
\]
Then 
\[
\mc{F}|_{\PP^1 \times \{x\}} \simeq (\Q_{x!, 0*}[1])[1]
\]
is a ``family of $\Q_{x!, 0*}$ for varying $x$''. 

\begin{exercise}
(Beautiful!) $T_s \simeq \psi_f\mc{F}$. 
\end{exercise}
\begin{remark}
(Spoiler alert): It's perverse and $B$-constructible $\dots$ now what are its global sections?!
\end{remark}

\subsection{Lightning introduction to the Whittaker world}

Let $N \subset B \subset G$ be (for a moment) finite groups of Lie type. Fix $\chi:N \rightarrow \C^\times$ a character. Given a representation $V$ of $G$, consider
\[
V^{(N, \chi)} = \{ v\in V \mid n \cdot v = \chi(n)v \}.
\]
This is the space of {\em Whittaker vectors} in the representation. By Frobenius reciprocity, 
\[
\Hom_N(\C_\chi, V) = \Hom_G(\Ind_N^G \C_\chi, V),
\]
so Whittaker vectors detect whether a representation is ``seen'' by a 1-dimensional character of $N$. Put another way, what irreducible representations can one get by inducing from a character of $N$? Precisely those that admit a non-zero Whittaker vector!

Suppose $V= \Fun(X, \C)$ for some $G$-space $X$. In this setting, what do Whittaker vectors look like? 

Fix an $N$-orbit $U \subset X$. This is a homogeneous space for $N$, so $U \simeq N/K$ for some $K \subset N$. Then 
\[
\Fun(U) = \Ind_K^N \C = \bigoplus_{\lambda \in \widehat{N}} V_\lambda \otimes (V_\lambda^*)^K.
\]
Hence 
\[
\Fun(U)^{(N, \chi)} = \begin{cases} \C & \text{ if $\chi$ is trivial on $K$}, \\
0 & \text{ otherwise}. \end{cases}
\]
So up to scaling, we have ``either 0 or 1 Whittaker vectors per $N$-orbit'' and we can tell exactly which orbits admit non-zero Whittaker vectors.  

\begin{exercise}
(Important) Let $X=G/B$ and $N^-$ be the unipotent radical of the opposite Borel. Then 
\[
N^- / [N^-, N^-] \simeq \bigoplus_{\text{simple} \atop \text{roots}} \F_q^\alpha. 
\]
Hence a character $\chi:N^- \rightarrow \C$ determines a subset of simple reflections:
\[
I = \{s_\alpha \in S \mid \chi\neq 0 \text{ on }\F_q^\alpha \}.
\]
Show that the orbit $N^-x B/B$ supports a Whittaker vector if and only if $x \in {^IW}$ (the set of minimal coset representatives of $W_I\backslash W$). 
\end{exercise}
\begin{example}
If $\chi$ is nondegenerate (i.e. $I=S$), then $\Fun(G/B, \C)^{(N^-, \chi)} = \C$. 
\end{example}

\noindent
{\bf Categorifying Whittaker functions:} Now let $G, B, N, N^-$, etc. be defined over $\overline{\F}_q$. Just as
\[
\Fun(X)^{N^-} \text{ is categorified by }D_{N^-}(X), 
\]
we will see that 
\[
\Fun(X)^{(N^-, \chi)} \text{ is categorified by }D_{(N^-, \mc{L})}(X). 
\]
But what is this category $D_{(N^-, \mc{L})}(X)$? Our first step is to find an appropriate way to categorify the character $\chi$. We do this using the {\bf Artin-Schreier sheaf}.

Before defining this sheaf, we give some motivation. Recall that
\[
\C/\Z \xrightarrow{\sim} \C^\times
\]
via $z \mapsto \exp(2 \pi i z)$. The exponential map is the fundamental additive character of $\C$. Analogously, 
\[
\A^1 / \F_p \xrightarrow{\sim} \A^1 
\]
via the Artin-Schreier map $a: x \mapsto x^p - x$. (The map $a: \A^1 \rightarrow \A^1$ has kernel $\F_p$, so provides the isomorphism above.) Moreover, we have 
\[
a_*(\Q_\ell)_{\A^1} \simeq \bigoplus_{\chi: \F_p \rightarrow \overline{\Q}_\ell^\times} \mc{L}_\chi. 
\]
The sheaves $\mc{L}_\chi$ are examples of {\em character sheaves} on $\A^1$. 

Define a map $p$ by the following diagram: 
\[
\begin{tikzcd}
N^- \arrow[d] \arrow[drr, "p"] & & \\
N^-/[N^-, N^-] \arrow[r, dash, "\sim"] & \prod_{\alpha \text{ simple}} \mathbb{G}_a^\alpha \arrow[r, "\sum"'] & \mathbb{G}_a 
\end{tikzcd}
\]
Fix a non-trivial additive character $\chi$ of $\A^1$, and define 
\[
\mc{L}:=p^* \mc{L}_\chi.
\]
\begin{exercise}
The sheaves $\mathcal{L}_\chi$ and $\mc{L}$ are ``multiplicative''; i.e. we have $m^* \mc{L} \simeq \mc{L} \boxtimes \mc{L}$, where $m:N^- \times N^- \rightarrow N^-$ is multiplication. Multiplicative sheaves are ``one-dimensional character sheaves'' or ``categorified characters''. 
\end{exercise}

Now let $N^- \circlearrowright X$. A {\em $(N^-, \mc{L})$-equivariant complex on $X$} is a pair $(\mc{F}, \beta)$, where $\mc{F} \in D_c^b(X)$ and 
\[
\beta: a^* \mc{F} \xrightarrow{\sim} \mc{L} \boxtimes \mc{F}
\]
is an isomorphism satisfying the usual cocycle condition. Let 
\[
D_{(N^-, \mc{L})}(X) = \text{ category of }(N^-, \mc{L})\text{-equivariant complexes on }X. 
\]
Morphisms in this category are morphisms in $D_c^b(X)$ which commute with $\beta$. We can define averaging functors 
\[
Av_{\mc{L}*}, Av_{\mc{L}!}: D^b_c(V) \rightarrow D^b_{(N^-, \mc{L})}(X)
\]
as we did in the beginning of this lecture, 
\[
Av_{\mc{L}*} = m_*(\mc{L} \boxtimes (-) )[\dim N^-], \text{ etc.} 
\]
{\bf Important fact:} The forgetful functor 
\[
\mathrm{For}: D_{(N^-, \mc{L})}(X) \rightarrow D_c^b(X)
\]
is fully faithful. 

\begin{exercise}
Let 
\[
\begin{tikzcd}
\mathbb{A}^1 \arrow[r, hookrightarrow] \arrow[d, "a"] &\PP^1 \arrow[d]\\
\A^1 \arrow[r, hookrightarrow] & \PP^1
\end{tikzcd}
\]
\begin{itemize}
    \item Show explicitly that the map $a:x \mapsto x^p - x$ extends to $\PP^1$. (Find a formula!)
    \item Show that $a$ breaks all the rules you know about coverings of Riemann surfaces. (The map $a$ at $\infty$ is the simplest example of ``wild ramification''.) 
    \item Show that $j_! \mc{L}_\chi \xrightarrow{\sim} j_* \mc{L}_\chi$. 
    \item Show that $H^*(\PP^1, j_! \mc{L}_\chi)=0$. (We can interpret this as saying that $\mc{L}_\chi$ is a bit like our friend $\Q_{x!, y*}$ from earlier.) 
    \item Show that $j_! \mc{L}_\chi$ is $(N^-, \mc{L}_\chi)$-equivariant. 
    \item Show that $Av_{N*}(j_! \mc{L}_\chi) \simeq Av_{N!}(j_* \mc{L}_\chi) = T_s$. 
\end{itemize}
\end{exercise}

\subsection{Bird's eye view of the rest of the proof}
We return to our usual setting: fix $G$, $\mc{F}\ell$ the corresponding affine flag variety, $I \subset G((t))$ Iwahori subgroup, $I^-$ Iwahori for the opposite Borel,  $I_u \subset I$, $I_u^- \subset I^-$ pro-unipotent radicals. Let 
\[
N_K = \bp 1 & * & * \\ 0 & \ddots & * \\ 0 & 0 & 1 \ep \subset G((t)), \hspace{5mm} N_K^- = \bp 1 & 0 & 0 \\ * & \ddots & 0 \\ * & * & 1 \ep. 
\]
We have seen that the antispherical module can be realized as
\[
M_\mathrm{asph} \simeq H/\langle b_x \mid x \not \in {^fW}\rangle. 
\]
Another realization is important in $p$-adic groups: 
\[
M_\mathrm{asph} = \Fun(\mc{F}\ell)^{(N_K^-, \psi)},
\]
the ``Whittaker vectors in the principal series''. We might hope that on the level of categories, we would also have 
\[
\mc{M}_\mathrm{asph} \simeq D_{(N_K^-, \mc{L})}(\mc{F}\ell). 
\]
{\bf Problem:} $N_K^-$ orbits on $\mc{F}\ell$ are ``$\infty/2$-dimensional'' (i.e. they have neither finite dimension nor finite co-dimension). It is difficult to work with sheaves on an $\infty$-dimensional space. 

\vspace{3mm}
\noindent
{\bf One solution:} Use Drinfeld compactification. This approach is described in \cite{FGV2}. This is not yet understood by Geordie. 

\vspace{3mm}
\noindent
{\bf Another solution:} Use ``Iwahori-Whittaker'' or ``baby Whittaker'' techniques. The idea is to replace $N_K^-$ by $I_u^-$. 

\begin{lemma} For nondegenerate characters $\psi$ of $N_K^-$ and $\chi$ of $I_u^-$, 
\[
\Fun(\mc{F}\ell (\F_q))^{(N_K^-, \psi)} = \Fun(\mc{F}\ell (\F_q))^{(I_u^-, \chi)}. 
\]
\end{lemma}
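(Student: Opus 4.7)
The plan is to prove the equality by showing both sides have the same ``Whittaker basis'' indexed by the set ${}^fW$ of minimal coset representatives for $W_f\backslash W$, and then identifying them more canonically. This is an analogue of the finite-dimensional exercise from earlier in the lecture, where we showed that $N^-xB/B$ supports a Whittaker vector iff $x\in {}^I W$.

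First, I would set up orbit decompositions. Using the Iwasawa/Bruhat decomposition, $\mc{F}\ell(\F_q)$ decomposes into orbits under each of $N_K^-$ and $I_u^-$. For each $x \in W$, let $H_x := H \cap (xIx^{-1})$ denote the stabiliser of $xI/I$, where $H$ is $N_K^-$ or $I_u^-$. A function supported on the single orbit $H \cdot xI/I$ and equivariant under the relevant character exists if and only if that character restricts trivially to $H_x$; moreover, when it exists, such a function is unique up to scalar. Thus both sides acquire a basis indexed by those $x$ for which the relevant character is trivial on $H_x$.

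The main calculation is then to identify both indexing sets with ${}^fW$. On the $I_u^-$ side this is essentially the affine analogue of the exercise above: the nondegeneracy of $\chi$ forces $x \in {}^fW$ by detecting all affine simple root subgroups that sit inside both $I_u^-$ and $xIx^{-1}$. On the $N_K^-$ side, the analogous argument reduces the triviality condition to a statement about finite simple roots (since $N_K^-$ only contains ``negative'' loop-directions), and nondegeneracy of $\psi$ again picks out exactly ${}^fW$. This step is where the genuine content lies; one must carefully analyse, for each $x = t_\lambda w_f$, which root subgroups $U_\beta$ of $N_K^-$ respectively $I_u^-$ lie in the stabiliser, and see that the characters are nontrivial on them precisely when $x \notin {}^fW$.

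Once bases are matched, I would upgrade this to a canonical isomorphism by averaging. Concretely, both spaces are naturally modules over the Iwahori--Matsumoto Hecke algebra (acting by convolution on the right), and one verifies by direct computation on basis elements that the bijection ${}^fW \leftrightarrow {}^fW$ intertwines these actions. Alternatively, one can write down an explicit averaging map using the compact-open filtration $N_K^- \cap I_u^-$, $N_K^- \cap (s_0^n I_u^- s_0^{-n})$, etc., showing that integrating a $(I_u^-,\chi)$-equivariant function against $\psi^{-1}$ over increasing compact subgroups of $N_K^-$ stabilises and lands in the $(N_K^-,\psi)$-equivariant space, and similarly in reverse.

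The main obstacle will be the combinatorial identification of the two stabiliser conditions with the single condition $x \in {}^fW$: the subgroups $N_K^-$ and $I_u^-$ are genuinely different (neither contains the other), and one has to track carefully how the affine root spaces distribute between them before the nondegeneracy hypothesis can be applied. Once that bookkeeping is under control, the rest is formal.
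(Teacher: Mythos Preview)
The paper does not provide a proof of this lemma; it is stated without argument as motivation for the ``baby Whittaker'' technique. Your proposed approach is sound and contains the right ingredients, so let me just comment on a couple of points.

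Your orbit-and-stabiliser strategy is the natural one, and you correctly identify the combinatorial verification (that both indexing sets are ${}^fW$) as the real content. For the $I_u^-$ side this is indeed the affine version of the earlier exercise. For the $N_K^-$ side be a little careful: the abelianization of $N_K^-$ is $\bigoplus_{\alpha \text{ finite simple}} K$, so a nondegenerate $\psi$ is specified by a nontrivial additive character on each copy of $K$, and the triviality-on-stabiliser condition becomes a statement about which \emph{finite} simple root subgroups (at various loop levels) sit inside $xIx^{-1}$. This does cut out ${}^fW$, but the affine-root bookkeeping is genuinely different from the Iwahori case and deserves to be written out.

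One clarification: the ``$=$'' in the lemma is best read as an isomorphism of right Hecke modules rather than a literal equality of subspaces of $\Fun(\mc{F}\ell(\F_q))$ --- a function equivariant for $(N_K^-,\psi)$ has no a priori reason to be equivariant for $(I_u^-,\chi)$, since neither group contains the other and the characters need not be compatible on the overlap. Your averaging map is one way to realise the isomorphism; a slicker route is to note that both spaces are cyclic right modules for the Iwahori--Matsumoto Hecke algebra generated by the unique Whittaker vector supported on the identity cell, and both are isomorphic to $M^{\mathrm{asph}}$, which forces them to agree.
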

The $I_u^-$-orbits on $\mc{F}\ell$ are just (opposite) Bruhat cells, so passing to $I_u^-$-orbits resolves our problem of infinte-dimensional orbits. 

Define 
\[
P_{IW} \subset D_{IW} = D_{(I_u^-, \mc{L})}(\mc{F}\ell), \hspace{5mm} \text{``Iwahori-Whittaker sheaves''}. 
\]
The irreducible objects in $P_{IW}$ are $\IC_\chi^{\mc{L}_\chi}$ for $x \in {^fW}$. Last week we constructed a functor 
\[
\widetilde{F}:D^b(\Coh^{G^\vee}(\widetilde{\mc{N}})) \rightarrow D_I 
\]
induced by 
\begin{align*}
    \Coh^{G^\vee}_\mathrm{free}(\widetilde{\mc{N}}) &\rightarrow P_I \\
    V \otimes \mc{O} &\mapsto Z(\mathrm{Sat}(V))  \hspace{5mm} \text{central sheaf}\\
    \mc{O}(\lambda) &\mapsto J_\lambda \hspace{18mm} \text{Wakimoto sheaf} 
\end{align*}
Let 
\[
{^f P}_I = P_I / \langle \IC_x \mid x \not \in {^fW}\rangle. 
\]
The main theorem is the following. 
\begin{theorem}
$\widetilde{F}$ induces an equivalence 
\[
D^b(\Coh^{G^\vee}(\widetilde{N})) \xrightarrow{\sim} D^b({^fP_I}). 
\]
\end{theorem}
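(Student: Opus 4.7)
The plan is to pass to the Iwahori--Whittaker realisation. By the lemma at the end of the excerpt, the averaging functor $Av_{\mathcal{L}!}\colon D_I \to D_{IW}$ kills precisely the IC sheaves $\IC_x$ with $x\notin {}^fW$ (these are the sheaves generated by $(I_u^-,\mathcal{L})$-non-equivariant orbits, where the character does not integrate against the trivial character coming from $B$-equivariance), while sending $\IC_x$ for $x\in {}^fW$ to $\IC_x^{\mathcal{L}}$. Hence $Av_{\mathcal{L}!}$ descends to an equivalence $D^b({}^fP_I)\xrightarrow{\sim} D_{IW}$, and the theorem is reduced to showing that the composite
\[
F_{IW}\colon D^b(\Coh^{G^\vee}(\widetilde{\mathcal N}))\xrightarrow{\widetilde{F}} D_I \xrightarrow{Av_{\mathcal{L}!}} D_{IW}
\]
is an equivalence of triangulated categories.

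First I would compute $F_{IW}$ on generators. On the coherent side, $D^b(\Coh^{G^\vee}(\widetilde{\mathcal N}))$ is generated, as a triangulated category, by the line bundles $\mathcal{O}(\lambda)$ for $\lambda\in X^\vee$ (this uses that $\widetilde{\mathcal N}=T^*\mathcal{B}^\vee$ is quasi-projective and that $\mathcal{B}^\vee$ is covered by Bruhat cells whose cotangent bundles are affine). The functor $\widetilde{F}$ sends $\mathcal{O}(\lambda)$ to the Wakimoto sheaf $J_\lambda$, and the essential step is to identify $Av_{\mathcal{L}!}(J_\lambda)$. Using that $Av_{\mathcal{L}!}$ is monoidal for the convolution action, together with $J_\lambda = \Delta_{t_{\lambda'}}\ast \nabla_{t_{-\lambda''}}$ and the fact that in $D_{IW}$ all $\nabla_{s}^{\mathcal{L}}$ for $s\in S_f$ become isomorphic to $\Delta_s^{\mathcal{L}}$ (the finite Hecke part collapses in the sign/Whittaker setting), one identifies $Av_{\mathcal{L}!}(J_\lambda)$ with the Iwahori--Whittaker standard object $\Delta_{t_\lambda}^{\mathcal{L}}\in P_{IW}$. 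So $F_{IW}$ sends line bundles to a family of objects which, by the highest-weight formalism on $P_{IW}$ (Section on affine stratifications by cells), generates $D_{IW}$.

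Next I would prove full faithfulness by matching Hom spaces between these generators. On the coherent side,
\[
\Hom^\bullet_{D^b(\Coh^{G^\vee}(\widetilde{\mathcal N}))}(\mathcal{O}(\lambda),\mathcal{O}(\mu)) = H^\bullet(\widetilde{\mathcal N},\mathcal{O}(\mu-\lambda))^{G^\vee},
\]
which by the projection $\widetilde{\mathcal N}\to\mathcal{B}^\vee$ and the Koszul resolution of the zero section (whose effect on line bundles we already computed in Lecture~\ref{lecture 22}) is concentrated in degree $0$ when $\mu-\lambda$ is dominant, vanishes when it is not comparable in the periodic order, and in general is controlled by a Demazure-style combinatorial formula. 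On the constructible side,
\[
\Hom^\bullet_{D_{IW}}(\Delta_{t_\lambda}^{\mathcal{L}},\Delta_{t_\mu}^{\mathcal{L}})
\]
is computed via adjunction between restriction to the relevant orbit and $Av_{\mathcal{L}!}$, and via Lemma~\ref{homs} on Wakimoto sheaves (which transports verbatim through the averaging functor, since $Av_{\mathcal{L}!}$ commutes with convolution). The match of these two calculations is exactly the Kazhdan--Lusztig isomorphism in $K$-theory categorified, and this is where I expect to need to invoke the construction of $\widetilde{F}$ carefully: the $\otimes$-derivation $N$ on central sheaves produces a $\mathfrak{g}^\vee/G^\vee$-action, and the Pl\"ucker arrows $b_\lambda$ plus the vanishing $b_\lambda\circ N_{V_\lambda}=0$ are exactly the data that realise the coherent Ext groups on $\widetilde{\mathcal N}$ as morphisms between Wakimotos in $P_{IW}$.

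Essential surjectivity would then follow from the fact that $D_{IW}$ is generated by $\{\Delta_w^{\mathcal{L}}\mid w\in {}^fW\}$, combined with the identification ${}^fW\leftrightarrow X^\vee$ via $w\mapsto t_\lambda w_f$ and induction on the length: $\Delta_{t_\lambda w_f}^{\mathcal{L}} = \Delta_{t_\lambda}^{\mathcal{L}}\ast \Delta_{w_f}^{\mathcal{L}}$ is obtained from $F_{IW}(\mathcal{O}(\lambda))$ by convolving with a finite object that lies in the image by the $W_f$-part of the Hecke action. The main obstacle, as above, will be the Ext-matching step: it requires proving that the natural maps $b_\lambda\colon J_\lambda\to Z(V_\lambda)$ (which encode the coherent algebra structure of $k[G^\vee/U^\vee]$) induce isomorphisms on the relevant graded pieces after passing to $D_{IW}$. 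This is the ``acyclicity'' condition lurking behind Theorem~\ref{technical lemma}(3), and proving it for all dominant $\lambda$ simultaneously is the technical heart of \cite{AB}.
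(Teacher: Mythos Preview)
Your overall strategy—pass to the Iwahori--Whittaker category $D_{IW}$ via averaging and then prove $F_{IW}$ is an equivalence—matches the paper. But two genuine gaps separate your outline from a proof.

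\textbf{Faithfulness is missing.} You propose to establish full faithfulness by computing $\Hom^\bullet(\mathcal{O}(\lambda),\mathcal{O}(\mu))$ on the coherent side and $\Hom^\bullet(\Delta_{t_\lambda}^{\mathcal{L}},\Delta_{t_\mu}^{\mathcal{L}})$ on the constructible side and ``matching'' them. But matching dimensions does not show that the map $F_{IW}$ induces an isomorphism on these Hom spaces; you need to know a priori that the induced map is injective (or surjective). The paper supplies this with a completely different argument: it passes to the quotient $P_I^{id}:=P_I/\langle \IC_x\mid x\neq id\rangle$, which is a tensor category with one simple object, and uses Tannakian formalism on $Z:\Rep G^\vee\to P_I^{id}$ together with the derivation $N$ to identify $P_I^{id}\simeq \Coh^{G^\vee}(\mathcal{N}_{\mathrm{reg}})$. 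Since restriction $\Coh^{G^\vee}(\widetilde{\mathcal{N}})\to\Coh^{G^\vee}(\mathcal{N}_{\mathrm{reg}})$ is faithful (open dense), so is $\widetilde{F}$. Nothing in your outline replaces this step.

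\textbf{Choice of generators for fullness.} Your Ext computation is between line bundles $\mathcal{O}(\lambda),\mathcal{O}(\mu)$, giving $H^\bullet(\widetilde{\mathcal{N}},\mathcal{O}(\mu-\lambda))^{G^\vee}$ with $\mu-\lambda$ arbitrary; this is hard to control. The paper instead checks fullness on $\Ext^i(V\otimes\mathcal{O},\mathcal{O}(\lambda))$ for $V\in\Rep G^\vee$ and $\lambda$ \emph{dominant}. Then Frobenius splitting of $T^*\mathcal{B}^\vee$ gives vanishing for $i\neq 0$ and $H^0=\Hom_{G^\vee}(V,V_\lambda)$, which can be matched on the constructible side. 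Using $V\otimes\mathcal{O}$ rather than pure line bundles is what makes the computation tractable—and note these objects involve the central sheaves $Z(\mathrm{Sat}(V))$, so the BGK construction is used essentially in the Ext check, not only in defining $\widetilde{F}$.

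A smaller point: you invert the paper's logic by assuming $Av_{\mathcal{L}}$ descends to an equivalence $D^b({}^fP_I)\simeq D_{IW}$ before proving the main theorem. The paper deduces ${}^fP_I\simeq P_{IW}$ \emph{from} the equivalence $F_{IW}$ (it is listed as a consequence of part 3), so you would need an independent argument for this.
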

The averaging functor 
\[
Av_{\mc{L}}: P_I \rightarrow D_{IW}
\]
factors over ${^fP}_I$. Now 
\[
\begin{tikzcd}
D^b(\Coh^{G^\vee}(\widetilde{\mc{N}})) \arrow[rrr, bend left, "F_{IW}"] \arrow[r, "\widetilde{F}"] & D^b({^fP}_I) \arrow[r, "Av_\chi"] &D^b(P_{IW}) \arrow[r, "\mathrm{real}"] & D_{IW}.
\end{tikzcd}
\]
\begin{theorem}
\begin{enumerate}
    \item ${^fP}_I \simeq P_{IW}$ (reasonably easy consequence of part 3. below)
    \item $D^b(P_{IW}) \simeq D_{IW}$ (same argument as Corollary \ref{real})
    \item $D^b(\Coh^{G^\vee}(\widetilde{\mc{N}}))\xrightarrow[F_{IW}]{\sim} D_{IW}$ (main difficulty)
\end{enumerate}
\end{theorem}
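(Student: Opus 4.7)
The plan is to prove Part 3 first (the hard equivalence), then deduce Part 2 by invoking the machinery of Lecture 20 on the Whittaker side, and finally derive Part 1 from Parts 2 and 3. The overall strategy for Part 3 mirrors the strategy for the Kazhdan--Lusztig isomorphism itself (Lectures 21--22): identify generators on both sides, compute the image of a collection of generators explicitly, match $\Ext$-groups, and conclude by d\'evissage.

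\textbf{Part 2 (realization is an equivalence for $P_{IW}$).} First, I would establish that $P_{IW}$ is a highest weight category. The simple objects are $\IC^{\mc{L}}_x$ for $x \in {^fW}$, with standards and costandards defined by
\[
\Delta^{IW}_x := j_{x!}\mc{L}_x[\ell(x)], \qquad \nabla^{IW}_x := j_{x*}\mc{L}_x[\ell(x)],
\]
where $\mc{L}_x$ is the pullback of the Artin--Schreier local system to the Iwahori orbit through $x$. Because the inclusion of each opposite Bruhat cell is affine, $j_{x!}$ and $j_{x*}$ are exact for the perverse $t$-structure, and the $\Ext^2$-vanishing $\Ext^2(\Delta^{IW}_x, \nabla^{IW}_y) = 0$ follows by the adjunction argument from Lecture 20 applied to $(I_u^-, \mc{L})$-equivariant sheaves. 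Once $P_{IW}$ is highest weight, the argument of Corollary \ref{real} applies verbatim: one checks that $\mathrm{real}$ is an isomorphism on $\Hom^\bullet$ between projectives and costandards, which gives the equivalence $D^b(P_{IW}) \simeq D_{IW}$.

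\textbf{Part 3 (coherent side matches Whittaker).} I would proceed in four steps. First, identify convenient generators: on the coherent side, $D^b(\Coh^{G^\vee}(\widetilde{\mc{N}}))$ is generated by the line bundles $\{\mc{O}(\lambda)\}_{\lambda \in X^\vee}$ (together with tensoring by $\Rep G^\vee$, which is already built into the central construction via Satake); on the Whittaker side, $D_{IW}$ is generated by the standards $\{\Delta^{IW}_{t_\lambda}\}_{\lambda \in X^\vee}$, since the bijection ${^f W} \leftrightarrow X^\vee$ at level of translations controls the $(I_u^-, \mc{L})$-orbit structure. Second, compute the images: by construction $\widetilde{F}(\mc{O}(\lambda)) = J_\lambda$, so I would show
\[
F_{IW}(\mc{O}(\lambda)) = \mathrm{real} \circ Av_{\mc{L}} \circ \widetilde{F}(\mc{O}(\lambda)) \simeq \Delta^{IW}_{t_\lambda}.
\]
For $\lambda$ dominant this is direct because $J_\lambda = \Delta_{t_\lambda}$ and averaging of a standard is a Whittaker standard; for general $\lambda$ one uses monoidality, $Av_{\mc{L}}(J_{\lambda} * \mc{G}) \simeq Av_{\mc{L}}(J_\lambda) * \mc{G}$, together with the fact that $Av_{\mc{L}}(\nabla_{t_{-\mu}}) = 0$ for $\mu$ strictly dominant unless one works in the correct ${^f W}$-component (mirroring the fun calculation with $j_!\mc{L}_\chi \simeq j_*\mc{L}_\chi$ from Lecture 32).

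Third, match $\Ext$-groups: on the coherent side, by homotopy invariance and Borel--Weil--Bott,
\[
\Ext^\bullet_{\Coh^{G^\vee}(\widetilde{\mc{N}})}(\mc{O}(\lambda), \mc{O}(\mu)) \simeq \Hom^\bullet_{G^\vee}\!\bigl(k,\, R\Gamma(\widetilde{\mc{N}}, \mc{O}(\mu-\lambda))\bigr),
\]
while on the Whittaker side the adjunction and affineness computations of Part 2 give a parallel formula for $\Ext^\bullet(\Delta^{IW}_{t_\lambda}, \Delta^{IW}_{t_\mu})$. Matching these uses the geometric Satake equivalence together with the fact that $R\Gamma(\widetilde{\mc{N}}, \mc{O}(\nu))$ carries a $G^\vee$-structure governed precisely by the combinatorics of Wakimoto filtrations. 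Fourth, full faithfulness on generators propagates to all of $D^b(\Coh^{G^\vee}(\widetilde{\mc{N}}))$ by a standard d\'evissage on distinguished triangles (using effaceability as in Lemma \ref{exts agree}(c)); essential surjectivity follows since the images of $\{\mc{O}(\lambda)\}$ generate $D_{IW}$.

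\textbf{Part 1 (as a corollary).} Given Parts 2 and 3, the composition $F_{IW} = \mathrm{real} \circ Av_\chi \circ \widetilde{F}$ is an equivalence. Since $\widetilde{F}$ factors through $D^b({^fP_I})$ essentially surjectively (by construction, Wakimoto sheaves avoid the killed cells) and $\mathrm{real}$ is an equivalence on $D^b(P_{IW})$, the induced functor $Av_\chi : {^fP_I} \to P_{IW}$ must be an equivalence.

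\textbf{Main obstacle.} The hardest step is the $\Ext$-matching in Part 3. On the coherent side the higher cohomologies $H^{>0}(\widetilde{\mc{N}}, \mc{O}(\nu))$ are delicate (they vanish in the dominant chamber by Kempf but not in general), and on the Whittaker side the corresponding higher $\Ext$'s require controlling how $Av_{\mc{L}}$ interacts with Wakimoto filtrations at non-regular parameters. Matching them essentially requires the full strength of the geometric Satake equivalence plus the braiding compatibilities of Gaitsgory's central functor (the appendix to \cite{Bez-tensor}). In practice this is where the real content of Arkhipov--Bezrukavnikov lives; everything else is formal by comparison.
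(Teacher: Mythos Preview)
Your Part 2 matches the paper's intended argument. For Part 3, however, your strategy diverges from the paper's in a way that makes the computation much harder than it needs to be, and you are missing the key structural idea that the paper uses to avoid exactly the obstacle you identify.

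The paper does not attempt to match $\Ext^\bullet(\mc{O}(\lambda), \mc{O}(\mu))$ for arbitrary $\lambda, \mu$. Instead it separates faithfulness and fullness and handles them by completely different mechanisms. For \emph{faithfulness}, the paper uses that $\mc{N}_{\mathrm{reg}}$ is open and dense in $\mc{N}$, so restriction of Homs from $\widetilde{\mc{N}}$ to $\mc{N}_{\mathrm{reg}}$ is injective. It then identifies $\Coh^{G^\vee}(\mc{N}_{\mathrm{reg}}) \simeq \Rep Z_{G^\vee}(N_0)$ and, via the Tannakian formalism applied to the central functor plus its derivation, matches this with the quotient category $P_I^{id} = P_I / \langle \IC_x \mid x \neq id \rangle$. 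The resulting commutative square forces $\widetilde{F}$ to be faithful without computing a single Ext group. This regular-nilpotent trick is the heart of the argument and does not appear in your proposal.

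For \emph{fullness}, the paper only checks
\[
\Ext^i_{\Coh^{G^\vee}(\widetilde{\mc{N}})}(V \otimes \mc{O}, \mc{O}(\lambda)) \xrightarrow{F_{IW}} \Ext^i(Av_{\mc{L}}(Z(V)), Av_{\mc{L}}(J_\lambda))
\]
for $\lambda$ \emph{dominant}. In that range $H^{>0}(\widetilde{\mc{N}}, \mc{O}(\lambda)) = 0$ by Frobenius splitting of $T^*\mc{B}$, and the degree-zero piece is $\Hom_{G^\vee}(V, V_\lambda)$; matching dimensions on the Whittaker side is then a finite check. Your plan to match Exts at all $\lambda, \mu$ runs directly into the non-dominant cohomology problem you flag as the main obstacle --- but the point is that once faithfulness is established independently, fullness only needs to be checked on a generating set where the cohomology is tame, and Beilinson's lemma propagates it. So the difficulty you identify is real, but the paper's route goes around it rather than through it.

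A smaller issue: your claim that $F_{IW}(\mc{O}(\lambda)) \simeq \Delta^{IW}_{t_\lambda}$ for all $\lambda$ needs care, since $J_\lambda$ is a $\Delta$ only for dominant $\lambda$ and a $\nabla$ for antidominant $\lambda$; the image under $Av_{\mc{L}}$ is not uniformly a Whittaker standard.
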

As usual, fully-faithfulness of $F_{IW}$ is the main issue. 

\vspace{3mm}
\noindent
{\bf Step 1: }$F_{IW}$ is faithful

\vspace{2mm}
A game with induction and restriction functors (using the $G/B$ version of the fun calculation at the start of this lecture) implies that $Av_\psi$ is faithful. Hence it is enough to show that $\widetilde{F}$ is faithful. The key idea is the following. We have
\[
\widetilde{\mc{N}} \rightarrow \mc{N} \subset \mf{g}^\vee, 
\]
and $\mc{N}_\mathrm{reg} \subset \mc{N}$ is open and dense. For $V, V' \in \Coh^{G^\vee}_\mathrm{free}(\widetilde{\mc{N}})$, 
\[
\Hom(V, V') \rightarrow \Hom(V|_{\mc{N}_\mathrm{reg}}, V'|_{\mc{N}_\mathrm{reg}}) 
\]
is injective. We have a diagram
\[
\begin{tikzcd}
\Rep Z_{G^\vee}(N_0) \simeq \Coh^{G^\vee}(\mc{N}_\mathrm{reg}) \arrow[rr] & & (*) \\ 
\Coh^{G^\vee}_\mathrm{free}(\widetilde{\mc{N}}) \arrow[u, "\mathrm{res.}"] \arrow[r] & P_I \arrow[r] & \mc{M}_\mathrm{asph} \arrow[u]
\end{tikzcd}
\]
What corresponds to $(*)$? 
\[
D_I \supset D^{\neq id}_I = \langle \IC_x \mid x \neq id \rangle_\Delta. 
\]
Using this, we can define a $\otimes$-category 
\[
D_I^{id} := D_I / D_I^{\neq id}.
\]
Moreover, if $P_I^{id}$ is the image of $P_I$ in $D_I^{id}$, then $P_I^{id}$ is a $\otimes$-category with one simple object. Hence we have 
\[
\Rep G^\vee \xrightarrow{Z} P_I \rightarrow P_I^{id}.
\]
Using the central functor + derivation, Tannakian formalism (+ a bit of sauce) gives us the following diagram 
\[
\begin{tikzcd}
\Rep Z_G(N_0) \arrow[r, "\sim"] &\Coh^{G^\vee}(\mc{N}_\mathrm{reg}) \arrow[r, "\sim"] &P_I^{id} \\
\Rep G^\vee \arrow[u] \arrow[r] & \Coh^{G^\vee}_\mathrm{free} (\widetilde{\mc{N}}) \arrow[u, "\mathrm{res}"] \arrow[r, "\widetilde{F}"] &{^fP}_I \arrow[u]
\end{tikzcd}
\]
Because $\mathrm{res}$ is faithful, $\widetilde{F}$ is faithful. 

\begin{remark}
The above is an instance of an important theme in Bezrukavnikov's work: from a two-sided cell $\underline{c}$, Lusztig constructed a semi-simple abelian tensor category $J_{\underline{c}}$. Bezrukavnikov observed that $Z$ provides a central functor $\Rep{G}^\vee \rightarrow J_{\underline{c}}$. Bezrukavnikov then uses $Z$ to identify $J_{\underline{c}}$. The theorem is the case when $\underline{c}=\{id\}$.
\end{remark}

\noindent
{\bf Step 2:} $F_{IW}$ is full:

\hspace{2mm}
This can be shown using Beilinson's lemma and a generation argument. It reduces to checking 
\[
\Ext^i_{\Coh^{G^\vee}(\widetilde{\mc{N}})}(V \otimes \mc{O}, \mc{O}(\lambda)) \xrightarrow{F_{IW}} \Ext^i(Av_\mc{L}(\mathrm{Sat}(V)), Av_\mc{L}(J_\lambda))
\]
for $V \in \Rep G^\vee$ and $\lambda$ dominant. The left side is
\[
\Hom_{G^\vee} (V, H^i(\widetilde{\mc{N}}, \mc{O}(\lambda))) = \begin{cases} 0 &\text{ if } i \neq 0 \text{ (Frobenius splitting of $T^*\mc{B}$}), \\
V_\lambda &\text{ if } i=0. \end{cases} 
\]
We can chek that the right side has the same dimension and we are done. 

\vspace{5mm}
\noindent
{\bf Potential moral of the proof:} It is ``obvious'' that $P_{IW}$ is highest weight. Hence one can do calculations more more easily here than in ${^fP}_I$. 
\pagebreak
\section{Lecture 33: Soergel bimodules, Soergel calculus, and BGK central sheaves}

We will begin today's lecture by describing a more combinatorial approach to the Hecke category using Soergel bimodules. 

\subsection{The Hecke category and Soergel bimodules}
We start with some motivation. Recall that given $G$ a (split) finite group of Lie type and $B$ a Borel subgroup, we can define a $\C$-algebra 
\[
H_q:=(\Fun_{B \times B}(G, \C), *).
\]
This is the first incarnation of the Hecke algebra. Iwahori showed that $H_q$ admits a presentation which is ``independent'' of $q$, in that it only depends on the Coxeter system $(W,S)$ determined by $B \subset G$. This leads us to define a $\Z[v^{\pm 1}]$-algebra using this presentation, which allows us study ``all $q$ at once''. This construction now makes sense for any Coxeter system $(W,S)$. Can we do a similar thing for the Hecke category?

More specifically, recall from Lecture \ref{lecture 24} that given an algebraic group $G/
\C$, we defined a monoidal\footnote{by the Decomposition Theorem} category 
\[
\mc{H}_{s.s.}:= \left\langle \begin{array}{c} \text{additive category of} \\ \text{semi-simple complexes} \end{array} \right\rangle \subset D_{B \times B}^b(G, \Q).
\]
From this category, we built our final incarnation of the Hecke category:
\[
\mc{H}:= K^b(\mc{H}_{s.s.}). 
\]

\vspace{3mm}
\noindent
{\bf Questions:} Can we present $\mc{H}$ or $\mc{H}_{s.s.}$ by generators and relations? Does $\mc{H}$ makes sense for any Coxeter system? We can also define $\mc{H}$ using $G / \overline{\F}_q$ and \'{e}tale sheaves, do we get equivalent categories after extending scalars?  
\vspace{3mm}

To answer these questions, we need to introduce the notion of {\bf equivariant cohomology}. Let $K$ be a group acting on a space $X$. Define 
\[
H^*_K(X):= H^*(X \times_K EK),
\]
where $EK$ is a classifying space for $K$ (i.e. a path-connected, contractible space with a free $K$-action). This construction is sometimes referred to as the ``Borel construction''. There is a natural map 
\[
X \times_K EK \rightarrow \mathrm{pt} \times_K EK,
\]
which gives $H^*_K(X)$ the structure of a graded $H^*_K(\mathrm{pt})$-module. Similarly, given a complex $\mc{F} \in D^b_K(X)$, $H_K^*(X, \mc{F})$ is a graded module over $H_K^*(\mathrm{pt})$. 

\vspace{3mm}
\noindent
{\bf Equivariant cohomology for tori:} Let $T=\C^\times$. Then a classifying space for $T$ is 
\[
ET = \C^\infty \backslash \{ \infty\} = \lim_{\rightarrow} \C^n \backslash \{0\}.
\]
Hence 
\[
H^*_{\C^\times}(\mathrm{pt}) = H^*(\mathrm{pt} \times_{\C^\times} \C^\infty\backslash \{0\}) = H^*(\PP^\infty) = \C[x], 
\]
where $x$ is in degree $2$. Similarly, if $T \simeq (\C^\times)^n$, then $ET=(\C^\infty \backslash \{0\})^n$, and 
\begin{equation}
    \label{polynomial ring}
H^*_T(\mathrm{pt}) \simeq H^*((\PP^\infty)^n) = \C[x_1, \ldots, x_n]. 
\end{equation}

Note that the isomorphism (\ref{polynomial ring}) depends on the isomorphism $T \simeq (\C^\times)^n$. We can give a more canonical description as follows. Let $\chi:T \rightarrow \C^\times$ be a character. We have an associated $\C^\times$-bundle 
\[
L_\chi:= \C^\times \times_{T, \chi} ET \rightarrow \mathrm{pt} \times_T ET. 
\]
Borel showed that there is an isomorphism 
\begin{align*}
    X &\xrightarrow{\sim} H^2_T(\mathrm{pt}, \Z) \\
    x &\mapsto c_1(L_\chi),
\end{align*}
where $X$ is the character lattice of $T$ and $c_1(\L_\chi)$ is the first Chern class of $L_\chi$. This leads to a canonical isomorphism 
\[
S^\bullet(X) \simeq H^*_T(\mathrm{pt}, \C),
\]
where $S^\bullet(X)$ is the symmetric algebra of the character lattice. This is the {\bf Borel isomorphism}. 

\begin{remark}
Over $\C$, this can be further simplified. Given $\chi:T \rightarrow \C^\times$, we can differentiate to get a linear functional $d\chi: \Lie{T}\rightarrow \C$. Then the Borel isomorphism becomes
\[
\mc{O}(\Lie T) \simeq H^*_T(\mathrm{pt}, \C). 
\]
This is the version which we will use today. 
\end{remark}

\vspace{3mm}
\noindent
{\bf Useful trick:} If $K \subset G$ is a subgroup, then any model of $EG$ is also a model of $EK$ via restriction. Examples:
\begin{enumerate}
    \item $T \subset B$: The map 
    \[
    T\backslash EB \rightarrow B \backslash EB
    \]
    is a $B/T \simeq \C^n$-bundle, so $H_B^*(\mathrm{pt}) \simeq H^*_T(\mathrm{pt})$. 
    \item $T \subset G$: The map 
    \[
    T \backslash EG \xrightarrow{p} G \backslash EG
    \]
    induces a map $H_G^*(\mathrm{pt}) \xrightarrow{p^*} H_T^*(\mathrm{pt})$. 
    \begin{theorem} $p^*$ is injective, and the image is in $(\mc{O}(\Lie T))^W$.
    \end{theorem}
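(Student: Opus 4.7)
The statement has two parts, and I would prove them independently. For the invariance under $W$, I would exploit the fact that conjugation by elements of $N_G(T)$ becomes trivial on $BG$. Concretely, for any $n \in N_G(T)$, define $\phi_n : EG \to EG$ by $x \mapsto nx$. Since $nTn^{-1} = T$, this descends to a map $\overline{\phi}_n : T \backslash EG \to T \backslash EG$, and since left multiplication by $n \in G$ is swallowed by the $G$-quotient, it descends on $G \backslash EG$ to the identity. Thus the square
\[
\begin{tikzcd}
T \backslash EG \arrow[r, "\overline{\phi}_n"] \arrow[d, "p"'] & T \backslash EG \arrow[d, "p"] \\
G \backslash EG \arrow[r, "\mathrm{id}"] & G \backslash EG
\end{tikzcd}
\]
commutes, so $\overline{\phi}_n^* \circ p^* = p^*$. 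The map $\overline{\phi}_n^*$ depends only on the coset $[n] \in W$ (since elements of $T$ act trivially on $T \backslash EG$), and one checks that $\overline{\phi}_n^*$ agrees with the Weyl group action on $H^*_T(\mathrm{pt}) \simeq \mathcal{O}(\Lie T)$ induced from the natural action of $N_G(T)$ on $T$. Hence the image of $p^*$ lies in $\mathcal{O}(\Lie T)^W$.

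For injectivity, I would use the Leray--Serre spectral sequence of the fibration
\[
G/T \longrightarrow T \backslash EG \xrightarrow{\ p\ } G \backslash EG,
\]
which identifies the fiber of $p$ with the flag-like variety $G/T$. Since $G$ is connected, $BG$ is simply connected, so the local system on $BG$ with fiber $H^*(G/T)$ is trivial, and $E_2^{p,q} = H^p(BG) \otimes H^q(G/T)$. Now both $H^*(BG)$ and $H^*(G/T)$ are concentrated in even degrees: the first by the Borel isomorphism $H^*_G(\mathrm{pt}) \simeq \mathcal{O}(\Lie T)^W$ (or, if one prefers, by reducing to the torus case), and the second by the Bruhat cell decomposition of $G/T$, which has only even-dimensional cells. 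Every differential $d_r$ shifts total degree by an odd amount, hence lands in a group that is automatically zero by the parity constraint. So the spectral sequence degenerates at $E_2$, and the edge morphism $E_2^{*,0} = H^*(BG) \hookrightarrow H^*(BT)$, which is precisely $p^*$, is injective.

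The main obstacle is really just verification: one has to be comfortable identifying $\overline{\phi}_n^*$ on $H^2_T(\mathrm{pt}) \simeq X^*(T)$ with the expected $W$-action $\chi \mapsto \chi \circ \mathrm{Ad}(n)^{-1}$ on characters, and one has to justify carefully that the fibration $G/T \to BT \to BG$ makes sense (for instance by realizing $BT$ as $EG \times_G (G/T)$ via the associated bundle construction for the principal $G$-bundle $EG \to BG$). Both points are standard and once set up properly the parity argument closes the proof cleanly. I would also note that this argument only gives the inclusion $\mathrm{im}(p^*) \subseteq \mathcal{O}(\Lie T)^W$; the reverse inclusion (i.e.\ that $p^*$ is an isomorphism onto the $W$-invariants) is the harder half of Borel's theorem and requires either a dimension/Poincar\'e series computation using the Chevalley--Shephard--Todd theorem, or an explicit transfer argument, neither of which is needed for the statement as written.
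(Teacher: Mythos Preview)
The paper states this theorem without proof, so there is no argument to compare against. Your proposal is correct and follows the standard route: the $W$-invariance comes from the commuting square induced by left multiplication by $n \in N_G(T)$ on $EG$, and injectivity follows from degeneration of the Serre spectral sequence of the fibration $G/T \to BT \to BG$ by parity.

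One point worth tightening: you justify the even-degree concentration of $H^*(BG)$ by invoking the Borel isomorphism $H^*_G(\mathrm{pt}) \simeq \mathcal{O}(\Lie T)^W$, which is strictly stronger than the statement you are proving, so this borders on assuming the conclusion. Your parenthetical ``reducing to the torus case'' is vague; better to say explicitly that $H^*(BG;\Q)$ is concentrated in even degrees by Hopf's theorem (via the fibration $G \to EG \to BG$, using that $H^*(G;\Q)$ is an exterior algebra on odd generators), or alternatively to bypass the parity argument entirely and use that $\chi(G/T) = |W| \neq 0$ in $\Q$ so that a Becker--Gottlieb-type transfer splits $p^*$. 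Either substitution makes the argument self-contained.
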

\end{enumerate}
By taking equivariant cohomology, we can give an algebraic description of the Hecke category. 
\begin{align*}
    \mc{H}_{s.s.} \subset D^b_{B \times B}(G) \xrightarrow{H^*_{B \times B}} &H^*_{B \times B}(\mathrm{pt})\text{-graded modules}\\
    &= H^*_{T \times T}(\mathrm{pt})\text{-graded modules} \\
    &= R\text{-gbim},
\end{align*}
where $R = \mc{O}(\Lie T)$. The category $R$-gbim is a monoidal category via $- \otimes_R -$. In most situations, taking cohomology loses a lot of information, so the following theorem is especially remarkable. 
\begin{theorem}
(Soergel) $H^*_{B \times B}$ is fully faithful and monoidal on $\mc{H}_{s.s.}$. 
\end{theorem}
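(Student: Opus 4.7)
The plan is to establish monoidality first (which is essentially formal, coming from proper base change and Künneth for equivariant cohomology), and then to grind out full faithfulness by reducing to a generating family of Bott--Samelson objects, where both sides admit explicit descriptions. Both categories involved are Krull--Schmidt, so all that matters is to compute Homs between generators.

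For monoidality, recall that convolution is defined by $\mathcal{F} * \mathcal{G} = m_*(\mathcal{F} \,\widetilde{\boxtimes}\, \mathcal{G})$, where $m : G \times^B G \to G$ is multiplication and $\,\widetilde{\boxtimes}\,$ is the twisted external product descending from $G \times G$ to $G \times^B G$. Applying $H^*_{B \times B}(-)$ and combining proper base change with the equivariant Künneth formula (the middle $B$ in $G \times^B G$ produces the tensor product over $R = H^*_B(\mathrm{pt})$), one obtains a natural isomorphism $H^*_{B \times B}(\mathcal{F} * \mathcal{G}) \cong H^*_{B \times B}(\mathcal{F}) \otimes_R H^*_{B \times B}(\mathcal{G})$, with the associator and unit constraints arising from the corresponding constraints for pushforward and external product. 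Next I would compute the fundamental example: for a simple reflection $s$, the object $\bm{IC}_s$ is the constant sheaf on $\overline{BsB}/B \cong \mathbb{P}^1$ shifted by one, and the $\mathbb{P}^1$-fibration $\pi_s : G/B \to G/P_s$ together with the Borel isomorphism identifies $H^*_{B \times B}(\bm{IC}_s) \cong R \otimes_{R^s} R$ (up to a grading shift). By monoidality, an iterated convolution $BS(\underline{w}) := \bm{IC}_{s_1} * \cdots * \bm{IC}_{s_k}$ goes to the Bott--Samelson bimodule $R \otimes_{R^{s_1}} R \otimes_{R^{s_2}} \cdots \otimes_{R^{s_k}} R$.

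The main obstacle is full faithfulness. Using the Decomposition Theorem applied to the semismall Bott--Samelson resolutions $BS(\underline{w}) \to G/B$, every $\bm{IC}_x$ appears as a summand of some convolution $BS(\underline{w})$, so the Bott--Samelson objects generate $\mc{H}_{s.s.}$ under direct summands and shifts; a Karoubi-envelope argument then reduces the statement to showing, for each pair of expressions $\underline{v}, \underline{w}$, that the map
\[
\mathrm{Hom}^\bullet_{\mc{H}_{s.s.}}(BS(\underline{v}), BS(\underline{w})) \longrightarrow \mathrm{Hom}^\bullet_{R\text{-gbim}}(H^*_{B \times B} BS(\underline{v}), H^*_{B \times B} BS(\underline{w}))
\]
is an isomorphism of graded $R$-modules. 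By the adjunctions $(f^*, f_*)$ for the Bott--Samelson maps and the self-biadjointness (up to shifts) of the $\bm{IC}_s$, the left-hand Hom space identifies with the equivariant cohomology of the fibered product $BS(\underline{v}) \times_{G/B} BS(\underline{w})$; this variety admits a cellular paving by affine spaces coming from the Bott--Samelson stratification, so its equivariant cohomology is equivariantly formal and free as an $R$-module, with a basis indexed by subexpressions. The right-hand Hom space between Bott--Samelson bimodules is also free over $R$ and admits a combinatorially identical basis (Libedinsky's ``light leaves'' or Soergel's standard filtration give this).

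To finish, one matches the two bases. The clean way is to use the $T$-equivariant localization theorem: both sides inject into their localizations at the generic point of $\mathrm{Spec}\, R$, and both localizations decompose as sums over $T$-fixed points / sequences of Weyl group elements, where the matching becomes a direct combinatorial verification. Full faithfulness between Bott--Samelson objects then propagates to all of $\mc{H}_{s.s.}$ by passing to direct summands (Krull--Schmidt), which completes the proof. The genuinely hard step is the localization-plus-rank-matching in the last paragraph; everything else is either formal or a direct computation, but this step is where the geometry of Bott--Samelson varieties (affine paving, equivariant formality) and the combinatorics of Soergel bimodules must be brought into precise alignment.
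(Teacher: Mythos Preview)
The paper does not actually prove this theorem; it is stated as Soergel's result and then used as input for the subsequent discussion of Soergel bimodules. There is therefore no paper-side proof to compare your proposal against.

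That said, your outline is a recognisable and essentially correct route to the result. The monoidality step is indeed formal once one has equivariant K\"unneth and proper base change in the Bernstein--Lunts framework. The reduction of full faithfulness to Bott--Samelson objects via the Decomposition Theorem and Karoubi closure is standard and correct, as is the identification of the geometric Hom space with the equivariant cohomology of a fibered Bott--Samelson variety (which is indeed affine-paved and hence equivariantly formal).

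The one place your sketch is thin is the final matching step. Rather than literally lining up light-leaves bases with a geometric basis, the cleaner argument runs as follows: both Hom spaces are free graded $R$-modules, and $H^*_{B\times B}$ gives a graded $R$-module map between them. After inverting the roots (i.e.\ passing to the fraction field of $R$), both sides decompose via equivariant localization as direct sums over the same index set of $T$-fixed points, and the map is visibly an isomorphism there. Freeness then forces the original map to be an isomorphism. This is what you intend, but one must check that the functor $H^*_{B\times B}$ is compatible with restriction to fixed points on both sides; this is where the parity/formality of the objects in $\mc{H}_{s.s.}$ is really used, and it is worth saying so explicitly rather than leaving it implicit in ``localization-plus-rank-matching''. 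With that caveat, there is no genuine gap.
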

Hence, 
\[
\mc{H}_{s.s.} \hookrightarrow R\text{-gbim}. 
\]
How can we describe the image? To start, observe that $\mc{H}_{s.s.}$ is generated by $IC_s= \C_{\overline{PsP}}[1]$ for $s \in S$ under $*, \oplus, \ominus, [1]$; i.e. 
\[
\mc{H}_{s.s.} = \langle IC_s \mid s \in S \rangle_{*, [\Z], \oplus, \ominus}
\]
This gives us a way of describing the image:
\[
\mathrm{Sbim}:= \langle B_s \mid s \in S \rangle_{*, \oplus, (1), \ominus},
\]
where $B_s = H^*_{B \times B}(IC_s) = R \otimes_{R^s} R(1)$. (Exercise: prove this!) This is the category of {\bf Soergel bimodules}. Unpacking definitions, we obtain equivalences of monoidal categories: 
\[
\begin{tikzcd}
 \mc{H}_{s.s.} \arrow[d] \arrow[r, "\sim"', "H^*_{B \times B}"] &\mathrm{Sbim} \arrow[d] \\
 \mc{H} = K^b(\mc{H}_{s.s.}) \arrow[r, "\sim"', "H_{B \times B}"] &K^b(\mathrm{Sbim})
\end{tikzcd}
\]

\noindent
{\bf Remarkable consequence:} To define $\mc{H}_{s.s.}$, we need $G$, $B$, hundreds of pages of sheaf theory, the decomposition theorem\footnote{can be avoided via the theory of parity sheaves}, and more. To define Sbim, all we need is $W \circlearrowright \Lie T$ and a bit of algebra!

\begin{remark}
\begin{enumerate}
    \item This can be seen as a first step toward freeing $\mc{H}$ from its concrete realization as a category of sheaves. By replacing $\Lie T$ with a reflection representation $\mf{h}$ of $W$, the definition makes sense for {\em any} Coxeter group. This led to the proof by Soergel and Elias-W. that the Kazhdan--Lusztig polynomials have non-negative coefficients. 
    \item One of the main goals of this course is to approach Bezrukavnikov's equivalence:
    \[
    \mc{H}^{\mathrm{affine}} \simeq \text{``coherent sheaves on Steinberg''}. 
    \]
    Soergel's theorem tells us that the Hecke category always has a coherent description in terms of Soergel bimodules. One way of understanding Bezrukavnikov's theorem is that ``Soergel bimodules have another name'' in the affine setting. 
    \item One can see Soergel bimodules as ``half way'' towards a generators and relations description. A generators and relations description was obtained by Elias-W., following Elias-Khovanov, Libedinsky, and Elias. See \cite{book}. An interesting recent take has been given by Abe in \cite{Abe}.
\end{enumerate}
\end{remark}

\subsection{BGK central sheaves for $G=\SL_2$}

{\bf Goal:} Describe $Z: \Rep \SL_2 \rightarrow \mc{H}_{ext}$ by ``generators and relations''. 

\vspace{3mm}
We start with the left hand side, $\Rep \SL_2$, which we can describe via the {\bf Temperly-Lieb category} $TL$: 
\begin{itemize}
    \item Objects: $\Z_{\geq 0}$ 
    \item Morphisms: crossingless matchings, up to isotopy, $\vcenter{\hbox{\includegraphics[scale=0.15]{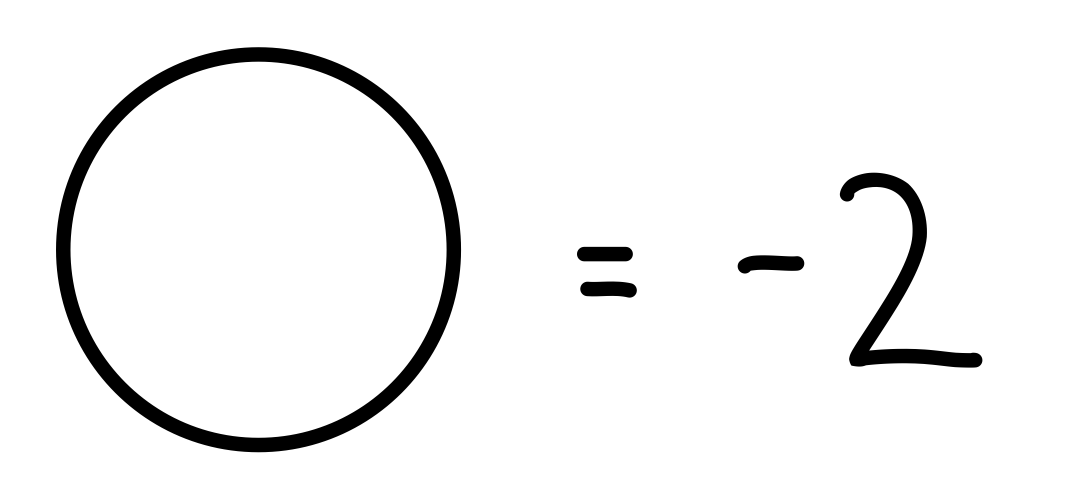}}}$
\end{itemize}
Let $V=\C^2$ be the natural representation of $\SL_2$. 
\begin{theorem}
$\langle V^{\otimes n} \rangle \simeq TL$, hence $(TL)_{\oplus, \ominus} \simeq \Rep \SL_2$. 
\end{theorem}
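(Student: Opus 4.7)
The plan is to construct an explicit functor $F : TL \to \langle V^{\otimes n}\rangle$ and prove it is an equivalence of categories; the second statement then follows by idempotent-completion combined with semisimplicity of $\Rep \SL_2$. I would set $F(n) := V^{\otimes n}$ on objects. The Temperley--Lieb category is generated, as a monoidal category, by the cap $\cap : 2 \to 0$ and the cup $\cup : 0 \to 2$, subject to the isotopy (``zig-zag'') relations and the closed-loop relation $\cap \circ \cup = -2 \cdot \mathrm{id}_0$. I would send $\cap$ to the $\SL_2$-invariant symplectic form $\omega : V \otimes V \to \mathbb{C}$ and $\cup$ to its coevaluation $1 \mapsto e_1 \otimes e_2 - e_2 \otimes e_1$. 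Both are manifestly $\SL_2$-equivariant, so $F$ lands in $\langle V^{\otimes n}\rangle$. Verifying the defining relations is routine: the zig-zag identities follow from non-degeneracy of $\omega$, and a direct calculation (up to the usual sign convention on $\cup$) yields the loop value $-2$.

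Essential surjectivity is tautological since, by definition, every object of $\langle V^{\otimes n}\rangle$ is a tensor power of $V$. The substantial work is in fullness and faithfulness. For fullness I need to show every $\SL_2$-equivariant map $V^{\otimes n} \to V^{\otimes m}$ is a $\mathbb{C}$-linear combination of diagrams. Via the standard duality $\Hom_{\SL_2}(V^{\otimes n}, V^{\otimes m}) \simeq (V^{\otimes (n+m)})^{\SL_2}$, this reduces to the First Fundamental Theorem of invariant theory for $\SL_2 = \mathrm{Sp}_2$: the invariants in $V^{\otimes N}$ are spanned by iterated contractions with the symplectic form, which is precisely what compositions of cups and caps produce. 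For faithfulness, the key input is a dimension count: the number of crossingless matchings between $n$ bottom points and $m$ top points is a classical Catalan-type quantity, while $\dim \Hom_{\SL_2}(V^{\otimes n}, V^{\otimes m})$ may be computed recursively using Clebsch--Gordan ($V \otimes V_k \simeq V_{k-1} \oplus V_{k+1}$, with $V_{-1} := 0$). These two counts agree, so once fullness is established, faithfulness is forced.

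For the final statement $(TL)_{\oplus,\ominus} \simeq \Rep \SL_2$, I would invoke semisimplicity of $\Rep \SL_2$ in characteristic zero together with the observation that every irreducible $V_m$ appears as a summand of $V^{\otimes m}$ (visible from $V^{\otimes m} \simeq V_m \oplus V_{m-2} \oplus \cdots$, computed inductively). Hence the idempotent completion of the additive hull of $F(TL)$ contains every simple object, and so is all of $\Rep \SL_2$. The main obstacle in the whole argument is the First Fundamental Theorem input used in the fullness step; the rest is combinatorial bookkeeping, standard representation theory, and care with sign conventions.
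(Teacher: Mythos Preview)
The paper states this theorem as a known classical result without proof, so there is nothing to compare against. Your outline is the standard argument and is essentially correct.

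One small imprecision is worth flagging. You write that FFT for $\SL_2 = \mathrm{Sp}_2$ says the invariants in $V^{\otimes N}$ are spanned by iterated contractions with $\omega$, ``which is precisely what compositions of cups and caps produce.'' But FFT gives you \emph{all} perfect matchings of the $N$ strands, whereas compositions of cups and caps in $TL$ only produce the \emph{crossingless} ones. So FFT alone does not immediately give fullness of $F$; you need one more step showing that a crossed matching can be rewritten as a linear combination of crossingless ones. This follows from the skein-type identity in $\End_{\SL_2}(V\otimes V)$ (the flip is a linear combination of the identity and cup-cap, since $V\otimes V \simeq V_0 \oplus V_2$), or alternatively you can reorganise your logic: first use Clebsch--Gordan to compute $\dim \Hom_{\SL_2}(V^{\otimes n}, V^{\otimes m})$ and match it with the Catalan count, then prove either fullness or faithfulness directly, and the other follows by dimension. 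Either route closes the gap with no new ideas.
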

\begin{remark}
This description of $\Rep \SL_2$ is very useful. For example, an immediate consequence of the theorem is that giving a functor $\Rep \SL_2 \rightarrow (\mc{C}, \otimes)$ with $\mc{C}$ additive Karoubian is the same as giving a self-dual object $X \in \mc{C}$ of dimension $2$. 
\end{remark}

Giving a generators and relations description of the right hand side $\mc{H}_{ext}$ is more complicated. We'll start with the unextended case and describe $\mc{H}$, the Hecke category corresponding to the affine Weyl group $W=\langle s, t \rangle$. (Here $s$ is the finite simple reflection.) Fix a ``realisation'' of $W$; that is, fix the data of
\begin{itemize}
    \item a complex $\C$-vector space $\mf{h}$,
    \item vectors $\alpha_s, \alpha_t\ \in \mf{h}^*$, and 
    \item vectors $\alpha_s^\vee, \alpha_t^\vee \in \mf{h}$, such that the pairing between the $\alpha$ and the $\alpha^\vee$ is given by the Cartan matrix
    \[
    \bp 2 & -2 \\-2 & 2 \ep.
    \]
\end{itemize}
With such a realisation, $W$ acts via automorphisms of $\mf{h}$ according to the usual formulas. 

\vspace{2mm}
\noindent
Most important choices of realisations:
\begin{enumerate}
    \item $\mf{h} = \C$, $\alpha_s = - \alpha_t$, $\alpha_s^\vee = - \alpha_t^\vee$. (Realises canonical quotient $W \twoheadrightarrow W_f$.) 
    \item $\mf{h}_{\mathrm{loop}} = \C \alpha_s \oplus \C \delta$, $\alpha_t = - \alpha_s + \delta$, $\alpha_t^\vee(\delta) = \alpha_s(\delta) = 0$. $\vcenter{\hbox{\includegraphics[scale=0.15]{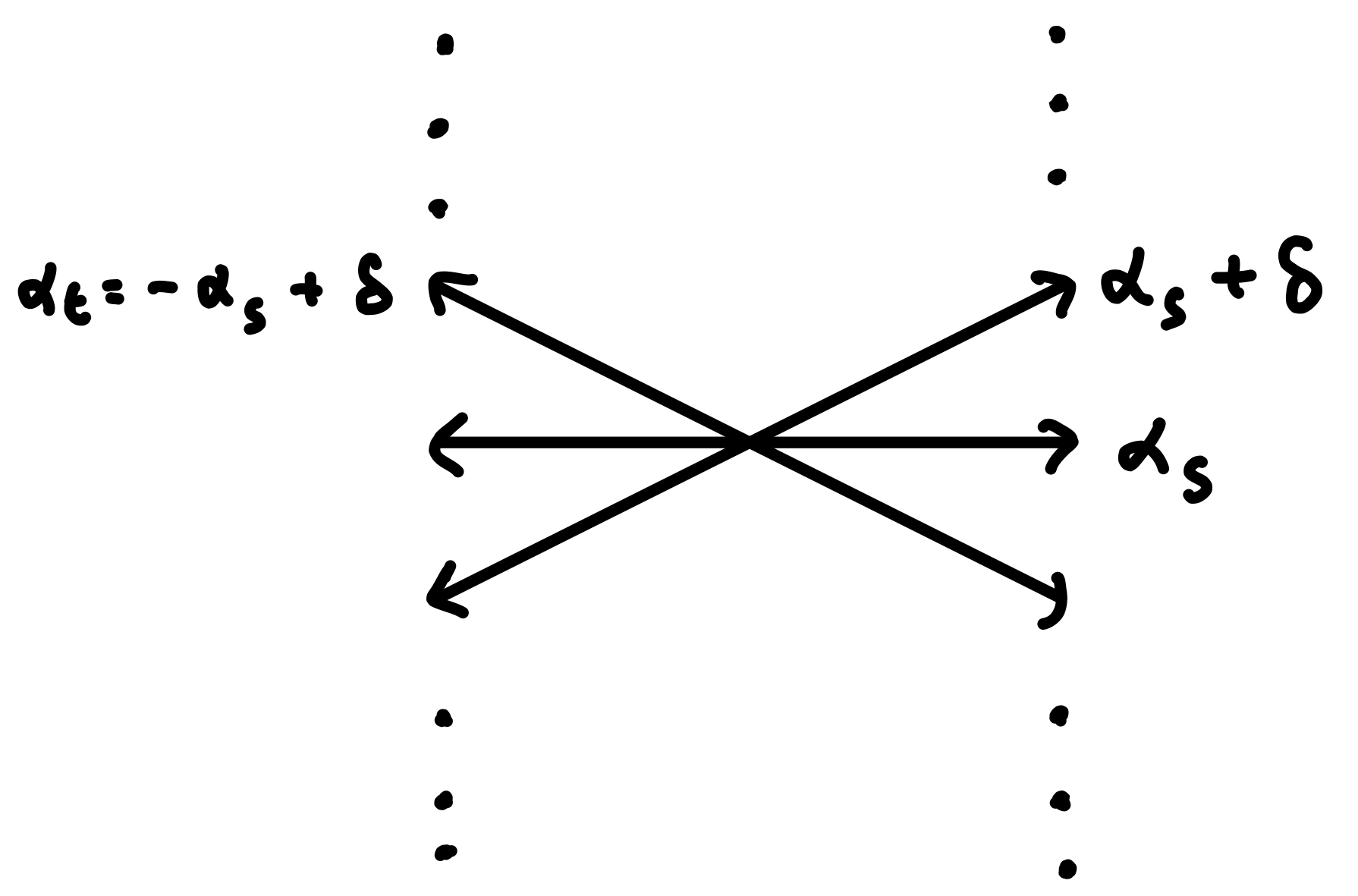}}}$
\end{enumerate}
\begin{remark}
In geometry, $\mf{h}$ arises from $I$, whereas $\mf{h}_\mathrm{loop}$ arises from $I \ltimes \C^\times$ (loop rotation). 
\end{remark}
Now we can describe a diagrammatic version of the Hecke category. Let 
\[
\mc{H}_{BS} = \text{ monoidal category generated by }B_{\color{blue} s} \text{ and } B_{\color{red} t}.
\]
Morphisms in $\mc{H}_{BS}$ are isotopy classes of diagrams generated by 
\[
\includegraphics[scale=0.3]{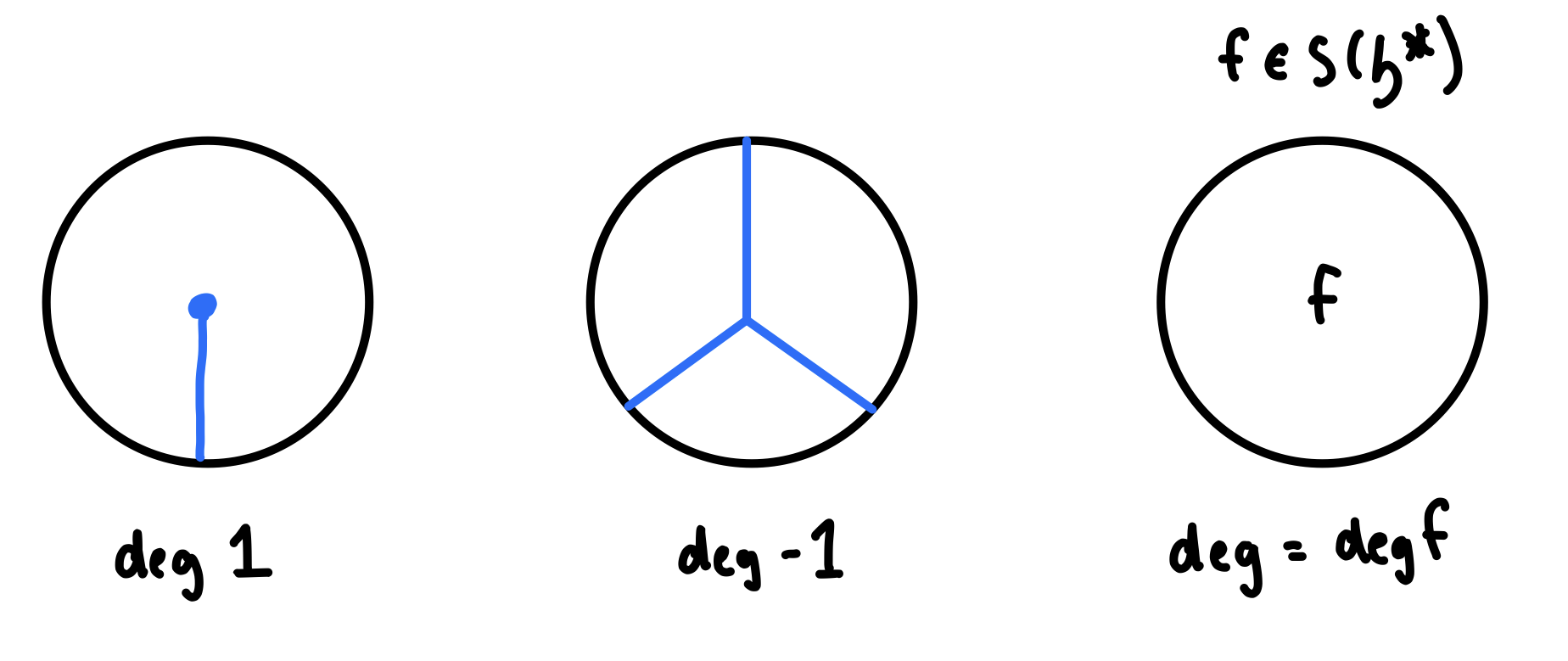},
\]
subject to a collection of relations. The most important of these relations are:
\[
\includegraphics[scale=0.35]{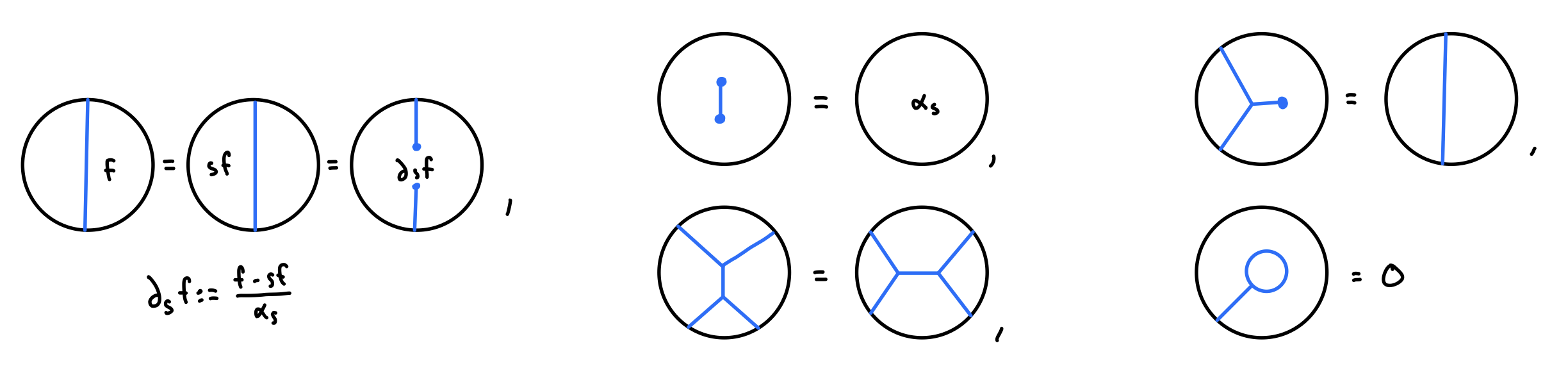}
\]
along with the same relations (and generators) obtained by swapping red $\leftrightarrow$ blue and $s \leftrightarrow t$. 

\begin{example}
The diagram
\[
\includegraphics[scale=0.3]{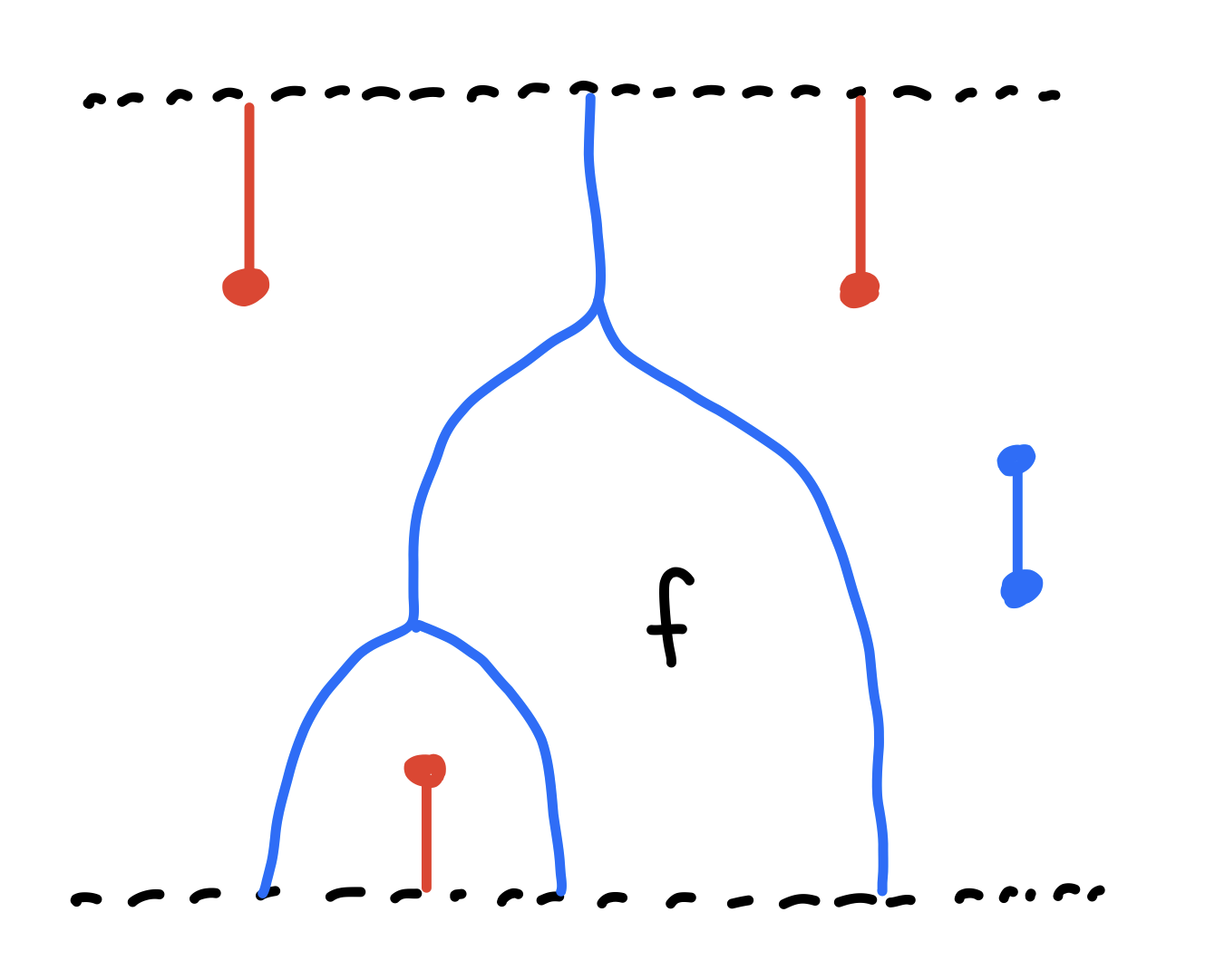}
\]
is a morphism $B_s B_t B_s B_s \rightarrow B_t B_s B_t$. 
\end{example}
From here, we can obtain a version of $\mc{H}_{s.s.}$:
\[
\mc{H}_{BS} \longrightarrow \begin{array}{c} \text{ formally add} \\
\text{shifts, keep} \\
\text{only degree} \\
\text{zero maps} \end{array} \longrightarrow \begin{array}{c} \text{add formal} \\ \text{sums and} \\ 
\text{take Karoubi} \\ 
\text{envelope} \end{array} =: \mc{H}_{s.s.}^\mathrm{diag}
\]
\begin{theorem}
(Elias-W.) Using $\mf{h}_\mathrm{loop}$, 
\[
\mc{H}_{s.s.}^\mathrm{diag} \simeq \mathrm{Sbim} \simeq \mc{H}_{s.s.}.
\]
\end{theorem}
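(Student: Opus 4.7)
The second equivalence $\mathrm{Sbim} \simeq \mc{H}_{s.s.}$ is Soergel's theorem stated earlier in the lecture, so the real content is to produce an equivalence $\mc{H}_{s.s.}^{\mathrm{diag}} \simeq \mathrm{Sbim}$. The plan is to first construct a monoidal functor $F : \mc{H}_{BS} \to \mathrm{Sbim}$ at the level of the diagrammatic Bott--Samelson category, then upgrade it to the Karoubian/graded completions. On objects, $F$ sends the generating object $B_s$ to the Soergel bimodule $R \otimes_{R^s} R(1)$, where $R = \mc{O}(\mf{h}_{\mathrm{loop}})$ and $R^s \subset R$ is the subring of $s$-invariants. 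On the four generating morphisms (dot, trivalent, 4-valent for $(st)$, 6-valent if present) one writes down the standard explicit bimodule maps: the dot $\mathbbm{1} \to B_s$ is $1 \mapsto \tfrac{1}{2}(\alpha_s \otimes 1 + 1 \otimes \alpha_s)$, the upper dot $B_s \to \mathbbm{1}$ is multiplication, the trivalent vertex $B_s \to B_s B_s$ is $1 \otimes 1 \mapsto 1 \otimes 1 \otimes 1$, and its reflection is the splitting $1 \otimes f \otimes 1 \mapsto \partial_s(f) \otimes 1$ using the Demazure operator. The $2m_{st}$-valent vertices are Libedinsky's explicit intertwiners.

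The first step is then to verify that the defining relations of $\mc{H}_{BS}$ hold between these bimodule maps. The one-color relations (needle, Frobenius associativity/unit, barbell) reduce to standard identities for $R$ over $R^s$; the two-color relations are an elementary but fiddly computation using the fact that in $\mf{h}_{\mathrm{loop}}$ the pair $(\alpha_s,\alpha_t)$ realises the affine rank-$2$ Cartan matrix, so that $R^{\langle s,t\rangle}$ is a polynomial ring in $\delta$ and one generator. This gives a well-defined monoidal functor $F$.

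The second and main step is fully faithfulness of $F$. The essential input is the \emph{double leaves basis}: for any two expressions $\underline{w} = (s_{i_1},\ldots,s_{i_k})$ and $\underline{w}' = (s_{j_1},\ldots,s_{j_\ell})$ one constructs, using ``light leaves'' combinatorics indexed by subexpressions of $\underline{w}$ and $\underline{w}'$ expressing elements of $W$, a family $\{ LL_{\underline{w},\underline{w}'}^{e,e'}\}$ of morphisms $B_{\underline{w}} \to B_{\underline{w}'}$ in $\mc{H}_{BS}$. One shows by hand in the diagrammatic category that these double leaves span $\Hom_{\mc{H}_{BS}}(B_{\underline{w}}, B_{\underline{w}'})$ as a right $R$-module (this is a direct consequence of the relations, and is the combinatorial heart of the proof). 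On the other side, Libedinsky's theorem asserts that the images $F(LL^{e,e'})$ form a free $R$-basis of $\Hom_{\mathrm{Sbim}}(B_{\underline{w}}, B_{\underline{w}'})$; this is essentially a version of Soergel's Hom formula, which in affine type uses Kostant's theorem for $R$ over $R^W$ applied to the pieces cut out by the Bruhat stratification. Fully faithfulness follows from linear-algebraic comparison of these two spanning sets. The hard part will be this step: verifying that the diagrammatic relations actually suffice to span, rather than merely generate, the Hom spaces. This is the technical core of \cite{book} and requires a careful induction on the length of $\underline{w}'$, rewriting arbitrary diagrams into light-leaves form.

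Finally, one passes to completions: $F$ extends uniquely to a functor on the additive hull, then to the grading shift completion, and then to the Karoubi envelope $\mc{H}_{s.s.}^{\mathrm{diag}}$, where it remains fully faithful. Essential surjectivity is immediate once one knows that both $\mc{H}_{s.s.}^{\mathrm{diag}}$ and $\mathrm{Sbim}$ are generated, as Karoubian graded additive monoidal categories, by the image of the $B_s$; thus $F$ is an equivalence. Composing with Soergel's equivalence $\mathrm{Sbim} \simeq \mc{H}_{s.s.}$ delivers the statement.
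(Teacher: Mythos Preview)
The paper does not give a proof of this theorem; it is simply stated as a result of Elias--Williamson and used as input. So there is no ``paper's own proof'' to compare against. Your outline is essentially the Elias--Williamson argument, and as a sketch of that proof it is on the right track: define the functor on generators, check relations, prove that double leaves span diagrammatic Homs and that their images (Libedinsky's light leaves) form an $R$-basis in $\mathrm{Sbim}$, then pass to Karoubi envelopes.

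Two technical corrections are worth flagging. First, in the specific setting of the paper ($W = \langle s,t\rangle$ is the infinite dihedral group, $m_{st} = \infty$), there are \emph{no} $2m_{st}$-valent vertices and no two-colour braid relations at all; the only generators are dots, trivalents, and polynomial boxes, and the only relations are the one-colour ones together with polynomial forcing. So the part of your sketch about ``4-valent for $(st)$'' and Libedinsky's intertwiners does not apply here, and in fact this case is substantially easier than the general one. Second, your invocation of ``Kostant's theorem for $R$ over $R^W$'' is not right: $W$ is infinite here, so $R^W$ is much smaller than you want (in $\mf{h}_{\mathrm{loop}}$ the invariants are just $\C[\delta]$). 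The actual input for linear independence of light leaves in $\mathrm{Sbim}$ is Soergel's theorem that Bott--Samelson bimodules admit $\Delta$-filtrations when the realisation is reflection faithful; this gives the lower bound on graded ranks of Hom spaces matching the upper bound from spanning. You should replace the Kostant reference with this.
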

As a corollary to this theorem, we obtain a rather explicit version of the Hecke category $\mc{H}$ where we can do calculations. 
\begin{corollary}
$K^b(\mc{H}_{s.s.}^\mathrm{diag}) \simeq \mc{H}$. 
\end{corollary}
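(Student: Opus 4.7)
The plan is to deduce this statement as an essentially formal consequence of the theorem immediately preceding it, namely the Elias--Williamson equivalence $\mathcal{H}_{s.s.}^\mathrm{diag} \simeq \mathrm{Sbim} \simeq \mathcal{H}_{s.s.}$ (using the loop realisation $\mathfrak{h}_\mathrm{loop}$), together with the definition $\mathcal{H} := K^b(\mathcal{H}_{s.s.})$ from the start of the lecture.

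First I would recall that the bounded homotopy category construction $K^b(-)$ is a 2-functor from the 2-category of additive categories to the 2-category of triangulated categories: given an additive functor $F: \mathcal{A} \to \mathcal{B}$, one obtains a triangulated functor $K^b(F): K^b(\mathcal{A}) \to K^b(\mathcal{B})$ by applying $F$ termwise to chain complexes and to chain maps. In particular, $K^b$ sends additive equivalences to triangulated equivalences (a quasi-inverse on $K^b$ is obtained by applying $K^b$ to a quasi-inverse on the underlying additive categories, and the required natural isomorphisms of functors lift termwise).

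Next I would verify that the Elias--Williamson equivalence is honestly an equivalence of additive categories, not merely of monoidal categories forgetting additivity: both $\mathcal{H}_{s.s.}^\mathrm{diag}$ and $\mathcal{H}_{s.s.}$ are constructed as Karoubi envelopes of additive hulls of categories closed under formal shifts, and the constructed equivalence (via Soergel bimodules) is additive on the nose. Applying $K^b$ therefore yields a triangulated equivalence
\[
K^b(\mathcal{H}_{s.s.}^\mathrm{diag}) \xrightarrow{\sim} K^b(\mathcal{H}_{s.s.}) = \mathcal{H},
\]
where the final equality is by definition.

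There is essentially no main obstacle at this stage, since all the real work has been absorbed into the Elias--Williamson theorem. The one point that deserves a brief remark in the write-up is the compatibility of the two shift functors: $\mathcal{H}_{s.s.}^\mathrm{diag}$ carries the internal grading shift $(1)$ (formal shifts added by hand), while $K^b(\mathcal{H}_{s.s.}^\mathrm{diag})$ carries in addition the homological shift $[1]$. Under the equivalence $K^b(\mathcal{H}_{s.s.}^\mathrm{diag}) \simeq \mathcal{H}$, these match the two shift functors $(1)$ and $[1]$ on $\mathcal{H}$ introduced in Lecture 25, because the Elias--Williamson equivalence intertwines the grading shift $(1)$ on $\mathcal{H}_{s.s.}^\mathrm{diag}$ with the corresponding shift on $\mathcal{H}_{s.s.}$. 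Thus the corollary is really the assertion that the diagrammatic presentation, after passing to bounded complexes, recovers the full monoidal triangulated Hecke category with both its shifts intact — the content being concentrated entirely in the preceding theorem.
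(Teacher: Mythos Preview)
Your proposal is correct and matches the paper's approach exactly: the paper presents this as an immediate corollary of the Elias--Williamson theorem $\mc{H}_{s.s.}^\mathrm{diag} \simeq \mc{H}_{s.s.}$ together with the definition $\mc{H} = K^b(\mc{H}_{s.s.})$, offering no further argument. If anything, you have supplied more detail than the paper (the remarks on $K^b$ as a 2-functor and on compatibility of the two shifts), which is entirely appropriate.
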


\vspace{3mm}
\noindent
{\bf How do we get $\mc{H}_\mathrm{ext}$?}  We can extend this constuction to $\mc{H}_\mathrm{ext}$ by adding
\begin{itemize}
    \item a generator $\vcenter{\hbox{\includegraphics[scale=0.1]{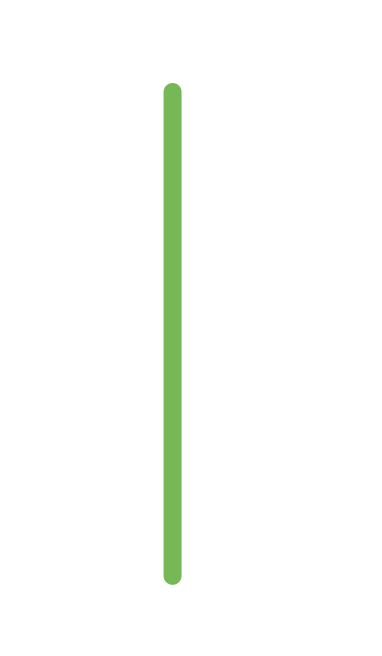}}}$ $\omega$ of $\Omega$, 
    \item morphisms $\vcenter{\hbox{\includegraphics[scale=0.15]{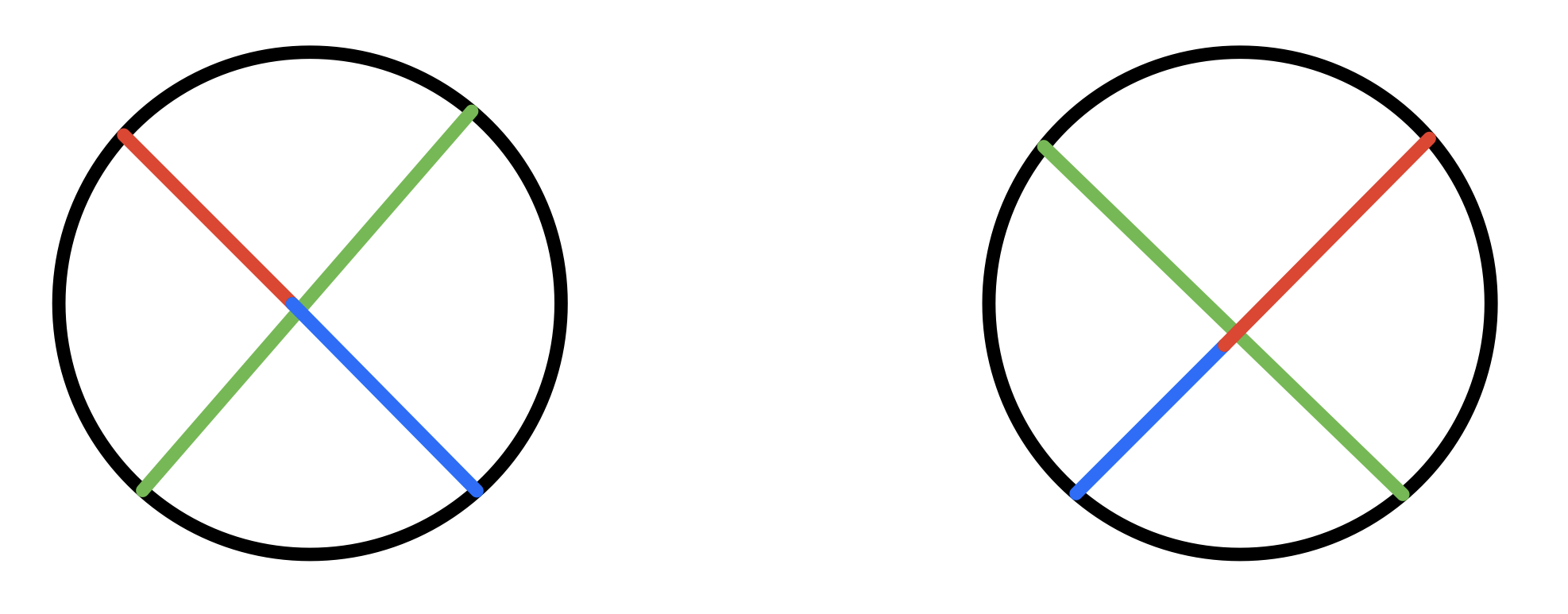}}}$, and 
    \item relations 
    \[
\includegraphics[scale=0.35]{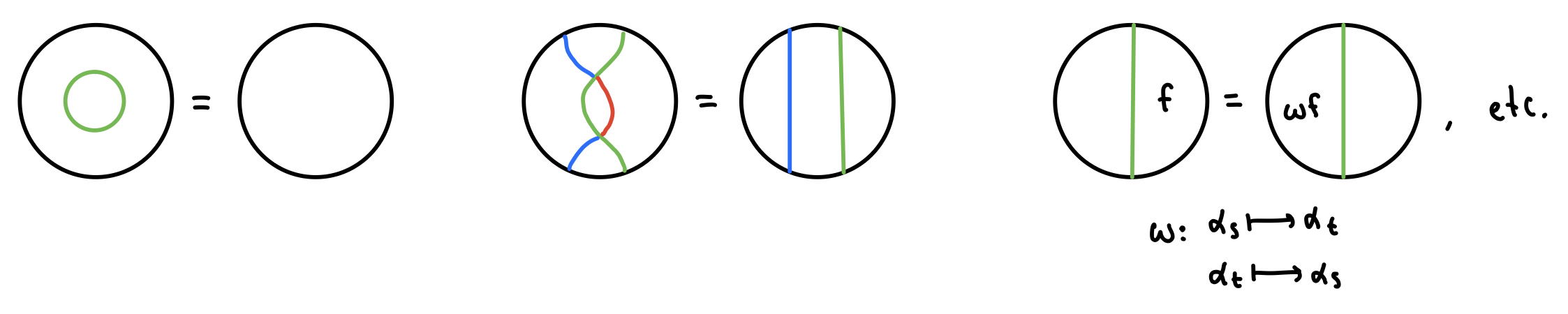}
\]
\end{itemize} 

Now we can translate what we've done in previous lectures to the language of diagrammatic Soergel bimodules. 

\subsection{Braid group categorification}
In Soergel bimodule world, 
\begin{align*}
    \Delta_s &\longmapsto \hspace{5mm} 0 \rightarrow \underline{\underline{B_s}} \xrightarrow{\includegraphics[scale=0.1]{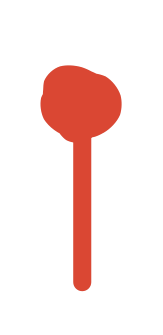}} R(1) \rightarrow 0 \\
    \Delta_w &\longmapsto \hspace{3mm} \text{ ``Rouquier complex'' with many intriguing properties}\\
       \Delta_\omega &\longmapsto \hspace{5mm} 0 \rightarrow \underline{\underline{\omega}} \rightarrow 0  \\
\end{align*}
Here a double underline indicates degree zero. Recall that for the affine simple reflection $t$, we have 
\[
\text{translation by } \varpi = t_\omega.
\]
Hence the Wakimoto sheaves
\[
J_\varpi = \underline{\underline{B_t \omega}} \xrightarrow{\includegraphics[scale=0.1]{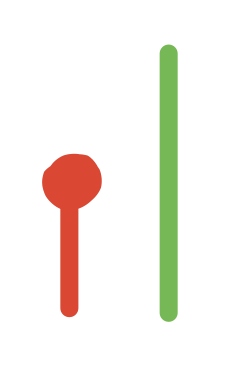}} \omega(1), \hspace{5mm} J_{-\varpi} = \omega(-1) \xrightarrow{\includegraphics[scale=0.1]{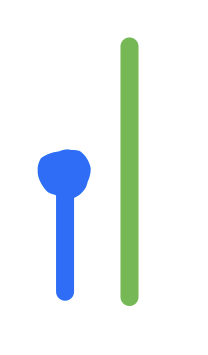}} \underline{\underline{B_s \omega}},
\]
and 
\[
J_{2\varpi} = J_\varpi^2 = \hspace{3mm} \underline{\underline{B_t \omega B_t \omega}} \rightarrow \begin{array}{c} B_t \omega \omega (1) \\ \oplus \\ \omega B_t \omega \end{array} \rightarrow \ind (2) \hspace{3mm} \simeq \hspace{3mm} \underline{\underline{B_t B_s}} \xrightarrow{\includegraphics[scale=0.1]{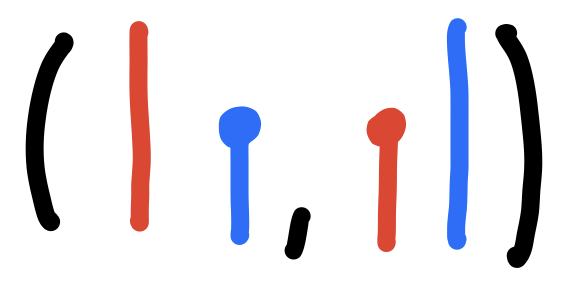}} \begin{array}{c} B_{ts} \omega (1) \\ \oplus \\ B_{st} \omega (1) \end{array} \xrightarrow{\includegraphics[scale=0.1]{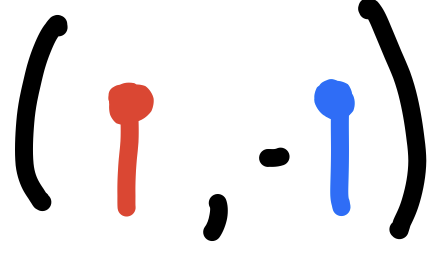}} \ind (2).
\]
\begin{exercise}
Show that 
\[
J_{3 \varpi} = \underline{\underline{B_{tst}}} \rightarrow \begin{array} {c} B_{ts} \omega (1) \\ \oplus \\ B_{st} \omega (1) \end{array} \rightarrow \begin{array}{c} B_s \omega (2) \\ \oplus \\ B_t \omega (2) \end{array} \rightarrow B_{id} \omega
\]
and 
\[
J_{-2 \varpi} = \ind(-2) \rightarrow \begin{array}{c} B_s(-1) \\ \oplus \\ B_t(-1) \end{array} \rightarrow \underline{\underline{B_s B_t}}. 
\]
\end{exercise}
Consider 
\[
\mc{F} = \ind(-1) \xrightarrow{\includegraphics[scale=0.1]{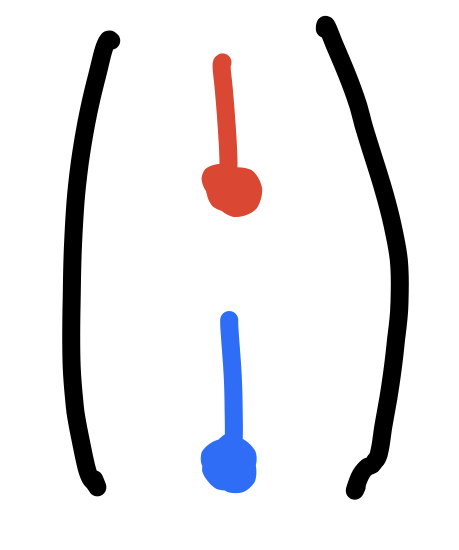}} \begin{array}{c} B_s \\ \oplus \\ B_t \end{array} \xrightarrow{\includegraphics[scale=0.1]{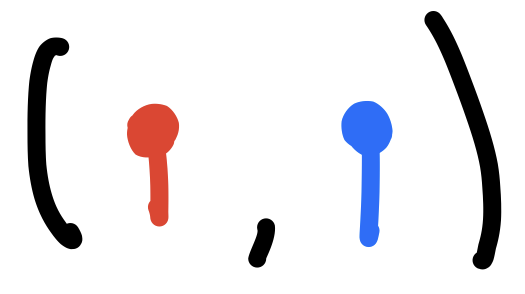}} \ind(1).
\]
Note that $\alpha^2 = \vcenter{\hbox{\includegraphics[scale=0.15]{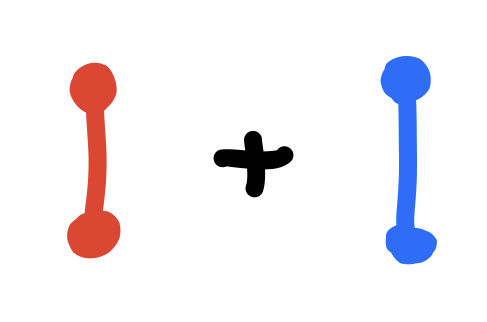}}} = \begin{cases} \delta & \text{ in } \mf{h}_\mathrm{loop}, \\ 
0 &\text{ in } \mf{h}. \end{cases}$ 

We have 
\[
\mc{F} B_s \hspace{3mm} \simeq \hspace{3mm} B_s(-1) \rightarrow \begin{array}{c} B_s B_s \\ \oplus \\ B_t B_s \end{array} \rightarrow B_s(1) \hspace{3mm} \simeq \hspace{3mm} B_s(-1) \rightarrow \begin{array}{c} B_s(-1) \\ \oplus \\ B_s(1) \\ \oplus \\ B_t B_s \end{array} \rightarrow B_s(1) \hspace{3mm} \simeq \hspace{3mm} B_t B_s. 
\]
Similarly, 
\[
\mc{F} B_t \simeq B_s B_t, \hspace{5mm} B_s \mc{F} \simeq B_s B_t, \text{ and } \hspace{5mm} B_t \mc{F} \simeq B_t B_s. 
\]
\begin{exercise}
Checek this. 
\end{exercise}

\begin{theorem}
(Elias) $z_1:= \mc{F} \omega$ has a canonical central structure. Moreover, there exist maps 
\[
\ind \rightarrow z_1^2, \hspace{5mm} z_1^2 \rightarrow \ind
\]
satisfying the Temperly-Lieb relations.
\end{theorem}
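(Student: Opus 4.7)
The structure of $z_1 = \mathcal{F}\omega$ as a central object should be extracted from the computations already displayed: we know $\mathcal{F} B_s \simeq B_t B_s$, $\mathcal{F} B_t \simeq B_s B_t$, $B_s \mathcal{F} \simeq B_s B_t$, $B_t \mathcal{F} \simeq B_t B_s$, together with $\omega B_s = B_t \omega$ and $\omega B_t = B_s \omega$. Combining these yields canonical isomorphisms $z_1 B_s \simeq B_s B_t \omega \simeq B_s z_1$ and similarly for $B_t$; for $\omega$ itself, $z_1 \omega = \mathcal{F}$ while $\omega z_1 = (\omega \mathcal{F} \omega)$, and the manifest red--blue symmetry of the complex $\mathcal{F}$ (interchanging the two middle summands) gives an isomorphism $\omega \mathcal{F} \omega \xrightarrow{\sim} \mathcal{F}$. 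First I would make each of these isomorphisms explicit as a chain map built from Soergel calculus generators (dots, trivalent vertices, six-valent $s$-$t$-$s$-morphisms, and the three-coloured crossings involving $\omega$), and then verify the hexagon axioms for a central structure by reducing, via monoidality and the fact that $B_s, B_t, \omega$ generate $\mathcal{H}_\mathrm{ext}^{\mathrm{diag}}$, to a finite list of diagrammatic identities; these should fall out of the defining relations of the diagrammatic category.

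Next I would construct the cup and cap $\ind \to z_1^2$ and $z_1^2 \to \ind$. Using $\omega \mathcal{F} \omega \simeq \mathcal{F}$ again, $z_1^2 \simeq \mathcal{F}^2$, so the plan is to expand $\mathcal{F}^2$ as the total complex of the tensor bicomplex $(\ind(-1) \to B_s \oplus B_t \to \ind(1))^{\otimes 2}$ and then identify $\ind$ as the extreme corner summands. Concretely, $\mathcal{F}^2$ has $\ind(-2)$ in cohomological degree $-2$ and $\ind(2)$ in degree $+2$; the inclusion of $\ind(-2)$ as a subcomplex (after shifting) and the projection onto $\ind(2)$ provide the only candidates for the structure maps, up to a choice of degree. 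The actual cup $\ind \to z_1^2$ should be constructed by combining the inclusion into the top corner with a chain of Soergel-calculus dots and trivalent vertices that realise the degeneration of the Wakimoto filtration of $z_1^2$ (which, by the exercise following the introduction of $J_{2\varpi}$ and $J_{-2\varpi}$, has $J_0 = \ind$ as a subquotient). The cap is obtained dually. Checking these are honest chain maps amounts to verifying that the boundaries of the relevant diagrammatic morphisms vanish up to the defining relations in $\mathcal{H}_{BS}^\mathrm{ext}$.

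The Temperley--Lieb relations then split into two verifications: the zig-zag identity (that $(\mathrm{cap} \otimes \id_{z_1}) \circ (\id_{z_1} \otimes \mathrm{cup})$ equals $\id_{z_1}$ up to homotopy, and similarly on the other side) and the circle relation (that $\mathrm{cap} \circ \mathrm{cup} \colon \ind \to \ind$ is multiplication by $-2$, in accordance with the quantum dimension of the natural representation of $\mathrm{SL}_2^\vee$ in the Temperley--Lieb category with the chosen sign convention). The zig-zag identity I would prove by reducing the composition to a morphism in $\mathrm{End}^0_{K^b}(z_1) \simeq \mathrm{End}^0(\mathcal{F}\omega)$, which is one-dimensional (since $\mathcal{F}$ is indecomposable with $\mathrm{End}^0 = \mathbb{C}$), and then checking that the composite is not null-homotopic by pairing with the cup. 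The circle relation is the genuine computation: unwinding both maps, one obtains a self-map $\ind \to \mathcal{F}^2 \to \ind$ which, after applying the explicit chain maps built in the previous step and using $\alpha_s \alpha_t = -\alpha_s^2 = -\delta \pmod{\text{lower terms}}$ in $\mathfrak{h}_\mathrm{loop}$, reduces to an explicit multiple of the identity; one chooses normalisations of cup and cap so that this multiple is $-2$.

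\textbf{Main obstacle.} The principal difficulty is the third paragraph: verifying the circle relation on the nose (not just up to an invertible scalar). This requires tracking signs and normalisations through the total complex $\mathcal{F}^2$, which has nine summands and many components in its differential, and then recognising the resulting endomorphism of $\ind$ as a specific element of $R$ in degree zero. The zig-zag identity, by contrast, is essentially formal once the maps are constructed, because $\mathrm{End}^0(z_1)$ is one-dimensional in the relevant degree; but pinning down the exact scalar in the circle relation is where the genuine content of the theorem lies, and it is closely tied to the choice of realisation $\mathfrak{h}_\mathrm{loop}$ rather than $\mathfrak{h}$, because in $\mathfrak{h}$ the class $\alpha^2$ vanishes and the computation degenerates.
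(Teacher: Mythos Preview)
The paper does not prove this theorem. It is stated as a result of Elias and immediately followed only by the consequence ``Hence we obtain $Z:\Rep\SL_2\to\mc{H}_{\mathrm{ext}}$'' and an illustrative example. There is no argument in the text to compare your proposal against; the lecture notes are quoting an external result (the relevant reference is Elias's work on Gaitsgory's central sheaves via Soergel bimodules).

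That said, your outline is broadly the shape such a proof takes. A couple of cautions. First, your reduction $z_1^2\simeq\mc{F}^2$ uses $\omega\mc{F}\omega\simeq\mc{F}$, which is fine, but then silently assumes $\omega^2=\ind$; you should check which extended affine Weyl group is in play (for $\PGL_2$ one has $\omega^2=\mathrm{id}$, for $\GL_2$ only that $\omega^2$ is central), since the theorem as stated has maps to and from $\ind$ rather than $\omega^2$. Second, the central structure is more than isomorphisms $z_1 B_s\simeq B_s z_1$ on objects: you need the hexagon compatibilities to hold not just for the generating objects but naturally with respect to all morphisms in $\mc{H}_{\mathrm{ext}}$, which in the diagrammatic category means checking compatibility with each generating morphism (dots, trivalents, and the $\omega$-crossings). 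This is the bulk of the work in Elias's argument and is where most of the diagrammatic relations get used; your sketch acknowledges this but perhaps underestimates the bookkeeping. Your identification of the circle computation as the crux is accurate.
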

Hence we obtain 
\[
Z: \Rep \SL_2 \rightarrow \mc{H}_{ext}. 
\]
\begin{example}
\[
\includegraphics[scale=0.75]{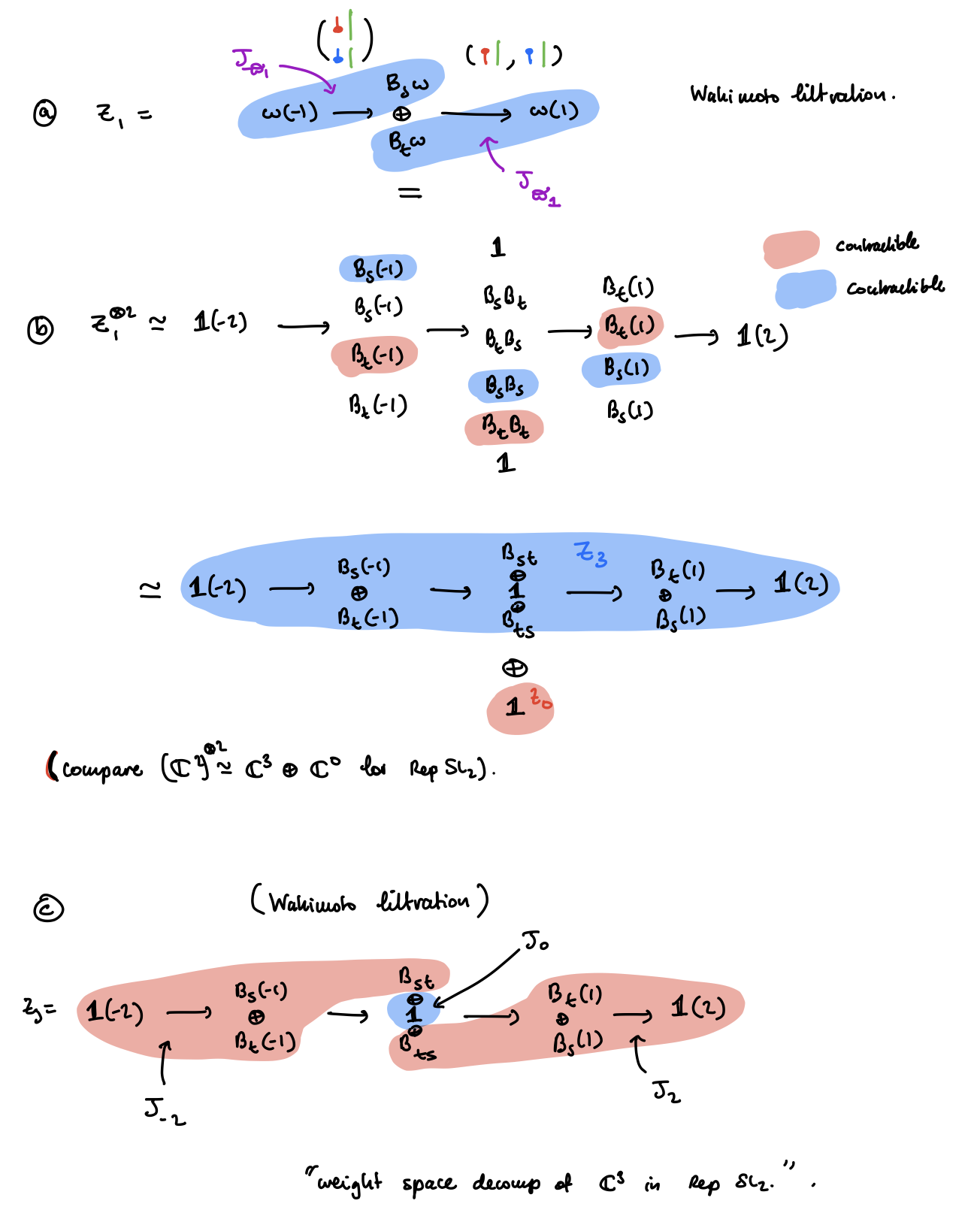}
\]
\end{example}
\pagebreak
\section{Lecture 34: Bezrukavnikov's equivalence}

This is the final lecture of the course! 

\subsection{Potted history of Koszul duality}
A more thorough description of the material in this section can be found in \cite[\S 3]{ICM}. Let $\mc{O}_0$ be the principal block of category $\mc{O}$ for $\mf{g}$, and $\Delta_x, \nabla_x,L_x, T_x, P_x, I_x$ the standard, costandard, simple, tilting, projective, and injective objects in $\mc{O}_0$ corresponding to $x \in W$. Let $\mc{O}_0^\vee$ be the principal block of category $\mc{O}$ for the Langlands dual Lie algebra $\mf{g}^\vee$, and $\Delta_x^\vee, \nabla_x^\vee$, etc. the corresponding objects in $\mc{O}_0^\vee$. 

Beilinson-Ginzburg ('86) conjectured that: 
\begin{enumerate}
    \item $\mc{O}_0$ admits a graded version $\widetilde{\mc{O}}_0$ with a ``shift of grading functor'' $\langle m \rangle$, and all ``canonical'' objects above admit lifts $\widetilde{\Delta}_x, \ldots, \widetilde{I}_x$. 
    \item There exists an equivalence of triangulated categories 
    \[
    \kappa: D^b(\widetilde{\mc{O}}_0) \xrightarrow{\sim} D^b(\widetilde{\mc{O}}_0^\vee) 
    \]
    such that $\kappa \circ \langle -1 \rangle [1] \simeq \langle 1 \rangle \circ \kappa$, and $\kappa$ sends
    \[
    \widetilde{\Delta}_x \mapsto \widetilde{\Delta}_{x^{-1}w_0}^\vee, \hspace{5mm} \widetilde{L}_x \mapsto \widetilde{I}_{x^{-1}w_0}^\vee, \hspace{5mm} \widetilde{P}_x \mapsto \widetilde{L}_{x^{-1}w_0}^\vee
    \]
\end{enumerate}
The conjecture was proved by Beilinson-Ginzburg-Soergel in \cite{BGS}.
\begin{exercise}
Decategorify $\kappa$ and deduce something remarkable about Kazhdan--Lusztig polynomials! (See the first chapter on Koszul duality chapter in \cite{book}.) 
\end{exercise}

\vspace{3mm}
\noindent
{\bf Beilinson-Ginzburg ('99):}  If we compose $\kappa$ with the Radon transform ($* \Delta_{w_0}$) + inversion ($g \mapsto g^{-1}$) and pass to a geometric setting, we obtain a functor 
\[
\widetilde{\kappa}: D^\mathrm{mix}_{(B)}(G/B) \xrightarrow{\sim} D^\mathrm{mix}_{(B^\vee)}(B^\vee \backslash G^\vee)
\]
is an equivalence with $\widetilde{\kappa} \circ \langle -1 \rangle [1] \simeq \langle 1 \rangle \circ \widetilde{\kappa}$, and 
\[
\widetilde{IC}_x \mapsto \widetilde{T}^\vee_x, \hspace{5mm} \widetilde{\Delta}_x \mapsto \widetilde{\Delta}^\vee_x, \hspace{5mm} \widetilde{\nabla}_x \mapsto \widetilde{\nabla}^\vee_x, \hspace{5mm} \widetilde{T}_x^\vee \mapsto \widetilde{IC}_x^\vee. 
\]
After seeing this more symmetric version, one starts to start to dream a little.

\vspace{3mm}
\noindent
{\bf Dream:} \begin{enumerate}
    \item It should work for any Kac-Moody group. (We've removed $w_0$ from the formulas!) 
    \item It should be ``monoidal''; i.e. we should roughly have 
    \[
    (\text{semisimple}, *) \simeq (\text{tiltings}, *^\vee). 
    \]
    \end{enumerate}
Unfortunately, this is very much in dreamland. 

\vspace{3mm}
\noindent
{\bf Basic problem:} $T_x \in D_{(B)}(G/B)$ lifts to $D_B(G/B)$ if and only if $x=id$. 

\begin{remark}
We have seen this a number of times for $\PP^1$ (e.g. Example \ref{equivariant category not highest weight}): $T_s$ does not have an equivariant structure. 
\end{remark}

One way we could try to resolve this problem is to work in $D_U(G/B)$ instead, because $T_x$ does lift to this category. However, in $D_U(G/B)$ we no longer have convolution. To fix this, we can go one step further and work in $D_U(G/U)$, which gives us access to $T_x$ and convolution. But unfortunately, this introduces new problems. 

\vspace{3mm}
\noindent
{\bf Second problem:} $*$ is not exact on $D_U(G/U)$. 

\vspace{3mm}
To illustrate this problem, we can consider a toy example

\subsection{A toy example}
Let $G=S^1$ (if you like Lie groups) or $\G_m$ (if you prefer the algebraic setting). 
\begin{exercise}
\label{cohomology}
Given $\mc{L}_1, \mc{L}_2 \in \Loc(S^1)$, 
\[
(\mc{L}_1 * \mc{L}_2)_1 \simeq H^*(\mc{L}_1 \otimes \mc{L}_2^*).
\]
The subscript $1$ denotes the stalk at the identity. As an additional exercise, describe the monodromy of $\mc{L}_1 * \mc{L}_2$ in terms of the monodromy on $\mc{L}_1$ and $\mc{L}_2$.
\end{exercise}
The basic issue is the following. We have seen (Example \ref{examples of perverse sheaves}) that for a local system $\mc{L}/S^1$ given by monodromy $\mu \circlearrowright V$, 
\[
H^0(S^1, \mc{L}) = V^\mu \hspace{5mm} \text{ and }\hspace{5mm} H^1(S^1, \mc{L}) = V_\mu. 
\]
For $V$ finite dimensional, $V^\mu \neq 0 \iff V_\mu \neq 0$. Hence for any finite-dimensional local system $\mc{L}$ on $S^1$, either $H^*(S^1,\mc{L})=0$ or $H^*(S^1, \mc{L})\neq0$ in both degree $0$ and degree $1$. This implies (by Exercise \ref{cohomology}) that $*$ is either zero or {\em not exact}.

\vspace{3mm}
\noindent
{\bf Solution:} Infinite-dimensional local systems! Take $\mc{L}$ to be the local system corresponding to \[
 \C[[t]] \circlearrowleft \mu=1+t. \]
Note that $\C[[t]]^\mu = 0$, $\C[[t]]_\mu = \C[[t]]/(1-\mu) = \C$. 

\begin{remark}
Roughly one can think of $\mc{L}$ as an infinite-dimensional Jordan block
\[
\bp \ddots & & & & 0 \\ & 1& 1 & & \\ & & 1 & 1 & \\  & & & 1 & 1 \\ 0 & & & & 1 \ep,
\]
which has one-dimensional coinvariants and no invariants. 
\end{remark}

\begin{exercise}
$m_!(\mc{L} \boxtimes \mc{L})[1] \simeq \mc{L}$, hence we have an exact $*$-product! 
\end{exercise}

\subsection{Torus monodromic sheaves \'{a} la Bezrukavnikov-Yun} The ideas we encountered in the toy example above were formalized by Bezrukavnikov-Yun in \cite{BY} (c.f. Beilinson's 1983 ICM talk). Let 
\begin{align*}
    D(B \hspace{1mm} \leftdash G \rightdash \hspace{1mm} B) &:= \langle q^* D(B \backslash G /B) \rangle \\
    &= \langle p^* D(U \backslash G/B) \rangle,
\end{align*}
where 
\[
\begin{tikzcd}
 U \backslash G / U \arrow[rd, "p"] \arrow[dd, "q"']&  \\
& U \backslash G /B \arrow[ld] \\
B \backslash G /B 
\end{tikzcd}
\]

\begin{example}
\begin{enumerate}
\item If $G=\C^\times$, 
\begin{align*}
    D(B \hspace{1mm} \leftdash G \rightdash \hspace{1mm} B) &= \langle p^* D(\mathrm{pt}) \rangle \\
    &= \text{ full subcategory of $D^b_c(\C^\times)$ generated by $\Q_\C$} \\
    &= \{ \mc{F} \mid \mc{H}^i(\mc{F}) \text{ local systems with unipotent monodromy} \}. 
\end{align*}
\item If $G=\SL_2$, $G/U \simeq \C^2 \backslash \{0 \}$, so $D(U \backslash G/B) \simeq \langle \IC_0, \IC_{\PP^1} \rangle$. Hence 
\[
D(B \hspace{1mm} \leftdash G \rightdash \hspace{1mm} B) = \langle \Q_{\C^\times}, \Q_{\C^2 \backslash \{0\}} \rangle \subset D_U(G/U).
\]
\end{enumerate}
\end{example}
By replacing $B$ with $U$, we get a convolution product \[
\overset{U}{*}: D(B \hspace{1mm} \leftdash G \rightdash \hspace{1mm} B) \times D(B \hspace{1mm} \leftdash G \rightdash \hspace{1mm} B) \rightarrow D(B \hspace{1mm} \leftdash G \rightdash \hspace{1mm} B) 
\]
from the diagram 
\[
\begin{tikzcd}
&G \times_U G/U \arrow[dl] \arrow[dr] \arrow[rr] & & G/U \\
G/U & & G/U & &
\end{tikzcd}
\]
using the same definition as previously, except that $m:G \times_U G/U$ is not proper, so we need to make a choice of which pushforward $m_*$ or $m_!$ to use. We choose $m_!$.

\vspace{3mm}
\noindent
{\bf Very technical point:} (cf. appendix by Yun to \cite{BY})
\begin{enumerate}
    \item One can complete $D(B \hspace{1mm} \leftdash G \rightdash \hspace{1mm} B)$ to the ``free monodromic completion'' $\widehat{D}(B \hspace{1mm} \leftdash G \rightdash \hspace{1mm} B)$ to allow pro-local systems (like $\C[[t]] \circlearrowleft$ from earlier) along the fibres of $p$. 
    \item The convolution product $\overset{U}{*}$ extends to $\widehat{D}(B \hspace{1mm} \leftdash G \rightdash \hspace{1mm} B)$. Moreover, we can define a category  
    \[
    \widehat{\mathrm{Tilt}} \subset \widehat{D}(B \hspace{1mm} \leftdash G \rightdash \hspace{1mm} B)
    \]
    of ``free monodromic tilting sheaves,'' and $\overset{U}{*}$ is exact on $\widehat{\mathrm{Tilt}}$. 
\end{enumerate}
 \begin{theorem} \cite{BY} 
    \[
    (\widehat{\mathrm{Tilt}}, \overset{U}{*}) \simeq (\widehat{\mathrm{SBim}}, \underset{\widehat{R}}{\otimes})
    \]
    \end{theorem}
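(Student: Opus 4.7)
The plan is to construct an explicit monoidal functor $\mathbb{V}: \widehat{\mathrm{Tilt}} \to \widehat{\mathrm{SBim}}$ that generalizes Soergel's functor $H^\bullet_{B \times B}$ from the non-monodromic setting, and then argue it is an equivalence. The natural candidate is a kind of ``monodromic equivariant cohomology'': for $\mc{F} \in \widehat{D}(B \leftdash G \rightdash B)$, use the (pro-unipotent) $T$-actions on both sides of $U\backslash G/U$ to extract $\mathbb{V}(\mc{F}) := H^\bullet(G/U, \mc{F} \otimes \mc{L}_{\mathrm{univ}})$, where $\mc{L}_{\mathrm{univ}}$ is the universal pro-unipotent local system on the two torus fibres. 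The output is naturally a module over $\widehat{R} \otimes \widehat{R}$, i.e.\ a completed $\widehat{R}$-bimodule, and one can check that the image lies in $\widehat{\mathrm{SBim}}$.

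First I would check that $\mathbb{V}$ is exact on $\widehat{\mathrm{Tilt}}$ and intertwines $\overset{U}{*}$ with $\underset{\widehat{R}}{\otimes}$. Exactness follows from the fact that free monodromic tiltings are by definition both standard- and costandard-filtered in the monodromic sense. Monoidality reduces, via base change along the fibre square for $G \times_U G/U \to G/U$, to the computation that ``cohomology of a torus with the universal local system is $\widehat{R}$ in a single degree'' — precisely the exactness miracle $m_!(\mc{L} \boxtimes \mc{L})[1] \simeq \mc{L}$ from the toy example, which is the whole point of passing to the free monodromic completion. Next, compute $\mathbb{V}(\widehat{T}_s)$ on each simple reflection $s$: this is a rank-one calculation on a $\mathbb{P}^1$-fibration, and should yield the completed Soergel generator $\widehat{B}_s = \widehat{R} \otimes_{\widehat{R}^s} \widehat{R}$. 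Combined with monoidality, this shows $\mathbb{V}$ hits every Bott--Samelson bimodule, and by Karoubi closure it is essentially surjective onto $\widehat{\mathrm{SBim}}$.

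The main obstacle, as always for Soergel-type theorems, will be full faithfulness: one must establish a ``Soergel Hom formula''
\[
\Hom_{\widehat{\mathrm{Tilt}}}(\mc{F}, \mc{G}) \xrightarrow{\sim} \Hom_{\widehat{R}\text{-bim}}(\mathbb{V}(\mc{F}), \mathbb{V}(\mc{G})).
\]
The expected route is induction on Bruhat order using the standard/costandard filtrations of free monodromic tiltings, reducing to the computation of monodromic $\Hom$'s between free monodromic standards and costandards supported on a single Bruhat cell — which by direct calculation is a free rank-one $\widehat{R}$-bimodule, matching the bimodule side. The delicate point is that because $\widehat{\mathrm{Tilt}}$ lives inside a pro-completion, one cannot simply invoke the Decomposition Theorem or purity: one instead has to verify by hand, working with explicit completed resolutions, that the higher $\Ext$ groups between monodromic standards and costandards vanish once one allows the universal monodromy coefficients. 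This vanishing is what promotes the isomorphism on Bott--Samelson generators to an isomorphism on all Homs, and it is the most technically demanding step in the Bezrukavnikov--Yun machinery.
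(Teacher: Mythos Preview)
Your proposal is essentially correct and follows the Bezrukavnikov--Yun strategy that the paper cites. The paper itself gives only a one-paragraph ``Idea of proof'' emphasizing Verdier's observation that every object of $D^b(X\rightdash\hspace{1mm}T)$ carries a canonical unipotent monodromy action of $\pi_1(T)$, which (after taking logs) is what endows the category with its $\widehat{R}$-linear structure; it then separately states that the resulting functor $\mathbb{V}:\widehat{\mathrm{Tilt}}\to\widehat{\mathrm{SBim}}$ is monoidal and fully faithful, again citing \cite{BY}. Your sketch fills in the details the paper omits: the monoidality reduction to the toy computation $m_!(\mc{L}\boxtimes\mc{L})[1]\simeq\mc{L}$, the generator computation $\mathbb{V}(\widehat{T}_s)\simeq\widehat{B}_s$, and the full-faithfulness argument via standard/costandard filtrations and Ext-vanishing between monodromic standards and costandards --- this is precisely the shape of the argument in \cite{BY}. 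One small point: your formula $\mathbb{V}(\mc{F})=H^\bullet(G/U,\mc{F}\otimes\mc{L}_{\mathrm{univ}})$ is morally right but not literally how BY phrase it; the $\widehat{R}$-bimodule structure arises directly from the two commuting monodromy actions (Verdier's observation) rather than from an explicit tensor with a universal local system, though the two formulations are equivalent.
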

    The category on the right side of this equivalence consists of Soergel bimodules for $R=\mc{O}(\Lie T^\vee) = \mc{O}((Lie T)^*)$ (note the dual!), completed along the grading. 
    
    \vspace{3mm}
    \noindent
    {\bf Idea of proof:} Suppose that $\begin{tikzcd} X \arrow[d, "p"] \\ Y
    \end{tikzcd}$ is a $T$-torsor. Define a category 
    \[
    D^b(X\rightdash \hspace{1mm}T) = \langle p^*D_c^b(Y) \rangle_\Delta \subset D^b(X)
    \]
    of ``unipotently monodromic sheaves''. The fundamental observation of Verdier is that every $\mc{F} \in D^b(X\rightdash \hspace{1mm}T)$ has a canonical  monodromy action of $\pi_1(T)$. The action is unipotent. In other words, $D^b(X\rightdash \hspace{1mm}T)$ is linear over $\Q[\pi_1(T)]$. Taking logs, we obtain that $D^b(X\rightdash \hspace{1mm}T)$ is $\widehat{R}$-linear. 
    
    \begin{theorem}
    \cite{BY} The functor $\mathbb{V}:\widehat{\mathrm{Tilt}} \rightarrow \widehat{\mathrm{SBim}}$ is monoidal and fully-faithful. 
    \end{theorem}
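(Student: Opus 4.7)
The plan is to follow the template of Soergel's original Endomorphismensatz/Struktursatz, adapted to the free monodromic setting. First I would make the functor $\mathbb{V}$ explicit: on $\widehat{\mathrm{Tilt}}$ it should be given by some variant of equivariant hypercohomology, namely
\[
\mathbb{V}(\mathcal{F}) = \mathrm{Hom}^{\bullet}_{\widehat{D}(B\,\leftdash\, G\,\rightdash\, B)}\bigl(\widehat{T}_{\mathrm{id}},\, \mathcal{F}\bigr),
\]
where $\widehat{T}_{\mathrm{id}}$ is the free monodromic tilting extension of the skyscraper on the identity coset. The left and right pro-unipotent monodromy endow $\mathbb{V}(\mathcal{F})$ with a $\widehat{R}$-bimodule structure (this is the Verdier monodromy observation recalled just before the theorem), and I would first verify that $\mathbb{V}(\widehat{T}_{\mathrm{id}}) = \widehat{R}$ and that $\mathbb{V}(\widehat{T}_s) \cong \widehat{R} \otimes_{\widehat{R}^s} \widehat{R}$ by an explicit local computation on the minimal parabolic $P_s/U$ (the free monodromic version of the computation $H^*_{B\times B}(\mathrm{IC}_s) = B_s$).

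For monoidality, I would construct a natural comparison map $\mathbb{V}(\mathcal{F}) \otimes_{\widehat{R}} \mathbb{V}(\mathcal{G}) \to \mathbb{V}(\mathcal{F} \overset{U}{*} \mathcal{G})$ using base change along the diagram defining $\overset{U}{*}$. Since both sides are exact in each variable on $\widehat{\mathrm{Tilt}}$, and since $\widehat{\mathrm{Tilt}}$ is generated under $\overset{U}{*}$ and direct summands by the rank-one objects $\widehat{T}_s$ and the monodromic unit $\widehat{T}_{\mathrm{id}}$, it suffices to check the comparison map is an isomorphism on tensor products of these generators. This reduces to a geometric calculation on $P_{s_1}\times_B P_{s_2}\times_B\cdots /U$, which can be handled inductively using the free monodromic analogue of the proper base change and projection formula.

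Fully-faithfulness is the main obstacle, and I would prove it by the standard two-step reduction. First, any free monodromic tilting $\widehat{T}$ admits both a standard filtration (by $\widehat{\Delta}_x$'s) and a costandard filtration (by $\widehat{\nabla}_x$'s), which is preserved by $\mathbb{V}$ and corresponds to the standard/costandard filtrations of Soergel bimodules by Bott--Samelson-type modules $\widehat{R} \otimes_{\widehat{R}^{s_{i_1}}} \cdots \otimes_{\widehat{R}^{s_{i_k}}} \widehat{R}$. Second, for a reduced expression $x = s_1\cdots s_k$, one has
\[
\mathrm{Hom}\bigl(\widehat{T}_{s_1}\overset{U}{*}\cdots\overset{U}{*}\widehat{T}_{s_k},\, \widehat{\nabla}_y\bigr)
\]
computed on both sides by adjunction and restriction to a single Bruhat cell; these match tautologically once the base case $\widehat{T}_s$ is pinned down. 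The long exact sequence associated to a $\widehat{\nabla}$-filtration then propagates the isomorphism from costandards to all tiltings.

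The hard part, and where genuine input from \cite{BY} (and behind it the Yun appendix on the technical construction of $\widehat{D}$) is needed, is justifying that these Hom-computations behave well after completion: one must show that the relevant $\mathrm{Ext}^{>0}$ groups vanish in the completed category (so that $\mathrm{Hom}$ in $\widehat{D}$ agrees with $\mathrm{Hom}$ of $\widehat{R}$-bimodules on both sides without derived corrections), and that taking free monodromic limits commutes with the base change used in the monoidality argument. Soergel's Hom-formula $\mathrm{Hom}(B,B') = \mathrm{bimodule\ Hom}$ on the bimodule side has a direct analogue here, but on the sheaf side it rests on the exactness properties of $\overset{U}{*}$ on $\widehat{\mathrm{Tilt}}$ recorded in the very technical point above. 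Once these completion-vs-Hom compatibilities are in place, the matching of Hom-spaces on generators upgrades to full faithfulness on all of $\widehat{\mathrm{Tilt}}$.
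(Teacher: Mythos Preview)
The paper does not actually prove this theorem; it is cited from \cite{BY}, and the only argument offered is the short ``Idea of proof'' paragraph immediately preceding it, which explains one ingredient: Verdier's observation that unipotently monodromic sheaves on a $T$-torsor carry a canonical $\widehat{R}$-linear structure via logarithm of monodromy. That is the entirety of what the lecture notes supply.

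Your proposal goes well beyond this and sketches what is essentially the actual Bezrukavnikov--Yun argument, correctly modelled on Soergel's Endomorphismensatz/Struktursatz. The overall architecture --- define $\mathbb{V}$ as (completed) hypercohomology, identify it on the generators $\widehat{T}_s$, build the monoidality comparison map and reduce to generators using exactness of $\overset{U}{*}$ on tiltings, then deduce full faithfulness via standard/costandard filtrations and a Hom-formula --- is the right shape, and you correctly flag that the serious technical content lies in controlling Homs and Exts after pro-completion (this is precisely what the Yun appendix to \cite{BY} handles). One small caution: in \cite{BY} the functor $\mathbb{V}$ is not literally $\Hom(\widehat{T}_{\mathrm{id}},-)$ but rather a variant involving averaging to the Whittaker category (or equivalently Hom from the big free-monodromic tilting); your description ``some variant of equivariant hypercohomology'' is safe, but pinning it down as Hom from the monoidal unit is slightly off and would not directly give the correct bimodule. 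Apart from that, your outline is a faithful account of how the proof actually goes, and considerably more informative than what the notes provide.
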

    Adding weights, one gets monoidal equivalences
    \[
    (\widehat{\mathrm{Tilt}}^\mathrm{mixed}, *) \simeq (\mathrm{SBim}, \underset{R}{\otimes}) \simeq (\mc{H}_{s.s.}^{G^\vee}, *),
    \]
    and hence taking homotopy categories, 
    \[
    D^{\mathrm{mixed}}(B \hspace{1mm} \leftdash G\rightdash \hspace{1mm}B) \simeq \mc{H} \simeq D^{\mathrm{mixed}}(B^\vee \backslash G^\vee / B^\vee).
    \]
    This achieves the dream of a ``monoidal Koszul duality''.
    
    \vspace{3mm}
    \noindent
    {\bf A beautiful feature:} Let $\pi:U \backslash G/U \rightarrow U \backslash G/B$ be projection. Then 
    \[
    \pi_*(\widehat{T}_x)[\mathrm{rank}T] = T_x \in D_U(G/B).
    \]
    In other words, taking coinvariants of free monodromic tilting sheaves gives indecomposible tilting sheaves downstairs. 
    
    \begin{remark}
    With enough\footnote{approximately $250$ pages} homological algebra, one can do all of the above algebraically (and also mod $p$), c.f. \cite{AMRW}. Some of the constructions there are very adhoc. Recent foundational work of Hogencamp and Makisumi explains much more satisfactorily what is going on.
    \end{remark}
    
    \subsection{Bezrukavnikov's equivalence}
    {\bf The constructible side:} 
    Associated to our choice $T \subset B \subset G$, we have the familiar zoo of characters: 
    \begin{itemize}
        \item the pro-unipotent radical $I^0\subset I \subset G((t))$,
        \item the extended affine flag variety (aka ``affine base affine space'') $\widetilde{\mc{F}\ell} = G((t))/I^0$,
        \item the affine flag variety $\mc{F}\ell = G((t))/I$, 
        \item the affine Grassmannian $\mc{G}r = G((t))/G[[t]]$,
        \item projections
        \[
        \widetilde{\mc{F}\ell} \xrightarrow{T\text{-torsor}} \mc{F}\ell \rightarrow \mc{G}r, 
        \]
        \item derived categories 
        \begin{align*}
            D_{II} &:= D(I \backslash G((t))/I), \\
            D_{I^0I} &:= D(I^0 \backslash G((t))/I) \rightsquigarrow \widehat{D}_{I^0I}, \hspace{5mm} \text{ free monodromic completion}\\
            D_{I^0I^0} &:= D(I^0 \hspace{1mm} \leftdash G((t))\rightdash \hspace{1mm} I^0) \rightsquigarrow \widehat{D}_{I^0I^0},  \hspace{5mm} \text{ free monodromic completion}
        \end{align*}
        \item and actions
        \begin{align*}
            \widehat{D}_{I^0I^0} &\circlearrowright \widehat{D}_{I^0I} \circlearrowleft \widehat{D}_{II} \\
            D_{I^0I^0} &\circlearrowright D_{I^0I} \circlearrowleft D_{II}. 
        \end{align*}
    \end{itemize}
    
    \noindent
    {\bf The coherent side:} Associated to $G^\vee$, we have 
    \[
    \begin{tikzcd}
    \widetilde{\mc{N}}^\vee \arrow[d] \arrow[r, hookrightarrow] & \widetilde{\mf{g}}^\vee \arrow[d] \\
    \mc{N}^\vee \arrow[r, hookrightarrow] & \mf{g}^\vee, 
    \end{tikzcd}
    \]
    where the left vertical arrow is the Springer resolution and the right vertical arrow is the Grothendieck-Springer resolution sending $(x, \mf{b}) \in \widetilde{\mf{g}}^\vee = \{(x, \mf{b}) \subset \mf{g}^\vee \times \mc{B}^\vee \mid x \in \mf{b} \}$ to $x \in \mf{g}^\vee$.
    
    \vspace{3mm}
    \noindent
    {\bf Recall:} Given $X \xrightarrow{f} Y$ proper, one can construct a convolution structure on $(D^b(X \times_Y X), *)$ (e.g. if $X$ is a finite set and $Y$ is a point, then $D^b(X \times_Y X) \simeq$ $X \times X$ matrices of chain complexes of vector spaces). Earlier we used this construction to give a coherent realization of the affine Hecke algebra via the Kazhdan--Lusztig isomorphism:
    \[ 
    H^\mathrm{aff} \simeq K^{G^\vee \times \C^\times}(\widetilde{\mc{N}}^\vee \times_{\mc{N}} \widetilde{\mc{N}}^\vee).
    \]
    We need a derived version of this (which will mostly disappear in a moment). 
    
    Recall that $X \times_Y X$ is constructed by gluing $\Spec(B \otimes_A B)$ for an affine cover $\Spec B \rightarrow \Spec A$ of $f$. Similarly, we can construct $X \overset{L}\times_Y X$ by gluing the $dg$-schemes $\Spec (B \overset{L}{\otimes}_A B)$ for an affine cover $\Spec B \rightarrow \Spec A$ of $f$.
    
    \vspace{3mm}
    \noindent
    {\bf Key points:}
    \begin{enumerate}
        \item $D^b(X \overset{L}\times_Y X)$ is still triangulated and monoidal. 
        \item If $\mathrm{Tor}_{>0}^{\mc{O}_Y}(\mc{O}_X, \mc{O}_X)=0$, then $X \overset{L} \times_Y X = X \times_Y X$. 
    \end{enumerate}
    From this we define our coherent characters:
    \begin{itemize}
        \item $St_{\mf{g}^\vee} := \widetilde{\mf{g}}^\vee \overset{L} \times_{\mf{g}^\vee} \widetilde{\mf{g}}^\vee = \widetilde{\mf{g}}^\vee \times_{\mf{g}^\vee} \widetilde{\mf{g}}^\vee$, 
        \item $St_{\mc{N}^\vee} := \widetilde{\mf{g}}^\vee \overset{L} \times _{\mf{g}^\vee} \widetilde{\mc{N}}^\vee = \widetilde{\mf{g}}^\vee \times_{\mf{g}^\vee} \widetilde{\mc{N}}^\vee$, and 
        \item $St^L := \widetilde{\mc{N}}^\vee \overset{L}\times _{\mf{g}^\vee} \widetilde{\mc{N}}^\vee$ \hspace{5mm} $(\neq \widetilde{\mc{N}}^\vee \times_{\mf{g}^\vee} \widetilde{\mc{N}}^\vee = \widetilde{\mc{N}} \times_{\mc{N}^\vee} \widetilde{\mc{N}}^\vee$). 
    \end{itemize}
    \begin{theorem}
    (Bezrukavnikov) There exist vertical equivalences 
    \[
    \begin{tikzcd}
    D^b \Coh^{G^\vee}(St_{\mf{g}^\vee}) \arrow[d, "(a)", "\sim"'] & \circlearrowright & D^b \Coh^{G^\vee}(St_{\mc{N}^\vee}) \arrow[d,  "(b)", "\sim"'] & \circlearrowleft & D^b \Coh^{G^\vee} (St^L) \arrow[d, "(c)", "\sim"'] \\ 
    D_{I^0I^0} & \circlearrowright & D_{I^0I} & \circlearrowleft & D_{II}
    \end{tikzcd}
    \]
    making the diagram of module categories commute.
    \end{theorem}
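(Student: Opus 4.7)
The strategy is to establish the middle equivalence $(b)$ first, then leverage the module structures to derive $(a)$ and $(c)$. For $(b)$, I would build on the Arkhipov--Bezrukavnikov framework developed in Lectures~\ref{lecture 29}--\ref{lecture 32}: the Gaitsgory central functor $Z \colon \Rep G^\vee \to P_I$, together with its $\otimes$-derivation $N$ and the Wakimoto sheaves $J_\lambda$, produces via Tannakian formalism a monoidal functor $\Coh^{G^\vee}_{\mathrm{free}}(\widetilde{\mf{g}}^\vee \times_{\mf{g}^\vee} \widetilde{\mc{N}}^\vee) \to P_{I^0 I}$. The key novelty over Arkhipov--Bezrukavnikov is that lifting from $P_I$ to $P_{I^0 I}$ corresponds geometrically to replacing one copy of $\widetilde{\mc{N}}^\vee$ by $\widetilde{\mf{g}}^\vee$: by Verdier's logarithm-of-monodromy principle, the pro-unipotent monodromy along the $T^\vee$-torsor $\widetilde{\mc{F}\ell} \to \mc{F}\ell$ yields an action of $H^*_{T^\vee}(\mathrm{pt}) \simeq \mc{O}(\mf{h}^\vee)$, which by Chevalley is $\mc{O}(\mf{g}^\vee/\!/G^\vee)$, and this upgrades $\widetilde{\mc{N}}^\vee$ to $\widetilde{\mf{g}}^\vee \times_{\mf{g}^\vee} \widetilde{\mc{N}}^\vee = St_{\mc{N}^\vee}$.

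For $(a)$, the main tool is the Bezrukavnikov--Yun theorem, which identifies $\widehat{\mathrm{Tilt}}$ inside $\widehat{D}_{I^0 I^0}$ with the category of completed Soergel bimodules for the affine Weyl group acting on $\mf{h}^\vee$. On the coherent side, $St_{\mf{g}^\vee}$ is smooth (two copies of smooth $\widetilde{\mf{g}}^\vee$ fibered transversally over $\mf{g}^\vee$), and $\Coh^{G^\vee}(St_{\mf{g}^\vee})$ admits a natural description via $\widehat{R} \otimes_{\widehat{R}^W} \widehat{R}$-modules, where $\widehat{R} = \widehat{\mc{O}}(\mf{h}^\vee)$. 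The equivalence $(a)$ then factors through this common Soergel-bimodule description, matching $\overset{U}{*}$ with $\otimes_{\widehat{R}}$ as in the finite-type case. The equivalence $(c)$ is obtained from $(b)$ by ``killing monodromy on the right factor'': $D_{II}$ is the base change $D_{I^0 I} \otimes_{D_{I^0 I^0}} D_{II}$, which on the coherent side amounts to replacing $\widetilde{\mf{g}}^\vee$ by $\widetilde{\mc{N}}^\vee$ in the right factor, yielding $St^L = \widetilde{\mc{N}}^\vee \overset{L}{\times}_{\mf{g}^\vee} \widetilde{\mc{N}}^\vee$. It is essential to take the \emph{derived} fibered product here, since $\widetilde{\mc{N}}^\vee \to \mf{g}^\vee$ is not flat and the higher $\mathrm{Tor}$ terms carry the Koszul-dual information visible on the constructible side through the non-exactness of forgetting monodromy.

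Compatibility with the module actions in the displayed diagram reduces to the fact that all three functors are built from the single central functor $Z$; the naturality of $Z$ together with the exactness of $\overset{U}{*}$ on tilting objects guarantees coherence across the diagram, which one verifies on the generating objects (Wakimoto sheaves, central sheaves, and their images under $Z$).

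The main obstacle will be full faithfulness of $(b)$, which requires refined $\Ext$ computations on both sides. Faithfulness uses Bezrukavnikov's principle of restricting to the regular locus $\mc{N}^\vee_{\mathrm{reg}}$: the unit two-sided cell identifies the specialization $P_I^{\mathrm{id}}$ with $\Rep Z_{G^\vee}(N_0) \simeq \Coh^{G^\vee}(\mc{N}^\vee_{\mathrm{reg}})$, through which one detects non-triviality of morphisms. Fullness, for generators $V \otimes \mc{O}(\lambda)$ with $V \in \Rep G^\vee$ and $\lambda$ dominant, reduces via Beilinson's lemma to matching $\Ext^i_{\Coh^{G^\vee}(St_{\mc{N}^\vee})}(V \otimes \mc{O}, \mc{O}(\lambda))$ with the corresponding $\Ext$ in $D_{I^0 I}$: the coherent side vanishes in positive degrees by Frobenius splitting of $T^* \mc{B}^\vee$, while the constructible side is controlled by Iwahori--Whittaker averaging as in the sketch at the end of Lecture~\ref{lecture 32}. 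A further technical hurdle is making the free monodromic completion fully rigorous in $(a)$: one must show that convolution stays exact on $\widehat{\mathrm{Tilt}}$ and extend all $\Ext$ comparisons to pro-unipotent local systems, which was carried out by Bezrukavnikov--Yun but is the most delicate part of the argument.
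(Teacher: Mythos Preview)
The paper does not give a full proof of this theorem; it offers only three ``Remarks on the proof,'' and your proposal runs directly counter to the first and most substantive of these. The paper states that equivalences $(b)$ and $(c)$ are \emph{consequences} of $(a)$, and that $(a)$ is where Bezrukavnikov spends most of his paper; you propose to establish $(b)$ first and then deduce $(a)$ from it. This is not merely a different order of exposition: the logical dependency genuinely runs the other way. The technically convenient form of $(a)$ that Bezrukavnikov actually proves is the completed version $\widehat{D}_{I^0I^0} \simeq D^b\Coh^{G^\vee}(\widehat{St}_{\mf{g}^\vee})$, and it is this bimodule-category equivalence that then controls the module categories in $(b)$ and $(c)$.

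Your proposed route to $(a)$ has a more serious problem: it is circular. You want to identify both sides of $(a)$ with completed Soergel bimodules for the affine Weyl group, invoking Bezrukavnikov--Yun for the constructible side and claiming that ``$\Coh^{G^\vee}(St_{\mf{g}^\vee})$ admits a natural description via $\widehat{R} \otimes_{\widehat{R}^W} \widehat{R}$-modules'' for the coherent side. But there is no such a priori description of the coherent side. The identification of affine Soergel bimodules with equivariant coherent sheaves on the Steinberg variety is precisely the content of Bezrukavnikov's theorem (cf.\ the paper's remark in Lecture~33 that ``one way of understanding Bezrukavnikov's theorem is that `Soergel bimodules have another name' in the affine setting''). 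You cannot take this as input. The actual argument for $(a)$, as the paper indicates, proceeds by building on the Arkhipov--Bezrukavnikov machinery (central sheaves, Wakimoto filtrations, Tannakian formalism) to construct the functor directly, and then proving full faithfulness by methods parallel to those you sketched for $(b)$---but carried out for the monoidal category $\widehat{D}_{I^0I^0}$ itself, not for a module over it.
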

    
    \noindent
    {\bf Remarks on the proof:}
    \begin{enumerate}
        \item Equivalences $(b)$ and $(c)$ and compatibility are reasonably straightforward consequences of $(a)$, which is where Bezrukavnikov spends most of the paper. 
        \item It is technically convenient to instead prove 
        \[
        \widehat{D}_{I^0I^0} \simeq D(\Coh^{G^\vee}(\widehat{St}_{\mf{g}^\vee})),
        \]
        where $\widehat{St}$ denotes the formal completion of $St_{\mf{g}^\vee}$ along the preimage of $\mc{N}^\vee$. 
        \item The proof relies heavily on ideas in \cite{AB}. 
    \end{enumerate}
    
    The theorem has several remarkable consequences, but exploring them will have to wait until the next course! 
\pagebreak

\bibliographystyle{alpha}
\nocite{*}
\bibliography{LanglandsAndBezrukavnikov}

\end{document}